\title[Log minimal model program]
{Introduction to the log minimal model program 
for log canonical pairs}
\author{Osamu Fujino} 
\subjclass[2000]{14E30.}
\date{2009/1/8}
\address{}
\email{fujino@math.kyoto-u.ac.jp}
\newcommand{\discrep}[0]{{\operatorname{discrep}}}
\newcommand{\id}[0]{{\operatorname{id}}}
\newcommand{\Ext}[0]{{\operatorname{Ext}}}
\newcommand{\Hom}[0]{{\operatorname{Hom}}}
\newcommand{\Spec}[0]{{\operatorname{Spec}}}
\newcommand{\GL}[0]{{\operatorname{GL}}}
\newcommand{\Proj}[0]{{\operatorname{Proj}}}
\newcommand{\LCS}[0]{{\operatorname{LCS}}}
\newcommand{\Sym}{{\rm{Sym}}}
\newcommand{\Pic}[0]{{\operatorname{Pic}}}
\newcommand{\xIm}[0]{{\operatorname{Im}}}
\newcommand{\Bs}[0]{{\operatorname{Bs}}}
\newcommand{\mult}[0]{{\operatorname{mult}}}
\newcommand{\Exc}[0]{{\operatorname{Exc}}}
\newcommand{\Supp}[0]{{\operatorname{Supp}}}
\newcommand{\Coker}[0]{{\operatorname{Coker}}}
\newcommand{\Nqklt}[0]{{\operatorname{Nqklt}}}
\newcommand{\Nklt}[0]{{\operatorname{Nklt}}}
\newtheorem{thm}{Theorem}[chapter]
\newtheorem{lem}[thm]{Lemma}
\newtheorem{cor}[thm]{Corollary}
\newtheorem{que}[thm]{Problem}
\newtheorem{prop}[thm]{Proposition}
\newtheorem*{claim}{Claim}
\newtheorem{cla}{Claim}
\newtheorem{conj}[thm]{Conjecture}
\theoremstyle{definition}
\newtheorem{ex}[thm]{Example}
\newtheorem{defn}[thm]{Definition}
\newtheorem{rem}[thm]{Remark}
\newtheorem*{ack}{Acknowledgments}       
\newtheorem{say}[thm]{}
\newtheorem{steste}{Step}
\newtheorem{step}{Step}
\newtheorem*{apo}{Apologies}   
\begin{document}
\title{Introduction to the log minimal model program for log canonical 
pairs}
\author{Osamu Fujino}
\date{January 8, 2009, Version 6.01}
\maketitle

\begin{abstract}
We describe the foundation of the log minimal model 
program for log canonical pairs according to 
Ambro's idea. 
We generalize Koll\'ar's vanishing and torsion-free 
theorems for embedded simple normal crossing 
pairs. Then we prove the cone and contraction 
theorems for quasi-log varieties, especially, 
for log canonical pairs. 
\end{abstract}

\tableofcontents

\chapter{Introduction}\label{chap1}

In this book, we describe the foundation of the log minimal 
model program (LMMP or MMP, for short) for log canonical 
pairs. We follow Ambro's idea in \cite{ambro}. 
First, we generalize Koll\'ar's vanishing and torsion-free theorems (cf.~\cite{kollar-high}) 
for embedded normal crossing pairs. 
Next, we introduce the notion of quasi-log varieties. 
The key points of the theory of quasi-log varieties are 
adjunction and the vanishing theorem, which 
directly follow from Koll\'ar's vanishing and 
torsion-free theorems for embedded normal crossing 
pairs. 
Finally, we prove the cone and contraction theorems for 
quasi-log varieties. 
The proofs are more or less routine works for experts once 
we know adjunction and the vanishing theorem 
for quasi-log varieties. 
Chapter \ref{chap2} is an expanded version of my preprint \cite{fujino} 
and Chapter \ref{chap3} is based on the preprint \cite{fuji-note}. 

After \cite{km} appeared, the log minimal model program 
has developed drastically. Shokurov's 
epoch-making paper \cite{sho-pre} gave us 
various new ideas. 
The book \cite{corti} explains some of them in details. 
Now, we have \cite{bchm}, where 
the log minimal model program for Kawamata log terminal pairs 
is established on some mild assumptions. 
In this book, we explain nothing on the results in \cite{bchm}. 
It is because many survey articles were and will be 
written for \cite{bchm}. 
See, for example, \cite{chklm}, 
\cite{druel}, and \cite{fuji-ronsetsu}. 
Here, we concentrate basics of the log minimal model program for 
log canonical pairs. 

We do not discuss 
the log minimal model program for toric varieties. 
It is because we have already established the 
foundation of the toric Mori theory. 
We recommend the reader to 
see \cite{reid-toric}, \cite[Chapter 14]{ma-bon}, 
\cite{fuji-sato}, \cite{fuji-to2}, 
and so on. 
Note that we will freely use the toric geometry 
to construct nontrivial 
examples explicitly. 

The main ingredient of this book is 
the theory of mixed Hodge structures. 
All the basic results for Kawamata log terminal pairs 
can be proved without it. 
I think that the classical Hodge 
theory and the theory of 
variation of Hodge structures 
are sufficient for Kawamata log terminal pairs. 
For log canonical 
pairs, the theory of mixed 
Hodge structures seems to be indispensable. 
In this book, we do not discuss the theory of variation of Hodge structures 
nor canonical bundle formulas. 

\begin{apo}
After I finished writing a preliminary version of 
this book, I found a more direct approach 
to the log minimal model program for log canonical 
pairs. 
In \cite{fuji-b}, I obtained a correct generalization 
of Shokurov's non-vanishing theorem for log canonical paris. 
It directly implies the base point free theorem for log 
canonical pairs. 
I also proved the rationality theorem and the cone theorem 
for log canonical pairs without using 
the framework of quasi-log varieties. 
The vanishing and torsion-free theorems we need in \cite{fuji-b} 
are essentially contained in \cite{ev}. 
The reader can learn them by \cite{fuji-a}, 
where I gave a short, easy, and almost self-contained 
proof to them. 
Therefore, now we can prove some of the 
results in this book in a more elementary manner. 
However, the method developed in \cite{fuji-b} can 
be applied only to log canonical pairs. 
So, \cite{fuji-b} will not decrease the value of this book. 
Instead, \cite{fuji-b} will complement 
the theory of quasi-log varieties. 
I am sorry that I do not discuss that new approach here. 
\end{apo}

\begin{ack}
First, I express my gratitude to Professors Shigefumi 
Mori, Yoichi Miyaoka, Noboru Nakayama, 
Daisuke Matsushita, and Hiraku Kawanoue, who 
were the members of my seminars when I 
was a graduate student at RIMS. 
In those seminars, I learned the foundation of the log 
minimal model program according to 
a draft of \cite{km}. 
I was partially supported by the Grant-in-Aid for Young 
Scientists (A) $\sharp$20684001 from JSPS. I was 
also supported by the Inamori Foundation. 
I thank Professors Noboru Nakayama, 
Hiromichi Takagi, Florin Ambro, Hiroshi 
Sato, Takeshi Abe and Masayuki Kawakita for 
discussions, comments, and questions. 
I would like to thank Professor 
J\'anos Koll\'ar for giving me
many comments on the preliminary version of 
this book and 
showing me many examples. 
I also thank Natsuo Saito for drawing a beautiful picture 
of a Kleiman--Mori cone. 
Finally, I thank Professors Shigefumi Mori, 
Shigeyuki Kondo, 
Takeshi Abe, and 
Yukari Ito for warm encouragement. 
\end{ack}

\section{What is a quasi-log variety ?}

In this section, we informally explain why it is 
natural to consider {\em{quasi-log varieties}}. 

Let $(Z, B_Z)$ be a log canonical 
pair and let $f:V\to Z$ be a resolution with 
$$K_V+S+B=f^*(K_Z+B_Z),$$ 
where $\Supp (S+B)$ is a simple normal crossing 
divisor, $S$ is reduced, and $\llcorner B\lrcorner\leq 0$. 
It is very important to consider the 
{\em{locus 
of log canonical singularities}} $W$ of the 
pair $(Z, B_Z)$, that is, $W=f(S)$. 
By the Kawamata--Viehweg vanishing theorem, 
we can easily check that 
$$\mathcal O_W\simeq f_*\mathcal O_S(\ulcorner -B_S\urcorner), 
$$ 
where $K_S+B_S=(K_V+S+B)|_S$. 
In our case, $B_S=B|_S$. 
Therefore, it is natural to introduce the following notion. 
Precisely speaking, a qlc pair is a quasi-log pair with 
only qlc singularities (see Definition \ref{quasi-log}). 

\begin{defn}[Qlc pairs]\index{qlc pair} 
A qlc pair $[X, \omega]$ is a scheme $X$ endowed with an 
$\mathbb R$-Cartier $\mathbb R$-divisor 
$\omega$ such that 
there is a proper morphism $f:(Y, B_Y)\to X$ satisfying 
the following conditions. 
\begin{itemize}
\item[(1)] $Y$ is a simple normal crossing 
divisor on a smooth variety 
$M$ and there exists an 
$\mathbb R$-divisor $D$ on $M$ such that $\Supp (D+Y)$ is 
a simple normal crossing divisor, 
$Y$ and $D$ have no common irreducible components,  
and $B_Y=D|_Y$. 
\item[(2)] $f^*\omega\sim _{\mathbb R}K_Y+B_Y$. 
\item[(3)] $B_Y$ is a subboundary, that is, $b_i\leq 1$ for 
any $i$ when $B_Y=\sum b_i B_i$. 
\item[(4)] $\mathcal O_X\simeq f_*\mathcal O_Y(\ulcorner 
-(B^{<1}_Y)\urcorner)$, 
where $B^{<1}_Y=\sum _{b_i<1}b_i B_i$. 
\end{itemize}
\end{defn}
It is easy to 
see that the pair $[W, \omega]$, where 
$\omega=(K_X+B)|_W$, with 
$f:(S, B_S)\to W$ satisfies the definition of qlc pairs. 
We note that the pair $[Z, K_Z+B_Z]$ with $f:(V, S+B)\to 
Z$ is also a qlc pair since $f_*\mathcal O_V(\ulcorner -B\urcorner)
\simeq \mathcal O_Z$. 
Therefore, we can treat log canonical pairs and 
loci of log canonical singularities in the 
same framework once we introduce the notion of 
qlc pairs. 
Ambro found that a modified version of 
\index{X-method} X-method, 
that is, 
the method introduced by Kawamata and 
used by him to prove the foundational results 
of the log minimal model program 
for Kawamata log terminal pairs, works for qlc pairs 
if we 
generalize Koll\'ar's vanishing and torsion-free theorems 
for embedded normal crossing pairs. 
It is the key idea of \cite{ambro}. 

\section{A sample computation}

The following theorem must motivate the 
reader to study our new framework. 

\begin{thm}[{cf.~Theorem \ref{adj-th} (ii)}]\label{moti1} 
Let $X$ be a normal projective 
variety and $B$ a boundary $\mathbb R$-divisor 
on $X$ such that 
$(X, B)$ is log canonical. 
Let $L$ be a Cartier divisor on $X$. 
Assume that $L-(K_X+B)$ is ample. 
Let $\{C_i\}$ be {\em{any}} set of 
lc centers of the pair $(X, B)$. 
We put $W=\bigcup C_i$ with a reduced scheme structure. 
Then we have 
$$
H^i(X, \mathcal I_W\otimes \mathcal O_X(L))=0 
$$ 
for any $i>0$, where $\mathcal I_W$ is the defining 
ideal sheaf of $W$ on $X$. 
In particular, the restriction 
map 
$$
H^0(X, \mathcal O_X(L))\to H^0(W, \mathcal O_W(L))
$$ 
is surjective. 
Therefore, if $(X, B)$ has a zero-dimensional 
lc center, then the linear system $|L|$ is not empty 
and the base locus of $|L|$ contains no zero-dimensional 
lc centers of $(X, B)$. 
\end{thm}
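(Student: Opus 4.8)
The plan is to read this statement off from the quasi-log machinery, precisely from the adjunction and vanishing theorem recorded as Theorem \ref{adj-th} (ii). As explained in the previous section, a log canonical pair carries a canonical qlc structure: choosing a log resolution $f\colon V\to X$ with
$$
K_V+S+B_V=f^*(K_X+B),
$$
where $S$ is reduced, $\Supp(S+B_V)$ is simple normal crossing, and $\llcorner B_V\lrcorner\le 0$, the pair $[X,\omega]$ with $\omega=K_X+B$ together with $f\colon(V,S+B_V)\to X$ satisfies the four conditions defining a qlc pair, since $f_*\mathcal O_V(\ulcorner -B_V\urcorner)\simeq\mathcal O_X$. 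Under this dictionary the lc centers of $(X,B)$ are exactly the qlc centers of $[X,\omega]$, and because $(X,B)$ is log canonical there is no non-qlc locus.

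With this in hand, I would apply Theorem \ref{adj-th} (ii) to the reduced union $W=\bigcup C_i$ of the chosen qlc centers. The positivity hypothesis needed there is that $L-\omega=L-(K_X+B)$ be nef and log big; since it is assumed ample, its restriction to every qlc center is again ample and hence big, so the hypothesis holds. The conclusion of that theorem, in the absolute projective setting, is exactly
$$
H^i(X,\mathcal I_W\otimes\mathcal O_X(L))=0\qquad(i>0),
$$
which is the first assertion.

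The two remaining statements are formal consequences. From the short exact sequence
$$
0\to\mathcal I_W\otimes\mathcal O_X(L)\to\mathcal O_X(L)\to\mathcal O_W(L)\to 0
$$
and the vanishing of $H^1(X,\mathcal I_W\otimes\mathcal O_X(L))$, the restriction map $H^0(X,\mathcal O_X(L))\to H^0(W,\mathcal O_W(L))$ is surjective. For the last claim, I would choose $W$ so as to contain a given zero-dimensional lc center $P$---a single point $P$ is itself an allowed choice of $W$---so that $H^0(W,\mathcal O_W(L))$ surjects onto the one-dimensional space $H^0(P,\mathcal O_P)$. Surjectivity then produces a global section of $\mathcal O_X(L)$ not vanishing at $P$, whence $|L|\neq\emptyset$ and $P\notin\Bs|L|$.

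The genuine difficulty lies entirely upstream, in Theorem \ref{adj-th} (ii) itself, which rests on the generalized Koll\'ar vanishing and torsion-free theorems for embedded simple normal crossing pairs and on the adjunction they produce. Granting that theorem, the only point in the present argument deserving care is that an \emph{arbitrary} union of lc centers, endowed with its reduced structure, is covered by the adjunction formalism---not merely a single center or the whole non-klt locus. Once this is confirmed, the theorem is a direct corollary.
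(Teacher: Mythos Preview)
Your proposal is correct and matches the paper's intended approach: Theorem~\ref{moti1} is stated in the introduction as a motivating consequence of Theorem~\ref{adj-th}~(ii), and your argument---endowing $(X,B)$ with its natural qlc structure (as in Example~\ref{new-example}) and applying adjunction and vanishing to the union of chosen qlc centers---is exactly how the paper derives it. The one subtlety you flag, that the scheme structure on $W$ produced by Theorem~\ref{adj-th}~(i) agrees with the reduced structure, is handled in the paper via Remarks~\ref{hosoku} and~\ref{hosoku2} (cf.\ the proof of Theorem~\ref{a1}~(3)): since $X_{-\infty}=\emptyset$, the qlc structure on $W$ satisfies $\mathcal O_W\simeq f_*\mathcal O_{Y'}$ with $Y'$ simple normal crossing, so $W$ is reduced (indeed semi-normal).
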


Let us see a simple setting to understand 
the difference between our new framework and 
the traditional one. 

\begin{say}
Let $X$ be a smooth projective surface and 
let $C_1$ and $C_2$ be smooth curves on $X$. 
Assume that 
$C_1$ and $C_2$ intersect only at a point $P$ 
transversally. 
Let $L$ be a Cartier divisor on $X$ such that 
$L-(K_X+B)$ is ample, where $B=C_1+C_2$. 
It is obvious that $(X, B)$ is log canonical 
and $P$ is an lc center of $(X, B)$. 
Then, by Theorem \ref{moti1}, 
we can directly obtain 
$$
H^i(X, \mathcal I_P\otimes \mathcal O_X(L))=0
$$ 
for any $i>0$, where $\mathcal I_P$ is the 
defining ideal sheaf of $P$ on $X$. 

In the classical framework, we prove it as follows. 
Let $C$ be a general curve passing through 
$P$. 
We take small positive rational numbers $\varepsilon$ 
and $\delta$ such that 
$(X, (1-\varepsilon)B+\delta C)$ is log canonical 
at $P$ and is Kawamata log terminal outside $P$. Since 
$\varepsilon$ and $\delta$ are small, 
$L-(K_X+(1-\varepsilon)B+\delta C)$ is still ample. 
By the Nadel 
vanishing\index{Nadel vanishing theorem} 
theorem, 
we obtain 
$$
H^i(X, \mathcal I_P\otimes \mathcal O_X(L))=0 
$$ 
for any $i>0$. We note that 
$\mathcal I_P$ is nothing but the {\em{multiplier 
ideal sheaf}}\index{multiplier ideal sheaf} 
associated to the pair 
$(X, (1-\varepsilon)B+\delta C)$. 
\end{say}

By our new vanishing theorem, 
the reader will be released from annoyance of 
perturbing coefficients of boundary divisors.  

We give a sample computation here. It may explain 
the reason why Koll\'ar's torsion-free and 
vanishing theorems appear 
in the study of log canonical 
pairs. 
The actual proof of Theorem \ref{moti1} 
depends on much more sophisticated 
arguments on the theory of mixed Hodge structures. 

\begin{ex}
Let $S$ be a normal projective surface which 
has only one simple elliptic Gorenstein 
singularity $Q\in S$. 
We put $X=S\times \mathbb P^1$ and $B=S\times \{0\}$. Then 
the pair $(X, B)$ is log canonical. 
It is easy to see that 
$P=(Q, 0)\in X$ is an lc 
center of $(X, B)$. 
Let $L$ be a Cartier divisor on $X$ such that 
$L-(K_X+B)$ 
is ample. 
We have 
$$
H^i(X, \mathcal I_P\otimes \mathcal O_X(L))=0 
$$ 
for any $i>0$, 
where $\mathcal I_P$ is the defining ideal 
sheaf of $P$ on $X$. We note that $X$ is not Kawamata log terminal and 
that $P$ is not an isolated lc center of 
$(X, B)$. 
\end{ex}
\begin{proof}
Let $\varphi: T\to S$ be the minimal 
resolution. 
Then we can write $K_T+C=\varphi^*K_S$, where 
$C$ is the 
$\varphi$-exceptional 
elliptic curve on $T$. We put $Y=T\times \mathbb P^1$ and 
$f=\varphi \times \id _{\mathbb P^1}: 
Y\to X$, where $\id _{\mathbb P^1}: \mathbb P^1\to \mathbb P^1$ 
is the identity. 
Then $f$ is a resolution of $X$ and we can write 
$$K_Y+B_Y+E=f^*(K_X+B),$$ where 
$B_Y$ is the strict transform 
of $B$ on $Y$ and $E\simeq C\times \mathbb P^1$ is the 
exceptional divisor of $f$. 
Let $g:Z\to Y$ be the blow-up along $E\cap B_Y$. 
Then we can write 
$$K_Z+B_Z+E_Z+F=g^*(K_Y+B_Y+E)=h^*(K_X+B),$$ 
where $h=f\circ g$, $B_Z$ (resp.~$E_Z$) is the strict 
transform of $B_Y$ (resp.~$E$) on $Z$, 
and $F$ is the $g$-exceptional 
divisor. 
We note that $$\mathcal I_P\simeq h_*\mathcal O_Z(-F)\subset 
h_*\mathcal O_Z\simeq \mathcal O_X.$$ Since 
$-F=K_Z+B_Z+E_Z-h^*(K_X+B)$, 
we have 
$$
\mathcal I_P\otimes \mathcal O_X(L)\simeq 
h_*\mathcal O_Z(K_Z+B_Z+E_Z)\otimes 
\mathcal O_X(L-(K_X+B)). 
$$ 
So, it is sufficient to prove that 
$$
H^i(X, h_*\mathcal O_Z(K_Z+B_Z+E_Z)\otimes \mathcal L)=0 
$$ 
for any $i>0$ and any ample line bundle 
$\mathcal L$ on $X$. 
We consider the short exact sequence 
$$
0\to \mathcal O_Z(K_Z)\to \mathcal O_Z(K_Z+E_Z)\to \mathcal 
O_{E_Z}(K_{E_Z})\to 0. 
$$ 
We can easily check that 
$$
0\to h_*\mathcal O_Z(K_Z)\to h_*\mathcal O_Z(K_Z+E_Z)\to h_*\mathcal 
O_{E_Z}(K_{E_Z})\to 0 
$$ 
is exact and 
$$
R^ih_*\mathcal O_Z(K_Z+E_Z)\simeq R^ih_*\mathcal O_{E_Z}(K_{E_Z}) 
$$ 
for any $i>0$ by the Grauert--Riemenschneider 
vanishing theorem. 
We can directly check that 
$$
R^1h_*\mathcal O_{E_Z}(K_{E_Z})\simeq R^1f_*\mathcal O_E(K_E)
\simeq \mathcal O_D(K_D), 
$$ 
where $D=Q\times \mathbb P^1\subset X$. 
Therefore, $R^1h_*\mathcal O_Z(K_Z+E_Z)\simeq 
\mathcal O_D(K_D)$ is a torsion 
sheaf on $X$. 
However, it is torsion-free as a sheaf on $D$. It is a 
generalization of Koll\'ar's torsion-free theorem. 
We consider 
$$
0\to \mathcal O_Z(K_Z+E_Z)\to 
\mathcal O_Z(K_Z+B_Z+E_Z)\to \mathcal 
O_{B_Z}(K_{B_Z})\to 0.
$$ 
We note that $B_Z\cap E_Z=\emptyset$. Thus, we have 
\begin{align*}
0&\to h_*\mathcal O_Z(K_Z+E_Z)\to 
h_*\mathcal O_Z(K_Z+B_Z+E_Z)
\to h_*\mathcal 
O_{B_Z}(K_{B_Z})\\ 
&\overset{\delta}\to R^1h_*\mathcal O_Z(K_Z+E_Z)\to \cdots.
\end{align*}
Since $\Supp h_*\mathcal O_{B_Z}(K_{B_Z})=B$, 
$\delta$ is a zero map by 
$R^1h_*\mathcal O_Z(K_Z+B_Z)\simeq 
\mathcal O_D(K_D)$. Therefore, we know that 
the following sequence 
$$
0\to h_*\mathcal O_Z(K_Z+E_Z)\to 
h_*\mathcal O_Z(K_Z+B_Z+E_Z)
\to h_*\mathcal 
O_{B_Z}(K_{B_Z})\to 0 
$$ 
is exact. 
By Koll\'ar's vanishing theorem on $B_Z$, it is sufficient 
to prove that 
$H^i(X, h_*\mathcal O_Z(K_Z+E_Z)\otimes \mathcal L)=0$ for 
any $i>0$ and any ample line bundle $\mathcal L$. 
We have 
$$H^i(X, h_*\mathcal O_Z(K_Z)\otimes 
\mathcal L)=H^i(X, h_*\mathcal O_{E_Z}(K_{E_Z})\otimes 
\mathcal L)=0$$ for any $i>0$ by Koll\'ar's 
vanishing theorem. 
By the following exact sequence 
\begin{align*}
\cdots&\to H^i(X, h_*\mathcal O_Z(K_Z)\otimes 
\mathcal L)\to H^i(X, h_*\mathcal O_Z(K_Z+E_Z))
\\ &\to 
H^i(X, h_*\mathcal O_{E_Z}(K_{E_Z}))\to \cdots, 
\end{align*}
we obtain the desired vanishing theorem. 
Anyway, we have 
$$
H^i(X, \mathcal I_P\otimes \mathcal O_X(L))=0 
$$ 
for any $i>0$. 
\end{proof}

\section{Overview} 

We summarize the contents of this book. 

In the rest of Chapter \ref{chap1}, 
we collect some preliminary results and 
notations. 
Moreover, we quickly review the classical 
log minimal model 
program. 

In Chapter \ref{chap2}, we discuss Ambro's generalizations of 
Koll\'ar's injectivity, vanishing, and torsion-free theorems for 
embedded normal crossing pairs. 
These results are indispensable for the theory of 
quasi-log varieties. To prove them, we recall 
some results on the mixed Hodge structures. 
For the details of Chapter \ref{chap2}, 
see Section \ref{21-sec}, which is 
the introduction of Chapter \ref{chap2}. 

In Chapter \ref{chap3}, we treat the log minimal model program for 
log canonical pairs. 
In Section \ref{31-sec}, we explicitly state the 
cone and contraction theorems for log canonical 
pairs and prove the log flip conjecture I for 
log canonical pairs in dimension four. 
We also discuss the length of extremal rays for log canonical 
pairs with the aid of the recent result by \cite{bchm}. 
Subsection \ref{313ss} contains 
Koll\'ar's various examples. 
We prove that a log canonical flop does not always 
exist.  
In Section \ref{qlog-sec}, we introduce the notion of 
quasi-log varieties and prove basic results, for example, 
adjunction and the vanishing theorem, for quasi-log varieties. 
Section \ref{33-sec} is devoted to the proofs of the 
fundamental theorems for quasi-log varieties. 
First, we prove the base point free theorem for quasi-log 
varieties. Then, we prove the rationality theorem and 
the cone theorem for quasi-log varieties. 
Once we understand the notion of quasi-log varieties and 
how to use adjunction and the vanishing theorem, 
there are no difficulties to prove the above fundamental theorems. 

In Chapter \ref{chap4}, 
we discuss some supplementary results. 
Section \ref{34-sec} is devoted to the 
proof of the base point free theorem 
of Reid--Fukuda type for quasi-log varieties with only qlc 
singularities. In Section \ref{b-dlt-sec}, 
we prove that the non-klt locus of a dlt pair is Cohen--Macaulay as 
an application of the vanishing theorem in Chapter \ref{chap2}. 
Section \ref{sec-alex} is a detailed description of 
Alexeev's criterion for Serre's $S_3$ condition. 
It is an application of the generalized torsion-free theorem. 
In Section \ref{to-sec}, we recall the notion of toric polyhedra. 
We can easily check that a toric polyhedron has a natural 
quasi-log structure. 
Section \ref{43-sec} is 
a short survey of the 
theory of non-lc ideal sheaves. 
In the finial section, we mention effective 
base point free theorems for log canonical pairs.  

In the final chapter:~Chapter \ref{chap5}, 
we collect various examples of toric flips. 

\section{How to read this book ?}

We assume that the reader is familiar with 
the classical log minimal model program, at 
the level of Chapters 2 and 3 in \cite{km}. 
It is not a good idea to read this book 
without studying the classical results 
discussed in \cite{km}, \cite{kmm}, or 
\cite{ma-bon}. 
We will quickly review the classical 
log minimal model program 
in Section \ref{se-quiqui} for the 
reader's convenience. If the reader understands 
\cite[Chapters 2 and 3]{km}, then it is 
not difficult to read \cite{fuj-lec}, which 
is a gentle introduction 
to the log minimal model program for lc pairs and 
written in the same style as \cite{km}. 
After these preparations, the reader can read Chapter \ref{chap3} in this book without any difficulties. 
We note that Chapter \ref{chap3} can be read 
before Chapter \ref{chap2}. 
The hardest part of this book is Chapter \ref{chap2}. 
It is very technical. So, the reader 
should have strong motivations before 
attacking Chapter \ref{chap2}. 

\section{Notation and Preliminaries} 

We will work over the complex number field $\mathbb C$ throughout 
this book. But we note that by using Lefschetz principle, we can 
extend almost everything to the case where the base 
field is an algebraically closed field of characteristic 
zero. Note that 
every scheme in this book 
is assumed to be separated. 
We deal not only with the usual divisors but 
also with the divisors with rational and real coefficients, which 
turn out to 
be fruitful and natural. 

\begin{say}[Divisors, $\mathbb Q$-divisors, and $\mathbb R$-divisors]
For an $\mathbb R$-Weil divisor 
$D=\sum _{j=1}^r d_j D_j$ such that 
$D_i\ne D_j$ for $i\ne j$, we define 
the {\em{round-up}}\index{round-up} 
$\ulcorner D\urcorner =\sum _{j=1}^{r} 
\ulcorner d_j\urcorner D_j$ 
(resp.~the {\em{round-down}}\index{round-down} 
$\llcorner D\lrcorner 
=\sum _{j=1}^{r} \llcorner d_j \lrcorner D_j$), 
where for any real number $x$, 
$\ulcorner x\urcorner$ (resp.~$\llcorner x\lrcorner$) is 
the integer defined by $x\leq \ulcorner x\urcorner <x+1$ 
(resp.~$x-1<\llcorner x\lrcorner \leq x$).\index{$\llcorner 
D\lrcorner$, the round-down of $D$}
\index{$\ulcorner D\urcorner$, the round-up of $D$}
\index{$\{D\}$, the fractional part of $D$} 
The {\em{fractional part}}\index{fractional part} $\{D\}$ 
of $D$ denotes $D-\llcorner D\lrcorner$. 
We define\index{$D^{=1}$}\index{$D^{\leq 1}$}
\index{$D^{<1}$}\index{$D^{>1}$} 
\begin{align*}&
D^{=1}=\sum _{d_j=1}D_j, \ \ D^{\leq 1}=\sum_{d_j\leq 1}d_j D_j, \\ &
D^{<1}=\sum_{d_j< 1}d_j D_j, \ \ \text{and}\ \ \ 
D^{>1}=\sum_{d_j>1}d_j D_j. 
\end{align*}
The {\em{support}}\index{support}
\index{$\Supp D$, the support of 
$D$} of $D=\sum _{j=1}^{r}d_jD_j$, 
denoted by $\Supp D$, is 
the subscheme $\bigcup_{d_j\ne 0}D_j$. 
We call $D$ a {\em{boundary}} (resp.~{\em{subboundary}}) 
$\mathbb R$-divisor\index{boundary 
$\mathbb R$-divisor}\index{subboundary 
$\mathbb R$-divisor} if 
$0\leq d_j\leq 1$ (resp.~$d_j\leq 1$) for any $j$. 
{\em{$\mathbb Q$-linear equivalence}} (resp.~{\em{$\mathbb R$-linear 
equivalence}})\index{$\sim_{\mathbb Q}$, 
$\mathbb Q$-linear equivalence}\index{$\sim_{\mathbb R}$, 
$\mathbb R$-linear equivalence} 
of two $\mathbb Q$-divisors (resp.~{\em{$\mathbb R$-divisors}}) 
$B_1$ and $B_2$ is denoted by $B_1\sim _{\mathbb Q}B_2$ (resp.~$B_1
\sim_{\mathbb R}B_2$). 
Let $f:X\to Y$ be a morphism and $B_1$ and $B_2$ two 
$\mathbb R$-divisors on $X$. 
We say that they are {\em{linearly $f$-equivalent}} (denoted 
by $B_1\sim _f B_2$)\index{$\sim_f$, linear 
$f$-equivalence} if and 
only if there is a Cartier divisor $B$ on $Y$ such that 
$B_1\sim B_2+f^*B$. 
We can define {\em{$\mathbb Q$-linear}} 
(resp.~{\em{$\mathbb R$-linear}}) {\em{$f$-equivalence}} 
(denoted by $B_1\sim _{\mathbb Q, f}B_2$ (resp.~$B_1\sim 
_{\mathbb R, f}B_2$))\index{$\sim_{\mathbb Q, f}$, 
$\mathbb Q$-linear 
$f$-equivalence}\index{$\sim_{\mathbb R, f}$, $\mathbb R$-linear 
$f$-equivalence} similarly. 

Let $X$ be a normal variety. 
Then $X$ is called {\em{$\mathbb Q$-factorial}} 
if every $\mathbb Q$-divisor is $\mathbb Q$-Cartier. \index{$\mathbb Q$-factorial} 
\end{say}

We quickly review the notion of singularities of pairs. 
For the details, see \cite[\S 2.3]{km}, \cite{ko-sing}, 
and \cite{fujino0}. 
See also the subsection \ref{161-ss}. 
 
\begin{say}[Singularities of pairs]
For a proper birational morphism $f:X\to Y$, 
the {\em{exceptional locus}} $\Exc (f)\subset X$ is the locus where 
$f$ is not an isomorphism.\index{exceptional locus}
\index{$\Exc(f)$, the exceptional locus of $f$} 
Let $X$ be a normal variety and let $B$ be an 
$\mathbb R$-divisor 
on $X$ such that $K_X+B$ is $\mathbb R$-Cartier. 
Let $f:Y\to X$ be a resolution such that 
$\Exc(f)\cup f^{-1}_*B$ has a simple normal crossing support, 
where $f^{-1}_*B$ is the strict transform of $B$ on $Y$. 
We write $K_Y=f^*(K_X+B)+\sum _i a_i E_i$ 
and $a(E_i, X, B)=a_i$. 
We say that $(X, B)$ is {\em{sub log canonical}} (resp.~{\em{sub Kawamata log terminal}}) 
({\em{sub lc}} (resp.~{\em{sub klt}}), for short) 
if and only if $a_i\geq -1$ (resp.~$a_i>-1$) for any $i$. 
If $(X, B)$ is sub lc (resp.~sub klt) and $B$ is 
effective, then $(X, B)$ is called 
{\em{log canonical}}\index{log canonical} 
(resp.~{\em{Kawamata log terminal}}\index{Kawamata log terminal}) 
(lc (resp.~klt), for short). 
Note that the {\em{discrepancy}}
\index{discrepancy} $a(E, X, B)\in \mathbb R$ can be defined 
for any prime divisor $E$ {\em{over}} $X$. 
Let $(X, B)$ be a sub lc pair. 
If $E$ is a prime divisor over $X$ such that $a(E, X, B)=-1$, then 
the center $c_X(E)$ is called an {\em{lc center}}\index{lc center} of 
$(X, B)$. 

\begin{defn}[Divisorial log terminal pairs]\index{divisorial 
log terminal}\label{dlt-def} 
Let $X$ be a normal variety and $B$ a boundary 
$\mathbb R$-divisor 
such that $K_X+B$ is $\mathbb R$-Cartier. 
If there exists a resolution $f:Y\to X$ such that 
\begin{itemize}
\item[(i)] both $\Exc (f)$ and $\Exc (f)\cup 
\Supp (f^{-1}_*B)$ are simple normal crossing divisors on $Y$, and 
\item[(ii)] $a(E, X, B)>-1$ for every exceptional divisor 
$E\subset Y$, 
\end{itemize}
then $(X, B)$ is called {\em{divisorial 
log terminal}} ({\em{dlt}}, for short). 
\end{defn}
For the details of dlt pairs, see Section \ref{b-dlt-sec}. 
The assumption that 
$\Exc(f)$ is a divisor in Definition \ref{dlt-def} (i) 
is very important. 
See Example \ref{416ex} below. 
\end{say}

We often use resolution of singularities. 
We need the following strong statement. 
We sometimes call it Szab\'o's resolution lemma (see \cite{szabo} 
and \cite{fujino0}).  

\begin{say}[Resolution lemma]\index{resolution lemma}\label{15-resol}
Let $X$ be a smooth variety and $D$ a reduced divisor on $X$. 
Then there exists a proper birational morphism 
$f:Y\to X$ with the following properties: 
\begin{itemize}
\item[(1)] $f$ is a composition of blow-ups of smooth 
subvarieties, 
\item[(2)] $Y$ is smooth, 
\item[(3)] $f^{-1}_*D\cup \Exc (f)$ is a simple normal crossing 
divisor, where $f^{-1}_*D$ is the strict transform of $D$ on $Y$, and 
\item[(4)] $f$ is an isomorphism over $U$, where $U$ is the largest 
open set of $X$ such that the restriction $D|_{U}$ is a simple normal 
crossing divisor on $U$. 
\end{itemize} 
Note that $f$ is projective and the exceptional locus $\Exc (f)$ is of 
pure codimension one in $Y$ since $f$ is a composition 
of blowing-ups. 
\end{say}

The Kleiman--Mori cone is the basic object to study in 
the log minimal model program. 

\begin{say}[Kleiman--Mori cone]\index{Kleiman--Mori cone}
Let $X$ be an algebraic scheme over $\mathbb C$ and let 
$\pi:X\to S$ be a proper morphism 
to an algebraic scheme $S$. 
Let $\Pic (X)$ be the group of line bundles on $X$. 
Take a complete curve on $X$ which 
is mapped to a point by $\pi$. For 
$\mathcal L\in \Pic (X)$, we 
define the intersection number 
$\mathcal L\cdot C=\deg _{\overline C}f^*\mathcal L$, 
where $f:\overline C\to C$ is the normalization of $C$. 
Via this intersection pairing, we introduce 
a bilinear form 
$$
\cdot: \Pic (X)\times Z_1(X/S)\to \mathbb Z, 
$$
where $Z_1(X/S)$ is the free abelian group 
generated by integral curves which are mapped to 
points on $S$ by $\pi$. 

Now we have the notion of numerical equivalence both 
in $Z_1(X/S)$ and in $\Pic(X)$, which 
is denoted by $\equiv$\index{$\equiv$, numerical equivalence}, 
and 
we obtain a perfect pairing 
$$
N^1(X/S)\times N_1(X/S)\to \mathbb R,  
$$
where 
$$N^1(X/S)=\{\Pic (X)/\equiv\}\otimes \mathbb R \ \ \  \text{and} 
\ \ \ N_1(X/S)=\{Z_1(X/S)/\equiv\}\otimes \mathbb R, 
$$ 
namely $N^1(X/S)$ and $N_1(X/S)$ are dual to 
each other through this intersection pairing. 
It is well known that $\dim _{\mathbb R}N^1(X/S)=
\dim _{\mathbb R}N_1(X/S)<\infty$. 
We write $\rho (X/S)=\dim _{\mathbb R}N^1(X/S)=
\dim _{\mathbb R}N_1(X/S)$.\index{$\rho(X/S)$} 
We define the Kleiman--Mori cone $\overline {NE}(X/S)$ 
as the closed convex cone 
in $N_1(X/S)$ generated by integral curves 
on $X$ which are mapped to points on $S$ by $\pi$. 
When $S=\Spec \mathbb C$, we drop $/\Spec \mathbb C$ from 
the notation, e.g., we simply write $N_1(X)$ in stead 
of $N_1(X/\Spec \mathbb C)$. 
\end{say}

\begin{defn}
An element $D\in N^1(X/S)$ is called {\em{$\pi$-nef}} 
(or {\em{relatively nef for $\pi$}}), if $D\geq 0$ 
on $\overline {NE}(X/S)$. When $S=\Spec \mathbb C$, 
we simply say that $D$ is {\em{nef}}.\index{nef}
\index{relatively nef} 
\end{defn}

\begin{thm}[Kleiman's criterion for ampleness]
\index{Kleiman's criterion}\label{klei-th}  
Let $\pi:X\to S$ be a {\em{projective}} morphism 
between algebraic schemes. 
Then $\mathcal L\in \Pic (X)$ is $\pi$-ample 
if and only if the numerical class of $\mathcal L$ in $N^1(X/S)$ gives a positive function on $\overline {NE}(X/S)\setminus 
\{0\}$. 
\end{thm}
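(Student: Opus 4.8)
The plan is to prove the two implications separately, exploiting the perfect duality between $N^1(X/S)$ and $N_1(X/S)$ recorded above. For the forward implication, suppose $\mathcal L$ is $\pi$-ample. After replacing $\mathcal L$ by a multiple I may assume it is relatively very ample, so $\mathcal L\cdot C>0$ for every integral curve $C$ contracted by $\pi$; since these curves generate $\overline{NE}(X/S)$, the class of $\mathcal L$ is nonnegative on the whole cone. To upgrade this to strict positivity on $\overline{NE}(X/S)\setminus\{0\}$ I would argue by duality rather than by compactness. Take any $\gamma\in\overline{NE}(X/S)$ with $\mathcal L\cdot\gamma=0$ and let $E$ be the class of an arbitrary Cartier divisor. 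By the relative form of Serre's theorem $m\mathcal L\pm E$ is $\pi$-ample for $m\gg 0$, hence nonnegative on $\overline{NE}(X/S)$; evaluating at $\gamma$ gives $\pm E\cdot\gamma\ge 0$, so $E\cdot\gamma=0$. As such classes $E$ span $N^1(X/S)$ and the pairing $N^1(X/S)\times N_1(X/S)\to\mathbb R$ is perfect, this forces $\gamma=0$. Hence $\mathcal L$ is strictly positive on $\overline{NE}(X/S)\setminus\{0\}$.

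For the converse, assume the class of $\mathcal L$ is positive on $\overline{NE}(X/S)\setminus\{0\}$. First I observe that this positivity makes the slice $\{\gamma\in\overline{NE}(X/S):H\cdot\gamma=1\}$ compact for any $\pi$-ample $H$: it is closed, and it is bounded because if $\gamma_n$ in it had $\|\gamma_n\|\to\infty$, then a limit of $\gamma_n/\|\gamma_n\|$ would be a nonzero element of the closed cone $\overline{NE}(X/S)$ on which $H$ vanishes, contradicting the forward implication applied to $H$. On this compact slice the continuous function $\mathcal L\cdot(-)$ attains a positive minimum $c$, so for $0<t<c$ the class $\mathcal L-tH$ is still positive on $\overline{NE}(X/S)\setminus\{0\}$, and in particular $\pi$-nef. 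Writing $\mathcal L=(\mathcal L-tH)+tH$ exhibits the class of $\mathcal L$ as the sum of a $\pi$-nef class and the $\pi$-ample class $tH$, so it remains to show that such a sum is $\pi$-ample.

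This last step is the heart of the matter and the place where I expect the genuine difficulty. Because $\pi$ is projective and, for a proper morphism, ampleness may be tested fibrewise, I would restrict to a fibre $X_s$, which is a projective but possibly reducible and non-reduced scheme on which $D:=\mathcal L-tH$ is nef and $H$ is ample. There the assertion is the absolute statement that nef $+$ ample is ample, which I would deduce from the Nakai--Moishezon criterion for projective schemes together with Kleiman's positivity inequality. Concretely, for a nef class $D$ and an irreducible subvariety $W\subseteq X_s$ of dimension $d$ one has $(D^{d}\cdot W)\ge 0$, and expanding $((D+tH)^{d}\cdot W)=\sum_{i}\binom{d}{i}\,t^{\,d-i}(D^{i}\cdot H^{\,d-i}\cdot W)$ every mixed term is nonnegative, by cutting with general members of $|mH|$ and inducting on $\dim W$, while the term $t^{d}(H^{d}\cdot W)$ is strictly positive. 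Thus each Nakai--Moishezon intersection number is positive, so $\mathcal L|_{X_s}$ is ample.

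Finally, fibrewise ampleness for the proper morphism $\pi$ yields the $\pi$-ampleness of $\mathcal L$ itself, completing the converse. The only nonformal inputs in this scheme are Kleiman's inequality $(D^{d}\cdot W)\ge 0$ for nef $D$ and the relative/fibral form of the Nakai--Moishezon criterion over the arbitrary base scheme $S$; once these are granted, the duality argument of the forward direction and the compactness argument of the converse are routine. The main obstacle is therefore the proof of Kleiman's positivity inequality, since the passage to fibres, the reduction to Nakai--Moishezon, and the handling of non-reduced or reducible fibres are standard.
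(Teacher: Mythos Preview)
The paper does not supply a proof of this theorem; it is stated as a classical result (with the original reference being Kleiman's paper \cite{kleiman}), and the surrounding discussion only explains via examples why the projectivity hypothesis cannot be dropped. So there is no ``paper's own proof'' to compare against.

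Your outline is the standard argument and is correct in substance. A couple of small remarks. In the converse, after establishing compactness of the slice you may simply choose $t$ rational (even of the form $1/m$), so that $m\mathcal L - H$ is an honest $\pi$-nef Cartier divisor and $m\mathcal L = (m\mathcal L - H) + H$; this avoids any worry about real coefficients when invoking Nakai--Moishezon. Also, the step ``ampleness may be tested fibrewise'' is indeed a theorem (openness of ampleness, EGA~III~4.7.1, or \cite[Theorem~1.2.17]{kleiman} in spirit): for a proper morphism over a Noetherian base, if $\mathcal L|_{X_s}$ is ample for every $s$ then $\mathcal L$ is relatively ample. With these points granted, your reduction to Nakai--Moishezon on fibres together with the nonnegativity $(D^i\cdot H^{d-i}\cdot W)\ge 0$ for nef $D$ and ample $H$ (proved, as you say, by cutting with general members of $|mH|$) completes the argument. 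The forward implication via the perfect pairing is clean and correct as written.
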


In Theorem \ref{klei-th}, we have to assume that 
$\pi:X\to S$ is {\em{projective}} 
since there are complete non-projective 
algebraic 
varieties for which Kleiman's criterion does not hold. 
We recall the explicit example given in \cite{fujino-km} for 
the reader's convenience. 
For the details of this example, see 
\cite[Section 3]{fujino-km}. 

\begin{ex}[{cf.~\cite[Section 3]{fujino-km}}] 
We fix a lattice $N= \mathbb Z^3$. 
We take lattice points  
\begin{align*}
v_1  &= (1,0,1), & v_2&=(0,1,1), & v_3&=(-1,-1,1),\\
v_4  &= (1,0,-1), & v_5&=(0,1,-1), & v_6&=(-1,-1,-1).
\end{align*} 
We consider the following fan 
$$
\Delta=
\left \{
\begin{array}{ccc}
\langle v_1, v_2, v_4\rangle, &
\langle v_2, v_4, v_5\rangle, &
\langle v_2, v_3, v_5, v_6\rangle, \\
\langle v_1, v_3, v_4, v_6\rangle, &
\langle v_1, v_2, v_3\rangle, &
\langle v_4, v_5, v_6\rangle, \\ 
\text{and their faces}& &
\end{array}
\right \}. $$ 
Then the toric variety $X=X(\Delta)$ has the 
following properties. 
\begin{itemize}
\item[(i)] $X$ is a non-projective 
complete toric variety with $\rho(X)=1$.  
\item[(ii)] There exists a Cartier divisor $D$ on $X$ such 
that $D$ is positive on $\overline {NE}(X)\setminus \{0\}$. 
In particular, $\overline{NE}(X)$ is a half line. 
\end{itemize}
Therefore, Kleiman's criterion for ampleness 
does not hold for this $X$. 
We note that $X$ is not $\mathbb Q$-factorial 
and that there is a torus invariant curve 
$C\simeq \mathbb P^1$ on $X$ such 
that $C$ is numerically equivalent to zero. 
\end{ex}

If $X$ has only mild singularities, 
for example, $X$ is $\mathbb Q$-factorial, 
then it is known that Theorem \ref{klei-th} holds even when 
$\pi:X\to S$ is {\em{proper}}. 
However, the Kleiman--Mori cone may not have 
enough informations when $\pi$ is only proper. 

\begin{ex}[{cf.~\cite{fuji-pa}}] 
There exists a smooth complete 
toric threefold $X$ such that 
$\overline {NE}(X)=N_1(X)$. 
\end{ex}

The description below helps the reader understand 
examples in \cite{fuji-pa}. 

\begin{ex}\label{fp-example}
Let $\Delta$ be the fan in $\mathbb R^3$ whose 
rays are generated by $v_1=(1,0,0), v_2=(0,1,0), 
v_3=(0,0,1), v_5=(-1,0,-1), 
v_6=(-2,-1,0)$ 
and whose maximal cones are 
$$\langle v_1, v_2, v_3\rangle, 
\langle v_1, v_3, v_6\rangle, 
\langle v_1, v_2, v_5\rangle, 
\langle v_1, v_5,v_6\rangle, 
\langle v_2, v_3, v_5\rangle, 
\langle v_3, v_5, v_6\rangle. 
$$ 
Then the associated toric variety 
$X_1=X(\Delta)$ is $\mathbb P_{\mathbb P^1}(\mathcal 
O_{\mathbb P^1}\oplus \mathcal O_{\mathbb P^1}(2)
\oplus \mathcal O_{\mathbb P^1}(2))$. 
We take a sequence of 
blow-ups 
$$
Y\overset{f_3}\longrightarrow X_3\overset{f_2}
\longrightarrow X_2
\overset{f_1}\longrightarrow X_1, 
$$
where $f_1$ is the blow-up along the ray 
$v_4=(0, -1, -1)=3v_1+v_5+v_6$, $f_2$ is along 
$$v_7=(-1, -1, -1)=\frac{1}{3}(2v_4+v_5+v_6),$$ and 
the final blow-up $f_3$ is along the ray 
$$v_8=(-2, -1, -1)=\frac{1}{2}(v_5+v_6+v_7).$$ 
Then we can directly check that 
$Y$ is a smooth 
projective toric variety 
with $\rho (Y)=5$. 

Finally, we remove the wall $\langle v_1, v_5\rangle$ and 
add the new wall $\langle v_2, v_4\rangle$. Then 
we obtain a flop 
$\phi:Y\dashrightarrow 
X$. We note that 
$v_2+v_4-v_1-v_5=0$. The 
toric variety $X$ is nothing but $X(\Sigma)$ given 
in \cite[Example 1]{fuji-pa}. Thus, 
$X$ is a smooth 
complete toric variety with 
$\rho (X)=5$ and $\overline {NE}(X)=N_1(X)$. Therefore, 
a simple flop $\phi:Y\dashrightarrow X$ completely 
destroys the projectivity of $Y$. 
\end{ex}

We use the following convention throughout this book. 

\begin{say}
$\mathbb R_{>0}$ (resp.~$\mathbb R_{\geq 0}$) 
denotes the set of 
positive (resp.~non-negative) real numbers. 
$\mathbb Z_{>0}$ denotes the set of 
positive integers. 
\end{say} 

\section{Quick review of the classical MMP}\label{se-quiqui}

In this section, we quickly review the classical 
MMP, at the level of \cite[Chapters 2 and 3]{km}, 
for the reader's convenience. 
For the details, see \cite[Chapters 2 and 3]{km} 
or \cite{kmm}. 
Almost all the results explained here will be described 
in more general settings in subsequent chapters. 

\subsection{Singularities of pairs}\label{161-ss}

We quickly review singularities of pairs in the log minimal 
model program. 
Basically, we will only use the notion of 
log canonical pairs in this book. 
So, the reader does not have to 
worry about the various notions of {\em{log terminal}}. 

\begin{defn}[Discrepancy] 
Let $(X, \Delta)$ be a pair 
where $X$ is a normal variety and 
$\Delta$ an $\mathbb R$-divisor 
on $X$ such that 
$K_X+\Delta$ is $\mathbb R$-Cartier. 
Suppose $f:Y\to X$ is a resolution. 
Then, we can write 
$$
K_Y=f^*(K_X+\Delta)+\sum _i a(E_i, X, \Delta)E_i. 
$$ 
This formula means that 
$$f_*(\sum _i a(E_i, X, \Delta)E_i)=-\Delta$$ and that 
$\sum _i a(E_i, X, \Delta)E_i$ is 
numerically equivalent to $K_Y$ over $X$. 
The real number $a(E, X, \Delta)$ is called 
{\em{discrepancy}} of $E$ with 
respect to $(X, \Delta)$. 
The {\em{discrepancy}} of\index{discrepancy} 
$(X, \Delta)$ is given by 
$$
\discrep (X, \Delta)=\inf _E\{a(E, X, \Delta)\, |\, 
E \ {\text{is an exceptional divisor over}} \ X\}.  
$$
\end{defn}

We note that 
it is indispensable to understand how to 
calculate discrepancies for the study of the 
log minimal model program. 

\begin{defn}[Singularities of pairs] 
Let $(X, \Delta)$ be a pair 
where $X$ is a normal variety and 
$\Delta$ an effective $\mathbb R$-divisor 
on $X$ such that 
$K_X+\Delta$ is $\mathbb R$-Cartier. 
We say that 
$(X, \Delta)$ is\index{terminal}\index{canonical}\index{Kawamata log terminal}\index{log canonical} 
$$
\begin{cases}
\text{terminal}\\
\text{canonical}\\
\text{klt}\\
\text{plt}\\
\text{lc}\\
\end{cases}
\quad {\text{if}} \quad \discrep (X,\Delta) 
 \quad
\begin{cases}
> 0,\\
\geq 0,\\
> -1\quad {\text {and \quad $\llcorner \Delta\lrcorner =0$,}}\\
> -1,\\
\geq -1.\\
\end{cases}
$$
Here, plt\index{purely log terminal} is short for {\em{purely log terminal}}. 
\end{defn}
The basic references on this topic are \cite[2.3]{km}, 
\cite{ko-sing}, and \cite{fujino0}. 

\subsection{Basic results for klt pairs} 

In this subsection, we assume that 
$X$ is a projective variety and 
$\Delta$ is an effective $\mathbb Q$-divisor 
for simplicity. 
Let us recall the basic results for klt pairs. 
A starting point is the following 
vanishing theorem. 

\begin{thm}[Vanishing theorem]\label{z-vani}  
Let $X$ be a smooth projective 
variety, $D$ a $\mathbb Q$-divisor 
such that 
$\Supp\{D\}$ is a simple normal crossing 
divisor on $X$. 
Assume that $D$ is ample. Then 
$$
H^i(X, \mathcal O_X(K_X+\ulcorner 
D\urcorner))=0 
$$ 
for $i>0$. 
\end{thm}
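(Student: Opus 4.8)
The statement is the Kawamata--Viehweg vanishing theorem, and the plan is to reduce it to the Kodaira vanishing theorem by a cyclic covering construction. First I would pass to the fractional form. Writing $D=\sum_j d_j D_j$ and setting $F=\ulcorner D\urcorner -D$, one computes $F=\ulcorner \{D\}\urcorner -\{D\}=\sum_{\{d_j\}>0}(1-\{d_j\})D_j$, which is an effective $\mathbb R$-divisor (indeed $\mathbb Q$-divisor) with $\Supp F=\Supp\{D\}$ a simple normal crossing divisor and with all coefficients in $[0,1)$, and which satisfies $\ulcorner D\urcorner -F=D$ ample. Thus it suffices to prove the following reformulation: if $P$ is an integral Cartier divisor and $F$ is effective with simple normal crossing support and coefficients in $[0,1)$ such that $P-F$ is ample, then $H^i(X,\mathcal O_X(K_X+P))=0$ for $i>0$.

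Next I would introduce a finite covering to clear the denominators of $F$. Choose a positive integer $N$ with $NF$ integral. Using a cyclic cover of degree $N$ adapted to $\Supp F$, combined with Kawamata's covering lemma to guarantee that the total space can be taken smooth and projective and that the class of $NF$ is globally realized as an $N$-th power, I would produce a finite surjective morphism $\pi:Y\to X$ with $Y$ smooth projective, together with the ramification formula $K_Y=\pi^*K_X+R$ and the characteristic-zero splitting $\pi_*\mathcal O_Y=\bigoplus_{i=0}^{N-1}\mathcal L^{-i}$. The point of the construction is that $\mathcal O_X(K_X+\ulcorner D\urcorner)$ appears as a direct summand of $\pi_*\mathcal O_Y(K_Y+\pi^*A)$ for a suitable ample divisor $A$ on $X$ (essentially $A=ND$, with the fractional part of $F$ absorbed into the ramification $R$ and the rounding up recorded by the choice of summand).

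Finally I would conclude by Kodaira vanishing upstairs. Since $\pi$ is finite and surjective, $\pi^*A$ is ample on $Y$, so the classical Kodaira vanishing theorem gives $H^i(Y,\mathcal O_Y(K_Y+\pi^*A))=0$ for all $i>0$. Finiteness of $\pi$ yields $H^i(X,\pi_*\mathcal O_Y(K_Y+\pi^*A))\cong H^i(Y,\mathcal O_Y(K_Y+\pi^*A))=0$, and because $\mathcal O_X(K_X+\ulcorner D\urcorner)$ is a direct summand of the pushforward, its cohomology vanishes as well, giving $H^i(X,\mathcal O_X(K_X+\ulcorner D\urcorner))=0$ for $i>0$. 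The main obstacle is entirely contained in the middle step: arranging a \emph{smooth} cover even though $F$ is only simple normal crossing rather than smooth, globally trivializing the $N$-torsion obstruction to $\mathcal O_X(NF)$ being an $N$-th power, and then reading off $\mathcal O_X(K_X+\ulcorner D\urcorner)$ as a summand of $\pi_*\mathcal O_Y(K_Y+\pi^*A)$ via a careful bookkeeping of the ramification formula and the trace splitting. Once the Kodaira vanishing theorem and these covering facts are granted, the rest is formal.
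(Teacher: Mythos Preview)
Your approach is essentially the same as the paper's: the paper does not spell out a proof but simply states that the result follows from the Kodaira vanishing theorem via the covering trick, citing \cite[Theorem 2.64]{km}, and your outline is precisely a sketch of that covering-trick argument. Your identification of the technical crux (building a smooth cover over an SNC branch locus and then reading off $\mathcal O_X(K_X+\ulcorner D\urcorner)$ as a direct summand of the pushforward) is accurate, though note that the eigenspace decomposition $\pi_*\mathcal O_Y=\bigoplus_i\mathcal L^{-i}$ is specific to a single cyclic cover while Kawamata's covering lemma produces an iterated cover with a more complicated pushforward---in practice one uses one device or the other rather than both at once.
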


It is a special case of the Kawamata--Viehweg vanishing 
theorem. It easily follows from the Kodaira vanishing 
theorem by using the covering trick (see \cite[Theorem 2.64]{km}). In Chapter \ref{chap2}, we will 
prove more general vanishing theorems. 
See, for example, Theorem \ref{8}. 
The next theorem is Shokurov's non-vanishing theorem. 

\begin{thm}[Non-vanishig theorem]\label{z-non}\index{non-vanishing theorem}
Let $X$ be a projective variety, $D$ a nef Cartier 
divisor and $G$ a $\mathbb Q$-divisor. 
Suppose 
\begin{itemize}
\item[$(1)$] $aD+G-K_X$ is $\mathbb Q$-Cartier, ample 
for some $a>0$, and 
\item[$(2)$] $(X, -G)$ is sub klt. 
\end{itemize}
Then, for all $m\gg 0$, $H^0(X, \mathcal O_X(mD+\ulcorner 
G\urcorner))\ne 0$. 
\end{thm}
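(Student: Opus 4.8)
The plan is to prove the statement by induction on $n=\dim X$, the case $n=0$ being immediate: sub kltness of $(X,-G)$ forces the coefficients of $-G$ to be $<1$, so $\ulcorner G\urcorner\geq 0$ is effective and $\mathcal O_X$ already has a section. After passing to a log resolution of $(X,\Supp G)$ and observing that a nonzero section downstairs is produced from one upstairs, I would assume $X$ smooth and $\Supp(\ulcorner G\urcorner-G)$ a simple normal crossing divisor, so that the rounding operations and the vanishing theorem (Theorem~\ref{z-vani}, in its Kawamata--Viehweg form) apply directly. I write $A=aD+G-K_X$, which is ample, and record that $\ulcorner G\urcorner\geq 0$.

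The argument then splits according to whether $D$ is numerically trivial. If $D\equiv 0$, then $mD+\ulcorner G\urcorner-K_X\equiv A+(\ulcorner G\urcorner-G)$ is the sum of an ample and an effective divisor, hence big and nef; the vanishing theorem gives $H^i(X,\mathcal O_X(mD+\ulcorner G\urcorner))=0$ for $i>0$, so $h^0$ equals the Euler characteristic. Since the Euler characteristic is a numerical invariant and $mD\equiv 0$, it equals $\chi(\mathcal O_X(\ulcorner G\urcorner))=h^0(X,\mathcal O_X(\ulcorner G\urcorner))\geq 1$, and the conclusion follows at once.

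The main case is $D\not\equiv 0$. Here I would fix a general smooth point $x\in X$ and use that, because $D$ is nef and nonzero while $A$ is ample, the self-intersection $(mD+A)^n$ tends to infinity as $m\to\infty$ (some $D^i\cdot A^{n-i}$ with $i\geq 1$ is positive). A dimension count on high multiples of the ample $\mathbb Q$-divisor $mD+A$ then produces, for $m\gg 0$, an effective $M\sim_{\mathbb Q}mD+A$ with $\mult_x M$ as large as desired; taking the log canonical threshold $c$ of $M$ with respect to $(X,-G)$, one has $c<1$ and $(X,-G+cM)$ is log canonical but not klt, with $x$ in its non-klt locus. After the usual tie-breaking (perturbing $M$ and $c$) I would arrange a minimal non-klt center $W$ through $x$ with $\dim W<n$. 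The key computation is that
\[
(mD+\ulcorner G\urcorner)-(K_X+(-G+cM))\sim_{\mathbb Q}((1-c)m-a)D+(1-c)A+\ulcorner G\urcorner
\]
is big and nef once $c<1$ and $m\gg 0$, so Nadel-type vanishing (again a form of Theorem~\ref{z-vani}) kills the $H^1$ of the multiplier-ideal twist and yields the surjection $H^0(X,\mathcal O_X(mD+\ulcorner G\urcorner))\to H^0(W,(mD+\ulcorner G\urcorner)|_W)$. Adjunction on $W$ then equips it with data $(W,-G_W)$ sub klt, $D|_W$ nef Cartier, and $a(D|_W)+G_W-K_W$ ample (the ample part descending from $A|_W$), so the induction hypothesis supplies a nonzero section on $W$, which lifts.

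The hardest part is precisely this last case: manufacturing a genuine minimal non-klt center $W$ out of the positivity of $D\not\equiv 0$, and then descending all three hypotheses to $W$ so that the inductive non-vanishing both applies and produces a section matching $(mD+\ulcorner G\urcorner)|_W$ under restriction. Controlling the tie-breaking and the adjunction identity is where the real work lies; by contrast the vanishing steps are routine given Theorem~\ref{z-vani}.
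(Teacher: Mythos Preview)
The paper does not prove this theorem; it is stated in the quick review of classical results and then used as a black box in the X-method proof of base point freeness (Subsection~\ref{subsec-X}). So there is no paper proof to compare against directly.

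That said, your proposal has a genuine gap in the $D\not\equiv 0$ case. You propose to cut down to a minimal non-klt center $W\subset X$ and then invoke ``adjunction on $W$'' to obtain sub klt data $(W,-G_W)$ with $a(D|_W)+G_W-K_W$ ample, and induct. But when $\operatorname{codim}_X W\geq 2$, ordinary adjunction does not apply: producing a $\mathbb Q$-divisor $G_W$ on $W$ with $(K_X+(-G+cM))|_W\sim_{\mathbb Q}K_W-G_W$ and $(W,-G_W)$ sub klt is the content of Kawamata's subadjunction theorem, which is deep (it requires canonical bundle formulas and semipositivity of moduli parts) and is certainly not available at this point in the development. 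Nothing in your tie-breaking step forces $W$ to be a divisor.

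The classical Shokurov argument --- whose structure the paper's X-method in Subsection~\ref{subsec-X} mirrors step for step --- avoids this entirely by never descending to a center on $X$. One works on a log resolution $f:Y\to X$ on which the high-multiplicity divisor $M$ and $\Supp G$ pull back to a simple normal crossing configuration $\sum F_j$; the threshold $c$ is chosen (with a small wiggle of auxiliary coefficients for uniqueness, exactly as in Step~\ref{step-z4} of the paper) so that precisely one component $F=F_{j_0}$ acquires coefficient $-1$. One then restricts to $F$, a \emph{smooth divisor on $Y$}, where ordinary adjunction $(K_Y+F+\cdots)|_F=K_F+\cdots$ immediately yields the inductive setup. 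Your numerically trivial case and the Riemann--Roch step producing $M$ are fine, though note a minor slip: ``ample plus effective'' is big but not automatically nef; the correct formulation is $mD+\ulcorner G\urcorner-(K_X+\{-G\})=(m-a)D+A$, which is nef and big.
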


It plays important roles in the proof of the 
base point free and rationality theorems below. 
In the theory of quasi-log varieties described 
in Chapter \ref{chap3}, the non-vanishing 
theorem will be absorbed into the proof of 
the base point free theorem for 
quasi-log varieties. The following 
two fundamental theorems for klt pairs will be 
generalized for quasi-log varieties in Chapter \ref{chap3}. 
See Theorems \ref{bpf-th}, \ref{rat-th}, and 
\ref{bpf-rd-th} in Chapter \ref{chap4}. 

\begin{thm}[Base point free theorem]\label{z-bpf}\index{base 
point free theorem} 
Let $(X, \Delta)$ be a projective  
klt pair. Let $D$ be a nef Cartier 
divisor such that $aD-(K_X+\Delta)$ is ample 
for 
some $a>0$. Then 
$|bD|$ has no base points for all $b\gg 0$. 
\end{thm}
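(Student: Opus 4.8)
The plan is to prove the Base Point Free Theorem by the classical X-method (Kawamata's method), which combines the Non-vanishing Theorem (Theorem \ref{z-non}) with the Vanishing Theorem (Theorem \ref{z-vani}) and a Noetherian induction on the base locus. Let me sketch how I would organize the argument.

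\medskip

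\textbf{Step 1: Reduce to producing many sections.} First I would observe that it suffices to show that for some fixed $b_0>0$, the linear system $|b_0 D|$ is base point free, and then bootstrap: if $|p D|$ and $|q D|$ are both base point free for coprime $p,q$, then $|bD|$ is base point free for all $b\gg 0$ since every large integer is a non-negative combination of $p$ and $q$. So the heart of the matter is to prove that $\Bs|bD|=\emptyset$ for infinitely many $b$, which I would attack by bounding the base locus.

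\medskip

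\textbf{Step 2: The Non-vanishing input and the basic estimate.} Applying the Non-vanishing Theorem with $G=-\Delta$ (so that $(X,-G)=(X,\Delta)$ is sub klt, using $\llcorner\Delta\lrcorner=0$ for klt) and with the ampleness hypothesis $aD-(K_X+\Delta)$ ample playing the role of condition $(1)$, I get $H^0(X,\mathcal O_X(mD))\ne 0$ for all $m\gg 0$, so $|mD|$ is nonempty. The key quantitative step is then to show that the base locus actually \emph{shrinks}. Suppose $x\in\Bs|m_0 D|$ for some $m_0$. I would take a general member and, after passing to a log resolution $f\colon Y\to X$, arrange coefficients so that exactly one divisor $E$ with discrepancy $-1$ passes through the point one wants to free, while keeping the pair klt elsewhere. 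The Vanishing Theorem then forces the restriction map $H^0(X,\mathcal O_X(mD))\to H^0(E,\cdot)$ to be surjective, and a second application of Non-vanishing on $E$ produces a section not vanishing at the chosen point, showing $x\notin\Bs|m_1 D|$ for some larger $m_1$.

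\medskip

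\textbf{Step 3: Noetherian induction on the base locus.} The surjectivity in Step 2 removes points from the base locus one stratum at a time. Since $X$ is Noetherian, I would run a descending induction on $\dim\Bs|mD|$: the construction shows that any irreducible component of the base locus can be removed by passing to a suitable multiple, and combining finitely many such multiples (again via the coprimality/semigroup trick) yields a single multiple whose base locus is empty.

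\medskip

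\textbf{Main obstacle.} The delicate part is Step 2: choosing the rational coefficients $\varepsilon,\delta$ and the auxiliary divisor so that the perturbed pair is klt away from a single exceptional divisor $E$ of discrepancy exactly $-1$, while preserving the ampleness of the twist needed to invoke both the Vanishing and Non-vanishing theorems. This requires a careful interplay between the discrepancy computation on the resolution and the rounding operators $\ulcorner\cdot\urcorner$, $\llcorner\cdot\lrcorner$, and it is exactly the kind of coefficient-perturbation bookkeeping that the new quasi-log framework (via Theorem \ref{moti1}) is designed to eliminate. I expect the estimate controlling how fast the base locus drops dimension — equivalently, the bound on which multiple $bD$ becomes free — to be the technical crux.
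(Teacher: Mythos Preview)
Your proposal is correct and follows essentially the same X-method argument the paper gives in \S\ref{subsec-X}: non-vanishing produces sections, Noetherian induction on the stable base locus, and the coefficient-tuning (your $\varepsilon,\delta$; the paper's $c,p_j$) singles out one exceptional divisor $F$ on a log resolution where vanishing gives surjectivity and a second application of non-vanishing frees a base point. The only refinement worth noting is that the paper carries out the whole argument on a fixed log resolution $f\colon Y\to X$ (so the surjection is $H^0(Y,\mathcal O_Y(bf^*D+\ulcorner A\urcorner))\twoheadrightarrow H^0(F,\cdots)$ and one uses that $\ulcorner A\urcorner$ is $f$-exceptional to identify with sections on $X$), and the Noetherian descent is on the base locus as a closed set rather than on its dimension; these are bookkeeping points, not gaps.
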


\begin{thm}[Rationality theorem]\index{rationality theorem} 
Let $(X, \Delta)$ be a projective klt pair such that 
$K_X+\Delta$ is not nef. 
Let $a>0$ be an integer such that 
$a(K_X+\Delta)$ is Cartier. Let $H$ be an ample Cartier 
divisor, and define 
$$
r=\max \{\, t \in \mathbb R\, |\, H+t(K_X+\Delta) \text{\ is nef\ }\}. 
$$ 
Then $r$ is a rational number of the form 
$u/v$ $(u, v\in \mathbb Z)$ where 
$$
0<v\leq a(\dim X+1). 
$$
\end{thm}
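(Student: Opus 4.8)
The plan is to run the classical X-method: I build Cartier divisors of the form $pH+q\,a(K_X+\Delta)$ sitting just inside the nef cone, feed them to the non-vanishing and base point free theorems above, and then extract the rationality of $r$ together with the denominator bound from a Hilbert-polynomial degree count. Set $n=\dim X$. First I would check that $r$ is a well-defined positive real number. Since $H$ is ample it is nef, so $t=0$ lies in the set $\{t\ge 0 : H+t(K_X+\Delta)\ \text{is nef}\}$; this set is closed because the nef cone is closed (Kleiman's criterion, Theorem \ref{klei-th}), and it is bounded because otherwise $K_X+\Delta=\lim_{t\to\infty}\tfrac1t(H+t(K_X+\Delta))$ would be a limit of nef classes, hence nef, contrary to hypothesis. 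Writing $H+t(K_X+\Delta)=(1-\tfrac{t}{r})H+\tfrac{t}{r}\bigl(H+r(K_X+\Delta)\bigr)$ exhibits $H+t(K_X+\Delta)$ as ample for $0\le t<r$ and nef for $t=r$; in particular $r>0$.

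Suppose now, for contradiction, that $r$ is either irrational or rational with every denominator exceeding $a(n+1)$. For positive integers $p,q$ put $L(p,q)=pH+q\,a(K_X+\Delta)$, which is Cartier because $a(K_X+\Delta)$ is Cartier. By the computation above, $L(p,q)$ is nef exactly when $q/p\le r/a$ and ample exactly when $q/p<r/a$. The arithmetic input is that, because $r/a$ is irrational or has large denominator, I can find infinitely many pairs $(p,q)$ with $p$ arbitrarily large and with $q/p$ lying just below $r/a$, arranged so that the consecutive points $(p,q),(p,q+1),\dots,(p,q+n)$ straddle the threshold line $q/p=r/a$ in a prescribed way. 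For the points strictly below threshold the divisor $L(p,q)$ and the difference $L(p,q)-(K_X+\Delta)=pH+(qa-1)(K_X+\Delta)$ are both ample, so the hypotheses of the non-vanishing theorem (Theorem \ref{z-non}) are met and $H^0(X,\mathcal O_X(L(p,q)))\ne 0$, while the base point free theorem (Theorem \ref{z-bpf}) applies to the nef divisor sitting on the threshold.

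The decisive step is to package this into the function $P(p,q)=\dim_{\mathbb C}H^0(X,\mathcal O_X(L(p,q)))$ and to play two facts about it against each other. On one hand, Theorem \ref{z-vani} identifies $P$ with an Euler characteristic on the ample range, so along a fixed $p$ the slice $q\mapsto P(p,q)$ agrees with a polynomial of degree at most $n$. On the other hand, the maximality of $r$ together with the base point free theorem forces the sections to vanish on a block of $n+1$ consecutive integer values of $q$ whenever the threshold line is irrationally placed or its denominator is too large. A polynomial of degree $\le n$ vanishing at $n+1$ consecutive integers is identically zero, contradicting the non-vanishing produced in the previous paragraph. The hard part will be exactly this final bookkeeping: choosing the Diophantine approximations $(p,q)$ so that the straddling block has precisely the length $n+1$ that the degree bound needs, and verifying that base-point-freeness genuinely annihilates the sections across the whole block. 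It is here that the explicit bound $0<v\le a(\dim X+1)$ emerges, the factor $a$ entering through the passage from $K_X+\Delta$ to the Cartier divisor $a(K_X+\Delta)$.
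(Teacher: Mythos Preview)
Your opening paragraphs are fine, and the reduction to studying the Cartier divisors $L(p,q)=pH+qa(K_X+\Delta)$ is the right starting point. The gap is in the ``decisive step'': you assert that ``the maximality of $r$ together with the base point free theorem forces the sections to vanish on a block of $n+1$ consecutive integer values of $q$'', and you acknowledge this as the hard part. It is not bookkeeping; it is false as stated. For $aq>rp$ the divisor $L(p,q)$ is not nef, but that only tells you there is a curve $C$ with $L(p,q)\cdot C<0$, hence $C$ lies in the base locus of $|L(p,q)|$. It does \emph{not} force $h^0(X,\mathcal O_X(L(p,q)))=0$. Indeed, in the strip $0<aq-rp<1$ the divisor $L(p,q)-(K_X+\Delta)=pH+(qa-1)(K_X+\Delta)$ is still ample for large $(p,q)$, so Kawamata--Viehweg vanishing gives $h^0=\chi$ there too; the polynomial $\chi$ has no reason to vanish on those lattice points. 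The base point free theorem applied to the threshold divisor produces a contraction, not a vanishing of sections of nearby non-nef divisors. So the one-variable polynomial argument you propose cannot close.

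The proof the paper gives (for the more general quasi-log statement, Theorem~\ref{rat-th}; the klt theorem you are asked about is only stated as review and is a special case) runs along a genuinely different line. The correct arithmetic input is the \emph{two-variable} polynomial lemma, Lemma~\ref{pol1-le}: if a nontrivial polynomial of degree $\le n$ in $(p,q)$ vanishes at all sufficiently large lattice points with $0<aq-rp<\varepsilon$, then $r$ is rational with denominator $\le a(n+1)/\varepsilon$. One therefore assumes $r$ is irrational or has denominator exceeding $a(n+1)$, deduces that $\chi(\mathcal O_X(L(p,q)))$ is \emph{nonzero} at many lattice points in the strip, and then studies the stable base locus $L_0$ of $|L(p,q)|$ for those $(p,q)$. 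Since $L(p,q)$ is not nef, $L_0\ne\emptyset$; one cuts down with $n+1$ general members of $|L(p_0,q_0)|$ to create a new pair whose non-klt locus meets $L_0$, and applies the argument inductively (via Lemma~\ref{pol2-le} and vanishing) to a lower-dimensional centre to show $L_0$ was not the stable base locus after all. The contradiction is geometric (shrinking base loci), not numerical (sections vanishing). You should replace your one-variable degree count by this base-locus argument and invoke Lemma~\ref{pol1-le} rather than try to force $h^0=0$.
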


The final theorem is the cone theorem. 
It easily follows from 
the base point free and rationality theorems.

\begin{thm}[Cone theorem]\index{cone theorem} 
Let $(X, \Delta)$ be a projective 
klt pair. 
Then we have the following properties. 
\begin{itemize}
\item[$(1)$] There are $($countably many$)$ rational 
curves $C_j\subset X$ such that 
$$
\overline {NE}(X)=\overline {NE}(X)_{(K_X+\Delta)\geq 0} 
+\sum \mathbb R_{\geq 0}[C_j]. 
$$
\item[$(2)$] Let $R\subset \overline {NE}(X)$ be a 
$(K_X+\Delta)$-negative extremal ray. Then there 
is a unique morphism $\varphi_R:X\to Z$ to a projective 
variety such that $(\varphi_R)_*\mathcal O_X\simeq \mathcal O_Z$ and 
an irreducible curve $C\subset X$ is mapped to a point by $\varphi_R$ if and only if $[C]\in R$. 
\end{itemize}
\end{thm}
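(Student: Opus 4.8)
The plan is to extract everything from the base point free and rationality theorems, with the rationality theorem controlling the shape of the cone and the base point free theorem producing the morphisms. If $K_X+\Delta$ is nef there is nothing to prove, so assume it is not. The engine is the following. Fix an ample Cartier divisor $H$ and form the nef value $r=\max\{t\in\mathbb{R}:H+t(K_X+\Delta)\text{ is nef}\}$, which is finite and positive because $K_X+\Delta$ is not nef. The rationality theorem says $r=u/v$ with $0<v\le a(\dim X+1)$, so $H+r(K_X+\Delta)$ is a nef $\mathbb{Q}$-Cartier divisor on the boundary of the nef cone; a suitable positive multiple $L$ is a nef Cartier divisor that is not ample. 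By Kleiman's criterion its null face $F=\{z\in\overline{NE}(X):L\cdot z=0\}$ is nonzero, and on $F$ one has $H+r(K_X+\Delta)=0$, which forces $(K_X+\Delta)\cdot z<0$ there: $F$ is a $(K_X+\Delta)$-negative face. For a single extremal ray $R$ cut out in such a face, $aL-(K_X+\Delta)$ is ample for $a\gg0$ (it is positive on $R$ because $-(K_X+\Delta)>0$ there, and $aL$ dominates elsewhere on a slice of the cone), so the base point free theorem makes $|bL|$ free for all $b\gg0$. Taking $Z=\Proj\bigoplus_{m\ge0}H^0(X,\mathcal{O}_X(mbL))$ and $\varphi_R$ the induced morphism, a curve $C$ is contracted exactly when $L\cdot C=0$, i.e. $[C]\in R$; since $Z$ is built from the section ring, $(\varphi_R)_*\mathcal{O}_X\simeq\mathcal{O}_Z$ and $\varphi_R$ is unique. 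This gives (2), granting that a single ray can be realized as the null face of some such $L$.

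For (1), set $B=\overline{NE}(X)_{(K_X+\Delta)\ge0}+\sum_j\mathbb{R}_{\ge0}[C_j]$, summed over the $(K_X+\Delta)$-negative extremal rays; clearly $B\subseteq\overline{NE}(X)$. I would first prove local finiteness: the uniform denominator bound $0<v\le a(\dim X+1)$ shows that only finitely many negative extremal rays meet any region bounded away from the hyperplane $(K_X+\Delta)^{\perp}$, so $B$ is a closed convex cone and the isolation of a single ray used above is legitimate. For the reverse inclusion I argue by contradiction: if $B\subsetneq\overline{NE}(X)$, then by separation there is a class $M\in N^1(X)$ nonnegative on $B$ but negative somewhere on $\overline{NE}(X)$; perturbing $M$ by a small ample class and running it through the nef-value construction yields a rational nef supporting divisor whose null face is $(K_X+\Delta)$-negative yet meets $\overline{NE}(X)\setminus B$. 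That face then contains an extremal ray not among the $C_j$, contradicting the definition of $B$. Hence $\overline{NE}(X)=B$.

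The main obstacle is the word \emph{rational} in the statement: the machinery above only shows that every $(K_X+\Delta)$-negative extremal ray is a \emph{rational} ray, spanned by the class of some curve, and says nothing about the genus of that curve. Upgrading this to a genuine rational curve $C_j$ requires knowing that the fibers of the Fano-type contraction $\varphi_R$ are covered by rational curves. With only the vanishing-type tools assumed here this is not available; classically it is supplied by Mori's bend-and-break, or equivalently by the rational connectedness of fibers of $(K_X+\Delta)$-negative contractions, and it is exactly this geometric input, rather than any of the cone-theoretic bookkeeping, that lies outside the base point free and rationality theorems and must be imported separately.
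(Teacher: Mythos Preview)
Your approach is essentially the paper's own: the paper does not prove this review statement directly but proves a general cone theorem for quasi-log varieties (Theorem~\ref{cone-thm}) by the same separation-plus-rationality-plus-discreteness scheme you outline, and deduces the contraction from the base point free theorem just as you do.

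Two comments. First, you correctly flag a gap when you write ``granting that a single ray can be realized as the null face of some such $L$'': the null face $F=L^\perp\cap\overline{NE}(X)$ produced by the nef-value construction need not be one-dimensional. The paper's fix (Step~2 of the proof of Theorem~\ref{cone-thm}) is to contract $F$ via $\varphi_F:X\to W$, observe $\overline{NE}(X/W)=F$, and rerun the argument over $W$; induction on $\dim F$ reduces to rays. This is the missing step you should fill in rather than grant.

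Second, your diagnosis of the word \emph{rational} is exactly right, and the paper confirms it. In the general cone theorem (Theorem~\ref{cone-thm}) the argument only yields that each $R_j$ is spanned by an \emph{integral} curve, obtained from the fibers of $\varphi_{R_j}$. The upgrade to rational curves, together with the length bound $-(K_X+\Delta)\cdot C_j\le 2\dim X$, is treated separately in the subsection on lengths of extremal rays (Proposition~\ref{leng-prop}), and rests on Kawamata's theorem, which in turn uses Mori's bend-and-break. So the base point free and rationality theorems alone give the cone decomposition and the contractions, but not the rationality of the spanning curves; that input is indeed imported from elsewhere, exactly as you say.
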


We note that 
the cone theorem can be 
proved for dlt pairs in the relative setting. 
See, for example, \cite{kmm}. 
We omit it here. It is because 
we will give a complete generalization of 
the cone theorem for quasi-log varieties in 
Theorem \ref{cone-thm}.  

\subsection{X-method}\label{subsec-X}\index{X-method}

In this subsection, 
we give a proof to the base point free theorem 
(see Theorem \ref{z-bpf}) by 
assuming the non-vanishing theorem 
(see Theorem \ref{z-non}). 
The following proof is taken almost verbatim 
from \cite[3.2 Basepoint-free Theorem]{km}. 
This type of argument is sometimes called 
X-method. It has various applications in many different 
contexts. 
So, the reader should understand X-method. 

\begin{proof}[Proof of the base point free theorem]
We prove the base point free theorem:~Theorem \ref{z-bpf}. 
\begin{step}\label{step-z1}
In this step, we establish that 
$|mD|\ne \emptyset$ for every $m\gg 0$. 
We can construct a resolution $f:Y\to X$ such that 
\begin{itemize}
\item[(1)] 
$K_Y=f^*(K_X+\Delta)+\sum a_j F_j$ 
with all $a_j >-1$, 
\item[(2)] $f^*(aD-(K_X+\Delta))-\sum p_j F_j$ is ample 
for some $a>0$ and for suitable 
$0<p_j \ll 1$, and 
\item[(3)] $\sum F_j (\supset \Exc (f)\cup \Supp f^{-1}_*\Delta)$ is a simple normal crossing divisor on $Y$. 
\end{itemize}
We note that the $F_j$ is not necessarily 
$f$-exceptional. 
On $Y$, we write 
\begin{align*}
&f^*(aD-(K_X+\Delta))-\sum p_j F_j\\ 
&=af^*D+\sum (a_j -p_j)F_j -(f^*(K_X+\Delta)+\sum a_j F_j)
\\
&=af^*D+G-K_Y, 
\end{align*} 
where $G=\sum (a_j -p_j)F_j$. By the assumption, 
$\ulcorner G\urcorner$ is an effective 
$f$-exceptional divisor, 
$af^*D+G-K_Y$ is ample, and $$H^0(Y, \mathcal O_Y
(mf^*D+\ulcorner G\urcorner))\simeq 
H^0(X, \mathcal O_X(mD)).$$ We can now apply the non-vanishing 
theorem $($see Theorem \ref{z-non}$)$ to get that 
$H^0(X, \mathcal O_X(mD))\ne 0$ for all $m \gg 0$. 
\end{step}

\begin{step}\label{step-z2}
For a positive integer $s$, let 
$B(s)$ denote the reduced base locus of $|sD|$. Clearly, 
we have 
$B(s^u)\subset B(s^v)$ for any positive 
integers $u>v$. Noetherian induction implies that the 
sequence $B(s^u)$ stabilizes, and we call the limit 
$B_s$. So either $B_s$ is non-empty for some 
$s$ or $B_s$ and $B_{s'}$ are empty for two 
relatively prime integers $s$ and $s'$. In the latter 
case, take $u$ and $v$ such that 
$B(s^u)$ and $B({s'}^v)$ are empty, 
and use the fact that every sufficiently 
large integer is a linear combination 
of $s^u$ and ${s'}^v$ with non-negative 
coefficients to conclude that $|mD|$ is base point free for 
all $m\gg 0$. So, we must show that the 
assumption that some $B_s$ is non-empty leads 
to a contradiction. We let $m=s^u$ such 
that $B_s=B(m)$ and assume that this set is non-empty. 

Starting with the linear system obtained from 
Step \ref{step-z1}, we can blow up further 
to obtain a new $f:Y\to X$ for which 
the conditions of Step \ref{step-z1} hold, and, 
for some $m>0$, 
$$
f^*|mD|=|L|{\text{\ (moving part)}}+
\sum r_j F_j {\text{\ (fixed 
part)}} 
$$ 
such that $|L|$ is base point free. 
Therefore, $\bigcup \{f(F_j) | r_j >0\}$ is the 
base locus of $|mD|$. 
Note that $f^{-1}\Bs|mD|=\Bs |mf^*D|$. 
We obtain the desired contradiction by 
finding some $F_j$ with 
$r_j>0$ such that, for 
all $b\gg 0$, $F_j$ is not contained in the base 
locus of $|bf^*D|$. 
\end{step}

\begin{step}\label{step-z3} 
For an integer $b>0$ and a rational number $c>0$ such 
that $b\geq cm+a$, 
we define divisors: 
\begin{eqnarray*}
N(b, c)&=&bf^*D-K_Y+\sum 
(-cr_j +a_j-p_j)F_j \\ 
&=& (b-cm-a)f^*D {\text{\ \ \ \ \ (nef)}}\\ 
&&+c(mf^*D-\sum r_j F_j) {\text{\ \ \ \ \ (base point free)}}\\ 
&&+f^*(aD-(K_X+\Delta))-\sum p_j F_j {\text{\ \ \ \ \ (ample).}}
\end{eqnarray*}
Thus, $N(b, c)$ is ample for $b\geq cm+a$. 
If that is the case then, by Theorem \ref{z-vani}, $H^1(Y, \mathcal O_Y
(\ulcorner N(b, c)\urcorner+K_Y))=0$, and 
$$
\ulcorner N(b, c)\urcorner=bf^*D+\sum 
\ulcorner -cr_j+a_j-p_j\urcorner F_j-K_Y. 
$$
\end{step}

\begin{step}\label{step-z4} 
$c$ and $p_j$ can be chosen so that 
$$
\sum (-cr_j+a_j-p_j)F_j=A-F 
$$ 
for some $F=F_{j_0}$, where $\ulcorner A\urcorner$ is 
effective and $A$ does not have $F$ as a component. 
In fact, we choose $c>0$ so that 
$$
\min_j (-cr_j+a_j-p_j)=-1. 
$$ 
If this last condition does not 
single out a unique $j$, we wiggle the 
$p_j$ slightly to achieve the desired uniqueness. 
This $j$ satisfies $r_j>0$ and $\ulcorner 
N(b, c)\urcorner+K_Y=
bf^*D+\ulcorner A\urcorner -F$. Now Step \ref{step-z3} 
implies that 
$$
H^0(Y, \mathcal O_Y(bf^*D+\ulcorner A\urcorner))
\to H^0(F, \mathcal O_F(bf^*D+\ulcorner A\urcorner)) 
$$ is 
surjective for $b\geq cm+a$. 
If $F_j$ appears in $\ulcorner A\urcorner$, 
then $a_j>0$, 
so $F_j$ is $f$-exceptional. 
Thus, $\ulcorner A\urcorner$ is $f$-exceptional. 
\end{step}

\begin{step}\label{step-z5} 
Notice that 
\begin{align*}
N(b, c)|_F&=(bf^*D+A-F-K_Y)|_F=
(bf^*D+A)|_F-K_F. 
\end{align*}
So we can apply the non-vanishing theorem (see Theorem 
\ref{z-non}) on $F$ to 
get 
$$
H^0(F, \mathcal O_F(bf^*D+\ulcorner A\urcorner))\ne 0. 
$$ 
Thus, $H^0(Y, \mathcal O_Y(bf^*D+\ulcorner A\urcorner))$ has 
a section not vanishing on $F$. 
Since $\ulcorner A\urcorner$ is $f$-exceptional and effective, 
$$
H^0(Y, \mathcal O_Y(bf^*D+\ulcorner A\urcorner))\simeq 
H^0(X, \mathcal O_X(bD)). 
$$ 
Therefore, $f(F)$ is not contained in the base locus of 
$|bD|$ for all $b\gg 0$. 
\end{step}
This completes the proof of the base point free theorem. 
\end{proof}

In the subsection \ref{331-ssec}, 
we will prove the base point free theorem 
for quasi-log varieties. We 
recommend the reader to 
compare the proof of Theorem \ref{bpf-th} 
with the arguments explained here. 

\subsection{MMP for $\mathbb Q$-factorial dlt pairs}\label{sub153}

In this subsection, we explain the log minimal 
model program for $\mathbb Q$-factorial dlt pairs. 
First, let us recall the definition of 
the log minimal model. 

\begin{defn}[Log minimal model]\label{z-minimal}
Let $(X, \Delta)$ be a log canonical pair and $f:X\to S$ 
a proper morphism. 
A pair $(X', \Delta')$ sitting in a diagram 
$$
\begin{matrix}
X & \overset{\phi}{\dashrightarrow} & \ X'\\
{\ \ \ \ \ f\searrow} & \ &  {\swarrow}f'\ \ \ \ \\
 \ & S &  
\end{matrix}
$$
is called a {\em{log minimal model}} of\index{log 
minimal model} $(X, \Delta)$ 
over $S$ if 
\begin{itemize}
\item[(1)] $f'$ is proper, 
\item[(2)] $\phi^{-1}$ has no exceptional divisors, 
\item[(3)] $\Delta'=\phi_*\Delta$, 
\item[(4)] $K_{X'}+\Delta'$ is $f'$-nef, and 
\item[(5)] $a(E, X, \Delta)<a(E, X', \Delta')$ for every $\phi$-exceptional divisor $E\subset X$. 
\end{itemize} 
\end{defn}

Next, we recall the 
flip theorem for dlt pairs in \cite{bchm} and \cite{hm}. 
We need the notion of small morphisms to 
treat flips. 

\begin{defn}[Small morphism]
Let $f:X\to Y$ be a proper birational 
morphism between normal varieties. If $\Exc (f)$ 
has codimension $\geq 2$, then $f$ is 
called\index{small} {\em{small}}. 
\end{defn}

\begin{thm}[Log flip for dlt pairs]\label{z-flip}  
Let $\varphi:(X, \Delta)\to W$ be an extremal 
flipping contraction, that is, 
\begin{itemize}
\item[$(1)$] $(X, \Delta)$ is dlt, 
\item[$(2)$] $\varphi$ is small projective 
and $\varphi$ has only connected fibers, 
\item[$(3)$] $-(K_X+\Delta)$ is $\varphi$-ample, 
\item[$(4)$] $\rho (X/W)=1$, and 
\item[$(5)$] $X$ is $\mathbb Q$-factorial. 
\end{itemize}
Then we have the following diagram{\em{:}}
$$
\begin{matrix}
X & {\dashrightarrow} & \ X^+\\
{\ \ \ \ \ \searrow} & \ &  {\swarrow}\ \ \ \ \\
 \ & W &  
\end{matrix}
$$
\begin{itemize}
\item[{\em{(i)}}] $X^+$ is a normal variety, 
\item[{\em{(ii)}}] 
$\varphi^+:X^+\to W$ is small projective, and 
\item[{\em{(iii)}}] 
$K_{X^+}+\Delta^+$ is $\varphi^+$-ample, where 
$\Delta^+$ is the strict transform of $\Delta$. 
\end{itemize}
We call $\varphi^+:(X^+, \Delta^+)\to W$ a 
$(K_X+\Delta)$-flip of $\varphi$. 
\end{thm}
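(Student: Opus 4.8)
The plan is to realize the flip as a relative $\Proj$ and to collapse the whole statement to a single finiteness assertion. Concretely, once we have reduced to a $\mathbb Q$-boundary (see the next step), I would set
$$
R=\bigoplus_{m\geq 0}\varphi_*\mathcal O_X(\llcorner m(K_X+\Delta)\lrcorner),
$$
an $\mathcal O_W$-algebra, and define $X^+=\Proj_W R$ with its structure morphism $\varphi^+\colon X^+\to W$. If $R$ is a finitely generated $\mathcal O_W$-algebra, then $\varphi^+$ is projective and the tautological divisor $K_{X^+}+\Delta^+$ is $\varphi^+$-ample by construction; normality of $X^+$ I defer to the last step. Thus the entire theorem rests on the finite generation of $R$, and the first thing I would do is record this reduction precisely.

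Next I would reduce to the klt, $\mathbb Q$-coefficient case. Since $\rho(X/W)=1$, every divisor is, over $W$, numerically proportional to $-(K_X+\Delta)$, so the flipping ray is a single ray and the flip is determined by it alone. Perturbing the coefficients of $\Delta$ slightly — lowering the coefficient-one components so that no lc center survives while all discrepancies stay $>-1$ — I obtain a $\mathbb Q$-boundary $\Delta'$ with $(X,\Delta')$ klt and $-(K_X+\Delta')$ still $\varphi$-ample. This last point is exactly where $\rho(X/W)=1$ enters: writing $\llcorner\Delta\lrcorner\equiv_W\lambda\cdot(-(K_X+\Delta))$ for some $\lambda\in\mathbb R$, the perturbed class is $(1+\varepsilon\lambda)(-(K_X+\Delta))$, which is ample for small $\varepsilon>0$ regardless of the sign of $\lambda$. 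Both $(X,\Delta)$ and $(X,\Delta')$ produce the same small modification, namely the unique one over $W$ on which the strict transform of the boundary is $\varphi^+$-ample, so it suffices to prove finite generation for a $\mathbb Q$-factorial klt flipping contraction.

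The heart of the proof is this finite generation, and it is the part I expect to be hardest; it is precisely the content of \cite{bchm} and \cite{hm}. The strategy there is an induction on $\dim X$. One first reduces to pl-flips, where a component $S$ of $\llcorner\Delta\lrcorner$ is $\varphi$-negative; by negativity and $\rho(X/W)=1$ the problem localizes near $S$, and restricting to $S$ converts the finite generation of $R$ into finite generation of a restricted algebra on the lower-dimensional pair produced by adjunction. One controls the restricted algebra using the extension (lifting of sections) theorem built on Kawamata--Viehweg type vanishing, and then the minimal model program with scaling in dimension $\dim X-1$ closes the induction. I would present this only in outline and cite \cite{bchm} and \cite{hm} for the technical core, since reproducing the extension theorem and the special termination argument is far beyond a sketch.

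Finally I would read off the three listed properties from finite generation. Projectivity of $\varphi^+$ is immediate from the $\Proj$ description. For smallness, since $K_X+\Delta$ is $\varphi$-anti-ample and $\varphi$ is small with $\rho(X/W)=1$, the induced birational map $\phi\colon X\dashrightarrow X^+$ neither extracts nor contracts a divisor, i.e. it is an isomorphism in codimension one; hence $\Exc(\varphi^+)$ has codimension $\geq 2$ and $\varphi^+$ is small, and this identification also exhibits $\Delta^+$ as the strict transform of $\Delta$. Normality of $X^+$ then follows because $X^+$ is $S_2$ (its structure sheaf being a pushforward of sections) and $R_1$ (being isomorphic in codimension one to the normal variety $X$), and the $\varphi^+$-ampleness of $K_{X^+}+\Delta^+$ is the tautological positivity of the $\Proj$. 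The only genuinely deep ingredient remains the finite generation established in the previous step.
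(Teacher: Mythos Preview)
The paper does not prove this theorem; it is stated as a recalled result with the prefatory sentence ``Next, we recall the flip theorem for dlt pairs in \cite{bchm} and \cite{hm}.'' Your sketch is a correct outline of precisely that BCHM/HM argument (reduce dlt to klt by lowering the coefficient-one part using $\rho(X/W)=1$, invoke finite generation of the relative canonical algebra, take $\Proj$), so there is nothing substantive to compare.
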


Let us explain the relative 
log minimal model program for 
$\mathbb Q$-factorial dlt pairs. 

\begin{say}[MMP for $\mathbb Q$-factorial 
dlt pairs] 
We start with a pair $(X, \Delta)=(X_0, \Delta_0)$. 
Let $f_0: X_0\to S$ be a projective 
morphism. The aim is to set up a recursive 
procedure which creates intermediate pairs 
$(X_i, \Delta_i)$ and 
projective morphisms $f_i: X_i \to S$. 
After some steps, it should stop with a final 
pair $(X', \Delta')$ and 
$f': X'\to S$. 
\setcounter{step}{-1}
\begin{step}[Initial datum]\label{0ste}
Assume that we already constructed $(X_i, \Delta_i)$ 
and $f_i:X_i \to S$ with the following 
properties: 
\begin{itemize}
\item[(1)] $X_i$ is $\mathbb Q$-factorial, 
\item[(2)] $(X_i, \Delta_i)$ is dlt, and 
\item[(3)] $f_i$ is projective. 
\end{itemize}
\end{step}
\begin{step}[Preparation]\label{1ste}
If $K_{X_i}+\Delta_i$ is $f_i$-nef, then we 
go directly to Step \ref{3ste} (2). If 
$K_{X_i}+\Delta_i$ is not $f_i$-nef, 
then we establish two results: 
\begin{itemize}
\item[(1)] (Cone Theorem) We have the following 
equality. 
$$\overline {NE}(X_i/S)=\overline {NE}(X_i/S)_{(K_{X_i}+
\Delta_i)\geq 0}+\sum \mathbb R_{\geq 0}[C_i]. $$
\item[(2)] (Contraction Theorem) 
Any $(K_{X_i}+\Delta_i)$-negative extremal ray $R_i\subset \overline {NE}(X_i/S)$ can 
be contracted. 
Let $\varphi_{R_i}:X_i\to Y_i$ denote the corresponding contraction. 
It sits in a commutative diagram. 
$$
\begin{matrix}
X_i & \overset{\varphi_{R_i}}{\longrightarrow} & \ Y_i\\
{\ \ \ \ \ f_i\searrow} & \ &  {\swarrow}g_i\ \ \ \ \\
 \ & S &  
\end{matrix}
$$
\end{itemize}
\end{step}
\begin{step}[Birational transformations]\label{2ste} 
If $\varphi_{R_i}:X_i\to Y_i$ is birational, 
then we produce a new pair $(X_{i+1}, \Delta_{i+1})$ as 
follows. 
\begin{itemize}
\item[(1)] (Divisorial contraction) 
If $\varphi_{R_i}$ is a divisorial contraction, 
that is, $\varphi_{R_i}$ contracts a divisor, 
then we 
set $X_{i+1}=Y_i$, 
$f_{i+1}=g_i$, and $\Delta_{i+1}=(\varphi_{R_i})_*\Delta_i$. 
\item[(2)] (Flipping contraction) 
If $\varphi_{R_i}$ is a flipping contraction, 
that is, $\varphi_{R_i}$ is small, 
then we set $(X_{i+1}, \Delta_{i+1})=(X^+_i, \Delta^+_i)$, 
where $(X^+_i, \Delta^+_i)$ is the flip 
of $\varphi_{R_i}$, and 
$f_{i+1}=g_i\circ \varphi_{R_i}^+$. See Theorem \ref{z-flip}. 
\end{itemize}
In both cases, we can prove that $X_{i+1}$ is $\mathbb Q$-factorial, 
$f_{i+1}$ is projective and 
$(X_{i+1}, \Delta_{i+1})$ is dlt. 
Then we go back to Step \ref{0ste} with 
$(X_{i+1}, \Delta_{i+1})$ and start anew. 
\end{step}
\begin{step}[Final outcome]\label{3ste} 
We expect that eventually the procedure 
stops, and we get one of the following 
two possibilities: 
\begin{itemize}
\item[(1)] (Mori fiber space) 
If $\varphi_{R_i}$ is a Fano contraction, 
that is, $\dim Y_i<\dim X_i$,  
then we set $(X', \Delta')=(X_i, \Delta_i)$ and 
$f'=f_i$. 
\item[(2)] (Minimal model) If 
$K_{X_i}+\Delta_i$ is $f_i$-nef, then we 
again set $(X', \Delta')=(X_i, \Delta_i)$ and $f'=f_i$. 
We can easily check that $(X', \Delta')$ is 
a log minimal model of $(X, \Delta)$ over $S$ in the 
sense of Definition \ref{z-minimal}.  
\end{itemize}
\end{step}
By the results in \cite{bchm} and \cite{hm}, 
all we have to do is to prove that 
there are no infinite sequence of 
flips in the above process. 
\end{say}
We will discuss the log minimal model program 
for (not necessarily $\mathbb Q$-factorial) {\em{lc}} pairs 
in Section \ref{31-sec}. 

\chapter{Vanishing and Injectivity Theorems for LMMP}\label{chap2} 

\section{Introduction}\label{21-sec} 

The following diagram is well known and 
described, for example, in \cite[3.1]{km}. 
See also Section \ref{se-quiqui}. 

{\small
\begin{equation*}
 \fbox{
\begin{tabular}{c}
\begin{minipage}{5.1cm}
Kawamata--Viehweg vanishing theorem 
\end{minipage}
\end{tabular}
}
\Longrightarrow 
\fbox{
 \begin{tabular}{c}
\begin{minipage}{5.0cm}
Cone, contraction, rationality, and  
base point free theorems for klt pairs
\end{minipage}
\end{tabular}
}
\end{equation*}
}

This means that the Kawamata--Viehweg vanishing theorem 
produces the fundamental theorems of the log minimal model 
program (LMMP, for short) for klt pairs. 
This method is sometimes called X-method\index{X-method} and now classical. 
It is sufficient for the LMMP for $\mathbb Q$-factorial 
dlt pairs. 
In \cite{ambro}, Ambro obtained the same diagram for 
{\em{quasi-log varieties}}. 
Note that the class of quasi-log varieties naturally contains 
lc pairs. 
Ambro introduced the notion of quasi-log varieties for the inductive 
treatments of lc pairs. 
 
{\small
\begin{equation*}
 \fbox{
\begin{tabular}{c}
\begin{minipage}{5cm}
Koll\'ar's torsion-free and 
vanishing theorems for embedded normal crossing pairs
\end{minipage}
\end{tabular}
}
\Longrightarrow
\fbox{
 \begin{tabular}{c}
\begin{minipage}{5cm}
Cone, contraction, rationality, and  
base point free theorems for quasi-log varieties
\end{minipage}
\end{tabular}
}
\end{equation*}
}

Namely, if we obtain Koll\'ar's torsion-free and 
vanishing theorems for {\em{embedded normal crossing 
pairs}}, then X-method works and 
we obtain the fundamental theorems 
of the LMMP for 
quasi-log varieties. 
So, there exists an important problem for 
the LMMP for lc pairs.

\begin{que}\label{1-pro}
Are the injectivity, torsion-free and vanishing theorems 
for embedded normal crossing pairs true? 
\end{que} 

Ambro gave an answer to Problem \ref{1-pro} in \cite[Section 3]{ambro}. 
Unfortunately, the proofs of injectivity, 
torsion-free, and vanishing theorems 
in \cite[Section 3]{ambro} contain various gaps. 
So, in this chapter, we give an affirmative answer to Problem \ref{1-pro} again. 

\begin{thm}\label{ma-th}  
Ambro's formulation of Koll\'ar's injectivity, torsion-free, and 
vanishing theorems for embedded normal crossing pairs 
hold true. 
\end{thm}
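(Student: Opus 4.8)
The plan is to unwind this single meta-statement into three concrete theorems for embedded simple normal crossing pairs — an \emph{injectivity theorem}, a \emph{torsion-free theorem}, and a \emph{vanishing theorem} — and to prove them in that logical order, since the latter two are essentially formal consequences of the first combined with standard Koll\'ar-type arguments. Thus the genuine work concentrates in the injectivity theorem, which is Hodge-theoretic in nature, whereas the passage to torsion-freeness and to vanishing is geometric and largely routine once injectivity is in hand. The classical template is exactly the implication displayed in Section \ref{21-sec}, but carried out for reducible and possibly non-reduced configurations.

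For the injectivity theorem the plan is as follows. Fix the ambient smooth variety $M$, the simple normal crossing divisor $Y$, and the boundary $B_Y=D|_Y$ as in the definition of a qlc pair, and consider the sheaf $\mathcal{O}_Y(\ulcorner -(B^{<1}_Y)\urcorner)$ twisted by the relevant semiample line bundle. First I would equip the cohomology of $Y$ (relative to the boundary strata) with a mixed Hodge structure, via Deligne's construction using a cubical semi-simplicial resolution of the reducible variety $Y$ together with the logarithmic de Rham complex attached to the boundary. The crucial analytic input is the $E_1$-degeneration of the associated Hodge-to-de Rham spectral sequence, equivalently the \emph{strictness} of the Hodge filtration $F$. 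Given a section $s$ of the semiample part, multiplication $\times s$ on the cohomology in question is then realized as a morphism of mixed Hodge structures; strictness of $F$ forces this morphism to be injective on the graded piece of $F$ that computes $\mathcal{O}_Y(K_Y+\cdots)$, which is precisely the desired cohomological injectivity. This is the step where I expect Ambro's original argument to have faltered and where the real difficulty lies: correctly gluing the pure Hodge structures of the smooth strata of $Y$ into a mixed Hodge structure compatible with the weight and boundary filtrations, and verifying the degeneration for the reducible, non-reduced configurations that actually arise.

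With injectivity available, I would next derive the torsion-free theorem. The statement is that the associated primes of $R^qf_*\mathcal{O}_Y(\cdots)$ are the generic points of the images $f(\text{strata})$, i.e. the sheaf has no embedded torsion. Following Koll\'ar's method I would reduce this to the injectivity of $R^qf_*\mathcal{L}\to R^qf_*\mathcal{L}(G)$ for an effective divisor $G$ pulled back from $X$; after compactifying $X$ and choosing $G$ from a very ample linear system, this becomes a special case of the global injectivity theorem just proved, and a local-to-global argument through the Leray spectral sequence then excludes torsion supported on any proper subvariety.

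Finally, the vanishing theorem follows from torsion-freeness by the usual induction on dimension: to prove $H^i(X,R^qf_*\mathcal{L}\otimes A)=0$ for $i>0$ and $A$ ample, I would cut $X$ with general members of $|A^{\otimes m}|$ and run the restriction exact sequences, the boundary terms being controlled precisely because torsion-freeness guarantees the relevant sheaves restrict without spurious kernels, together with the degeneration of the Leray spectral sequence for $f$. The only subtlety here is arranging the induction so that the cutting divisors stay transverse to every stratum of $Y$, which generality supplies; the essential mathematical content remains the Hodge-theoretic injectivity of the second step.
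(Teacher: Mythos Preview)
Your overall architecture — injectivity first, then torsion-freeness, then vanishing — matches the paper, and your sketches of the last two steps are essentially correct. The genuine issue is in your account of the injectivity step: you have misdiagnosed where Ambro's argument breaks and what the actual fix is.

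You locate the difficulty in ``correctly gluing the pure Hodge structures of the smooth strata of $Y$ into a mixed Hodge structure.'' That part is standard Deligne theory and is not where Ambro's proof fails. The paper's Example~\ref{0} and Remarks~\ref{r-1}, \ref{r0} make the failure explicit: after the cyclic cover $\pi:Y\to X$ branched along $\nu B$, the eigenspace decomposition of $\pi_*\widetilde\Omega^\bullet_Y(\log \pi^*B)$ does \emph{not} contain $\Omega^\bullet_X(\log B)\otimes\mathcal L^{-1}$ as a summand when $S=\llcorner B\rrcorner\ne 0$; it contains $\Omega^\bullet_X(\log B)\otimes\mathcal L^{-1}(S)$ instead. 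So the $E_1$-degeneration of the usual logarithmic de Rham spectral sequence (the one computing $H^\bullet(Y\setminus\pi^*B)$) only yields injectivity for $\mathcal O_X(L-S)$, not for $\mathcal O_X(L)$. The remedy is to work with the complex $\widetilde\Omega^\bullet_Y(\log T)(-T)$ where $T=\pi^*S$, whose hypercohomology is $H^\bullet_c(Y\setminus T,\mathbb C)$; this is the mixed Hodge structure on cohomology \emph{with compact support}, and its $E_1$-degeneration (established in \ref{s5}, \ref{s6}) is what makes the argument go through. Your proposal never mentions this twist by $-T$ or the compact-support cohomology, and without it the plan reproduces Ambro's gap.

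A second, smaller point: your mechanism ``multiplication by $s$ is a morphism of mixed Hodge structures, strictness gives injectivity'' is not how the argument runs. The cohomology groups $H^q(X,\mathcal O_X(L))$ are not themselves graded pieces of a MHS; they appear as eigenspace summands after the cyclic cover. The paper's argument (Propositions~\ref{1}, \ref{2}) is instead: the inclusion $\Omega^\bullet_X(\log(S+B))\otimes\mathcal L^{-1}(-S-D)\hookrightarrow\Omega^\bullet_X(\log(S+B))\otimes\mathcal L^{-1}(-S)$ is a quasi-isomorphism (an Esnault--Viehweg lemma), the $E_1$-degeneration gives surjectivity of hypercohomology onto $H^q(X,\mathcal L^{-1}(-S))$, a diagram chase gives surjectivity of $H^q(X,\mathcal L^{-1}(-S-D))\to H^q(X,\mathcal L^{-1}(-S))$, and Serre duality converts this to the desired injectivity. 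You should rewrite the injectivity step around this specific commutative-square argument and the compact-support spectral sequence.
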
 

Once we have Theorem \ref{ma-th}, we 
can obtain the fundamental 
theorems of the LMMP for lc 
pairs. 
The X-method for 
quasi-log varieties, 
which was explained in \cite[Section 5]{ambro} and 
will be described in Chapter \ref{chap3}, 
is essentially the same as the klt case. 
It may be more or less a routine work for the experts (see 
Chapter \ref{chap3} and \cite{fuj-lec}). 
We note that Kawamata used Koll\'ar's injectivity, 
vanishing, and torsion-free theorems for 
{\em{generalized normal crossing varieties}} in 
\cite{kawamata1}. 
For the details, see \cite{kawamata1} or 
\cite[Chapter 6]{kmm}. We think that \cite{kawamata1} 
is the first place where X-method was 
used for reducible 
varieties. 

Ambro's proofs of the 
injectivity, torsion-free, 
and vanishing theorems in \cite{ambro} do not work even for 
smooth varieties. So, we need new ideas to prove 
the desired injectivity, torsion-free, vanishing theorems. 
It is the main subject of this chapter. 
We will explain various troubles in the proofs in \cite[Section 3]{ambro} 
below for the reader's convenience. 
Here, we give an application of Ambro's 
theorems to motivate the reader. 
It is the culmination of the works of several authors:~Kawamata, 
Viehweg, Nadel, Reid, Fukuda, Ambro, and many others. 
It is the first time that the following theorem is 
stated explicitly in the literature. 

\begin{thm}[{cf.~Theorem \ref{kvn}}]
Let $(X, B)$ be a proper 
lc pair such that $B$ is a boundary $\mathbb R$-divisor 
and let $L$ be a $\mathbb Q$-Cartier Weil divisor on $X$. 
Assume that $L-(K_X+B)$ is nef 
and log big. Then 
$H^q(X, \mathcal O_X(L))=0$ for any $q>0$. 
\end{thm}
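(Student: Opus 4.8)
The plan is to deduce the statement from the vanishing theorem for quasi-log varieties, arguing by induction on $\dim X$ and using the generalized Koll\'ar vanishing and torsion-free theorems (Theorem \ref{ma-th}) together with adjunction for qlc pairs (Theorem \ref{adj-th}). First I would record that $[X, K_X+B]$ carries a natural qlc structure $f:(Y,B_Y)\to X$ coming from a log resolution, and that $D:=L-(K_X+B)$, being nef and log big, restricts to a nef and big $\mathbb R$-Cartier divisor on every lc center of $(X,B)$, including $X$ itself. The base case $\dim X=0$ is trivial, so assume $\dim X>0$.

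Let $W=\bigcup_i C_i$ be the union of all lc centers of $(X,B)$ with its reduced structure (the non-klt locus). The heart of the argument is the short exact sequence
$$0\to \mathcal I_W\otimes \mathcal O_X(L)\to \mathcal O_X(L)\to \mathcal O_W(L)\to 0,$$
where $\mathcal O_W(L)=\mathcal O_X(L)\otimes \mathcal O_W$, and I would handle the two outer terms separately. For the ideal-sheaf term the goal is $H^q(X,\mathcal I_W\otimes \mathcal O_X(L))=0$ for $q>0$, which is the nef-and-big analogue of Theorem \ref{moti1}. On the resolution $Y$ this sheaf is $f_*$ of $\mathcal O_Y$ of a divisor of the form $\ulcorner -(B_Y^{<1})\urcorner-B_Y^{=1}+f^*L$, and since away from the reduced part $B_Y^{=1}$ the pair is sub klt, Kodaira's lemma lets me write $D\sim_{\mathbb R}\varepsilon A+\Theta$ with $A$ ample and $\Theta$ effective, then absorb $\varepsilon\Theta$ into the boundary without touching the lc centers; the required vanishing follows from Theorem \ref{ma-th}.

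For the restriction term, adjunction (Theorem \ref{adj-th}) endows $[W,(K_X+B)|_W]$ with a qlc structure of dimension $<\dim X$, and $(L-(K_X+B))|_W=D|_W$ is again nef and log big, because the lc centers of this induced qlc structure are precisely lc centers of $(X,B)$, so log-bigness is inherited by restriction. By the induction hypothesis applied to $W$ we get $H^q(W,\mathcal O_W(L))=0$ for $q>0$. Feeding both vanishings into the long exact cohomology sequence of the displayed short exact sequence yields $H^q(X,\mathcal O_X(L))=0$ for all $q>0$, as desired.

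I expect the main obstacle to be the ideal-sheaf vanishing in the second paragraph, namely upgrading Koll\'ar's vanishing from the ample (pullback-of-ample) setting to the nef-and-big setting while keeping the simple normal crossing structure and the set of lc centers intact. Concretely, the Kodaira-lemma perturbation must be carried out on the embedded normal crossing model so that the extra effective divisor is $f$-exceptional or is swallowed by the round-up and the coefficients stay $<1$ off $B_Y^{=1}$, so that the hypotheses of Theorem \ref{ma-th} are genuinely satisfied; this bookkeeping, rather than the d\'evissage or the adjunction, is where the real work lies. A secondary point to verify is that the induction is well-founded, i.e.\ that the qlc structure on $W$ has strictly smaller dimension and that log-bigness descends to each lc stratum.
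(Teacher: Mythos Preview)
Your d\'evissage-on-$X$ strategy is different from what the paper does, and as written it has gaps.

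The paper's proof is a single direct application on a log resolution, with no induction on $\dim X$ and no short exact sequence on $X$. One takes $f:Y\to X$ with $K_Y=f^*(K_X+B)+\sum a_iE_i$, sets $E=\sum a_iE_i$ and $F=\sum_{a_j=-1}(1-b_j)E_j$ with $b_j=\mult_{E_j}\{f^*L\}$, checks that $f_*\mathcal O_Y(\ulcorner f^*L+E+F\urcorner)\simeq\mathcal O_X(L)$ and that $\ulcorner f^*L+E+F\urcorner-(K_Y+F+\{-(f^*L+E+F)\})=f^*(L-(K_X+B))$, and then invokes Theorem~\ref{74} (the nef-and-log-big upgrade of Koll\'ar's vanishing) once. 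All the hard work---the Kodaira's lemma perturbation, the Chow's lemma reduction from proper to projective, and the d\'evissage over strata---is packed inside the proof of Theorem~\ref{74}, which operates on the embedded simple normal crossing pair $Y$, not on $X$.

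Your approach has three concrete problems. First, the induction does not close: $W$ with its induced structure is a qlc pair, not an lc pair, so you cannot feed it back into the stated hypothesis; you would have to prove the stronger statement for qlc pairs, which is precisely Theorem~\ref{adj-th}(ii). Second, $X$ is only proper, so Kodaira's lemma on $X$ is unavailable in general; the paper circumvents this by running Chow's lemma and Kodaira's lemma on the resolution side inside Theorem~\ref{74}. Third, $L$ is only $\mathbb Q$-Cartier Weil, so $f^*L$ need not be integral and your description of $\mathcal I_W\otimes\mathcal O_X(L)$ as $f_*\mathcal O_Y(\ulcorner -(B_Y^{<1})\urcorner-B_Y^{=1}+f^*L)$ is not correct; the paper's $F$-correction and rounding of $f^*L+E+F$ are exactly what is needed to repair this. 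In short, the bookkeeping you flagged as the main obstacle is genuinely the whole proof, and the paper resolves it by proving Theorem~\ref{74} rather than by perturbing on $X$.
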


It also contains a complete form of Kov\'acs' Kodaira vanishing 
theorem for lc pairs (see Corollary \ref{lcvar}). 
Let us explain the main trouble in \cite[Section 3]{ambro} 
by the following 
simple example. 

\begin{ex}\label{0}
Let $X$ be a smooth projective variety and $H$ a 
Cartier divisor on $X$. Let $A$ be a smooth 
irreducible member of $|2H|$ and $S$ 
a smooth divisor on $X$ such that $S$ and $A$ are disjoint. 
We put $B=\frac{1}{2}A+S$ and $L=H+K_X+S$. Then $L\sim _{\mathbb Q}
K_X+B$ and 
$2L\sim 2(K_X+B)$. 
We define $\mathcal E=\mathcal O_X(-L+K_X)$ as in the 
proof of \cite[Theorem 3.1]{ambro}. 
Apply the argument in the proof of \cite[Theorem 3.1]{ambro}. 
Then we have a double cover $\pi:Y\to X$ 
corresponding to $2B\in |\mathcal E^{-2}|$. 
Then $\pi_*\Omega ^{p}_Y(\log \pi^*B)\simeq 
\Omega ^p_X(\log B)\oplus \Omega^p_X(\log B)\otimes \mathcal E(S)$. 
Note that $\Omega ^p_X(\log B)\otimes \mathcal E$ is not a direct 
summand of $\pi_*\Omega^p_Y(\log \pi^*B)$. 
Theorem 3.1 in \cite{ambro} claims that 
the homomorphisms 
$H^q(X, \mathcal O_X(L))\to H^q(X, \mathcal O_X(L+D))$ are injective for all 
$q$. Here, we used the notation in \cite[Theorem 3.1]{ambro}. 
In our case, $D=mA$ for some positive integer $m$. 
However, Ambro's argument just implies that 
$H^q(X,\mathcal O_X(L-\llcorner B\lrcorner))\to 
H^q(X, \mathcal O_X(L-\llcorner B\lrcorner +D))$ is injective for 
any $q$. Therefore, his proof works only for the case 
when $\llcorner B\lrcorner=0$ even if $X$ is smooth. 
\end{ex}
This trouble is crucial in several applications on the LMMP. 
Ambro's proof is based on the mixed Hodge structure of 
$H^i(Y-\pi^*B, \mathbb Z)$. 
It is a standard technique for vanishing 
theorems in the LMMP. 
In this chapter, we use the mixed Hodge structure of 
$H^i_{c}(Y-\pi^*S, \mathbb Z)$, where 
$H^i_{c}(Y-\pi^*S, \mathbb Z)$ is the 
cohomology group with 
compact support. 
Let us explain the main idea of this chapter. 
Let $X$ be a smooth projective variety with $\dim X=n$ and 
$D$ a simple normal crossing divisor 
on $X$. 
The main ingredient of our arguments 
is the decomposition 
$$
H^i_c(X-D, \mathbb C)=\bigoplus _{p+q=i}H^q(X, \Omega^p_X(\log D)\otimes 
\mathcal O_X(-D)). 
$$ 
The dual statement 
$$
H^{2n-i}(X-D, \mathbb C)=\bigoplus _{p+q=i}H^{n-q}
(X, \Omega^{n-p}_X(\log D)), 
$$ 
which is well known and is commonly 
used for vanishing theorems, is not useful 
for our purposes. 
To solve Problem \ref{1-pro}, we have to carry out 
this simple idea for reducible varieties. 

\begin{rem}\label{r-1} 
In the proof of \cite[Theorem 3.1]{ambro}, 
if we assume that $X$ is smooth, $B'=S$ is 
a reduced smooth divisor on $X$, and 
$T\sim 0$, 
then 
we need 
the $E_1$-degeneration of 
$$
E^{pq}_1=H^q(X, \Omega^p_X(\log S)\otimes \mathcal O_X(-S))
\Longrightarrow \mathbb H^{p+q}(X, \Omega^{\bullet}_X(\log S)\otimes 
\mathcal O_X(-S)).  
$$ 
However, Ambro seemed to confuse it with 
the $E_1$-degeneration 
of 
$$
E^{pq}_1=H^q(X, \Omega^p_X(\log S))
\Longrightarrow \mathbb H^{p+q}(X, \Omega^{\bullet}_X(\log S)).  
$$ 
Some problems on the Hodge theory 
seem to exist in the proof of \cite[Theorem 3.1]{ambro}. 
\end{rem}

\begin{rem}\label{r0} 
In \cite[Theorem 3.1]{ambro2}, 
Ambro reproved his theorem under some extra assumptions. 
Here, we use the notation in \cite[Theorem 3.1]{ambro2}. 
In the last line of the proof of \cite[Theorem 3.1]{ambro2}, he 
used the $E_1$-degeneration of some 
spectral sequence. 
It seems to be the $E_1$-degeneration of 
$$
E^{pq}_1=H^q(X', \widetilde {\Omega}^p_{X'}(\log \sum 
_{i'}E'_{i'}))\Longrightarrow 
\mathbb H^{p+q}(X', \widetilde{\Omega}^{\bullet}_{X'}
(\log \sum _{i'}E'_{i'})) 
$$ 
since he cited \cite[Corollary 3.2.13]{deligne0}. Or, he 
applied the same type of $E_1$-degeneration to 
a desingularization of $X'$. 
However, we think that the $E_1$-degeneration of 
\begin{eqnarray*}
\lefteqn{E^{pq}_1=H^q(X', \widetilde {\Omega}^p_{X'}(\log (\pi^*R+\sum 
_{i'}E'_{i'}))\otimes 
\mathcal O_{X'}(-\pi^*R))}\\& & \Longrightarrow 
\mathbb H^{p+q}(X', \widetilde{\Omega}^{\bullet}_{X'}
(\log (\pi^*R+\sum _{i'}E'_{i'}))\otimes \mathcal O_{X'}(-\pi^*R)) 
\end{eqnarray*} 
is the appropriate one in his proof. 
If we assume that $T\sim 0$ in \cite[Theorem 3.1]{ambro2}, 
then Ambro's proof seems to imply that the $E_1$-degeneration 
of 
$$
E^{pq}_1=H^q(X, \Omega^p_X(\log R)\otimes \mathcal O_X(-R))
\Longrightarrow \mathbb H^{p+q}(X, \Omega^{\bullet}_X(\log R)\otimes 
\mathcal O_X(-R)) 
$$ 
follows from the usual $E_1$-degeneration of 
$$
E^{pq}_1=H^q(X, \Omega^p_X)\Longrightarrow \mathbb H^{p+q}(X, 
\Omega^{\bullet}_X). 
$$ 
Anyway, there are some problems in the proof of 
\cite[Theorem 3.1]{ambro2}. 
In this chapter, we adopt  
the following spectral sequence 
\begin{eqnarray*}
\lefteqn{E^{pq}_1=H^q(X', \widetilde {\Omega}^p_{X'}(\log 
\pi^*R)\otimes \mathcal O_{X'}(-\pi^*R))}\\& & \Longrightarrow 
\mathbb H^{p+q}(X', \widetilde{\Omega}^{\bullet}_{X'}
(\log \pi^*R)\otimes \mathcal O_{X'}(-\pi^*R)) 
\end{eqnarray*} 
and prove its $E_1$-degeneration. For the 
details, see Sections \ref{sec2} and \ref{sec3}. 
\end{rem}

One of the main contributions of this chapter 
is the rigorous proof of Proposition \ref{2}, 
which we call a fundamental injectivity theorem. 
Even if we 
prove this proposition, there are 
still several technical difficulties to 
recover Ambro's results on injectivity, 
torsion-free, and vanishing theorems:~Theorems \ref{61} and 
\ref{62}. 
Some important arguments are missing in \cite{ambro}. 
We will discuss the other troubles on the arguments in 
\cite{ambro} throughout Section \ref{sec4}. 
See also Section \ref{29-sec}. 

\begin{say}[Background, history, and related topics]\label{bunken}
The standard references for vanishing, 
torsion-free, and injectivity 
theorems for the LMMP are 
\cite[Part III Vanishing Theorems]{kollar} and 
the first half of the book \cite{ev}. 
In this chapter, we closely follow the presentation of 
\cite{ev} and that of \cite{ambro}. 
Some special cases of Ambro's theorems 
were proved in \cite[Section 2]{fujino-high}. 
Chapter 1 in \cite{kmm} is 
still a good source for vanishing 
theorems for the LMMP. 
We note that one of the origins of Ambro's results is \cite[Section 4]
{kawamata}. However, we do not treat Kawamata's 
generalizations of vanishing, torsion-free, 
and injectivity theorems for {\em{generalized}} normal 
crossing varieties. 
It is mainly because we can quickly reprove the 
main theorem of \cite{kawamata1} without appealing 
these difficult vanishing and injectivity 
theorems once we 
know a generalized version of 
Kodaira's canonical bundle formula. 
For the details, see 
\cite{fujino6} or \cite{fuji-ka}. 
\end{say}

We summarize the contents of this chapter. 
In Section \ref{sec-pre}, we collect basic definitions and fix some notations. 
In Section \ref{sec2}, we prove a fundamental cohomology 
injectivity theorem for 
simple normal crossing pairs. 
It is a very special case of Ambro's theorem. 
Our proof heavily depends on the 
$E_1$-degeneration of a certain Hodge to de Rham type 
spectral sequence. 
We postpone the proof of the 
$E_1$-degeneration in Section \ref{sec3} 
since it is a purely Hodge theoretic argument.  
Section \ref{sec3} consists of a short survey of mixed Hodge 
structures on various objects and the proof 
of the key $E_1$-degeneration. 
We could find no references on mixed Hodge 
structures which 
are appropriate for our purposes. 
So, we write it for the reader's convenience. 
Section \ref{sec4} is 
devoted to the proofs of Ambro's theorems for 
embedded simple normal crossing 
pairs. 
We discuss various problems in \cite[Section 3]{ambro} and 
give the 
first rigorous proofs to 
\cite[Theorems 3.1, 3.2]{ambro} 
for embedded simple normal crossing pairs. 
We think that several indispensable 
arguments such as Lemmas \ref{re-vani-lem}, \ref{6}, and \ref{comp} 
are missing in \cite[Section 3]{ambro}. 
We treat some further generalizations of vanishing and 
torsion-free theorems in Section \ref{genera}. 
In Section \ref{sec6}, we recover Ambro's theorems in full 
generality. 
We recommend the reader to compare this chapter 
with \cite{ambro}. 
We note that Section \ref{sec6} 
seems to be unnecessary for applications. 
Section \ref{28-sec} is devoted to describe some 
examples. 
In Section \ref{29-sec}, 
we will quickly review the structure of 
our proofs of the injectivity, torsion-free, and 
vanishing theorems. It may 
help the reader to understand 
the reason why our proofs are much longer than 
the original proofs in \cite[Section 3]{ambro}. 
In Chapter \ref{chap3}, we will treat the fundamental 
theorems of the LMMP for lc pairs as 
an application of our vanishing and torsion-free 
theorems. 
The reader can find various other applications 
of our new cohomological results in \cite{fujino8}, 
\cite{fujino9}, and \cite{fujino10}. 
See also Sections \ref{to-sec}, \ref{43-sec}, 
and \ref{44-sec}. 

We note that we will work over $\mathbb C$, the complex number field, 
throughout this chapter. 

\section{Preliminaries}\label{sec-pre}
We explain basic notion according to \cite[Section 2]{ambro}. 

\begin{defn}[Normal and simple normal crossing varieties]
\index{normal crossing variety}\index{simple normal 
crossing variety}\label{01}
A variety $X$ has {\em{normal crossing}} singularities 
if, for every closed point $x\in X$, 
$$
\widehat{\mathcal O}_{X,x}\simeq \frac{\mathbb C[[x_0, \cdots, x_{N}]]}
{(x_0\cdots x_k)}
$$
for some $0\leq k \leq N$, where 
$N=\dim X$. 
Furthermore, if each irreducible component of $X$ is smooth, 
$X$ is called a {\em{simple normal crossing}} 
variety. If $X$ is a normal crossing variety, then $X$ has only  
Gorenstein singularities. Thus, it has an 
invertible dualizing sheaf $\omega_X$. 
So, we can define the {\em{canonical divisor}} 
$K_X$ such that $\omega_X\simeq \mathcal O_X(K_X)$. 
It is a Cartier divisor on $X$ and is well defined 
up to linear equivalence. 
\end{defn}

\begin{defn}[Mayer--Vietoris simplicial resolution]\label{011}
Let $X$ be a simple normal crossing variety with 
the irreducible decomposition $X=\bigcup_{i\in I}X_i$. 
Let $I_n$ be the set of strictly increasing 
sequences $(i_0, \cdots, i_n)$ in $I$ and $X^n=\coprod_{I_n} X_{i_0}\cap 
\cdots \cap X_{i_n}$ the disjoint union of the intersections of 
$X_i$. 
Let $\varepsilon_n:X^n \to X$ be the disjoint union 
of the natural inclusions. 
Then $\{X^n, \varepsilon_n\}_n$ 
has a natural semi-simplicial 
scheme structure. 
The face operator is induced by $\lambda_{j,n}$, where 
$\lambda_{j, n}: X_{i_0}\cap \cdots 
\cap X_{i_n}\to 
X_{i_0}\cap \cdots \cap X_{i_{j-1}}\cap X_{i_{j+1}}\cap 
\cdots \cap X_{i_n}$ is the natural closed embedding for 
$j\leq n$ (cf.~\cite[3.5.5]{elzein2}). 
We denote it by $\varepsilon:X^{\bullet}\to X$ and call it the 
{\em{Mayer--Vietoris simplicial resolution}} of 
$X$. The complex 
$$
0\to \varepsilon_{0*}\mathcal O_{X^0}
\to \varepsilon _{1*}\mathcal O_{X^1} \to 
\cdots \to \varepsilon _{k*}\mathcal O_{X^k}\to \cdots,  
$$ 
where the differential 
$d_k:\varepsilon _{k*}\mathcal O_{X^k}\to 
\varepsilon _{k+1*}\mathcal O_{X^{k+1}}$ is 
$\sum ^{k+1}_{j=0} (-1)^j 
\lambda^* _{j, k+1}$ for any $k\geq 0$, 
is denoted by $\mathcal O_{X^\bullet}$.  
It is easy to see that $\mathcal O_{X^\bullet}$ is 
quasi-isomorphic to $\mathcal O_X$. 
By tensoring $\mathcal L$, any line bundle 
on $X$, 
to $\mathcal O_{X^\bullet}$, we obtain a complex 
$$
0\to \varepsilon_{0*}\mathcal L^0
\to \varepsilon _{1*}\mathcal L^1 \to 
\cdots \to \varepsilon _{k*}\mathcal L^k\to \cdots,  
$$ 
where $\mathcal L^n=\varepsilon ^*_n\mathcal L$. 
It is denoted by $\mathcal L^{\bullet}$. 
Of course, $\mathcal L^{\bullet}$ is quasi-isomorphic to 
$\mathcal L$. 
We note that $H^q(X^\bullet, \mathcal L^\bullet)$ is 
$\mathbb H^q(X, \mathcal L^{\bullet})$ by the definition 
and it is obviously isomorphic to 
$H^q(X, \mathcal L)$ for any $q\geq 0$ 
because $\mathcal L^\bullet$ is quasi-isomorphic to $\mathcal L$. 
\end{defn}

\begin{defn}\label{0111} 
Let $X$ be a simple normal crossing variety. A {\em{stratum}}\index{stratum} 
of $X$ is the image on $X$ of some irreducible 
component of $X^{\bullet}$. 
Note that an irreducible component of $X$ is a stratum of $X$. 
\end{defn}

\begin{defn}[Permissible and normal crossing divisors]
\index{permissible divisor}\index{normal crossing 
divisor}\label{02}
Let $X$ be a simple normal crossing variety. A Cartier 
divisor $D$ on $X$ is called {\em{permissible}} 
if it induces a Cartier divisor $D^{\bullet}$ on $X^{\bullet}$. 
This means that $D^n=\varepsilon^*_n D$ is a Cartier divisor 
on $X_n$ for any $n$. 
It is equivalent to the condition 
that $D$ contains no strata of $X$ in its 
support. 
We say that $D$ is a {\em{normal crossing}} 
divisor on $X$ if, in the notation of Definition \ref{01}, 
we have 
$$
\widehat{\mathcal O}_{D, x}\simeq \frac{\mathbb C[[x_0, \cdots, 
x_{N}]]}{(x_0\cdots x_k, x_{i_1}\cdots x_{i_l})}
$$ 
for some $\{i_1, \cdots, i_l\}\subset\{k+1, \cdots, N\}$. 
It is equivalent to the condition that 
$D^n$ is a normal crossing divisor on $X^n$ for any $n$ 
in the usual sense. 
Furthermore, let $D$ be a 
normal crossing divisor on a simple normal crossing 
variety $X$. 
If $D^n$ is a simple normal crossing divisor 
on $X^n$ for any $n$, then $D$ is called a 
{\em{simple normal crossing divisor}} on $X$. \index{simple normal crossing divisor}
\end{defn}

The following lemma is easy but important. We 
will repeatedly use it in 
Sections \ref{sec2} and \ref{sec4}. 

\begin{lem}\label{7} 
Let $X$ be a simple normal crossing variety and $B$ 
a permissible $\mathbb R$-Cartier $\mathbb R$-divisor 
on $X$, that is, $B$ is an $\mathbb R$-linear 
combination of permissible Cartier divisor on $X$, 
such that $\llcorner B\lrcorner=0$. 
Let $A$ be a Cartier divisor on $X$. Assume that 
$A\sim _{\mathbb R}B$. 
Then there exists a $\mathbb Q$-Cartier $\mathbb Q$-divisor 
$C$ on $X$ such that $A\sim _{\mathbb Q}C$, $\llcorner 
C\lrcorner =0$, 
and $\Supp C=\Supp B$. 
\end{lem}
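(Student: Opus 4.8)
The plan is to realize the equivalence $A\sim_{\mathbb R}B$ explicitly and then perturb the real coefficients to rational ones, taking care to preserve the support. By the definition of $\mathbb R$-linear equivalence we may write
$$
A-B=\sum_{k=1}^{m}r_k(f_k),
$$
where $r_k\in\mathbb R$ and each $(f_k)$ is the principal Cartier divisor of a rational function $f_k$ on $X$. Let $D_1,\dots,D_N$ be the finitely many permissible prime divisors occurring in $B$ or in some $(f_k)$, and write $B=\sum_j b_j D_j$ with $b_j\in[0,1)$ (this is the content of $\llcorner B\lrcorner=0$), $A=\sum_j a_j D_j$, and $(f_k)=\sum_j c_{kj}D_j$. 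The crucial observation is that, since $A$ and each $(f_k)$ are Cartier and the $D_j$ are permissible (so their generic points lie in the smooth locus of $X$, where the local rings are discrete valuation rings), the numbers $a_j=\mult_{D_j}A$ and $c_{kj}=\mult_{D_j}(f_k)$ are \emph{integers}.

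For $s=(s_1,\dots,s_m)\in\mathbb R^m$ I would set
$$
C_s:=A-\sum_{k=1}^{m}s_k(f_k)=B+\sum_{k=1}^{m}(r_k-s_k)(f_k).
$$
Then $C_r=B$, and for any $s\in\mathbb Q^m$ the divisor $C_s$ is a $\mathbb Q$-linear combination of Cartier divisors, hence $\mathbb Q$-Cartier, and satisfies $A\sim_{\mathbb Q}C_s$. The coefficient of $C_s$ along $D_j$ is the affine function $\gamma_j(s)=a_j-\sum_k c_{kj}s_k$, which is automatically rational once $s$ is rational. So it only remains to choose $s\in\mathbb Q^m$ making $\llcorner C_s\lrcorner=0$ and $\Supp C_s=\Supp B$.

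Split the indices into $P=\{j: b_j>0\}$, so that $\Supp B=\bigcup_{j\in P}D_j$, and its complement. I would then impose the two families of conditions separately. The support condition forces $\gamma_j(s)=0$ for every $j\notin P$; since $b_j=0$ there we have $\sum_k c_{kj}r_k=a_j$, so these are the \emph{integer}-coefficient linear equations $\sum_k c_{kj}s_k=a_j$, and they cut out an affine subspace $\Lambda\subset\mathbb R^m$ defined over $\mathbb Q$ which contains the real point $r$. On the other hand, for $j\in P$ we have $\gamma_j(r)=b_j\in(0,1)$, an open condition, so the set $U=\{s\in\Lambda:0<\gamma_j(s)<1\ \text{for all }j\in P\}$ is a nonempty open neighborhood of $r$ in $\Lambda$.

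The proof then concludes by a density argument: because $\Lambda$ is a nonempty affine subspace defined over $\mathbb Q$, its rational points are dense in it, so $U\cap\mathbb Q^m\neq\emptyset$, and any $s$ in this set yields the desired $C:=C_s$. The step I expect to require the most care is precisely the preservation of the support: a naive perturbation of the $r_k$ would in general introduce tiny nonzero coefficients along the $D_j$ with $j\notin P$, destroying the equality $\Supp C=\Supp B$. This is overcome exactly by the integrality of $a_j$ and $c_{kj}$, which makes the constraints $\gamma_j=0$ rational and hence compatible with finding rational points arbitrarily close to $r$, while the remaining constraints $0<\gamma_j<1$ are open and therefore survive a sufficiently small perturbation.
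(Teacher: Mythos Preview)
Your argument follows the same strategy as the paper's sketch---write $A-B=\sum_k r_k(f_k)$ and perturb the real $r_k$ to rationals---and you have made precise the linear algebra the paper only alludes to: the closed constraints $\gamma_j=0$ cut out a rational affine subspace $\Lambda\ni r$, the constraints $0<\gamma_j<1$ are open, and rational points are dense in $\Lambda$.

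There is, however, one gap when $X$ is reducible. You take the $D_j$ to be the \emph{permissible} prime components and then write $A=\sum_j a_jD_j$ and $(f_k)=\sum_j c_{kj}D_j$. But a Cartier divisor on a simple normal crossing variety need not be permissible: for instance on $X=\{xy=0\}\subset\mathbb A^3$ the principal divisor $(x+y)$ is supported exactly on the stratum $\{x=y=0\}$, whose local ring on $X$ is not a DVR. So $A$ and the $(f_k)$ may carry components along codimension-one strata, your decompositions can be incomplete, and for generic rational $s$ the divisor $C_s$ could acquire support along a stratum, violating $\Supp C_s=\Supp B$. The paper handles this by first pulling back along the normalization $\varepsilon_0:X^0\to X$: on the smooth $X^0$ every prime component has a well-defined integer multiplicity, the perturbation argument runs there with the same $f_k$, and the resulting $C=A-\sum_k s_k(f_k)$ is already defined on $X$ since $A$ and the $f_k$ are. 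Equivalently, you can repair your argument by enlarging the list of $D_j$ to include all prime components of $\varepsilon_0^*A$ and $\varepsilon_0^*(f_k)$ on $X^0$; the stratum components, having $b_j=0$ because $B$ is permissible, contribute further integer-coefficient equations to $\Lambda$, and the rest of your proof goes through unchanged.
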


\begin{proof}[Sketch of the proof] 
We can write $B=A+\sum _i r_i(f_i)$, where 
$f_i\in \Gamma (X, \mathcal K_X^*)$ and 
$r_i\in \mathbb R$ for any $i$. 
Here, $\mathcal K_X$ is the sheaf of total quotient 
ring of $\mathcal O_X$. 
First, we assume that $X$ is smooth. 
In this case, the claim is well known and easy to check. 
Perturb $r_i$'s suitably. Then we obtain a desired 
$\mathbb Q$-Cartier $\mathbb Q$-divisor 
$C$ on $X$. 
It is an elementary problem of the linear algebra. 
In the general case, we take the normalization 
$\varepsilon _0: X^0\to X$ and apply the above result 
to $X^0$, $\varepsilon _0^*A$, 
$\varepsilon _0^*B$, 
and $\varepsilon_0^*(f_i)$'s. 
We note that $\varepsilon _0:X_i\to X$ is a closed 
embedding 
for any irreducible component $X_i$ of $X^0$. 
So, we get a desired $\mathbb Q$-Cartier 
$\mathbb Q$-divisor $C$ on $X$. 
\end{proof}

\begin{defn}[Simple normal crossing pair]\index{simple normal 
crossing pair}\label{03} 
We say that the pair $(X, B)$ is a {\em{simple 
normal crossing pair}} if the 
following 
conditions are satisfied. 
\begin{itemize}
\item[(1)] $X$ is a simple normal crossing 
variety, and 
\item[(2)] $B$ is an $\mathbb R$-Cartier 
$\mathbb R$-divisor whose 
support is a simple normal crossing 
divisor on $X$. 
\end{itemize}
We say that a simple normal crossing 
pair $(X, B)$ is {\em{embedded}} 
if there exists a closed embedding 
$\iota:X\to M$, where 
$M$ is a smooth variety 
of dimension $\dim X+1$. 
We put $K_{X^0}+\Theta=\varepsilon^*_0(K_X+B)$, where 
$\varepsilon_0:X^0\to X$ is the normalization of $X$. 
From now on, we assume that $B$ is a subboundary $\mathbb R$-divisor. 
A {\em{stratum}}\index{stratum} of $(X, B)$ is an irreducible 
component of $X$ or 
the image of some lc center of $(X^0, \Theta)$ on $X$. 
It is compatible with Definition \ref{0111} when $B=0$. 
A Cartier divisor $D$ on a simple normal 
crossing pair $(X, B)$ is called 
{\em{permissible with respect to $(X, B)$}} 
if $D$ contains no strata of the pair $(X, B)$.\index{permissible divisor} 
\end{defn}

\begin{rem}\label{smooth}
Let $(X, B)$ be a simple normal crossing pair. 
Assume that $X$ is smooth. 
Then $(X, B)$ is embedded. 
It is because $X$ is a divisor 
on $X\times C$, where $C$ is 
a smooth curve. 
\end{rem}

We give a typical example of 
embedded simple normal crossing 
pairs. 

\begin{ex}
Let $M$ be a smooth variety and $X$ a simple 
normal crossing divisor on $M$. 
Let $A$ be an $\mathbb R$-Cartier $\mathbb R$-divisor on $M$ such that 
$\Supp (X+A)$ is simple normal crossing on $M$ and 
that $X$ and $A$ have no common irreducible 
components. 
We put $B=A|_X$. Then 
$(X, B)$ is an embedded simple normal crossing 
pair. 
\end{ex}

The reader will find that it is very useful 
to introduce the notion of {\em{global 
embedded simple normal crossing 
pairs}}. 

\begin{defn}[Global embedded simple normal crossing 
pairs]\index{global embedded simple normal crossing 
pair}\label{gsnc0} 
Let $Y$ be a simple normal crossing divisor 
on a smooth 
variety $M$ and let $D$ be an $\mathbb R$-divisor 
on $M$ such that 
$\Supp (D+Y)$ is simple normal crossing and that 
$D$ and $Y$ have no common irreducible components. 
We put $B_Y=D|_Y$ and consider the pair $(Y, B_Y)$. 
We call $(Y, B_Y)$ a {\em{global embedded simple normal 
crossing pair}}. 
\end{defn}

The following lemma is obvious. 

\begin{lem}\label{use}
Let $(X, S+B)$ be an embedded simple normal crossing 
pair such that $S+B$ is a boundary $\mathbb R$-divisor, 
$S$ is reduced, and $\llcorner B\lrcorner=0$. 
Let $M$ be the ambient space of $X$ and $f:N\to M$ 
the blow-up along a smooth irreducible 
component $C$ of $\Supp (S+B)$. 
Let $Y$ be the strict transform of $X$ on $N$. 
Then $Y$ is a simple normal crossing divisor on $N$. 
We can write $K_Y+S_Y+B_Y=f^*(K_X+S+B)$, 
where $S_Y+B_Y$ is 
a boundary $\mathbb R$-Cartier $\mathbb R$-divisor 
on $Y$ such that $S_Y$ is reduced and $\llcorner B_Y\lrcorner 
=0$. In particular, $(Y, S_Y+B_Y)$ 
is an embedded simple normal crossing pair. 
By the construction, we can easily check the 
following properties. 
\begin{itemize}
\item[{\em{(i)}}] $S_Y$ is the strict transform of $S$ on $Y$ if 
$C\subset \Supp B$, 
\item[{\em{(ii)}}] $B_Y$ is the strict transform of 
$B$ on $Y$ if $C\subset \Supp S$, 
\item[{\em{(iii)}}] the 
$f$-image of any stratum of $(Y, S_Y+B_Y)$ 
is a stratum of $(X, S+B)$, and 
\item[{\em{(iv)}}] $R^if_*\mathcal O_Y=0$ for $i>0$ and 
$f_*\mathcal O_Y\simeq \mathcal O_X$. 
\end{itemize}
\end{lem}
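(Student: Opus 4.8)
The plan is to reduce everything to a local computation on the smooth ambient space $M$ and then read off the statements by standard adjunction and blow-up formulae. First I would recall that, since $(X, S+B)$ is embedded, $X$ is a simple normal crossing divisor on the smooth variety $M$ and there is an $\mathbb R$-divisor $\bar D$ on $M$ with $\Supp (X+\bar D)$ simple normal crossing, with $X$ and $\bar D$ sharing no component, and $S+B=\bar D|_X$. A component $C$ of $\Supp (S+B)$ is then, analytically locally, an intersection $X_i\cap \bar D_j$ of one component of $X$ with one component of $\bar D$; in particular $C$ is a smooth codimension-two stratum of the simple normal crossing divisor $X+\bar D$ on $M$, contained in exactly one component $X_i$ of $X$ and in exactly one component $\bar D_j$ of $\bar D$. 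Because $C$ is smooth and $M$ is smooth, the blow-up $f:N\to M$ has $N$ smooth with exceptional divisor $E$ a $\mathbb P^1$-bundle over $C$.

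The first key step is that $Y$ is simple normal crossing on $N$. I would verify this in a toric chart in which $X$, $\bar D$ and $C$ are all coordinate subschemes: blowing up the codimension-two coordinate center $C=\{x_i=x_j=0\}$ is an explicit monomial transformation, and one checks directly that the strict transforms of all components of $X+\bar D$ together with $E$ form a simple normal crossing divisor on $N$. In particular $\Supp (Y+\bar D_N+E)$ is simple normal crossing, where $\bar D_N$ is the strict transform of $\bar D$, so $Y$ is a simple normal crossing divisor on $N$ and, since $\dim N=\dim M=\dim X+1=\dim Y+1$, the pair $(Y, S_Y+B_Y)$ with $S_Y+B_Y$ defined below is an embedded simple normal crossing pair. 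The same chart description shows that the blow-up carries strata to strata: every component of $Y$ maps to a component of $X$ and $E$ maps onto $C$, whence the $f$-image of any stratum of $(Y, S_Y+B_Y)$ is a stratum of $(X, S+B)$, giving (iii).

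For the discrepancy formula I would combine the standard blow-up identities. Since $C$ has codimension two, $K_N=f^*K_M+E$; since $C$ lies in the single component $X_i$ of the reduced divisor $X$, we have $\mult_C X=1$ and hence $f^*X=Y+E$. Therefore $f^*(K_M+X)=K_N+Y$, and restricting to $Y$ via adjunction on the Gorenstein variety $Y$ gives $K_Y=(K_N+Y)|_Y$. Writing $K_X+S+B=(K_M+X+\bar D)|_X$ and using $f^*\bar D=\bar D_N+d_jE$, where $d_j\in(0,1]$ is the coefficient of the unique component $\bar D_j$ of $\bar D$ through $C$, I obtain $f^*(K_X+S+B)=K_Y+(\bar D_N+d_jE)|_Y$. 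I then set $S_Y+B_Y=(\bar D_N+d_jE)|_Y$. Its coefficients are those of $\bar D$ together with $d_j$, all in $(0,1]$, so $S_Y+B_Y$ is a boundary; decomposing by coefficient gives a reduced $S_Y$ and a $B_Y$ with $\llcorner B_Y\lrcorner=0$. Tracking the exceptional coefficient yields (i) and (ii): if $C\subset\Supp B$ then $d_j<1$, so $E|_Y$ lands in $B_Y$ and $S_Y$ is exactly the strict transform of $S$; if $C\subset\Supp S$ then $d_j=1$, so $E|_Y$ lands in $S_Y$ and $B_Y$ is the strict transform of $B$.

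Finally, for (iv) I would use the structure sequence $0\to\mathcal O_N(-Y)\to\mathcal O_N\to\mathcal O_Y\to 0$ together with $\mathcal O_N(-Y)\simeq\mathcal O_N(E)\otimes f^*\mathcal O_M(-X)$. Blow-up of a smooth center gives $Rf_*\mathcal O_N=\mathcal O_M$, and from $0\to\mathcal O_N\to\mathcal O_N(E)\to\mathcal O_N(E)|_E\to 0$ with $\mathcal O_N(E)|_E\simeq\mathcal O_E(-1)$ of fibrewise degree $-1$ over $C$ one gets $Rf_*\mathcal O_N(E)=\mathcal O_M$ as well. By the projection formula $Rf_*\mathcal O_N(-Y)$ is $\mathcal O_M(-X)$ in degree zero and vanishes in higher degrees, so the long exact sequence collapses to $0\to\mathcal O_M(-X)\to\mathcal O_M\to f_*\mathcal O_Y\to 0$ with $R^if_*\mathcal O_Y=0$ for $i>0$; this identifies $f_*\mathcal O_Y\simeq\mathcal O_X$ and proves (iv). I expect the only genuinely non-formal point to be the simple normal crossing claim for $Y$: although it is a routine monomial computation, it is the step on which the embeddedness, the strata statement (iii), and the equality $\mult_C X=1$ all rest, so I would carry out the local chart analysis there with care.
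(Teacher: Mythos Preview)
The paper offers no proof of this lemma beyond calling it obvious, so there is nothing to compare against; your job was to supply the details, and your overall plan---do everything in monomial charts on the smooth ambient $M$, read off the discrepancy from $K_N+Y=f^*(K_M+X)$, and handle (iv) via the structure sequence on $N$ together with $Rf_*\mathcal O_N(E)\simeq\mathcal O_M$---is exactly right. Your proof of (iv) in particular is clean and complete.

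There is one genuine slip. You begin by ``recalling'' that an embedded simple normal crossing pair comes with a global $\mathbb R$-divisor $\bar D$ on $M$ satisfying $\Supp(X+\bar D)$ simple normal crossing and $\bar D|_X=S+B$. That is not the definition of an \emph{embedded} simple normal crossing pair (Definition~\ref{03}), which only asks for the closed embedding $X\hookrightarrow M$; what you wrote is the definition of a \emph{global embedded} simple normal crossing pair (Definition~\ref{gsnc0}). The distinction matters here: Lemma~\ref{useful-lemma}, immediately following, uses Lemma~\ref{use} repeatedly precisely to manufacture such a global $\bar D$ after blowing up $M$, so assuming it in the proof of Lemma~\ref{use} would be circular.

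The repair is easy, because every place you invoke $\bar D$ is in a local computation. By Definition~\ref{02}, at each point of $X$ one can choose formal coordinates on $M$ in which both $X$ and $\Supp(S+B)$ are unions of coordinate hyperplanes; this furnishes a \emph{local} $\bar D$, and your chart calculation then yields the simple normal crossing property of $Y$, the formula $S_Y+B_Y=(f|_Y)^*(S+B)$ with the exceptional contribution having coefficient $d$ equal to the coefficient of $C$ in $S+B$, and statements (i)--(iii). Alternatively, bypass $\bar D$ entirely: deduce $K_Y=(f|_Y)^*K_X$ from $K_N+Y=f^*(K_M+X)$ (which needs only $\mult_C X=1$, a consequence of $C$ lying in a single component $X_{i_0}$), and then analyze $(f|_Y)^*(S+B)$ directly on $Y$. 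Either way your argument goes through once the opening sentence is corrected.
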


As a consequence of Lemma \ref{use}, 
we obtain a very useful lemma. 

\begin{lem}\label{useful-lemma}
Let $(X, B_X)$ be an embedded simple normal crossing pair, 
$B_X$ a boundary $\mathbb R$-divisor, and 
$M$ the ambient space of $X$. 
Then there is a projective birational morphism 
$f:N\to M$, which is a sequence of 
blow-ups as in {\em{Lemma \ref{use}}}, with the 
following properties. 
\begin{itemize}
\item[{\em{(i)}}] Let $Y$ be the strict transform of $X$ on $N$. 
We put $K_Y+B_Y=f^*(K_X+B_X)$. Then 
$(Y, B_Y)$ is an embedded simple normal crossing 
pair. Note that $B_Y$ is a boundary $\mathbb R$-divisor. 
\item[{\em{(ii)}}] $f:Y\to X$ is an isomorphism 
at the generic point 
of any stratum of $Y$. $f$-image of any stratum 
of $(Y, B_Y)$ is a stratum of $(X, B_X)$. 
\item[{\em{(iii)}}] $R^if_*\mathcal O_Y=0$ for any $i>0$ and 
$f_*\mathcal O_Y\simeq \mathcal O_X$. 
\item[{\em{(iv)}}] There exists an $\mathbb R$-divisor $D$ on $N$ 
such that $D$ and $Y$ have no common irreducible 
components and $\Supp (D+Y)$ is simple normal crossing on $N$, and 
$B_Y=D|_Y$. This means that the pair 
$(Y, B_Y)$ is a global embedded 
simple normal crossing pair. 
\end{itemize}
\end{lem}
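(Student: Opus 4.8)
The plan is to prove the lemma by iterating Lemma \ref{use}. The only thing separating an embedded simple normal crossing pair from a global embedded one is condition (iv): the boundary must be the restriction of an $\mathbb R$-divisor on the ambient space that meets $Y$ transversally with simple normal crossing total support. Since $B_X$ is a boundary with $\Supp B_X$ simple normal crossing on $X$, each irreducible component of $\Supp B_X$ is a smooth prime divisor on $X$, hence a smooth subvariety of codimension two in $M$. I would convert these components into restrictions of ambient divisors one at a time, writing $B_X=S+B$ with $S=\llcorner B_X\lrcorner$ so that Lemma \ref{use} applies. Concretely, I induct on the number $r$ of irreducible components of $\Supp B_X$ that are not already of the form $D'|_X$ for some ambient divisor $D'$ transverse to $X$; if $r=0$ the pair is already global embedded and there is nothing to prove.

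For the inductive step, pick such a component $C$ and apply Lemma \ref{use} to the blow-up $f\colon N\to M$ of $M$ along $C$. Lemma \ref{use} guarantees that the strict transform $Y$ is again a simple normal crossing divisor, that $(Y,B_Y)$ with $K_Y+B_Y=f^*(K_X+B_X)$ is an embedded simple normal crossing pair with $B_Y$ a boundary, and it supplies the stratum-tracking and cohomological properties we need. A local computation with the blow-up of a smooth ambient space along the codimension-two center $C\subset X$ shows that $C$ is realized as $E|_Y$ with the same coefficient, where $E$ is the exceptional divisor; thus the number of non-ambient components drops by one, while the components that were already ambient survive as the restrictions of the strict transforms of their ambient divisors. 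By induction the process terminates, and at the end every component of $B_Y$ is the restriction of an ambient divisor, so taking $D$ to be the corresponding $\mathbb R$-combination of those ambient divisors gives $B_Y=D|_Y$ as required in (iv).

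It remains to assemble the global conclusions for the composite morphism. Property (iii) is inherited by composition: if $g$ and $h$ each satisfy $g_*\mathcal O=\mathcal O$ and $R^{>0}g_*\mathcal O=0$, then the Leray spectral sequence $R^ph_*R^qg_*\mathcal O\Rightarrow R^{p+q}(h\circ g)_*\mathcal O$ collapses and yields the same vanishing for $h\circ g$; iterating over the blow-ups gives $f_*\mathcal O_Y\simeq\mathcal O_X$ and $R^if_*\mathcal O_Y=0$ for $i>0$. Property (ii) is likewise transitive: each individual blow-up sends strata to strata and is an isomorphism over the generic point of every stratum, since the centers are boundary components and $f|_Y\colon Y\to X$ restricts to an isomorphism near the generic points of the strata, so the composite has the same behaviour.

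The main obstacle is the bookkeeping needed to keep property (iv) under control through the entire sequence, i.e. to guarantee that $\Supp(D+Y)$ is simple normal crossing on $N$ and that $D$ and $Y$ share no component. The second is automatic, since the newly created components of $D$ are exceptional divisors of ambient blow-ups and are therefore distinct from the strict transform $Y$. The first requires that each center $C$ sit in simple normal crossing position with respect to the current ambient configuration, so that its exceptional divisor crosses $Y$ (and the previously produced ambient divisors) normally; this compatibility is exactly what makes $Y$ simple normal crossing in Lemma \ref{use}, and one must check that blowing up along one component does not spoil the transversality already arranged for the others. Verifying this, together with confirming that the coefficients stay in $[0,1]$ so that $B_Y$ remains a boundary, is the technical heart of the argument, but it reduces entirely to the single-blow-up analysis of Lemma \ref{use}.
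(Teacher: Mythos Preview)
Your proposal is correct and follows exactly the approach the paper intends: the paper states Lemma \ref{useful-lemma} simply ``as a consequence of Lemma \ref{use},'' and your iteration of single blow-ups along smooth irreducible components of $\Supp B_X$ is precisely how that consequence is realized (cf.\ the parallel argument spelled out later in Proposition \ref{taisetsu2}). One small imprecision: when you say ``$C$ is realized as $E|_Y$,'' note that $E|_Y$ may have additional irreducible components $E\cap Y_j$ over the other branches $X_j$ of $X$ meeting $C$, but these are likewise ambient, so your inductive count still goes through.
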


The next lemma is also easy to 
prove. 

\begin{lem}[{cf.~\cite[p.216 embedded log transformation]{ambro}}]
\index{embedded log transformation}
\label{useful-lem2}
Let $X$ be a simple normal crossing divisor on a smooth variety 
$M$ and let $D$ be an 
$\mathbb R$-divisor on $M$ such that 
$\Supp (D+X)$ is simple normal crossing and that 
$D$ and $X$ have  no common irreducible components. 
We put $B=D|_X$. 
Then $(X, B)$ is a global 
embedded simple normal crossing 
pair. 
Let $C$ be a smooth stratum of $(X, B^{=1})$. 
Let $\sigma:N\to M$ be the blow-up 
along $C$. We denote by $Y$ the reduced structure of the total 
transform of $X$ in $N$. we put $K_Y+B_Y=
f^*(K_X+B)$, where $f=\sigma|_{Y}$. 
Then we have the following properties. 
\begin{itemize}
\item[{\em{(i)}}] $(Y, B_Y)$ is an embedded simple normal 
crossing pair. 
\item[{\em{(ii)}}] $f_*\mathcal O_Y\simeq \mathcal O_X$ and 
$R^if_*\mathcal O_Y=0$ for any $i>0$. 
\item[{\em{(iii)}}] The strata of $(X, B^{=1})$ 
are exactly 
the images of the strata of $(Y, B^{=1}_Y)$. 
\item[{\em{(iv)}}] $\sigma^{-1}(C)$ is a maximal 
$($with respect to the inclusion$)$ stratum of 
$(Y, B^{=1}_Y)$. 
\item[{\em{(v)}}] There exists an $\mathbb R$-divisor $E$ 
on $N$ such that $\Supp (E+Y)$ is simple normal 
crossing and that $E$ and $Y$ have no common irreducible 
components such that 
$B_Y=E|_Y$. 
\item[{\em{(vi)}}] If $B$ is a boundary $\mathbb R$-divisor, 
then so is $B_Y$. 
\end{itemize}
\end{lem}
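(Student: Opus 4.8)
The plan is to reduce everything to an explicit computation on the blow-up $\sigma:N\to M$ along the smooth center $C$, exactly as in Lemma \ref{use}, the only new feature being that here $Y$ is the reduced total transform rather than the strict transform, so that the exceptional divisor $E_\sigma:=\sigma^{-1}(C)$ becomes an irreducible component of $Y$. Working in local coordinates on $M$ adapted to the simple normal crossing arrangement $\Supp(D+X)$, the center $C$ is a connected component of an intersection of coordinate hyperplanes; write $c=\operatorname{codim}_M C$, let $m$ be the number of branches of $X$ through $C$, and let $s$ be the number of components of $D^{=1}$ through $C$. Since $C$ is a stratum of $(X,B^{=1})$, exactly these $m+s$ components of $\Supp(D+X)$ pass through the generic point of $C$, so $c=m+s$ and no component of $D^{<1}$ passes through $C$. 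A standard toric/SNC computation then shows that $Y=X'\cup E_\sigma$, with $X'$ the strict transform of $X$, is a simple normal crossing divisor on $N$ and that $\Supp(\sigma^*D+Y)$ is simple normal crossing, which already gives (i).

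The key computation is the discrepancy. Using $K_X+B=(K_M+X+D)|_X$, $K_Y=(K_N+Y)|_Y$, the smooth blow-up formula $K_N=\sigma^*K_M+(c-1)E_\sigma$, and $\sigma^*X=X'+mE_\sigma$, $\sigma^*D=D'+sE_\sigma$, I compute
\[
\sigma^*(K_M+X+D)-K_N-Y=D',
\]
because the exceptional coefficient is $m+s-(c-1)-1=0$; this vanishing is exactly the observation that $\operatorname{mult}_C(X+D)=m+s=c$, i.e.\ that $C$ meets only coefficient-one components. Restricting to $Y$ gives $B_Y=D'|_Y$, so I may take $E:=D'$, the strict transform of $D$, which contains no component of $Y$ and has $\Supp(E+Y)$ simple normal crossing; this proves (v), exhibits $(Y,B_Y)$ as a global embedded simple normal crossing pair, and shows that $B_Y$ is a subboundary which is a boundary whenever $B$ is, giving (vi).

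For the cohomological statement (ii) I would use the Mayer--Vietoris resolution of Definition \ref{011}. Since $\sigma$ is the blow-up of a smooth center, $\sigma_*\mathcal O_N\simeq\mathcal O_M$ and $R^i\sigma_*\mathcal O_N=0$ for $i>0$; I propagate this to $\mathcal O_Y$ via the short exact sequence
\[
0\to\mathcal O_Y\to\mathcal O_{X'}\oplus\mathcal O_{E_\sigma}\to\mathcal O_{X'\cap E_\sigma}\to 0.
\]
Here $f|_{X'}:X'\to X$ is the induced blow-up of $X$ along the stratum $C$, for which $R^i(f|_{X'})_*\mathcal O_{X'}=0$ and $(f|_{X'})_*\mathcal O_{X'}\simeq\mathcal O_X$ by an iteration of Lemma \ref{use}; the projection $E_\sigma\to C$ is a projective bundle and $X'\cap E_\sigma\to C$ is the exceptional divisor of $f|_{X'}$, so both push forward to $\mathcal O_C$ with no higher direct images. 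Taking $f_*$ and chasing the long exact sequence then yields $f_*\mathcal O_Y\simeq\mathcal O_X$ and $R^if_*\mathcal O_Y=0$ for $i>0$. Finally, (iii) and (iv) follow by tracking strata: the blow-up of a simple normal crossing arrangement along a stratum sends each stratum of $(Y,B^{=1}_Y)$ onto a stratum of $(X,B^{=1})$ and hits every stratum of $(X,B^{=1})$, while $E_\sigma$, being irreducible as $C$ is, is an irreducible component of $Y$ and hence a maximal stratum of $(Y,B^{=1}_Y)$.

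The main obstacle I anticipate is the vanishing (ii), not the discrepancy bookkeeping: because both $X$ and $Y$ are reducible and $Y$ acquires the new component $E_\sigma$, the Mayer--Vietoris argument must be organized carefully, in general via the full simplicial resolution together with induction on the number of irreducible components, and one must verify the vanishing of higher direct images stratum by stratum. The conceptual crux that makes (v) and (vi) clean is the elementary but essential remark that a stratum of the coefficient-one pair $(X,B^{=1})$ meets only coefficient-one components, so the exceptional discrepancy is exactly $-1$ and $B_Y$ carries no exceptional part.
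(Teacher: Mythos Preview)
The paper does not supply a proof of this lemma, declaring it easy. Your discrepancy computation and the arguments for (i), (iii), (iv), (v), (vi) are correct; in particular, the observation that $\mathrm{mult}_C(X+D)=m+s=c$, forcing the exceptional coefficient of $K_N+Y+D'-\sigma^*(K_M+X+D)$ to vanish, is the heart of the matter and yields (v) and (vi) at once.

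There is, however, a genuine gap in your treatment of (ii). Your assertions that $(f|_{X'})_*\mathcal{O}_{X'}\simeq\mathcal{O}_X$ and $R^i(f|_{X'})_*\mathcal{O}_{X'}=0$ for $i>0$ are false when $s=0$, i.e.\ when $C$ is a stratum of $X$ itself with $m=c\ge 2$. For $m=c=2$ the strict transforms $X'_1$ and $X'_2$ become disjoint, so $f|_{X'}$ is the normalization of $X$ along $C$ and $(f|_{X'})_*\mathcal{O}_{X'}\ne\mathcal{O}_X$. For $m=c=3$ the strict transforms still meet pairwise but their triple intersection is empty, and one finds $R^1(f|_{X'})_*\mathcal{O}_{X'}\simeq\mathcal{O}_C\ne 0$. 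Similarly, your claim that $X'\cap E_\sigma\to C$ pushes forward to $\mathcal{O}_C$ with no higher direct images fails for $m=c$: the fiber is the union of all $c$ coordinate hyperplanes in $\mathbb{P}^{c-1}$, which has one-dimensional $H^{c-2}$. These defects do cancel in the Mayer--Vietoris long exact sequence for $Y=X'\cup E_\sigma$, but that cancellation is the actual content of (ii) and is not what you wrote; and invoking Lemma~\ref{use} does not help, since that lemma blows up components of the boundary, not strata.

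A much cleaner route to (ii) avoids $X'$ altogether: use the ambient sequence $0\to\mathcal{O}_N(-Y)\to\mathcal{O}_N\to\mathcal{O}_Y\to 0$. Since $Y=\sigma^*X-(m-1)E_\sigma$ as divisors, $\mathcal{O}_N(-Y)\simeq\sigma^*\mathcal{O}_M(-X)\otimes\mathcal{O}_N((m-1)E_\sigma)$, and by the projection formula it suffices to show $R\sigma_*\mathcal{O}_N((m-1)E_\sigma)\simeq\mathcal{O}_M$. This follows by induction on $k$ from $0\to\mathcal{O}_N((k-1)E_\sigma)\to\mathcal{O}_N(kE_\sigma)\to\mathcal{O}_{E_\sigma}(-k)\to 0$, using that $\mathcal{O}_{\mathbb{P}^{c-1}}(-k)$ is acyclic for $1\le k\le c-1$, together with $0\le m-1\le c-1$. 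Then $R\sigma_*\mathcal{O}_Y\simeq\mathcal{O}_X$ drops out immediately.
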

In general, normal crossing varieties are much more 
difficult than 
{\em{simple}} normal crossing 
varieties. 
We postpone the definition of {\em{normal crossing 
pairs}} in Section \ref{sec6} 
to avoid unnecessary confusion. 
Let us recall the notion of semi-ample 
$\mathbb R$-divisors since 
we often use it in this book. 

\begin{say}[Semi-ample $\mathbb R$-divisor]\index{semi-ample 
$\mathbb R$-divisor} 
Let $D$ be an $\mathbb R$-Cartier $\mathbb R$-divisor on a variety $X$ 
and $\pi:X\to S$ a proper morphism. 
Then, $D$ is $\pi$-semi-ample if $D\sim _{\mathbb R}f^*H$, 
where $f:X\to Y$ is a proper morphism over $S$ and 
$H$ a relatively ample $\mathbb R$-Cartier $\mathbb R$-divisor 
on $Y$. 
It is not difficult to see that $D$ is $\pi$-semi-ample 
if and only if $D\sim _{\mathbb R} \sum _ia_i D_i$, where 
$a_i$ is a positive real number and $D_i$ is a $\pi$-semi-ample 
Cartier divisor on $X$ for any $i$.  
\end{say}
 
In the following sections, we have to 
treat algebraic varieties with quotient singularities. 
All  the $V$-manifolds in this book are obtained 
as cyclic covers of smooth 
varieties whose ramification loci are contained 
in simple normal crossing divisors. 
So, they also have toroidal structures. 
We collect basic definitions according to 
\cite[Section 1]{steenbrink}, which 
is the best reference for our purposes. 

\begin{say}[$V$-manifold]\index{$V$-manifold}\label{v-n}
A {\em{$V$-manifold}} of dimension $N$ is a complex analytic 
space that admits an open covering $\{U_i\}$ such 
that each $U_i$ is analytically isomorphic to $V_i/G_i$, 
where $V_i\subset \mathbb C^N$ is an 
open ball and $G_i$ is a finite subgroup of $\GL(N, \mathbb C)$. 
In this paper, $G_i$ is always a cyclic group for any $i$. 
Let $X$ be a $V$-manifold and $\Sigma$ its 
singular locus. Then we define $\widetilde {\Omega}^{\bullet}_X=
j_*\Omega^{\bullet}_{X-\Sigma}$, where 
$j:X-\Sigma \to X$ is the natural open immersion. 
A divisor $D$ on $X$ is called a {\em{divisor with 
$V$-normal crossings}} if locally on $X$ we have $(X, D)\simeq 
(V, E)/G$ with $V\subset \mathbb C^N$ an open domain, 
$G\subset \GL(N, \mathbb C)$ a small 
subgroup acting on $V$, and $E\subset V$ a $G$-invariant 
divisor with only normal crossing singularities. 
We define $\widetilde {\Omega}^{\bullet}_X(\log D)
=j_*\Omega^{\bullet}_{X-\Sigma}(\log D)$. 
Furthermore, if $D$ is Cartier, then 
we put $\widetilde {\Omega}^{\bullet}_X(\log D)(-D)
=\widetilde \Omega^{\bullet}_X(\log D)\otimes \mathcal O_X(-D)$. 
This complex will play crucial roles in Sections \ref{sec2} and 
\ref{sec3}. 
\end{say}

\section{Fundamental injectivity theorems}\label{sec2}
The following proposition is a reformulation of 
the well-known result by Esnault--Viehweg (cf.~\cite[3.2.~Theorem.~c), 
5.1.~b)]{ev}). 
Their proof in \cite{ev} depends on the characteristic $p$ 
methods obtained by Deligne and Illusie. 
Here, we give another proof for the later usage. 
Note that all we want to do in this section is to generalize the following 
result for simple normal crossing pairs. 

\begin{prop}[Fundamental injectivity theorem I]
\index{fundamental injectivity theorem}\label{1}
Let $X$ be a proper smooth variety and $S+B$  
a boundary $\mathbb R$-divisor on $X$ such that 
the support of $S+B$ is simple normal crossing, $S$ is 
reduced, and 
$\llcorner B\lrcorner =0$. 
Let $L$ be a Cartier divisor on $X$ and let $D$ be an 
effective Cartier divisor whose support is contained 
in $\Supp B$. 
Assume that 
$L\sim_{\mathbb R}K_X+S+B$. Then 
the natural homomorphisms 
$$
H^q(X, \mathcal O_X(L))\to H^q(X, \mathcal O_X(L+D)), 
$$ 
which are induced by the inclusion 
$\mathcal O_X\to \mathcal O_X(D)$, 
are injective for all $q$. 
\end{prop}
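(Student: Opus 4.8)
The plan is to adapt the Esnault--Viehweg proof of Koll\'ar-type injectivity, working throughout with the \emph{compactly supported} logarithmic complex $\widetilde{\Omega}^{\bullet}(\log T)\otimes\mathcal O(-T)$, whose hypercohomology computes $H^{\bullet}_c$ of a complement. This $\mathcal O(-T)$-twist, in place of the naive logarithmic complex, is exactly what is needed to keep track of the reduced divisor $S$; it is the point at which Ambro's argument fails (cf.~Example \ref{0} and Remark \ref{r-1}).

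First I would reduce to the case where $B$ is a $\mathbb Q$-divisor. Since $X$ is smooth, $A:=L-K_X-S$ is a Cartier divisor with $A\sim_{\mathbb R}B$, and $B$ is permissible with $\llcorner B\lrcorner=0$; Lemma \ref{7} then yields a $\mathbb Q$-divisor $C$ with $A\sim_{\mathbb Q}C$, $\Supp C=\Supp B$, and $\llcorner C\lrcorner=0$. The last condition forces $0\le c_j<1$, so $C$ is a boundary and $\Supp D\subseteq\Supp C$. Replacing $B$ by $C$, I may assume $B=\sum_i b_iB_i$ with $b_i\in\mathbb Q\cap[0,1)$ and $L\sim_{\mathbb Q}K_X+S+B$.

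Next I would choose $N>0$ with $NB$ integral and $N(L-K_X-S)\sim NB$, and form the $N$-fold cyclic cover $\pi:Y\to X$ ramified along $\Supp B$ attached to $\mathcal L:=\mathcal O_X(L-K_X-S)$, $\mathcal L^{N}\simeq\mathcal O_X(NB)$ (the reduced divisor $S$, having integral coefficient, does not ramify). As $\Supp B$ is simple normal crossing, $Y$ is a $V$-manifold as in \ref{v-n}, and with $T:=\pi^{*}S$ the complexes $\widetilde{\Omega}^p_Y(\log T)$ make sense. The cyclic group acts, and decomposing $\pi_*\bigl(\widetilde{\Omega}^{\bullet}_Y(\log T)\otimes\mathcal O_Y(-T)\bigr)$ into eigensheaves isolates a subcomplex $\mathcal C^{\bullet}$ on $X$ with logarithmic poles along $S+\Supp B$ and a fractional twist by $B$, in which $\llcorner B\lrcorner=0$ kills every round-down and whose degree-zero term is $\mathcal O_X(K_X-L)$. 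The crucial input --- and the step I expect to be the genuine obstacle --- is the $E_1$-degeneration of
$$E_1^{pq}=H^q\bigl(Y,\widetilde{\Omega}^p_Y(\log T)\otimes\mathcal O_Y(-T)\bigr)\Longrightarrow\mathbb H^{p+q}\bigl(Y,\widetilde{\Omega}^{\bullet}_Y(\log T)\otimes\mathcal O_Y(-T)\bigr),$$
whose abutment is $H^{p+q}_c(Y-T,\mathbb C)$. This is a Hodge-theoretic statement about the mixed Hodge structure on the compactly supported cohomology of a $V$-manifold, whose proof I would defer to Section \ref{sec3}. Being equivariant, the degeneration descends to $\mathcal C^{\bullet}$.

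Finally I would extract the injectivity. By Serre duality the assertion is equivalent to the surjectivity of
$$H^{n-q}(X,\mathcal O_X(K_X-L-D))\longrightarrow H^{n-q}(X,\mathcal O_X(K_X-L)),\qquad n=\dim X,$$
induced by the inclusion $\mathcal O_X(-D)\hookrightarrow\mathcal O_X$. The $E_1$-degeneration exhibits $H^{n-q}(X,\mathcal O_X(K_X-L))$ as a split direct summand (the degree-zero graded piece) of $\mathbb H^{n-q}(X,\mathcal C^{\bullet})$. Because $\Supp D\subseteq\Supp B$ lies in the logarithmic poles of $\mathcal C^{\bullet}$, twisting by $\mathcal O_X(-D)$ stays within the same class of complexes, so $\mathcal O_X(-D)\hookrightarrow\mathcal O_X$ induces a strict morphism of the corresponding degenerate structures, and comparing the two spectral sequences forces the displayed surjectivity. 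The delicate bookkeeping is twofold: matching the eigensheaf twists so that the degree-zero term is precisely $\mathcal O_X(K_X-L)$ (here $0\le b_i<1$ is essential), and checking that adjoining the $D$-twist disturbs neither the degeneration nor this identification (here $\Supp D\subseteq\Supp B$ is essential).
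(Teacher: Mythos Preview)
Your proposal is correct and matches the paper's proof almost step for step: reduce to $\mathbb Q$-coefficients via Lemma~\ref{7}, form the cyclic cover $\pi:Y\to X$ branched along $\Supp B$, set $T=\pi^*S$, invoke the $E_1$-degeneration of the $(-T)$-twisted log de~Rham spectral sequence on the $V$-manifold $Y$ (deferred to Section~\ref{sec3}, specifically \ref{s5}), and descend via the eigensheaf decomposition to the complex $\Omega^{\bullet}_X(\log(S+B))\otimes\mathcal L^{-1}(-S)$ on $X$, whose degree-zero piece is $\mathcal O_X(K_X-L)$.

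One point to sharpen in your final paragraph: the paper does \emph{not} compare two degenerate spectral sequences via a ``strict morphism''. Instead it uses the single fact (cf.\ \cite[2.9]{ev}) that the inclusion
\[
\Omega^{\bullet}_X(\log(S+B))\otimes\mathcal L^{-1}(-S-D)\ \hookrightarrow\ \Omega^{\bullet}_X(\log(S+B))\otimes\mathcal L^{-1}(-S)
\]
is already a \emph{quasi-isomorphism}, because $\Supp D\subseteq\Supp B$ lies in the log poles. Thus the map $\gamma$ on hypercohomology is an isomorphism, the edge map $\alpha:\mathbb H^q\to H^q(X,\mathcal L^{-1}(-S))$ is surjective by the one degeneration you already have, and a single commutative square forces the surjectivity of $\beta:H^q(X,\mathcal L^{-1}(-S-D))\to H^q(X,\mathcal L^{-1}(-S))$. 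Your phrase ``stays within the same class of complexes'' is pointing at exactly this quasi-isomorphism, but as written it is too vague to carry the argument; without it, merely knowing that both spectral sequences degenerate does not by itself give surjectivity on the $E_1^{0,q}$ terms.
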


\begin{proof} 
We can assume that $B$ is a $\mathbb Q$-divisor 
and $L\sim _{\mathbb Q}K_X+S+B$ by Lemma \ref{7}. 
We put $\mathcal L=\mathcal O_X(L-K_X-S)$. 
Let $\nu$ be the smallest positive integer 
such that $\nu L\sim \nu (K_X+ S+ B)$. 
In particular, $\nu B$ is an integral Weil divisor. 
We take the $\nu$-fold cyclic cover 
$\pi': Y'=\Spec_X\!\bigoplus _{i=0}^{\nu-1} \mathcal L^{-i}\to 
X$ associated to the section $\nu B\in |\mathcal L^{\nu}|$. 
More precisely, let $s\in H^0(X, \mathcal L^{\nu})$ be a section 
whose zero divisor is $\nu B$. 
Then the dual of $s:\mathcal O_X\to \mathcal L^{\nu}$ 
defines a $\mathcal O_X$-algebra structure on 
$\bigoplus ^{\nu-1}_{i=0} \mathcal L^{-i}$.   
For the details, see, for example, \cite[3.5.~Cyclic covers]{ev}. 
Let $Y\to Y'$ be the normalization and 
$\pi:Y\to X$ the composition morphism. 
Then $Y$ has only quotient singularities 
because 
the support of $\nu B$ is simple normal crossing (cf.~\cite[3.24.~Lemma]
{ev}). 
We put $T=\pi^*S$. 
The usual differential $d:\mathcal O_Y\to \widetilde {\Omega}^1_Y\subset 
\widetilde {\Omega}^1_Y(\log T)$ gives the 
differential $d:\mathcal O_Y(-T)\to \widetilde {\Omega}^1_Y(\log T)(-T)$.  
This induces a natural connection 
$\pi_*(d): \pi_*\mathcal O_Y(-T)\to \pi_*(\widetilde{\Omega}^1_Y(\log T)(-T))$. 
It is easy to see that $\pi_*(d)$ decomposes into $\nu$ eigen 
components. 
One of them is $\nabla: \mathcal L^{-1}(-S)\to 
\Omega^1_X(\log (S+B))\otimes \mathcal L^{-1}(-S)$ 
(cf.~\cite[3.2.~Theorem.~c)]{ev}). 
It is well known and easy to check that 
the inclusion 
$\Omega^{\bullet}_X(\log (S+B))\otimes \mathcal L^{-1}(-S-D) 
\to \Omega^{\bullet}_X(\log (S+B))\otimes \mathcal L^{-1}(-S)$ is a 
quasi-isomorphism (cf.~\cite[2.9.~Properties]{ev}). 
On the other hand, the following spectral sequence 
\begin{eqnarray*}
\lefteqn{E^{pq}_1=H^q(X, \Omega^p_X(\log (S+B))
\otimes \mathcal L^{-1}(-S))}\\
& &\Longrightarrow \mathbb H^{p+q}(X, 
\Omega^{\bullet}_X(\log (S+B))\otimes 
\mathcal L^{-1}(-S))
\end{eqnarray*}
degenerates in $E_1$. 
This follows from 
the $E_1$-degeneration of 
$$
H^{q}(Y, \widetilde \Omega^p_Y(\log T)(-T))
\Longrightarrow \mathbb H^{p+q}(Y, \widetilde {\Omega}^{\bullet}_Y
(\log T)(-T))
$$ 
where the right hand side is isomorphic to $H^{p+q}_c(Y-T, \mathbb C)$. 
We will discuss this $E_1$-degeneration in Section \ref{sec3}. 
For the details, see \ref{s5} in Section \ref{sec3} below. 
We note that $\Omega^{\bullet}_X(\log (S+B))
\otimes \mathcal L^{-1}(-S)$ is a 
direct summand of $\pi_*(\widetilde {\Omega}^{\bullet}_Y(\log T)(-T))$. 
We consider the following 
commutative diagram for any $q$. 
$$
\begin{CD}
\mathbb H^{q}(X, \Omega^{\bullet}_X(\log (S+B))\otimes \mathcal L^{-1}(-S))
@>{\alpha}>> H^q(X, \mathcal L^{-1}(-S)) \\ 
@AA{\gamma}A @AA{\beta}A \\ 
\mathbb H^{q}(X, \Omega^{\bullet}_X(\log (S+B))\otimes \mathcal L^{-1}(-S-D))
@>>> H^q(X, \mathcal L^{-1}(-S-D)) 
\end{CD}
$$
Since $\gamma$ is an isomorphism by the 
above quasi-isomorphism and $\alpha$ is surjective 
by the $E_1$-degeneration, we obtain that $\beta$ 
is surjective. 
By the Serre duality, we obtain 
$H^q(X, \mathcal O_X(K_X)\otimes \mathcal L(S))\to H^q(X, \mathcal 
O_X(K_X)\otimes \mathcal L(S+D))$ is 
injective for any $q$. 
This means that $H^q(X, \mathcal O_X(L))\to 
H^q(X, \mathcal O_X(L+D))$ is injective for any $q$. 
\end{proof}

The next result is a key result of this chapter. 

\begin{prop}[Fundamental injectivity theorem II]
\index{fundamental injectivity theorem}\label{2}
Let $(X, S+B)$ be a simple normal crossing 
pair such that $X$ is proper, $S+B$ is a boundary $\mathbb R$-divisor, 
$S$ is reduced, and 
$\llcorner B\lrcorner =0$. 
Let $L$ be a Cartier divisor on $X$ and let $D$ be an 
effective Cartier divisor whose support is contained 
in $\Supp B$. 
Assume that 
$L\sim_{\mathbb R}K_X+S+B$. Then 
the natural homomorphisms 
$$
H^q(X, \mathcal O_X(L))\to H^q(X, \mathcal O_X(L+D)), 
$$ 
which are induced by the inclusion 
$\mathcal O_X\to \mathcal O_X(D)$, 
are injective for all $q$. 
\end{prop}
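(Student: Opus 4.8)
The plan is to run the proof of Proposition \ref{1} essentially verbatim, with the smooth ambient variety replaced by the simple normal crossing variety $X$ together with its Mayer--Vietoris simplicial resolution, and to defer the one genuinely new ingredient---the $E_1$-degeneration on a reducible space---to Section \ref{sec3}. First I would apply Lemma \ref{7}, which is stated precisely for permissible $\mathbb{R}$-divisors on simple normal crossing varieties, to reduce to the case where $B$ is a $\mathbb{Q}$-divisor and $L\sim_{\mathbb{Q}}K_X+S+B$. Setting $\mathcal{L}=\mathcal{O}_X(L-K_X-S)$ and taking the smallest $\nu>0$ with $\nu L\sim \nu(K_X+S+B)$, so that $\mathcal{L}^\nu\simeq\mathcal{O}_X(\nu B)$, the section cutting out $\nu B\in|\mathcal{L}^\nu|$ yields a $\nu$-fold cyclic cover; after normalization I obtain $\pi:Y\to X$, where $Y$ is again a simple normal crossing variety each of whose irreducible components is a $V$-manifold (the ramification being supported in a simple normal crossing divisor, as in \cite[3.24.~Lemma]{ev}). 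I put $T=\pi^*S$.

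Next I would form the logarithmic complex $\widetilde{\Omega}^\bullet_Y(\log T)(-T)$ in the sense of \ref{v-n}, push the differential $d:\mathcal{O}_Y(-T)\to\widetilde{\Omega}^1_Y(\log T)(-T)$ down by $\pi_*$, and split it into its $\nu$ eigencomponents; one of them is a connection $\nabla:\mathcal{L}^{-1}(-S)\to\Omega^1_X(\log(S+B))\otimes\mathcal{L}^{-1}(-S)$, where $\Omega^p_X(\log(S+B))$ is defined through the simplicial resolution as the relevant direct summand of $\pi_*\widetilde{\Omega}^p_Y(\log T)(-T)$. Since $\Supp D\subset\Supp B$, the inclusion
$$
\Omega^\bullet_X(\log(S+B))\otimes\mathcal{L}^{-1}(-S-D)\hookrightarrow\Omega^\bullet_X(\log(S+B))\otimes\mathcal{L}^{-1}(-S)
$$
is a quasi-isomorphism, exactly as in the smooth case. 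I would then reproduce the commutative square of the proof of Proposition \ref{1}, with left vertical map $\gamma$ induced by this inclusion and top edge map $\alpha:\mathbb{H}^q(X,\Omega^\bullet_X(\log(S+B))\otimes\mathcal{L}^{-1}(-S))\to H^q(X,\mathcal{L}^{-1}(-S))$. Here $\gamma$ is an isomorphism by the quasi-isomorphism, and $\alpha$ is surjective as soon as the Hodge-to-de Rham type spectral sequence
$$
E^{pq}_1=H^q(X,\Omega^p_X(\log(S+B))\otimes\mathcal{L}^{-1}(-S))\Longrightarrow\mathbb{H}^{p+q}(X,\Omega^\bullet_X(\log(S+B))\otimes\mathcal{L}^{-1}(-S))
$$
degenerates at $E_1$. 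Granting this, the right vertical map $\beta:H^q(X,\mathcal{L}^{-1}(-S-D))\to H^q(X,\mathcal{L}^{-1}(-S))$ is surjective, and Serre duality on the Gorenstein variety $X$ (whose dualizing sheaf is $\omega_X\simeq\mathcal{O}_X(K_X)$) turns the surjectivity of $\beta$ into the asserted injectivity of $H^q(X,\mathcal{O}_X(L))\to H^q(X,\mathcal{O}_X(L+D))$.

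The main obstacle is exactly this $E_1$-degeneration, which is strictly harder than in Proposition \ref{1}: it must be established on the reducible $V$-manifold $Y$, and it reduces to the $E_1$-degeneration of $H^q(Y,\widetilde{\Omega}^p_Y(\log T)(-T))\Rightarrow\mathbb{H}^{p+q}(Y,\widetilde{\Omega}^\bullet_Y(\log T)(-T))$ together with the identification $\mathbb{H}^{p+q}(Y,\widetilde{\Omega}^\bullet_Y(\log T)(-T))\cong H^{p+q}_c(Y-T,\mathbb{C})$. Unlike the smooth situation, one cannot invoke Deligne's degeneration directly; instead the semi-simplicial resolution of $Y$ has to be carried through the Hodge filtration, so that a mixed Hodge structure on the compactly supported cohomology $H^{p+q}_c(Y-T,\mathbb{C})$ is produced compatibly with the filtration coming from $\widetilde{\Omega}^\bullet_Y(\log T)(-T)$. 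I would isolate and prove this degeneration as a purely Hodge-theoretic statement in Section \ref{sec3} (see \ref{s5}), and keep the present argument as a formal deduction from it, parallel line-for-line to the proof of Proposition \ref{1}.
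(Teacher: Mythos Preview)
Your proposal is correct and follows the same route as the paper: reduce to $\mathbb{Q}$-coefficients via Lemma~\ref{7}, take the cyclic cover determined by $\nu B\in|\mathcal{L}^\nu|$, identify $\Omega^\bullet_X(\log(S+B))\otimes\mathcal{L}^{-1}(-S)$ as a direct summand of the pushforward of the logarithmic de~Rham complex twisted by $-T$, use the quasi-isomorphism coming from $\Supp D\subset\Supp B$, and defer the $E_1$-degeneration to Section~\ref{sec3}. Two small points of precision are worth flagging. First, the relevant degeneration is \ref{s6}, not \ref{s5}; the latter handles only the irreducible $V$-manifold arising in Proposition~\ref{1}, whereas here $Y$ is genuinely reducible and the spectral sequence lives on the Mayer--Vietoris simplicial object $Y^\bullet$ rather than on $Y$ itself. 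Second, ``after normalization I obtain $\pi:Y\to X$'' skips a step: the normalization $\widetilde{Y}$ of the naive cover $Y'=\Spec_X\bigoplus\mathcal{L}^{-i}$ is a disjoint union of $V$-manifolds over the components of $X$, and one must glue these along the preimages of the double locus to produce the connected reducible $Y$ (this is exactly the content of Remark~\ref{su}). With those adjustments your argument matches the paper's line for line.
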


\begin{proof} 
We can assume that $B$ is a $\mathbb Q$-divisor 
and $L\sim _{\mathbb Q}K_X+S+B$ by 
Lemma \ref{7}. 
Without loss of generality, we can assume that 
$X$ is connected. 
Let $\varepsilon: X^{\bullet}\to X$ be the Mayer--Vietoris 
simplicial resolution of $X$. 
Let $\nu$ be the smallest positive integer such that 
$\nu L\sim \nu (K_X+ S+ B)$. 
We put $\mathcal L=\mathcal O_X(L-K_X-S)$. 
We take the $\nu$-fold cyclic cover $\pi':Y'\to X$ associated 
to $\nu B\in |\mathcal L^{\nu}|$ as in the proof of Proposition \ref{1}. 
Let $\widetilde Y\to Y'$ be the normalization of $Y'$. 
We can glue 
$\widetilde Y$ 
naturally along the inverse image of $\varepsilon_1(X^1)\subset 
X$ and then obtain 
a connected 
reducible variety $Y$ and a finite morphism 
$\pi:Y\to X$. For a supplementary argument, see 
Remark \ref{su} below. 
We can construct the Mayer--Vietoris simplicial resolution 
$\varepsilon:Y^{\bullet}\to Y$ and 
a natural morphism $\pi_{\bullet}:Y^{\bullet}\to X^{\bullet}$. 
Note that Definition \ref{011} makes sense without any 
modifications though $Y$ has singularities. 
The finite morphism $\pi_0:Y^0\to X^0$ 
is essentially the same as the finite cover 
constructed in Proposition \ref{1}. 
Note that the inverse image of an irreducible 
component $X_{i}$ of $X$ by $\pi_0$ 
may be a disjoint union of copies of the finite 
cover constructed in the proof of 
Proposition \ref{1}. 
More precisely, let $V$ be any stratum of $X$. 
Then $\pi^{-1}(V)$ 
is not necessarily 
connected and $\pi:\pi^{-1}(V)\to V$ may be a disjoint union of 
copies of the finite cover constructed in the 
proof of 
the Proposition \ref{1}. 
Since $H^q(X^{\bullet}, (\mathcal L^{-1}(-S-D))^{\bullet})\simeq 
H^q(X, \mathcal L^{-1}(-S-D))$ and 
$H^q(X^{\bullet}, (\mathcal L^{-1}(-S))^{\bullet})\simeq 
H^q(X, \mathcal L^{-1}(-S))$, it is sufficient to 
see that 
$H^q(X^{\bullet}, (\mathcal L^{-1}(-S-D))^{\bullet})\to 
H^q(X^{\bullet}, (\mathcal L^{-1}(-S))^{\bullet})$ is surjective. 
First, we note that the natural inclusion 
$$
\Omega^{\bullet}_{X^n}(\log (S^n+B^n))\otimes 
(\mathcal L^{-1}(-S-D))^{n} \to 
\Omega ^{\bullet}_{X^n}(\log (S^n+B^n))\otimes 
(\mathcal L^{-1}(-S))^{n} 
$$ 
is a quasi-isomorphism for any $n\geq 0$ (cf.~\cite[2.9.~Properties]{ev}). 
So, 
$$
\Omega^{\bullet}_{X^{\bullet}}(\log (S^{\bullet}+B^{\bullet}))\otimes 
(\mathcal L^{-1}(-S-D))^{\bullet} \to 
\Omega ^{\bullet}_{X^{\bullet}}(\log (S^{\bullet}+B^{\bullet}))\otimes 
(\mathcal L^{-1}(-S)^{\bullet}) 
$$ 
is a quasi-isomorphism. 
We put $T=\pi^*S$. 
Then $\Omega^{\bullet}_{X^n}(\log (S^n +B^n))\otimes 
(\mathcal L^{-1}(-S))^{n}$ is a direct summand of 
${\pi_n}_*\widetilde {\Omega}^{\bullet}_Y(\log T^n)(-T^n)$ 
for any $n\geq 0$. 
Next, we can check that 
$$
E^{pq}_1=H^q(Y^{\bullet}, 
\widetilde {\Omega}^p_{Y^{\bullet}}(\log T^{\bullet})(-T^{\bullet}))
\Longrightarrow 
\mathbb H^{p+q}(Y, 
s(\widetilde {\Omega}^{\bullet}_{Y^{\bullet}}
(\log T^{\bullet})(-T^{\bullet}))) 
$$
degenerates in $E_1$. We will discuss this 
$E_1$-degeneration in Section \ref{sec3}.  See \ref{s6} in 
Section \ref{sec3}. 
The right hand side is isomorphic to 
$H^{p+q}_c(Y-T, \mathbb C)$. 
Therefore, 
\begin{eqnarray*}
\lefteqn{E^{pq}_1=H^q(X^{\bullet}, \Omega^p_{X^{\bullet}}(\log 
(S^{\bullet}+B^{\bullet}))\otimes 
(\mathcal L^{-1}(-S))^{\bullet})}\\
&&\Longrightarrow 
\mathbb H^{p+q}(X, 
s(\Omega^{\bullet}_{X^{\bullet}}(\log (S^{\bullet}+B^{\bullet})) 
\otimes (\mathcal L^{-1}(-S))^{\bullet})) 
\end{eqnarray*}
degenerates in $E_1$. 
Thus, we have the following commutative diagram. 
$$
\begin{CD}
\mathbb H^{q}(X, s(\Omega^{\bullet}_{X^{\bullet}}
(\log (S^{\bullet}+B^{\bullet}))\otimes (\mathcal L^{-1}(-S))^{\bullet}))
@>{\alpha}>> H^q(X^{\bullet}, (\mathcal L^{-1}(-S))^{\bullet}) \\ 
@AA{\gamma}A @AA{\beta}A \\ 
\mathbb H^{q}(X, 
s(\Omega^{\bullet}_{X^{\bullet}}
(\log (S^{\bullet}+B^{\bullet}))
\otimes (\mathcal L^{-1}(-S-D))^{\bullet}))
@>>> H^q(X^{\bullet}, (\mathcal L^{-1}(-S-D))^{\bullet}) 
\end{CD}
$$ 
As in the proof of Proposition \ref{1}, $\gamma$ is an isomorphism 
and $\alpha$ is surjective. 
Thus, $\beta$ is surjective. 
This implies the desired injectivity results. 
\end{proof}

\begin{rem}\label{su} 
For simplicity, we assume that 
$X=X_1\cup X_2$, where 
$X_1$ and $X_2$ are smooth, and that $V=X_1\cap X_2$ is irreducible. 
We consider the natural projection $p:\widetilde Y\to X$. 
We note that $\widetilde Y=\widetilde Y_1\coprod \widetilde Y_2$, 
where $\widetilde Y_i$ is the inverse image 
of $X_i$ by $p$ for $i=1$ and $2$. 
We put $p_i=p|_{\widetilde Y_i}$ for $i=1$ and $2$. 
It is easy to see that $p_1^{-1}(V)$ is 
isomorphic to $p_2^{-1}(V)$ over $V$. We denote 
it by $W$. 
We consider the following surjective 
$\mathcal O_X$-module homomorphism $\mu:
p_*\mathcal O_{\widetilde Y_1}\oplus 
p_*\mathcal O_{\widetilde Y_2}\to p_*\mathcal O_W: 
(f, g)\mapsto f|_W-g|_W$. Let $\mathcal A$ be the kernel 
of $\mu$. 
Then $\mathcal A$ is an $\mathcal O_X$-algebra and $\pi:Y
\to X$ is nothing but $\Spec _X\mathcal A\to X$. 
We can 
check that $\pi^{-1}(X_i)\simeq \widetilde Y_i$ for $i=1$ and $2$ 
and that $\pi^{-1}(V)\simeq W$. 
\end{rem}

\begin{rem}\label{rem-1}
As pointed out in the introduction, 
the proof of \cite[Theorem 3.1]{ambro} 
only implies that the homomorphisms 
$$H^q(X, \mathcal O_X(L-S))\to 
H^q(X, \mathcal O_X(L-S+D))$$ 
are injective for all $q$. 
When $S=0$, we do not need 
the mixed Hodge structure on the cohomology 
with compact support. The mixed Hodge structure on 
the usual singular 
cohomology is sufficient for the case when $S=0$. 
\end{rem}

We close this section with an easy 
application of Proposition \ref{2}. 
The following vanishing theorem is the Kodaira vanishing theorem 
for simple normal crossing varieties. 

\begin{cor}\label{3}
Let $X$ be a projective simple 
normal crossing variety and $\mathcal L$ 
an ample line bundle on $X$. 
Then $H^q(X, \mathcal O_X(K_X)\otimes \mathcal L)=0$ for 
any $q>0$. 
\end{cor}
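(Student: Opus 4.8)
The plan is to deduce this Kodaira-type vanishing from the fundamental injectivity theorem (Proposition \ref{2}) together with Serre vanishing, exactly as Koll\'ar's injectivity theorem yields Kodaira vanishing in the classical smooth case. The idea is to realize $\mathcal O_X(K_X)\otimes\mathcal L$ as $\mathcal O_X(L')$ for a Cartier divisor $L'$ that is $\mathbb Q$-linearly of the form $K_X+B$, with $B$ a boundary supported on an auxiliary divisor $D$, and then let $D$ absorb a large power of $\mathcal L$.

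First I would fix an integer $m\gg 0$ enjoying two properties: $\mathcal L^{\otimes m}$ is very ample, and Serre vanishing holds, i.e. $H^q(X,\mathcal O_X(K_X)\otimes\mathcal L^{\otimes (m+1)})=0$ for all $q>0$. Using the embedding given by $\mathcal L^{\otimes m}$, a Bertini argument on the smooth strata of $X$ produces a general member $D\in|\mathcal L^{\otimes m}|$ which is reduced, contains no stratum of $X$, and whose support is a simple normal crossing divisor on $X$ in the sense of Definition \ref{02}. In particular $D$ is permissible and $(X,\tfrac1m D)$ is a simple normal crossing pair with $\llcorner \tfrac1m D\lrcorner=0$.

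Next I would apply Proposition \ref{2} with $S=0$, $B=\tfrac1m D$, and $L'$ a Cartier divisor representing $\mathcal O_X(K_X)\otimes\mathcal L$. Since $D\sim m\mathcal L$ we have $B=\tfrac1m D\sim_{\mathbb Q}\mathcal L$, hence $L'\sim_{\mathbb Q}K_X+S+B$, so the hypotheses are satisfied, and I may take the effective divisor in the theorem to be $D$ itself (its support lies in $\Supp B$). The conclusion is that
$$H^q(X,\mathcal O_X(K_X)\otimes\mathcal L)\longrightarrow H^q(X,\mathcal O_X(K_X)\otimes\mathcal L\otimes\mathcal O_X(D))=H^q(X,\mathcal O_X(K_X)\otimes\mathcal L^{\otimes(m+1)})$$
is injective for every $q$. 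By the choice of $m$ the target vanishes for $q>0$, and therefore so does the source, which is the desired statement.

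The only genuinely delicate step is the construction of $D$: one must ensure simultaneously that the general member of $|\mathcal L^{\otimes m}|$ avoids every stratum and restricts to a simple normal crossing divisor on each stratum, so that $(X,\tfrac1m D)$ really is a simple normal crossing pair to which Proposition \ref{2} applies. This is a standard consequence of Bertini applied componentwise on the Mayer--Vietoris simplicial resolution $\varepsilon:X^{\bullet}\to X$, since $\mathcal L^{\otimes m}$ pulls back to a very ample bundle on each smooth $X^n$; the remainder of the argument is purely formal.
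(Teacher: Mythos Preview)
Your argument is correct and follows essentially the same route as the paper: choose a general member of a high power of $\mathcal L$ to play the role of $B$ in Proposition~\ref{2}, obtain an injection into the cohomology of $\mathcal O_X(K_X)$ twisted by a high power of $\mathcal L$, and kill the target by Serre vanishing. The only cosmetic difference is that the paper fixes $l$ for the Bertini step and then lets the multiple $mB$ (their notation) grow afterwards, whereas you arrange both conditions with a single large $m$ at the outset; either way works.
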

\begin{proof}
We take a general member $B\in |\mathcal L^l|$ for 
some $l\gg 0$. 
Then we can find a Cartier divisor $M$ such that 
$M\sim _{\mathbb Q}K_X+\frac{1}{l}B$ and 
$\mathcal O_X(K_X)\otimes \mathcal L\simeq \mathcal O_X(M)$. 
By Proposition \ref{2}, we obtain injections 
$H^q(X, \mathcal O_X(M))\to H^q(X, \mathcal O_X(M+mB))$ for 
any $q$ and any positive integer $m$. 
Since $B$ is ample, Serre's vanishing theorem implies 
the desired vanishing theorem. 
\end{proof}

\section{$E_1$-degenerations of Hodge to de Rham type spectral sequences}
\label{sec3}

From \ref{s1} to \ref{s3}, we recall some well-known 
results on mixed Hodge structures. 
We use the notations in \cite{deligne} freely. 
The basic references on this topic are 
\cite[Section 8]{deligne}, \cite[Part II]{elzein}, 
and \cite[Chapitres 2 and 3]{elzein2}. 
The recent book \cite{ps} may be useful. 
The starting point is the pure Hodge structures on proper 
smooth algebraic varieties. 

\begin{say}\label{s1}
(Hodge structures for proper smooth varieties). 
Let $X$ be a proper smooth algebraic variety over 
$\mathbb C$. 
Then the triple $(\mathbb Z_X, 
(\Omega^{\bullet}_X, F), \alpha)$, 
where $\Omega^{\bullet}_X$ is the holomorphic de Rham complex 
with the filtration b\^ete $F$ and 
$\alpha:\mathbb C_X\to \Omega^{\bullet}_X$ is the inclusion, 
is a cohomological Hodge complex (CHC, for short) of weight 
zero. 
\end{say}
The next one is also a fundamental 
example. For the details, see \cite[I.1.]{elzein} or 
\cite[3.5]{elzein2}. 

\begin{say}\label{s2}
(Mixed Hodge structures for 
proper simple normal crossing varieties). 
Let $D$ be a proper simple normal crossing algebraic 
variety over $\mathbb C$. 
Let $\varepsilon:D^{\bullet}\to D$ be the Mayer--Vietoris simplicial 
resolution (cf.~Definition \ref{011}). 
The following complex of sheaves, denoted by 
$\mathbb Q_{D^{\bullet}}$, 
$$
0\to {\varepsilon_0}_*\mathbb Q_{D^0}
\to {\varepsilon_1}_*\mathbb Q_{D^1}\to \cdots 
\to {\varepsilon_k}_*\mathbb Q_{D^k}\to \cdots, 
$$ is 
a resolution of $\mathbb Q_D$. 
More explicitly, the differential 
$d_k: \varepsilon _{k*} \mathbb Q_{D^k} 
\to \varepsilon _{k+1*}\mathbb Q_{D^{k+1}}$ is $\sum _{j=0}^{k+1} 
(-1)^j\lambda^*_{j, k+1}$ for any $k\geq 0$. 
For the details, see \cite[I.1.]{elzein} or 
\cite[3.5.3]{elzein2}. 
We obtain the resolution $\Omega^{\bullet}_{D^{\bullet}}$ of 
$\mathbb C_D$ as follows, 
$$
0\to {\varepsilon_0}_*\Omega^{\bullet}_{D^0}
\to {\varepsilon_1}_*\Omega^{\bullet}_{D^1}\to \cdots 
\to {\varepsilon_k}_*\Omega^{\bullet}_{D^k}\to \cdots. 
$$ 
Of course, 
$d_k: \varepsilon _{k*} \Omega^{\bullet}_{D^k} 
\to \varepsilon _{k+1*}\Omega^{\bullet}_{D^{k+1}}$ is $\sum _{j=0}^{k+1} 
(-1)^j\lambda^*_{j, k+1}$. 
Let $s(\Omega^{\bullet}_{D^{\bullet}})$ be the simple complex 
associated to the double complex $\Omega^{\bullet}_{D^{\bullet}}$. 
The Hodge filtration $F$ on 
$s(\Omega^{\bullet}_{D^{\bullet}})$ is defined 
by  
$F^p=s(0\to \cdots \to 0\to 
\varepsilon _*\Omega^p_{D^{\bullet}}\to 
\varepsilon_*\Omega^{p+1}_{D^{\bullet}}\to \cdots)$. 
We note that $\varepsilon _*\Omega^p_{D^{\bullet}}
=(0\to \varepsilon _{0*}\Omega^p_{D^0}
\to \varepsilon _{1*}\Omega^p_{D^1}\to \cdots 
\to \varepsilon _{k*}\Omega^p_{D^k}\to \cdots)$. 
There exist natural weight filtrations $W$'s on 
$\mathbb Q_{D^{\bullet}}$ and $s(\Omega^{\bullet}_{D^{\bullet}})$. 
We omit the definition of the weight filtrations $W$'s on 
$\mathbb Q_{D^{\bullet}}$ and $s(\Omega^{\bullet}_{D^{\bullet}})$  
since we do not need their explicit descriptions. 
See \cite[I.1.]{elzein} or \cite[3.5.6]{elzein2}. 
Then $(\mathbb Z_{D}, (\mathbb Q_{D^{\bullet}}, W), 
(s(\Omega^{\bullet}_{D^{\bullet}}), W, F))$ is a cohomological 
mixed Hodge complex (CMHC, for short). 
This CMHC induces a natural mixed Hodge structure on $H^{\bullet}
(D, \mathbb Z)$. 
\end{say}
For the precise definitions of CHC and CMHC (CHMC, in French), 
see \cite[Section 8]{deligne} or 
\cite[Chapitre 3]{elzein2}. 
The third example is not so standard but is indispensable for our injectivity 
theorems. 

\begin{say}\label{s3}
(Mixed Hodge structure on the cohomology 
with compact support). 
Let $X$ be a proper 
smooth algebraic variety over $\mathbb C$ and 
$D$ a simple normal crossing divisor on $X$. 
We consider the mixed cone of $\mathbb Q_X\to 
\mathbb Q_{D^{\bullet}}$ with suitable shifts of complexes 
and weight filtrations (for the details, 
see \cite[I.3.]{elzein} or 
\cite[3.7.14]{elzein2}). We obtain a complex $\mathbb Q_{X-
D^{\bullet}}$, which is quasi-isomorphic to $j_{!}\mathbb Q_{X-D}$, 
where $j:X-D\to X$ is the natural open immersion, and 
a weight filtration $W$ on $\mathbb Q_{X-D^{\bullet}}$. 
We define in the same way, that is, 
by taking a cone of a morphism of complexes 
$\Omega^{\bullet}_X\to \Omega^{\bullet}_{D^{\bullet}}$, 
a complex $\Omega^{\bullet}_{X-D^{\bullet}}$ with 
filtrations $W$ and $F$. 
Then we obtain that the triple 
$(j_!\mathbb Z_{X-D}, (\mathbb Q_{X-D^{\bullet}}, 
W), (\Omega^{\bullet}_{X-D^{\bullet}}, W, F))$ is 
a CMHC. 
It defines a natural mixed Hodge structure on $H^{\bullet}_c(X-D, 
\mathbb Z)$. 
Since we can check that the complex 
\begin{eqnarray*}
0\to \Omega^{\bullet}_X(\log D)(-D)\to \Omega^{\bullet}_X
\to {\varepsilon_0}_*\Omega^{\bullet}_{D^0}\\
\to {\varepsilon_1}_*\Omega^{\bullet}_{D^1}\to 
\cdots 
\to {\varepsilon_k}_*\Omega^{\bullet}_{D^k}\to \cdots
\end{eqnarray*} 
is exact by direct local calculations, we see that 
$(\Omega^{\bullet}_{X-D^{\bullet}}, F)$ is quasi-isomorphic to 
$(\Omega^{\bullet}_X(\log D)(-D), F)$ in $D^+F(X, \mathbb C)$, 
where 
\begin{eqnarray*}
\lefteqn{F^p\Omega^{\bullet}_X(\log D)(-D)}\\ & & 
=(0\to \cdots \to 0 \to \Omega^p_X(\log D)(-D)\to 
\Omega^{p+1}_X(\log D)(-D)\to \cdots). 
\end{eqnarray*}
Therefore, the spectral sequence 
$$
E^{pq}_1=H^q(X, \Omega^p_X(\log D)(-D))
\Longrightarrow 
\mathbb H^{p+q}(X, \Omega^{\bullet}_X(\log D)(-D))
$$ 
degenerates in $E_1$ and the right hand side 
is isomorphic to $H^{p+q}_c(X-D, \mathbb C)$. 
\end{say}

From here, we treat mixed Hodge structures on 
much more complicated algebraic varieties. 

\begin{say}\label{s4} 
(Mixed Hodge structures for 
proper simple normal crossing pairs). 
Let $(X, D)$ be a proper simple normal crossing pair over $\mathbb C$ 
such that $D$ is reduced. 
Let $\varepsilon:X^{\bullet}\to X$ be the 
Mayer--Vietoris simplicial resolution of $X$. 
As we saw in the previous step, we have a CHMC 
$$(j_{n!}\mathbb Z_{X^n-D^n}, (\mathbb Q_{X^n-(D^n)^{\bullet}}, 
W), (\Omega^{\bullet}_{X^n-(D^n)^{\bullet}}, W, F))$$ on $X^n$, 
where $j_n:X^n-D^n\to X^n$ is the natural 
open immersion, and that $(\Omega^{\bullet}_{X^n-(D^n)^{\bullet}}, F)$ 
is quasi-isomorphic to $(\Omega^{\bullet}_{X^n}(\log D^n)(-D^n), 
F)$ in $D^+F(X^n, \mathbb C)$ for 
any $n\geq 0$. 
Therefore, by using the Mayer--Vietoris 
simplicial resolution $\varepsilon: X^{\bullet}\to X$, 
we can construct a CMHC 
$(j_!\mathbb Z_{X-D}, (K_{\mathbb Q}, 
W), (K_{\mathbb C}, W, F))$ 
on $X$ that induces a natural mixed Hodge structure on $H^{\bullet}_c
(X-D, \mathbb Z)$. We can see that 
$(K_{\mathbb C}, F)$ is quasi-isomorphic to $(s(\Omega^{\bullet}_{X^{\bullet}}
(\log D^{\bullet})(-D^{\bullet})), F)$ in $D^+F(X, \mathbb C)$, 
where 
\begin{eqnarray*}
F^p=s(0\to \cdots \to 0 \to 
\varepsilon _*\Omega^p_{X^{\bullet}}(\log D^{\bullet})(-D^{\bullet})
\\ 
\to \varepsilon _*\Omega^{p+1}_{X^{\bullet}}(\log D^{\bullet})(-D^{\bullet})
\to \cdots).
\end{eqnarray*}
We note that $\Omega^{\bullet}_{X^{\bullet}}(\log D^{\bullet})(-D^{\bullet})$ is 
the double complex 
\begin{eqnarray*}
0\to {\varepsilon _0}_*\Omega^{\bullet}_{X^0}(\log D^0)(-D^0) 
\to {\varepsilon _1}_*\Omega^{\bullet}_{X^1}(\log D^1)(-D^1) 
\to \cdots\\
\to 
{\varepsilon _k}_*\Omega^{\bullet}_{X^k}(\log D^k)(-D^k) 
\to \cdots. 
\end{eqnarray*}
Therefore, the spectral sequence 
$$
E^{pq}_1=H^q(X^{\bullet}, \Omega^p_{X^{\bullet}}(\log D^{\bullet})(-D
^{\bullet}))
\Longrightarrow 
\mathbb H^{p+q}(X, s(\Omega^{\bullet}_{X^{\bullet}}(\log D^{\bullet})
(-D^{\bullet})))
$$ 
degenerates in $E_1$ and the right hand side 
is isomorphic to $H^{p+q}_c(X-D, \mathbb C)$. 
\end{say} 

Let us go to the proof of the $E_1$-degeneration that 
we already used in the proof of Proposition \ref{1}. 

\begin{say}[$E_1$-degeneration for Proposition \ref{1}]\label{s5}
Here, we use the notation in the proof of 
Proposition \ref{1}. 
In this case, $Y$ has only quotient singularities. 
Then $(\mathbb Z_Y, (\widetilde \Omega^{\bullet}_Y, F), \alpha)$ 
is a CHC, where $F$ is the filtration b\^ete and 
$\alpha:\mathbb C_Y\to \widetilde {\Omega}^{\bullet}_Y$ is 
the inclusion. For the 
details, see \cite[(1.6)]{steenbrink}. 
It is easy to see that $T$ is a divisor with $V$-normal crossings 
on $Y$ (see \ref{v-n} or \cite[(1.16) Definition]{steenbrink}). 
We can easily check that $Y$ is singular only over the singular 
locus of $\Supp B$. 
Let $\varepsilon:T^{\bullet}\to T$ be the Mayer--Vietoris 
simplicial resolution. Though $T$ has singularities, 
Definition \ref{011} makes sense without any modifications. 
We note that 
$T^n$ has only quotient singularities for any $n\geq 0$ 
by the construction of $\pi:Y\to X$. 
We can also check that the same construction in \ref{s2} 
works with minor modifications and we have 
a CMHC $(\mathbb Z_T, (\mathbb Q_{T^{\bullet}}, W), 
(s(\widetilde {\Omega}^{\bullet}_{T^{\bullet}}), W, F))$ that 
induces a natural mixed Hodge structure on $H^{\bullet}(T, \mathbb Z)$. 
By the same arguments as in \ref{s3}, we can construct a triple 
$(j_!\mathbb Z_{Y-T}, (\mathbb Q_{Y-T^{\bullet}}, W), (K_{\mathbb C}, 
W, F))$, where $j:Y-T\to Y$ is the natural 
open immersion. 
It is a CHMC that induces a natural mixed Hodge structure on 
$H^{\bullet}_c
(Y-T, \mathbb Z)$ and $(K_{\mathbb C}, F)$ is quasi-isomorphic 
to $(\widetilde \Omega^{\bullet}_Y(\log T)(-T), F)$ 
in $D^+F(Y, \mathbb C)$, where 
\begin{eqnarray*}
\lefteqn{F^p\widetilde\Omega^{\bullet}_Y(\log T)(-T)}\\& &
=(0\to \cdots \to 0 \to \widetilde \Omega^p_Y(\log T)(-T)\to 
\widetilde\Omega^{p+1}_Y(\log T)(-T)\to \cdots). 
\end{eqnarray*} 
Therefore, the spectral sequence 
$$
E^{pq}_1=H^q(Y, \widetilde\Omega^p_Y(\log T)(-T))
\Longrightarrow 
\mathbb H^{p+q}(Y, \Omega^{\bullet}_Y(\log T)(-T))
$$ 
degenerates in $E_1$ and the right hand side 
is isomorphic to $H^{p+q}_c(Y-T, \mathbb C)$. 
\end{say}

The final one is the $E_1$-degeneration that we used in the 
proof of Proposition \ref{2}. 
It may be one of the main contributions of this chapter. 

\begin{say}[$E_1$-degeneration for Proposition \ref{2}]\label{s6} 
We use the notation in the proof of Proposition \ref{2}. 
Let $\varepsilon:Y^{\bullet}\to Y$ be the Mayer--Vietoris 
simplicial resolution. 
By the previous step, we can obtain a CHMC 
$$(j_{n!}\mathbb Z_{Y^n-T^n}, (\mathbb Q_{Y^n-(T^n)^{\bullet}}, 
W), (K_{\mathbb C}, 
W, F))$$ for each $n\geq 0$. 
Of course, $j_n:Y^n-T^n\to Y^n$ is the natural 
open immersion for any $n\geq 0$. 
Therefore, we can construct 
a CMHC 
$$(j_!\mathbb Z_{Y-T}, (K_{\mathbb Q}, W), (K_{\mathbb C}, 
W, F))$$ on $Y$. 
It induces a natural mixed Hodge structure on $H^{\bullet}_c(Y-T, \mathbb Z)$. 
We note that $(K_{\mathbb C}, F)$ is quasi-isomorphic to 
$(s(\widetilde \Omega^{\bullet}_{Y^{\bullet}}(\log T^{\bullet})(-T^{\bullet})), 
F)$ in $D^+F(Y, \mathbb C)$, 
where 
\begin{eqnarray*}
F^p=s(0\to \cdots \to 0 \to 
\varepsilon _*\widetilde \Omega^p_{Y^{\bullet}}(\log T^{\bullet})(-T^{\bullet})
\\
\to \varepsilon _*\widetilde
\Omega^{p+1}_{Y^{\bullet}}(\log T^{\bullet})(-T^{\bullet})
\to \cdots).
\end{eqnarray*} 
See \ref{s4} above. 
Thus, the desired spectral sequence 
$$
E^{pq}_1=H^q(Y^{\bullet}, \widetilde 
\Omega^p_{Y^{\bullet}}(\log T^{\bullet})(-T
^{\bullet}))
\Longrightarrow 
\mathbb H^{p+q}(Y, s(\widetilde
\Omega^{\bullet}_{Y^{\bullet}}(\log T^{\bullet})
(-T^{\bullet})))
$$ 
degenerates in $E_1$. 
It is what we need in the proof of Proposition \ref{2}. 
Note that 
$
\mathbb H^{p+q}(Y, s(\widetilde
\Omega^{\bullet}_{Y^{\bullet}}(\log T^{\bullet})
(-T^{\bullet})))
\simeq H^{p+q}_c(Y-T, \mathbb C)$. 
\end{say}

\section{Vanishing and injectivity theorems}\label{sec4}

The main purpose of this section is to prove Ambro's 
theorems (cf.~\cite[Theorems 3.1 and 3.2]{ambro}) 
for embedded simple normal crossing pairs. 
The next lemma (cf.~\cite[Proposition 1.11]{fujino-high}) 
is missing in the proof of 
\cite[Theorem 3.1]{ambro}. It justifies the 
first three lines in the proof of \cite[Theorem 3.1]{ambro}. 

\begin{lem}[Relative vanishing lemma]\label{re-vani-lem} 
Let $f:Y\to X$ be a proper morphism 
from a simple normal crossing 
pair $(Y, T+D)$ 
such that $T+D$ is a boundary 
$\mathbb R$-divisor, 
$T$ is reduced, and $\llcorner D\lrcorner =0$. 
We assume that $f$ is an isomorphism 
at the generic point of any stratum of the pair $(Y, T+D)$. 
Let $L$ be a Cartier divisor on $Y$ such 
that $L\sim _{\mathbb R}K_Y+T+D$. 
Then $R^qf_*\mathcal O_Y(L)=0$ for $q>0$. 
\end{lem}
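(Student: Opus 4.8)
The plan is to deduce this relative statement from the global theorems already established, namely the fundamental injectivity theorem (Proposition \ref{2}) and its consequence, the Kodaira vanishing theorem for simple normal crossing varieties (Corollary \ref{3}). Since $R^qf_*\mathcal O_Y(L)$ is coherent and the assertion is local on $X$, I would first shrink $X$ to a quasi-projective scheme and invoke Lemma \ref{7} to replace $T+D$ by a $\mathbb Q$-divisor, so that $L\sim_{\mathbb Q}K_Y+T+D$. The decisive reduction is the familiar one: for a proper $g\colon\overline Y\to\overline X$ over a projective base, $R^qg_*\mathcal F=0$ for all $q>0$ if and only if $H^q(\overline Y,\mathcal F\otimes g^*\mathcal A)=0$ for all $q>0$ and all sufficiently ample $\mathcal A$ on $\overline X$; the substantial direction follows from Serre vanishing in the Leray spectral sequence, which gives $H^q(\overline Y,\mathcal F\otimes g^*\mathcal A)\simeq H^0(\overline X,R^qg_*\mathcal F\otimes\mathcal A)$ and hence forces $R^qg_*\mathcal F=0$ by global generation.

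Accordingly I would pass to a projective compactification. Using Szab\'o's resolution lemma \ref{15-resol} together with the blow-up constructions of Lemmas \ref{use}--\ref{useful-lem2}, I would extend the data to a projective simple normal crossing pair $(\overline Y,\overline T+\overline D)$, a projective $\overline X\supseteq X$, and a proper $\overline f\colon\overline Y\to\overline X$ with $\overline f^{-1}(X)=Y$, a Cartier divisor $\overline L$ restricting to $L$ with $\overline L\sim_{\mathbb Q}K_{\overline Y}+\overline T+\overline D$, and with $\overline f$ still an isomorphism at the generic point of every stratum of $(\overline Y,\overline T+\overline D)$. Because $\overline f^{-1}(X)=Y$, it then suffices to prove $R^q\overline f_*\mathcal O_{\overline Y}(\overline L)=0$, equivalently $H^q(\overline Y,\mathcal O_{\overline Y}(\overline L)\otimes\overline f^*\mathcal A)=0$ for $q>0$ and $\mathcal A$ ample on $\overline X$.

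I would prove this vanishing by induction on $n=\dim\overline X$, feeding the positivity of $\mathcal A$ in one dimension at a time. Choosing an ample $\mathcal A'$ with $\mathcal A\otimes(\mathcal A')^{-1}$ ample and a general $H\in|\mathcal A'|$, the reduced divisor $\overline Y_H=\overline f^*H$ meets the strata transversally, and there is a restriction exact sequence
$$0\to\mathcal O_{\overline Y}(\overline L)\otimes\overline f^*\bigl(\mathcal A\otimes(\mathcal A')^{-1}\bigr)\to\mathcal O_{\overline Y}(\overline L)\otimes\overline f^*\mathcal A\to\mathcal O_{\overline Y_H}(\overline L_H)\to 0.$$
By adjunction $\overline L_H\sim_{\mathbb Q}K_{\overline Y_H}+(\overline T+\overline D)|_{\overline Y_H}+(\overline f|_{\overline Y_H})^*M_H$ for some ample $M_H$ on $H$, so the higher cohomology of the third term vanishes by the induction hypothesis applied to $\overline f|_{\overline Y_H}\colon\overline Y_H\to H$. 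This relates the higher cohomology of the two ample twists, and a Serre-vanishing and Hilbert-polynomial comparison then shows that $R^q\overline f_*\mathcal O_{\overline Y}(\overline L)$ has support of dimension at most zero; the remaining zero-dimensional part is removed by a local analysis at these finitely many points, reducing to the absolute vanishing of Corollary \ref{3} and Proposition \ref{2}. The base case $n=0$ is immediate, since the stratum hypothesis forces $\overline Y$ itself to be zero-dimensional.

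The two genuinely delicate points are the compactification and the use of positivity. Constructing $(\overline Y,\overline T+\overline D)$ so that the simple normal crossing structure, the relation $\overline L\sim_{\mathbb Q}K_{\overline Y}+\overline T+\overline D$, and above all the ``isomorphism at the generic point of every stratum'' condition all survive over $\overline X\setminus X$ is exactly what Lemmas \ref{use}--\ref{useful-lem2} are designed to control, and is easy to get wrong. The deeper obstacle is that $\overline f^*\mathcal A$ is only semi-ample and never ample on $\overline Y$, so the absolute vanishing theorems cannot be applied in a single step; all the Hodge-theoretic positivity must be transported into the relative setting through the hyperplane-section induction, and the hardest bookkeeping is ensuring that the contracted strata — precisely those on which $\overline f$ fails to be finite — do not obstruct the conclusion, which is where the stratum condition and the Mayer--Vietoris structure are essential.
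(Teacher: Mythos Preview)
Your proposal has genuine gaps, and more importantly it misses a much simpler route.

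The serious gap is the final step. After your hyperplane induction shows that $R^q\overline f_*\mathcal O_{\overline Y}(\overline L)$ has zero-dimensional support, you say ``the remaining zero-dimensional part is removed by a local analysis at these finitely many points, reducing to the absolute vanishing of Corollary \ref{3} and Proposition \ref{2}.'' This is where the argument stalls. Corollary \ref{3} concerns ample line bundles on projective simple normal crossing varieties; it does not apply to a formal or analytic neighbourhood of a fibre $\overline f^{-1}(p)$, and Proposition \ref{2} gives injectivity, not vanishing. You have not explained how to kill the possible skyscraper contributions, and I do not see a way to do it with only these tools. There is also a real issue with the compactification: you need every stratum of $(\overline Y,\overline T+\overline D)$, including those created over $\overline X\setminus X$, to map isomorphically at its generic point. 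The lemmas you cite do not guarantee this; new boundary strata over $\overline X\setminus X$ will typically be contracted, and your induction then fails over that locus.

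The paper's proof avoids all of this by a direct d\'evissage on the irreducible components of $Y$. The point you are overlooking is that an irreducible component $Y_1$ of a simple normal crossing variety is \emph{smooth}, and on a smooth variety the relative statement $R^qf_*\mathcal O_{Y_1}(L')=0$ with $L'\sim_{\mathbb R}K_{Y_1}+(T+D)|_{Y_1}$ is classical: the stratum hypothesis says precisely that $L'-(K_{Y_1}+(T+D)|_{Y_1})\sim_{\mathbb R}0$ is nef and log big over $X$ (every stratum maps birationally to its image, so the zero divisor is relatively big on each), and Lemma \ref{vani-rf-le} (which is just Kawamata--Viehweg plus an elementary induction on the components of $\llcorner(T+D)|_{Y_1}\lrcorner$) gives the vanishing. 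For reducible $Y$, peel off one component $Y_1$ via the exact sequence $0\to\mathcal O_{Y_1}(-Y_2|_{Y_1})\to\mathcal O_Y\to\mathcal O_{Y_2}\to 0$; twisting by $L$ and using adjunction, the irreducible case handles the first term and induction on the number of components handles the third. No compactification, no hyperplane induction, and no appeal to Propositions \ref{1} or \ref{2} is needed.
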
 

\begin{proof} 
By Lemma \ref{7}, we can assume that 
$D$ is a $\mathbb Q$-divisor and 
$L\sim _{\mathbb Q}K_Y+T+D$. 
We divide the proof into two steps. 
\setcounter{step}{0}
\begin{step}\label{1ne}
We assume that $Y$ is irreducible. 
In this case, $L-(K_Y+T+D)$ is 
nef and log big over $X$ with respect to the pair $(Y, T+D)$ 
(see Definition \ref{2-46}). 
Therefore, $R^qf_*\mathcal O_Y(L)=0$ for any $q>0$ 
by the vanishing theorem (see, for example, Lemma \ref{vani-rf-le}). 
\end{step}
\begin{step}
Let $Y_1$ be an irreducible component of $Y$ and $Y_2$ the union 
of the other irreducible components of $Y$. 
Then we have a short exact sequence 
$0\to i_*\mathcal O_{Y_1}(-Y_2|_{Y_1})\to 
\mathcal O_Y\to \mathcal O_{Y_2}\to 0$, 
where $i:Y_1\to Y$ is the natural closed immersion 
$($cf.~\cite[Remark 2.6]{ambro}$)$. 
We put $L'=L|_{Y_1}-{Y_2}|_{Y_1}$. 
Then we have a short exact sequence 
$0\to i_*\mathcal O_{Y_1}(L')\to 
\mathcal O_Y(L)\to \mathcal O_{Y_2}(L|_{Y_2})\to 0$ 
and $L'\sim _{\mathbb Q}K_{Y_1}+T|_{Y_1}+D|_{Y_1}$. 
On the other hand, we can check 
that $L|_{Y_2}\sim 
_{\mathbb Q}K_{Y_2}+{Y_1}|_{Y_2}+T|_{Y_2}
+D|_{Y_2}$. 
We have already known that $R^qf_*\mathcal O_{Y_1}(L')=0$ 
for any $q>0$ by Step \ref{1ne}. 
By the induction on the number of 
the irreducible components of 
$Y$, we have $R^qf_*\mathcal O_{Y_2}(L|_{Y_2})=0$ 
for any $q>0$. 
Therefore, $R^qf_*\mathcal O_Y(L)=0$ for any 
$q>0$ by the exact sequence: 
$$\cdots 
\to R^qf_*\mathcal O_{Y_1}(L')\to 
R^qf_*\mathcal O_Y(L)\to R^qf_*\mathcal O_{Y_2}
(L|_{Y_2})\to \cdots. $$
\end{step} 
So, we finish the proof of Lemma \ref{re-vani-lem}. 
\end{proof}

The following lemma is a variant of Szab\'o's resolution lemma 
(see the resolution lemma in \ref{15-resol}). 

\begin{lem}\label{6} 
Let $(X, B)$ be an embedded simple normal 
crossing pair and $D$ a permissible Cartier 
divisor on $X$. Let $M$ be an ambient space of $X$. 
Assume that there exists an $\mathbb R$-divisor $A$ on $M$ 
such that $\Supp (A+X)$ is simple normal 
crossing on $M$ and that $B=A|_{X}$. 
Then there exists a projective birational 
morphism $g:N\to M$ from a smooth 
variety $N$ with the following properties. 
Let $Y$ be the strict transform of $X$ on $N$ and 
$f=g|_Y:Y\to X$. 
Then we have 
\begin{itemize}
\item[{\em{(i)}}] $g^{-1}(D)$ is a 
divisor on $N$. $\Exc (g)\cup g^{-1}_*(A+X)$ is 
simple normal crossing on $N$, where $\Exc(g)$ is 
the exceptional locus of $g$. 
In particular, $Y$ is a simple normal crossing divisor on $N$.  
\item[{\em{(ii)}}] $g$ and $f$ are isomorphisms outside $D$, in 
particular, $f_*\mathcal O_Y\simeq 
\mathcal O_X$. 
\item[{\em{(iii)}}] $f^*(D+B)$ has a simple normal crossing 
support on $Y$. More precisely, there exists an $\mathbb R$-divisor 
$A'$ on $N$ such that 
$\Supp (A'+Y)$ is simple normal crossing on $N$, $A'$ and $Y$ have 
no common irreducible components, and 
that $A'|_{Y}=f^*(D+B)$. 
\end{itemize}
\end{lem}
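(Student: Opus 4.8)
The plan is to reduce everything to an application of Szab\'o's resolution lemma (\ref{15-resol}) on the smooth ambient space $M$, and then to restrict to the strict transform of $X$. The only potential obstruction to $\Supp(A+X)$ being simple normal crossing has already been removed by hypothesis, so the sole purpose of $g$ is to turn the codimension-two locus $\Supp D \subset X \subset M$ into a divisor while keeping everything simple normal crossing. Consequently all blow-up centres will lie over $\Supp D$, which is exactly what will force $g$ and $f$ to be isomorphisms outside $D$.

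First I would construct $g$. Let $\mathcal{I}\subset \mathcal{O}_M$ be the ideal sheaf of $\Supp D$, a closed subscheme of codimension $\geq 2$ in $M$. Applying the principalization form of the resolution lemma (\ref{15-resol}) to $M$ together with the reduced divisor $\Supp(A+X)$ and the ideal $\mathcal{I}$, I obtain a projective birational morphism $g:N\to M$ with $N$ smooth, realized as a composition of blow-ups along smooth centres all lying over $\Supp D$, such that $\mathcal{I}\cdot \mathcal{O}_N$ is invertible (so $g^{-1}(D)$ is a divisor) and such that $\Exc(g)\cup g^{-1}_*(A+X)$ is simple normal crossing on $N$. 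This gives (i). Since every centre lies over $\Supp D$, the morphism $g$ is an isomorphism over $M\setminus \Supp D$, so $f=g|_Y$ is an isomorphism over $X\setminus D$. Moreover each individual blow-up is of the type analysed in Lemma \ref{use}, namely a blow-up of the ambient space along a smooth subvariety of the boundary; hence by that lemma and induction on the number of blow-ups one gets $R^if_*\mathcal{O}_Y=0$ for $i>0$ and $f_*\mathcal{O}_Y\simeq \mathcal{O}_X$, which yields (ii). The relative vanishing Lemma \ref{re-vani-lem} may alternatively be invoked for the higher direct images.

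It remains to produce $A'$ and verify (iii). Since $A$ and $X$ share no irreducible component, $g^*A$ does not contain $Y=g^{-1}_*X$ as a component, and $(g^*A)|_Y=f^*(A|_X)=f^*B$. On the other hand $f^*D$ is a Cartier divisor on $Y$ whose support, by (i), is contained in the trace on $Y$ of $\Exc(g)\cup g^{-1}(\Supp D)$ and is therefore simple normal crossing. I would then set $A'=g^*A+\Delta$, where $\Delta$ is the $\mathbb{R}$-divisor supported on $\Exc(g)\cup g^{-1}(\Supp D)$ whose components and coefficients are chosen so that $\Delta|_Y=f^*D$; such a $\Delta$ exists because each component of $f^*D$ on $Y$ is the intersection with $Y$ of a component of the simple normal crossing divisor $\Exc(g)\cup g^{-1}(\Supp D)$ on $N$. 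By construction $A'$ and $Y$ have no common component, $A'|_Y=f^*(D+B)$, and $\Supp(A'+Y)\subset \Exc(g)\cup g^{-1}_*(A+X)\cup g^{-1}(\Supp D)$ is simple normal crossing by (i), proving (iii).

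The main obstacle is the simultaneous bookkeeping in the construction of $g$: one must principalize $\Supp D$ by blow-ups whose centres are smooth, lie over $\Supp D$, and are compatible with the simple normal crossing structure already present on $\Supp(A+X)$, so that the strict transform $Y$ of $X$ remains a simple normal crossing divisor throughout and the total transform is simple normal crossing at the end. This is precisely the content of the resolution lemma applied on the ambient $M$, combined with the Lemma \ref{use} description of how the strict transform of $X$ behaves under each blow-up. Once this is in place, the statements (ii) and (iii) are essentially formal consequences.
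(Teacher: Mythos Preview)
Your approach is essentially the paper's: principalize $\Supp D$ on the ambient $M$ while keeping $\Supp(A+X)$ simple normal crossing, then restrict to the strict transform $Y$ of $X$. The paper carries this out in three explicit steps (blow up $M$ along $D$; desingularize to get a smooth $M_2$; apply Szab\'o's lemma to $\Supp\sigma^*(A+X)\cup F$ on $M_2$, where $F$ is the reduced inverse image of $D$), whereas you invoke a principalization package in one stroke; either is acceptable.

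There is, however, a small but genuine gap in your proof of (ii). The blow-up centres arising in principalization lie over $\Supp D$ but are \emph{not} in general smooth irreducible components of the boundary divisor $\Supp(S+B)$ on $X$; they can be arbitrary smooth subvarieties of $M$ lying over $\Supp D$. Hence Lemma~\ref{use} does not apply to these blow-ups, and your induction on the number of blow-ups via that lemma does not go through. The paper's argument here is different and uses the one hypothesis you never invoke: since $D$ is \emph{permissible}, $\Supp D$ contains no stratum of $(X,B)$; combined with the fact that $f$ is an isomorphism outside $D$, this shows $f$ is an isomorphism at the generic point of every stratum of $Y$, hence has connected fibres, and therefore $f_*\mathcal O_Y\simeq\mathcal O_X$. (The remark following the lemma in the paper gives the alternative route via $f_*\mathcal O_Y(K_Y)\simeq\mathcal O_X(K_X)$, Lemma~\ref{re-vani-lem}, and Grothendieck duality.) Your appeal to Lemma~\ref{re-vani-lem} for the higher direct images is fine, but it too requires the permissibility observation to verify its hypothesis that $f$ is an isomorphism at the generic point of every stratum.
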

\begin{proof}
First, we take a blow-up $M_1\to M$ along $D$. 
Apply Hironaka's desingularization theorem 
to $M_1$ and obtain a projective birational morphism $M_2\to M_1$ 
from a smooth variety $M_2$. 
Let $F$ be the reduced divisor that coincides 
with the support of the inverse image of $D$ on $M_2$. 
Apply Szab\'o's resolution lemma to 
$\Supp \sigma^*(A+X)\cup F$ on $M_2$ 
(see, for example, \ref{15-resol} 
or \cite[3.5.~Resolution lemma]{fujino0}), 
where $\sigma:M_2\to M$. 
Then, we obtain desired projective birational 
morphisms $g:N\to M$ from 
a smooth variety $N$, and $f=g|_Y:Y\to X$, 
where $Y$ is the strict transform of $X$ on $N$, such that 
$Y$ is a simple normal crossing 
divisor on $N$, $g$ and $f$ are isomorphisms outside $D$, and 
$f^*(D+B)$ has a simple normal crossing support on $Y$. 
Since $f$ is an isomorphism outside $D$ and 
$D$ is permissible on $X$, $f$ is 
an isomorphism at the generic point of any 
stratum of $Y$. 
Therefore, every fiber of $f$ is connected and 
then $f_*\mathcal O_Y\simeq \mathcal O_X$. 
\end{proof}

\begin{rem}
In Lemma \ref{6}, 
we can directly check that 
$f_*\mathcal O_Y(K_Y)\simeq \mathcal O_X(K_X)$. By Lemma 
5.1, $R^qf_*\mathcal O_Y(K_Y)=0$ for 
$q>0$. 
Therefore, we obtain $f_*\mathcal O_Y\simeq \mathcal O_X$ 
and $R^qf_*\mathcal O_Y=0$ for 
any $q>0$ by the Grothendieck duality. 
\end{rem}

Here, we treat the compactification problem. 
It is because we can use the same technique as in the proof 
of Lemma \ref{6}. 
This lemma plays important roles in this section. 

\begin{lem}\label{comp}
Let $f:Z\to X$ be a proper morphism 
from an embedded simple normal crossing pair $(Z, B)$. 
Let $M$ be the ambient space of $Z$. 
Assume that there is an $\mathbb R$-divisor $A$ on $M$ such that 
$\Supp (A+Z)$ is simple normal crossing 
on $M$ and that $B=A|_Z$. 
Let $\overline X$ be a projective variety such that $\overline 
X$ contains $X$ as a Zariski open set. 
Then there exist a proper embedded simple normal crossing pair 
$(\overline Z, \overline B)$ that is a compactification of $(Z, B)$ 
and a proper morphism $\overline f: \overline Z\to 
\overline X$ that compactifies $f:Z\to X$. 
Moreover, $\Supp \overline B\cup 
\Supp (\overline Z\setminus Z)$ is a simple normal 
crossing divisor on $\overline Z$, and 
$\overline Z\setminus Z$ has no common 
irreducible components with $\overline B$. 
We note that $\overline B$ is $\mathbb R$-Cartier. 
Let $\overline M$, which is a compactification of 
$M$, be the ambient space of $(\overline Z, \overline B)$. 
Then, by the construction, 
we can find an $\mathbb R$-divisor 
$\overline A$ on $\overline M$ such that 
$\Supp (\overline A+\overline Z)$ is simple normal crossing 
on $\overline M$ and that $\overline B=\overline A|_{\overline Z}$. 
\end{lem}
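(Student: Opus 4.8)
The plan is to work entirely on the ambient space and imitate the proof of Lemma~\ref{6}: compactify $M$ once and for all, extend $f$ to the boundary by a graph construction, and then clean up with Hironaka's desingularization and Szab\'o's resolution lemma (\ref{15-resol}). First I would compactify. By Nagata's theorem $M$ has a complete compactification, and after Chow's lemma and Hironaka I may take a smooth projective $\overline M_1$ containing $M$ as a dense open set. Applying Hironaka and then the resolution lemma (\ref{15-resol}) to the reduced divisor $\overline Z_1\cup \overline A_1\cup(\overline M_1\setminus M)$, where $\overline Z_1,\overline A_1$ are the closures of $Z$ and $A$, I may assume in addition that $\overline M_1$ is smooth, that $\overline M_1\setminus M$ is a divisor, and that $\Supp(\overline A_1+\overline Z_1)\cup\Supp(\overline M_1\setminus M)$ is simple normal crossing. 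Every subsequent modification will be an isomorphism over $M$, so $Z$, $B$ and $f$ are never disturbed.

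Next I would extend the morphism. Since $f$ is defined only on the divisor $Z$, there is no rational map on $\overline M_1$ to resolve; instead I pass to the graph. Consider the locally closed immersion $Z\hookrightarrow M\times X\subset \overline M_1\times \overline X$, $z\mapsto(z,f(z))$, and let $\overline\Gamma$ be the closure of its image in the complete variety $\overline M_1\times \overline X$. The second projection gives a proper morphism $\overline\Gamma\to\overline X$ extending $f$, while the first gives a proper birational morphism $q:\overline\Gamma\to\overline Z_1$ which is an isomorphism over $Z$. Hence $q$ is the blow-up of $\overline Z_1$ along some ideal sheaf $\mathcal J$ cosupported on $\overline Z_1\setminus Z\subset\overline M_1\setminus M$. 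Letting $\mathcal I$ be the preimage of $\mathcal J$ under $\mathcal O_{\overline M_1}\to\mathcal O_{\overline Z_1}$, so that $\mathcal I\cdot\mathcal O_{\overline Z_1}=\mathcal J$ and $\mathcal I$ is cosupported in $\overline M_1\setminus M$, I blow it up. Because the strict transform of $\overline Z_1$ under $\mathrm{Bl}_{\mathcal I}\overline M_1\to\overline M_1$ equals $\mathrm{Bl}_{\mathcal J}\overline Z_1=\overline\Gamma$, that strict transform now carries a morphism to $\overline X$ extending $f$.

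Finally I would desingularize. Neither $\mathrm{Bl}_{\mathcal I}\overline M_1$ nor the strict transform of $\overline Z_1$ need be smooth or simple normal crossing, so I apply Hironaka and Szab\'o's resolution lemma once more, blowing up only over $\overline M_1\setminus M$, exactly as in Lemma~\ref{6}. This yields a smooth complete $\overline M$ whose divisor $\overline Z$ (the strict transform of $\overline Z_1$) is simple normal crossing, together with a proper $\overline f:\overline Z\to\overline\Gamma\to\overline X$ extending $f$. Setting $\overline B$ to be the closure of $B$ and $\overline A$ the strict transform of $\overline A_1$, the simple normal crossing conclusions and the equality $\overline B=\overline A|_{\overline Z}$ follow from the Szab\'o step as in Lemma~\ref{6}; in particular $\overline B$ is $\mathbb R$-Cartier because $\overline A$ and $\overline Z$ meet properly on the smooth $\overline M$. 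Since the components of $\overline B$ are closures of components of $B$, they share no component with the boundary $\overline Z\setminus Z$, which gives the last assertion.

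The main obstacle is the middle step. Because $f$ lives on the divisor $Z$ rather than on $M$, one cannot directly resolve a rational map on $\overline M_1$; the device that saves the embedded structure is that the indeterminacy of $\overline Z_1\dashrightarrow\overline X$ is resolved by a blow-up of the \emph{ambient} space, using that the strict transform of $\overline Z_1$ under $\mathrm{Bl}_{\mathcal I}\overline M_1$ is the blow-up $\mathrm{Bl}_{\mathcal I\cdot\mathcal O_{\overline Z_1}}\overline Z_1$. Once this is arranged, all remaining work is the same simple normal crossing bookkeeping as in Lemma~\ref{6}.
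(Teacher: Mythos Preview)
Your proof is correct and follows essentially the same route as the paper's: compactify the ambient space, blow up inside the boundary $\overline Z\setminus Z$ to extend $f$, then apply Hironaka and Szab\'o's resolution lemma to restore the simple normal crossing condition. The paper compresses the middle step into the single sentence ``By blowing up $\overline M$ inside $\overline Z\setminus Z$, we can assume that $f:Z\to X$ extends to $\overline f:\overline Z\to\overline X$,'' whereas you spell out the graph construction and the passage from $\mathcal J$ on $\overline Z_1$ to $\mathcal I$ on $\overline M_1$; this is a useful elaboration of exactly the point you flag as the main obstacle, but it is the same idea.
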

\begin{proof}
Let $\overline Z, \overline A\subset \overline M$ be any compactification. 
By blowing up $\overline M$ inside $\overline Z\setminus Z$, 
we can assume that $f:Z\to X$ extends to $\overline f: 
\overline Z\to \overline X$. 
By Hironaka's desingularization and the resolution lemma, 
we can assume that $\overline M$ is smooth and $\Supp(\overline Z
+\overline A)\cup \Supp (\overline M\setminus M)$ 
is a simple normal crossing 
divisor on $\overline M$. 
It is not difficult to see that 
the above compactification has the desired 
properties. 
\end{proof}

\begin{rem}\label{rem-2} 
There exists a big trouble 
to compactify normal crossing varieties. 
When we treat normal crossing varieties, 
we can not directly compactify them.
For the details, see \cite[3.6.~Whitney umbrella]{fujino0}, 
especially, Corollary 3.6.10 and 
Remark 3.6.11 in \cite{fujino0}. 
Therefore, the first two lines in the 
proof of \cite[Theorem 3.2]{ambro} 
is nonsense.  
\end{rem}

It is the time to state the main injectivity theorem 
(cf.~\cite[Theorem 3.1]{ambro}) for embedded {\em{simple}} 
normal crossing pairs. 
For applications, this formulation seems to be sufficient. 
We note that we will recover \cite[Theorem 3.1]{ambro} 
in full generality in Section \ref{sec6} (see 
Theorem \ref{61}). 

\begin{thm}[{cf.~\cite[Theorem 3.1]{ambro}}]\label{5.1} 
Let $(X, S+B)$ be an embedded simple normal crossing 
pair such that $X$ is proper, $S+B$ is a boundary 
$\mathbb R$-divisor, $S$ is reduced, and $\llcorner 
B\lrcorner =0$. Let $L$ be a Cartier 
divisor on $X$ and $D$ an effective Cartier divisor that is permissible 
with 
respect to $(X, S+B)$. 
Assume the following conditions. 
\begin{itemize}
\item[{\em{(i)}}] $L\sim _{\mathbb R}K_X+S+B+H$, 
\item[{\em{(ii)}}] $H$ is a semi-ample $\mathbb R$-Cartier 
$\mathbb R$-divisor, and 
\item[{\em{(iii)}}] $tH\sim _{\mathbb R} D+D'$ for some 
positive real number $t$, where 
$D'$ is an effective $\mathbb R$-Cartier 
$\mathbb R$-divisor that is permissible with respect to $(X, S+B)$. 
\end{itemize}
Then the homomorphisms 
$$
H^q(X, \mathcal O_X(L))\to H^q(X, \mathcal O_X(L+D)), 
$$ 
which are induced by the natural inclusion 
$\mathcal O_X\to \mathcal O_X(D)$, 
are injective for all $q$. 
\end{thm}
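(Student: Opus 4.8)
The plan is to deduce the theorem from the Fundamental injectivity theorem II (Proposition \ref{2}) by absorbing the extra positive term $H$ into the boundary, using the semi-ampleness of $H$ together with condition (iii) to produce a representative of $H$ whose support contains $D$ with an arbitrarily small coefficient. Since $H$ is semi-ample, writing $H\sim_{\mathbb R}\sum a_iH_i$ with $H_i$ semi-ample Cartier, a general member of the free system $|mH_i|$ for $m\gg0$ gives, after dividing by $m$, an effective $\mathbb R$-divisor $H_0\sim_{\mathbb R}H$ whose coefficients are smaller than any prescribed $\delta>0$ and whose support is in general position. Condition (iii) gives $H\sim_{\mathbb R}\tfrac1t(D+D')$, so for a small rational $\varepsilon>0$ the convex combination
$$
G \;=\; \tfrac{\varepsilon}{t}(D+D') + (1-\varepsilon)H_0 \;\sim_{\mathbb R}\; H
$$
is an effective $\mathbb R$-divisor with $\Supp D\subset\Supp G$ and all coefficients strictly below $1$. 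Putting $B^{+}=B+G$, I get $L\sim_{\mathbb R}K_X+S+B^{+}$ with $\llcorner B^{+}\lrcorner=0$ for $\varepsilon$ small, so $D$ now lies inside the fractional boundary.

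Because $D$ and $D'$ are only permissible, $\Supp(S+B^{+})$ need not be simple normal crossing, so next I would resolve. Applying Lemma \ref{6} to a permissible Cartier divisor whose support contains $\Supp(D+D')$, I obtain a projective birational $f\colon Y\to X$ that is an isomorphism at the generic point of every stratum, with $f_*\mathcal O_Y\simeq\mathcal O_X$ and $R^qf_*\mathcal O_Y=0$ for $q>0$, and on which the total transform of $S+B^{+}$ is simple normal crossing. Defining $S_Y+B_Y^{+}$ by the crepant pullback $K_Y+S_Y+B_Y^{+}=f^{*}(K_X+S+B^{+})$, the pair $(Y,S_Y+B_Y^{+})$ is an embedded simple normal crossing pair with $S_Y$ reduced, $\llcorner B_Y^{+}\lrcorner=0$, and $f^{*}L\sim_{\mathbb R}K_Y+S_Y+B_Y^{+}$. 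Proposition \ref{2}, applied to the Cartier divisor $f^{*}L$ and the effective Cartier divisor $f^{*}D$ (whose support lies in $\Supp B_Y^{+}$), then gives that
$$
H^q(Y,\mathcal O_Y(f^{*}L))\longrightarrow H^q(Y,\mathcal O_Y(f^{*}L+f^{*}D))
$$
is injective for all $q$.

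Finally I would descend this to $X$. Since $f^{*}L\sim_{\mathbb R}K_Y+S_Y+B_Y^{+}$ is a genuine subboundary and $f$ is an isomorphism over the generic point of every stratum, Lemma \ref{re-vani-lem} yields $R^qf_*\mathcal O_Y(f^{*}L)=0$ for $q>0$; with the projection formula and $f_*\mathcal O_Y\simeq\mathcal O_X$ this gives $f_*\mathcal O_Y(f^{*}L)\simeq\mathcal O_X(L)$. As $D$ is Cartier on $X$, the projection formula also gives $R^qf_*\mathcal O_Y(f^{*}L+f^{*}D)\simeq R^qf_*\mathcal O_Y(f^{*}L)\otimes\mathcal O_X(D)=0$ for $q>0$ and $f_*\mathcal O_Y(f^{*}L+f^{*}D)\simeq\mathcal O_X(L+D)$. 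The Leray spectral sequences then identify the displayed map with $H^q(X,\mathcal O_X(L))\to H^q(X,\mathcal O_X(L+D))$, so the injectivity transfers to $X$.

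I expect the main obstacle to be the geometric bookkeeping in the middle step: arranging a single resolution on which $S$, $B$, $D$, $D'$ and the chosen representative of $H$ all acquire simple normal crossing support simultaneously, and then verifying that the crepant pullback $S_Y+B_Y^{+}$ is an honest subboundary whose fractional part still contains all of $\Supp f^{*}D$, so that no exceptional divisor extracted over $D$ slips into the reduced part $S_Y$. Controlling the discrepancies of the divisors lying over the permissible, but possibly non-transverse, divisor $D$ is the delicate point. By contrast, the Hodge-theoretic core is already packaged in Proposition \ref{2}, and the cohomological descent is routine.
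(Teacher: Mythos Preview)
Your overall strategy matches the paper's: resolve to a situation where Proposition~\ref{2} applies, then descend via Lemma~\ref{re-vani-lem} and Leray. The absorption of $H$ into the boundary via the convex combination $\tfrac{\varepsilon}{t}(D+D')+(1-\varepsilon)H_0$ is exactly what the paper does (on $Y$ rather than on $X$, but that is a minor reordering). However, there is a genuine gap in your middle step.

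You assert that the crepant pullback $K_Y+S_Y+B_Y^{+}=f^{*}(K_X+S+B^{+})$ satisfies $\llcorner B_Y^{+}\lrcorner=0$. This is false in general: the exceptional divisors of $f$ lie over centers contained in $\Supp(D+D')$, and since $D,D'$ are merely permissible (they miss the strata of $(X,S+B)$ but are otherwise arbitrary), the discrepancies $a(E_i,X,S+B^{+})$ can be strictly positive. For instance, blowing up a smooth point of a smooth component of $X$ lying on $D$ but away from $S\cup\Supp B$ gives discrepancy $\dim X-1-O(\varepsilon)>0$. The corresponding coefficient of $E_i$ in $B_Y^{+}$ is then negative, so $S_Y+B_Y^{+}$ is only a \emph{subboundary}, not a boundary, and Proposition~\ref{2} does not apply to $f^{*}L$. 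Your closing paragraph worries about exceptional coefficients reaching $1$; the actual danger is the opposite sign.

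The paper's fix is to twist by the round-up of the discrepancy divisor. Writing $K_Y+S'+B'=f^{*}(K_X+S+B)+E$ with $S',B'$ the strict transforms and $E$ exceptional, one has $\ulcorner E\urcorner\geq 0$ (because all discrepancies exceed $-1$), and one sets $L'=f^{*}L+\ulcorner E\urcorner$. Then $L'\sim_{\mathbb R}K_Y+S'+B'+\{-E\}+f^{*}H$, and now $B'+\{-E\}$ is an honest boundary with zero round-down; after absorbing $f^{*}H$ as you did (but with the general members chosen on $Y$, so they avoid the exceptional locus), Proposition~\ref{2} applies to $L'$. Since $\ulcorner E\urcorner$ is effective and $f$-exceptional, one still has $f_{*}\mathcal O_Y(L')\simeq\mathcal O_X(L)$, and Lemma~\ref{re-vani-lem} gives the higher vanishing for $L'$ (not for $f^{*}L$, where you would need it). With this correction your argument becomes the paper's proof.
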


\begin{proof}
First, we use Lemma \ref{useful-lemma}. Thus, 
we can assume that there exists a divisor 
$A$ on $M$, where $M$ is the ambient space of $X$, such that 
$\Supp (A+X)$ is simple normal crossing on $M$ 
and that $A|_X=S$. 
Apply Lemma \ref{6} to an embedded simple normal crossing 
pair $(X, S)$ and a divisor $\Supp (D+D'+B)$ on $X$. 
Then we obtain a projective birational 
morphism $f:Y\to X$ from 
an embedded simple normal crossing 
variety $Y$ such that $f$ is an isomorphism 
outside $\Supp (D+D'+B)$, and that the 
union of the support of $f^*(S+B+D+D')$ and the 
exceptional locus of $f$ has a simple normal crossing support on $Y$. 
Let $B'$ be the strict transform of $B$ on $Y$. 
We can assume that $\Supp B'$ is disjoint from 
any strata of $Y$ that are 
not irreducible components of $Y$ 
by taking blow-ups. 
We write $K_Y+S'+B'=f^*(K_X+S+B)+E$, 
where $S'$ is the strict transform of $S$, and 
$E$ is $f$-exceptional. 
By the construction of $f:Y\to X$, 
$S'$ is Cartier and $B'$ is $\mathbb R$-Cartier. 
Therefore, $E$ is also $\mathbb R$-Cartier. 
It is easy to see that $E_+=\ulcorner E\urcorner \geq 0$. 
We put $L'=f^*L+E_+$ and $E_-=E_+-E\geq 0$. 
We note that $E_+$ is Cartier and $E_-$ is 
$\mathbb R$-Cartier 
because $\Supp E$ is simple normal crossing on 
$Y$. 
Since $f^*H$ is an $\mathbb R_{>0}$-linear 
combination of semi-ample Cartier 
divisors, we can write $f^*H\sim _{\mathbb R}
\sum _i a_i H_i$, where 
$0< a_i <1$ and $H_i$ is a general 
Cartier divisor on $Y$ for any $i$. 
We put $B''=B'+E_-+\frac{\varepsilon}{t} 
f^*(D+D')+(1-\varepsilon) \sum _i a_i H_i$ for 
some $0<\varepsilon \ll 1$. 
Then $L'\sim _{\mathbb R}K_Y+S'+B''$. 
By the construction, $\llcorner B''\lrcorner =0$, the 
support of $S'+B''$ is simple normal crossing 
on $Y$, and $\Supp B''\supset \Supp f^*D$. 
So, Proposition \ref{2} implies that 
the homomorphisms 
$H^q(Y, \mathcal O_Y(L'))\to H^q(Y, \mathcal O_Y(L'+f^*D))$ are 
injective for all $q$.
By Lemma \ref{re-vani-lem}, $R^qf_*\mathcal O_Y(L')=0$ for 
any $q>0$ and it is easy to see that $f_*\mathcal O_Y(L')\simeq 
\mathcal O_X(L)$. By the 
Leray spectral sequence, 
the homomorphisms $H^q(X, \mathcal O_X(L))\to 
H^q(X, \mathcal O_X(L+D))$ are injective for all $q$. 
\end{proof}

The following theorem is another main theorem of 
this section. It is essentially the same as \cite[Theorem 3.2]{ambro}. 
We note that we assume that $(Y, S+B)$ is a 
{\em{simple}} normal crossing pair. 
It is a small but technically important difference. 
For the full statement, see Theorem \ref{62} below. 

\begin{thm}[{cf.~\cite[Theorem 3.2]{ambro}}]\label{8}
Let $(Y, S+B)$ be an embedded simple normal crossing 
pair such that $S+B$ is a boundary $\mathbb R$-divisor, 
$S$ is reduced, and $\llcorner B\lrcorner =0$. 
Let $f:Y\to X$ be a proper morphism and $L$ a Cartier 
divisor on $Y$ such that 
$H\sim _{\mathbb R}L-(K_Y+S+B)$ is $f$-semi-ample. 
\begin{itemize}
\item[{\em{(i)}}] every non-zero local section of $R^qf_*\mathcal O_Y(L)$ contains 
in its support the $f$-image of 
some strata of $(Y, S+B)$. 
\item[{\em{(ii)}}] 
let $\pi:X\to V$ be a projective morphism and 
assume that $H\sim _{\mathbb R}f^*H'$ for 
some $\pi$-ample $\mathbb R$-Cartier 
$\mathbb R$-divisor $H'$ on $X$. 
Then $R^qf_*\mathcal O_Y(L)$ is $\pi_*$-acyclic, that is, 
$R^p\pi_*R^qf_*\mathcal O_Y(L)=0$ for any $p>0$. 
\end{itemize}
\end{thm}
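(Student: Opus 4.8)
The plan is to follow Kollár's method, with the fundamental injectivity theorem (Theorem \ref{5.1}) as the engine, proving (i) first and feeding it into (ii). I would begin with the standard reductions. By Lemma \ref{7} I reduce to the case where $B$ is a $\mathbb Q$-divisor and $L\sim_{\mathbb Q}K_Y+S+B+H$, and by the compactification Lemma \ref{comp} I may assume $X$ (and in (ii) also $V$) is projective, so that the cohomology groups in play are finite-dimensional and Serre's vanishing theorem is available. Whenever I need an auxiliary divisor to have simple normal crossing support, I would pass to a modification $g:Y'\to Y$ furnished by Lemma \ref{6}; here Lemma \ref{6}(ii) gives $g_*\mathcal O_{Y'}\simeq\mathcal O_Y$ and the relative vanishing Lemma \ref{re-vani-lem} gives the vanishing of the higher direct images of the relevant line bundle, so that passing to $Y'$ loses no cohomological information and preserves permissibility of strata.

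For (i) the assertion is local on $X$ and is of torsion type, so I argue by contradiction. Suppose $\gamma$ is a nonzero local section of $\mathcal F:=R^qf_*\mathcal O_Y(L)$ whose support $Z$ contains no $f$-image of a stratum of $(Y,S+B)$. Since the finitely many strata-images are not contained in $Z$, for a sufficiently ample divisor $A$ on $X$ I can choose $s\in H^0(X,\mathcal O_X(A)\otimes\mathcal I_Z)$ whose zero divisor $D_s$ contains $Z$ but contains no strata-image; then $f^*D_s$ is permissible with respect to $(Y,S+B)$ and multiplication by $s$ annihilates $\gamma$. After replacing $H$ by $H+f^*A$, which is again semi-ample, and correspondingly twisting $\mathcal F$ by $\mathcal O_X(A)$ (which does not change the support condition), one checks that for $t\gg 0$ the divisor $t(H+f^*A)-f^*D_s$ is semi-ample, hence $\mathbb R$-linearly equivalent to an effective permissible $D'$; thus hypotheses (i)--(iii) of Theorem \ref{5.1} hold. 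That theorem then gives injectivity of $H^q(Y,\mathcal O_Y(L_1))\to H^q(Y,\mathcal O_Y(L_1+f^*D_s))$ for the twisted divisor $L_1=L+f^*A$, and passing through the Leray spectral sequence for $f$ together with twisting by high powers of an ample divisor to move between sheaf-level and global statements, this forces multiplication by $s$ to be injective on $\mathcal F\otimes\mathcal O_X(A)$, contradicting $s\cdot\gamma=0$. Hence $Z$ must contain a strata-image, which is (i).

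For (ii) I would localize on $V$, assuming $V$ affine, so that the claim becomes $H^p(X,\mathcal F)=0$ for $p>0$, where now $H=f^*H'$ with $H'$ ample on the projective variety $X$. I would then run the standard Kollár induction on $\dim X$ by cutting with a general member $G\in|mH'|$ for $m\gg 0$: part (i) is exactly what makes this work, since the associated primes of $\mathcal F$ are strata-images, so a general $G$ avoids them, the section cutting out $G$ is a non-zero-divisor on $\mathcal F$, and the restricted data $f^{-1}(G)\to G$ is again an embedded simple normal crossing pair to which the inductive hypothesis applies by adjunction. The short exact sequence relating $\mathcal F$, $\mathcal F(-G)$, and the restriction to $G$, combined with Theorem \ref{5.1} controlling the connecting maps and Serre vanishing for the top twist, then forces $H^p(X,\mathcal F)=0$; the base case of the induction is the Kodaira-type vanishing already recorded in Corollary \ref{3}.

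The main obstacle I anticipate is not any single cohomological computation but the bookkeeping required to bring each application into the precise form demanded by Theorem \ref{5.1}: keeping the modified divisor semi-ample, producing the auxiliary $D'$ so that $tH\sim_{\mathbb R}D+D'$ holds with $D'$ effective and permissible, and guaranteeing that the compactification of Lemma \ref{comp} and the modification of Lemma \ref{6} preserve simple normal crossing-ness, permissibility, and the identity $f_*\mathcal O_{Y'}(L')\simeq\mathcal O_X(L)$. These are precisely the points that become delicate once $Y$ is reducible, and arranging the general-position choices of $A$, $s$, and $G$ so that they remain compatible with permissibility and with the inductive restriction in (ii) is where the argument must be handled with the greatest care.
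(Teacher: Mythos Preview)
Your proposal is correct and follows essentially the same route as the paper: contradiction via Theorem \ref{5.1} for (i) after reducing to the projective case, and for (ii) induction on $\dim X$ by slicing with a general $G\in|mH'|$, using (i) to split the long exact sequence of $R^qf_*$ and Theorem \ref{5.1} (together with the Leray spectral sequence comparison) to kill $R^1\pi_*$. Two minor points: the base case of the induction is simply $\dim X=0$ rather than Corollary \ref{3}, and the paper packages the $p=1$ step of (ii) as a comparison of the Leray spectral sequences for $L$ and $L+f^*A$---this is exactly what your phrase ``Theorem \ref{5.1} controlling the connecting maps'' amounts to, but it is worth making explicit that the injectivity is applied to $H^{q+1}(Y,\mathcal O_Y(L))\hookrightarrow H^{q+1}(Y,\mathcal O_Y(L+f^*A))$.
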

\begin{proof}
Let $M$ be the ambient space of $Y$. 
Then, by Lemma \ref{useful-lemma}, 
we can assume that there exists an $\mathbb R$-divisor $D$ on 
$M$ such that $\Supp (D+Y)$ is simple normal 
crossing on $M$ and that $D|_Y=S+B$. Therefore, 
we can use Lemma \ref{comp} in Step \ref{8-2} of (i) 
and Step \ref{o3} of (ii) below. 

(i) We have already proved a very spacial case in Lemma \ref{re-vani-lem}. 
The argument in Step 1 is not new and it is well known. 
\setcounter{step}{0}
\begin{step}
First, we assume that $X$ is projective. 
We can assume that $H$ is semi-ample by replacing 
$L$ (resp.~$H$) with 
$L+f^*A'$ (resp.~$H+f^*A'$), where 
$A'$ is a very ample Cartier divisor. 
Assume that $R^qf_*\mathcal O_Y(L)$ 
has a local section whose support does not contain 
the image of any 
$(Y, S+B)$-stratum. Then 
we can find a very ample Cartier divisor 
$A$ with the following properties. 
\begin{itemize}
\item[(a)] $f^*A$ is permissible with respect to 
$(Y, S+B)$, and 
\item[(b)] $R^qf_*\mathcal O_Y(L)\to R^qf_*\mathcal O_Y(L)\otimes 
\mathcal O_X(A)$ is not injective. 
\end{itemize}
We can assume that $H-f^*A$ is semi-ample by replacing 
$L$ (resp.~$H$) with 
$L+f^*A$ (resp.~$H+f^*A$). 
If necessary, we replace $L$ (resp.~$H$) with 
$L+f^*A''$ (resp.~$H+f^*A''$), where $A''$ is a very ample 
Cartier divisor. 
Then, we have $H^0(X, R^qf_*\mathcal O_Y(L))\simeq 
H^q(Y, \mathcal O_Y(L))$ and 
$H^0(X, R^qf_*\mathcal O_Y(L)\otimes \mathcal O_X(A))\simeq 
H^q(Y, \mathcal O_Y(L+f^*A))$. 
We obtain that 
$$H^0(X, R^qf_*\mathcal O_Y(L))
\to H^0(X, R^qf_*\mathcal O_Y(L)\otimes \mathcal O_X(A))
$$ 
is not injective by (b) if $A''$ is sufficiently ample. 
So, 
$H^q(Y, \mathcal O_Y(L))
\to 
H^q(Y, \mathcal O_Y(L+f^*A))$ is not injective. 
It contradicts Theorem \ref{5.1}. 
We finish the proof when $X$ is projective. 
\end{step}
\begin{step}\label{8-2}
Next, we assume that $X$ is not projective. 
Note that the problem is local. So, we can shrink $X$ and assume that 
$X$ is affine. 
By the argument similar to the one in Step 1 in the proof of 
(ii) below, we can assume that $H$ is a semi-ample $\mathbb Q$-Cartier 
$\mathbb Q$-divisor. 
We compactify $X$ and apply Lemma \ref{comp}. 
Then we obtain a compactification $\overline f: \overline Y\to \overline X$ of 
$f:Y\to X$. 
Let $\overline H$ be the closure of $H$ on $\overline Y$. If $\overline H$ is not a semi-ample $\mathbb Q$-Cartier 
$\mathbb Q$-divisor, then we take blowing-ups of $\overline Y$ inside $\overline Y\setminus Y$ 
and obtain a semi-ample $\mathbb Q$-Cartier $\mathbb Q$-divisor 
$\overline H$ on 
$\overline Y$ such that $\overline H|_{Y}=H$. 
Let $\overline L$ (resp.~$\overline B$, $\overline S$) be the 
closure of $L$ (resp.~$B$, $S$) on $\overline Y$. 
We note that $\overline H\sim _{\mathbb R}\overline L-(K_{\overline Y}
+\overline S +\overline B)$ does not necessarily hold. 
We can write $H+\sum _i a_i (f_i)=L-(K_Y+S+B)$, 
where $a_i$ is a real number and $f_i\in \Gamma(Y, \mathcal K_Y^*)$ for 
any $i$. 
We put $E=\overline H+\sum _i a_i (f_i)-
(\overline L-(K_{\overline Y}+\overline S+
\overline B))$.  
We replace $\overline L$ (resp.~$\overline B)$ with 
$\overline L+\ulcorner E\urcorner$ (resp.~$\overline B+\{-E\}$). 
Then we obtain the desired property of 
$R^q\overline f_*\mathcal O_{\overline Y}
(\overline L)$ since $\overline X$ is projective. We 
note that $\Supp E$ is in $\overline Y\setminus Y$. 
So, we finish the whole proof. 
\end{step}

(ii) We divide the proof into three steps. 
\setcounter{step}{0}
\begin{step} 
We assume that $\dim V=0$. 
The following arguments are well known and 
standard. 
We describe them for the reader's convenience. 
In this case, we can write $H'\sim _{\mathbb R}H'_1+H'_2$, 
where $H'_1$ (resp.~$H'_2$) is a $\pi$-ample $\mathbb Q$-Cartier 
$\mathbb Q$-divisor 
(resp.~$\pi$-ample $\mathbb R$-Cartier $\mathbb R$-divisor) on $X$. 
So, we can write $H'_2\sim _{\mathbb R}\sum _i a_i H_i$, 
where $0<a_i<1$ and $H_i$ is a general very ample Cartier 
divisor on $X$ for any $i$. 
Replacing $B$ (resp.~$H'$) with $B+\sum _i a_i f^*H_i$ 
(resp.~$H'_1$), we can assume that 
$H'$ is a $\pi$-ample $\mathbb Q$-Cartier $\mathbb Q$-divisor. 
We take a general member $A\in |mH'|$, 
where $m$ is a sufficiently large and divisible integer, such 
that $A'=f^*A$ and 
$R^qf_*\mathcal O_Y(L+A')$ is $\pi_*$-acyclic for all $q$. 
By (i), we have the following short exact sequences, 
$$
0\to R^qf_*\mathcal O_Y(L)\to 
R^qf_*\mathcal O_Y(L+A')
\to R^qf_*\mathcal O_{A'}(L+A')\to 0. 
$$ 
for any $q$. 
Note that $R^qf_*\mathcal O_{A'}(L+A')$ is $\pi_*$-acyclic by 
induction on $\dim X$ and $R^qf_*\mathcal O_Y(L+A')$ is also 
$\pi_*$-acyclic by the above assumption. 
Thus, $E^{pq}_2=0$ for $p\geq 2$ in the 
following commutative diagram of spectral sequences. 
$$
\xymatrix{
&E^{pq}_2=R^p\pi_*R^qf_*\mathcal O_Y(L) \ar[d]_{\varphi^{pq}}
\ar@{=>}[r]&R^{p+q}(\pi\circ f)_*
\mathcal O_Y(L)\ar[d]_{\varphi^{p+q}}\\ 
&\overline{E}^{pq}_2=R^p\pi_*R^qf_*\mathcal O_Y(L+A') 
\ar@{=>}[r]& R^{p+q}
(\pi\circ f)_*
\mathcal O_Y(L+A')
}
$$
Since $\varphi^{1+q}$ is injective by Theorem \ref{5.1}, 
$E^{1q}_2\to R^{1+q}(\pi\circ f)_*\mathcal O_Y(L)$ 
is injective by the fact that 
$E^{pq}_2=0$ for $p\geq 2$, and $\overline 
{E}^{1q}_2=0$ by the above assumption, we 
have $E^{1q}_2=0$. 
This implies that $R^p\pi_*R^qf_*\mathcal O_Y(L)=0$ for any 
$p>0$. 
\end{step}

\begin{step}\label{o2}  
We assume that $V$ is projective. 
By replacing $H'$ (resp.~$L$) with 
$H'+\pi^*G$ (resp.~$L+(\pi\circ f)^*G$), where 
$G$ is a very ample Cartier divisor on $V$, 
we can assume that $H'$ is an ample $\mathbb R$-Cartier 
$\mathbb R$-divisor. 
By the same argument as in Step 1, we can assume that $H'$ is ample 
$\mathbb Q$-Cartier $\mathbb Q$-divisor and $H\sim _{\mathbb Q}f^*H'$. 
If $G$ is a 
sufficiently ample Cartier 
divisor on $V$, $H^k(V, R^p\pi_*R^qf_*\mathcal O_Y(L)\otimes G)
=0$ for 
any $k\geq 1$, $H^0(V, R^p\pi_*R^qf_*\mathcal O_Y(L)\otimes G)
\simeq H^p(X, R^qf_*\mathcal O_Y(L)\otimes \pi^*G)$, 
and $R^p\pi_*R^qf_*\mathcal O_Y(L)\otimes G$ is generated by its 
global sections. 
Since $H+f^*\pi^*G\sim _{\mathbb R}
L+f^*\pi^*G-(K_Y+S+B)$, 
$H+f^*\pi^*G\sim _{\mathbb Q}f^*(H'+\pi^*G)$, 
and $H'+\pi^*G$ is ample, 
we can apply Step 1 and obtain $H^p(X, R^qf_*\mathcal O_Y(L+
f^*\pi^*G))=0$ for any $p>0$. 
Thus, $R^p\pi_*R^qf_*\mathcal 
O_Y(L)=0$ for any $p>0$ by the above arguments.
\end{step}
\begin{step}\label{o3}  
When $V$ is not projective, we shrink $V$ and assume 
that $V$ is affine. 
By the same argument as in Step 1 above, we can assume that 
$H'$ is $\mathbb Q$-Cartier. 
We compactify $V$ and $X$, and can assume that 
$V$ and $X$ are projective. 
By Lemma \ref{comp}, we can reduce it to the case when 
$V$ is projective. This step is essentially the same as Step 
2 in the proof of (i). So, we omit the details here.  
\end{step}
We finish the whole proof of (ii). 
\end{proof}

\begin{rem}\label{smooth-ne} 
In Theorem \ref{5.1}, if $X$ is smooth, then 
Proposition \ref{1} is enough for the proof of Theorem \ref{5.1}. 
In the proof of Theorem \ref{8}, if $Y$ is smooth, then 
Theorem \ref{5.1} for a smooth $X$ is sufficient. 
Lemmas \ref{re-vani-lem}, 
\ref{6}, and \ref{comp} are easy and 
well known for smooth varieties. 
Therefore, the reader can find that our proof of Theorem \ref{8} 
becomes much easier if we assume that $Y$ is smooth. 
Ambro's original proof of \cite[Theorem 3.2 (ii)]{ambro} 
used embedded simple normal crossing pairs even 
when $Y$ is smooth (see (b) in the proof of 
\cite[Theorem 3.2 (ii)]{ambro}). 
It may be a technically important difference. 
I could not follow Ambro's original 
argument in (a) in the proof of 
\cite[Theorem 3.2 (ii)]{ambro}. 
\end{rem}

\begin{rem}\label{9-1}
It is easy to see that Theorem \ref{5.1} is a generalization of 
Koll\'ar's injectivity theorem. 
Theorem \ref{8} (i) (resp.~(ii)) is a generalization of 
Koll\'ar's torsion-free (resp.~vanishing) theorem. 
\end{rem}

We treat an easy vanishing theorem 
for lc pairs as an application of Theorem \ref{8} (ii). 
It seems to be buried in \cite{ambro}. 
We note that we do not need the notion of embedded simple normal 
crossing pairs to prove Theorem \ref{lc}. See Remark \ref{smooth-ne}. 

\begin{thm}[Kodaira vanishing theorem for lc pairs]
\index{Kodaira vanishing theorem}\label{lc} 
Let $(X, B)$ be an lc pair such that 
$B$ is a boundary $\mathbb R$-divisor. 
Let $L$ be a $\mathbb Q$-Cartier Weil divisor on $X$ such that 
$L-(K_X+B)$ is $\pi$-ample, where 
$\pi:X\to V$ is a projective morphism. 
Then $R^q\pi_*\mathcal O_X(L)=0$ for any $q>0$.  
\end{thm}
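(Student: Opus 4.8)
The plan is to descend to a log resolution and read the vanishing off from Theorem \ref{8}(ii) with $q=0$; no Leray bookkeeping is needed, since that statement already controls $R^p\pi_*R^qf_*$ for all $q$ simultaneously. First I would take a projective log resolution $f\colon Y\to X$ of $(X,B)$ that also resolves $\Supp f^*L$, so that $Y$ is smooth and $\Supp B_Y\cup\Exc(f)\cup\Supp f^*L$ is a simple normal crossing divisor, where $K_Y+B_Y=f^*(K_X+B)$. Since $(X,B)$ is lc, $B_Y$ is a subboundary, and since $Y$ is smooth it is automatically an embedded simple normal crossing variety by Remark \ref{smooth}. Writing $H'=L-(K_X+B)$, which is $\pi$-ample and $\mathbb R$-Cartier, the divisor $f^*H'=f^*L-(K_Y+B_Y)$ is $f$-semi-ample (it is pulled back from $X$), and $H'$ is $\pi$-ample, so the hypotheses of Theorem \ref{8}(ii) are in reach once a suitable Cartier divisor $L_Y$ is produced.

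Next I would manufacture $L_Y$ on $Y$ with $f_*\mathcal O_Y(L_Y)\simeq\mathcal O_X(L)$ together with a boundary decomposition $L_Y-(K_Y+S+\Delta)=f^*H'$. The guiding identity is $f^*L-B_Y^{<1}=K_Y+B_Y^{=1}+f^*H'$, which suggests $L_Y:=\ulcorner f^*L-B_Y^{<1}\urcorner$. A direct coefficient count shows that $L_Y$ agrees with $f^*L$ along every non-$f$-exceptional prime divisor and has coefficient $\geq\llcorner f^*L\lrcorner$ along the exceptional ones, which is exactly the condition guaranteeing $f_*\mathcal O_Y(L_Y)\simeq\mathcal O_X(L)$. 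Moreover $L_Y-(K_Y+B_Y^{=1})=\ulcorner f^*H'\urcorner$, whose fractional correction $\Delta_0:=\ulcorner f^*H'\urcorner-f^*H'$ has coefficients in $[0,1)$.

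Taking $S=B_Y^{=1}$ and $\Delta=\Delta_0$ would give $L_Y=K_Y+S+\Delta+f^*H'$ with $(Y,S+\Delta)$ an embedded simple normal crossing pair, and then Theorem \ref{8}(ii), applied to $f\colon Y\to X$ and $\pi\colon X\to V$ with the $\pi$-ample $H'$, yields $R^p\pi_*R^qf_*\mathcal O_Y(L_Y)=0$ for all $p>0$ and all $q$. Reading this at $q=0$ and using $f_*\mathcal O_Y(L_Y)\simeq\mathcal O_X(L)$ gives $R^p\pi_*\mathcal O_X(L)=0$ for $p>0$, which is the theorem.

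The step I expect to be the genuine obstacle is checking that $(Y,S+\Delta)$ really meets the hypotheses of Theorem \ref{8}, that is, that $S+\Delta$ is a boundary with $S$ reduced and $\llcorner\Delta\lrcorner=0$, equivalently that $S$ and $\Delta$ share no component. This is automatic along strict transforms of $B$ and along exceptional divisors of positive discrepancy, but it is exactly where the assumption that $L$ is only $\mathbb Q$-Cartier, not Cartier, bites: if $f^*L$ has a non-integral coefficient along an exceptional lc place $E_0\subset B_Y^{=1}$, then $\Delta_0$ acquires a positive coefficient on $E_0$, pushing the coefficient of $S+\Delta$ there above $1$. The fix, which is the technical heart, is not to insist that every coefficient-one divisor lie in the reduced part: along such an $E_0$ I would instead take the coefficient of $L_Y$ to be $\llcorner f^*L\lrcorner$ (still $\geq\llcorner f^*L\lrcorner$, so $f_*\mathcal O_Y(L_Y)\simeq\mathcal O_X(L)$ persists) and move $E_0$ into $\Delta$ with coefficient $1-\{f^*L\}\in(0,1)$. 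A short check then shows the resulting $(Y,S+\Delta)$ is a legitimate embedded simple normal crossing pair with $L_Y-(K_Y+S+\Delta)=f^*H'$, after which the appeal to Theorem \ref{8}(ii) is immediate.
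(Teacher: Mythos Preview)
Your proposal is correct and follows essentially the same route as the paper. The paper's proof also passes to a log resolution $f\colon Y\to X$ with $\Supp B_Y\cup\Supp f^*L$ simple normal crossing, constructs a Cartier divisor $\ulcorner f^*L+E+F\urcorner$ on $Y$ (where $E=-B_Y$ and $F=\sum_{a_j=-1}(1-\mult_{E_j}\{f^*L\})E_j$), checks that its pushforward is $\mathcal O_X(L)$ and that the residual boundary $F+\{-(f^*L+E+F)\}$ is a genuine boundary, and then reads off the vanishing from Theorem~\ref{8}(ii) at $q=0$; unwinding the coefficients, your patched $L_Y$ and $S+\Delta$ coincide with the paper's choices, and the ``obstacle'' you isolate (non-integral $f^*L$ along an exceptional lc place) is precisely what the paper's correction term $F$ handles.
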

\begin{proof}
Let $f:Y\to X$ be a log resolution of $(X, B)$ such 
that $K_Y=f^*(K_X+B)+\sum _i a_i E_i$ with 
$a_i\geq -1$ for any $i$. 
We can assume that $\sum _i E_i\cup \Supp f^*L$ is a 
simple normal crossing divisor on $Y$. 
We put $E=\sum _i a_i E_i$ and $F=\sum _{a_j=-1}(1-b_j)E_j$, 
where $b_j=\mult _{E_j}\{f^*L\}$. 
We note that $A=L-(K_X+B)$ is $\pi$-ample by the 
assumption. So, we have 
$f^*A=f^*L-f^*(K_X+B)=
\ulcorner f^*L+E+F\urcorner-(K_Y+F+\{-(f^*L+E+F)\})$. 
We can easily check that $f_*\mathcal O_Y(\ulcorner 
f^*L+E+F\urcorner)\simeq \mathcal O_X(L)$ and 
that $F+\{-(f^*L+E+F)\}$ has a simple 
normal crossing support and is a boundary $\mathbb R$-divisor 
on $Y$. By Theorem \ref{8} (ii), we obtain that 
$\mathcal O_X(L)$ is $\pi_*$-acyclic. 
Thus, we have $R^q\pi_*\mathcal O_X(L)=0$ for 
any $q>0$. 
\end{proof}
We note that Theorem \ref{lc} contains 
a complete form of \cite[Theorem 0.3]{kovacs} as a 
corollary. For the related topics, 
see \cite[Corollary 1.3]{kss}. 

\begin{cor}[Kodaira vanishing theorem for lc varieties]\label{lcvar}  
Let $X$ be a projective lc variety and $L$ 
an ample Cartier divisor on $X$. 
Then $$H^q(X, \mathcal O_X(K_X+L))=0$$ for any $q>0$. 
Furthermore, if we assume that $X$ is Cohen--Macaulay, 
then $H^q(X, \mathcal O_X(-L))=0$ for any $q<\dim X$. 
\end{cor}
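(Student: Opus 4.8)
The plan is to deduce both assertions from Theorem \ref{lc} (applied over a point) together with Serre duality on $X$. Neither part requires any fresh analysis: the vanishing Theorem \ref{lc} already does all the real work, and what remains is to arrange the hypotheses and then dualize.

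For the first statement I would apply Theorem \ref{lc} to the lc pair $(X,0)$ with $\pi\colon X\to \Spec \mathbb C$ the structure morphism, which is projective since $X$ is projective. Since $X$ is lc, $K_X$ is $\mathbb Q$-Cartier, so $K_X+L$ is a $\mathbb Q$-Cartier Weil divisor on $X$; moreover
$$
(K_X+L)-(K_X+0)=L
$$
is ample, hence $\pi$-ample. Taking the Weil divisor in Theorem \ref{lc} to be $K_X+L$, that theorem yields $R^q\pi_*\mathcal O_X(K_X+L)=0$ for $q>0$, which is exactly $H^q(X,\mathcal O_X(K_X+L))=0$ for $q>0$.

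For the second statement I would invoke Serre duality. Being lc, $X$ is normal, and by hypothesis it is Cohen--Macaulay of dimension $n=\dim X$; hence its dualizing complex is $\omega_X[n]$ with $\omega_X\simeq \mathcal O_X(K_X)$, and Serre duality takes the clean form $H^q(X,\mathcal F)^{\vee}\simeq H^{n-q}(X,\mathcal F^{\vee}\otimes \omega_X)$ for any locally free sheaf $\mathcal F$. Applying this with $\mathcal F=\mathcal O_X(-L)$, which is a line bundle because $L$ is Cartier, gives that $H^q(X,\mathcal O_X(-L))$ is dual to
$$
H^{n-q}\bigl(X,\mathcal O_X(L)\otimes \omega_X\bigr)=H^{n-q}\bigl(X,\mathcal O_X(K_X+L)\bigr).
$$
For $q<\dim X$ we have $n-q>0$, so this group vanishes by the first part, and therefore $H^q(X,\mathcal O_X(-L))=0$ for all $q<\dim X$.

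The main point to watch, rather than a genuine obstacle, is that the input Theorem \ref{lc} is stated for a $\mathbb Q$-Cartier Weil divisor, so one must record that $K_X+L$ is $\mathbb Q$-Cartier (i.e.\ that $K_X$ is $\mathbb Q$-Cartier for the lc variety $X$) before applying it; and in the second part one must use that on a normal Cohen--Macaulay projective variety the dualizing sheaf is the divisorial sheaf $\mathcal O_X(K_X)$ and that Serre duality is available in the displayed one-term form. Both are standard, so the proof is short.
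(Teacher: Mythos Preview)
Your proposal is correct and is exactly the intended argument: the paper states this as an immediate corollary of Theorem~\ref{lc} without further proof, and your derivation (apply Theorem~\ref{lc} to $(X,0)$ with the divisor $K_X+L$, then use Serre duality on the Cohen--Macaulay normal variety $X$ for the second part) is precisely what is meant.
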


\begin{rem}
We can see that Corollary \ref{lcvar} is contained in 
\cite[Theorem 2.6]{fujino-high}, 
which is a very special case of Theorem \ref{8} (ii). 
I forgot to state Corollary \ref{lcvar} explicitly in \cite{fujino-high}. 
There, we do not need embedded simple normal crossing pairs. 
We note that there are typos in the proof of 
\cite[Theorem 2.6]{fujino-high}. 
In the commutative diagram, $R^if_*\omega_X(D)$'s 
should be replaced by $R^jf_*\omega_X(D)$'s. 
\end{rem}

We close this section with an easy example. 

\begin{ex}
Let $X$ be a projective lc threefold which has the 
following properties:~(i) there 
exists a projective birational morphism $f:Y\to X$ from a 
smooth projective threefold, and (ii) the exceptional 
locus $E$ of $f$ is an Abelian surface with 
$K_Y=f^*K_X-E$. 
For example, $X$ is a cone over a normally projective 
Abelian surface in $\mathbb P^N$ 
and $f:Y\to X$ is the blow-up 
at the vertex of $X$. 
Let $L$ be an ample Cartier divisor on $X$. 
By the Leray spectral sequence, 
we have 
\begin{align*}
0&\to H^1(X, f_*f^*\mathcal O_X(-L))\to 
H^1(Y, f^*\mathcal O_X(-L))\to 
H^0(X, R^1f_*f^*\mathcal O_X(-L))\\ &\to 
H^2(X, f_*f^*\mathcal O_X(-L))\to 
H^2(Y, f^*\mathcal O_X(-L))\to \cdots. 
\end{align*} 
Therefore, we obtain 
$$H^2(X, \mathcal O_X(-L))\simeq 
H^0(X, \mathcal O_X(-L)\otimes R^1f_*\mathcal O_Y), $$
because $H^1(Y, f^*\mathcal O_X(-L))=H^2
(Y, f^*\mathcal O_X(-L))=0$ by the 
Kawamata--Viehweg vanishing theorem. 
On the other hand, we have $R^qf_*\mathcal O_Y\simeq 
H^q(E, \mathcal O_E)$ for any $q>0$ since 
$R^qf_*\mathcal O_Y(-E)=0$ for 
every $q>0$. Thus, $H^2(X, \mathcal O_X(-L))\simeq 
\mathbb C^2$. In particular, $H^2(X, \mathcal O_X(-L))\ne 0$. 
We note that $X$ is not Cohen--Macaulay. 
In the above example, if we assume that $E$ is a $K3$-surface, 
then $H^q(X, \mathcal O_X(-L))=0$ for 
$q<3$ and $X$ is Cohen--Macaulay. 
For the details, 
see the subsection \ref{431sss}, especially, 
Lemma \ref{437lem}. 
\end{ex}

\section{Some further generalizations}\label{genera} 
Here, we treat some generalizations of Theorem \ref{8}. 
First, we introduce the notion of nef and log big (resp.~nef and 
log abundant) divisors. 
See also Definition \ref{nlb-def}. 

\begin{defn}\index{nef and log 
big divisor}\index{nef and 
log abundant divisor}\label{2-46}
Let $f:(Y, B)\to X$ be a proper morphism 
from a simple normal crossing 
pair $(Y, B)$ such that 
$B$ is a subboundary. 
Let $\pi:X\to V$ be a proper 
morphism and $H$ an $\mathbb R$-Cartier $\mathbb R$-divisor 
on $X$. 
We say that $H$ is {\em{nef and log big}} 
(resp.~{\em{nef and log abundant}}) 
over $V$ if and only if $H|_C$ is nef and big (resp.~nef and 
abundant) over $V$ for any $C$, 
where $C$ is the image 
of a stratum of $(Y, B)$. When $(X, B_X)$ is 
an lc pair, we choose a log resolution of $(X, B_X)$ 
to be $f:(Y, B)\to X$, 
where $K_Y+B=f^*(K_X+B_X)$. 
\end{defn}

We can generalize Theorem \ref{8} as follows. 
It is \cite[Theorem 7.4]{ambro} for embedded {\em{simple}} normal crossing 
pairs. His idea of the proof is very clever. 

\begin{thm}[{cf.~\cite[Theorem 7.4]{ambro}}]\label{74} 
Let $f:(Y, S+B)\to X$ be a proper morphism from an embedded simple 
normal crossing pair such that 
$S+B$ is a boundary $\mathbb R$-divisor, 
$S$ is reduced, and $\llcorner B\lrcorner=0$. 
Let $L$ be a Cartier divisor on $Y$ and $\pi:X\to V$ 
a proper morphism. 
Assume that $f^*H\sim _{\mathbb R}L-(K_Y+S+B)$, 
where $H$ is nef and log big over $V$. 
Then 
\begin{itemize}
\item[{\em{(i)}}] 
every non-zero local section of $R^qf_*\mathcal O_Y(L)$ contains 
in its support the $f$-image of some strata of $(Y, S+B)$, and 
\item[{\em{(ii)}}] $R^qf_*\mathcal O_Y(L)$ is $\pi_*$-acyclic. 
\end{itemize}
\end{thm}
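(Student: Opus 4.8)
The plan is to derive Theorem \ref{74} entirely from the ample case already proved in Theorem \ref{8}, using Kodaira's lemma to trade the nef and log big divisor for an ample divisor plus a small effective perturbation that can be hidden inside the boundary.

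Part (i) I would dispose of immediately. The divisor appearing here is $f^*H$, i.e. the pullback under $f$ of an $\mathbb R$-Cartier divisor on $X$; such a divisor is trivial on every fibre of $f$ and is therefore $f$-semi-ample in the sense used in Theorem \ref{8} (one may take the relatively ample model to be $f:Y\to X$ itself). Hence the hypotheses of Theorem \ref{8} (i) hold with $f^*H$ playing the role of $H$, and the statement about the supports of local sections of $R^qf_*\mathcal O_Y(L)$ follows verbatim. No positivity beyond $f$-semi-ampleness is used for (i), so log bigness is not needed here.

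For part (ii) I would first reduce to the projective situation exactly as in Steps \ref{o2} and \ref{o3} of the proof of Theorem \ref{8} (ii): the assertion is local on $V$, so I shrink $V$ to be affine, compactify both $V$ and $X$ by Lemma \ref{comp}, and twist by the pullback of an ample divisor from $V$ (which preserves nef and log bigness, turning ``over $V$'' into the absolute notion). After this it suffices to prove $H^p(X,R^qf_*\mathcal O_Y(L))=0$ for $p>0$ with $X$ projective and $H$ nef and log big in the absolute sense. Now comes the heart of the matter. Since $H$ is big, Kodaira's lemma lets me write $H\sim_{\mathbb R}A+E$ with $A$ ample and $E\geq 0$ effective, whence $H-\varepsilon E\sim_{\mathbb R}(1-\varepsilon)H+\varepsilon A$ is ample for every $0<\varepsilon<1$. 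Writing $L\sim_{\mathbb R}K_Y+(S+B+\varepsilon f^*E)+f^*(H-\varepsilon E)$, I would absorb $\varepsilon f^*E$ into the boundary: applying Lemma \ref{6} to make $S+B+f^*E$ simple normal crossing and tracking the exceptional divisors exactly as in the proof of Theorem \ref{5.1} (setting $L'=f'^*L+E_+$ and absorbing $E_-$ together with $\varepsilon f^*E$ into a new subboundary $B'$ with $\llcorner B'\lrcorner=0$), this produces a Cartier divisor of the shape $K+S'+B'+(\text{pullback of an ample divisor})$ on a modified embedded simple normal crossing pair. Theorem \ref{8} (ii) then gives the $\pi_*$-acyclicity upstairs, and Lemma \ref{re-vani-lem} with the Leray spectral sequence pushes the vanishing back down to $X$.

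The step I expect to be the real obstacle is choosing the Kodaira perturbation $E$ so that the absorption is legitimate: if $E$ met an $f$-image of a stratum of $(Y,S+B)$, then $\varepsilon f^*E$ would raise a coefficient to $1$ (or pass through a stratum) and the round-down-zero subboundary condition would fail. This is precisely where the \emph{log} bigness of Definition \ref{2-46} is indispensable, since it guarantees that $H$ is big on \emph{every} stratum image, so Kodaira's lemma can be run on each such centre and the effective part kept off it; globally one organizes this as an induction on the dimension of the strata, peeling off centres via the restriction exact sequences used in Step \ref{1ne} of Lemma \ref{re-vani-lem}. Once $E$ avoids all stratum images, small $\varepsilon$ keeps every coefficient below $1$ and the reduction to Theorem \ref{8} is clean; correctly using log bigness on all lc centres simultaneously is the delicate and clever part of the argument.
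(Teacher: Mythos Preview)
Your treatment of (i) is correct and in fact cleaner than the paper's presentation: since $L-(K_Y+S+B)\sim_{\mathbb R}f^*H$ is pulled back from $X$, it is trivially $f$-semi-ample, and Theorem~\ref{8}\,(i) applies verbatim with no use of log bigness.

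For (ii) there is a genuine gap. Your entire reduction hinges on finding a single Kodaira decomposition $H\sim_{\mathbb R}A+E$ on $X$ with $A$ ample and $E$ effective such that no stratum image $C=f(\text{stratum})$ lies in $\Supp E$; only then is $f^*E$ permissible and the absorption into the boundary legitimate. You do not justify that such an $E$ exists. The phrase ``run Kodaira's lemma on each centre and keep the effective part off it'' produces a decomposition of $H|_C$ on $C$, not a decomposition on $X$ avoiding $C$. What you are implicitly invoking is a Nakamaye-type statement identifying $\mathbf B_+(H)$ with the null locus, so that $H|_C$ big for every $C$ forces $C\not\subset\mathbf B_+(H)$; but $X$ here is neither smooth nor even normal, and you supply no such result. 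Your fallback ``induction on the dimension of the strata, peeling off centres via the restriction exact sequences of Lemma~\ref{re-vani-lem}'' is not a proof either: those sequences peel off irreducible \emph{components} of $Y$, not lc centres, and in any case do not manufacture a global $E$ on $X$.

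The paper's route is different and sidesteps the problem entirely. Rather than trying to choose $E$ on $X$, it performs a d\'evissage on $Y$. Via the embedded log transformation (Lemma~\ref{useful-lem2}) one writes $Y=Y'\cup Y''$, where $Y'$ is the union of all strata whose images are \emph{not} irreducible components of $X$, and $Y''=Y-Y'$. On $Y''$ every stratum dominates a component of $X$, so the pullback of \emph{any} effective divisor on (a Chow model of) $X$ contains no stratum; hence Kodaira's lemma may be applied with no constraint on $E$, and Theorem~\ref{8}\,(ii) handles this piece. On $Y'$ one has $\dim f(Y')<\dim f(Y)$ and one inducts; log bigness enters precisely here, ensuring that $H$ remains nef and log big on the lower-dimensional images. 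The two pieces are glued via the short exact sequence $0\to\mathcal O_{Y''}(L|_{Y''}-Y'|_{Y''})\to\mathcal O_Y(L)\to\mathcal O_{Y'}(L)\to 0$, whose connecting homomorphisms vanish by the already-established (i). The paper explicitly flags (in the remark following the proof) that this d\'evissage, and in particular the embedded log transformation, is the key point --- it is exactly the mechanism your proposal lacks.
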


\begin{proof}We note that we can assume that $V$ is affine 
without loss of generality. 
By using Lemma \ref{useful-lemma},  
we can assume that there exists a divisor 
$D$ on $M$, where $M$ is the ambient space of $Y$, such that 
$\Supp (D+Y)$ is simple normal crossing on $M$ 
and that $D|_Y=S+B$. 

\setcounter{step}{0}
\begin{step}\label{so-no-1} 
We assume that each stratum of $(Y, S+B)$ dominates some 
irreducible 
component of $X$. 
By taking the Stein factorization, 
we can assume that $f$ has connected fibers. 
Then we can assume that $X$ is irreducible and 
each stratum of $(Y, S+B)$ dominates $X$. 
By Chow's lemma, there exists a projective birational 
morphism $\mu:X'\to X$ such that 
$\pi':X'\to V$ is projective. 
By taking blow-ups $\varphi: Y'\to Y$ 
that is an isomorphism over the generic point 
of any stratum of 
$(Y, S+B)$, 
we have the following commutative diagram. 
$$
\begin{CD}
Y'@>{\varphi}>>Y\\ 
@V{g}VV @VV{f}V\\
X'@>>{\mu}> X
\end{CD}
$$ 
Then, we can write 
$$
K_{Y'}+S'+B'=\varphi^*(K_Y+S+B)+E, 
$$
where 
\begin{itemize}
\item[(1)] 
$(Y', S'+B')$ is a global embedded simple normal crossing 
pair such that 
$S'+B'$ is a boundary $\mathbb R$-divisor, 
$S'$ is reduced, and $\llcorner B'\lrcorner=0$. 
\item[(2)] $E$ is an effective $\varphi$-exceptional 
Cartier divisor. 
\item[(3)] Each stratum of $(Y', S'+B')$ dominates 
$X'$. 
\end{itemize}
We note that each stratum of $(Y, S+B)$ dominates $X$. 
Therefore, 
$$
\varphi^*L+E\sim _{\mathbb R}K_{Y'}+S'+B'+\varphi^*f^*H. 
$$
We note that 
$\varphi_*\mathcal 
O_{Y'}(\varphi^*L+E)\simeq \mathcal O_Y(L)$ and 
$R^i\varphi_*\mathcal O_{Y'}(\varphi^*L+E)=0$ for 
any $i>0$ by 
Theorem \ref{8} (i). 
Thus, we can assume that 
$\varphi:Y'\to Y$ is an identity, that is, 
we have 
$$
\begin{CD}
Y@=Y\\ 
@V{g}VV @VV{f}V\\
X'@>>{\mu}> X. 
\end{CD}
$$
We put $\mathcal F=R^qg_*\mathcal O_Y(L)$. 
Since $\mu^*H$ is nef and big over $V$ and 
$\pi':X'\to V$ is projective, 
we can write 
$\mu^*H=E+A$, where 
$A$ is a $\pi'$-ample $\mathbb R$-divisor 
on $X'$ and $E$ is an effective $\mathbb R$-divisor. 
By the same arguments as above, we take some blow-ups 
and can assume that 
$(Y, S+B+g^*E)$ is a global 
embedded simple normal crossing pair. 
If $k\gg 1$, then $\llcorner B+\frac{1}{k}g^*E
\lrcorner 
=0$, 
$$\mu^*H=\frac{1}{k}E+\frac{1}{k}A+\frac{k-1}{k}\mu^*
H, $$ 
and 
$$\frac{1}{k}A+\frac{k-1}{k}\mu^*H$$ is $\pi'$-ample. 
Thus, $\mathcal F$ is $\mu_*$-acyclic and $(\pi\circ 
\mu)_*=\pi'_*$-acyclic by Theorem \ref{8} 
(ii). 
We note that 
$$
L-\Big(K_Y+S+B+\frac{1}{k}g^*E\Big)\sim _{\mathbb R}
g^*\Big(\frac{1}{k}A+\frac{k-1}{k}\mu^*H\Big). 
$$
So, we have $R^qf_*\mathcal O_Y(L)\simeq \mu_*\mathcal F$ and 
$R^qf_*\mathcal O_Y(L)$ is $\pi_*$-acyclic. 
It is easy to see that $\mathcal F$ is torsion-free by Theorem 
\ref{8} (i). 
Therefore, $R^qf_*\mathcal O_Y(L)$ is also 
torsion-free. 
Thus, we finish the proof when each stratum of $(Y, S+B)$ dominates some irreducible component of $X$. 
\end{step}
\begin{step}\label{so-no-2} 
We treat the general case by induction on $\dim f(Y)$. 
By taking embedded log transformation 
(see Lemma \ref{useful-lem2}), 
we can decompose $Y=Y'\cup Y''$ as follows:~$Y'$ is 
the union of all strata of $(Y, S+B)$ that are not mapped 
to irreducible components of $X$ and $Y''=Y-Y'$. 
We put $K_{Y''}+B_{Y''}=(K_Y+S+B)|_{Y''}-Y'|_{Y''}$. 
Then $f:(Y'', B_{Y''})\to X$ and 
$L''=L|_{Y''}-Y'|_{Y''}$ satisfy the assumption in Step \ref{so-no-1}. We consider the following short exact sequence 
$$
0\to \mathcal O_{Y''}(L'')\to \mathcal O_Y(L)\to \mathcal 
O_{Y'}(L)\to 0. 
$$ 
By taking $R^qf_*$, 
we have short exact sequence 
$$
0\to R^qf_*\mathcal O_{Y''}(L'')\to 
R^qf_*\mathcal O_Y(L)\to R^qf_*\mathcal O_{Y'}(L)\to 0 
$$ 
for any $q$ by Step \ref{so-no-1}. 
It is because 
the connecting homomorphisms 
$R^qf_*\mathcal O_{Y'}(L)\to 
R^{q+1}f_*\mathcal O_{Y''}(L'')$ are zero maps 
by Step \ref{so-no-1}. 
Since (i) and (ii) hold for the first and third members 
by Step \ref{so-no-1} and by induction on dimension, 
respectively, they also hold for $R^qf_*\mathcal O_Y(L)$. 
\end{step}
So, we finish the proof. 
\end{proof}

In Step \ref{so-no-2} 
in the proof of Theorem \ref{74}, 
we used the embedded log 
transformation and 
the d\'evissage (see \cite[Remark 2.6]{ambro}). 
So, we need the notion of embedded simple normal crossing 
pairs to prove Theorem \ref{74} even when $Y$ is 
smooth. It is a key point. 

As a corollary of Theorem \ref{74}, 
we can prove the following vanishing theorem, which is 
stated implicitly in the introduction of \cite{ambro}. 
It is the culmination of the works of several authors:~Kawamata, 
Viehweg, Nadel, Reid, Fukuda, Ambro, and 
many others (cf.~\cite[Theorem 1-2-5]{kmm}). 

\begin{thm}\label{kvn}
Let $(X, B)$ be an lc pair such that 
$B$ is a boundary $\mathbb R$-divisor 
and let $L$ be a $\mathbb Q$-Cartier 
Weil divisor on $X$. 
Assume that 
$L-(K_X+B)$ is nef and log big over $V$, 
where $\pi:X\to V$ is a proper morphism. 
Then $R^q\pi_*\mathcal O_X(L)=0$ for any $q>0$. 
\end{thm}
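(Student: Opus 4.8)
The plan is to imitate the proof of Theorem~\ref{lc} verbatim, replacing the appeal to Theorem~\ref{8}~(ii) by an appeal to Theorem~\ref{74}~(ii): since $A:=L-(K_X+B)$ is now only nef and log big over $V$ rather than $\pi$-ample, Theorem~\ref{8} is no longer available, but Theorem~\ref{74} was designed exactly to cover the nef and log big case. First I would set $A=L-(K_X+B)$ and take a log resolution $f\colon Y\to X$ of $(X,B)$ with $K_Y=f^*(K_X+B)+\sum_i a_iE_i$, all $a_i\geq -1$, arranged so that $\sum_iE_i\cup\Supp f^*L$ is simple normal crossing. As in the proof of Theorem~\ref{lc}, put $E=\sum_i a_iE_i$, $F=\sum_{a_j=-1}(1-b_j)E_j$ with $b_j=\mult_{E_j}\{f^*L\}$, $L'=\ulcorner f^*L+E+F\urcorner$, and $B_Y=F+\{-(f^*L+E+F)\}$, a boundary $\mathbb{R}$-divisor with simple normal crossing support; write $B_Y=S+(B_Y-S)$ with $S=B_Y^{=1}$ reduced and $\llcorner B_Y-S\lrcorner=0$.

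The same computation as in Theorem~\ref{lc} gives $L'-(K_Y+B_Y)=f^*A$, i.e.\ $f^*A\sim_{\mathbb R}L'-(K_Y+S+(B_Y-S))$, which is precisely the hypothesis of Theorem~\ref{74} with the nef and log big divisor $H=A$. The only genuinely new point, and the step I expect to be the main obstacle, is to check that $A$ really is nef and log big over $V$ \emph{with respect to the pair $(Y,B_Y)$ I am using to apply Theorem~\ref{74}}, whereas the hypothesis of Theorem~\ref{kvn} only furnishes this with respect to the defining log resolution of the lc pair $(X,B)$ in the sense of Definition~\ref{2-46} (i.e.\ the pair $(Y,-E)$ whose reduced part is $\sum_{a_j=-1}E_j$). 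This requires a strata comparison: I would compute the coefficient of $B_Y$ along each divisor. On $E_j$ with $a_j=-1$ one finds $f^*L+E+F$ has integral coefficient there, so the fractional part contributes nothing and $\mult_{E_j}B_Y=1-b_j\in(0,1]$, equal to $1$ only when $b_j=0$; on every other divisor the coefficient of $B_Y$ lies in $[0,1)$. Hence $S=B_Y^{=1}\subseteq\sum_{a_j=-1}E_j$, so every stratum of $(Y,B_Y)$ is also a stratum of the defining pair $(Y,-E)$.

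Consequently, for any stratum $W$ of $(Y,B_Y)$ the image $C=f(W)$ is one of the centers on which $A|_C$ is assumed nef and big over $V$, so $A$ is nef and log big over $V$ relative to $(Y,B_Y)$ as well. Theorem~\ref{74}~(ii) then applies and yields that $R^qf_*\mathcal O_Y(L')$ is $\pi_*$-acyclic for all $q$. Finally, exactly as in Theorem~\ref{lc}, the round-up $L'=\ulcorner f^*L+E+F\urcorner$ differs from $f^*L$ by an effective divisor that is exceptional in the appropriate sense, so $f_*\mathcal O_Y(L')\simeq\mathcal O_X(L)$. Taking $q=0$ in the $\pi_*$-acyclicity statement gives $R^p\pi_*\mathcal O_X(L)=R^p\pi_*\bigl(f_*\mathcal O_Y(L')\bigr)=0$ for all $p>0$, which is the desired conclusion.
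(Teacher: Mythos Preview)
Your proposal is correct and follows exactly the approach the paper takes: the paper's proof is the single sentence ``The proof of Theorem~\ref{lc} works for Theorem~\ref{kvn} without any changes if we adopt Theorem~\ref{74},'' and you have reproduced that proof with the substitution made. Your explicit verification that $B_Y^{=1}\subseteq\sum_{a_j=-1}E_j$, so that every stratum of $(Y,B_Y)$ maps to an lc center of $(X,B)$ and hence the nef and log big hypothesis transfers, is a point the paper leaves implicit in the phrase ``without any changes''; it is correct and worth writing out.
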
 

As a special case, we have the Kawamata--Viehweg vanishing 
theorem. 

\begin{cor}[Kawamata--Viehweg vanishing theorem]\index{Kawamata--Viehweg vanishing theorem}\label{kv-vani}
Let $(X, B)$ be a klt pair and let $L$ be 
a $\mathbb Q$-Cartier Weil divisor on $X$. 
Assume that $L-(K_X+B)$ is nef and big 
over $V$, where $\pi:X\to V$ is a proper morphism. 
Then $R^q\pi_*\mathcal O_X(L)=0$ for 
any $q>0$. 
\end{cor}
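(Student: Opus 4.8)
The plan is to deduce this directly from Theorem \ref{kvn} by checking that, for a klt pair, the hypothesis ``nef and big over $V$'' already coincides with ``nef and log big over $V$'' in the sense of Definition \ref{2-46}. A klt pair is in particular an lc pair with $B$ a boundary, and $L$ is a $\mathbb Q$-Cartier Weil divisor, so all the remaining hypotheses of Theorem \ref{kvn} are satisfied verbatim; the only point requiring work is the ``log big'' upgrade. Accordingly, I would first fix a log resolution $f:(Y,B_Y)\to X$ with $K_Y+B_Y=f^*(K_X+B)$, as prescribed in Definition \ref{2-46} for the lc pair $(X,B)$. By that definition, $H:=L-(K_X+B)$ is nef and log big over $V$ if and only if $H|_C$ is nef and big over $V$ for every $C$ that is the image on $X$ of a stratum of the simple normal crossing pair $(Y,B_Y)$.

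The key step, and essentially the only real content of the proof, is to identify these strata. Because $(X,B)$ is klt, every discrepancy satisfies $a(E,X,B)>-1$, so in the identity $K_Y+B_Y=f^*(K_X+B)$ the divisor $B_Y$ has all coefficients strictly less than $1$; that is, $B_Y^{=1}=0$. Consequently, by the notion of stratum in Definition \ref{03}, the pair $(Y,B_Y)$ has no lc center other than the irreducible components of $Y$. Since $Y$ is a resolution of the normal, hence irreducible, variety $X$, it is itself irreducible, so its unique stratum is $Y$, whose image on $X$ is $C=X$. This is exactly where the klt hypothesis is used, and one must be careful to invoke the correct notion of stratum rather than an ambient one.

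It then remains only to read off the conclusion. The single condition imposed by ``nef and log big over $V$'' is that $H|_X=H=L-(K_X+B)$ be nef and big over $V$, which is precisely the hypothesis of the corollary. Hence $L-(K_X+B)$ is nef and log big over $V$, and Theorem \ref{kvn} yields $R^q\pi_*\mathcal O_X(L)=0$ for all $q>0$. No estimates, vanishing covers, or spectral-sequence arguments are needed here: the entire analytic difficulty is packaged inside Theorem \ref{kvn}, and the corollary is the observation that for klt pairs the family of test subvarieties in Definition \ref{2-46} degenerates to the single variety $X$, so that the a priori stronger ``log big'' requirement reduces to ordinary bigness.
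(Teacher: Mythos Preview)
Your proposal is correct and matches the paper's intent exactly: the paper states this corollary immediately after Theorem \ref{kvn} with only the remark ``As a special case, we have the Kawamata--Viehweg vanishing theorem,'' and you have correctly supplied the one-line justification, namely that for a klt pair the log resolution $(Y,B_Y)$ has $B_Y^{=1}=0$, so the only stratum is $Y$ itself with image $X$, collapsing ``nef and log big'' to ``nef and big.''
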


The proof of Theorem \ref{lc} works for Theorem \ref{kvn} 
without any changes if we adopt Theorem \ref{74}. 
We add one example. 

\begin{ex}\label{ex111}
Let $Y$ be a projective surface which has the following 
properties:~(i) there exists a projective birational 
morphism $f:X\to Y$ from a smooth projective surface $X$, and (ii) 
the exceptional locus $E$ of $f$ is an elliptic curve with 
$K_X+E=f^*K_Y$. 
For example, $Y$ is a cone over a smooth plane 
cubic curve and $f:X\to Y$ is the blow-up at the vertex of $Y$. 
We note that $(X, E)$ is a plt pair. 
Let $H$ be an ample Cartier divisor on $Y$. We consider 
a Cartier divisor $L=f^*H+K_X+E$ on $X$. 
Then $L-(K_X+E)$ is nef and big, but not log big. 
By the short exact sequence $$0\to 
\mathcal O_X(f^*H+K_X)\to 
\mathcal O_X(f^*H+K_X+E)
\to \mathcal O_E(K_E)\to 0, $$ 
we obtain $$R^1f_*\mathcal O_X(f^*H+K_X+E)\simeq 
H^1(E, \mathcal O_E(K_E))\simeq 
\mathbb C(P), $$ 
where $P=f(E)$. 
By the Leray spectral sequence, we have 
\begin{align*}
0&\to H^1(Y, f_*\mathcal O_X(K_X+E)\otimes 
\mathcal O_Y(H))\to 
H^1(X, \mathcal O_X(L))\to 
H^0(Y, \mathbb C(P))\\ 
&\to 
H^2(Y, f_*\mathcal O_X(K_X+E)\otimes 
\mathcal O_Y(H))\to \cdots. 
\end{align*} 
If $H$ is sufficiently ample, then 
$H^1(X, \mathcal O_X(L))\simeq 
H^0(Y, \mathbb C(P))\simeq \mathbb C(P)$. 
In particular, $H^1(X, \mathcal O_X(L))\ne 0$. 
\end{ex}

\begin{rem}
In Example \ref{ex111}, 
there exists an effective $\mathbb Q$-divisor $B$ on 
$X$ such that 
$L-\frac{1}{k}B$ is ample for any $k>0$ by Kodaira's 
lemma. 
Since $L\cdot E=0$, we have $B\cdot E<0$. 
In particular, $(X, E+\frac{1}{k}B)$ is not lc 
for any $k>0$. 
This is the main reason why $H^1(X, \mathcal O_X(L))\ne 0$. 
If $(X, E+\frac{1}{k}B)$ were 
lc, then the ampleness of $L-(K_X+E+\frac{1}{k}B)$ would 
imply $H^1(X, \mathcal O_X(L))=0$. 
\end{rem}

We modify the proof of Theorem \ref{74}. 
Then 
we can easily obtain the following generalization 
of Theorem \ref{8} (i). 
We leave the details for the reader's exercise. 

\begin{thm}\label{nef-labun-th}
Let $f:(Y, S+B)\to X$ be a proper 
morphism from an embedded simple normal crossing pair such that 
$S+B$ is a boundary, $S$ is reduced, and $\llcorner B\lrcorner =0$. 
Let $L$ be a Cartier divisor on $Y$ and 
$\pi:X\to V$ a proper morphism. Assume that 
$f^*H\sim _{\mathbb R}L-(K_Y+S+B)$, where 
$H$ is nef and log abundant over $V$. 
Then, every non-zero local section of 
$R^qf_*\mathcal O_Y(L)$ contains 
in its support the $f$-image of some strata of $(Y, S+B)$. 
\end{thm}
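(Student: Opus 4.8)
The plan is to rerun the proof of Theorem~\ref{74} with ``nef and log big'' weakened to ``nef and log abundant'', retaining only the torsion-free conclusion~(i). Because statement~(ii) is not asserted, the dévissage of Step~\ref{so-no-2}---splitting $Y=Y'\cup Y''$ by an embedded log transformation (Lemma~\ref{useful-lem2}) and inducting on $\dim f(Y)$---carries over word for word, since it invokes only part~(i) for the two pieces. Hence everything reduces to the analogue of Step~\ref{so-no-1}, where every stratum of $(Y,S+B)$ dominates an irreducible component of $X$. After Stein factorization, Chow's lemma and the blow-ups of that step (using Theorem~\ref{8}~(i) to take $\varphi=\mathrm{id}$), I may assume $X$ is irreducible, $f=g$ has connected fibers, there is a birational $\mu\colon X'\to X$ with $\pi'\colon X'\to V$ projective, and every stratum dominates $X'$. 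Then the $f$-image of each stratum is all of $X'$, so the assertion becomes exactly the torsion-freeness of $R^qg_*\mathcal O_Y(L)$ on $X'$.

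The single new input is the nef and abundant reduction for $H$: after a further birational modification of $X'$ (compatibly blowing up $Y$, which is harmless by Theorem~\ref{8}~(i)) there are a surjective morphism with connected fibers $h\colon X'\to Z$ over $V$ and a nef and big $\mathbb R$-Cartier $\mathbb R$-divisor $H_Z$ on $Z$ over $V$ with $\mu^*H\sim_{\mathbb R}h^*H_Z$. Its decisive feature, which replaces the decomposition $\mu^*H=E+A$ of Theorem~\ref{74}, is that $\mu^*H$ becomes $\mathbb R$-linearly trivial on the general fiber of $h$: since the generic point of $Z$ avoids the support of $H_Z$, one has $\mu^*H|_{X'_\eta}\sim_{\mathbb R}0$, where $X'_\eta$ denotes the generic fiber. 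Set $p=h\circ g\colon Y\to Z$, so that $L\sim_{\mathbb R}K_Y+S+B+p^*H_Z$.

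I would then establish torsion-freeness of $\mathcal G:=R^qg_*\mathcal O_Y(L)$ on $X'$ by locating its associated primes. In the vertical direction, flat base change gives $\mathcal G|_{X'_\eta}\simeq R^q(g_\eta)_*\mathcal O_{Y_\eta}(L)$; as $\mu^*H|_{X'_\eta}\sim_{\mathbb R}0$, the divisor $0$ is trivially $g_\eta$-semi-ample, so Theorem~\ref{8}~(i) over the function field of $Z$ (admissible in characteristic zero) shows this restriction is torsion-free. Consequently every associated prime of $\mathcal G$ other than the generic point of $X'$ maps into a proper closed subset of $Z$. In the horizontal direction, Theorem~\ref{74}~(i) applies to $p$---here the bigness of $H_Z$ is essential, and all strata dominate $Z$---and yields that $R^qp_*\mathcal O_Y(L)$ is torsion-free on $Z$; feeding this into the Leray spectral sequence $R^ah_*R^bg_*\mathcal O_Y(L)\Rightarrow R^{a+b}p_*\mathcal O_Y(L)$ (or, equivalently, iterating the vertical argument over a stratification $Z_0\subsetneq Z$ of the support of the torsion) eliminates the remaining associated primes. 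Thus $\mathcal G$ is torsion-free on $X'$, and since $\mu$ is birational, descending along $\mu_*$ gives the assertion for $R^qf_*\mathcal O_Y(L)$ on $X$.

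The principal obstacle is the nef and abundant reduction in this generality: producing $h$ and $H_Z$ with $\mu^*H\sim_{\mathbb R}h^*H_Z$ relatively over $V$, for an $\mathbb R$-divisor $H$, together with the upgrade from numerical to genuine $\mathbb R$-linear triviality on the general fiber that powers the vertical step. The secondary difficulties are purely bookkeeping: the compatibility $\mathcal G|_{X'_\eta}\simeq R^q(g_\eta)_*\mathcal O_{Y_\eta}(L)$ on the generic fiber, and the reconciliation of the horizontal vanishing over $Z$ with the torsion-freeness sought over $X'$---the step where the Leray filtration must be controlled. Since each of these is standard once Theorems~\ref{8} and~\ref{74} are in hand, the result can reasonably be left as an exercise.
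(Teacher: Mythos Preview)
Your d\'evissage (Step~\ref{so-no-2} of Theorem~\ref{74}) and the initial reductions of Step~\ref{so-no-1} are fine and match the paper. The divergence is in how you finish Step~\ref{so-no-1}, and here the paper takes a much shorter route that you have missed.

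The paper does not pass through an Iitaka-type fibration at all. Instead it invokes \cite[5.11.~Lemma]{ev}: for a nef and abundant divisor, after a suitable birational modification one can find an effective $\mathbb R$-divisor $E$ such that $k\mu^*H-E$ is $\pi'$-semi-ample for every positive integer $k$. This is the exact analogue of Kodaira's lemma used in Theorem~\ref{74} for nef and big, and it slots into the identical computation: absorb $\tfrac{1}{k}g^*E$ into the boundary for $k\gg 0$, observe
\[
L-\Big(K_Y+S+B+\tfrac{1}{k}g^*E\Big)\sim_{\mathbb R} g^*\Big(\tfrac{1}{k}(k\mu^*H-E)\Big),
\]
and since the right side is the pullback of a $\pi'$-semi-ample divisor, Theorem~\ref{8}~(i) applies directly to give torsion-freeness. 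No fibration, no spectral sequence.

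Your vertical/horizontal split has a real gap in the horizontal direction. Torsion-freeness of $R^qp_*\mathcal O_Y(L)$ on $Z$ does not, via Leray, force $R^qg_*\mathcal O_Y(L)$ to be torsion-free on $X'$: the edge map $h_*R^qg_*\to R^qp_*$ from $E_2^{0,q}$ can have nonzero kernel, so torsion in $R^qg_*$ supported over a proper closed subset of $Z$ may simply vanish in the abutment. Your fallback of ``iterating over a stratification of $Z$'' does not repair this, because restricting $\mathcal G$ to $h^{-1}(Z_0)$ is not the same as isolating the torsion subsheaf of $\mathcal G$ on $X'$, and the hypotheses of your vertical argument (triviality of $\mu^*H$ on a generic fiber) are not available over $Z_0$. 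This is not bookkeeping; I do not see how to close it without essentially reverting to the Esnault--Viehweg decomposition above.
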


\begin{proof}[Sketch of the proof]
In Step \ref{so-no-1} in the proof of Theorem \ref{74}, 
we can write $\mu^*H=E+A$, where 
$E$ is an effective $\mathbb R$-divisor such that 
$k\mu^*H-E$ is $\pi'$-semi-ample for any 
positive integer $k$ (cf.~\cite[5.11.~Lemma]{ev}). 
Therefore, Theorem \ref{nef-labun-th} 
holds when each stratum of $(Y, S+B)$ dominates 
some irreducible component of $X$. 
Step \ref{so-no-2} in the proof of Theorem \ref{74} 
works without any changes. 
\end{proof}

\section{From SNC pairs to NC pairs}\label{sec6} 

In this section, we recover Ambro's theorems 
from Theorems \ref{5.1} and \ref{8}. We repeat Ambro's 
statements for the reader's convenience. 

\begin{thm}[{cf.~\cite[Theorem 3.1]{ambro}}]\label{61} 
Let $(X, S+B)$ be an embedded normal crossing 
pair such that $X$ is proper, $S+B$ is a boundary 
$\mathbb R$-divisor, $S$ is reduced, and $\llcorner 
B\lrcorner =0$. Let $L$ be a Cartier 
divisor on $X$ and $D$ an effective Cartier divisor that is permissible 
with 
respect to $(X, S+B)$. 
Assume the following conditions. 
\begin{itemize}
\item[{\em{(i)}}] $L\sim _{\mathbb R}K_X+S+B+H$, 
\item[{\em{(ii)}}] $H$ is a semi-ample $\mathbb R$-Cartier 
$\mathbb R$-divisor, and 
\item[{\em{(iii)}}] $tH\sim _{\mathbb R} D+D'$ for some 
positive real number $t$, where 
$D'$ is an effective $\mathbb R$-Cartier 
$\mathbb R$-divisor that is permissible with respect to $(X, S+B)$. 
\end{itemize}
Then the homomorphisms 
$$
H^q(X, \mathcal O_X(L))\to H^q(X, \mathcal O_X(L+D)), 
$$ 
which are induced by the natural inclusion 
$\mathcal O_X\to \mathcal O_X(D)$, 
are injective for all $q$. 
\end{thm}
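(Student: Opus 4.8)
The plan is to reduce Theorem \ref{61} to its simple normal crossing counterpart, Theorem \ref{5.1}, by resolving the ambient space so that the strict transform of $X$ becomes an embedded \emph{simple} normal crossing divisor, and then descending the injectivity with the relative vanishing Lemma \ref{re-vani-lem}. First I would unwind the definition of an embedded normal crossing pair: $X$ is a normal crossing divisor on a smooth ambient variety $M$ with $\dim M=\dim X+1$, and there is an $\mathbb R$-divisor $A$ on $M$, having no common components with $X$ and with $\Supp(A+X)$ normal crossing, such that $S+B=A|_X$. By Lemma \ref{7} I may assume that $B$ is a $\mathbb Q$-divisor and $L\sim_{\mathbb Q}K_X+S+B+H$.

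Next I would apply Szab\'o's resolution lemma (see \ref{15-resol}), exactly as in the proof of Lemma \ref{6}, to the reduced divisor $\Supp(A+X+D+D')$ on the smooth variety $M$. This produces a projective birational morphism $g:N\to M$ from a smooth $N$ such that the strict transform $Y$ of $X$ is a \emph{simple} normal crossing divisor on $N$, such that $g$ (and hence $f:=g|_Y$) is an isomorphism at the generic point of every stratum of $X$, and such that the total transforms of $A$, $D$, and $D'$ all acquire simple normal crossing support. The key point is that blowing up smooth centers turns the normal crossing divisor $X$ into a simple normal crossing divisor while modifying it only over its non-snc locus, which contains no stratum generically.

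On the resulting embedded simple normal crossing pair $(Y,S_Y+B_Y)$, where I write $K_Y+S_Y+B_Y=f^*(K_X+S+B)+E$ with $E$ effective and $f$-exceptional, I would set $E_+=\ulcorner E\urcorner$, $L'=f^*L+E_+$, $E_-=E_+-E$, and absorb $E_-$ together with a small multiple of the semi-ample part $f^*H$ into a new boundary $B''$ with $\llcorner B''\lrcorner=0$, $\Supp B''\supset \Supp f^*D$, and $L'\sim_{\mathbb R}K_Y+S_Y+B''$, precisely as at the end of the proof of Theorem \ref{5.1}. Since $f^*H$ is semi-ample and $tf^*H\sim_{\mathbb R}f^*D+f^*D'$ with $f^*D'$ permissible, the hypotheses of Theorem \ref{5.1} transfer to $Y$, giving that $H^q(Y,\mathcal O_Y(L'))\to H^q(Y,\mathcal O_Y(L'+f^*D))$ is injective for all $q$. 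Then Lemma \ref{re-vani-lem} yields $R^qf_*\mathcal O_Y(L')=0$ for $q>0$ and $f_*\mathcal O_Y(L')\simeq\mathcal O_X(L)$, while the projection formula gives $f_*\mathcal O_Y(L'+f^*D)\simeq\mathcal O_X(L+D)$; the commutative square of Leray edge maps then transports the injectivity on $Y$ down to the desired injectivity $H^q(X,\mathcal O_X(L))\to H^q(X,\mathcal O_X(L+D))$ on $X$. Note that only relative vanishing for the \emph{source} $L'$ is needed, so no boundary condition on $B''+f^*D$ is required.

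The main obstacle I anticipate is the second step: arranging the resolution $g$ so that $f:Y\to X$ is an isomorphism over the generic point of \emph{every} stratum of $(Y,S_Y+B_Y)$, which is exactly the hypothesis that activates Lemma \ref{re-vani-lem}. For a genuinely normal crossing variety the irreducible components may themselves be singular, so one must verify that the non-snc locus of $X$ meets no stratum in codimension zero and that the blow-ups do not create new strata collapsing to lower-dimensional loci in a way that destroys the generic-isomorphism property. This is precisely where the passage from normal crossing to simple normal crossing genuinely enters, and where the bookkeeping of strata carried out for the snc case in Lemmas \ref{use} and \ref{useful-lem2} must be re-examined in the merely normal crossing setting. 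Since $X$ is assumed proper, no compactification is required here, so the Whitney umbrella difficulties noted in Remark \ref{rem-2} do not obstruct this particular reduction.
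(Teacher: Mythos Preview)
Your approach has a genuine gap at exactly the point you flagged. The claim that the non-snc locus of $X$ ``contains no stratum generically'' is false: for a normal crossing variety that is not simple normal crossing, the non-snc locus \emph{is} a union of strata. Take an irreducible nodal curve $X\subset M=\mathbb P^2$: the node $P$ is a stratum of $(X,0)$ (it is the image of the two lc centers on the normalization), and $P$ is precisely where $X$ fails to be snc. If you blow up $M$ at $P$, the strict transform $Y$ is the normalization of $X$; there are no $f$-exceptional divisors, so $E=0$ and $L'=f^*L$, but $f_*\mathcal O_Y\not\simeq\mathcal O_X$. Hence $f_*\mathcal O_Y(L')\simeq f_*\mathcal O_Y\otimes\mathcal O_X(L)$ is strictly larger than $\mathcal O_X(L)$, and your Leray descent fails. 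In higher dimensions the same thing happens: passing to the strict transform is essentially normalizing $X$ along its non-snc locus, which destroys the identification $f_*\mathcal O_Y(L')\simeq\mathcal O_X(L)$ that you need. Lemma \ref{re-vani-lem} only gives the higher vanishing; it says nothing about $f_*$.

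The paper's proof takes a different route precisely to avoid this. Instead of strict transforms, it uses \emph{embedded log transformations} via Lemmas \ref{63}, \ref{64}, \ref{65}: at each blow-up of the ambient space along a smooth stratum one replaces $X$ by the \emph{reduced total transform} (the inverse image with reduced structure), which adds the exceptional divisor as a new component of $X$. Lemma \ref{63} records that for such a tower $\sigma_*\mathcal O_{X'}\simeq\mathcal O_X$ and $R^q\sigma_*\mathcal O_{X'}=0$ for $q>0$, so cohomology is preserved and one may simply replace $(X,L)$ by $(X',\sigma^*L)$. The three lemmas successively make $X$ snc, then $S$ snc on $X$, then $\Supp(S+B)$ snc; the last step does take strict transforms and produces an exceptional correction $E=\ulcorner -B'\urcorner$, but by then $X$ is already snc so Theorem \ref{8} (i) controls the pushforward. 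After these reductions Theorem \ref{5.1} applies directly. No appeal to Lemma \ref{re-vani-lem} is needed for this descent.
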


\begin{thm}[{cf.~\cite[Theorem 3.2]{ambro}}]\label{62}
Let $(Y, S+B)$ be an embedded normal crossing 
pair such that $S+B$ is a boundary $\mathbb R$-divisor, 
$S$ is reduced, and $\llcorner B\lrcorner =0$. 
Let $f:Y\to X$ be a proper morphism and $L$ a Cartier 
divisor on $Y$ such that 
$H\sim _{\mathbb R}L-(K_Y+S+B)$ is $f$-semi-ample. 
\begin{itemize}
\item[{\em{(i)}}] 
every non-zero local section of $R^qf_*\mathcal O_Y(L)$ contains 
in its support the $f$-image of 
some strata of $(Y, S+B)$. 
\item[{\em{(ii)}}] 
let $\pi:X\to V$ be a projective morphism and 
assume that $H\sim _{\mathbb R}f^*H'$ for 
some $\pi$-ample $\mathbb R$-Cartier 
$\mathbb R$-divisor $H'$ on $X$. 
Then $R^qf_*\mathcal O_Y(L)$ is $\pi_*$-acyclic, that is, 
$R^p\pi_*R^qf_*\mathcal O_Y(L)=0$ for any $p>0$. 
\end{itemize}
\end{thm}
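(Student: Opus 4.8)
The plan is to derive Theorem \ref{62} from its \emph{simple} normal crossing versions, Theorems \ref{5.1} and \ref{8}, since the two statements differ only in that the irreducible components of the normal crossing variety $Y$ are now allowed to be singular, i.e.\ to cross themselves. Because a normal crossing divisor is already simple normal crossing étale-locally, this self-intersection is a global phenomenon, and the idea is to \emph{unfold} it by a birational modification of the ambient space $M$: blowing up along the loci where $Y$ meets itself and passing to the reduced total transform replaces $Y$ by a genuinely simple normal crossing divisor $Y'$, while the contracted exceptional divisors glue back so as to recover $Y$.

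Concretely, I would iterate the embedded log transformation of Lemma \ref{useful-lem2}, together with the resolution construction of Lemma \ref{6}, to produce a projective birational morphism $g\colon Y'\to Y$ from an embedded simple normal crossing pair such that $K_{Y'}+S'+B'=g^*(K_Y+S+B)+E$ with $S'$ reduced, $\llcorner B'\lrcorner=0$, and $E$ an effective $g$-exceptional divisor. Setting $L'=g^*L+E_+$ with $E_+=\ulcorner E\urcorner$ (as in the proof of Theorem \ref{5.1}) and enlarging the boundary to a subboundary $S'+B''$ with $\llcorner B''\lrcorner=0$, one arranges $L'-(K_{Y'}+S'+B'')\sim_{\mathbb R}g^*H$, so that $g^*H$ remains $(f\circ g)$-semi-ample. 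The three properties I must secure from this construction are: $g_*\mathcal O_{Y'}(L')\simeq \mathcal O_Y(L)$; the vanishing $R^ig_*\mathcal O_{Y'}(L')=0$ for $i>0$, which follows from Lemma \ref{re-vani-lem} once $g$ is an isomorphism at the generic point of every stratum; and the stratum correspondence, namely that the $g$-images of the strata of $(Y',S'+B'')$ (governed by $Y'$ and $S'$, since $\llcorner B''\lrcorner=0$) are exactly the strata of $(Y,S+B)$, which is the reduced-total-transform analog of Lemma \ref{useful-lem2} (iii).

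With this reduction in hand, I would apply Theorem \ref{8} to the embedded simple normal crossing pair $(Y',S'+B'')$ and the proper morphism $f\circ g\colon Y'\to X$. Because $R^ig_*\mathcal O_{Y'}(L')=0$ for $i>0$ and $g_*\mathcal O_{Y'}(L')\simeq\mathcal O_Y(L)$, the Leray spectral sequence yields a canonical isomorphism $R^qf_*\mathcal O_Y(L)\simeq R^q(f\circ g)_*\mathcal O_{Y'}(L')$ of sheaves on $X$. Statement (ii), the $\pi_*$-acyclicity, then transfers verbatim from Theorem \ref{8} (ii). For statement (i), Theorem \ref{8} (i) says that the support of any non-zero local section contains the $(f\circ g)$-image of some stratum of $(Y',S'+B'')$; by the stratum correspondence this image is the $f$-image of a stratum of $(Y,S+B)$, which is exactly what is required.

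The main obstacle is the reduction step itself: producing the birational model $g\colon Y'\to Y$ and verifying that the reduced total transform is simple normal crossing while $g_*\mathcal O_{Y'}(L')\simeq\mathcal O_Y(L)$ and the higher direct images vanish, in the presence of self-intersecting and reducible components. The bookkeeping of which new exceptional divisors enter $S'$ or $B''$ and which remain in the exceptional part $E$, together with the verification that no stratum is created or destroyed, must be carried out carefully so that both the equivalence $L'-(K_{Y'}+S'+B'')\sim_{\mathbb R}g^*H$ and the subboundary conditions survive. This \emph{unfolding} of non-simple normal crossings is the genuinely new geometric input beyond the simple normal crossing theorems; once it is established, the rest is a formal consequence of Theorem \ref{8} and the Leray spectral sequence.
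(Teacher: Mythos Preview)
Your broad strategy---reduce from normal crossing to simple normal crossing by blowing up the ambient space $M$, then apply Theorem \ref{8} via Leray---is exactly what the paper does, and your final paragraph (Leray plus stratum correspondence) is correct once the reduction is in hand. The gap is in the reduction itself: you conflate two genuinely different steps and the framework you write down does not cover the first one.

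When you resolve the self-crossings of $Y$ by passing to the \emph{reduced total transform} (as you correctly say in your opening), the exceptional divisors become new \emph{irreducible components} of $Y'$, not divisors \emph{on} $Y'$. There is then no effective $g$-exceptional divisor $E$ on $Y'$ with $K_{Y'}+S'+B'=g^*(K_Y+S+B)+E$; the discrepancy bookkeeping you lift from the proof of Theorem \ref{5.1} simply does not apply. More seriously, $g$ is \emph{not} an isomorphism at the generic point of these new components, so Lemma \ref{re-vani-lem} fails to give $R^ig_*\mathcal O_{Y'}(L')=0$. If instead you take strict transforms to keep $g$ an isomorphism at generic points, you lose $g_*\mathcal O_{Y'}\simeq\mathcal O_Y$ (already for a nodal curve the strict transform is the normalization). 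The lemmas you cite, \ref{useful-lem2} and \ref{6}, both assume $Y$ is already a simple normal crossing divisor, so they do not help with this first stage.

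The paper resolves this by separating the reduction into three stages (Lemmas \ref{63}, \ref{64}, \ref{65}): first make the variety $Y$ itself simple normal crossing by blowing up strata of $Y$ and taking reduced total transforms, where the vanishing $R^q\sigma_{i+1*}\mathcal O_{X_{i+1}}=0$ is checked \emph{directly} at each step and $L$ is replaced by $\sigma^*L$; then make $S$ simple normal crossing via strict transforms, again with direct vanishing; finally make $\Supp B$ simple normal crossing via strict transforms, where an effective exceptional $E=\ulcorner -B_k\urcorner$ appears and the vanishing now uses Theorem \ref{8} (i). Only the third stage matches your template. Once you repair the reduction along these lines, the rest of your argument goes through.
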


Before we go to the proof, let us recall the 
definition of {\em{normal crossing pairs}}, 
which is a slight generalization of Definition \ref{03}. 
The following definition is the same as 
\cite[Definition 2.3]{ambro} though 
it may look different. 

\begin{defn}[Normal crossing pair]\index{normal crossing 
pair}\label{625} 
Let $X$ be a normal crossing variety. 
We say that a reduced divisor $D$ on $X$ 
is {\em{normal crossing}} if, 
in the notation of Definition \ref{01}, 
we have 
$$
\widehat{\mathcal O}_{D, x}\simeq \frac{\mathbb C[[x_0, \cdots, 
x_{N}]]}{(x_0\cdots x_k, x_{i_1}\cdots x_{i_l})}
$$ 
for some $\{i_1, \cdots, i_l\}\subset\{k+1, \cdots, N\}$. 
We say that the pair $(X, B)$ is a 
{\em{normal crossing pair}} if the 
following 
conditions are satisfied. 
\begin{itemize}
\item[(1)] $X$ is a normal crossing 
variety, and 
\item[(2)] $B$ is an $\mathbb R$-Cartier $\mathbb R$-divisor whose 
support is normal crossing on $X$. 
\end{itemize}
We say that a normal crossing 
pair $(X, B)$ is {\em{embedded}} 
if there exists a closed embedding 
$\iota:X\to M$, where 
$M$ is a smooth variety 
of dimension $\dim X+1$. 
We put $K_{X^0}+\Theta=\eta^*(K_X+B)$, where 
$\eta:X^0\to X$ is the normalization of $X$. 
From now on, we assume that $B$ is a subboundary $\mathbb R$-divisor. 
A {\em{stratum}} of $(X, B)$\index{stratum} 
is an irreducible 
component of $X$ or 
the image of some lc center of $(X^0, \Theta)$ on $X$. 
A Cartier divisor $D$ on a normal 
crossing pair $(X, B)$ is called 
{\em{permissible with respect to $(X, B)$}} 
if $D$ contains no strata of the pair $(X, B)$.\index{permissible divisor} 
\end{defn}

The following three lemmas are easy to 
check. So, we omit the proofs. 

\begin{lem}\label{63} 
Let $X$ be a normal crossing divisor on a smooth 
variety $M$. 
Then there exists a sequence of blow-ups 
$M_k\to M_{k-1}\to \cdots 
\to M_0=M$ with 
the following properties. 
\begin{itemize}
\item[{\em{(i)}}] $\sigma_{i+1}: M_{i+1}\to M_i$ 
is the blow-up 
along a smooth stratum of $X_i$ for any $i\geq 0$, 
\item[{\em{(ii)}}] $X_0=X$ and $X_{i+1}$ is 
the inverse image of $X_{i}$ with 
the reduced structure for any $i\geq 0$, 
and 
\item[{\em{(iii)}}] $X_k$ is a simple normal crossing 
divisor on $M_k$. 
\end{itemize}
For each step $\sigma_{i+1}$, we can directly 
check that $\sigma_{i+1*}\mathcal O_{X_{i+1}}\simeq 
\mathcal O_{X_i}$ and $R^q\sigma_{i+1*}\mathcal O_{X_{i+1}}=0$ 
for any $i\geq 0$ and $q\geq 1$. 
Let $B$ be an $\mathbb R$-Cartier $\mathbb R$-divisor 
on $X$ such that $\Supp B$ is normal crossing. We put $B_0=B$ 
and $K_{X_{i+1}}+B_{i+1}=\sigma^*_{i+1}
(K_{X_i}+B_i)$ for all $i\geq 0$. 
Then it is obvious that $B_i$ is 
an $\mathbb R$-Cartier $\mathbb R$-divisor 
and $\Supp B_i$ is normal crossing on $X_i$ for 
any $i\geq 0$. 
We can also check that $B_i$ is a boundary $\mathbb R$-divisor 
$($resp.~$\mathbb Q$-divisor$)$ for any $i\geq 0$ if so is 
$B$. If $B$ is a boundary, then 
the $\sigma_{i+1}$-image of 
any stratum of $(X_{i+1}, B_{i+1})$ is a stratum 
of $(X_i, B_i)$. 
\end{lem}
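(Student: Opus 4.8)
The plan is to prove Lemma \ref{63} by the same inductive resolution procedure that underlies Szab\'o's resolution lemma (\ref{15-resol}), with the extra bookkeeping that every blow-up centre is taken to be a smooth stratum of the current $X_i$. The starting observation is a clean characterisation of the defect: since $X$ is already normal crossing, it is \emph{simple} normal crossing if and only if each of its irreducible components is smooth; equivalently, writing $\eta\colon X^0\to X$ for the normalization, $X$ is simple normal crossing precisely when at every point $x$ the number of local analytic branches of $X$ equals the number of global irreducible components passing through $x$. Thus the only obstruction is the self-intersection of a single component, and the locus where this occurs is contained in the singular locus of $X$, hence is a union of strata of $X$.

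Next I would set up the induction. The natural invariant is the maximum over $x\in X$ of (number of local branches at $x$) minus (number of global components through $x$), refined lexicographically by the dimension of the locus on which this maximum is attained. First I would choose a smooth stratum $Z\subset X_i$ of maximal codimension lying in this bad locus; the existence of such a smooth $Z$ is guaranteed by working from the deepest strata outward, exactly as in \ref{15-resol}, since in the local model $\widehat{\mathcal O}_{X,x}\simeq \mathbb C[[x_0,\dots,x_N]]/(x_0\cdots x_k)$ the deep strata are transverse coordinate subspaces. I would then take $\sigma_{i+1}\colon M_{i+1}\to M_i$ to be the blow-up along $Z$ and let $X_{i+1}$ be the reduced total transform. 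Because the total transform of a normal crossing divisor along a smooth stratum is again normal crossing, $X_{i+1}$ is a normal crossing divisor on the smooth $M_{i+1}$; and because blowing up $Z$ separates the branches of $X_i$ that were forced to coincide along $Z$, the invariant drops strictly while no new self-intersections are created, the exceptional divisor being smooth and meeting the strict transforms transversally. Noetherian induction then terminates with $X_k$ simple normal crossing, giving (i)--(iii).

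It remains to verify the auxiliary assertions, all of which are local computations on the blow-up of a smooth variety along a smooth centre. For the cohomology of the structure sheaf, the fibres of $\sigma_{i+1}|_{X_{i+1}}\colon X_{i+1}\to X_i$ are connected unions of projective spaces, hence have vanishing higher structure-sheaf cohomology and $H^0$ equal to $\mathbb C$; checking this on the normalization and gluing yields $\sigma_{i+1*}\mathcal O_{X_{i+1}}\simeq \mathcal O_{X_i}$ and $R^q\sigma_{i+1*}\mathcal O_{X_{i+1}}=0$ for $q\geq1$. For the divisor, $B_{i+1}$ is defined by the crepant formula $K_{X_{i+1}}+B_{i+1}=\sigma_{i+1}^*(K_{X_i}+B_i)$; since $Z$ is a stratum, the components of $X_i$ and of $B_i$ cross $Z$ normally, so the standard blow-up discrepancy computation shows that $\Supp B_{i+1}$ is normal crossing and that the coefficients of $B_{i+1}$ remain in $[0,1]$ whenever those of $B_i$ do, and stay rational if $B_i$ is a $\mathbb Q$-divisor. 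Finally, every stratum of $(X_{i+1},B_{i+1})$ either dominates $Z$, when it comes from the exceptional locus, or is the strict transform of a stratum of $(X_i,B_i)$, and in both cases its $\sigma_{i+1}$-image is a stratum of $(X_i,B_i)$. The main obstacle in the whole argument is the inductive core of the first two paragraphs: arranging the order of blow-ups, deepest strata first, so that each centre is a smooth stratum and the chosen invariant strictly decreases, which is precisely the content of Szab\'o's resolution lemma transcribed to the present normal crossing setting.
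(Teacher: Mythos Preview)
The paper gives no proof; it states just before the lemma that it is ``easy to check'' and omits the argument. Your verification of the auxiliary assertions (the vanishing of $R^q\sigma_{i+1*}\mathcal O_{X_{i+1}}$, the boundary property of $B_{i+1}$, and the behaviour of strata) is fine, and the overall plan---blow up strata until each component is smooth---is correct. The gap is in the inductive core: blowing up the \emph{deepest} stratum inside the non-SNC locus does not terminate, so your invariant cannot drop.

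Here is a local model that can be realised globally (e.g.\ take $C$ to be a nodal plane cubic times $\mathbb A^2$ and $D$ a transversal hyperplane). In $M=\mathbb A^4$ let $X=C\cup D$ with $D=\{x_2=0\}$ and $C$ an irreducible component whose local branches at the origin are $\{x_0=0\}$ and $\{x_1=0\}$. The bad locus is $\Sigma(X)=\operatorname{Sing}(C)=\{x_0=x_1=0\}$ and the minimal stratum of $X$ inside it is the line $Z=\{x_0=x_1=x_2=0\}$. In the chart of $\operatorname{Bl}_Z M$ where $x_2$ generates, with $v_i=x_i/x_2$, the reduced total transform is $\{x_2 v_0 v_1=0\}$ and the strict transform of $C$ still carries both branches $\{v_0=0\}$ and $\{v_1=0\}$; the new minimal stratum in the bad locus is $\{v_0=v_1=x_2=0\}\subset E$, and blowing it up reproduces the same picture with $E$ in place of $D$. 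The procedure loops. The fix is to blow up the \emph{largest} strata in the bad locus instead: an irreducible component of $\Sigma(X)=\bigcup_j\operatorname{Sing}(C_j)$ is always a smooth stratum of $X$ of codimension two in $M$, and the two-variable blow-up of $\{x_0=x_1=0\}$ separates the offending branches of $C$ along that component while creating no new self-intersections over the exceptional divisor. Hence the number of irreducible components of $\Sigma$ strictly decreases, and Noetherian induction terminates. This is the induction the paper has in mind; cf.\ Remark~\ref{635} and Lemma~\ref{useful-lem2}.
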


\begin{rem}\label{635} 
Each step in Lemma \ref{63} is called 
{\em{embedded log transformation}} in \cite[Section 2]{ambro}. 
See also Lemma \ref{useful-lem2}. 
\end{rem}

\begin{lem}\label{64} 
Let $X$ be a simple normal crossing divisor on a smooth 
variety $M$. 
Let $S+B$ be a boundary $\mathbb R$-Cartier 
$\mathbb R$-divisor 
on $X$ such that $\Supp (S+B)$ is normal crossing, 
$S$ is reduced, and $\llcorner B\lrcorner=0$. 
Then there exists a sequence of blow-ups 
$M_k\to M_{k-1}\to \cdots 
\to M_0=M$ with 
the following properties. 
\begin{itemize}
\item[{\em{(i)}}] $\sigma_{i+1}: M_{i+1}\to M_i$ is the blow-up 
along a smooth stratum 
of $(X_i, S_i)$ that 
is contained in $S_i$ for any $i\geq 0$, 
\item[{\em{(ii)}}] we put $X_0=X$, $S_0=S$, and $B_0=B$, 
and $X_{i+1}$ is 
the strict transform of $X_{i}$ for any $i\geq 0$, 
\item[{\em{(iii)}}] we define $K_{X_{i+1}}+S_{i+1}+B_{i+1}
=\sigma^*_{i+1}(K_{X_i}+S_i+B_i)$ for 
any $i\geq 0$, where 
$B_{i+1}$ is the strict transform of $B_i$ on $X_{i+1}$, 
\item[{\em{(iv)}}] the $\sigma_{i+1}$-image of 
any stratum of $(X_{i+1}, S_{i+1}+B_{i+1})$ is a stratum 
of $(X_i, S_i+B_i)$, and  
\item[{\em{(v)}}] $S_k$ is a simple normal crossing 
divisor on $X_k$. 
\end{itemize}
For each step $\sigma_{i+1}$, we can easily 
check that $\sigma_{i+1*}\mathcal O_{X_{i+1}}\simeq 
\mathcal O_{X_i}$ and $R^q\sigma_{i+1*}\mathcal O_{X_{i+1}}=0$ 
for any $i\geq 0$ and $q\geq 1$. 
We note that $X_i$ is simple normal crossing, 
$\Supp (S_i+B_i)$ is normal crossing on $X_i$, 
and $S_i$ is reduced for any $i\geq 0$. 
\end{lem}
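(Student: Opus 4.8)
The statement is a resolution result, so the plan is to produce the sequence $M_k\to\cdots\to M_0=M$ one blow-up at a time. At each stage I would choose the center $C_i$ to be a smooth closed stratum of $(X_i,S_i)$ lying in $S_i$ along which $S_i$ fails to be simple normal crossing on $X_i$, and then split the work into two parts: (a) checking that properties (i)--(iv), together with $\sigma_{i+1*}\mathcal O_{X_{i+1}}\simeq\mathcal O_{X_i}$ and $R^q\sigma_{i+1*}\mathcal O_{X_{i+1}}=0$, persist under a single such blow-up; and (b) proving that the process terminates with $S_k$ simple normal crossing, which is property (v). Part (a) is entirely local and modelled on Lemma~\ref{use} and Lemma~\ref{useful-lem2}, while part (b), the termination, is where the real content lies.

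For the per-step checks I would argue as follows. Since $C_i$ is a smooth stratum of the simple normal crossing variety $X_i$ contained in $S_i$, the strict transform $X_{i+1}$ is again a simple normal crossing divisor on the smooth $M_{i+1}$, exactly as in Lemma~\ref{use}; the $\sigma_{i+1}$-image of a stratum of $X_{i+1}$ is a stratum of $X_i$, giving (iv), and the two cohomological statements follow because $\sigma_{i+1}|_{X_{i+1}}\colon X_{i+1}\to X_i$ has the same fibre structure as the blow-ups in Lemma~\ref{use}(iv). The only point needing care is the coefficient bookkeeping in (iii). Writing $E$ for the exceptional divisor of $\sigma_{i+1}$ on $X_{i+1}$ and computing $K_{X_{i+1}}+S_{i+1}+B_{i+1}=\sigma_{i+1}^*(K_{X_i}+S_i+B_i)$ with $B_{i+1}$ the strict transform of $B_i$, I would find the coefficient of $E$ in $S_{i+1}+B_{i+1}$ to equal $k-(k-1)=1$, where $k$ is the number of local branches of $S_i$ meeting along $C_i$ and $k-1$ is the discrepancy of $E$. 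The crucial observation is that, because $\llcorner B_i\lrcorner=0$, the strata of $(X_i,S_i)$ are intersections of branches of $S_i$ and components of $X_i$ only, so $C_i$ is never contained in $\Supp B_i$; hence $B_i$ contributes nothing to $\mult_E\sigma_{i+1}^*(S_i+B_i)$, forcing $S_{i+1}$ to be the strict transform of $S_i$ plus $E$, which is reduced, and $\llcorner B_{i+1}\lrcorner=0$. The boundary and $\mathbb Q$-divisor assertions then follow, so the hypotheses are preserved.

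The hard part is termination (v). A normal crossing divisor $S_i$ fails to be simple normal crossing on $X_i$ exactly where, at some point, the number of local analytic branches of $S_i$ exceeds the number of global components of $S_i$ through that point, i.e.\ where a component self-intersects or two components meet along a reducible locus. I would encode this by an invariant such as the maximal local branch number of $S_i$ over points of $X_i$, refined by the dimension of the deepest locus where simple normal crossing fails; this locus is a finite union of strata of $(X_i,S_i)$ contained in $S_i$, and a minimal-dimensional such stratum is smooth, hence a legitimate center $C_i$. The delicate issue, and the main obstacle, is that blowing up can create new incidences: to guarantee that the invariant strictly drops one must order the centers carefully, blowing up the smallest-dimensional bad strata first, so that separating the deepest branches does not reintroduce violations at the same level. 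Granting this ordering, each blow-up strictly decreases the invariant, and Noetherian descent gives that after finitely many steps $S_k$ is simple normal crossing on $X_k$. The remaining assertions that $X_i$ is simple normal crossing and $\Supp(S_i+B_i)$ is normal crossing for all $i$ hold throughout by the per-step analysis of part (a).
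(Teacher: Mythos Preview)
The paper does not supply a proof of this lemma: it is grouped with Lemmas~\ref{63} and \ref{65} under the sentence ``The following three lemmas are easy to check. So, we omit the proofs.'' Your write-up therefore cannot be compared line by line with the paper, but it is exactly the kind of argument the author has in mind, and it is essentially correct.

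Two small points. First, your justification for $C_i\not\subset\Supp B_i$ is slightly mis-attributed: the fact that strata of $(X_i,S_i)$ involve only branches of $X_i$ and $S_i$ is simply the definition of stratum for the pair $(X_i,S_i)$ and has nothing to do with $\llcorner B_i\lrcorner=0$. The actual reason $C_i$ cannot lie in a component $D$ of $B_i$ is the normal crossing hypothesis on $\Supp(S_i+B_i)$: locally $D$ is a coordinate hyperplane transverse to the branches cutting out $C_i$, so $C_i\cap D\subsetneq C_i$. Once this is in place your conclusion $\mult_E\sigma_{i+1}^*B_i=0$ is correct, and the coefficient bookkeeping goes through as you say. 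Second, your discrepancy computation $k-(k-1)=1$ is written for the case where $X_i$ is locally irreducible near $C_i$; when $C_i$ lies in the double locus of $X_i$ one should compute component by component on the normalization, but the answer is still $1$ for the same reason (the pair $(X_i,S_i)$ is log canonical along $C_i$ and $C_i$ is an lc center), so this does not affect the outcome.

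Your termination argument via the invariant ``maximal number of local analytic branches of $S_i$ exceeding the number of global components through a point, stratified by dimension of the bad locus'' is the standard one and is what the author is implicitly invoking; blowing up a minimal-dimensional bad stratum separates self-meeting branches without creating new ones of the same depth, so Noetherian induction terminates the process.
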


\begin{lem}\label{65} 
Let $X$ be a simple normal crossing divisor on a smooth 
variety $M$. 
Let $S+B$ be a boundary $\mathbb R$-Cartier 
$\mathbb R$-divisor 
on $X$ such that $\Supp (S+B)$ is normal crossing, 
$S$ is reduced and simple normal crossing, 
and $\llcorner B\lrcorner=0$. 
Then there exists a sequence of blow-ups 
$M_k\to M_{k-1}\to \cdots 
\to M_0=M$ with 
the following properties. 
\begin{itemize}
\item[{\em{(i)}}] $\sigma_{i+1}: M_{i+1}\to M_i$ 
is the blow-up 
along a smooth stratum 
of $(X_i, \Supp B_i)$ that is 
contained in $\Supp B_i$ for any $i\geq 0$, 
\item[{\em{(ii)}}] we put $X_0=X$, $S_0=S$, and $B_0=B$, 
and $X_{i+1}$ is 
the strict transform of $X_{i}$ for any $i\geq 0$, 
\item[{\em{(iii)}}] we define $K_{X_{i+1}}+S_{i+1}+B_{i+1}
=\sigma^*_{i+1}(K_{X_i}+S_i+B_i)$ for 
any $i\geq 0$, where 
$S_{i+1}$ is the strict transform of $S_i$ on $X_{i+1}$, and 
\item[{\em{(iv)}}] $\Supp (S_k+B_k)$ 
is a simple normal crossing 
divisor on $X_k$. 
\end{itemize}
We note that $X_i$ is simple normal crossing on $M_i$ and 
$\Supp (S_i+B_i)$ is normal crossing on $X_i$ for 
any $i\geq 0$. 
We can easily check that 
$\llcorner B_i\lrcorner \leq 0$ for any $i\geq 0$. 
The composition morphism $M_k\to M$ is denoted by 
$\sigma$. 
Let $L$ be any Cartier divisor on $X$. 
We put $E=\ulcorner -B_k\urcorner$. 
Then $E$ is an effective $\sigma$-exceptional Cartier 
divisor on $X_k$ 
and we obtain $\sigma_*\mathcal O_{X_k}(\sigma ^*L+E)
\simeq \mathcal O_X(L)$ and 
$R^q\sigma_*\mathcal O_{X_k}(\sigma ^*L+E)=0$ 
for any $q\geq 1$ by {\em{Theorem \ref{8} (i)}}. 
We note that $\sigma^*L+E-(K_{X_k}+S_k+\{B_k\})
=\sigma^*L-\sigma^*(K_X+S+B)$ is $\mathbb R$-linearly 
trivial over $X$ and $\sigma$ 
is an isomorphism at the generic point of any stratum 
of $(X_k, S_k+\{B_k\})$. 
\end{lem}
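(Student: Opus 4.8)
The plan is to realize the sequence $M_k\to\cdots\to M_0=M$ as a constrained form of Szab\'o's resolution lemma (\ref{15-resol}), in which at each stage the only admissible centers are smooth strata of $(X_i,\Supp B_i)$ lying in $\Supp B_i$. This lemma picks up exactly where Lemma \ref{64} leaves off: there $S$ is made simple normal crossing by blowing up centers inside $S$, and here, with $S$ already simple normal crossing, we finish the job on the $B$-part. The guiding observation is that, because $X$ is simple normal crossing and $S$ is simple normal crossing, the locus where $\Supp(S+B)$ fails to be simple normal crossing is contained in $\Supp B$: such a failure can only come from a singular component of $B$, or from a reducible or non-transverse intersection involving a component of $B$, and each such locus sits on $\Supp B$ and is, after passing to a minimal one, a smooth stratum of $(X_i,\Supp B_i)$ contained in $\Supp B_i$ (cf.~Definition \ref{03}). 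I would therefore run Szab\'o's lemma blowing up such a minimal, hence smooth, stratum at each step. Each blow-up is an embedded log transformation in the sense of Remark \ref{635}, so by Lemma \ref{63} it keeps $X_{i+1}$ simple normal crossing and yields $\sigma_{i+1*}\mathcal O_{X_{i+1}}\simeq\mathcal O_{X_i}$ and $R^q\sigma_{i+1*}\mathcal O_{X_{i+1}}=0$ for $q\geq 1$.

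Properties (i)--(iii) are then built into the construction: (i) records the admissible centers, while (ii) and (iii) are merely the definitions of $X_{i+1}$, the strict transform $S_{i+1}$, and $B_{i+1}$. Because the centers lie in $\Supp B_i$ and are transverse to the already simple normal crossing divisor $S_i$, the strict transform $S_{i+1}$ stays simple normal crossing and the new exceptional divisor is thrown into $B_{i+1}$ rather than $S_{i+1}$; this is precisely the point of restricting the centers to $\Supp B_i$. The termination---that after finitely many such blow-ups $\Supp(S_k+B_k)$ becomes simple normal crossing, giving (iv)---is the termination in Szab\'o's lemma applied to this constrained problem.

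The coefficient bound $\llcorner B_i\lrcorner\leq 0$ is the one genuinely arithmetic point, but it is a routine discrepancy computation carried out in the local normal crossing model (which is available everywhere, since $\Supp(S_i+B_i)$ is normal crossing even where it is not globally simple normal crossing). Near the generic point of a center $Z$, the divisor $\Supp(S_i+B_i)$ is a union of $r\geq 2$ coordinate branches $D_1,\dots,D_r$ of coefficients $s_1,\dots,s_r$ with $Z=D_1\cap\cdots\cap D_r$, and the exceptional divisor enters $S_{i+1}+B_{i+1}$ with coefficient $\sum_{j=1}^r s_j-(r-1)$. Since $Z\subset\Supp B_i$, at least one of the local branches is a branch of $B_i$, so one $s_j<1$ while the others are $\leq 1$; hence $\sum_{j=1}^r s_j<r$ and the exceptional coefficient is $<1$. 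As the strict transforms of the $B_i$-components keep coefficients in $[0,1)$ and $S_{i+1}$ absorbs the coefficient-one part, induction gives $\llcorner B_i\lrcorner\leq 0$ at every stage.

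Finally, for the displayed cohomological assertions I would note $E=\ulcorner -B_k\urcorner=-\llcorner B_k\lrcorner$, which is effective because every coefficient of $B_k$ is $<1$, and is $\sigma$-exceptional because the non-exceptional part of $-B_k$ has coefficients in $(-1,0]$ and rounds up to $0$. From (iii) and $E+\llcorner B_k\lrcorner=0$ one computes $\sigma^*L+E-(K_{X_k}+S_k+\{B_k\})=\sigma^*\big(L-(K_X+S+B)\big)$, which is $\mathbb R$-linearly trivial, in particular $\sigma$-semi-ample, over $X$; moreover $\sigma$ is an isomorphism at the generic point of every stratum of $(X_k,S_k+\{B_k\})$. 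Theorem \ref{8} (i) then forces every nonzero local section of $R^q\sigma_*\mathcal O_{X_k}(\sigma^*L+E)$ to contain the $\sigma$-image of a stratum in its support; but $\sigma$ is an isomorphism over those generic points, so there can be no such nonzero sheaf for $q>0$, giving $R^q\sigma_*\mathcal O_{X_k}(\sigma^*L+E)=0$. The projection formula together with $\sigma_*\mathcal O_{X_k}(E)\simeq\mathcal O_X$ then yields $\sigma_*\mathcal O_{X_k}(\sigma^*L+E)\simeq\mathcal O_X(L)$. The main obstacle is the content of the first two paragraphs: pinning down that the non--simple normal crossing locus of $\Supp(S+B)$ is exactly resolvable by blow-ups of smooth strata inside $\Supp B_i$ while preserving the simple normal crossing property of both $X_i$ and $S_i$---that is, the constrained version of Szab\'o's lemma and its termination---rather than the subsequent, essentially routine, discrepancy and cohomology bookkeeping.
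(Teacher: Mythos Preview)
The paper omits the proof entirely: the sentence before Lemma~\ref{63} reads ``The following three lemmas are easy to check. So, we omit the proofs,'' and Lemma~\ref{65} is one of those three. Your proposal is a correct and reasonable filling-in of the omitted argument; the three parts---existence and termination of the blow-up sequence, the discrepancy bound $\llcorner B_i\lrcorner\le 0$, and the application of Theorem~\ref{8}~(i) to get the vanishing of $R^q\sigma_*\mathcal O_{X_k}(\sigma^*L+E)$---are exactly what needs to be checked, and you check them in the natural way.

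One small imprecision: your appeal to ``embedded log transformation in the sense of Remark~\ref{635}'' and to Lemma~\ref{63} is off target. In Lemma~\ref{63} the new $X_{i+1}$ is the \emph{reduced inverse image} of $X_i$, whereas here (as in Lemma~\ref{64}) $X_{i+1}$ is the \emph{strict transform}; these are genuinely different operations, and the isomorphism $\sigma_{i+1*}\mathcal O_{X_{i+1}}\simeq\mathcal O_{X_i}$ with higher vanishing that you quote from Lemma~\ref{63} is for the other construction. The closer analogue in the paper is Lemma~\ref{use} (blow-up along a smooth irreducible component of $\Supp(S+B)$ with $X_{i+1}$ the strict transform), which records exactly the properties you need: $Y$ stays simple normal crossing on the blown-up ambient space and $Rf_*\mathcal O_Y\simeq\mathcal O_X$. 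This does not affect your final cohomological conclusion, which you correctly derive from Theorem~\ref{8}~(i) applied to the pair $(X_k,S_k+\{B_k\})$ rather than from the step-by-step vanishing; but the intermediate citation should be to Lemma~\ref{use} or the parallel Lemma~\ref{64}, not to Lemma~\ref{63}.
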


Let us go to the proof of Theorems \ref{61} and \ref{62}. 

\begin{proof}[{Proof of {\em{Theorems \ref{61} and \ref{62}}}}]
We take a sequence of blow-ups and obtain a projective 
morphism $\sigma:X'\to X$ (resp.~$\sigma:Y'\to Y$) 
from an embedded simple normal crossing variety $X'$ 
(resp.~$Y'$) in 
Theorem \ref{61} (resp.~Theorem \ref{62}) by Lemma 
\ref{63}. 
We can replace $X$ (resp.~$Y$) and $L$ 
with $X'$ (resp.~$Y'$) and $\sigma^*L$ 
by Leray's spectral sequence. 
So, we can assume that $X$ (resp.~$Y$) 
is simple normal crossing. 
Similarly, we can assume that $S$ is simple normal crossing 
on $X$ (resp.~$Y$) by applying Lemma 
\ref{64}. 
Finally, we use Lemma \ref{65} and obtain a birational 
morphism $\sigma:(X', S'+B')\to (X, S+B)$ 
(resp.~$(Y', S'+B')\to (Y, S+B)$) from 
an embedded 
simple normal crossing pair $(X', S'+B')$ (resp.~$(Y', S'+B')$) 
such that $K_{X'}+S'+B'=\sigma^*(K_X+S+B)$ 
(resp.~$K_{Y'}+S'+B'=\sigma^*
(K_Y+S+B)$) as in Lemma \ref{65}. 
By Lemma \ref{65}, we 
can replace $(X, S+B)$ (resp.~$(Y, S+B)$) and $L$ 
with $(X', S'+\{B'\})$ (resp.~$(Y', S'+\{B'\})$) and 
$\sigma^*L+\ulcorner -B'\urcorner$ by Leray's 
spectral sequence. 
Then we apply Theorem \ref{5.1} (resp.~Theorem 
\ref{8}). Thus, we obtain Theorems \ref{61} and 
\ref{62}. 
\end{proof}

\section{Examples}\label{28-sec}

In this section, we treat various supplementary 
examples. 

\begin{say}[Examples for Section \ref{sec2}]
Let $X$ be a smooth 
projective variety and let $M$ be a Cartier 
divisor on $X$ such that 
$N\sim mM$, where $N$ is a simple normal crossing 
divisor on $X$ and $m\geq 2$. 
We put $B=\frac{1}{m}N$ and 
$L=K_X+M$. 
In this setting, we can apply Proposition \ref{1}. 
If $M$ is semi-ample, then the existence of 
such $N$ and $m$ is obvious by Bertini. 
Here, we give explicit examples where $M$ is not nef. 

\begin{ex}
We consider the $\mathbb P^1$-bundle 
$\pi:X=\mathbb P_{\mathbb P^1}(\mathcal O_{\mathbb P^1}
\oplus \mathcal O_{\mathbb P^1}(2))\to \mathbb P^1$. 
Let $E$ and $G$ be the sections 
of $\pi$ such that $E^2=-2$ and $G^2=2$. 
We note that 
$E+2F\sim G$, 
where $F$ is a fiber of $\pi$. 
We consider $M=E+F$. Then 
$2M=2E+2F\sim E+G$. 
In this case, $M\cdot E=-1$. In particular, 
$M$ is not nef. Furthermore, 
we can easily check that 
$H^i(X, \mathcal O_X(K_X+M))=0$ for 
any $i$. So, it is not interesting to 
apply Proposition \ref{1}. 
\end{ex} 

\begin{ex}
We consider the $\mathbb P^1$-bundle 
$\pi:Y=\mathbb P_{\mathbb P^1}(\mathcal O_{\mathbb P^1}
\oplus \mathcal O_{\mathbb P^1}(4))\to \mathbb P^1$. 
Let $G$ (resp.~$E$) be the 
positive (resp.~negative) section of $\pi$, that is, 
the section corresponding 
to the projection $\mathcal O_{\mathbb P^1}\oplus 
\mathcal O_{\mathbb P^1}(4)\to \mathcal O_{\mathbb P^1}(4)$ 
(resp.~$\mathcal O_{\mathbb P^1}\oplus 
\mathcal O_{\mathbb P^1}(4)\to \mathcal O_{\mathbb P^1})$. 
We put $M'=-F+2G$, where 
$F$ is a fiber of $\pi$. Then 
$M'$ is not nef and 
$2M'\sim G+E+F_1+F_2+H$, where 
$F_1$ and $F_2$ are distinct 
fibers of $\pi$, and $H$ is a general 
member of the free linear system $|2G|$. Note that 
$G+E+F_1+F_2+H$ is a reduced 
simple normal crossing divisor on $Y$. We put 
$X=Y\times C$, where $C$ is an elliptic curve, and 
$M=p^*M'$, where 
$p:X\to Y$ is the projection. 
Then $X$ is a smooth projective 
variety and $M$ is a Cartier divisor on $X$. 
We note that $M$ is not nef and that 
we can find a reduced simple normal 
crossing divisor such 
that $N\sim 2M$. By the K\"unneth formula, 
we have 
\begin{eqnarray*}
H^1(X, \mathcal O_X(K_X+M))\simeq 
H^0(\mathbb P^1, \mathcal O_{\mathbb P^1}(1))
\simeq \mathbb C^2.
\end{eqnarray*}
Therefore, $X$ with $L=K_X+M$ satisfies the conditions in 
Proposition \ref{1} and we have 
$H^1(X, \mathcal O_X(L))\ne 0$. 
\end{ex}
\end{say}

\begin{say}[Kodaira vanishing theorem for 
singular varieties]
The following example is due to Sommese (cf.~\cite[(0.2.4) 
Example]{som}). 
It shows that the Kodaira vanishing theorem does not 
necessarily hold for varieties with non-lc singularities. 
Therefore, Corollary \ref{lcvar} is sharp. 

\begin{prop}[Sommese]\index{Sommese's example}\label{no1} 
We consider the $\mathbb P^3$-bundle 
$$\pi:Y=\mathbb P_{\mathbb P^1}(\mathcal O_{\mathbb P^1}
\oplus \mathcal O_{\mathbb P^1}(1)^{\oplus 3})\to 
\mathbb P^1$$ over $\mathbb P^1$. Let $\mathcal M=\mathcal O_Y(1)$ be the 
tautological line bundle 
of $\pi:Y\to \mathbb P^1$. 
We take a general member $X$ of 
the linear system 
$|(\mathcal M\otimes \pi^*\mathcal O_{\mathbb P^1}(-1))^{\otimes 4}|$. 
Then $X$ is a normal projective Gorenstein threefold and 
$X$ is not lc. 
We put $\mathcal L=\mathcal M\otimes \pi^*\mathcal O_{\mathbb P^1} (1)$. Then 
$\mathcal L$ is ample. 
In this case, we can check that $H^2(X, \mathcal L^{-1})=
\mathbb C$. 
By the Serre duality, $H^1(X, \mathcal O_X(K_X)\otimes 
\mathcal L)=\mathbb C$. Therefore, 
the Kodaira vanishing theorem does not 
hold for $X$. 
\end{prop}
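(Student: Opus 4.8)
The plan is to make everything explicit on the ambient $\mathbb P^3$-bundle $\pi\colon Y\to\mathbb P^1$ and then transfer cohomological information to $X$ via its defining exact sequence. Set $\mathcal E=\mathcal O_{\mathbb P^1}\oplus\mathcal O_{\mathbb P^1}(1)^{\oplus 3}$, so that $Y=\mathbb P(\mathcal E)$ with $\pi_*\mathcal M=\mathcal E$, and write $F=\pi^*\mathcal O_{\mathbb P^1}(1)$ for the fiber class. Since $\operatorname{rank}\mathcal E=4$ and $\det\mathcal E=\mathcal O_{\mathbb P^1}(3)$, the relative canonical bundle formula gives $\omega_{Y/\mathbb P^1}=\mathcal O_Y(-4\mathcal M+3F)$, hence $K_Y=-4\mathcal M+F$. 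As $X\in|4\mathcal M-4F|$, adjunction yields $K_X=(K_Y+X)|_X=-3F|_X$, which is Cartier. Being a divisor in the smooth variety $Y$, $X$ is a local complete intersection, hence Gorenstein; once normality is checked it is a normal projective Gorenstein threefold.

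Next I would read off the geometry from $H^0(Y,\mathcal O_Y(4\mathcal M-4F))\simeq H^0(\mathbb P^1,\Sym^4\mathcal E\otimes\mathcal O_{\mathbb P^1}(-4))$. Writing the fiber coordinates as $x_0$ (for the $\mathcal O_{\mathbb P^1}$-summand) and $x_1,x_2,x_3$ (for the three $\mathcal O_{\mathbb P^1}(1)$-summands), a degree-four monomial $x_0^{a_0}x_1^{a_1}x_2^{a_2}x_3^{a_3}$ contributes $\mathcal O_{\mathbb P^1}(-a_0)$, which has global sections only when $a_0=0$. Thus a general $X$ is defined by a constant-coefficient quartic $q(x_1,x_2,x_3)$, so along the section $E_0=\{x_1=x_2=x_3=0\}\simeq\mathbb P^1$ it is analytically a product of a line with the affine cone over the plane quartic $C=\{q=0\}\subset\mathbb P^2$, which is smooth of genus $3$ for general $q$. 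Hence $\operatorname{Sing}X=E_0$ has codimension two; since a hypersurface is Cohen--Macaulay (so $S_2$) and $X$ is then $R_1$, Serre's criterion gives normality. For the non-lc claim I blow up the vertex of the cone: the exceptional curve is isomorphic to $C$ with self-intersection $-\deg\mathcal O_C(1)=-4$, and adjunction on the resolution gives the discrepancy $a$ with $(a+1)(-4)=2g(C)-2=4$, i.e.\ $a=-2$. Crossing with a smooth factor leaves discrepancies unchanged, so $\discrep(X)=-2<-1$ and $X$ is not lc.

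It remains to compute $H^2(X,\mathcal L^{-1})$ with $\mathcal L=\mathcal M+F$. Ampleness of $\mathcal L$ follows because $\mathcal E\otimes\mathcal O_{\mathbb P^1}(1)=\mathcal O(1)\oplus\mathcal O(2)^{\oplus3}$ is an ample vector bundle and $\mathcal O_{\mathbb P(\mathcal E\otimes\mathcal O(1))}(1)=\mathcal L$. Tensoring $0\to\mathcal O_Y(-X)\to\mathcal O_Y\to\mathcal O_X\to 0$ by $\mathcal O_Y(-\mathcal M-F)$ gives
$$0\to\mathcal O_Y(-5\mathcal M+3F)\to\mathcal O_Y(-\mathcal M-F)\to\mathcal O_X(-\mathcal M-F)\to 0.$$
For $\mathcal O_Y(-\mathcal M-F)$ all higher direct images vanish (the relative twist is $\mathcal O(-1)$ with $-4<-1<0$, the range where $R^q\pi_*$ of the $\mathbb P^3$-bundle is zero for every $q$), so $H^i(Y,\mathcal O_Y(-\mathcal M-F))=0$ for all $i$. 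For $\mathcal O_Y(-5\mathcal M+3F)$ only $R^3\pi_*$ survives, and relative Serre duality with the projection formula give $R^3\pi_*\mathcal O_Y(-5\mathcal M+3F)\simeq(\pi_*\mathcal O_Y(\mathcal M+3F))^{\vee}\otimes\mathcal O_{\mathbb P^1}(3)\simeq\mathcal E^{\vee}=\mathcal O_{\mathbb P^1}\oplus\mathcal O_{\mathbb P^1}(-1)^{\oplus3}$, whence $H^3(Y,\mathcal O_Y(-5\mathcal M+3F))\simeq H^0(\mathbb P^1,\mathcal E^{\vee})\simeq\mathbb C$. The long exact sequence then collapses to $H^2(X,\mathcal L^{-1})\simeq\mathbb C$, and Serre duality on the Gorenstein threefold $X$ gives $H^1(X,\mathcal O_X(K_X)\otimes\mathcal L)\simeq H^2(X,\mathcal L^{-1})^{\vee}\simeq\mathbb C\neq 0$; since $\mathcal L$ is ample, Kodaira vanishing fails.

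The cohomological computation is essentially mechanical once the conventions on $Y$ are pinned down, so the real content sits in the geometric analysis: correctly identifying the local analytic structure of $X$ along $E_0$ as a product with the cone over a smooth plane quartic (this is where the genericity of $X$ and the smoothness and genus of $C$ enter) and confirming that it forces a discrepancy strictly below $-1$. I expect the main obstacle to be this local/discrepancy step; care is also needed to get the twists right in the relative duality computation, since a slip there would alter the final group.
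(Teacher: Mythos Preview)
Your argument is correct and follows the same overall architecture as the paper: the defining exact sequence of $X$ in $Y$, Leray along $\pi$, and identification of $H^3(Y,\mathcal L^{-1}(-X))\simeq H^0(\mathbb P^1,\mathcal E^{\vee})\simeq\mathbb C$. The paper gets $H^i(Y,\mathcal L^{-1})=0$ for $i<4$ by quoting Kodaira vanishing on the smooth $Y$, whereas you obtain it from the vanishing of all $R^q\pi_*$ for the fiberwise twist $\mathcal O(-1)$; both are immediate. For $R^3\pi_*\mathcal M^{-5}$ the paper writes out Grothendieck duality in the derived category and gets $\mathcal O_{\mathbb P^1}(-3)\oplus\mathcal O_{\mathbb P^1}(-4)^{\oplus 3}$, which after the twist by $\mathcal O_{\mathbb P^1}(3)$ agrees with your $\mathcal E^{\vee}$.

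The one genuine difference is in the ``not lc'' step. The paper argues that the general fiber of $\pi|_X$ is a cone of multiplicity $4$ over a smooth plane quartic, concludes that the ambient pair $(Y,X)$ is not lc along the negative section, and then invokes inversion of adjunction (Corollary~\ref{inv-cor}) to deduce that $X$ itself is not lc. You instead resolve the two-dimensional cone singularity directly, compute the discrepancy of the exceptional curve as $-(d-2)=-2$ via adjunction, and note that crossing with a smooth curve factor preserves discrepancies. Your route is more elementary and self-contained (it avoids quoting inversion of adjunction), while the paper's route makes explicit the ambient-pair viewpoint; both are short and reach the same conclusion.
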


\begin{proof}
We consider the following short exact sequence 
$$
0\to \mathcal L^{-1}(-X)\to \mathcal L^{-1}\to 
\mathcal L^{-1}|_X\to 0. 
$$ 
Then we have the long exact sequence 
\begin{align*}
\cdots \to H^i(Y, \mathcal L^{-1}(-X))
\to H^i(Y, \mathcal L^{-1}) 
\to H^i(X, \mathcal L^{-1})\\ 
\to H^{i+1}(Y, \mathcal L^{-1}(-X))\to 
\cdots. 
\end{align*}
Since $H^i(Y, \mathcal L^{-1})=0$ for 
$i<4$ by the original 
Kodaira vanishing theorem, we 
obtain that 
$H^2(X, \mathcal L^{-1})=H^3(Y, \mathcal L^{-1}
(-X))$. 
Therefore, it is sufficient to prove that 
$H^3(Y, \mathcal L^{-1}(-X))=\mathbb C$. 

We have 
$$
\mathcal L^{-1}(-X)=\mathcal M^{-1}
\otimes \pi^*\mathcal O_{\mathbb P^1}(-1)
\otimes \mathcal M^{-4}\otimes \pi^*\mathcal O_{\mathbb P^1}
(4)=\mathcal M^{-5}\otimes \pi^*\mathcal O_{\mathbb P^1}(3). 
$$
We note that $R^i\pi_*\mathcal M^{-5}=0$ for $i\ne 3$ 
because $\mathcal M=\mathcal O_Y(1)$. 
By the Grothendieck duality, 
$$
R\mathcal Hom(R\pi_*\mathcal M^{-5}, 
\mathcal O_{\mathbb P^1}(K_{\mathbb P^1})[1])
=R\pi_*R\mathcal Hom (\mathcal M^{-5}, \mathcal O_Y(K_Y)[4]).
$$ 
By the Grothendieck duality again, 
\begin{eqnarray*}
R\pi_*\mathcal M^{-5}=
R\mathcal Hom (R\pi_*R\mathcal Hom 
(\mathcal M^{-5}, \mathcal O_Y(K_Y)[4]), 
\mathcal O_{\mathbb P^1}
(K_{\mathbb P^1})[1])\\
=R\mathcal Hom (R\pi_*(\mathcal O_Y(K_Y)\otimes \mathcal 
M^5), \mathcal O_{\mathbb P^1}(K_{\mathbb P^1}))[-3]=(*).  
\end{eqnarray*} 
By the definition, we have 
$$
\mathcal O_Y(K_Y)=
\pi^*(\mathcal O_{\mathbb P^1}(K_{\mathbb P^1})\otimes 
\det (\mathcal O_{\mathbb P^1}\oplus \mathcal O_{\mathbb P^1}
(1)^{\oplus 3}))\otimes 
\mathcal M^{-4}=\pi^*\mathcal O_{\mathbb P^1}(1)\otimes 
\mathcal M^{-4}. 
$$
By this formula, we obtain 
$$\mathcal O_Y(K_Y)
\otimes \mathcal M^5=\pi^*\mathcal O_{\mathbb P^1}
(1)\otimes \mathcal M. 
$$
Thus, $R^i\pi_*(\mathcal O_Y(K_Y)\otimes 
\mathcal M^5)=0$ for any $i>0$. We note 
that 
\begin{eqnarray*}
\pi_*(\mathcal O_Y(K_Y)\otimes \mathcal 
M^5)&=&\mathcal O_{\mathbb P^1}
(1)\otimes \pi_*\mathcal M \\&=&
\mathcal O_{\mathbb P^1}(1)\otimes 
(\mathcal O_{\mathbb P^1}\oplus \mathcal O_{\mathbb P^1}(1)
^{\oplus 3})=\mathcal O_{\mathbb P^1}(1)\oplus 
\mathcal O_{\mathbb P^1}(2)^{\oplus 3}. 
\end{eqnarray*}
Therefore, we have 
\begin{eqnarray*}
(*)&=&R\mathcal Hom(\mathcal O_{\mathbb P^1}(1)\oplus 
\mathcal O_{\mathbb P^1}(2)^{\oplus 3}, 
\mathcal O_{\mathbb P^1}(-2))[-3]\\
&=&(\mathcal O_{\mathbb P^1}(-3)\oplus \mathcal O_{\mathbb P^1}
(-4)^{\oplus 3})[-3]. 
\end{eqnarray*}
So, we obtain 
$R^3\pi_*\mathcal M^{-5}=\mathcal O_{\mathbb P^1}(-3)\oplus \mathcal O_{\mathbb P^1}(-4)^{\oplus 
3}$.  
Thus, 
$R^3\pi_*\mathcal M^{-5}\otimes \mathcal 
O_{\mathbb P^1}(3)=\mathcal O_{\mathbb P^1}\oplus \mathcal O_{\mathbb P^1}(-1)
^{\oplus 3}$. 

By the spectral sequence, we have 
\begin{eqnarray*}
H^3(Y, \mathcal L^{-1}(-X))&=&H^3(Y, \mathcal M^{-5}
\otimes \pi^*\mathcal O_{\mathbb P^1}(3))\\
&=&H^0(\mathbb P^1, R^3\pi_*(\mathcal M^{-5}\otimes 
\pi^*\mathcal O_{\mathbb P^1}(3)))\\
&=&H^0(\mathbb P^1, \mathcal O_{\mathbb P^1}\oplus 
\mathcal O_{\mathbb P^1}(-1)^{\oplus 3})=\mathbb C. 
\end{eqnarray*} 
Therefore, 
$H^2(X, \mathcal L^{-1})=\mathbb C$. 

Let us recall that $X$ is a general member of 
the linear system 
$|(\mathcal M\otimes \pi^*\mathcal O_{\mathbb P^1}
(-1))^{\otimes 4}|$. 
Let $C$ be the negative section of $\pi:Y\to 
\mathbb P^1$, that is, 
the section corresponding to the projection 
$$
\mathcal O_{\mathbb P^1}\oplus \mathcal O_{\mathbb P^1}
(1)^{\oplus 3}\to \mathcal O_{\mathbb P^1}\to 0. 
$$ 
From now, we will check that 
$|\mathcal M\otimes \pi^*\mathcal O_{\mathbb P^1}
(-1)|$ is free outside $C$. Once we 
checked it, we know that 
$|(\mathcal M\otimes \pi^*\mathcal O_{\mathbb P^1}
(-1))^{\otimes 4}|$ 
is free outside $C$. 
Then $X$ is smooth in codimension one. 
Since $Y$ is smooth, $X$ is normal 
and Gorenstein by adjunction. 

We take 
$Z\in 
|\mathcal M\otimes \pi^*\mathcal O_{\mathbb P^1}
(-1)|\ne \emptyset$. 
Since 
$H^0(Y, \mathcal M\otimes \pi^*\mathcal O_{\mathbb P^1}(-1)
\otimes \pi^*\mathcal O_{\mathbb P^1}(-1))=0$, 
$Z$ can not have a fiber of $\pi$ as an irreducible 
component, that is, 
any irreducible component of $Z$ is mapped onto 
$\mathbb P^1$ by $\pi:Y\to \mathbb P^1$. 
On the other hand, let $l$ be a line in a fiber 
of $\pi:Y\to \mathbb P^1$. Then 
$Z\cdot l=1$. Therefore, 
$Z$ is irreducible. 
Let $F=\mathbb P^3$ be a fiber of $\pi:Y\to 
\mathbb P^1$. We consider 
\begin{eqnarray*}
0=H^0(Y, \mathcal M\otimes \pi^*\mathcal O_{\mathbb P^1}(-1)
\otimes \mathcal O_Y(-F))\to 
H^0(Y, \mathcal M\otimes \pi^*\mathcal O_{\mathbb P^1}(-1))\\
\to H^0(F, \mathcal O_F(1))\to 
H^1(Y, \mathcal M\otimes \pi^*\mathcal O_{\mathbb P^1}(-1)
\otimes \mathcal O_Y(-F))\to \cdots. 
\end{eqnarray*}
Since $(\mathcal M\otimes \pi^*\mathcal O_{\mathbb P^1}
(-1))\cdot C=-1$, 
every member of $|\mathcal M\otimes \pi^*\mathcal O_{\mathbb P^1}
(-1)|$
contains $C$. We put 
$P=F\cap C$. Then 
the image of 
$$\alpha:
H^0(Y, \mathcal M\otimes \pi^*\mathcal O_{\mathbb P^1}(-1))\\
\to H^0(F, \mathcal O_F(1))$$ 
is 
$H^0(F, m_P\otimes \mathcal O_{F}(1))$, 
where 
$m_P$ is the maximal 
ideal of 
$P$. It is because 
the dimension of 
$H^0(Y, \mathcal M\otimes \pi^*\mathcal O_{\mathbb P^1}(-1))$ 
is three. 
Thus, we know that 
$|\mathcal M\otimes \pi^*\mathcal O_{\mathbb P^1}
(-1)|$ 
is free outside $C$. 
In particular, 
$|(\mathcal M\otimes \pi^*\mathcal O_{\mathbb P^1}
(-1))^{\otimes 4}|$ 
is free outside $C$. 

More explicitly, 
the image of the injection 
$$\alpha:
H^0(Y, \mathcal M\otimes \pi^*\mathcal O_{\mathbb P^1}(-1))\\
\to H^0(F, \mathcal O_F(1))$$ 
is 
$H^0(F, m_P\otimes \mathcal O_{F}(1))$. 
We note that 
$$H^0(Y, \mathcal M\otimes \pi^*\mathcal O_{\mathbb P^1}
(-1))=H^0(\mathbb P^1, \mathcal O_{\mathbb P^1}(-1)
\oplus \mathcal O_{\mathbb P^1}^{\oplus 3})=\mathbb C^3, 
$$ 
and 
$$
H^0(Y, (\mathcal M\otimes \pi^*\mathcal O_{\mathbb P^1}
(-1))^{\otimes 4})=H^0(\mathbb P^1, 
\Sym ^4 (\mathcal O_{\mathbb P^1}(-1)\oplus \mathcal O_{\mathbb P^1}^{\oplus 3}))=\mathbb C^{15}. 
$$ 
We can check that the restriction of 
$H^0(Y, (\mathcal M\otimes \pi^*\mathcal O_{\mathbb P^1}
(-1))^{\otimes 4})$ 
to $F$ is 
$\Sym ^4 H^0(F, m_P\otimes \mathcal O_F(1))$. 
Thus, the general fiber $f$ of $\pi:X\to \mathbb P^1$ 
is a cone in $\mathbb P^3$ on a smooth 
plane curve of degree $4$ with the 
vertex $P=f\cap C$. 
Therefore, $(Y, X)$ is not lc because 
the multiplicity of $X$ along $C$ is four. 
Thus, $X$ is not lc by the inversion of adjunction 
(cf.~Corollary \ref{inv-cor}). 
Anyway, $X$ is the required variety. 
\end{proof}

\begin{rem}
We consider the $\mathbb P^{k+1}$-bundle 
$$\pi:Y=\mathbb P_{\mathbb P^1}(\mathcal O_{\mathbb P^1} 
\oplus \mathcal O_{\mathbb P^1}(1)^{\oplus (k+1)})\to 
\mathbb P^1$$ over $\mathbb P^1$ for $k\geq 2$. 
We put $\mathcal M=\mathcal O_Y(1)$ and 
$\mathcal L=\mathcal M\otimes \pi^*
\mathcal O_{\mathbb P^1}(1)$. 
Then $\mathcal L$ is ample. 
We take a general member $X$ of 
the linear system 
$|(\mathcal M\otimes \pi^*\mathcal O_{\mathbb P^1}(-1))
^{\otimes {(k+2)}}|$. 
Then we can check the following properties. 
\begin{itemize}
\item[(1)] $X$ is a normal projective 
Gorenstein $(k+1)$-fold. 
\item[(2)] $X$ is not lc. 
\item[(3)] We can check that 
$R^{k+1}\pi_*\mathcal M^{-(k+3)}
=\mathcal O_{\mathbb P^1}(-1-k)\oplus 
\mathcal O_{\mathbb P^1}(-2-k)^{\oplus (k+1)}$ and that 
$R^i\pi_*\mathcal M^{-(k+3)}=0$ for $i\ne k+1$. 
\item[(4)] Since 
$\mathcal L^{-1}(-X)=\mathcal M^{-(k+3)}
\otimes 
\pi^*\mathcal O_{\mathbb P^1}(k+1)$, 
we have 
\begin{eqnarray*}
H^{k+1}(Y, \mathcal L^{-1}(-X))
=H^0(\mathbb P^1, 
R^{k+1}\pi_*\mathcal M^{-(k+3)}
\otimes \mathcal O_{\mathbb P^1}(k+1))\\
=H^0(\mathbb P^1, \mathcal O_{\mathbb P^1}\oplus \mathcal 
O_{\mathbb P^1}(-1)^{\oplus (k+1)})=\mathbb C. 
\end{eqnarray*} 
\end{itemize} Thus, 
$H^k(X, \mathcal L^{-1})=H^{k+1}(Y, \mathcal L^{-1}
(-X))=\mathbb C$. 
\end{rem}

We note that the first cohomology 
group of an anti-ample 
line bundle on a normal variety with 
$\dim \geq 2$ always vanishes by the following 
Mumford vanishing theorem. 

\begin{thm}[Mumford]\index{Mumford vanishing theorem} 
Let $V$ be a normal complete algebraic 
variety and $\mathcal L$ be a semi-ample line bundle 
on $V$. 
Assume that $\kappa (V, \mathcal L)\geq 2$. 
Then $H^1(V, \mathcal L^{-1})=0$. 
\end{thm}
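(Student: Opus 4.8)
The plan is to use semi-ampleness to pass to a projective base over which $\mathcal L$ becomes, up to a power, an ample divisor, and then to deduce the vanishing from an Enriques--Severi--Zariski type argument on that base in which normality plays the decisive role. It is convenient to prove the whole family $H^1(V,\mathcal L^{-k})=0$, $k\geq 1$, at once (applying the desired statement to the semi-ample bundle $\mathcal L^{\otimes k}$, whose Iitaka dimension is again $\geq 2$), so that the hyperplane induction below can be run uniformly in the power.

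First I would record the geometry of the semi-ample fibration. For a suitable $n>0$ the system $|\mathcal L^{\otimes n}|$ is base point free and, after Stein factorization, yields a morphism $f\colon V\to W$ with $f_*\mathcal O_V\simeq \mathcal O_W$, where $W$ is a normal projective variety with $\dim W=\kappa(V,\mathcal L)\geq 2$ and $\mathcal L^{\otimes n}\simeq f^*A$ for an ample Cartier divisor $A$ on $W$ (completeness of $V$ together with the morphism to projective space lets me take $W$ projective). I would then feed this into the Leray spectral sequence $E_2^{pq}=H^p(W,R^qf_*\mathcal L^{-1})\Rightarrow H^{p+q}(V,\mathcal L^{-1})$, whose low-degree exact sequence reduces the vanishing of $H^1(V,\mathcal L^{-1})$ to the two statements $H^1(W,f_*\mathcal L^{-1})=0$ and $H^0(W,R^1f_*\mathcal L^{-1})=0$ on the base.

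Both of these are Enriques--Severi--Zariski vanishings for a coherent sheaf $\mathcal F$ on the normal projective variety $W$ of dimension $\geq 2$ equipped with the ample divisor $A$: namely that a sheaf which is $A$-negative (its twists acquire sections only after tensoring with high powers of $A$, as one checks via the projection formula and Serre vanishing) has no global sections and no $H^1$. I would prove these by cutting with a general member $H\in|A^{\otimes n}|$, which is again normal because $\dim W\geq 2$, and inducting on $\dim W$ down to the case of a curve. The inductive step rests on the surjectivity of the restriction maps $H^0(W,\mathcal F\otimes A^{\otimes j})\to H^0(H,(\mathcal F\otimes A^{\otimes j})|_H)$, which in turn follows from the $S_2$ property supplied by normality together with $\dim W\geq 2$; this is precisely where the hypothesis $\kappa\geq 2$ is indispensable, and it is clarified conceptually by the affine cone over $W$ being normal of dimension $\geq 3$, hence of depth $\geq 2$ at its vertex.

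The main obstacle I anticipate is the passage from $V$ to the base $W$: because $f$ is genuinely a fibration and not an isomorphism, the term $R^1f_*\mathcal L^{-1}$ is really present and need not vanish, so one cannot simply descend cohomology along $f$. What rescues the degree $-1$ case (equivalently, every negative power) is that this higher direct image, like $f_*\mathcal L^{-1}$, turns out to be $A$-negative on $W$, so that the Enriques--Severi--Zariski vanishing on $W$ applies. Isolating and proving the $A$-negativity of these direct images, and arranging the hyperplane induction so that it produces vanishing for the given bundle rather than only for all sufficiently high powers (the weaker classical Enriques--Severi--Zariski statement), is the delicate heart of the argument.
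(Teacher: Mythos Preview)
Your approach has a genuine gap that cannot be repaired with the tools you list. Enriques--Severi--Zariski, normality, the $S_2$ property, and hyperplane induction are all characteristic-free, yet Mumford's vanishing theorem is false in positive characteristic: Raynaud constructed smooth projective surfaces $S$ in characteristic $p$ with an ample line bundle $A$ and $H^1(S,A^{-1})\neq 0$. Any argument using only your ingredients would therefore prove a false statement. You yourself flag the crux --- passing from asymptotic vanishing (all sufficiently negative powers) to vanishing for the specific bundle $\mathcal L^{-1}$ --- but you do not supply the characteristic-zero input that this step requires. Concretely, when your hyperplane induction reaches the surface case you must show $H^1(W,A^{-1})=0$ for $W$ a normal projective surface and $A$ ample; this is Ramanujam's vanishing, equivalent to Kodaira vanishing in dimension two, and it does not follow from Enriques--Severi--Zariski. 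Your cone remark only yields depth $\geq 2$ at the vertex, which controls $H^0$ of negative twists, not $H^1$. There is also a secondary issue: the sheaf $R^1f_*\mathcal L^{-1}$ need not have the depth properties that would let the Enriques--Severi--Zariski mechanism apply to it.

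The paper's proof takes an entirely different and much shorter route. Let $g\colon W\to V$ be a resolution of singularities. The pullback $g^*\mathcal L$ is semi-ample on the smooth complete variety $W$ with Iitaka (hence numerical) dimension at least $2$, so the Kawamata--Viehweg vanishing theorem gives $H^1(W,g^*\mathcal L^{-1})=0$. Since $V$ is normal, $g_*\mathcal O_W\simeq\mathcal O_V$, and the low-degree Leray sequence yields an injection $H^1(V,\mathcal L^{-1})\hookrightarrow H^1(W,g^*\mathcal L^{-1})=0$. All of the characteristic-zero content is packaged inside Kawamata--Viehweg; the semi-ample fibration and Enriques--Severi--Zariski play no role.
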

\begin{proof}
Let $f:W\to V$ be a resolution. 
By Leray's spectral sequence, 
$$
0\to H^1(V, f_*f^*\mathcal L^{-1})\to H^1(W, f^*\mathcal L^{-1})\to \cdots. 
$$
By the Kawamata--Viehweg vanishing theorem, 
$H^1(W, f^*\mathcal L^{-1})=0$. 
Thus, $H^1(V, \mathcal L^{-1})=H^1(V, f_*f^*\mathcal 
L^{-1})=0$. 
\end{proof}
\end{say}

\begin{say}[On the Kawamata--Viehweg vanishing theorem]
The next example shows that a naive generalization of 
the Kawamata--Viehweg vanishing theorem does not 
necessarily hold for varieties with lc singularities. 

\begin{ex}\label{ex4} 
We put $V=\mathbb P^2\times \mathbb P^2$. 
Let $p_i:V\to \mathbb P^2$ be the $i$-th 
projection for $i=1$ and $2$. 
We define $\mathcal L=p_1^*\mathcal O_{\mathbb P^2} 
(1)\otimes p_2^*\mathcal O_{\mathbb P^2}(1)$ and 
consider the $\mathbb P^1$-bundle 
$\pi:W=\mathbb P_{V}(\mathcal L\oplus 
\mathcal O_V)\to V$. 
Let $F=\mathbb P^2\times \mathbb P^2$ be the negative 
section of 
$\pi:W\to V$, that is, 
the section of $\pi$ corresponding to $\mathcal L
\oplus \mathcal O_V\to \mathcal O_V\to 0$. 
By using the linear system 
$|\mathcal O_W(1)\otimes \pi^*p_1^*\mathcal O_{\mathbb P^2}
(1)|$, we can contract $F=\mathbb P^2\times \mathbb P^2$ to 
$\mathbb P^2\times\{\text{point}\}$. 

Next, we consider an elliptic curve 
$C\subset \mathbb P^2$ and 
put $Z=C\times C\subset V=\mathbb P^2\times \mathbb P^2$. 
Let $\pi:Y\to Z$ be the restriction of $\pi:W\to V$ to 
$Z$. 
The restriction of the above contraction morphism 
$\Phi_{|\mathcal O_W(1)\otimes \pi^*p_1^*
\mathcal O_{\mathbb P^2}(1)|}: W\to U$ to $Y$ is denoted 
by $f:Y\to X$. Then, the exceptional 
locus of $f:Y\to X$ is $E=F|_Y=C\times C$ and $f$ contracts 
$E$ to $C\times \{\text{point}\}$. 

Let $\mathcal O_W(1)$ be the tautological 
line bundle of the $\mathbb P^1$-bundle 
$\pi:W\to V$. 
By the construction, 
$\mathcal O_W(1)=\mathcal O_W(D)$, where 
$D$ is the positive 
section of $\pi$, that is, the section 
corresponding to $\mathcal L\oplus 
\mathcal O_W\to \mathcal L\to 0$. 
By the definition, 
$$\mathcal O_W(K_W)=\pi^*(\mathcal O_V(K_V)\otimes 
\mathcal L)\otimes \mathcal O_W(-2). 
$$ 
By adjunction, 
$$
\mathcal O_Y(K_Y)=\pi^*(\mathcal O_Z(K_Z)\otimes \mathcal L
|_Z)\otimes \mathcal O_Y(-2)=\pi^*(\mathcal L|_Z)\otimes 
\mathcal O_Y(-2). 
$$ 
Therefore, 
$$
\mathcal O_Y(K_Y+E)=\pi^*(\mathcal L|_Z)\otimes 
\mathcal O_Y(-2)\otimes \mathcal O_Y(E). 
$$ 
We note that $E=F|_Y$. 
Since $\mathcal O_Y(E)\otimes \pi^*(\mathcal L|_Z)\simeq 
\mathcal O_Y(D)$, we have 
$\mathcal O_Y(-(K_Y+E))
=\mathcal O_Y(1)$ because $\mathcal O_Y(1)=\mathcal O_Y(D)$. 
Thus, $-(K_Y+E)$ is nef and big. 

On the other hand, it is not difficult to see 
that $X$ is a normal projective Gorenstein 
threefold, $X$ is lc but not klt along $G=f(E)$, and 
that $X$ is smooth outside $G$. 
Since we can check that 
$f^*K_X=K_Y+E$, $-K_X$ is nef and big. 

Finally, we consider the short exact sequence 
$$
0\to \mathcal J\to \mathcal O_X\to \mathcal 
O_X/\mathcal J\to 0, 
$$ 
where $\mathcal J$ is the multiplier ideal sheaf 
of $X$. 
In our case, we can easily check that $\mathcal J=
f_*\mathcal O_Y(-E)=\mathcal 
I_G$, where $\mathcal I_G$ is the defining ideal 
sheaf of $G$ on $X$. 
Since $-K_X$ is nef and big, 
$H^i(X, \mathcal J)=0$ for any $i>0$ by Nadel's 
vanishing\index{Nadel vanishing theorem} theorem. 
Therefore, 
$H^i(X, \mathcal O_X)=H^i(G, \mathcal O_G)$ for 
any $i>0$. 
Since $G$ is an elliptic curve, 
$H^1(X, \mathcal O_X)=H^1(G, \mathcal O_G)=\mathbb C$. 
We note that $-K_X$ is nef and big but $-K_X$ is not 
log big with respect to $X$. 
\end{ex}
\end{say}

\begin{say}[On the injectivity theorem]
The final example in this section supplements 
Theorem \ref{5.1}. 

\begin{ex}
We consider the $\mathbb P^1$-bundle 
$\pi:X=\mathbb P_{\mathbb P^1}(\mathcal O_{\mathbb P^1}
\oplus \mathcal O_{\mathbb P^1}(1))\to \mathbb P^1$. 
Let $S$ (resp.~$H$) be the negative (resp.~positive) 
section of $\pi$, that is, 
the section corresponding to 
the projection $\mathcal O_{\mathbb P^1}\oplus 
\mathcal O_{\mathbb P^1}(1)\to \mathcal O_{\mathbb P^1}$ 
(resp.~$\mathcal O_{\mathbb P^1}\oplus 
\mathcal O_{\mathbb P^1}(1)\to \mathcal O_{\mathbb P^1}(1)$). 
Then $H$ is semi-ample and $S+F\sim H$, where 
$F$ is a fiber of $\pi$. 
\begin{claim}
The homomorphism 
$$
H^1(X, \mathcal O_X(K_X+S+H))\to 
H^1(X, \mathcal O_X(K_X+S+H+S+F)) 
$$
induced by the natural inclusion $\mathcal O_X\to 
\mathcal O_X(S+F)$ is not injective. 
\end{claim}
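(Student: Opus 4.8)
The plan is to reduce everything to cohomology on the base $\mathbb{P}^1$ via the Leray spectral sequence for $\pi$, after pinning down the relevant divisor classes on the Hirzebruch surface $X=\mathbb{F}_1$. First I would record the intersection theory: with $\mathrm{Pic}(X)=\mathbb{Z}S\oplus\mathbb{Z}F$ we have $S^2=-1$, $F^2=0$, $S\cdot F=1$, and adjunction applied to the rational curves $F$ and $S$ gives $K_X\cdot F=-2$ and $K_X\cdot S=-1$, so $K_X\sim -2S-3F$. Combining this with the given relation $H\sim S+F$ produces the key simplification
\[
L:=K_X+S+H\sim -2F,
\]
whence $\mathcal{O}_X(L)\simeq \pi^*\mathcal{O}_{\mathbb{P}^1}(-2)$. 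Since $\pi_*\mathcal{O}_X\simeq\mathcal{O}_{\mathbb{P}^1}$ and $R^1\pi_*\mathcal{O}_X=0$, the projection formula together with Leray yields $H^1(X,\mathcal{O}_X(L))\simeq H^1(\mathbb{P}^1,\mathcal{O}_{\mathbb{P}^1}(-2))\simeq\mathbb{C}$, which is nonzero. Because the source of the map in the Claim is one-dimensional, the map fails to be injective exactly when it vanishes, so it suffices to show it is the zero map.

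The main idea is to exploit the factorization $S+F\sim H$ at the level of defining sections. The inclusion $\mathcal{O}_X\to\mathcal{O}_X(S+F)$ is multiplication by the canonical section $s_{S+F}$ with zero divisor $S+F$, and up to a nonzero scalar $s_{S+F}=s_S\cdot s_F$, where $s_S$ and $s_F$ cut out $S$ and $F$ (the ratio is an invertible global function on the connected proper variety $X$, hence constant). Therefore the homomorphism of the Claim factors as
\[
H^1(X,\mathcal{O}_X(L))\xrightarrow{\;\cdot s_F\;}H^1(X,\mathcal{O}_X(L+F))\xrightarrow{\;\cdot s_S\;}H^1(X,\mathcal{O}_X(L+S+F)).
\]
Now $L+F\sim -F$, so $\mathcal{O}_X(L+F)\simeq\pi^*\mathcal{O}_{\mathbb{P}^1}(-1)$, and the same Leray argument gives $H^1(X,\mathcal{O}_X(L+F))\simeq H^1(\mathbb{P}^1,\mathcal{O}_{\mathbb{P}^1}(-1))=0$. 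Thus the composite is zero; since it agrees up to a nonzero scalar with the map in the Claim and the source $\mathbb{C}$ is nonzero, that map is not injective.

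There is essentially no hard step here: the only thing demanding care is the sign and convention bookkeeping, namely the two adjunction computations giving $K_X\sim-2S-3F$ and the identification $\mathcal{O}_X(F)\simeq\pi^*\mathcal{O}_{\mathbb{P}^1}(1)$ (a fiber is the preimage of a point), both of which are routine. Finally I would explain why this does not contradict Theorem \ref{5.1}: taking the pair $(X,S)$ (so $B=0$), the divisor $D=S+F$ is \emph{not} permissible, since it contains the stratum $S$ of $(X,S)$; hence the permissibility hypothesis on $D$ in Theorem \ref{5.1} is violated, and the computation above shows that this hypothesis cannot be dropped.
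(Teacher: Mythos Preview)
Your proof is correct and follows essentially the same route as the paper: both factor the map through multiplication by the section cutting out $F$, reducing to the first step $H^1(X,\mathcal O_X(K_X+S+H))\to H^1(X,\mathcal O_X(K_X+S+H+F))$. The only cosmetic difference is that you compute $L\sim -2F$ and $L+F\sim -F$ explicitly and then read off the cohomology via Leray on $\pi$, whereas the paper uses the restriction short exact sequence to $F$ (with $\mathcal O_F(K_F+(S+H)|_F)\simeq\mathcal O_{\mathbb P^1}$) to exhibit a one-dimensional kernel; these are two ways of packaging the same calculation.
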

\begin{proof}[Proof of {\em{Claim}}]
It is sufficient to see that the homomorphism 
$$
H^1(X, \mathcal O_X(K_X+S+H))\to 
H^1(X, \mathcal O_X(K_X+S+H+F)) 
$$
induced by the natural inclusion $\mathcal O_X\to 
\mathcal O_X(F)$ is not injective. 
We consider the short exact sequence 
\begin{align*}
0\to \mathcal O_X(K_X+S+H)\to 
\mathcal O_X(K_X+S+H+F)\\ \to 
\mathcal O_F(K_F+(S+H)|_F)\to 0. 
\end{align*}
We note that $F\simeq \mathbb P^1$ and 
$\mathcal O_F(K_F+(S+H)|_F)\simeq \mathcal O_{\mathbb P^1}$. 
Therefore, we obtain the following exact sequence 
$$
0\to \mathbb C\to 
H^1(X, \mathcal O_X(K_X+S+H))\to 
H^1(X, \mathcal O_X(K_X+S+H+F))\to 0. 
$$
Thus, $H^1(X, \mathcal O_X(K_X+S+H))\to 
H^1(X, \mathcal O_X(K_X+S+H+F))$ is 
not injective. 
We note that $S+F$ is not {\em{permissible}} 
with respect to $(X, S)$. 
\end{proof}
\end{ex}
\end{say}

\section{Review of the proofs}\label{29-sec}
We close this chapter with the review of our proofs of 
Theorems \ref{61} and \ref{62}. 
It may help the reader to compare this chapter 
with \cite[Section 3]
{ambro}. We think that our proofs are not so long. Ambro's 
proofs seem to be too short. 

\begin{say}[Review]\label{680} 
We review our proofs of the injectivity, 
torsion-free, and 
vanishing theorems. 
\setcounter{steste}{0}
\begin{steste}\label{step1}
($E_1$-degeneration of a certain 
Hodge to de Rham type spectral sequence). 
We discuss this $E_1$-degeneration in \ref{s6}. 
As we pointed out in the introduction, the appropriate 
spectral sequence was not chosen in \cite{ambro}. 
It is one of 
the crucial technical problems in \cite[Section 3]{ambro}. 
This step is purely Hodge theoretic. 
We describe it in Section \ref{sec3}. 
\end{steste}
\begin{steste}\label{step2}
(Fundamental injectivity theorem:~Proposition \ref{2}). 
This is a very special case of \cite[Theorem 3.1]{ambro} and 
follows from the $E_1$-degeneration in Step \ref{step1} by 
using covering arguments. This step is in Section \ref{sec2}. 
\end{steste}
\begin{steste}\label{step3}
(Relative vanishing lemma:~Lemma \ref{re-vani-lem}). 
This step is missing in \cite{ambro}. 
It is a very special case of \cite[Theorem 3.2 (ii)]{ambro}. 
However, we can not use \cite[Theorem 3.2 (ii)]{ambro} 
in this stage. 
Our proof of this lemma does {\em{not}} work directly for 
normal 
crossing pairs. 
So, we need to assume that the varieties are {\em{simple}} 
normal crossing pairs. 
\end{steste}
\begin{steste}\label{step4}
(Injectivity theorem for embedded simple normal 
crossing pairs:~Theorem \ref{5.1}). 
It is \cite[Theorem 3.1]{ambro} for embedded {\em{simple}} 
normal crossing pairs. It follows easily from 
Step \ref{step2} since we already have the relative 
vanishing lemma in Step \ref{step3}. 
A key point in this step is Lemma \ref{6}, which 
is missing in \cite{ambro} and 
works only for embedded {\em{simple}} normal crossing pairs. 
\end{steste}
\begin{steste}\label{step5}
(Torsion-free and vanishing theorems for 
embedded simple normal crossing pairs:~Theorem \ref{8}). 
It is \cite[Theorem 3.2]{ambro} for embedded {\em{simple}} normal 
crossing pairs. 
The proof uses the lemmas on desingularization and compactification 
(see Lemmas \ref{6} and \ref{comp}), which 
hold only for embedded {\em{simple}} 
normal crossing pairs, and the injectivity 
theorem proved for embedded {\em{simple}} normal 
crossing pairs in Step \ref{step4}. 
Therefore, this step also works only for embedded {\em{simple}} 
normal crossing pairs. Our proof of 
the vanishing theorem is slightly 
different from Ambro's 
one. 
Compare Steps \ref{o2} and \ref{o3} in the 
proof of Theorem \ref{8} with (a) and (b) in the proof 
of \cite[Theorem 3.2 (ii)]{ambro}. See Remark \ref{smooth-ne}. 
\end{steste}
\begin{steste}\label{step6} 
(Ambro's theorems:~Theorems \ref{61} and 
\ref{62}). 
In this final step, we recover Ambro's theorems, that is, 
\cite[Theorems 3.1 and 3.2]{ambro}, in full generality. 
Since we have already proved \cite[Theorem 3.2 (i)]{ambro} for 
embedded {\em{simple}} normal crossing pairs in Step \ref{step5}, 
we can reduce the problems to the case when 
the varieties are embedded {\em{simple}} normal 
crossing pairs 
by blow-ups and Leray's spectral sequences. 
This step is described in Section \ref{sec6}. 
\end{steste}
\end{say}

\chapter{Log Minimal Model Program for lc pairs}\label{chap3} 

In this chapter, 
we discuss the log minimal model program (LMMP, 
for short) for log canonical pairs. 

In Section \ref{31-sec}, we 
will explicitly state the LMMP for lc 
pairs. 
We state the cone and contraction theorems 
explicitly for lc pairs with the 
additional estimate of lengths of extremal rays. 
We also write the flip conjectures for 
lc pairs. We note 
that the flip conjecture I (existence of an lc flip) 
is still open and that the flip 
conjecture II (termination of a sequence of 
lc flips) follows from the termination of klt flips. 
We give a proof of the flip conjecture I in dimension four. 

\begin{thm}[{cf.~Theorem \ref{lcflip-th}}] 
Log canonical flips exist in dimension four. 
\end{thm}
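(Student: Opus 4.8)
The plan is to reduce the existence of a four-dimensional log canonical flip to the existence of flips for $\mathbb{Q}$-factorial dlt pairs, which is available in every dimension by Theorem \ref{z-flip} (that is, by \cite{bchm} and \cite{hm}). So let $\varphi:(X,B)\to W$ be an extremal flipping contraction with $(X,B)$ lc, $X$ $\mathbb{Q}$-factorial, $-(K_X+B)$ $\varphi$-ample, $\rho(X/W)=1$, $\varphi$ small, and $\dim X=4$. The flip exists if and only if the relative log canonical algebra
\[
R=\bigoplus_{m\geq 0}\varphi_*\mathcal{O}_X(\llcorner m(K_X+B)\lrcorner)
\]
is a finitely generated $\mathcal{O}_W$-algebra, in which case $X^+=\Proj_W R$ together with the induced morphism $\varphi^+:X^+\to W$ is the desired flip. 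Thus the entire problem is the finite generation of $R$.

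First I would pass to a dlt model. By the existence of dlt blow-ups (a consequence of \cite{bchm}), there is a projective birational morphism $g:(Y,B_Y)\to (X,B)$ from a $\mathbb{Q}$-factorial dlt pair with $K_Y+B_Y=g^*(K_X+B)$. Since $g$ is crepant we have $R(Y/W,K_Y+B_Y)\simeq R$, so it suffices to construct the relative canonical model of $(Y,B_Y)$ over $W$. I would do this by running the $(K_Y+B_Y)$-minimal model program over $W$, using the cone and contraction theorems together with the existence of $\mathbb{Q}$-factorial dlt flips (Theorem \ref{z-flip}). Because $K_Y+B_Y=g^*(K_X+B)$ is $g$-trivial while $-(K_X+B)$ is $\varphi$-ample, once this program stops its output is a model on which $K+B$ is relatively nef over $W$; contracting it over $W$ yields $\Proj_W R$, hence the flip.

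The hard part is the termination of this relative program in dimension four, and this is exactly where the hypothesis $\dim X=4$ is used. I would handle it by \emph{special termination}: after finitely many flips the flipping loci become disjoint from the non-klt locus $\llcorner B_Y\lrcorner$, which reduces termination to the open klt locus, where the flips are four-dimensional klt flips whose termination is known. The adjunction that drives special termination — restricting $K_Y+B_Y$ to the components of $\llcorner B_Y\lrcorner$ and invoking the established three-dimensional minimal model program on the resulting lower-dimensional dlt pairs — is where the essential work lies; it is precisely here that the structure of lc centers and the vanishing and torsion-free theorems of Chapter \ref{chap2} enter to control the restricted pairs. Divisorial contractions can occur only finitely often since they drop the relative Picard number, so special termination combined with termination of four-dimensional klt flips secures termination of the whole program over $W$, hence finite generation of $R$, and therefore the existence of the four-dimensional log canonical flip $\Proj_W R$.
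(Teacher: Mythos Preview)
Your reduction to a dlt blow-up and the appeal to special termination are sound, but the argument breaks at the assertion that ``the flips are four-dimensional klt flips whose termination is known.'' Termination of arbitrary klt flips in dimension four is \emph{not} known. What is available is the theorem of Alexeev--Hacon--Kawamata \cite{ahk}: a sequence of four-dimensional log flips terminates provided the log canonical divisor is (pseudo\nobreakdash-)effective. In your relative setup over $W$, however, $-(K_Y+B_Y)=g^*(-(K_X+B))$ is nef and big over $W$, so $K_Y+B_Y$ is far from pseudo-effective over $W$, and \cite{ahk} does not apply. Special termination by itself only confines the flipping loci to the klt locus after finitely many steps; you still need a termination statement there, and none is available in this setting.

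The paper takes a different route that sidesteps this obstruction. Via Lemma~\ref{fg-lem} it reduces the existence of the flip to \emph{global} finite generation of the log canonical ring of a projective lc fourfold with $K_X+B$ big. One then passes to a log resolution and runs the absolute $(K_X+B)$-LMMP; now $K_X+B$ is big, hence effective, so special termination (via the established three-dimensional MMP) together with \cite{ahk} legitimately produces a minimal model $(X',B')$. The remaining step---showing that $K_{X'}+B'$ is semi-ample, which you also gloss over with ``contracting it over $W$''---requires abundance: the paper invokes \cite[Theorem~3.1]{re-fu}, itself a consequence of \cite{abun}. Both the correct termination input (effectivity of $K+B$, to feed into \cite{ahk}) and the abundance step are essential, and neither appears in your sketch.
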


In Section \ref{qlog-sec}, we introduce the notion of 
quasi-log varieties. 
We think that the notion of quasi-log varieties is indispensable 
for investigating lc pairs. 
The reader can find that the key points of the theory of 
quasi-log varieties in \cite{ambro} 
are adjunction and the vanishing theorem (see \cite[Theorem 4.4]{ambro} and Theorem \ref{adj-th}). 
Adjunction and the 
vanishing theorems for quasi-log varieties follow from \cite[3.~Vanishing 
Theorems]{ambro}. 
However, Section 3 of \cite{ambro} 
contains various troubles. 
Now Chapter \ref{chap2} gives us sufficiently 
powerful vanishing and torsion-free theorems 
for the theory of 
quasi-log varieties. 
We succeed in removing all the troublesome 
problems for the foundation of the theory of 
quasi-log varieties. It is one of the main contributions 
of this chapter and \cite{fuj-lec}. 
We slightly change Ambro's formulation. By this 
change, 
the theory of quasi-log varieties becomes more accessible. 
As a byproduct, we have the following definition of 
quasi-log varieties.  

\begin{defn}[Quasi-log varieties]\index{quasi-log 
variety}\label{new-def}
A {\em{quasi-log variety}} is a scheme $X$ endowed with an 
$\mathbb R$-Cartier $\mathbb R$-divisor 
$\omega$, a proper closed subscheme 
$X_{-\infty}\subset X$, and a finite collection $\{C\}$ of reduced 
and irreducible subvarieties of $X$ such that there is a 
proper morphism $f:Y\to X$ from 
a simple normal crossing divisor $Y$ on a smooth 
variety $M$ 
satisfying the following properties: 
\begin{itemize}
\item[(0)] there exists an $\mathbb R$-divisor 
$D$ on $M$ such that 
$\Supp (D+Y)$ is simple normal crossing on $M$ and 
that $D$ and $Y$ have no common 
irreducible components. 
\item[(1)] $f^*\omega\sim_{\mathbb R}K_Y+B_Y$, 
where $B_Y=D|_{Y}$. 
\item[(2)] The natural map 
$\mathcal O_X
\to f_*\mathcal O_Y(\ulcorner -(B_Y^{<1})\urcorner)$ 
induces an isomorphism 
$$
\mathcal I_{X_{-\infty}}\to f_*\mathcal O_Y(\ulcorner 
-(B_Y^{<1})\urcorner-\llcorner B_Y^{>1}\lrcorner),  
$$
where $\mathcal I_{X_{-\infty}}$ is the defining ideal sheaf of $X_{-\infty}$. 
\item[(3)] The collection of subvarieties $\{C\}$ coincides with the image 
of $(Y, B_Y)$-strata that are not included in $X_{-\infty}$. 
\end{itemize}
\end{defn} 
Definition \ref{new-def} is equivalent to 
Ambro's original definition (see \cite[Definition 4.1]{ambro} and Definition \ref{qlog-def-ambro}). 
For the details, see the subsection \ref{3-2-6}. 
However, we think Definition \ref{new-def} is much 
better than Ambro's. 
Once we adopt Definition \ref{new-def}, 
we do not need the notion of {\em{normal 
crossing pairs}} to define quasi-log varieties and 
get flexibility in the choice of {\em{quasi-log resolutions}} 
$f:Y\to X$ by Proposition \ref{taisetsu3}. 

In Section \ref{33-sec}, we will prove the fundamental 
theorems for the theory of quasi-log varieties 
such as cone, contraction, rationality, and 
base point free theorems. 

The paper \cite{fuj-lec} is a gentle introduction 
to the log minimal model 
program for lc pairs. It may be better to 
see \cite{fuj-lec} before reading this chapter. 

\section{LMMP for log canonical pairs}\label{31-sec}
\subsection{Log minimal model program}
In this subsection, we explicitly state the log minimal model program 
(LMMP, for short) for log canonical pairs. It is known to some experts but 
we can not find it in the standard literature. 
The following cone theorem is a consequence of 
Ambro's cone theorem for 
quasi-log varieties (see Theorem 5.10 in \cite{ambro}, 
Theorems \ref{cont-th} and \ref{cone-thm} below) 
except for the existence of $C_j$ with 
$0<-(K_X+B)\cdot C_j\leq 2 \dim X$ in 
Theorem \ref{coco-th} (1). 
We will discuss the estimate of lengths of extremal rays 
in the subsection \ref{leng-ssec}. 

\begin{thm}[Cone and contraction theorems]\label{coco-th} 
Let $(X, B)$ be an 
lc pair, $B$ an $\mathbb R$-divisor, and $f:X\to Y$ a 
projective morphism between algebraic  
varieties. Then we have 
\begin{itemize}
\item[{\em{(i)}}] There are $($countably many$)$ 
rational curves $C_j\subset X$ such that 
$f(C_j)$ is a point, $0<-(K_X+B)\cdot C_j\leq 
2\dim X$, and 
$$
\overline {NE}(X/Y)=\overline {NE}(X/Y)_{(K_X+B)\geq 0}
+\sum \mathbb R_{\geq 0}[C_j]. 
$$
\item[{\em{(ii)}}] For any $\varepsilon >0$ and $f$-ample 
$\mathbb R$-divisor $H$, 
$$
\overline {NE}(X/Y)=\overline {NE}(X/Y)_{(K_X+B+\varepsilon 
H)\geq 0}
+\underset{\text{\em{finite}}}\sum \mathbb R_{\geq 0}[C_j]. 
$$
\item[{\em{(iii)}}] Let $F\subset \overline {NE}(X/Y)$ be a 
$(K_X+B)$-negative 
extremal face. Then there is a unique morphism 
$\varphi_F:X\to Z$ over $Y$ such that 
$(\varphi_F)_*\mathcal O_X\simeq \mathcal O_Z$, 
$Z$ is projective over 
$Y$, and 
an irreducible curve $C\subset X$ is mapped to a point 
by $\varphi_F$ if and only if $[C]\in F$. 
The map 
$\varphi_F$ is called the contraction of $F$. 
\item[{\em{(iv)}}] Let $F$ and $\varphi_F$ be as in {\em{(iii)}}. Let $L$ 
be a line bundle on $X$ such that 
$L\cdot C=0$ for every curve $C$ with 
$[C]\in F$. Then there is a line bundle $L_Z$ on $Z$ such that 
$L\simeq \varphi_F^*L_Z$. 
\end{itemize}
\end{thm}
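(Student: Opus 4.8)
The plan is to equip the lc pair $(X,B)$ with its canonical quasi-log structure and then feed that structure into the cone and contraction theorems already established for quasi-log varieties, reserving a separate argument only for the numerical length bound in (i).

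First I would produce the quasi-log structure. Choosing a log resolution $f:Y\to X$ of $(X,B)$ and writing $K_Y+B_Y=f^*(K_X+B)$, the divisor $B_Y$ is a subboundary whose support is simple normal crossing; since $B$ is a boundary, no component of $B_Y$ has coefficient $>1$, so $B_Y^{>1}=0$. Setting $\omega=K_X+B$ and $X_{-\infty}=\emptyset$, I would check conditions (0)--(3) of Definition \ref{new-def}: (1) is the crepant relation $f^*\omega\sim_{\mathbb R}K_Y+B_Y$, and (2) collapses, since $\mathcal I_{X_{-\infty}}=\mathcal O_X$ and $\llcorner B_Y^{>1}\lrcorner=0$, to the isomorphism $\mathcal O_X\simeq f_*\mathcal O_Y(\ulcorner -(B_Y^{<1})\urcorner)$, which is the standard fact recalled in the introduction. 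Thus $[X,K_X+B]$ is a quasi-log variety whose qlc centers are exactly the lc centers of $(X,B)$, and the relative versions apply to the projective morphism $f:X\to Y$.

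Second, I would simply invoke the cone and contraction theorems for quasi-log varieties (Theorems \ref{cont-th} and \ref{cone-thm}) applied to this quasi-log structure over $Y$. These directly yield the cone decomposition in (i), the perturbed decomposition (ii), the existence and uniqueness of the contraction $\varphi_F$ in (iii), and the descent of line bundles in (iv). The rational curves $C_j$ are produced by the quasi-log cone theorem; what the quasi-log formalism does \emph{not} by itself supply is the effective bound $0<-(K_X+B)\cdot C_j\leq 2\dim X$.

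The remaining, and hardest, step is this length estimate, which is precisely the part flagged as lying outside the quasi-log machinery (to be treated in the subsection \ref{leng-ssec}). The plan is to reduce to a situation where a classical bend-and-break length bound is available: I would pass to a $\mathbb Q$-factorial dlt modification $g:(X',B')\to(X,B)$ with $K_{X'}+B'=g^*(K_X+B)$, whose existence rests on \cite{bchm}, pull back the given extremal ray, and bound the length on $(X',B')$, handling the reduced part $\llcorner B'\lrcorner$ either by restriction to a suitable stratum or by a small perturbation so that a Kawamata-type estimate applies. The main obstacle is that the bounded rational curve must ultimately be exhibited \emph{on $X$ itself}: transporting the curve back through $g$ while preserving the inequality $-(K_X+B)\cdot C\le 2\dim X$ is the delicate point, and it is here that the input from \cite{bchm} is essential.
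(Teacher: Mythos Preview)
Your proposal is correct and follows essentially the same route as the paper: endow $[X,K_X+B]$ with its natural quasi-log structure (Example \ref{new-example}), read off (i)--(iv) minus the length bound from Theorems \ref{cont-th} and \ref{cone-thm}, and treat the estimate $-(K_X+B)\cdot C_j\le 2\dim X$ separately via a crepant $\mathbb Q$-factorial model supplied by \cite{bchm} together with Kawamata's argument \cite{kawamata}. Two small remarks: the paper first passes to the extremal contraction so that $\rho(X/Y)=1$ and then, via Shokurov's Lemma \ref{sho-lem}, reduces the $\mathbb R$-divisor $B$ to a $\mathbb Q$-divisor before taking the model $g:(W,B_W)\to(X,B)$; and the transport of the rational curve back to $X$ is not delicate at all---since $K_W+B_W=g^*(K_X+B)$, the projection formula gives $-(K_X+B)\cdot g(C')=-(K_W+B_W)\cdot C'\le 2\dim X$ immediately.
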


\begin{rem}[Lengths of extremal rays]\label{est-re} 
In Theorem \ref{coco-th} (i), 
the estimate $-(K_X+B)\cdot C_j\leq 2 \dim X$ 
should be replaced by $-(K_X+B)\cdot C_j\leq \dim X+1$. 
For toric varieties, this conjectural estimate and some generalizations 
were obtained in \cite{fuji-tor} and \cite{fuji-to2}. 
\end{rem}

The following proposition is obvious. 
See, for example, \cite[Proposition 3.36]{km}. 

\begin{prop}\label{di-fa-prop}
Let $(X, B)$ be a $\mathbb Q$-factorial 
lc pair and let $\pi: X\to S$ be a projective morphism. 
Let $\varphi_R:X\to Y$ be the contraction of a $(K_X+B)$-negative extremal ray $R\subset \overline {NE}(X/S)$. 
Assume that $\varphi_R$ is either a divisorial 
contraction $($that is, $\varphi_R$ contracts a divisor 
on $X$$)$ 
or a Fano contraction $($that is, $\dim Y<\dim X$$)$. 
Then 
\begin{itemize}
\item[$(1)$] $Y$ is $\mathbb Q$-factorial, and 
\item[$(2)$] $\rho (Y/S)=\rho (X/S)-1$. 
\end{itemize}
\end{prop}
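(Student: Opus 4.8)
The plan is to derive both assertions directly from the contraction theorem, Theorem \ref{coco-th}, applied to the extremal ray $R$, which is a one-dimensional $(K_X+B)$-negative extremal face. First I would treat the Picard number. By Theorem \ref{coco-th} (iv), any line bundle $L$ on $X$ with $L\cdot C=0$ for all $[C]\in R$ is of the form $\varphi_R^*L_Z$, so the image of $\varphi_R^*$ on $N^1$ contains the hyperplane $R^\perp=\{\,\xi\in N^1(X/S)\mid \xi\cdot R=0\,\}$; conversely $\varphi_R^*L_Z\cdot C=L_Z\cdot(\varphi_R)_*C=0$ for contracted $C$, so the image equals $R^\perp$. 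Since $\varphi_R$ is surjective with connected fibers, $\varphi_R^*$ is injective (a class pulling back to $0$ meets every curve on $Z$ trivially, as curves lift numerically under a proper surjection). Hence $\varphi_R^*\colon N^1(Y/S)\to R^\perp$ is an isomorphism, and as $R$ is one-dimensional this gives $\rho(Y/S)=\rho(X/S)-1$, proving (2).

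For (1) it suffices to show that every prime divisor $D_Y$ on $Y$ has a Cartier multiple, since an arbitrary $\mathbb Q$-divisor is a $\mathbb Q$-combination of these. Let $D_X$ be the strict transform of $D_Y$ in the divisorial case, and $D_X=\varphi_R^{-1}(D_Y)$ in the Fano case; as $X$ is $\mathbb Q$-factorial, $mD_X$ is Cartier for a suitable $m>0$. The idea is to correct $mD_X$ inside its numerical class over $Y$ so that the result is a Cartier divisor meeting $R$ trivially, and then descend it by Theorem \ref{coco-th} (iv). In the divisorial case, with $E$ the prime divisor contracted by $\varphi_R$, one has $E\cdot C\neq 0$ for $[C]\in R$: otherwise $E$ would be numerically $\varphi_R$-trivial, contradicting that $E$ is an effective divisor contracted to smaller dimension while $\rho(X/Y)=1$. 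Choosing $a=-(D_X\cdot C)/(E\cdot C)\in\mathbb Q$ and enlarging $m$, the divisor $G=m(D_X+aE)$ is Cartier with $G\cdot R=0$. In the Fano case there is no exceptional divisor; here I would take $G=mD_X$ and observe that a curve $C$ in a general fiber is disjoint from $D_X=\varphi_R^{-1}(D_Y)$ (as $D_Y\subsetneq Y$), so $D_X\cdot R=0$ since all classes in $R$ are proportional. (If $\dim Y=0$ then $Y$ is a point and there is nothing to prove.)

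In either case $G$ is Cartier with $G\cdot R=0$, so Theorem \ref{coco-th} (iv) produces a line bundle $L_Z$ on $Y=Z$ with $\varphi_R^*L_Z\cong\mathcal O_X(G)$. To identify $L_Z$, let $Y'\subseteq Y$ be the complement of $Y_{\mathrm{sing}}$ and, in the divisorial case, of $\varphi_R(E)$; both removed loci have codimension $\geq 2$, so $Y'$ is a big open subset. Over $\varphi_R^{-1}(Y')$ one checks $G=\varphi_R^*(mD_Y)$ as Cartier divisors, whence $L_Z$ and the reflexive sheaf $\mathcal O_Y(mD_Y)$ agree on $Y'$; since both are reflexive of rank one on the normal variety $Y$ and coincide on a big open set, they are isomorphic. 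Thus $\mathcal O_Y(mD_Y)$ is invertible, i.e.\ $mD_Y$ is Cartier, and $Y$ is $\mathbb Q$-factorial.

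The genuinely non-formal points are the inequality $E\cdot R\neq 0$ in the divisorial case and the final identification $L_Z\cong\mathcal O_Y(mD_Y)$; I expect the latter, reconciling the descended line bundle with a Weil-divisorial sheaf across a codimension-two locus, to be the step requiring the most care, though it reduces to normality of $Y$ and reflexivity. Everything else is forced once Theorem \ref{coco-th} is in hand, which is why the proposition is essentially immediate.
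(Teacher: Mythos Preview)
Your argument is correct and is precisely the standard proof---the one in \cite[Proposition 3.36]{km}, which the paper simply cites rather than reproving. Since the paper offers no independent argument, there is nothing further to compare; your flagged subtleties ($E\cdot R\neq 0$ via descent of a numerically trivial $E$ forcing $mE\sim 0$, and the reflexivity comparison on a big open set) are exactly the points that carry the weight in the cited reference.
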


By the above cone and contraction theorems, we can easily see that the 
LMMP, 
that is, a recursive procedure explained in \cite[3.31]{km} 
(see also the subsection \ref{sub153}),  
works for $\mathbb Q$-factorial 
log canonical pairs if the flip 
conjectures (Flip Conjectures I and II) hold. 

\begin{conj}\label{fI-conj}{\em{((Log) 
Flip Conjecture I:~The existence of a (log) flip)}}\index{flip conjecture 
I}.  
Let $\varphi\colon (X,B)\to W$ be an 
extremal flipping contraction of an
$n$-dimensional pair, that is,
\begin{itemize}
\item[$(1)$] $(X, B)$ is lc, $B$ is an $\mathbb R$-divisor, 
\item[$(2)$] $\varphi$ is small projective and $\varphi$ has only connected fibers, 
\item[$(3)$] $-(K_X+B)$ is $\varphi$-ample, 
\item[$(4)$] $\rho(X/W)=1$, and 
\item[$(5)$] $X$ is $\mathbb{Q}$-factorial. 
\end{itemize}
Then there should be a diagram{\em{:}}
$$
\begin{matrix}
X & \dashrightarrow & \ X^+ \\
{\ \ \ \ \ \searrow} & \ &  {\swarrow}\ \ \ \ \\
 \ & W &  
\end{matrix}
$$
which satisfies the following conditions\emph{:} 
\begin{itemize}
\item[{\em{(i)}}] $X^+$ is a normal variety, 
\item[{\em{(ii)}}] $\varphi^+\colon X^+\to W$ 
is small projective, and 
\item[{\em{(iii)}}] $K_{X^+}+B^+$ is $\varphi^+$-ample, 
where $B^+$ is the strict transform of $B$. 
\end{itemize}
We call $\varphi^+:(X^+, B^+)\to W$ a {\em{$(K_X+B)$-flip}} 
of $\varphi$. 
\end{conj}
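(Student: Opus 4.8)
The final statement is the log Flip Conjecture I, which is open in general; what can actually be proved, and what the paper asserts in Theorem~\ref{lcflip-th}, is the four-dimensional case, so the plan is to establish existence of the flip in Conjecture~\ref{fI-conj} when $\dim X=4$. The first move is to reformulate existence as finite generation. Given the flipping contraction $\varphi\colon (X,B)\to W$, the flip $\varphi^+\colon X^+\to W$ exists if and only if the relative log canonical algebra
$$
\mathcal R=\bigoplus_{m\geq 0}\varphi_*\mathcal O_X(\llcorner m(K_X+B)\lrcorner)
$$
is a finitely generated $\mathcal O_W$-algebra; one then sets $X^+=\Proj_W\mathcal R$, and properties (i), (ii), (iii) follow formally from $\rho(X/W)=1$ and the relative ampleness of $K_{X^+}+B^+$ built into the $\Proj$ construction. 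Thus the whole problem is reduced to proving finite generation of $\mathcal R$ in dimension four.

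Next I would pass to a better model. Taking a $\mathbb Q$-factorial dlt blow-up $g\colon (Y,B_Y)\to (X,B)$ with $K_Y+B_Y=g^*(K_X+B)$, one has $\varphi_*\mathcal O_X(\llcorner m(K_X+B)\lrcorner)=(\varphi\circ g)_*\mathcal O_Y(\llcorner m(K_Y+B_Y)\lrcorner)$, so it suffices to prove finite generation of the log canonical algebra of the $\mathbb Q$-factorial dlt pair $(Y,B_Y)$ over $W$. Writing $S=\llcorner B_Y\lrcorner$, all lc centers lie on $S$, and the difficulty is concentrated there: away from $S$ the pair is klt, where the needed flips and finite generation are supplied directly by \cite{bchm}, klt flips existing in every dimension. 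Following the standard reduction of flips to pre-limiting (pl) flips, the general case is reduced by special termination—available here precisely because the lc centers have dimension at most three—to the situation in which the flipping ray is spanned by a curve contained in a single component of $S$.

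For such pl-type flips I would restrict to $S$. By adjunction $(K_Y+B_Y)|_S=K_S+B_S$, where $(S,B_S)$ is a semi-log-canonical pair of dimension at most three, and finite generation of the restricted algebra $\bigoplus_m H^0(S,\mathcal O_S(\llcorner m(K_S+B_S)\lrcorner))$ then follows from the established minimal model theory in dimension $\leq 3$. To promote this to finite generation of the algebra on $Y$, I would use the restriction exact sequence together with surjectivity of the restriction maps on global sections, and this is exactly where the generalized Kollár-type results of Chapter~\ref{chap2}—the torsion-free and vanishing theorems (Theorems~\ref{8} and \ref{74}) and the nef-and-log-big vanishing Theorem~\ref{kvn}—are indispensable, both to run adjunction through the quasi-log formalism and to lift sections from $S$ while killing the obstruction cohomology. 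Assembling the lifted sections yields finite generation of $\mathcal R$, hence $X^+=\Proj_W\mathcal R$ is the required flip.

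The main obstacle will be the non-klt locus. Since $S=\llcorner B_Y\lrcorner$ is in general reducible and non-normal, the restricted pair $(S,B_S)$ is only semi-log-canonical, so the klt machinery of \cite{bchm} does not apply verbatim; one must instead route the finite generation and lifting argument through the simple normal crossing / quasi-log vanishing and torsion-free theorems of Chapter~\ref{chap2}, which were designed to handle exactly this reducible, non-normal setting. A secondary technical point is verifying special termination in the log canonical setting in dimension four, which again rests on the lower-dimensional minimal model program applied to the lc centers, and on the cone and contraction theorems for lc pairs (Theorem~\ref{coco-th}).
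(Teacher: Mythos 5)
Your opening reduction coincides with the paper's: Lemma \ref{fg-lem} converts the existence of the flip into finite generation of the relative log canonical algebra (note $K_X+B$ is automatically $\varphi$-big because $\varphi$ is birational), and $X^+=\Proj_W\mathcal R$ then has properties (i)--(iii) formally. But from that point on you diverge from the paper, and your route has a genuine gap. The paper proves finite generation \emph{globally}: it compactifies, runs the four-dimensional $(K_X+B)$-LMMP --- citing \cite{sho-pre} and \cite{hm} for the existence of (pl, hence dlt) flips and \cite{ahk} together with the special termination theorem of \cite{special} for termination --- to reach a log minimal model $(X',B')$, and then concludes that $K_{X'}+B'$ is semi-ample by the base point free theorem of Reid--Fukuda type \cite{re-fu}, which rests on abundance for semi log canonical threefolds \cite{abun}; semi-ampleness gives finite generation, and Lemma \ref{fg-lem} gives the flip. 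You instead propose to re-prove the pl flip step itself, by restricting to $S=\llcorner B_Y\lrcorner$ and lifting sections.

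The lifting step is where your argument fails. The vanishing and torsion-free theorems of Chapter \ref{chap2} (Theorems \ref{8}, \ref{74}, and the adjunction/vanishing package of Theorem \ref{adj-th}) all require the difference $L-(K+\Delta)$ to be semi-ample, ample, or nef and log big over the base. In the flipping configuration $-(K_Y+B_Y)$ is ample over $W$, so for $L=m(K_Y+B_Y)$ one has $m(K_Y+B_Y)-S=K_Y+(B_Y-S)+(m-1)(K_Y+B_Y)$, whose ``positivity part'' $(m-1)(K_Y+B_Y)$ is \emph{anti-ample} over $W$; no theorem in Chapter \ref{chap2} gives $R^1\varphi_*$-vanishing or surjectivity of restriction here. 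Extension of pluri-log-canonical sections across the boundary in this anti-ample setting is precisely the hard core of pl flips, which is why Shokurov's saturation machinery \cite{sho-pre} and the Hacon--McKernan extension theorem \cite{hm} exist --- and the latter needs plt/klt-type hypotheses unavailable for a reducible lc boundary. Two further points: finite generation of the restricted algebra alone would not suffice without a saturation/boundedness property of the restricted system; and your claim that finite generation for the slc pair $(S,B_S)$ ``follows from established minimal model theory in dimension $\leq 3$'' is unjustified, since $S$ is in general reducible and non-normal and no MMP exists for such pairs --- the paper's own Example \ref{cone-ex} shows that the cone and contraction theorems for reducible quasi-log varieties do not yield an LMMP, because the log canonical divisor can fail to be $\mathbb Q$-Cartier after one contraction. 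What is true, and what the paper actually uses, is the weaker statement of slc abundance in dimension three \cite{abun}, applied via \cite{re-fu} to the minimal model of the total space rather than to a restricted algebra. Repairing your argument means citing \cite{sho-pre} and \cite{hm} for pl flips outright, at which point it collapses into the paper's proof.
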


We note the following proposition. 
See, for example, \cite[Proposition 3.37]{km}. 

\begin{prop}\label{fli-prop} 
Let $(X, B)$ be a $\mathbb Q$-factorial 
lc pair and let $\pi: X\to S$ be a projective morphism. 
Let $\varphi_R:X\to Y$ be the contraction of a $(K_X+B)$-negative extremal ray $R\subset \overline {NE}(X/S)$. 
Let $\varphi_R:X\to Y$ be the flipping contraction 
of $R\subset \overline {NE}(X/S)$ with 
flip $\varphi^+_R: X^+\to Y$. 
Then we have 
\begin{itemize}
\item[$(1)$] $X^+$ is $\mathbb Q$-factorial, and 
\item[$(2)$] $\rho (X^+/S)=\rho (X/S)$. 
\end{itemize}
\end{prop}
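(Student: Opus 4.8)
The plan is to mirror the proof of the klt analogue, \cite[Proposition 3.37]{km}, replacing its Kawamata--Viehweg input by the contraction theorem already available here (Theorem \ref{coco-th}). The starting observation is that a flipping contraction is small and that the flip $\varphi^+_R$ is small as well (Conjecture \ref{fI-conj}), so the induced birational map $\phi\colon X\dashrightarrow X^+$ is an isomorphism in codimension one. Consequently, taking strict transforms gives a bijection between prime divisors on $X$ and on $X^+$, preserves $\mathbb Q$-linear equivalence (since $\phi$ is the identity on function fields), and carries divisors pulled back from $Y$ to divisors pulled back from $Y$. Throughout I will use that $\rho(X/Y)=1$, because $\varphi_R$ is the contraction of a single extremal ray $R$.

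For (1), I would first fix a $\varphi^+_R$-ample Cartier divisor $H^+$ on $X^+$, which exists because $\varphi^+_R$ is projective; the key point is that $H^+$ is \emph{Cartier} on $X^+$ by construction, so I never need to assume in advance that an auxiliary divisor is $\mathbb Q$-Cartier there. Let $P$ be the strict transform of $H^+$ on $X$; it is $\mathbb Q$-Cartier since $X$ is $\mathbb Q$-factorial, and $P\cdot R\neq 0$ (otherwise $P$ would be numerically trivial over $Y$ by $\rho(X/Y)=1$, forcing $H^+=\phi_*P$ to be $\varphi^+_R$-numerically trivial, contradicting its relative ampleness). Given any prime divisor $D^+$ on $X^+$ with strict transform $D$ on $X$, the number $c:=(D\cdot R)/(P\cdot R)$ is rational, so $D-cP$ is $\mathbb Q$-Cartier with $(D-cP)\cdot R=0$, hence numerically trivial over $Y$. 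Clearing denominators and applying Theorem \ref{coco-th} (iv) to $\varphi_R$, a multiple of $D-cP$ descends to a line bundle on $Y$, i.e. $D-cP\sim_{\mathbb Q,Y}0$. Pushing this relation forward by $\phi$ yields $D^+\sim_{\mathbb Q}cH^++(\varphi^+_R)^*L_Y$ with $L_Y$ a $\mathbb Q$-Cartier divisor on $Y$; both terms on the right are $\mathbb Q$-Cartier, so $D^+$ is $\mathbb Q$-Cartier and $X^+$ is $\mathbb Q$-factorial.

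For (2), the same computation shows that, modulo pullbacks from $Y$, every $\mathbb R$-Cartier divisor on $X^+$ is numerically equivalent over $Y$ to a multiple of $H^+$; thus $N^1(X^+/Y)$ is spanned by $H^+$, and it is nonzero since $\varphi^+_R$ is not an isomorphism, giving $\rho(X^+/Y)=1=\rho(X/Y)$. I would then invoke the standard additivity of relative Picard numbers for a contraction with connected fibers, namely $\rho(X/S)=\rho(X/Y)+\rho(Y/S)$ and $\rho(X^+/S)=\rho(X^+/Y)+\rho(Y/S)$ (valid because $(\varphi_R)_*\mathcal O_X\simeq\mathcal O_Y$ and $(\varphi^+_R)_*\mathcal O_{X^+}\simeq\mathcal O_Y$), to conclude $\rho(X^+/S)=\rho(Y/S)+1=\rho(X/S)$. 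This is consistent with the divisorial and Fano cases recorded in Proposition \ref{di-fa-prop}.

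The step I expect to demand the most care is (1), specifically obtaining a genuinely $\mathbb Q$-Cartier (not merely $\mathbb R$-Cartier) conclusion in the $\mathbb R$-boundary setting. Using $K_X+B$ as the generator of $N^1(X/Y)$ would give only $\mathbb R$-Cartierness, since $(K_X+B)\cdot R$ is in general irrational; comparing instead against a Cartier $\varphi^+_R$-ample divisor $H^+$ on $X^+$—whose strict transform to $X$ is automatically $\mathbb Q$-Cartier—is exactly what keeps the coefficient $c$ rational and breaks the apparent circularity of trying to certify $\mathbb Q$-Cartierness on $X^+$ by a generator one does not yet know to be $\mathbb Q$-Cartier there. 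The remaining bookkeeping—that strict transform intertwines $\varphi_R^*$ with $(\varphi^+_R)^*$ and respects $\mathbb Q$-linear equivalence over $Y$—is routine once $\phi$ is an isomorphism in codimension one, and follows the pattern of \cite[Proposition 3.36]{km}.
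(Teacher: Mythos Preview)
Your proposal is correct and is exactly what the paper has in mind: the paper gives no proof of its own but simply points to \cite[Proposition 3.37]{km}, and you have spelled out that argument with the Kawamata--Viehweg/klt input replaced by Theorem \ref{coco-th} (iv), which is the only change needed in the lc setting. Your care in part (1) to compare against the strict transform of a genuinely Cartier $\varphi^+_R$-ample divisor (rather than against $K_X+B$) is the right move to keep the coefficient $c$ rational, and the exactness used in part (2) follows from the same computation together with Theorem \ref{coco-th} (iv), so no additional input is required.
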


Note that to prove Conjecture \ref{fI-conj} we can assume that $B$ is 
a $\mathbb{Q}$-divisor, by
perturbing $B$ slightly. It is known that 
Conjecture \ref{fI-conj} holds when $\dim X=3$ (see 
\cite[Chapter 8]{FA}). 
Moreover, if there exists an $\mathbb R$-divisor $B'$ on $X$ such 
that $K_X+B'$ is klt and $-(K_X+B')$ is $\varphi$-ample, then 
Conjecture \ref{fI-conj} 
is true by \cite{bchm}. The following famous 
conjecture is stronger than Conjecture \ref{fI-conj}. 
We will see it in Lemma \ref{fg-lem}. 

\begin{conj}[Finite generation\index{finite generation 
conjecture}]\label{fg-conj}
Let $X$ be an $n$-dimensional smooth projective variety and 
$B$ a boundary $\mathbb Q$-divisor on $X$ such that 
$\Supp B$ is a simple normal crossing divisor on $X$. 
Assume that $K_X+B$ is big. 
Then the log canonical ring 
$$R(X, K_X+B)=\bigoplus _{m\geq 0}H^0(X, 
\mathcal O_X(\llcorner m(K_X+B)\lrcorner))$$ is a 
finitely generated $\mathbb 
C$-algebra. 
\end{conj}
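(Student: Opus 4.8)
The plan is to deduce finite generation from the existence of a good minimal model for the dlt pair $(X,B)$ together with the birational invariance of the log canonical ring along a $(K_X+B)$-minimal model program. Since $X$ is smooth and $\Supp B$ is simple normal crossing, $(X,B)$ is dlt, and since $K_X+B$ is big it is in particular pseudo-effective and we may write $K_X+B\sim_{\mathbb Q}A+E$ with $A$ ample and $E\geq 0$ by Kodaira's lemma. The goal is to reach a model on which $K+B$ is semi-ample, where finite generation is immediate.

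First I would run a $(K_X+B)$-MMP with scaling of an ample divisor over $\Spec\mathbb C$. At each stage the cone and contraction theorems for lc pairs (Theorem \ref{coco-th}) produce a $(K_{X_i}+B_i)$-negative extremal ray and its contraction; this is either divisorial or a small (flipping) contraction, and in the latter case the required flip exists: perturbing the reduced part $\llcorner B_i\lrcorner$ downward by a small amount yields a klt boundary $B_i'$ with $-(K_{X_i}+B_i')$ still relatively ample, so \cite{bchm} applies, exactly as recorded after Conjecture \ref{fI-conj}. Because $B$ is effective, a standard computation (cf.~\cite{km}) shows that a divisorial contraction or a flip in such a program induces isomorphisms on $H^0(\mathcal O(\llcorner m(K+B)\lrcorner))$ for every $m\geq0$; hence the log canonical ring is preserved. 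As $K_X+B$ is big, the program cannot terminate with a Mori fibre space, so it ends at a minimal model $(X',B')$ with $K_{X'}+B'$ nef and big.

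Next I would upgrade nef-and-big to semi-ample on $X'$. In the klt case this is routine: writing $K_{X'}+B'\sim_{\mathbb Q}A'+E'$ with $A'$ ample and $E'\geq0$, the pair $(X',B'+\delta E')$ is klt for $0<\delta\ll1$, and for $a\gg0$ the divisor $a(K_{X'}+B')-(K_{X'}+B'+\delta E')\sim_{\mathbb Q}(a-1-\delta)(K_{X'}+B')+\delta A'$ is nef plus ample, hence ample, so the base point free theorem (Theorem \ref{z-bpf}) applied to the klt pair $(X',B'+\delta E')$ shows that $K_{X'}+B'$ is semi-ample. It then induces a morphism $\psi\colon X'\to Z$ onto a normal projective variety with $K_{X'}+B'\sim_{\mathbb Q}\psi^*H$, where $H$ is an ample $\mathbb Q$-Cartier divisor; since the section ring of a semi-ample $\mathbb Q$-divisor is finitely generated, so are $R(X',K_{X'}+B')$ and therefore $R(X,K_X+B)$.

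The hard part will be exactly the reduced part $S=\llcorner B\lrcorner$, i.e. the components of coefficient one, which is what separates the dlt from the klt situation. The perturbation by $\delta E'$ used above is illegitimate along $S$, so semi-ampleness cannot be reduced to the klt case directly; instead one must argue by adjunction along $S$ and induction on $\dim X$, lifting pluricanonical sections from $S$ to $X$. This is precisely where the injectivity, torsion-free, and vanishing theorems of Chapter \ref{chap2} (Theorems \ref{61}, \ref{62}, and \ref{74}) are needed, to kill the relevant higher direct images and to supply the extension statement that makes the restricted algebra to $S$ finitely generated. The same coefficient-one phenomenon is also the real source of difficulty in proving termination of the lc-MMP with scaling; in the big case this reduces to the termination results of \cite{bchm} for klt pairs, and it is this reduction, rather than the cone, contraction, and base point free theorems, that I expect to be the main obstacle.
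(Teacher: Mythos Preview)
The statement you are addressing is labelled a \emph{conjecture} in the paper, not a theorem; the paper does not claim to prove it and offers no proof. Immediately after stating it, the paper records only the klt case: if $K_X+B\sim_{\mathbb Q}K_X+B'$ with $(X,B')$ klt, then finite generation follows from \cite{bchm}. The only unconditional result of this type in the paper is the four-dimensional case (Theorem~\ref{lcflip-th} and the preceding theorem), proved by running the dlt MMP (flips via \cite{bchm}/\cite{hm}, termination via \cite{ahk} plus special termination) and then invoking three-dimensional abundance \cite{abun}, \cite{re-fu} to obtain semi-ampleness on the minimal model.

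Your outline therefore has two genuine gaps, which you partly acknowledge but do not close. First, termination of the $(K_X+B)$-MMP with scaling for dlt pairs in arbitrary dimension is not a corollary of \cite{bchm}; special termination only reduces it to termination of lower-dimensional klt flips, and the paper records this as Conjecture~\ref{fII-conj}, open in general. Second, and more seriously, your step ``upgrade nef-and-big to semi-ample'' is exactly the abundance conjecture for dlt pairs of big log canonical class. The perturbation trick you write out is, as you say, valid only when $\llcorner B'\lrcorner=0$; for the reduced part you appeal to adjunction, induction on dimension, and the injectivity/vanishing/torsion-free theorems of Chapter~\ref{chap2}. But those theorems feed into the base point free theorem (Theorem~\ref{bpf-th}, Theorem~\ref{bpf-rdlc-th}), whose hypothesis is that $qL-(K_X+B)$ be \emph{ample} or nef and log big for some $q>0$; when $L=K_X+B$ is merely nef and big on a dlt pair with $\llcorner B\lrcorner\ne 0$, no such $q$ is available, and the section-lifting/extension argument you allude to is precisely the unresolved content of the conjecture. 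You have correctly located the difficulty, but neither the paper nor your outline resolves it outside the klt and low-dimensional cases.
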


Note that if there exists a $\mathbb Q$-divisor 
$B'$ on $X$ such that $K_X+B'$ is klt and 
$K_X+B'\sim _{\mathbb Q}K_X+B$, then Conjecture \ref{fg-conj} holds 
by \cite{bchm}. 
See Remark \ref{trun-re}. 

\begin{lem}\label{fg-lem}
Let $f:X\to S$ be a proper surjective morphism 
between normal varieties with connected fibers. 
We assume $\dim X=n$. 
Let $B$ be a $\mathbb Q$-divisor 
on $X$ such that $(X, B)$ is lc. 
Assume that $K_X+B$ is $f$-big. 
Then the relative log canonical ring 
$$R(X/S, K_X+B)=\bigoplus _{m\geq 0}
f_*\mathcal O_X(\llcorner m(K_X+B)\lrcorner)$$ is a finitely generated 
$\mathcal O_S$-algebra if 
{\em{Conjecture \ref{fg-conj}}} holds. 
In particular, {\em{Conjecture \ref{fg-conj}}} implies {\em{Conjecture \ref{fI-conj}}}. 
\end{lem}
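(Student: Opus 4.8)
The plan is to reduce the relative, singular statement to the absolute, smooth, projective situation of Conjecture \ref{fg-conj}, and then to read off Conjecture \ref{fI-conj} by the standard construction of a flip as a relative $\Proj$. First I would use that finite generation of an $\mathcal O_S$-algebra is local on the base, shrinking $S$ to an affine scheme $S=\Spec A$; then $R(X/S,K_X+B)$ is the $A$-algebra $\bigoplus_m H^0(X,\mathcal O_X(\llcorner m(K_X+B)\lrcorner))$, and it suffices to prove this is a finitely generated $A$-algebra. Since $(X,B)$ is lc with $B$ effective, $B$ is automatically a boundary. Next I would pass to a log resolution $\mu\colon Y\to X$ and write $K_Y+B_Y=\mu^*(K_X+B)+E$ with $B_Y$ a boundary $\mathbb Q$-divisor whose support is simple normal crossing and $E\ge 0$ a $\mu$-exceptional $\mathbb Q$-divisor; because $E$ is effective and exceptional one has $\mu_*\mathcal O_Y(\llcorner m(K_Y+B_Y)\lrcorner)\simeq \mathcal O_X(\llcorner m(K_X+B)\lrcorner)$, so $R(X/S,K_X+B)\simeq R(Y/S,K_Y+B_Y)$ and $K_Y+B_Y$ remains big over $S$. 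Thus I may assume $X$ is smooth and $B$ a boundary with snc support.

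The genuine difficulty is that Conjecture \ref{fg-conj} is absolute (projective $X$, big $K_X+B$), while here the divisor is only $f$-big and the base only affine. I would therefore compactify: choose a normal projective $\bar S\supset S$ and a smooth projective $\bar X\supset X$ with a morphism $\bar f\colon \bar X\to \bar S$ extending $f$ and satisfying $\bar f^{-1}(S)=X$, and let $\bar B$ be the closure of $B$; after a further resolution $\bar B$ may be taken to be a boundary with snc support. Fix an ample divisor $H$ on $\bar S$. Since relative bigness is detected on the generic fibre, which is unchanged, $K_{\bar X}+\bar B$ is big over $\bar S$, hence $K_{\bar X}+\bar B+N\bar f^*H$ is big for $N\gg 0$. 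To stay within the boundary hypothesis of Conjecture \ref{fg-conj}, I would replace $N\bar f^*H$ by a linearly equivalent \emph{reduced} divisor $\Gamma=\Sigma_1+\cdots+\Sigma_N$, where the $\Sigma_i\in|\bar f^*H|$ are general members sharing no component with $\bar B$, chosen so that $\bar B+\Gamma$ is a boundary with snc support. Then $K_{\bar X}+\bar B+\Gamma$ is big, and Conjecture \ref{fg-conj} shows that $\bigoplus_m H^0(\bar X,\llcorner m(K_{\bar X}+\bar B+\Gamma)\lrcorner)$ is a finitely generated $\mathbb C$-algebra; as $\Gamma\sim N\bar f^*H$, this ring equals the $H$-twisted section ring $\bigoplus_m H^0(\bar S,\mathcal G_m\otimes \mathcal O_{\bar S}(mNH))$, where $\mathcal G_m=\bar f_*\mathcal O_{\bar X}(\llcorner m(K_{\bar X}+\bar B)\lrcorner)$.

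It then remains to descend finite generation from this twisted absolute ring to the relative one. The point I would invoke is that, over the projective base $\bar S$, finite generation of the $H$-twisted section ring of the graded $\mathcal O_{\bar S}$-algebra $\mathcal G=\bigoplus_m\mathcal G_m$ forces $\mathcal G$ itself to be a finitely generated sheaf of $\mathcal O_{\bar S}$-algebras: for $N$ large the twist is relatively ample, so $\Proj_{\bar S}\mathcal G$ is recovered and is of finite type over $\bar S$. Restricting to the open set $S$ and using $\bar f^{-1}(S)=X$ and $\bar B|_X=B$ gives $\mathcal G|_S=\bigoplus_m f_*\mathcal O_X(\llcorner m(K_X+B)\lrcorner)$ finitely generated over $\mathcal O_S$, as required. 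Finally, to obtain Conjecture \ref{fI-conj} I would apply the lemma to $f=\varphi\colon X\to W$: perturbing $B$ we may assume it is a $\mathbb Q$-divisor, $\varphi$ is small birational so $K_X+B$ is trivially $\varphi$-big, and $(X,B)$ is lc, whence $R(X/W,K_X+B)$ is a finitely generated $\mathcal O_W$-algebra; then $X^+:=\Proj_W R(X/W,K_X+B)$ is projective over $W$, and the usual check — using that $-(K_X+B)$ is $\varphi$-ample and $\rho(X/W)=1$ — gives that $X^+$ is normal, $\varphi^+\colon X^+\to W$ is small, and $K_{X^+}+B^+$ is $\varphi^+$-ample, i.e.\ the desired flip.

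I expect the main obstacle to be exactly the manoeuvre in the second and third paragraphs: manufacturing an absolute big pair \emph{with boundary coefficients} out of a divisor that is only big over an affine base, and then descending finite generation of the twisted absolute ring back to the relative ring over $\mathcal O_S$. The delicate bookkeeping lies in keeping $\bar B+\Gamma$ a boundary while securing bigness, and in justifying the relative-$\Proj$ descent uniformly in the twist $mNH$; the resolution and flip-construction steps are routine by comparison.
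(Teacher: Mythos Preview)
Your proposal is correct and follows essentially the same route as the paper: localize to an affine base, replace $(X,B)$ by a smooth projective log-resolved compactification, twist by the pullback of an ample divisor from the compactified base to force absolute bigness, apply Conjecture~\ref{fg-conj}, and then descend via the resulting relative log canonical model; the flip is then the relative $\Proj$.

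The only noticeable differences are cosmetic. The paper adds a single general member $H\in|rA|$ with $r\gg0$ (so $f^*H$ is a reduced snc divisor meeting $\bar B$ transversally), whereas you add a sum $\Gamma=\Sigma_1+\cdots+\Sigma_N$ of general members of $|\bar f^*H|$; both keep the coefficients in $[0,1]$ and achieve the same bigness. For the descent, the paper is slightly more concrete: it forms $X'=\Proj\bigoplus_m H^0(\bar X,\mathcal O_{\bar X}(mm_0(K_{\bar X}+\bar B+f^*H)))$ as an absolute projective variety and observes, via the sub-linear-system $|m_0\,g^*f^*A|$, that $X'$ admits a morphism to $\bar S$, hence is the relative log canonical model over $S$. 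Your phrasing ``$\Proj_{\bar S}\mathcal G$ is recovered'' is the same statement read backwards, but if you wanted to tighten it you should construct the absolute $\Proj$ first and then exhibit the map to $\bar S$, rather than presupposing the relative $\Proj$ exists as a scheme.
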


The following conjecture is the most general one. 
\begin{conj}[Finite Generation Conjecture]\index{finite 
generation conjecture}\label{fg-conj2} 
Let $f:X\to S$ be a proper surjective morphism 
between normal varieties. 
Let $B$ be a $\mathbb Q$-divisor 
on $X$ such that $(X, B)$ is lc. 
Then the relative log canonical ring 
$$R(X/S, K_X+B)=\bigoplus _{m\geq 0}
f_*\mathcal O_X(\llcorner m(K_X+B)\lrcorner)$$ is a finitely generated 
$\mathcal O_S$-algebra. 
\end{conj}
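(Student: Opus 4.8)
The plan is to reduce Conjecture \ref{fg-conj2} to the relatively big case already treated in Lemma \ref{fg-lem}, which itself rests on the finite generation Conjecture \ref{fg-conj}. Since finite generation of an $\mathcal O_S$-algebra may be checked on an affine open cover, I would first assume $S$ affine. Next I would pass to the Stein factorization $f=h\circ g$, with $g:X\to S'$ having connected fibers and $h:S'\to S$ finite; then $R(X/S,K_X+B)=h_*R(X/S',K_X+B)$, and finiteness of $h$ shows that finite generation over $\mathcal O_{S'}$ forces finite generation over $\mathcal O_S$. Thus I may assume $f$ has connected fibers. A further application of Chow's lemma together with a log resolution lets me assume $f$ is projective and $(X,B)$ is log smooth (with $\operatorname{Supp}B$ simple normal crossing), the log canonical rings being identified under the standard comparison of $\llcorner m(K_X+B)\lrcorner$ along a birational model.

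I would then stratify by the relative Iitaka dimension $\kappa=\kappa(X/S,K_X+B)$ over the generic point of $S$. If $\kappa<0$, then $g_*\mathcal O_X(\llcorner m(K_X+B)\lrcorner)$ is torsion for all $m>0$, so after shrinking $S$ the ring reduces to $\mathcal O_S$ in degree zero and is trivially finitely generated. If $\kappa$ equals the relative dimension $\dim X-\dim S$, then $K_X+B$ is $f$-big and Lemma \ref{fg-lem} applies directly, yielding finite generation from Conjecture \ref{fg-conj}. The substance of the conjecture therefore lies entirely in the intermediate range $0\le\kappa<\dim X-\dim S$.

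For the intermediate case the natural device is the relative Iitaka fibration: after replacing $X$ by a higher birational model one produces a morphism $\pi:X\to Z$ over $S$ with connected fibers such that $K_X+B$ has Iitaka dimension zero on the very general fiber of $\pi$. Applying a canonical bundle formula along $\pi$ would rewrite $m(K_X+B)$, for suitably divisible $m$, as the pullback of $m(K_Z+B_Z+M_Z)$, where $B_Z$ is the discriminant divisor and $M_Z$ the moduli part, with $K_Z+B_Z+M_Z$ big over $S$. One then identifies $R(X/S,K_X+B)$ with a truncation of the log canonical ring of $(Z,B_Z+M_Z)$ over $S$, which is of relatively big type and hence falls under the case already settled.

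The main obstacle is precisely this last step, and it is genuinely serious. First, the canonical bundle formula and the required positivity and finiteness properties of the moduli part $M_Z$ (b-semiampleness, or at least $\mathbb Q$-Cartier nefness with eventual finiteness) are deep and, in the log canonical generality considered here, still conjectural; they depend on the theory of variation of Hodge structures and canonical bundle formulas deliberately not developed in this book. Second, identifying $R(X/S,K_X+B)$ with the ring on $Z$ requires a relative abundance statement, namely that fiberwise vanishing Iitaka dimension forces the ring to be generated from the base, which is not formal. Equivalently, one could attempt to run a $(K_X+B)$-MMP over $S$ using the cone, contraction, and flip machinery of this chapter (Theorem \ref{coco-th}, Conjecture \ref{fI-conj}, and \cite{bchm}) and then invoke relative abundance, but that route confronts the same trio of open problems: existence of lc flips, their termination, and abundance. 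Thus the reductions above bring the problem down to the relatively big case, but bridging intermediate to big relative Iitaka dimension is exactly where a new input (a canonical bundle formula with controllable moduli part) is unavoidable, which is why Conjecture \ref{fg-conj2} remains open.
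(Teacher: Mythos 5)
This statement is a conjecture: the paper offers no proof of it, only the remark immediately following it that the klt case reduces to the $f$-big case via the canonical bundle formula of \cite{fuji-mori} (and is then settled by \cite{bchm}), while for lc pairs the reduction to the $f$-big case is not known, with dimension $\leq 4$ deferred to \cite{birkar}, \cite{fuji-finite}, and \cite{fukuda2}. Your proposal is consistent with this: your preliminary reductions (affine base, Stein factorization, the $f$-big case via Lemma \ref{fg-lem}) are sound, and your identification of the obstruction --- a canonical bundle formula with controllable moduli part in lc generality, or equivalently lc flips, termination, and abundance --- coincides with the paper's own assessment, so you have correctly located, rather than closed, the open gap.
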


When $(X, B)$ is klt, we can reduce Conjecture \ref{fg-conj2} 
to the 
case when $K_X+B$ is $f$-big 
by using a canonical bundle formula (see \cite{fuji-mori}). 
Thus, Conjecture \ref{fg-conj2} holds for klt pairs 
by \cite{bchm}. 
When $(X, B)$ is lc but not klt, we do not know if we can reduce 
it to the case when $K_X+B$ is $f$-big or not. 

Before we go to the proof of Lemma \ref{fg-lem}, we note one easy remark. 

\begin{rem}\label{trun-re}
For a graded integral domain 
$R=\underset {m\geq 0}\bigoplus R_m$ and a positive 
integer $k$, 
the truncated ring $R^{(k)}$ is defined by 
$R^{(k)}=\underset{m\geq 0}\bigoplus R_{km}$. 
Then $R$ is finitely generated if and only if 
so is $R^{(k)}$. 
We consider $\Proj R$ when $R$ is 
finitely generated. 
We note that $\Proj R^{(k)}=\Proj R$. 
\end{rem} 
The following argument is well known to the experts. 

\begin{proof}[Proof of {\em{Lemma \ref{fg-lem}}}] 
Since the problem is local, we can shrink $S$ and assume that 
$S$ is affine. 
By compactifying $X$ and $S$ and by the desingularization 
theorem, we can 
further assume that 
$X$ and $S$ are projective, $X$ is smooth, $B$ is effective, 
and $\Supp B$ is a simple normal crossing divisor. 
Let $A$ be a very ample divisor on $S$ and $H\in |{rA}|$ 
a general member 
for $r\gg 0$. 
Note that $K_X+
B+(r-1)f^*A$ is big for $r\gg 0$ (cf.~\cite[Corollary 0-3-4]{kmm}). 
Let $m_0$ be a positive integer such that 
$m_0(K_X+B+f^*H)$ is Cartier. 
By Conjecture \ref{fg-conj}, 
$\underset{m\geq 0}\bigoplus H^0({X}, 
\mathcal O_{X}(mm_0(K_{X}+
{B}+f^*H)))
$ 
is finitely 
generated. 
Thus, the relative 
log canonical model $X'$ over $S$ exists. 
Indeed, by assuming that $m_0$ is sufficiently large and divisible, 
$R(X, K_X+B+f^*H)^{(m_0)}$ is 
generated by 
$R(X, K_X+B+f^*H)_{m_0}$ and 
$|m_0(K_X+B+(r-1)f^*A)|\ne \emptyset$.  
Then 
$X'=\Proj \underset{m\geq 0}\bigoplus H^0({X}, 
\mathcal O_{X}(mm_0(K_{X}+ {B}+f^*H)))
$ and $X'$ is the closure of the image of $X$ by the rational map 
defined by 
the complete linear system $|m_0(K_X+B+rf^*A)|$. 
More precisely, 
let $g:X''\to X$ be the elimination of the 
indeterminacy of the 
rational map defined by $|m_0(K_X+B+rf^*A)|$. 
Let $g':X''\to X'$ be the induced morphism and 
$h:X''\to S$ the morphism defined by the complete linear 
system $|m_0g^*f^*A|$. 
Then it is not difficult to see that $h$ factors through $X'$. 

Therefore, 
$\underset{m\geq 0}\bigoplus f_*\mathcal O_X(mm_0(K_X+B))$ 
is a finitely generated $\mathcal O_S$-algebra 
by the existence of the relative 
log canonical model $X'$ over $S$. 
We finish the proof.  
\end{proof}

The next theorem is an easy consequence of 
\cite{bchm}, \cite{ahk}, \cite{abun}, 
and \cite{re-fu}. 

\begin{thm}
Let $(X, B)$ be a proper four-dimensional 
lc pair such that $B$ is a $\mathbb Q$-divisor 
and $K_X+B$ is big. 
Then the log canonical ring 
$$\underset{m\geq 0}\bigoplus H^0(X, \mathcal O_X
(\llcorner m(K_X+B)\lrcorner))$$ is finitely generated. 
\end{thm}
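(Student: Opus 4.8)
The plan is to reduce the finite generation of $R(X, K_X+B)$ to the now-standard theory of the minimal model program for $\mathbb Q$-factorial dlt pairs in dimension four, which rests on \cite{bchm} together with the existence of log canonical flips (Theorem \ref{lcflip-th}) and abundance. First I would replace $(X, B)$ by a $\mathbb Q$-factorial dlt pair. Taking a dlt blow-up $g:(Y, \Delta)\to (X, B)$ with $K_Y+\Delta=g^*(K_X+B)$, where $(Y,\Delta)$ is $\mathbb Q$-factorial dlt and $g$ is crepant, we have
$$
g_*\mathcal O_Y(\llcorner m(K_Y+\Delta)\lrcorner)\simeq \mathcal O_X(\llcorner m(K_X+B)\lrcorner)
$$
for every $m\geq 0$, so that $R(Y, K_Y+\Delta)\simeq R(X, K_X+B)$. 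Since $g$ is crepant and $K_X+B$ is big, $K_Y+\Delta$ is again big. Thus it suffices to prove finite generation for the four-dimensional $\mathbb Q$-factorial dlt pair $(Y, \Delta)$ with $K_Y+\Delta$ big.

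Next I would run a $(K_Y+\Delta)$-minimal model program with scaling of an ample divisor. The cone and contraction theorems (Theorem \ref{coco-th}) supply the extremal contractions, the required flips exist because we are in dimension four (Theorem \ref{lcflip-th}), and $\mathbb Q$-factoriality and dlt-ness are preserved at each step by Propositions \ref{di-fa-prop} and \ref{fli-prop}. Termination is the point where bigness is essential: because $K_Y+\Delta$ is big, one may invoke termination of the MMP with scaling for big boundaries, which in the lc setting is reduced to the klt case of \cite{bchm} via special termination, so that it suffices to know termination in dimension $\leq 3$. The program therefore ends with a $\mathbb Q$-factorial dlt pair $(Y', \Delta')$ on which $K_{Y'}+\Delta'$ is nef and big, and $R(Y', K_{Y'}+\Delta')\simeq R(Y, K_Y+\Delta)$.

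It then remains to show that the nef and big divisor $K_{Y'}+\Delta'$ is semi-ample; once this is known, its section ring $R(Y', K_{Y'}+\Delta')$ is automatically finitely generated, and it is isomorphic to $R(X, K_X+B)$. Here I would apply abundance for dlt pairs in dimension four (\cite{abun}) together with the base point free theorem of Reid--Fukuda type (\cite{re-fu}, cf.~Theorem \ref{bpf-rd-th}): restricting to the lc centers and inducting on dimension via adjunction reduces semi-ampleness along the non-klt locus to lower dimensions, after which the nef and big divisor is seen to be nef and log big and the base point free theorem applies directly.

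The main obstacle is precisely this last package, namely termination of the lc program together with the passage from nef and big to semi-ample for an lc (not merely klt) pair. Both fail to be available in full generality, but in dimension four they are supplied by \cite{bchm}, \cite{ahk}, \cite{abun}, and \cite{re-fu} through special termination and the four-dimensional abundance theorem, which is exactly why the statement is only an \emph{easy consequence} of these works once the dlt reduction above has been made.
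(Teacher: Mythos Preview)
Your overall strategy is the same as the paper's: pass to a $\mathbb Q$-factorial dlt model, run the MMP to reach a minimal model, and then invoke four-dimensional abundance (\cite{abun}, \cite{re-fu}) to conclude semi-ampleness and hence finite generation. The paper carries this out by first replacing $(X,B)$ with a log resolution (so the pair becomes log smooth, hence $\mathbb Q$-factorial dlt) rather than using a dlt blow-up, but that is an inessential difference.

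There is, however, a genuine circularity in your flip step. You justify the existence of flips by citing Theorem~\ref{lcflip-th}, but in the paper Theorem~\ref{lcflip-th} (existence of lc flips in dimension four) is obtained \emph{as a corollary} of the very finite generation theorem you are proving, via Lemma~\ref{fg-lem}. So invoking it here is circular. The fix is simple: once you have passed to a $\mathbb Q$-factorial dlt pair, you only need \emph{dlt} flips, and these exist in every dimension by \cite{bchm} and \cite{hm} (Theorem~\ref{z-flip}). That is exactly what the paper uses, citing \cite{hm} rather than anything four-dimensional for the flips.

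A smaller point: your termination argument (``special termination reduces to the klt case of \cite{bchm}'') is plausible in outline but glosses over why the residual sequence of flips, after special termination cuts them away from $\llcorner \Delta\lrcorner$, is governed by a terminating klt MMP with scaling for a pair whose log canonical divisor is still big. The paper avoids this by citing \cite{sho-pre}, \cite{hm}, and \cite{ahk} together with special termination (\cite[Theorem~4.2.1]{special}) directly for termination in dimension four.
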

\begin{proof}
Without loss of generality, we can assume that 
$X$ is smooth projective and 
$\Supp B$ is simple normal crossing. 
Run a $(K_X+B)$-LMMP. 
Then we obtain a log minimal model 
$(X', B')$ by \cite{sho-pre}, \cite{hm} and \cite{ahk} 
with the aid of the special termination theorem 
(cf.~\cite[Theorem 4.2.1]{special}). 
By \cite[Theorem 3.1]{re-fu}, which is 
a consequence of the main theorem in \cite{abun}, $K_{X'}+B'$ is 
semi-ample. In particular, 
$\underset{m\geq 0}\bigoplus H^0(X, \mathcal O_X
(\llcorner m(K_X+B)\lrcorner))
\simeq \underset {m\geq 0}\bigoplus H^0(X', \mathcal O_{X'}
(\llcorner m(K_{X'}+B')\lrcorner))$ 
is finitely generated. 
\end{proof}

As a corollary, we obtain the next theorem by Lemma \ref{fg-lem}. 

\begin{thm}\label{lcflip-th} 
{\em{Conjecture \ref{fI-conj}}} is true if $\dim X\leq 4$. 
\end{thm}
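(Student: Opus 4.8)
The plan is to deduce Theorem \ref{lcflip-th} from the finite generation theorem for four-dimensional log canonical pairs proved above, together with Lemma \ref{fg-lem}. So let $\varphi:(X,B)\to W$ be an extremal flipping contraction as in Conjecture \ref{fI-conj} with $\dim X\leq 4$. First I would reduce to the case where $B$ is a $\mathbb Q$-divisor: as already noted, the conditions $(1)$--$(5)$ of Conjecture \ref{fI-conj} are preserved under a small perturbation of the coefficients of $B$, so, using $\rho(X/W)=1$, we may wiggle $B$ to a $\mathbb Q$-divisor without destroying the hypotheses.

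The next point is to observe that $K_X+B$ is $\varphi$-big. Indeed $\varphi$ is small, so $\dim X=\dim W$ and the generic fibre of $\varphi$ is a point; hence every $\mathbb R$-Cartier divisor on $X$ is trivially $\varphi$-big, in particular $K_X+B$ is. Thus the hypotheses of Lemma \ref{fg-lem} are satisfied with $f=\varphi$ and $S=W$, since $\varphi$ is proper surjective with connected fibres. Since $\dim X\leq 4$, the Finite Generation Conjecture \ref{fg-conj} holds: in dimension four it is the preceding theorem on finite generation of the log canonical ring for proper four-dimensional lc pairs, and the lower-dimensional cases are classical. Therefore Lemma \ref{fg-lem} applies, and the relative log canonical ring
$$
R(X/W, K_X+B)=\bigoplus_{m\geq 0}\varphi_*\mathcal O_X(\llcorner m(K_X+B)\lrcorner)
$$
is a finitely generated $\mathcal O_W$-algebra. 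This is precisely the ``in particular'' assertion of Lemma \ref{fg-lem}, that Conjecture \ref{fg-conj} implies Conjecture \ref{fI-conj}, now specialized to $\dim X\leq 4$.

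Finally I would construct the flip as the relative log canonical model
$$
X^+=\Proj_W R(X/W, K_X+B),
$$
with $\varphi^+:X^+\to W$ its structure morphism, and verify the defining properties (i)--(iii) of Conjecture \ref{fI-conj}. Normality of $X^+$ and projectivity of $\varphi^+$ are formal consequences of the $\Proj$ construction from a finitely generated sheaf of algebras; the $\varphi^+$-ampleness of $K_{X^+}+B^+$, with $B^+$ the strict transform of $B$, is built into that construction because $K_X+B$ is $\varphi$-anti-ample. The one step requiring care is checking that $\varphi^+$ is \emph{small} (i.e.\ that $X\dashrightarrow X^+$ is an isomorphism in codimension one, extracting no divisor), which is where the extremality $\rho(X/W)=1$ and the $\varphi$-ampleness of $-(K_X+B)$ are essential. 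I expect no genuinely hard input to remain at this stage: the substantial obstacle was the finite generation theorem in dimension four, which is already in hand, so within the present argument the work is the routine verification that the relative $\Proj$ yields a small birational model over $W$.
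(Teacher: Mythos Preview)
Your proposal is correct and follows exactly the route the paper takes: the paper states Theorem \ref{lcflip-th} as an immediate corollary of the preceding four-dimensional finite generation theorem via Lemma \ref{fg-lem}, and your write-up simply unpacks that deduction (perturbing $B$ to a $\mathbb Q$-divisor, checking $\varphi$-bigness, invoking Lemma \ref{fg-lem}, and reading off the flip as the relative $\Proj$). Nothing is missing.
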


More generally, we have the following theorem. 

\begin{thm}
{\em{Conjecture \ref{fg-conj2}}} is true if $\dim X\leq 4$. 
\end{thm}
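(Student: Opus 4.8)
The final statement to prove is:

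\begin{thm}
Conjecture \ref{fg-conj2} is true if $\dim X\leq 4$.
\end{thm}

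The plan is to reduce the general lc finite generation statement in Conjecture \ref{fg-conj2} to the big case that was just settled in the preceding theorem, and then to feed that through Lemma \ref{fg-lem}. First I would localize: since finite generation of $R(X/S, K_X+B)=\bigoplus_{m\geq 0} f_*\mathcal O_X(\llcorner m(K_X+B)\lrcorner)$ as an $\mathcal O_S$-algebra is a local question on $S$, I may shrink $S$ and assume $S$ is affine. Using compactification and Szab\'o's resolution lemma (\ref{15-resol}) together with the desingularization theorem, I would reduce to the situation where $X$ and $S$ are projective, $X$ is smooth, $B$ is an effective $\mathbb Q$-divisor, and $\Supp B$ is simple normal crossing, exactly as at the start of the proof of Lemma \ref{fg-lem}.

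The core of the argument is to separate the $f$-big case from the non-big case. When $K_X+B$ is $f$-big, Lemma \ref{fg-lem} tells me that finite generation of the relative log canonical ring follows from Conjecture \ref{fg-conj} (absolute finite generation for simple normal crossing pairs with $K_X+B$ big). But the preceding theorem establishes precisely that absolute statement in dimension $\leq 4$: for a proper four-dimensional lc pair with $B$ a $\mathbb Q$-divisor and $K_X+B$ big, the log canonical ring $\bigoplus_{m\geq 0} H^0(X, \mathcal O_X(\llcorner m(K_X+B)\lrcorner))$ is finitely generated. So Conjecture \ref{fg-conj} holds in our dimension range, and hence by Lemma \ref{fg-lem} the relative ring $R(X/S, K_X+B)$ is a finitely generated $\mathcal O_S$-algebra whenever $K_X+B$ is $f$-big and $\dim X\leq 4$.

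The remaining and genuinely harder case is when $K_X+B$ is not $f$-big. Here the strategy is to add an auxiliary ample divisor pulled back from the base to create bigness without changing the ring up to truncation. Concretely, after shrinking $S$ to be affine and compactifying, I would choose a very ample $A$ on $S$ and work with $K_X+B+r f^*A$ for $r\gg 0$; by \cite[Corollary 0-3-4]{kmm} this divisor is big, so the $f$-big case applies to it. The delicate point is to relate the finite generation of $R(X/S, K_X+B+rf^*A)$ back to that of $R(X/S, K_X+B)$: adding the pullback of an ample divisor from the base tensors each graded piece by a power of an invertible sheaf on $S$, and since $S$ is affine this does not affect finite generation of the $\mathcal O_S$-algebra structure. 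I would make this precise via the observation used in the proof of Lemma \ref{fg-lem} that the relative log canonical model over $S$ exists once the corresponding big ring is finitely generated, and that $\Proj$ of a truncation agrees with $\Proj$ of the original (Remark \ref{trun-re}).

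The main obstacle I anticipate is exactly this last reduction from the non-big case to the big case. In the klt setting one reduces to the big case via the canonical bundle formula of \cite{fuji-mori}, but as the text explicitly notes immediately after Conjecture \ref{fg-conj2}, for lc (non-klt) pairs it is not known whether one can reduce to the $f$-big situation in general. Thus the honest route in dimension $\leq 4$ is not a canonical bundle formula reduction but rather the direct LMMP-plus-abundance approach already invoked in the preceding theorem: run a $(K_X+B)$-LMMP, obtain a log minimal model $(X',B')$ using \cite{sho-pre}, \cite{hm}, \cite{ahk} with special termination (\cite[Theorem 4.2.1]{special}), and then apply \cite[Theorem 3.1]{re-fu} (a consequence of \cite{abun}) to deduce that $K_{X'}+B'$ is semi-ample, whence the relative log canonical ring is finitely generated. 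The semi-ampleness output sidesteps the bigness hypothesis entirely, which is why abundance in dimension four is the essential input. I would therefore present the proof as: reduce to projective smooth data, run the four-dimensional lc LMMP to reach a minimal model, invoke semi-ampleness from abundance to get finite generation of the absolute (or relative, after the $f^*A$ trick) canonical ring, and conclude via Lemma \ref{fg-lem} and Remark \ref{trun-re}.
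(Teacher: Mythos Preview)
The paper does not prove this theorem; it simply defers to \cite{birkar}, \cite{fuji-finite}, and \cite{fukuda2}. Your high-level strategy---run the four-dimensional lc LMMP to a minimal model and then invoke abundance to obtain semi-ampleness and hence finite generation---is the right one and is essentially what those references carry out. However, two genuine gaps prevent your sketch from going through as written.

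First, the $f^*A$ trick does not help in the non-$f$-big case. A pullback $f^*A$ is numerically trivial on every fibre, so $K_X+B+rf^*A$ is $f$-big if and only if $K_X+B$ already is; in particular \cite[Corollary~0-3-4]{kmm} does not produce absolute bigness here, and in any event Lemma~\ref{fg-lem} requires $f$-bigness, not absolute bigness, as its input. That trick converts relative bigness into absolute bigness; it cannot manufacture bigness from nothing. Second, and more seriously, the abundance input you invoke is too weak. The result \cite[Theorem~3.1]{re-fu}, which rests on \cite{abun}, gives semi-ampleness of $K_{X'}+B'$ only in the nef and \emph{big} case---this is exactly why the preceding theorem in the paper carries a bigness hypothesis. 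For a four-dimensional lc minimal model with $K_{X'}+B'$ merely nef, one needs full log abundance in dimension four, which is substantially deeper and is precisely what \cite{fuji-finite} and \cite{fukuda2} (combined with the existence results of \cite{birkar}) supply. Your sentence ``the semi-ampleness output sidesteps the bigness hypothesis entirely'' identifies the needed statement correctly, but the references you give for it do not establish it.
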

For the proof, see \cite{birkar}, \cite{fuji-finite}, 
and \cite{fukuda2}. 
Let us go to the flip conjecture II. 

\begin{conj}\label{fII-conj}{\em{((Log) 
Flip Conjecture II:~Termination 
of a sequence of $($log$)$ flips)}}\index{flip conjecture II}.  
A sequence of $($log$)$ flips
$$
(X_0,B_0)\dasharrow 
(X_1,B_1)\dasharrow (X_2,B_2)\dasharrow \cdots  
$$
terminates after finitely many steps.  Namely, there does 
not exist an infinite
sequence of $($log$)$ flips.
\end{conj}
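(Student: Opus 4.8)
Since this is stated as a \emph{conjecture}, the realistic goal is not an unconditional proof but the reduction announced before its statement: Flip Conjecture II for lc pairs follows from termination of klt flips. The plan is to run a special termination argument, isolating the non-klt locus and showing that after finitely many steps every flip occurs where the pair is already klt, at which point the hypothesis handles the tail of the sequence.

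First I would reduce to the $\mathbb{Q}$-factorial dlt case. Starting from the given sequence of lc flips $(X_i, B_i) \dashrightarrow (X_{i+1}, B_{i+1})$, I would pass to dlt blow-ups (dlt modifications, whose existence follows from \cite{bchm}) so as to replace each $(X_i, B_i)$ by a $\mathbb{Q}$-factorial dlt pair without changing the question of termination; one may also assume $B$ is a $\mathbb{Q}$-divisor by the small perturbation mentioned after Conjecture \ref{fI-conj}. For a dlt pair the non-klt locus $\Nklt(X_i, B_i)$ is governed by $\llcorner B_i\lrcorner = B_i^{=1}$ together with the lc centers it carries, which is what makes the locus amenable to an inductive analysis.

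The heart of the argument is special termination (cf. \cite{special}). Assuming termination of flips in dimensions strictly less than $\dim X$, one shows that in the sequence the flipping and flipped loci eventually become disjoint from $\llcorner B_i\lrcorner$ and from every lc center of $(X_i, B_i)$. The mechanism is adjunction: restricting $K_{X_i}+B_i$ to the strata of $\llcorner B_i\lrcorner$ induces lower-dimensional dlt pairs, the induced discrepancies behave monotonically along the flips (in the spirit of condition (5) of Definition \ref{z-minimal}), and inductive termination in lower dimension forces the flips to move away from these strata after finitely many steps. Once the flipping locus avoids the non-klt locus, I would restrict to the open sets $U_i = X_i \setminus \Nklt(X_i, B_i)$, where $(U_i, B_i|_{U_i})$ is klt and the flips restrict to genuine klt flips; termination of klt flips then closes the argument.

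The main obstacle is special termination itself: it rests on deep auxiliary inputs — ACC for log canonical thresholds and for minimal log discrepancies along lc centers, together with the finiteness and monotonicity needed to control the strata — which are precisely the ingredients whose general validity remains out of reach. For this reason the conjecture cannot be settled outright; what the plan actually delivers is the clean conditional statement that Conjecture \ref{fII-conj} holds whenever klt flips terminate, and in particular in the low-dimensional cases (e.g. the four-dimensional situation of Theorem \ref{lcflip-th} and below) where termination of klt flips is already known.
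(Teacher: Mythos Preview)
Your overall strategy---reduce lc flips to $\mathbb Q$-factorial dlt flips via dlt blow-ups, invoke special termination to push the flipping loci away from the non-klt locus, then finish with klt termination---is exactly the reduction the paper sketches in the paragraph following the conjecture, citing \cite[4.2, 4.4]{special}. So the plan is correct and matches the paper.

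There is, however, a genuine error in your diagnosis of the obstacle. You write that special termination ``rests on deep auxiliary inputs --- ACC for log canonical thresholds and for minimal log discrepancies along lc centers \ldots\ which are precisely the ingredients whose general validity remains out of reach.'' This is not so: special termination in the sense of \cite[Theorem 4.2.1]{special} is proved purely by induction on dimension, using adjunction to the strata of $\llcorner B\lrcorner$ and monotonicity of discrepancies under flips, and its only external input is termination of dlt flips in strictly lower dimension---an assumption you yourself correctly invoke earlier in the same paragraph. ACC-type conjectures belong to a different program (Shokurov's approach to termination itself, cf.\ \cite{sho-7}), not to the special termination step. The paper's chain of implications is: termination of dlt flips in dimension $\leq n-1$ gives special termination in dimension $n$; special termination together with klt termination in dimension $n$ gives dlt termination in dimension $n$; and dlt termination gives lc termination (via \cite{bchm}). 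Running this induction on $n$, klt termination in all dimensions yields lc termination in all dimensions---so the reduction is unconditional, and the only open ingredient is klt termination, not ACC.
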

Note that it is sufficient to prove Conjecture \ref{fII-conj} 
for any sequence of klt flips. 
The termination of dlt flips with dimension $\leq n-1$ implies the 
special termination in dimension $n$. Note that 
we use the formulation in \cite[Theorem 4.2.1]{special}. 
The special termination and the termination of klt flips 
in dimension $n$ implies the termination of 
dlt flips in dimension $n$. 
The termination of 
dlt flips 
in dimension $n$ implies the termination of lc flips in dimension $n$. 
It is because we can use the LMMP for $\mathbb Q$-factorial 
dlt pairs in full generality by \cite{bchm} once we obtain 
the termination of dlt flips. 
The reader can find all the necessary arguments in \cite[4.2, 4.4]{special}. 

\begin{rem}[Analytic spaces]\label{ana-re} 
The proofs of the vanishing theorems in Chapter \ref{chap2} only work for algebraic varieties. 
Therefore, the cone, contraction, and base point free theorems 
stated here 
for lc pairs hold only for algebraic 
varieties. 
Of course, all the results should be proved for complex analytic spaces that 
are projective over any fixed analytic spaces. 
\end{rem}

\subsection{Non-$\mathbb Q$-factorial log minimal model 
program}

In this subsection, we explain the log minimal model 
program for non-$\mathbb Q$-factorial 
lc pairs. It is the most general log minimal model 
program. 
First, let us recall the definition of 
log canonical models. 

\begin{defn}[Log canonical model]\label{z-log-cano}
Let $(X, \Delta)$ be a log canonical pair and $f:X\to S$ 
a proper morphism. 
A pair $(X', \Delta')$ sitting in a diagram 
$$
\begin{matrix}
X & \overset{\phi}{\dashrightarrow} & \ X'\\
{\ \ \ \ \ f\searrow} & \ &  {\swarrow}f'\ \ \ \ \\
 \ & S &  
\end{matrix}
$$
is called a {\em{log canonical model}} of\index{log canonical 
model} $(X, \Delta)$ 
over $S$ if 
\begin{itemize}
\item[(1)] $f'$ is proper, 
\item[(2)] $\phi^{-1}$ has no exceptional divisors, 
\item[(3)] $\Delta'=\phi_*\Delta$, 
\item[(4)] $K_{X'}+\Delta'$ is $f'$-ample, and 
\item[(5)] $a(E, X, \Delta)\leq a(E, X', \Delta')$ for every $\phi$-exceptional divisor $E\subset X$. 
\end{itemize} 
\end{defn}

Next, we explain the minimal model program 
for non-$\mathbb Q$-factorial 
lc pairs (cf.~\cite[4.4]{special}). 

\begin{say}[MMP for non-$\mathbb Q$-factorial 
lc pairs]
We start with a pair $(X, \Delta)=(X_0, \Delta_0)$. 
Let $f_0: X_0\to S$ be a projective 
morphism. The aim is to set up a recursive 
procedure which creates intermediate pairs 
$(X_i, \Delta_i)$ and 
projective morphisms $f_i: X_i \to S$. 
After some steps, it should stop with a final 
pair $(X', \Delta')$ and 
$f': X'\to S$. 
\setcounter{step}{-1}
\begin{step}[Initial datum]\label{0ste-new}
Assume that we already constructed $(X_i, \Delta_i)$ 
and $f_i:X_i \to S$ with the following 
properties: 
\begin{itemize}
\item[(1)] $(X_i, \Delta_i)$ is lc, 
\item[(2)] $f_i$ is projective, and 
\item[(3)] $X_i$ is not necessarily $\mathbb Q$-factorial. 
\end{itemize}
If $X_i$ is $\mathbb Q$-factorial, then 
it is easy to see that $X_k$ is also $\mathbb Q$-factorial 
for any $k\geq i$. 
Even when $X_i$ is not $\mathbb Q$-factorial, 
$X_{i+1}$ sometimes becomes $\mathbb Q$-factorial. 
See, for example, Example \ref{final-ex} below. 

\end{step}
\begin{step}[Preparation]\label{1ste-new}
If $K_{X_i}+\Delta_i$ is $f_i$-nef, then we 
go directly to Step \ref{3ste-new} (2). If 
$K_{X_i}+\Delta_i$ is not $f_i$-nef, 
then we establish two results: 
\begin{itemize}
\item[(1)] (Cone Theorem) We have the following 
equality. 
$$\overline {NE}(X_i/S)=\overline {NE}(X_i/S)_{(K_{X_i}+
\Delta_i)\geq 0}+\sum \mathbb R_{\geq 0}[C_i]. $$
\item[(2)] (Contraction Theorem) 
Any $(K_{X_i}+\Delta_i)$-negative extremal ray $R_i\subset \overline {NE}(X_i/S)$ can 
be contracted. 
Let $\varphi_{R_i}:X_i\to Y_i$ denote the corresponding contraction. 
It sits in a commutative diagram. 
$$
\begin{matrix}
X_i & \overset{\varphi_{R_i}}{\longrightarrow} & \ Y_i\\
{\ \ \ \ \ f_i\searrow} & \ &  {\swarrow}g_i\ \ \ \ \\
 \ & S &  
\end{matrix}
$$
\end{itemize}
\end{step}
\begin{step}[Birational transformations]\label{2ste-new} 
If $\varphi_{R_i}:X_i\to Y_i$ is birational, 
then we can find an effective 
$\mathbb Q$-divisor $B$ on $X_i$ such that 
$(X_i, B)$ is log canonical 
and $-(K_{X_i}+B)$ is $\varphi_{R_i}$-ample 
since $\rho (X_i/S)=1$ (cf.~Lemma \ref{sho-lem}). 
Here, we {\em{assume}} that 
$\bigoplus _{m\geq 0}(\varphi_{R_i})_*\mathcal O_{X_i}
(\llcorner m(K_{X_i}+B)\lrcorner)
$ is a finitely generated $\mathcal O_{Y_i}$-algebra. 
We put 
$$
X_{i+1}=\Proj _{Y_i}\bigoplus _{m\geq 0}
(\varphi_{R_i})_*\mathcal O_{X_i}
(\llcorner m(K_{X_i}+B)\lrcorner), 
$$ 
where $\Delta_{i+1}$ is the strict transform 
of $(\varphi_{R_i})_*\Delta_i$ on $X_{i+1}$. 

We note 
that $(X_{i+1}, \Delta_{i+1})$ is the log canonical 
model of $(X_i, \Delta_i)$ over $Y_i$ (see Definition \ref{z-log-cano}). 
It can be checked easily that 
$\varphi_{R_i}^+: X_{i+1}\to Y_i$ is a small 
projective morphism and that $(X_{i+1}, \Delta_{i+1})$ is 
log canonical. 
Then we go back to Step \ref{0ste-new} with 
$(X_{i+1}, \Delta_{i+1})$, $f_{i+1}=
g_i\circ \varphi_{R_i}^+$ and start anew. 

If $X_i$ is $\mathbb Q$-factorial, then so is 
$X_{i+1}$. If $X_i$ is $\mathbb Q$-factorial and 
$\varphi_{R_i}$ is not small, then 
$\varphi_{R_i}^+:X_{i+1}\to Y_i$ is an isomorphism. 
It may happen that $\rho (X_i/S)<\rho (X_{i+1}/S)$ when 
$X_i$ is not $\mathbb Q$-factorial. See, 
for example, Example \ref{final-ex} below.  

\end{step}
\begin{step}[Final outcome]\label{3ste-new} 
We expect that eventually the procedure 
stops, and we get one of the following 
two possibilities: 
\begin{itemize}
\item[(1)] (Mori fiber space) 
If $\varphi_{R_i}$ is a Fano contraction, 
that is, $\dim Y_i<\dim X_i$,  
then we set $(X', \Delta')=(X_i, \Delta_i)$ and 
$f'=f_i$. 
\item[(2)] (Minimal model) If 
$K_{X_i}+\Delta_i$ is $f_i$-nef, then we 
again set $(X', \Delta')=(X_i, \Delta_i)$ and $f'=f_i$. 
We can easily check that $(X', \Delta')$ is 
a log minimal model of $(X, \Delta)$ over $S$ in the 
sense of Definition \ref{z-minimal}.  
\end{itemize}
\end{step}
Therefore, all we have to do is to prove 
Conjecture \ref{fg-conj2} for birational 
morphisms and Conjecture \ref{fII-conj}. 
\end{say}
We close this subsection with 
an example of a non-$\mathbb Q$-factorial log 
canonical variety. 

\begin{ex}
Let $C\subset \mathbb P^2$ be 
a smooth 
cubic curve and 
$Y\subset \mathbb P^3$ be a cone 
over $C$. 
Then 
$Y$ is log canonical. In this case, 
$Y$ is not $\mathbb Q$-factorial. 
We can check it as follows. 
Let $f:X=\mathbb P_C(\mathcal O_C\oplus \mathcal L)\to Y$ 
be a resolution such that 
$K_X+E=f^*K_Y$, where 
$\mathcal L=\mathcal O_{\mathbb P^2}(1)|_C$ 
and $E$ is the exceptional curve. 
We take $P, Q\in C$ such that 
$\mathcal O_C(P-Q)$ is not a torsion in 
$\Pic ^0(C)$. 
We consider $D=\pi^*P-\pi^*Q$, where 
$\pi:X=\mathbb P_C(\mathcal O_C\oplus \mathcal L)\to C$. 
We put $D'=f_*D$. 
If $D'$ is $\mathbb Q$-Cartier, 
then $mD=f^*mD'+aE$ for 
some $a\in \mathbb Z$ and $m\in \mathbb Z_{>0}$. 
Restrict it to $E$. Then 
$\mathcal O_C(m(P-Q))\simeq \mathcal O_E(aE)\simeq 
(\mathcal L^{-1})^{\otimes a}$. 
Therefore, we obtain that 
$a=0$ and $m(P-Q)\sim 0$. It is 
a contradiction. 
Thus, $D'$ is not $\mathbb Q$-Cartier. 
In particular, $Y$ is not $\mathbb Q$-factorial. 
\end{ex}

\subsection{Lengths of extremal rays}\label{leng-ssec}
In this subsection, we consider the estimate of lengths of extremal rays. 
Related topics are in \cite{bchm}. 
Let us recall the following easy lemma. 

\begin{lem}[{cf.~\cite[Lemma 1]{sho-7}}]\label{sho-lem}
Let $(X, B)$ be an lc pair, 
where $B$ is 
an $\mathbb R$-divisor. Then there 
are positive real numbers 
$r_i$ and effective $\mathbb Q$-divisors $B_i$ for $1\leq i\leq l$ 
and a positive integer $m$ such that 
$\sum^{l}_{i=1}r_i=1$, $K_X+B=\sum ^{l}_{i=1}
r_i(K_X+B_i)$, $(X, B_i)$ is lc, and 
$m(K_X+B_i)$ is Cartier for any $i$. 
\end{lem}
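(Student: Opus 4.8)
The plan is to realize $B$ as a point of a suitable rational polytope of effective log canonical boundaries and then split it into its rational vertices. Write $B=\sum_{j=1}^N d_jD_j$ with the $D_j$ distinct prime divisors, and let $V=\bigoplus_{j=1}^N\mathbb R D_j\cong \mathbb R^N$ be the space of $\mathbb R$-divisors supported on $\Supp B$. Inside $V$ I would first isolate the $\mathbb R$-Cartier locus
$$
W=\{\,D\in V\ :\ K_X+D\ \text{is}\ \mathbb R\text{-Cartier}\,\}.
$$
The first step is to check that $W$ is a rational affine subspace. This follows by sending $D\mapsto$ (class of $K_X+D$) into $\bigoplus_x\bigl(\mathrm{Cl}(\mathcal O_{X,x})\otimes\mathbb R\bigr)$, the sum ranging over the finitely many points $x$ at which $\mathbb R$-Cartierness can fail; this map is affine with rational linear part, since the local classes $[D_j]_x$ are integral, and with integral constant term $[K_X]_x$, so $W$, being the preimage of $0$, is cut out by rational affine equations. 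It is nonempty because $B\in W$ by hypothesis, hence its rational points are dense, and any rational $D\in W$ has $K_X+D$ simultaneously $\mathbb Q$-rational and $\mathbb R$-Cartier, therefore $\mathbb Q$-Cartier.

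Next I would fix one log resolution $f:Y\to X$ of $(X,\Supp B)$ as in \ref{15-resol}, so that $\Exc(f)\cup f^{-1}_*(\Supp B)$ is simple normal crossing, and use it to impose log canonicity. For $D\in W$ the pullback $f^*(K_X+D)$ is defined, and writing $K_Y=f^*(K_X+D)+\sum_i a(E_i,X,D)E_i$ over the prime divisors $E_i$ on $Y$, each discrepancy $a(E_i,X,D)$ is an affine function of $D\in W$ that takes rational values at the dense rational points of $W$ (there $f^*(K_X+D)$ is a $\mathbb Q$-divisor). Hence these are rational affine functions on $W$. Consequently the set
$$
\mathcal L=\{\,D\in W\ :\ D\geq 0,\ a(E_i,X,D)\geq -1\ \text{for all}\ i\,\}
$$
is cut out inside the rational affine space $W$ by finitely many rational half-spaces; since the strict transform of each $D_j$ forces $0\le d_j\le 1$, the set $\mathcal L$ is bounded, so it is a rational polytope. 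Because log canonicity of $(X,D)$ may be tested on the single log resolution $f$, the points of $\mathcal L$ are exactly the effective $\mathbb R$-Cartier boundaries with $(X,D)$ lc, and $B\in\mathcal L$.

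Finally I would use that $B$, being a point of the rational polytope $\mathcal L$, is a convex combination $B=\sum_{i=1}^l r_i B_i$ of finitely many rational points $B_i\in\mathcal L$, say its vertices, discarding any zero weights so that $r_i>0$ and $\sum_{i=1}^l r_i=1$; note the weights $r_i$ are allowed to be irrational while the $B_i$ remain rational. Each $B_i$ is then an effective $\mathbb Q$-divisor with $(X,B_i)$ lc and $K_X+B_i$ $\mathbb Q$-Cartier, and choosing $m$ to be a common multiple of the Cartier indices of the finitely many $K_X+B_i$ makes $m(K_X+B_i)$ Cartier for every $i$, as required. I expect the only genuinely delicate point to be the rationality of $\mathcal L$ as a polytope: concretely, verifying that $W$ is defined over $\mathbb Q$ and that the discrepancy functions are rational-affine on it. Once these linear-algebra facts over the local class groups are in place, the convex decomposition and the choice of $m$ are immediate.
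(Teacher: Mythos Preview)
The paper does not give a proof of this lemma; it is simply recalled as an ``easy'' fact with a citation to Shokurov and then used as a black box in the proof of Proposition~\ref{leng-prop}. Your argument is correct and is exactly the standard proof: realize the lc boundaries supported on $\Supp B$ as a rational polytope $\mathcal L$ and write $B$ as a convex combination of its (rational) vertices. The substantive steps---that $W$ is a rational affine subspace, that the discrepancies on a fixed log resolution are rational-affine on $W$, and that the coefficient bounds $0\le d_j\le 1$ force $\mathcal L$ to be bounded---are all handled correctly.

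One cosmetic point: your phrase ``the finitely many points $x$ at which $\mathbb R$-Cartierness can fail'' is imprecise, since that locus can be positive-dimensional. A cleaner formulation is to work with the single global quotient $(\mathrm{WDiv}(X)/\mathrm{CDiv}(X))\otimes\mathbb R$: the affine map $D\mapsto [K_X+D]$ has integral linear part $D_j\mapsto[D_j]$ and integral constant term $[K_X]$, so $W$ is the preimage of $0$ under a rational-affine map and is therefore rational-affine. This gives the same conclusion without the local bookkeeping.
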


The next result is essentially due to \cite{kawamata} and 
\cite[Proposition 1]{sho-7}. 

\begin{prop}\label{leng-prop}
We use the notation in {\em{Lemma \ref{sho-lem}}}. 
Let $(X, B)$ be an lc pair, $B$ an $\mathbb R$-divisor, and $f:X\to Y$ 
a projective morphism between algebraic 
varieteis. 
Let $R$ be a $(K_X+B)$-negative extremal ray 
of $\overline {NE}(X/Y)$. 
Then we can find a rational curve $C$ on $X$ such that 
$[C]\in R$ and $-(K_X+B_i)\cdot C\leq 2\dim X$ for 
any $i$. 
In particular, $-(K_X+B)\cdot C\leq 2\dim X$. More 
precisely, we can write $-(K_X+B)\cdot C=
\sum ^{l}_{i=1} \frac{r_in_i}{m}$, 
where $n_i\in \mathbb Z$ and 
$n_i\leq 2m\dim X$ for 
any $i$. 
\end{prop}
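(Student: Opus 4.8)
The plan is to reduce the statement to the classical bend-and-break estimate of Mori and Kawamata, and then to pass from a single boundary to the whole family $\{B_i\}$ by a minimality argument of Shokurov's type. First I would fix the decomposition of Lemma \ref{sho-lem}: write $K_X+B=\sum_{i=1}^{l}r_i(K_X+B_i)$ with $r_i>0$, $\sum_i r_i=1$, each $(X,B_i)$ lc, and $m(K_X+B_i)$ Cartier. The whole difficulty is then to produce a \emph{single} rational curve $C$ with $[C]\in R$ satisfying $-(K_X+B_i)\cdot C\le 2\dim X$ for \emph{every} $i$ at once; once this is done the rest is formal. Indeed $n_i:=-m(K_X+B_i)\cdot C$ is an integer because $m(K_X+B_i)$ is Cartier, the bound reads $n_i\le 2m\dim X$, and $-(K_X+B)\cdot C=\sum_i r_i n_i/m$, which is positive since $[C]\in R$ and $R$ is $(K_X+B)$-negative.

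To set up the minimality argument I would fix an $f$-ample Cartier divisor $H$ on $X$; for every curve contracted by $f$ the number $H\cdot C$ is a positive integer. Since $R$ is $(K_X+B)$-negative, Theorem \ref{coco-th} (i) shows $R$ is generated by the class of a rational curve, so the set of rational curves $C$ with $f(C)$ a point and $[C]\in R$ is non-empty; among them I choose one, $C$, of minimal $H$-degree $H\cdot C$. The core is then the length estimate for a single index $i$. If $(K_X+B_i)\cdot R\ge 0$ there is nothing to prove, since $-(K_X+B_i)\cdot C\le 0\le 2\dim X$. So assume $(K_X+B_i)\cdot R<0$ and invoke the bend-and-break technique of \cite{kawamata}: working over the image of the extremal contraction and passing to a resolution of $(X,B_i)$ on which deformations of rational curves can be produced, one shows that a rational curve in $R$ of degree $-(K_X+B_i)\cdot C>2\dim X$ can be degenerated, keeping one point fixed, into a connected union of rational curves. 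Because $R$ is an extremal face each component again has class in $R$, and because $H$ is $f$-ample each component has strictly smaller $H$-degree than $C$. This contradicts the minimal choice of $C$, so $-(K_X+B_i)\cdot C\le 2\dim X$. Running this for all $i$ gives the simultaneous bound with the one curve $C$, which is exactly why one uses a curve of minimal $H$-degree rather than Kawamata's theorem as a black box, since the latter would only yield a possibly different short curve for each $i$.

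The main obstacle is the bend-and-break step itself on a merely log canonical $X$: producing the deforming family and breaking it rests on the characteristic-$p$ deformation estimates on a suitable resolution, together with a careful bookkeeping of the discrepancy contributions. This is precisely where the lc hypothesis (discrepancies $\ge -1$) is used and where the factor $2\dim X$, rather than the klt value $\dim X+1$, enters. Two further points need care: verifying that the construction respects the relative setting over $Y$, so that every curve produced is contracted by $f$ and lies in $\overline{NE}(X/Y)$, and checking that the degeneration genuinely lowers $H$-degree so that minimality of $C$ is contradicted. Granting the estimate of \cite{kawamata} and the minimality argument of \cite[Proposition 1]{sho-7}, the remainder of the proof is the formal computation of $n_i$ and $-(K_X+B)\cdot C$ recorded in the first step above.
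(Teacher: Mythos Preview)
Your overall architecture is sensible and close to Shokurov's, but it diverges from the paper's proof in two places, and in one of them you leave an acknowledged gap that the paper actually closes by a different device.

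First, on getting a \emph{single} curve short for all $i$: you choose $C$ of minimal $H$-degree in $R$ and argue by bend-and-break for each $i$ separately. The paper instead replaces $f$ by the extremal contraction $\varphi_R:X\to W$, so that $\rho(X/W)=1$. In $N^1(X/W)$ every $-(K_X+B_i)$ is a scalar multiple of $-(K_X+B_1)$; after reordering one may take $-(K_X+B_1)$ to be $\varphi_R$-ample and $-(K_X+B_i)=s_i(-(K_X+B_1))$ with $s_i\le 1$. Hence a curve $C$ with $-(K_X+B_1)\cdot C\le 2\dim X$ automatically satisfies $-(K_X+B_i)\cdot C\le 2\dim X$ for every $i$. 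This reduces the whole proposition to the length estimate for a single $\mathbb Q$-Cartier lc pair, and no minimal-$H$-degree argument is needed.

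Second, and more importantly, on the length estimate for a single lc pair: you flag ``the bend-and-break step itself on a merely log canonical $X$'' as the main obstacle but do not resolve it. This is a genuine gap. Kawamata's argument in \cite{kawamata} requires, after shrinking analytically, the vanishing $R^1\phi_*\mathcal O_X(H)=0$ for $H-(K_X+D)$ ample; for an lc pair the needed analytic vanishing is not available (cf. Remark \ref{re-03}). The paper circumvents this by invoking \cite{bchm}: one takes a birational model $g:(W,B_W)\to (X,B)$ with $K_W+B_W=g^*(K_X+B)$, $W$ $\mathbb Q$-factorial, $B_W\ge 0$, and $(W,\{B_W\})$ klt. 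On $W$ one can run Kawamata's argument (using the klt Kawamata--Viehweg vanishing after perturbing $\llcorner B_W\lrcorner$ by $\varepsilon$) to find a rational curve $C'$ spanning a $(K_W+B_W)$-negative extremal ray with $-(K_W+B_W)\cdot C'\le 2\dim X$; the projection formula then gives the desired curve $C=g(C')$ on $X$. Your sketch lacks this reduction, and without it the bend-and-break step you invoke does not go through for lc pairs.
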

\begin{proof}
By replacing $f:X\to Y$ with 
the extremal contraction $\varphi_R:X\to W$ over $Y$, we can 
assume that the relative Picard number $\rho (X/Y)=1$. 
In particular, $-(K_X+B)$ is $f$-ample. 
Therefore, we can assume that 
$-(K_X+B_1)$ is $f$-ample and $-(K_X+B_i)=
-s_i(K_X+B_1)$ in $N^1(X/Y)$ with 
$s_i\leq 1$ for any $i\geq 2$. 
Thus, it is sufficient to find a rational curve $C$ such that 
$f(C)$ is a point and that $-(K_X+B_1)\cdot C\leq 
2\dim X$. So, we can assume that $K_X+B$ is 
$\mathbb Q$-Cartier and lc. 
By \cite{bchm}, there is a birational morphism $g:(W, B_W)\to (X, B)$ such that $K_W+B_W=g^*(K_X+B)$, 
$W$ is $\mathbb Q$-factorial, $B_W$ is effective, and 
$(W, \{B_W\})$ is klt. 
By \cite[Theorem 1]{kawamata}, we can find 
a rational curve $C'$ on $W$ such that 
$-(K_W+B_W)\cdot C'\leq 2\dim W=2\dim X$ and 
that $C'$ spans 
a $(K_W+B_W)$-negative extremal ray. Note that Kawamata's 
proof works in the above situation with only small modifications. 
See the proof of Theorem 10-2-1 in \cite{ma-bon} 
and Remark \ref{re-03} below. 
By the projection formula, the $g$-image of $C'$ is a desired 
rational curve. So, we finish the 
proof.    
\end{proof}

\begin{rem}\label{re-03}
Let $(X,D)$ be an lc pair, $D$ an $\mathbb R$-divisor. 
Let $\phi:X\to Y$ be a projective morphism and 
$H$ a Cartier divisor on $X$. Assume that 
$H-(K_X+D)$ is $f$-ample. 
By Theorem \ref{kvn}, $R^q\phi_*\mathcal O_X(H)=0$ for 
any $q>0$ if $X$ and $Y$ are {\em{algebraic}} varieties. 
If this vanishing theorem holds for analytic spaces $X$ and $Y$, 
then Kawamata's original argument in \cite{kawamata} 
works directly for lc pairs. 
In that case, we do not need the results in \cite{bchm} in the 
proof of Proposition \ref{leng-prop}. 

We consider the proof of \cite[Theorem 10-2-1]{ma-bon} when 
$(X, D)$ is lc such that $(X, \{D\})$ is klt. 
We need $R^1\phi_*\mathcal O_X(H)=0$ after 
shrinking $X$ and $Y$ analytically. 
In our situation, $(X, D-\varepsilon \llcorner D\lrcorner)$ is klt for 
$0<\varepsilon \ll 1$. 
Therefore, $H-(K_X+D-\varepsilon \llcorner D\lrcorner)$ is $\phi$-ample 
and $(X, D-\varepsilon \llcorner D\lrcorner)$ is klt for $0<\varepsilon 
\ll 1$. Thus, we can apply the analytic 
version of the relative Kawamata--Viehweg  vanishing theorem. 
So, we do not need the analytic version of 
Theorem \ref{kvn}. 
\end{rem}

By Proposition \ref{leng-prop}, Lemma 2.6 in \cite{birkar} holds for lc pairs. 
For the proof, see \cite[Lemma 2.6]{birkar}. 
It may be useful for the LMMP with scaling. 

\begin{prop}\label{bir-prop} 
Let $(X,B)$ be an lc pair, $B$ an $\mathbb R$-divisor, 
and $f:X\to Y$ a projective morphism between algebraic 
varieties. Let $C$ be an effective $\mathbb R$-Cartier 
divisor on $X$ such that 
$K_X+B+C$ is $f$-nef and $(X, B+C)$ is lc. 
Then, either $K_X+B$ is also $f$-nef or there 
is a $(K_X+B)$-negative 
extremal ray $R$ such that $(K_X+B+\lambda C)\cdot R=0$, 
where 
$$\lambda:=\inf\{ t\geq 0 \, |\,  K_X+B+tC \ {\text{is $f$-nef}} \,\}. 
$$ 
Of course, $K_X+B+\lambda C$ is $f$-nef.  
\end{prop}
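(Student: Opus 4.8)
The plan is to dispose of the trivial alternative first and then to locate the extremal ray by pushing the nef threshold $\lambda$ down onto an extremal ray of the Kleiman--Mori cone. If $K_X+B$ is $f$-nef we are in the first case and there is nothing to prove, so assume it is not. Since $K_X+B+C$ is $f$-nef the set defining $\lambda$ contains $t=1$, so $\lambda$ is well defined and $\lambda\le 1$; since $K_X+B$ is not $f$-nef, $\lambda>0$. First I would record that $(X,B+tC)$ is lc for every $t\in[0,1]$: the discrepancies satisfy $a(E,X,B+tC)=(1-t)\,a(E,X,B)+t\,a(E,X,B+C)$, which is affine in $t$, so $a(E,X,B+tC)\ge -1$ follows from the log canonicity of $(X,B)$ and of $(X,B+C)$. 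Next, because $t\mapsto K_X+B+tC$ is affine and the relatively nef cone $\{D\in N^1(X/Y): D\ge 0 \text{ on } \overline{NE}(X/Y)\}$ is closed and convex, the set $\{t:K_X+B+tC \text{ is } f\text{-nef}\}$ is the preimage of a closed convex cone under an affine map, hence a closed interval. As it contains $1$ but not $0$, its minimum equals $\lambda$ and lies in the set; thus $K_X+B+\lambda C$ is $f$-nef, which already establishes the final assertion of the statement.

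Write $N=K_X+B+\lambda C$, an $f$-nef divisor. The target is a $(K_X+B)$-negative extremal ray $R$ of $\overline{NE}(X/Y)$ with $N\cdot R=0$. For each $t\in[0,\lambda)$ the divisor $K_X+B+tC$ is not $f$-nef while $(X,B+tC)$ is lc, so the cone theorem (Theorem \ref{coco-th}) produces a $(K_X+B+tC)$-negative extremal ray $R_t$. For a $(K_X+B)$-negative extremal ray $R$ spanned by a curve $C_R$, set $\lambda_R:=\dfrac{-(K_X+B)\cdot C_R}{C\cdot C_R}$. Here $N\cdot R\ge 0$ together with $(K_X+B)\cdot C_R<0$ forces $C\cdot C_R>0$ and $\lambda_R\le\lambda$ for every such ray, while a short computation shows $R_t$ is $(K_X+B)$-negative with $\lambda_{R_t}>t$. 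Letting $t\nearrow\lambda$ gives $\sup_R\lambda_R=\lambda$, and the whole problem reduces to showing that this supremum is attained: a ray $R$ with $\lambda_R=\lambda$ is exactly one with $N\cdot R=0$.

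The attainment is the hard part, and it is precisely here that Proposition \ref{leng-prop} is indispensable. By that proposition each $(K_X+B)$-negative extremal ray is spanned by a rational curve $C_R$ with $0<-(K_X+B)\cdot C_R\le 2\dim X$. Hence for rays with $\lambda_R$ bounded below by a fixed $\mu\in(0,\lambda)$ the value $C\cdot C_R=\tfrac{-(K_X+B)\cdot C_R}{\lambda_R}$ is uniformly bounded, and $N\cdot C_R=(\lambda-\lambda_R)\,C\cdot C_R$. The plan is then to apply the finiteness in the cone theorem (Theorem \ref{coco-th}(ii)) to the lc pair $\bigl(X,B+\tfrac{\lambda}{2}C\bigr)$, for which all rays with $\lambda_R>\tfrac{\lambda}{2}$ are $\bigl(K_X+B+\tfrac{\lambda}{2}C\bigr)$-negative, and to combine this with the length bound to conclude that only finitely many extremal rays have $\lambda_R$ exceeding any fixed value $<\lambda$. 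Given such finiteness, the supremum $\lambda=\sup_R\lambda_R$ is a maximum, attained by some $(K_X+B)$-negative extremal ray $R$, and that ray satisfies $N\cdot R=(K_X+B+\lambda C)\cdot R=0$, as required.

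The delicate point in the last paragraph — ruling out an infinite sequence of distinct extremal rays with $\lambda_R\nearrow\lambda$ accumulating towards the locus where $K_X+B$, $C$ and $N$ are simultaneously numerically trivial — is exactly the obstacle that the length estimate is designed to control, and it is this estimate (newly available for lc pairs through Proposition \ref{leng-prop}) that makes Birkar's argument go through verbatim; for the detailed verification I would refer to \cite[Lemma 2.6]{birkar}.
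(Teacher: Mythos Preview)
Your proposal is correct and follows the same approach as the paper: the paper gives no detailed proof at all, stating only that Proposition~\ref{leng-prop} (the length estimate for lc pairs) is precisely what is needed for Birkar's argument in \cite[Lemma~2.6]{birkar} to go through, and referring the reader there. Your write-up is in fact more detailed than the paper's treatment --- you spell out the convexity/closedness argument for the nef threshold, the interpolation of discrepancies showing $(X,B+tC)$ is lc, and the reduction to attaining $\sup_R\lambda_R=\lambda$ via the length bound --- before ultimately deferring to the same reference for the final finiteness verification.
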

The following picture helps the reader 
understand Proposition \ref{bir-prop}. 
 
\begin{center}
\unitlength 0.1in
\begin{picture}( 46.0000, 21.5200)(  2.0000,-24.3200)
%
\special{pn 8}%
\special{pa 200 1200}%
\special{pa 3400 1200}%
\special{fp}%
\special{pa 200 1400}%
\special{pa 3200 400}%
\special{fp}%
\special{pa 200 1000}%
\special{pa 3800 2200}%
\special{fp}%
%
\special{pn 8}%
\special{ar 2000 1800 632 632  0.3217506 3.4633432}%
%
\special{pn 8}%
\special{pa 2600 2000}%
\special{pa 2500 1500}%
\special{fp}%
\special{pa 2500 1500}%
\special{pa 2100 1200}%
\special{fp}%
\special{pa 2100 1200}%
\special{pa 1600 1300}%
\special{fp}%
\special{pa 1600 1300}%
\special{pa 1400 1600}%
\special{fp}%
\put(17.0000,-20.0000){\makebox(0,0)[lb]{$\overline{NE}(X/Y)$}}%
\put(20.6000,-11.7000){\makebox(0,0)[lb]{$R$}}%
\put(32.3000,-4.5000){\makebox(0,0)[lb]{$K_{X}+B+C=0$}}%
\put(34.3000,-12.5000){\makebox(0,0)[lb]{$K_{X}+B+\lambda C=0$}}%
\put(38.0000,-18.0000){\makebox(0,0)[lb]{$K_{X}+B<0$}}%
\put(38.3000,-22.8000){\makebox(0,0)[lb]{$K_{X}+B=0$}}%
%
\put(48.0000,-10.0000){\makebox(0,0)[lb]{}}%
\put(28.0000,-24.0000){\makebox(0,0)[lb]{$K_{X}+B>0$}}%
\end{picture}%
\end{center}

\subsection{Log canonical flops}\label{313ss}

The following theorem is an easy consequence of 
\cite{bchm}. 

\begin{thm}\label{exe85} 
Let $(X, \Delta)$ be a klt pair 
and $D$ a $\mathbb Q$-divisor 
on $X$. 
Then $\bigoplus _{m\geq 0}
\mathcal O_X(\llcorner mD\lrcorner)$ is 
a finitely generated $\mathcal O_X$-algebra. 
\end{thm}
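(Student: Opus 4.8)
The plan is to reduce to a situation in which the divisor is $\mathbb{Q}$-Cartier and then to quote the finite generation results of \cite{bchm}. First I would note that finite generation of $\bigoplus_{m\geq 0}\mathcal O_X(\llcorner mD\lrcorner)$ as a sheaf of $\mathcal O_X$-algebras is a local question on $X$, so I may shrink $X$ and assume it is affine, in particular quasi-projective. On the open locus where $D$ is $\mathbb{Q}$-Cartier the claim is immediate: choosing $b>0$ with $bD$ Cartier, the $b$-th Veronese subalgebra $\bigoplus_m \mathcal O_X(\llcorner mbD\lrcorner)=\bigoplus_m\mathcal O_X(mbD)$ is the relative section algebra of the line bundle $\mathcal O_X(bD)$ and is finitely generated, while the full algebra is a finite module over it. Hence all the content is concentrated along the non-$\mathbb{Q}$-Cartier locus of $D$, which has codimension at least two.

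To handle that locus I would pass to a small $\mathbb{Q}$-factorialization. By \cite{bchm} there is a small projective birational morphism $g\colon X'\to X$ with $X'$ $\mathbb{Q}$-factorial such that, writing $K_{X'}+\Delta'=g^*(K_X+\Delta)$, the pair $(X',\Delta')$ is again klt. Let $D'$ be the strict transform of $D$ on $X'$. Since $g$ is small it is an isomorphism in codimension one, so $D'$ is $\mathbb{Q}$-Cartier on $X'$, and, the sheaves $\mathcal O_X(\llcorner mD\lrcorner)$ and $\mathcal O_{X'}(\llcorner mD'\lrcorner)$ being reflexive and hence determined in codimension one, one obtains $g_*\mathcal O_{X'}(\llcorner mD'\lrcorner)\simeq \mathcal O_X(\llcorner mD\lrcorner)$ for every $m$. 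Thus $\bigoplus_m \mathcal O_X(\llcorner mD\lrcorner)\simeq g_*\bigl(\bigoplus_m \mathcal O_{X'}(\llcorner mD'\lrcorner)\bigr)$ as graded $\mathcal O_X$-algebras, and upstairs the algebra is locally finitely generated precisely because $D'$ is $\mathbb{Q}$-Cartier, by the first paragraph applied on $X'$.

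The remaining, and genuinely substantial, point is the descent of finite generation along $g$: one must show that $g_*\bigl(\bigoplus_m \mathcal O_{X'}(\llcorner mD'\lrcorner)\bigr)$ is finitely generated as an $\mathcal O_X$-algebra, not merely that each graded piece is coherent. Here I would invoke \cite{bchm} once more: since $(X',\Delta')$ is $\mathbb{Q}$-factorial klt and $K_{X'}+\Delta'=g^*(K_X+\Delta)$ is trivial relative to $X$, one can run a minimal model program over $X$ adapted to the $\mathbb{Q}$-Cartier divisor $D'$, which terminates and produces a relative ample model of $D'$ over $X$; equivalently, the relative divisorial algebra $\bigoplus_m g_*\mathcal O_{X'}(\llcorner mD'\lrcorner)$ is finitely generated over $\mathcal O_X$. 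I expect this descent (relative finite generation over the non-$\mathbb{Q}$-Cartier locus) to be the main obstacle, since it is exactly where the substance of \cite{bchm}, rather than the formal reflexivity and Veronese arguments, is used; granting it, the combination of the three steps yields the theorem.
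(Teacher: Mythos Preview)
Your proposal is correct and follows essentially the same route as the paper's sketch: reduce to the quasi-projective case, take a small $\mathbb Q$-factorialization $g:X'\to X$ from \cite{bchm} so that the strict transform $D'$ becomes $\mathbb Q$-Cartier, identify $g_*\mathcal O_{X'}(\llcorner mD'\lrcorner)$ with $\mathcal O_X(\llcorner mD\lrcorner)$ via smallness, and then run an MMP over $X$ (the paper phrases this as the MMP with scaling for $(X',\Delta'+\varepsilon D')$ over $X$) to make $D'$ relatively nef and hence relatively semi-ample by the base point free theorem. The paper's proof is terser but the substance is identical; your identification of the descent step as the place where \cite{bchm} does the real work is exactly right.
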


\begin{proof}[Sketch of the proof]
If $D$ is $\mathbb Q$-Cartier, then the claim is 
obvious. So, we assume that $D$ is not $\mathbb Q$-Cartier.  
We can also assume that 
$X$ is quasi-projective. 
By \cite{bchm}, we take a birational 
morphism $f:Y\to X$ such that 
$Y$ is $\mathbb Q$-factorial, 
$f$ is small projective, and $(Y, \Delta_Y)$ is 
klt, where $K_Y+\Delta_Y=f^*(K_X+\Delta)$. 
Then the strict transform 
$D_Y$ of $D$ on $Y$ is $\mathbb Q$-Cartier. 
Let $\varepsilon$ be a small positive number. 
By applying the MMP with scaling for the 
pair $(Y, \Delta _Y+\varepsilon D_Y)$ over $X$, we can assume 
that $D_Y$ is $f$-nef, 
Therefore, by the base point free theorem, 
$\bigoplus _{m\geq 0}
f_*\mathcal O_Y(\llcorner mD_Y\lrcorner)\simeq 
\bigoplus _{m\geq 0}\mathcal O_X(\llcorner mD\lrcorner)$ 
is finitely generated as an $\mathcal O_X$-algebra. 
\end{proof}

The next example shows that 
Theorem \ref{exe85} is not true 
for lc pairs. 
In other words, 
if $(X, \Delta)$ is lc, then $\bigoplus _{m\geq 0}
\mathcal O_X(\llcorner mD\lrcorner)$ 
is not necessarily finitely generated as 
an $\mathcal O_X$-algebra. 

\begin{ex}[{cf.~\cite[Exercise 95]{ko-exe}}]\label{exe88} 
Let $E\subset \mathbb P^2$ be a smooth cubic 
curve. 
Let $S$ be a surface obtained by blowing up nine 
general points on $E$ and $E_S\subset S$ 
the strict transform of $E$. Let $H$ be a very 
ample divisor on $S$ giving 
a projectively normal embedding $S\subset \mathbb P^n$. 
Let $X\subset \mathbb A^{n+1}$ be the cone 
over $S$ and $D\subset X$ the cone over $E_S$. 
Then $(X, D)$ is lc since $K_S+E_S\sim 0$ (cf.~Proposition \ref{438p}). 
Let $P\in D\subset X$ be the vertex 
of the cones $D$ and $X$. 
Since $X$ is normal, we have 
\begin{align*}
H^0(X, \mathcal O_X(mD)) &=H^0(X\setminus P, \mathcal O_X(mD))
\\& \simeq \bigoplus _{r\in \mathbb Z}H^0(S, 
\mathcal O_S(mE_S+rH)). 
\end{align*} 
By the construction, 
$\mathcal O_S(mE_S)$ has only the obvious section which 
vanishes 
along $mE_S$ for any $m>0$. 
It can be checked by the induction on $m$ 
using the following exact sequence 
$$
0\to H^0(X, \mathcal O_S((m-1)E_S))\to 
H^0(S, \mathcal O_S(mE_S))\to H^0(E_S, \mathcal 
O_{E_S}(mE_S))\to \cdots 
$$
since $\mathcal O_{E_S}(E_S)$ is not a torsion element 
in $\Pic ^0(E_S)$. 
Therefore, 
$H^0(S, \mathcal O_S(mE_S+rH))=0$ for any $r<0$. So, we 
have 
$$
\bigoplus _{m\geq 0} \mathcal O_X(mD)
\simeq \bigoplus _{m\geq 0}\bigoplus _{r\geq 0} 
H^0(S, \mathcal O_S(mE_S+rH)). 
$$ 
Since $E_S$ is nef, $\mathcal O_S(mE_S+4H)\simeq 
\mathcal O_S(K_S+E_S+mE_S+4H)$ is very 
ample for any $m\geq 0$. 
Therefore, by replacing $H$ with 
$4H$, we can assume that 
$\mathcal O_S(mE_S+rH)$ is very ample for 
any $m\geq 0$ and $r>0$. 
In this setting, the multiplication maps 
\begin{align*}
\bigoplus _{a=0}^{m-1}H^0(S, \mathcal O_S(aE_S+H))\otimes 
H^0(S, \mathcal O_S((m-a)E_S))
\\ \to H^0(S, \mathcal O_S(mE_S+H))
\end{align*} 
are never surjective.  
This implies that 
$\bigoplus _{m\geq 0}\mathcal O_X(mD)$ 
is not finitely generated as an 
$\mathcal O_X$-algebra. 
\end{ex}

Let us recall the definition of log canonical flops 
(cf.~\cite[6.8 Definition]{FA}). 

\begin{defn}[Log canonical flop]\index{log canonical 
flop}\label{lc-flop-def}
Let $(X, B)$ be an lc pair. Let $H$ be a 
Cartier divisor on $X$. 
Let $f:X\to Z$ be 
a small contraction such that 
$K_X+B$ is numerically $f$-trivial and 
$-H$ is $f$-ample. 
The opposite of $f$ with respect to $H$ is called 
an $H$-flop with respect to 
$K_X+B$ or simply say an $H$-flop. 
\end{defn} 

The following example shows that 
log canonical flops do not always exist. 

\begin{ex}[{cf.~\cite[Exercise 96]{ko-exe}}]\label{exe89} 
Let $E$ be an elliptic curve and $L$ a degree 
zero line bundle on $E$. 
We put $S=\mathbb P_E(\mathcal O_E\oplus L)$. Let 
$C_1$ and $C_2$ be the sections of the $\mathbb P^1$-bundle 
$p:S\to E$. 
We note that $K_S+C_1+C_2\sim 0$. 
As in Example \ref{exe88}, 
we take a sufficiently ample divisor 
$H=aF+bC_1$ on $S$ giving a projectively normal embedding 
$S\subset 
\mathbb P^n$, where $F$ is a fiber of the $\mathbb P^1$-bundle 
$p:S\to E$, $a>0$, and $b>0$. 
We can assume that $\mathcal O_S(mC_i+rH)$ is very ample 
for any $i$, $m\geq 0$, and $r>0$. 
Moreover, we can assume that 
$\mathcal O_S(M+rH)$ is 
very ample for any nef divisor $M$ and 
any $r>0$. 
Let $X\subset \mathbb A^{n+1}$ be a cone over $S$ and 
$D_i\subset X$ the cones over 
$C_i$. 
Since $K_S+C_1+C_2\sim 0$, $(X, D_1+D_2)$ is lc and 
$K_X+D_1+D_2\sim 0$ (cf.~Proposition \ref{438p}). 
By the same arguments as in Example \ref{exe88}, 
we can prove the following statement. 
\begin{cla}\label{exe-cl1}
If $L$ is a non-torsion element in 
$\Pic ^0(E)$, then 
$\bigoplus _{m\geq 0} \mathcal O_X(mD_i)$ is not 
a finitely generated sheaf of $\mathcal O_X$-algebra 
for $i=1$ and $2$. 
\end{cla}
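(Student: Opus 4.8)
The plan is to follow the argument of Example \ref{exe88} almost verbatim, the only genuinely new input being a computation of the normal bundles of the two sections $C_1,C_2$. Since $X$ is a normal affine cone over $S$ with vertex $P$ of codimension $\geq 2$ and each $\mathcal O_X(mD_i)$ is reflexive (hence $S_2$), sections extend across $P$ and decompose by the $\mathbb G_m$-weight:
\[
H^0(X,\mathcal O_X(mD_i))=H^0(X\setminus P,\mathcal O_X(mD_i))\simeq \bigoplus_{r\in\mathbb Z}H^0(S,\mathcal O_S(mC_i+rH)).
\]
Thus $R_i:=\bigoplus_{m\geq 0}H^0(X,\mathcal O_X(mD_i))$ carries a bigrading by $(m,r)$, and since $\mathcal O_X(X)=\bigoplus_{r\geq 0}H^0(S,\mathcal O_S(rH))$ is finitely generated (as $X\subset\mathbb A^{n+1}$), finite generation of the sheaf $\bigoplus_m\mathcal O_X(mD_i)$ as an $\mathcal O_X$-algebra is equivalent to finite generation of $R_i$ as a $\mathbb C$-algebra. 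The task is to show $R_i$ is not finitely generated.

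First I would compute $\mathcal O_{C_i}(C_i)$. Realizing $C_1$ and $C_2$ as the two natural rank-one quotients of $\mathcal O_E\oplus L$, the standard formula for the normal bundle of a section of a $\mathbb P^1$-bundle gives $\mathcal O_{C_1}(C_1)\simeq L^{-1}$ and $\mathcal O_{C_2}(C_2)\simeq L$; in particular $C_i^2=0$, $C_i$ is nef, and $\mathcal O_{C_i}(C_i)$ is a degree-zero line bundle that is non-torsion precisely because $L$ is. Feeding $\mathcal O_{C_i}(mC_i)\simeq\mathcal O_{C_i}(C_i)^{\otimes m}$ into the restriction sequence
\[
0\to\mathcal O_S((m-1)C_i)\to\mathcal O_S(mC_i)\to\mathcal O_{C_i}(mC_i)\to 0
\]
and using that a nontrivial degree-zero line bundle on the elliptic curve $C_i\simeq E$ has no sections, an induction on $m$ shows $H^0(S,\mathcal O_S(mC_i))=\mathbb C$ for every $m\geq 0$, spanned by the tautological section $s_0^m$ with divisor $mC_i$. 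A short multiplication argument then yields $H^0(S,\mathcal O_S(mC_i+rH))=0$ for $r<0$: any nonzero section times a section of $\mathcal O_S(-rH)$ would lie in $\mathbb C\,s_0^m$, forcing an effective divisor $kC_i\sim -rH$, which is impossible since $-rH$ is ample while $(kC_i)^2=0$ (and $k=0$ would make $-rH\sim 0$). Hence $R_i=\bigoplus_{m,r\geq 0}H^0(S,\mathcal O_S(mC_i+rH))$, whose $r=0$ slice is the polynomial ring $\mathbb C[s_0]$.

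The non-finite-generation then follows from the bigraded bookkeeping of Example \ref{exe88}. Were $R_i$ generated by finitely many homogeneous elements, some $N$ would bound the $C_i$-degree of all generators of positive $H$-degree. In any monomial of bidegree $(m,1)$ the unique $H$-degree-one factor carries all the $H$-degree, so every other factor lies in the $r=0$ slice $\mathbb C[s_0]$; consequently, for $m>N$, every element of $H^0(S,\mathcal O_S(mC_i+H))$ would lie in $s_0\cdot H^0(S,\mathcal O_S((m-1)C_i+H))$, i.e.\ the maps
\[
\bigoplus_{a=0}^{m-1}H^0(S,\mathcal O_S(aC_i+H))\otimes H^0(S,\mathcal O_S((m-a)C_i))\to H^0(S,\mathcal O_S(mC_i+H))
\]
would be surjective. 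But their image is exactly the subspace of sections vanishing along $C_i$, which is proper because $\mathcal O_S(mC_i+H)$ is very ample (hence globally generated, so its restriction to $C_i$ is nonzero). This contradiction proves the claim for $i=1,2$.

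I expect the main obstacle to be the careful verification that the only sections of $\mathcal O_S(mC_i)$ are the tautological powers of $s_0$—this is exactly where the non-torsion hypothesis on $L$ enters—together with the clean identification of the image of the multiplication maps with the $C_i$-vanishing sections, so that very ampleness of $\mathcal O_S(mC_i+H)$ delivers the required non-surjectivity.
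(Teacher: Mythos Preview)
Your proposal is correct and follows essentially the same route as the paper, which simply refers back to Example~\ref{exe88} and records the one new ingredient, namely that $\mathcal O_S(mC_i)$ has only the obvious section vanishing along $mC_i$ (this is exactly your normal-bundle computation $\mathcal O_{C_i}(C_i)\simeq L^{\pm 1}$ plus the induction on $m$ using that $L$ is non-torsion). Your write-up just fills in the details the paper leaves implicit.
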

We note that $\mathcal O_S(mC_i)$ has only the obvious 
section which vanishes along $mC_i$ for any $m>0$. 
Let $B\subset X$ be the cone over $F$. 
Then we have the following result. 
\begin{cla}\label{exe-cl2} 
The graded $\mathcal O_X$-algebra $\bigoplus _{m\geq 0} 
\mathcal O_X(mB)$ is a finitely generated 
$\mathcal O_X$-algebra. 
\end{cla}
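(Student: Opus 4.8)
The plan is to run the very computation of Example \ref{exe88}, but to exploit that the fiber class $F$ is \emph{semi-ample}, which is precisely the feature that $C_1,C_2$ (and $E_S$ in Example \ref{exe88}) lack. First I would record, exactly as in Example \ref{exe88} and using that $X$ is normal with vertex $P$ of codimension $\geq 2$, the identification
$$
\bigoplus_{m\geq 0} H^0(X, \mathcal O_X(mB))
\simeq
\bigoplus_{m\geq 0}\bigoplus_{r\in \mathbb Z} H^0(S, \mathcal O_S(mF+rH)).
$$
Since $X=\Spec \bigoplus_{r\geq 0}H^0(S,\mathcal O_S(rH))$ is affine, finite generation of the $\mathcal O_X$-algebra $\bigoplus_{m\geq 0}\mathcal O_X(mB)$ is equivalent to finite generation over $\mathbb C$ of this $(\mathbb Z\times\mathbb Z)$-graded ring, so the whole problem is to analyze $\bigoplus_{m,r}H^0(S,\mathcal O_S(mF+rH))$.

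Next I would dispose of the negative-$r$ part. Writing $H\sim aF+bC_1$ with $a,b>0$, the divisor $rH$ has relative degree $r(H\cdot F)=rb$ over the ruling $p:S\to E$. For $r<0$ this relative degree is negative, so a line bundle of negative fiber degree on a $\mathbb P^1$-bundle has no push-forward: $p_*\mathcal O_S(rH)=0$. Since $mF=p^*\mathcal O_E(m\cdot\mathrm{pt})$, the projection formula gives $p_*\mathcal O_S(mF+rH)=\mathcal O_E(m\cdot\mathrm{pt})\otimes p_*\mathcal O_S(rH)=0$, hence $H^0(S,\mathcal O_S(mF+rH))=0$ for all $r<0$. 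Thus the ring above reduces to $\bigoplus_{m,r\geq 0}H^0(S,\mathcal O_S(mF+rH))$, the $\mathbb N^2$-graded section ring of the two nef divisors $F$ and $H$.

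The main content, and the step I expect to be the real obstacle, is finite generation of $\bigoplus_{m,r\geq 0}H^0(S,\mathcal O_S(mF+rH))$. The decisive point is that $F$ is semi-ample: because $\mathcal O_E(2\cdot\mathrm{pt})$ is base point free on the elliptic curve $E$, the divisor $2F=p^*\mathcal O_E(2\cdot\mathrm{pt})$ is base point free and defines a morphism $\phi:S\to\mathbb P^1$, while $|H|$ gives the projectively normal embedding $\iota:S\hookrightarrow\mathbb P^n$. The product $\Psi=(\phi,\iota):S\hookrightarrow\mathbb P^1\times\mathbb P^n$ is then a closed embedding with $\Psi^*\mathcal O(1,0)=2F$ and $\Psi^*\mathcal O(0,1)=H$, so the even-$m$ part $\bigoplus_{m,r\geq 0}H^0(S,\mathcal O_S(2mF+rH))$ is, by restriction, a finite module over the finitely generated bigraded coordinate ring of $\mathbb P^1\times\mathbb P^n$, and the full ring is recovered from it. I would then invoke the standard finite generation of the multigraded section ring of finitely many semi-ample divisors to conclude. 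This is exactly where $B$ parts company with the $D_i$ of Claim \ref{exe-cl1}: there the relevant class $C_i$ is nef but \emph{not} semi-ample, since $\mathcal O_{C_i}(C_i)$ is the non-torsion bundle $L^{\pm 1}$, so no contraction is available and the multiplication maps fail to be surjective; here $F$ is a genuine fiber, hence semi-ample, and finite generation follows. The only delicate bookkeeping is making the passage from the $2mF$-Veronese to the full $\mathbb N^2$-graded ring rigorous, which is the standard (but not entirely formal) part of the theory of semi-ample multigraded section rings.
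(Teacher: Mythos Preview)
Your argument is correct and rests on the same key point as the paper: $F$ is semi-ample (unlike $C_1,C_2$), so the bigraded ring $\bigoplus_{m,r\geq 0}H^0(S,\mathcal O_S(mF+rH))$ is finitely generated. The paper, however, packages this more slickly. Rather than embedding in $\mathbb P^1\times\mathbb P^n$ and invoking the general multigraded semi-ample theorem, it forms $V=\mathbb P_S(\mathcal O_S(F)\oplus\mathcal O_S(H))$, notes that $\mathcal O_V(1)$ is semi-ample since both summands are, and uses the tautological identity $\bigoplus_{n\geq 0}H^0(V,\mathcal O_V(n))\simeq\bigoplus_{m,r\geq 0}H^0(S,\mathcal O_S(mF+rH))$ to reduce everything to finite generation of a \emph{single} semi-ample section ring. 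This sidesteps both delicate points you yourself flag: your embedding only captures $2F$, so the passage from the even Veronese to odd $m$ is genuine extra work, and the finite-module claim over the Cox ring of $\mathbb P^1\times\mathbb P^n$ requires a bigraded-normality argument that does not come for free from the projective normality of $\iota$ alone. Your fallback to the general theorem is legitimate, but the paper's $\mathbb P^1$-bundle trick is essentially a two-line proof of exactly that theorem in the two-divisor case.
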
 
\begin{proof}[Proof of {\em{Claim \ref{exe-cl2}}}]
By the same arguments as in Example \ref{exe88}, 
we have 
$$
\bigoplus _{m\geq 0} \mathcal O_X(mB)\simeq 
\bigoplus _{m\geq 0} 
\bigoplus _{r\geq 0} H^0(S, \mathcal O_S(mF+rH)). 
$$ 
We consider $V=\mathbb P_S(\mathcal O_S(F)\oplus \mathcal 
O_S(H))$. 
Then $\mathcal O_V(1)$ is semi-ample. 
Therefore, 
$$
\bigoplus _{n\geq 0} H^0(V, \mathcal O_V(n))\simeq 
\bigoplus _{m\geq 0} \bigoplus _{r\geq 0} 
H^0(S, \mathcal O_S(mF+rH))  
$$ 
is finitely generated. 
\end{proof}
Let $P \in X$ be the vertex of the cone $X$ and 
let $f:Y\to X$ be the blow-up at $P$. 
Let $A\simeq S$ be the exceptional divisor 
of $f$. 
We consider the $\mathbb P^1$-bundle $\pi:
\mathbb P_S(\mathcal O_S\oplus \mathcal O_S(H))\to 
S$. Then $Y\simeq \mathbb P_S(\mathcal O_S\oplus \mathcal O_S(H))\setminus G$, where $G$ is the section of 
$\pi$ corresponding to $\mathcal O_S\oplus \mathcal O_S(H)
\to \mathcal O_S(H)\to 0$. 
We consider $\pi^*F$ on $Y$. 
Then $\mathcal O_Y(\pi^*F)$ is $f$-semi-ample. 
So, we obtain a contraction morphism 
$g:Y\to Z$ over $X$. 
It is easy to see that 
$Z\simeq \Proj _X\bigoplus _{m \geq 0} \mathcal O_X(mB)$ 
and that $h:Z\to X$ is a small projective contraction. 
On $Y$, we have 
$-A\sim \pi^*H=a\pi^*F+b\pi^*C_1$. 
Therefore, we obtain $aB+bD_1\sim 0$ on $X$. 
If $L$ is not a torsion element, then 
the flop of $h:Z\to X$ with respect to 
$D_1$ does not 
exist since $\bigoplus _{m\geq 0} \mathcal O_X(mD_1)$ is 
not finitely generated as an $\mathcal O_X$-algebra. 

Let $C$ be any Cartier divisor on $Z$ such that 
$-C$ is $h$-ample. 
Then the flop of $h:Z\to X$ with respect to 
$C$ exists if and only if 
$\bigoplus _{m\geq 0} h_*\mathcal O_Z(mC)$ is a finitely 
generated $\mathcal O_X$-algebra. 
We can find a positive constant $m_0$ and 
a degree zero Cartier divisor $N$ on $E$ such that 
the finite generation of 
$\bigoplus _{m\geq 0} h_*\mathcal O_Z(mC)$ is equivalent to 
that of $\bigoplus _{m\geq 0}
\mathcal O_X(m(m_0D_1+\widetilde N))$, 
where $\widetilde N\subset X$ is the cone 
over $p^*N\subset S$. 
\begin{cla}\label{exe-cl3}
If $L$ is not a torsion element in $\Pic ^0(E)$, 
then $\bigoplus_{m\geq 0} 
\mathcal O_X(m(m_0D_1+\widetilde N))$ 
is not finitely generated as an $\mathcal O_X$-algebra. 
In particular, the flop of $h:Z\to X$ with 
respect to $C$ does not exist. 
\end{cla}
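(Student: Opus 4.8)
The plan is to reduce the statement to a computation of sections on $S$ exactly as in Example~\ref{exe88}, and then to run the non-surjectivity argument there essentially verbatim. First I would record that, since $X$ is the affine cone over the projectively normal $S\subset\mathbb P^n$ and is normal, for every Weil divisor on $X$ arising as the cone over a divisor class $\Delta_S$ on $S$ one has
$$
H^0(X, \mathcal O_X(m\Delta)) = H^0(X\setminus P, \mathcal O_X(m\Delta)) \simeq \bigoplus_{r\in\mathbb Z} H^0(S, \mathcal O_S(m\Delta_S + rH)).
$$
Applying this with $\Delta = m_0 D_1 + \widetilde N$, whose associated class on $S$ is $G := m_0 C_1 + p^*N$, reduces the finite generation of $\bigoplus_{m\geq 0}\mathcal O_X(m(m_0 D_1 + \widetilde N))$ to that of the bigraded ring $\bigoplus_{m\geq 0}\bigoplus_{r\geq 0} H^0(S, \mathcal O_S(mG + rH))$.

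Next I would isolate the arithmetic input. The divisor $G$ is nef: one has $C_1^2 = 0$ because $\deg L = 0$, and $p^*N \equiv 0$, while $G\cdot F = m_0 > 0$; hence $\mathcal O_S(mG + rH)$ is very ample for $r > 0$ by the chosen properties of $H$. The restriction of $G$ to $C_1\simeq E$ is the degree-zero line bundle
$$
\mathcal G := \mathcal O_S(G)|_{C_1} \simeq (\mathcal O_S(C_1)|_{C_1})^{\otimes m_0}\otimes \mathcal O_E(N),
$$
and $\mathcal O_S(C_1)|_{C_1}$ is $L^{\pm 1}$. The key point is that $\mathcal G$ is non-torsion in $\Pic^0(E)$, which is where the hypothesis on $L$ enters. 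Using the filtration of $p_*\mathcal O_S(m m_0 C_1)\otimes\mathcal O_E(mN)$ whose successive quotients are the degree-zero bundles $(\mathcal O_S(C_1)|_{C_1})^{\otimes j}\otimes\mathcal O_E(mN)$ for $0\leq j\leq m m_0$, the non-torsion of $\mathcal G$ forces $H^0(S, \mathcal O_S(mG))$ to consist only of the finitely many sections coming from the rare indices $j$ at which the quotient is trivial, so these spaces do not grow the way a semiample class would. This is the exact analogue of the fact, used in Example~\ref{exe88}, that $\mathcal O_S(mE_S)$ carries only the obvious section because $\mathcal O_{E_S}(E_S)$ is non-torsion.

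With this in hand I would copy the argument of Example~\ref{exe88}. Since $H = aF + bC_1$ with $a>0$, for $r<0$ every successive quotient in the analogous filtration of $p_*\mathcal O_S(mG+rH)$ has negative degree, so $H^0(S, \mathcal O_S(mG + rH)) = 0$, while $\mathcal O_S(mG+rH)$ is very ample for $r>0$. Combined with the smallness of $H^0(S,\mathcal O_S(kG))$ coming from the non-torsion of $\mathcal G$, this shows that the multiplication maps
$$
\bigoplus_{a=0}^{m-1} H^0(S, \mathcal O_S(aG + H))\otimes H^0(S, \mathcal O_S((m-a)G)) \to H^0(S, \mathcal O_S(mG + H))
$$
are never surjective, and this failure implies that $\bigoplus_{m\geq 0}\bigoplus_{r\geq 0} H^0(S, \mathcal O_S(mG+rH))$, hence $\bigoplus_{m\geq 0}\mathcal O_X(m(m_0 D_1 + \widetilde N))$, is not a finitely generated $\mathcal O_X$-algebra. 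The final assertion about the flop is then immediate from the equivalence recorded just before the claim: the flop of $h:Z\to X$ with respect to $C$ exists if and only if $\bigoplus_{m\geq 0}h_*\mathcal O_Z(mC)$ is finitely generated, and the latter is equivalent to the finite generation just ruled out, so the flop does not exist.

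The main obstacle is the non-torsion verification for $\mathcal G$: one must trace through the construction of $m_0$ and $N$ (which arise from expressing $-C$ in terms of $B$ and $D_1$ modulo the degree-zero part pulled back from $E$) to confirm that $L^{\pm m_0}\otimes \mathcal O_E(N)$ remains non-torsion whenever $L$ is. The remainder is a faithful repetition of Example~\ref{exe88}, so the only genuinely new work lies in this step and in checking that the dichotomy between few sections for $r=0$ and very ampleness for $r>0$ survives the twist by the degree-zero bundle $\mathcal O_E(mN)$.
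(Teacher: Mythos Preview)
Your overall strategy matches the paper's proof: reduce to the bigraded section ring on $S$, show that $H^0(S,\mathcal O_S(mG))$ stays bounded while $H^0(S,\mathcal O_S(mG+H))$ grows (by very ampleness), and conclude non-finite-generation exactly as in Example~\ref{exe88}. The reduction, the nefness of $G=m_0C_1+p^*N$, the vanishing for $r<0$, and the very ampleness for $r>0$ are all correct.

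The gap is in your identification of the ``key point'' and your ``main obstacle.'' You claim that the crucial input is the non-torsion of $\mathcal G=\mathcal O_S(G)|_{C_1}\simeq L^{\pm m_0}\otimes\mathcal O_E(N)$, and you flag its verification as the principal difficulty. This is both unnecessary and, in general, false. Since $\mathcal O_S(C_1)\simeq\mathcal O_S(1)$, one has
\[
p_*\mathcal O_S(mG)\simeq\bigoplus_{j=0}^{mm_0} L^j\otimes\mathcal O_E(mN),
\]
and each summand is a degree-zero line bundle on $E$. Two of these differ by a nonzero power of $L$, so the hypothesis that $L$ itself is non-torsion forces at most one summand to be trivial; hence $\dim H^0(S,\mathcal O_S(mG))\leq 1$ for every $m\geq 0$, which is exactly the bound the paper records. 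No property of $\mathcal G$ is used. In fact $\mathcal G$ can perfectly well be trivial (for instance if the chosen $C$ leads to $\mathcal O_E(N)\simeq L^{\pm m_0}$), so your proposed verification would fail, yet the argument still goes through because it is $L$, not $\mathcal G$, whose non-torsion is doing the work. Once you replace ``non-torsion of $\mathcal G$'' by ``non-torsion of $L$'' in your filtration step, the obstacle you worry about disappears and the rest of your outline is the paper's proof.
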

\begin{proof}[Proof of {\em{Claim \ref{exe-cl3}}}] 
By the same arguments as in Example \ref{exe88}, 
we have 
\begin{align*}
&\bigoplus_{m\geq 0} \mathcal O_X(m(m_0D_1+\widetilde N))
\\
&\simeq  
\bigoplus _{m\geq 0}\bigoplus _{r\in \mathbb Z}
H^0(S, \mathcal O_S(m(m_0C_1+p^*N)+rH)). 
\end{align*} 
Since $\dim H^0(S, \mathcal O_S(m(m_0C_1+p^*N)))\leq 1$ for 
any $m\geq 0$, we can easily check that the above 
$\mathcal O_X$-algebra is not finitely generated. 
See the arguments in Example \ref{exe88}. 
We note that $\mathcal O_S(m(m_0C_1+p^*N)+rH)$ is very ample 
for any $m\geq 0$ and $r>0$ because 
$m_0C_1+p^*N$ is nef. 
\end{proof}
Anyway, if $L$ is not a torsion element in $\Pic ^0(E)$, 
then the flop of $h:Z\to X$ does not exist. 

In the above setting, we assume that $L$ is a torsion 
element in $\Pic ^0(E)$. 
Then $\mathcal O_Y(\pi^*C_1)$ is $f$-semi-ample. So, 
we obtain a contraction morphism 
$g':Y\to Z^+$ over $X$. 
It is easy to see that $\bigoplus _{m\geq 0}
\mathcal O_X(mD_i)$ is 
finitely generated as an $\mathcal O_X$-algebra 
for $i=1, 2$ (cf.~Claim \ref{exe-cl2}), 
$Z^+\simeq \Proj _X\bigoplus _{m\geq 0}\mathcal O_X(mD_1)$,  
and that $Z^+\to X$ is the flop of $Z\to X$ with 
respect to $D_1$. 

Let $C$ be any Cartier divisor on $Z$ such that 
$-C$ is $h$-ample. 
If $-C\sim _{\mathbb Q, h} cB$ 
for some positive rational number 
$c$, then it is obvious that the above $Z^+\to X$ is the flop of 
$h:Z\to X$ with respect to $C$. 
Otherwise, the flop of $h:Z\to X$ with respect to $C$ does not 
exist. As above, 
we can find a positive constant $m_0$ and 
a non-torsion element 
$N$ in $\Pic^0(E)$ such that 
$\bigoplus _{m\geq 0} h_*\mathcal O_Z(mC)$ is 
finitely generated if and only if 
so is $\bigoplus _{m\geq 0} \mathcal O_X(m(m_0D_1+\widetilde 
N))$, where $\widetilde N\subset X$ is the cone 
over $p^*N\subset S$. By the 
same arguments as in the proof of Claim \ref{exe-cl3}, 
we can easily check that $\bigoplus _{m\geq 0}
\mathcal O_X(m(m_0D_1+\widetilde N))$ is not finitely 
generated as an $\mathcal O_X$-algebra. 
We note that 
$\dim H^0(S, \mathcal O_S(m(m_0D_1+p^*N)))=0$ for any $m>0$ 
since $N$ is a non-torsion element and 
$L$ is a torsion element in $\Pic ^0(E)$.  
\end{ex}

\section{Quasi-log varieties}\label{qlog-sec}

\subsection{Definition of quasi-log varieties}

In this subsection, we introduce the notion of 
{\em{quasi-log varieties}} according to \cite{ambro}. 
Our definition requires slightly stronger assumptions 
than 
Ambro's original 
one. However, we will check that 
our definition is equivalent to Ambro's in the subsection 
\ref{3-2-6}. 

Let 
us recall the definition of global embedded simple normal 
crossing pairs (see Definition \ref{gsnc0}). 

\begin{defn}[Global embedded simple normal crossing 
pairs]\index{global embedded simple normal 
crossing pair}\label{gsnc} 
Let $Y$ be a simple normal crossing divisor 
on a smooth 
variety $M$ and let $D$ be an $\mathbb R$-divisor 
on $M$ such that 
$\Supp (D+Y)$ is simple normal crossing and that 
$D$ and $Y$ have no common irreducible components. 
We put $B_Y=D|_Y$ and consider the pair $(Y, B_Y)$. 
We call $(Y, B_Y)$ a {\em{global embedded simple normal 
crossing pair}}. 
\end{defn}

It's time for us to define {\em{quasi-log varieties}}. 

\begin{defn}[Quasi-log varieties]\index{quasi-log variety}
\label{quasi-log}
A {\em{quasi-log variety}} is a scheme $X$ endowed with an 
$\mathbb R$-Cartier $\mathbb R$-divisor 
$\omega$, a proper closed subscheme 
$X_{-\infty}\subset X$, and a finite collection $\{C\}$ of reduced 
and irreducible subvarieties of $X$ such that there is a 
proper morphism $f:(Y, B_Y)\to X$ from a global 
embedded simple 
normal crossing pair satisfying the following properties: 
\begin{itemize}
\item[(1)] $f^*\omega\sim_{\mathbb R}K_Y+B_Y$. 
\item[(2)] The natural map 
$\mathcal O_X
\to f_*\mathcal O_Y(\ulcorner -(B_Y^{<1})\urcorner)$ 
induces an isomorphism 
$$
\mathcal I_{X_{-\infty}}\to f_*\mathcal O_Y(\ulcorner 
-(B_Y^{<1})\urcorner-\llcorner B_Y^{>1}\lrcorner),  
$$ 
where $\mathcal I_{X_{-\infty}}$ is the defining ideal sheaf of 
$X_{-\infty}$. 
\item[(3)] The collection of subvarieties $\{C\}$ coincides with the image 
of $(Y, B_Y)$-strata that are not included in $X_{-\infty}$. 
\end{itemize}
We sometimes simply say that 
$[X, \omega]$ is a {\em{quasi-log pair}}\index{quasi-log pair}. 
We use the following terminology according to Ambro.  
The subvarieties $C$ 
are the {\em{qlc centers}}\index{qlc center} of $X$, 
$X_{-\infty}$ is the {\em{non-qlc locus}}\index{non-qlc locus}\index{$X_{-\infty}$, non-qlc locus} 
of $X$, and $f:(Y, B_Y)\to X$ is 
a {\em{quasi-log resolution}}\index{quasi-log resolution} 
of $X$. We say that $X$ has {\em{qlc singularities}}\index{qlc 
singularity} 
if $X_{-\infty}=\emptyset$. 
Assume that $[X, \omega]$ is a quasi-log pair 
with $X_{-\infty}=\emptyset$. Then 
we simply say that $[X, \omega]$ is a 
{\em{qlc pair}}.\index{qlc pair} 
Note that a quasi-log variety $X$ is the union of 
its qlc centers and $X_{-\infty}$. A {\em{relative quasi-log variety}} 
$X/S$ is a quasi-log variety $X$ endowed 
with a proper morphism $\pi:X\to S$. 
\end{defn}

\begin{rem}[Quasi-log canonical class]\index{quasi-log 
canonical class}\label{cano}
In Definition \ref{quasi-log}, we assume that 
$\omega$ is an $\mathbb R$-Cartier $\mathbb R$-divisor.\index{$\omega$, quasi-log canonical class} 
However, it may be better to see $\omega\in \Pic (X)\otimes 
_{\mathbb Z}\mathbb R$. 
It is because the {\em{quasi-log canonical class}} 
$\omega$ is 
defined up to $\mathbb R$-linear equivalence and 
we often restrict $\omega$ to a subvariety of $X$. 
\end{rem}

\begin{ex}\label{new-example}
Let $X$ be a normal variety and let $B$ be an effective 
$\mathbb R$-divisor on $X$ such that 
$K_X+B$ is $\mathbb R$-Cartier. 
We take a resolution $f:Y\to X$ such that 
$K_Y+B_Y=f^*(K_X+B)$ and that $\Supp B_Y$ is 
a simple normal crossing divisor on $Y$. 
Then the pair $[X, K_X+B]$ is a 
quasi-log variety with a  
quasi-log resolution $f:(Y, B_Y)\to X$. 
By this quasi-log structure, 
$[X, K_X+B]$ is qlc if and only if 
$(X, B)$ is lc. 
See also Corollary \ref{346346}. 
\end{ex}

\begin{rem}\label{hosoku}
By Definition \ref{quasi-log}, $X$ has only qlc singularities 
if and only if $B_Y$ is a subboundary. 
In this case, $f_*\mathcal O_Y\simeq \mathcal O_X$ 
since
$\mathcal O_X\simeq f_*\mathcal O_Y(\ulcorner 
-(B^{<1}_Y)\urcorner)$. 
In particular, $f$ is surjective when $X$ has 
only qlc singularities. 
\end{rem}

\begin{rem}[Semi-normality]\label{hosoku2}
In general, we have 
$$\mathcal O_{X\setminus X_{-\infty}}\simeq 
f_*\mathcal O_{f^{-1}(X\setminus X_{-\infty})}
(\ulcorner -(B^{<1}_Y)\urcorner 
-\llcorner B^{>1}_Y\lrcorner)
=f_*\mathcal O_{f^{-1}(X\setminus X_{-\infty})}
(\ulcorner -(B^{<1}_Y)\urcorner).$$  
This implies that 
$\mathcal O_{X\setminus X_{-\infty}}\simeq f_*\mathcal 
O_{f^{-1}(X\setminus X_{-\infty})}$. 
Therefore, 
$X\setminus X_{-\infty}$ is 
semi-normal\index{semi-normal} 
since $f^{-1}(X\setminus X_{-\infty})$ is a simple normal 
crossing variety. 
\end{rem}

\begin{rem}\label{sing}
To prove the cone and contraction theorems for 
lc pairs, it is enough to 
treat quasi-log varieties with only qlc singularities. 
For the details, see \cite{fuj-lec}. 
\end{rem}

We close this subsection with an obvious lemma. 

\begin{lem}\label{ob-lem} 
Let $[X, \omega]$ be a quasi-log pair. 
Assume that $X=V\cup X_{-\infty}$ and 
$V\cap X_{-\infty}=\emptyset$. 
Then $[V, \omega']$ is a qlc pair, where 
$\omega'=\omega|_V$. 
\end{lem}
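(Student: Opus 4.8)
The plan is to transport the given quasi-log structure on $X$ to the piece $V$ by simply restricting a quasi-log resolution, the point being that $V$ is a \emph{clopen} piece of $X$. First I would record that $X_{-\infty}$ is closed by definition and that $V$ is closed too, being a finite union of the qlc centers $C$ (which are disjoint from $X_{-\infty}$ by hypothesis); since $V$ and $X_{-\infty}$ are disjoint and cover $X$, each is open and closed, so $X=V\amalg X_{-\infty}$ as schemes. Now fix a quasi-log resolution $f\colon (Y,B_Y)\to X$ from a global embedded simple normal crossing pair and set $Y_V=f^{-1}(V)$. Because $V$ is clopen, $Y_V$ is a clopen union of irreducible components of $Y$; in particular it meets none of the components of $Y\setminus Y_V$, so, with $B_{Y_V}=B_Y|_{Y_V}$ and the same ambient space $M$ and boundary $D$, the pair $(Y_V,B_{Y_V})$ is again a global embedded simple normal crossing pair and $K_{Y_V}=K_Y|_{Y_V}$ with no adjunction correction. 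Writing $f_V=f|_{Y_V}\colon (Y_V,B_{Y_V})\to V$ and $\omega'=\omega|_V$, condition (1) of Definition \ref{quasi-log} is then immediate: $f_V^*\omega'=(f^*\omega)|_{Y_V}\sim_{\mathbb R}(K_Y+B_Y)|_{Y_V}=K_{Y_V}+B_{Y_V}$.

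The main step is to show that the non-qlc locus of $[V,\omega']$ is empty, equivalently that $B_{Y_V}$ is a subboundary. Since $V\cap X_{-\infty}=\emptyset$, the ideal $\mathcal I_{X_{-\infty}}$ restricts to $\mathcal O_V$ over $V$, so restricting the isomorphism in condition (2) gives $\mathcal O_V\simeq f_{V*}\mathcal O_{Y_V}(\ulcorner -(B_{Y_V}^{<1})\urcorner-\llcorner B_{Y_V}^{>1}\lrcorner)$. Following the image of the section $1$ under the natural map (this is exactly the computation behind Remark \ref{hosoku2}) shows that $\ulcorner -(B_Y^{<1})\urcorner-\llcorner B_Y^{>1}\lrcorner$ is effective on $Y_V$; as its two summands have disjoint supports and $\ulcorner -(B_Y^{<1})\urcorner\geq 0$, this forces $\llcorner B_Y^{>1}\lrcorner=0$ on $Y_V$, i.e. $\Supp B_Y^{>1}\cap Y_V=\emptyset$. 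Hence $B_{Y_V}$ is a subboundary, so $V_{-\infty}=\emptyset$, $\mathcal I_{V_{-\infty}}=\mathcal O_V$, and condition (2) holds in the required form.

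It remains to check condition (3). Since $Y_V$ is a clopen union of components, the strata of $(Y_V,B_{Y_V})$ are precisely the strata of $(Y,B_Y)$ contained in $Y_V$, and each of these maps into $V$, hence off $X_{-\infty}$; conversely every stratum of $(Y,B_Y)$ not mapped into $X_{-\infty}$ lies over $V$ and so sits in $Y_V$. Thus the $f_V$-images of the $(Y_V,B_{Y_V})$-strata are exactly the qlc centers $\{C\}$ of $X$, all of which lie in $V$, and with $V_{-\infty}=\emptyset$ none of them is excluded. This reproduces the collection $\{C\}$, so $[V,\omega']$ is a quasi-log variety with $V_{-\infty}=\emptyset$, i.e. a qlc pair. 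The only genuinely delicate point — the main obstacle — is the subboundary claim for $B_{Y_V}$; once $V$ is seen to be clopen and $Y_V$ a union of connected components, the rest is formal.
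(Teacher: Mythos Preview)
Your proof is correct and is precisely the natural argument the paper has in mind; the paper itself gives no proof, simply labeling the lemma ``obvious.'' The key observation that $V$ is clopen (so that $Y_V=f^{-1}(V)$ is a union of connected components of $Y$ and everything restricts without adjunction corrections) is exactly the point, and your verification of conditions (1)--(3) with $V_{-\infty}=\emptyset$ is clean; the subboundary check via the image of $1$ is a bit more than strictly necessary---restricting condition~(2) already gives $\mathcal O_V\simeq f_{V*}\mathcal O_{Y_V}(\ulcorner -(B_{Y_V}^{<1})\urcorner-\llcorner B_{Y_V}^{>1}\lrcorner)$ directly---but it does no harm.
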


\subsection{Quick review of vanishing and torsion-free 
theorems}

In this subsection, we quickly review Ambro's formulation 
of torsion-free and vanishing theorems in a simplified form. 
For more advanced topics and the proof, see Chapter 
\ref{chap2}. 

We consider a global embedded simple normal crossing 
pair $(Y, B)$. More precisely, 
let $Y$ be a simple normal crossing divisor 
on a smooth 
variety $M$ and let $D$ be an $\mathbb R$-divisor 
on $M$ such that 
$\Supp (D+Y)$ is simple normal crossing and that 
$D$ and $Y$ have no common irreducible components. 
We put $B=D|_Y$ and consider the pair $(Y, B)$. 
Let $\nu:Y^{\nu}\to Y$ be the normalization. 
We put $K_{Y^\nu}+\Theta=\nu^*(K_Y+B)$. 
A {\em{stratum}} of $(Y, B)$ is an irreducible component of $Y$ or 
the image of some lc center of $(Y^\nu, \Theta^{=1})$. 

When $Y$ is smooth and $B$ is an $\mathbb R$-divisor 
on $Y$ such that 
$\Supp B$ is simple normal crossing, we 
put $M=Y\times \mathbb A^1$ and $D=B\times \mathbb A^1$. 
Then $(Y, B)\simeq (Y\times \{0\}, B\times \{0\})$ satisfies 
the above conditions. 

The following theorem is a special 
case of Theorem \ref{8}. 

\begin{thm}\label{ap1}
Let $(Y, B)$ be as above. 
Assume that $B$ is a boundary $\mathbb R$-divisor. 
Let 
$f:Y\to X$ be a proper morphism and $L$ a Cartier 
divisor on $Y$. 

$(1)$ Assume that $H\sim _{\mathbb R}L-(K_Y+B)$ is $f$-semi-ample.
Then 
every non-zero local section of $R^qf_*\mathcal O_Y(L)$ contains 
in its support the $f$-image of 
some strata of $(Y, B)$. 

$(2)$ Let $\pi:X\to V$ be a proper morphism and 
assume that $H\sim _{\mathbb R}f^*H'$ for 
some $\pi$-ample $\mathbb R$-Cartier 
$\mathbb R$-divisor $H'$ on $X$. 
Then, $R^qf_*\mathcal O_Y(L)$ is $\pi_*$-acyclic, that is, 
$R^p\pi_*R^qf_*\mathcal O_Y(L)=0$ for any $p>0$. 
\end{thm}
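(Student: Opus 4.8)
The plan is to recognize Theorem \ref{ap1} as a direct specialization of Theorem \ref{8}, so that the entire proof reduces to matching hypotheses and notation. First I would decompose the boundary: since $B$ is a boundary $\mathbb{R}$-divisor on $Y$, set $S:=B^{=1}$ and $B':=B^{<1}=B-S$, so that $S$ is reduced, $\llcorner B'\lrcorner=0$, and $S+B'=B$. Thus $(Y,B)$ is literally of the form $(Y,S+B')$ demanded in Theorem \ref{8}. Moreover a global embedded simple normal crossing pair (Definition \ref{gsnc}) is in particular an embedded simple normal crossing pair in the sense of Definition \ref{03}: $Y$ is a simple normal crossing divisor on the smooth variety $M$ with $\dim M=\dim Y+1$, and $B=D|_Y$ has simple normal crossing support because $\Supp(D+Y)$ is simple normal crossing and $D,Y$ share no components. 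Hence all the structural hypotheses of Theorem \ref{8} are satisfied.

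Next I would verify that the two notions of stratum coincide. In the statement of Theorem \ref{ap1} a stratum of $(Y,B)$ is an irreducible component of $Y$ or the image of an lc center of $(Y^\nu,\Theta^{=1})$, where $\nu\colon Y^\nu\to Y$ is the normalization and $K_{Y^\nu}+\Theta=\nu^*(K_Y+B)$; in Definition \ref{03} it is instead the image of an lc center of $(Y^\nu,\Theta)$. Since $\Theta$ is a simple normal crossing subboundary, its lc centers are exactly the strata of the reduced divisor $\Theta^{=1}$, which are precisely the lc centers of $(Y^\nu,\Theta^{=1})$; the coefficients strictly less than $1$ contribute no lc center. Therefore the two collections of strata, and a fortiori their $f$-images, agree, and every conclusion phrased in terms of strata transports verbatim.

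With these identifications, part $(1)$ is exactly Theorem \ref{8}(i) applied to $f\colon Y\to X$, to $L$, and to $H\sim_{\mathbb{R}}L-(K_Y+B)=L-(K_Y+S+B')$, which is $f$-semi-ample by hypothesis. For part $(2)$ the one genuine discrepancy is that Theorem \ref{8}(ii) is stated for a projective morphism $\pi\colon X\to V$, whereas here $\pi$ is only proper. I would close this gap by noting that $\pi_*$-acyclicity of $R^qf_*\mathcal O_Y(L)$, i.e.\ the vanishing $R^p\pi_*R^qf_*\mathcal O_Y(L)=0$ for $p>0$, is a local question on $V$; so one may shrink $V$ to an affine scheme and then, using Lemma \ref{comp} to compactify $V$ and $X$ exactly as in Step \ref{o3} of the proof of Theorem \ref{8}, arrange that both are projective and that the global embedded simple normal crossing structure together with the relation $H\sim_{\mathbb{R}}f^*H'$ is preserved. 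Theorem \ref{8}(ii) then applies with the $\pi$-ample $\mathbb{R}$-divisor $H'$, and one descends back to the original $V$.

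The main obstacle — indeed essentially the only nonformal point — is this last reduction from a proper to a projective $\pi$ in part $(2)$: the compactification must retain both the embedded simple normal crossing structure and the $f$-semi-ampleness data, which is precisely what Lemma \ref{comp} and the argument of Step \ref{o3} are designed to guarantee. Everything else is bookkeeping, so once the compactification is in place no new ideas are required.
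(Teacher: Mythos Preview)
Your approach is exactly the paper's: Theorem \ref{ap1} is introduced there simply as ``a special case of Theorem \ref{8}'' with no separate proof, and your verification that the global embedded simple normal crossing hypotheses and the notion of stratum match those of Theorem \ref{8} is correct.

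The only point to correct is your handling of the proper-versus-projective discrepancy in part (2). Your proposed compactification via Lemma \ref{comp} and Step \ref{o3} does not actually do the job: those arguments compactify $V$ (and extend the simple normal crossing data on $Y$), but they assume $\pi$ is already projective and do nothing to convert a merely proper $\pi$ into a projective one; compactifying $X$ and $V$ separately to complete varieties does not force the extended morphism to be projective. Fortunately no such argument is needed: the hypothesis that $H'$ is a $\pi$-ample $\mathbb R$-Cartier $\mathbb R$-divisor already implies (after shrinking $V$ to an affine, which is harmless since the conclusion is local on $V$) the existence of a $\pi$-ample Cartier divisor, hence that $\pi$ is projective. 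So Theorem \ref{8}(ii) applies directly and the ``proper'' in Theorem \ref{ap1}(2) is not a genuine weakening.
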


We need a slight generalization of Theorem 
\ref{ap1} in Section \ref{34-sec}. 
Let us recall the definition of {\em{nef and log big divisors}} for 
the vanishing theorem. 

\begin{defn}[Nef and log big divisors]\index{nef and log 
big divisor}\label{nlb-def}
Let $f:(Y, B_Y)\to X$ be a proper morphism 
from a simple normal crossing 
pair $(Y, B_Y)$. 
Let $\pi:X\to V$ be a proper 
morphism and $H$ an $\mathbb R$-Cartier $\mathbb R$-divisor 
on $X$. 
We say that $H$ is {\em{nef and log big}} 
over $V$ if and only if $H|_C$ is nef and big 
over $V$ for any $C$, where 
\begin{itemize}
\item[(i)] $C$ is a qlc center when $X$ is a quasi-log variety and 
$f:(Y, B_Y)\to X$ is a quasi-log resolution, or 
\item[(ii)] $C$ is the image of a stratum of $(Y, B_Y)$ when  
$B_Y$ is a subboundary.  
\end{itemize} 
If $X$ is a quasi-log variety with only qlc singularities and 
$f:(Y, B_Y)\to X$ is a quasi-log resolution, 
then the above two cases (i) and (ii) coincide. 
When $(X, B_X)$ is 
an lc pair, we choose a log resolution of $(X, B_X)$ 
to be $f:(Y, B_Y)\to X$, 
where $K_Y+B_Y=f^*(K_X+B_X)$. 
We note that if $H$ is ample over $V$ then it is 
obvious that $H$ is nef and log big over $V$.  
\end{defn}

\begin{thm}[cf.~Theorem \ref{74}]\label{nlb-vani-th}
Let $(Y, B)$ be as above. 
Assume that $B$ is a boundary $\mathbb R$-divisor. 
Let $f:Y\to X$ be a proper morphism and $L$ a Cartier 
divisor on $Y$. We put $H\sim _{\mathbb R}L-(K_X+B)$. 
Let $\pi:X\to V$ be a proper morphism and 
assume that $H\sim _{\mathbb R}f^*H'$ for 
some $\pi$-nef and $\pi$-log big $\mathbb R$-Cartier 
$\mathbb R$-divisor $H'$ on $X$. 
Then, 
every non-zero local section of $R^qf_*\mathcal O_Y(L)$ contains 
in its support the $f$-image of 
some strata of $(Y, B)$, and 
$R^qf_*\mathcal O_Y(L)$ is $\pi_*$-acyclic, that is, 
$R^p\pi_*R^qf_*\mathcal O_Y(L)=0$ for any $p>0$. 
\end{thm}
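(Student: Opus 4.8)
The plan is to recognize that Theorem~\ref{nlb-vani-th} is simply Theorem~\ref{74} transcribed into the language of global embedded simple normal crossing pairs, and to deduce it directly from that result. First I would unpack the hypotheses. A global embedded simple normal crossing pair $(Y, B)$ in the sense of Definition~\ref{gsnc} is in particular an embedded simple normal crossing pair: the ambient space $M$ is smooth of dimension $\dim Y+1$, the variety $Y$ is a simple normal crossing divisor on $M$, and $B=D|_Y$ for an $\mathbb R$-divisor $D$ with $\Supp(D+Y)$ simple normal crossing and no common component with $Y$. So the framework of Theorem~\ref{74} applies. Since $B$ is a boundary $\mathbb R$-divisor, I would split it as $B=S+B'$ with $S=B^{=1}$ the reduced part (coefficients equal to $1$) and $B'=B-S$ satisfying $\llcorner B'\lrcorner=0$; then $(Y,S+B')$ is an embedded simple normal crossing pair with $S$ reduced and $\llcorner B'\lrcorner=0$, which is exactly the shape required in Theorem~\ref{74}.

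Next I would match the remaining data. The strata of $(Y,B)$ used in Definition~\ref{nlb-def} are the irreducible components of $Y$ together with the images of the lc centers of $(Y^{\nu},\Theta^{=1})$, and these coincide with the strata of $(Y,S+B')$ of Definition~\ref{2-46}, because $S=B^{=1}$ singles out exactly the coefficient-one part picked up by $\Theta^{=1}$; consequently the two formulations of ``nef and log big over $V$'' agree for $H'$. For the numerical hypothesis, by assumption $H\sim_{\mathbb R}L-(K_Y+B)=L-(K_Y+S+B')$ and $H\sim_{\mathbb R}f^*H'$, so $f^*H'\sim_{\mathbb R}L-(K_Y+S+B')$ with $H'$ nef and log big over $V$. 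This is precisely the hypothesis of Theorem~\ref{74} with its divisor ``$H$'' taken to be our $H'$. Applying parts (i) and (ii) of Theorem~\ref{74} then gives, respectively, that every non-zero local section of $R^qf_*\mathcal O_Y(L)$ contains in its support the $f$-image of some stratum of $(Y,B)$, and that $R^qf_*\mathcal O_Y(L)$ is $\pi_*$-acyclic, i.e. $R^p\pi_*R^qf_*\mathcal O_Y(L)=0$ for $p>0$.

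Because the reduction is purely formal, there is no genuine obstacle remaining at this stage; the entire substance of the statement already resides in the proof of Theorem~\ref{74}, whose core is the d\'evissage of Step~\ref{so-no-2} built on Theorem~\ref{8}. The only points that demand a moment's care are the bookkeeping identifications made above: that a global embedded pair qualifies as an embedded pair, that the decompositions $B$ and $S+B'$ describe the same collection of strata, and that the ``nef and log big'' notions of Definitions~\ref{nlb-def} and~\ref{2-46} coincide. Each of these is immediate from the relevant definitions, so I would record them briefly and invoke Theorem~\ref{74} to conclude.
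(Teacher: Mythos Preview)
Your proposal is correct and matches the paper's approach exactly: the paper simply writes ``For the proof, see Theorem~\ref{74},'' and your argument carries out precisely the formal identification (global embedded pair $\Rightarrow$ embedded pair, $B=S+B'$ with $S=B^{=1}$, matching strata and the nef-and-log-big notions) needed to invoke that theorem.
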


For the proof, see Theorem \ref{74}. 

\subsection{Adjunction and Vanishing Theorem}

The following theorem is one of the key results in the 
theory of quasi-log varieties (cf.~\cite[Theorem 4.4]{ambro}). 

\begin{thm}[Adjunction and vanishing theorem]\index{adjunction and 
the vanishing theorem}\label{adj-th}
Let $[X, \omega]$ be a quasi-log pair and $X'$ the union 
of $X_{-\infty}$ with 
a $($possibly empty$)$ union of some qlc centers of $[X, \omega]$. 
\begin{itemize}
\item[{\em{(i)}}] Assume that $X'\ne X_{-\infty}$. Then 
$X'$ is a quasi-log variety, with $\omega'=\omega|_{X'}$ and 
$X'_{-\infty}=X_{-\infty}$. 
Moreover, the qlc centers of $[X', \omega']$ are exactly the qlc centers of 
$[X, \omega]$ that are included in $X'$. 
\item[{\em{(ii)}}] Assume that $\pi:X\to S$ is proper. 
Let $L$ be a Cartier divisor on $X$ such that 
$L-\omega$ is nef and log big over $S$. Then $\mathcal I_{X'}\otimes 
\mathcal O_X(L)$ is $\pi_*$-acyclic, where $\mathcal I_{X'}$ is 
the defining ideal sheaf of $X'$ on $X$. 
\end{itemize}
\end{thm}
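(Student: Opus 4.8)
The plan is to reduce both assertions to the torsion-free and vanishing theorems of Chapter \ref{chap2} applied on a quasi-log resolution $f\colon (Y,B_Y)\to X$, organizing the bookkeeping around the decomposition $B_Y=B_Y^{<1}+B_Y^{=1}+B_Y^{>1}$. Write $A=\ulcorner -(B_Y^{<1})\urcorner$, so that $A\geq 0$ is supported on $\Supp B_Y^{<1}$ and $\llcorner B_Y^{>1}\lrcorner\geq 0$ is effective and lies over $X_{-\infty}$; by Definition \ref{quasi-log} (2) we have $f_*\mathcal O_Y(A-\llcorner B_Y^{>1}\lrcorner)\simeq \mathcal I_{X_{-\infty}}$. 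Set $L_Y=f^*L+A-\llcorner B_Y^{>1}\lrcorner$. Since $f^*\omega\sim_{\mathbb R}K_Y+B_Y$, a direct round-up/round-down computation gives $L_Y-(K_Y+B_Y^{=1}+\{B_Y^{<1}\}+\{B_Y^{>1}\})\sim_{\mathbb R}f^*(L-\omega)$, where the divisor $B_Y^{=1}+\{B_Y^{<1}\}+\{B_Y^{>1}\}$ is a boundary with reduced part $B_Y^{=1}$. This is precisely the shape required by Theorem \ref{nlb-vani-th} (equivalently Theorem \ref{74}) once $L-\omega$ is nef and log big over $S$, and by the projection formula $f_*\mathcal O_Y(L_Y)\simeq \mathcal I_{X_{-\infty}}\otimes\mathcal O_X(L)$.

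For part (ii), I would first arrange, using the flexibility of quasi-log resolutions (Proposition \ref{taisetsu3}) and embedded log transformations (Lemma \ref{useful-lem2}), that the union $Y'$ of those strata of $(Y,B_Y)$ whose $f$-image lies in $X'$ is a union of whole irreducible components of $Y$; put $Y''=\overline{Y\setminus Y'}$. Twisting the short exact sequence $0\to\mathcal O_{Y''}(-Y'|_{Y''})\to\mathcal O_Y\to\mathcal O_{Y'}\to 0$ (cf.~\cite[Remark 2.6]{ambro}) by $\mathcal O_Y(L_Y)$ and pushing forward by $f$ produces a sequence with middle term $\mathcal I_{X_{-\infty}}\otimes\mathcal O_X(L)$ and quotient term $(f|_{Y'})_*\mathcal O_{Y'}(L_Y|_{Y'})$, which by the quasi-log structure of $X'$ from part (i) is the ideal of $X_{-\infty}$ inside $X'$, twisted by $L$. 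Theorem \ref{nlb-vani-th} forces every associated prime of each higher direct image to be the image of a stratum, hence the connecting maps vanish and the push-forward sequence is exact; its kernel term is exactly $\mathcal I_{X'}\otimes\mathcal O_X(L)=f_*\mathcal O_{Y''}(L_Y-Y'|_{Y''})$. Applying the acyclicity half of Theorem \ref{nlb-vani-th}, namely $R^p\pi_*R^qf_*=0$ for $p>0$, and the Leray spectral sequence then yields $R^p\pi_*(\mathcal I_{X'}\otimes\mathcal O_X(L))=0$ for $p>0$. Here one uses that $(L-\omega)|_{X'}$ is still nef and log big over $S$, which holds because, by part (i), the qlc centers of $X'$ form a subset of those of $X$, and Definition \ref{nlb-def} only tests nef-and-bigness on these centers.

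For part (i), I would take $\omega'=\omega|_{X'}$, $X'_{-\infty}=X_{-\infty}$, and the candidate resolution $f|_{Y'}\colon (Y',B_{Y'})\to X'$ with $K_{Y'}+B_{Y'}=(K_Y+B_Y)|_{Y'}$. Condition (1) of Definition \ref{quasi-log} is immediate from $f^*\omega|_{Y'}\sim_{\mathbb R}(K_Y+B_Y)|_{Y'}=K_{Y'}+B_{Y'}$, and condition (3) follows since the strata of $(Y',B_{Y'})$ are the strata of $(Y,B_Y)$ contained in $Y'$, whose images are exactly the qlc centers of $[X,\omega]$ lying in $X'$. Condition (2), the isomorphism $\mathcal I_{X_{-\infty}}\simeq (f|_{Y'})_*\mathcal O_{Y'}(\ulcorner -(B_{Y'}^{<1})\urcorner-\llcorner B_{Y'}^{>1}\lrcorner)$, is local over $X$ and comes from the same exact-sequence computation as in part (ii) with $L=0$ and $\pi=\id$, using the relative torsion-free vanishing $R^qf_*=0$ of Theorem \ref{ap1}. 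The genuinely delicate point here is that the lower-dimensional strata over $X'$ are not divisors in the original ambient space $M$, so $(Y',B_{Y'})$ is not literally a global embedded simple normal crossing pair until one re-embeds it; this is exactly the role of Proposition \ref{taisetsu3} and Lemma \ref{useful-lem2}.

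The main obstacle I anticipate is precisely this construction of the quasi-log structure on $X'$: making the non-divisorial strata divisorial by embedded log transformations while preserving the conclusions of Lemma \ref{useful-lem2} (so that neither $f_*\mathcal O_{Y'}$ nor its higher direct images change), and tracking the $B_Y^{<1}/B_Y^{=1}/B_Y^{>1}$ decomposition together with the round-up and round-down corrections under these blow-ups. Everything else is a formal consequence of Theorem \ref{nlb-vani-th} once the supports of the higher direct images are identified with unions of qlc centers. The two parts are naturally intertwined, so I expect to prove them together, with part (i) supplying the log-bigness of $(L-\omega)|_{X'}$ and the identification of the quotient term used in the inductive step of part (ii).
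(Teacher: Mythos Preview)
Your approach is essentially the paper's: blow up the ambient space via Lemma \ref{useful-lem2}/Proposition \ref{taisetsu3} so that $Y'$ is a union of irreducible components of $Y$, twist the sequence $0\to\mathcal O_{Y''}(-Y')\to\mathcal O_Y\to\mathcal O_{Y'}\to 0$ by $\mathcal O_Y(A-N)$ (your $L_Y$ with $L=0$), use the torsion-free theorem to kill the connecting map and read off the quasi-log structure on $X'$, then apply Theorem \ref{nlb-vani-th} to $(Y'',\{B_{Y''}\}+B_{Y''}^{=1}-Y'|_{Y''})$ for the $\pi_*$-acyclicity in (ii).

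One correction: in your justification of (ii) you write ``here one uses that $(L-\omega)|_{X'}$ is still nef and log big over $S$.'' That is not what is used. The vanishing is applied on $Y''$, and the relevant boundary is $\{B_{Y''}\}+B_{Y''}^{=1}-Y'|_{Y''}$; the strata of this pair map to qlc centers of $[X,\omega]$ \emph{not} contained in $X'$ (and, crucially, not into $X_{-\infty}$). So what you need is the original hypothesis that $L-\omega$ is nef and log big for $[X,\omega]$, not its restriction to $X'$. The paper makes exactly this point when it notes that no stratum of $(Y'',B_{Y''}^{=1}-Y'|_{Y''})$ is mapped to $X_{-\infty}$. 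Your sentence as written would only be relevant if you were applying the vanishing on $Y'$, which you are not.
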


Theorem \ref{adj-th} is the hardest part to 
prove in the theory of quasi-log varieties. 
It is because it depends on the non-trivial 
vanishing and torsion-free theorems for simple normal 
crossing pairs. 
The adjunction for normal divisors 
on normal varieties is investigated 
in \cite{fujino10}. 
See also Section \ref{43-sec}. 
 
\begin{proof}
By blowing up the ambient space $M$ of $Y$, we 
can assume that the union of all strata of $(Y, B_Y)$ mapped 
to $X'$, which 
is denoted by $Y'$, is a union of irreducible components of $Y$ (cf.~Lemma \ref{useful-lem2}). We will 
justify this reduction in a more general 
setting in Proposition \ref{taisetsu} below. 
We put $K_{Y'}+B_{Y'}=(K_Y+B_Y)|_{Y'}$ and 
$Y''=Y-Y'$. We claim that $[X', \omega']$ is a 
quasi-log pair and 
that $f:(Y', B_{Y'})\to X'$ is a quasi-log 
resolution.  
By the construction, $f^*\omega'\sim _{\mathbb R}K_{Y'}+B_{Y'}$ on $Y'$ 
is obvious. 
We put $A=\ulcorner -(B^{<1}_Y)\urcorner$ and 
$N=\llcorner B^{>1}_Y\lrcorner$. We consider the following 
short exact sequence 
$$
0\to \mathcal O_{Y''}(-Y')\to \mathcal O_Y\to \mathcal O_{Y'}\to 0. 
$$
By applying $\otimes \mathcal O_Y(A-N)$, 
we have 
$$
0\to \mathcal O_{Y''}(A-N-Y')\to \mathcal O_Y(A-N)\to \mathcal O_{Y'}
(A-N)\to 0. 
$$ 
By applying $f_*$, we 
obtain 
\begin{align*}
0&\to f_*\mathcal O_{Y''}(A-N-Y')\to f_*\mathcal O_Y(A-N)\to f_*\mathcal O_{Y'}
(A-N)
\\ 
&\to R^1f_*\mathcal O_{Y''}(A-N-Y')\to \cdots. 
\end{align*}
By Theorem \ref{ap1} (i), the support of any 
non-zero local section of $R^1f_*\mathcal O_{Y''}(A-N-Y')$ can not be 
contained in $X'=f(Y')$. 
We note 
that 
$$
(A-N-Y')|_{Y''}-(K_{Y''}+\{B_{Y''}\}+B^{=1}_{Y''}-Y'|_{Y''})
=-(K_{Y''}+B_{Y''})\sim _{\mathbb R}f^*\omega|_{Y''}, 
$$
where $K_{Y''}+B_{Y''}=(K_Y+B_Y)|_{Y''}$. 
Therefore, the connecting homomorphism 
$f_*\mathcal O_{Y'}(A-N)\to 
R^1f_*\mathcal O_{Y''}(A-N-Y')$ is a zero map. 
Thus, 
$$
0\to f_*\mathcal O_{Y''}(A-N-Y')\to \mathcal I_{X_{-\infty}}\to f_*\mathcal O_{Y'}
(A-N)\to 0 
$$ 
is exact. 
We put $\mathcal I_{X'}=f_*\mathcal O_{Y''}(A-N-Y')$. 
Then $\mathcal I_{X'}$ defines 
a scheme structure on $X'$. 
We define $\mathcal I_{X'_{-\infty}}=\mathcal I_{X_{-\infty}}/\mathcal I_{X'}$. 
Then $\mathcal I_{X'_{-\infty}}\simeq f_*\mathcal O_{Y'}(A-N)$ 
by the above exact sequence. 
By the following diagram: 
$$
\xymatrix{
0
\ar[r] 
&
f_*\mathcal O_{Y''}(A-N-Y') 
\ar[d]\ar[r]
&
f_*\mathcal O_Y(A-N)
\ar[d]\ar[r] 
&
f_*\mathcal O_{Y'}(A-N) 
\ar[r]\ar[d] 
& 0 \\ 
0
\ar[r] 
&
f_*\mathcal O_{Y''}(A-Y') 
\ar[r]
&
f_*\mathcal O_Y(A)
\ar[r] 
&
f_*\mathcal O_{Y'}(A) 
\\
0
\ar[r] 
&
\mathcal I_{X'}
\ar[u]\ar[r]
&
\mathcal O_{X}
\ar[u]\ar[r] 
&
\mathcal O_{X'}
\ar[r]\ar[u] 
& 0, 
}
$$
we can see that $\mathcal O_{X'}\to f_*\mathcal O_{Y'}(
\ulcorner -(B^{<1}_{Y'})\urcorner)$ induces 
an isomorphism $\mathcal I_{X'_{-\infty}}\to 
f_*\mathcal O_{Y'}(\ulcorner -(B^{<1}_{Y'})\urcorner-
\llcorner B^{>1}_{Y'}\lrcorner)$. 
Therefore, $[X', \omega']$ is a quasi-log pair 
such that $X'_{-\infty}=X_{-\infty}$. By the construction, the property 
about 
qlc centers are obvious. 
So, we finish the proof of (i). 

Let $f:(Y, B_Y)\to X$ be a quasi-log resolution as in the 
proof of (i). 
Then $f^*(L-\omega)\sim _{\mathbb R}
f^*L-(K_{Y''}+B_{Y''})$ on $Y''$, 
where $K_{Y''}+B_{Y''}=(K_Y+B_Y)|_{Y''}$. 
Note that 
$$
f^*L-(K_{Y''}+B_{Y''})=
(f^*L+A-N-Y')|_{Y''}
-(K_{Y''}+\{B_{Y''}\}+B^{=1}_{Y''}-Y'|_{Y''}) 
$$ 
and that any stratum of $(Y'', B^{=1}_{Y''}-Y'|_{Y''})$ is not mapped 
to $X_{-\infty}=X'_{-\infty}$. Then 
by Theorem \ref{nlb-vani-th} (Theorem \ref{ap1} (ii) 
when $L-\omega$ is $\pi$-ample), 
$$
R^p\pi_*(f_*\mathcal O_{Y''}(f^*L+A-N-Y'))
=R^p\pi_*(\mathcal I_{X'}\otimes \mathcal O_X(L))=0$$ for 
any $p>0$. Thus, we finish the proof of (ii). 
\end{proof}

\begin{rem}
We make a few comments on Theorem \ref{adj-th} for the 
reader's convenience. 
We slightly changed the big diagram 
in the proof of \cite[Theorem 4.4]{ambro} and 
incorporated \cite[Theorem 7.3]{ambro} into 
\cite[Theorem 4.4]{ambro}. 
Please compare Theorem \ref{adj-th} 
with the original statements in \cite{ambro}. 
\end{rem}

\begin{cor}\label{irre-cor} 
Let $[X, \omega]$ be a qlc pair and 
let $X'$ be an irreducible 
component of $X$. 
Then $[X', \omega']$, where 
$\omega'=\omega|_{X'}$, is a qlc pair. 
\end{cor}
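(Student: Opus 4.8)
The plan is to derive Corollary \ref{irre-cor} as a direct application of the adjunction part, Theorem \ref{adj-th} (i). The key observation is that for a qlc pair $[X, \omega]$ we have $X_{-\infty}=\emptyset$ by definition, so any nonempty union of qlc centers $X'$ automatically satisfies $X'\neq X_{-\infty}=\emptyset$, and Theorem \ref{adj-th} (i) applies to give that $[X', \omega']$ is a quasi-log variety with $X'_{-\infty}=X_{-\infty}=\emptyset$, hence a qlc pair. The only thing I need to check is that an irreducible component $X'$ of $X$ is in fact a union of qlc centers of $[X, \omega]$.

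First I would recall from Remark \ref{hosoku} that, since $X$ has only qlc singularities, the associated $B_Y$ is a subboundary and $f:Y\to X$ is surjective; and from the final sentence of Definition \ref{quasi-log} that $X$ is the union of its qlc centers (here $X_{-\infty}=\emptyset$). By Definition \ref{quasi-log} (3), the qlc centers are exactly the $f$-images of the strata of $(Y, B_Y)$, and the irreducible components of $Y$ are themselves strata, so their images are qlc centers. Since $f$ is surjective and $X$ is covered by the images of the irreducible components of $Y$, every irreducible component of $X$ must coincide with the $f$-image of some irreducible component of $Y$ that dominates it, and therefore is itself a qlc center. Thus the irreducible component $X'$ is a qlc center, in particular a (one-element) union of qlc centers.

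With this in hand, I would apply Theorem \ref{adj-th} (i) with $X'$ equal to the given irreducible component: since $X_{-\infty}=\emptyset$ we have $X'\neq X_{-\infty}$, so the theorem gives that $X'$ is a quasi-log variety with $\omega'=\omega|_{X'}$ and $X'_{-\infty}=X_{-\infty}=\emptyset$. Because $X'_{-\infty}=\emptyset$, the pair $[X', \omega']$ has only qlc singularities, i.e.\ it is a qlc pair, which is exactly the assertion.

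The main obstacle, such as it is, is the purely bookkeeping point that an irreducible component of $X$ genuinely is a qlc center (equivalently, a union of qlc centers) rather than a proper subvariety thereof; this rests on the surjectivity of $f$ for qlc pairs (Remark \ref{hosoku}) together with the fact that irreducible components of $Y$ are strata of $(Y, B_Y)$. Once that identification is made, everything else is an immediate invocation of Theorem \ref{adj-th} (i), so no further computation is required.
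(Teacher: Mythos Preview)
Your proof is correct and follows essentially the same approach as the paper's own proof, which simply asserts that $X'$ is a qlc center of $[X,\omega]$ by Remark \ref{hosoku} and then implicitly invokes Theorem \ref{adj-th} (i). You have merely unpacked the one step the paper leaves implicit, namely why surjectivity of $f$ (from Remark \ref{hosoku}) forces each irreducible component of $X$ to be the $f$-image of an irreducible component of $Y$ and hence a qlc center.
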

\begin{proof}
It is because $X'$ is a qlc center 
of $[X, \omega]$ by Remark \ref{hosoku}. 
\end{proof}

The next example shows that the definition of quasi-log varieties 
is reasonable. 

\begin{ex}\label{311}
Let $(X, B_X)$ be an lc pair. 
Let $f:Y\to (X, B_X)$ be a resolution such that 
$K_Y+S+B=f^*(K_X+B_X)$, where 
$\Supp (S+B)$ is simple normal crossing, $S$ is 
reduced, and $\llcorner B\lrcorner \leq 0$. 
We put $K_S+B_S=(K_X+S+B)|_{S}$ and 
consider the short exact sequence 
$$0\to \mathcal O_Y(\ulcorner -B\urcorner-S)\to 
\mathcal O_Y(\ulcorner -B\urcorner)\to \mathcal O_S(\ulcorner 
-B_S\urcorner)\to 0.$$ Note that $B_S=B|_S$ since $Y$ is 
smooth. 
By the Kawamata--Viehweg vanishing theorem, 
$R^1f_*\mathcal O_Y(\ulcorner -B\urcorner-S)=0$. 
This implies that $f_*\mathcal O_S(\ulcorner 
-B_S\urcorner)\simeq 
\mathcal O_{f(S)}$ since $f_*\mathcal O_Y(\ulcorner -B
\urcorner)\simeq \mathcal O_X$. 
This argument is well known as the proof of the connectedness 
lemma. We put $W=f(S)$ and $\omega=(K_X+B_X)|_{W}$. Then 
$[W, \omega]$ is a quasi-log pair with only qlc singularities and 
$f:(S, B_S)\to W$ is a quasi-log resolution. 
\end{ex}

Example \ref{311} is a very special case of 
Theorem \ref{adj-th} (i), 
that is, adjunction from $[X, K_X+B_X]$ to 
$[W, \omega]$. For 
other examples, see \cite[\S 5]{fujino7} or 
Section \ref{to-sec}, where 
we treat toric polyhedra as quasi-log varieties. 
In the proof of Theorem \ref{adj-th} (i), we used Theorem 
\ref{ap1} (i), which is a generalization of Koll\'ar's theorem, 
instead of the Kawamata--Viehweg vanishing theorem.

\subsection{Miscellanies on qlc centers}

The notion of {\em{lcs locus}} is important for 
$X$-method on quasi-log varieties. 

\begin{defn}[LCS locus]\index{lcs locus}\label{lcs} 
The {\em{LCS locus}} of a quasi-log pair $[X, \omega]$, denoted 
by $\LCS(X)$ or $\LCS(X, \omega)$,\index{$\LCS(X, \omega)$, 
lcs locus} is 
the union of $X_{-\infty}$ with all qlc centers of $X$ that are not maximal 
with respect to the inclusion. 
The subscheme structure is defined in 
Theorem \ref{adj-th} (i), and 
we have a natural embedding $X_{-\infty}\subseteq \LCS(X)$. 
In this book and \cite{fuj-lec}, 
$\LCS(X, \omega)$ is denoted by 
$\Nqklt (X, \omega)$.\index{$\Nqklt(X, \omega)$} 

When $X$ is normal and $B$ is an 
effective $\mathbb R$-divisor 
such that $K_X+B$ is $\mathbb R$-Cartier, $\Nqklt 
(X, K_X+B)$ is denoted by $\Nklt (X, B)$ and 
is called the {\em{non-klt locus}}\index{non-klt locus}\index{$\Nklt(X, B)$, non-klt locus} of the pair $(X, B)$. 
\end{defn}

The next proposition is easy to prove. However, 
in some applications, it may be useful. 
 
\begin{prop}[{cf.~\cite[Proposition 4.7]{ambro}}]\label{normal}
Let $X$ be a quasi-log variety whose LCS locus is empty. 
Then $X$ is normal. 
\end{prop}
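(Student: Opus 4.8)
The plan is to prove that $\mathcal O_X$ is integrally closed in its total ring of fractions, after first reducing to the case that $X$ is integral; this yields normality. The opening observations are immediate. Since $\LCS(X)=\emptyset$ we have $X_{-\infty}=\emptyset$, so by Remark \ref{hosoku} the divisor $B_Y$ is a subboundary, $f$ is surjective, and $f_*\mathcal O_Y\simeq\mathcal O_X$; by Remark \ref{hosoku2} the scheme $X$ is seminormal, in particular reduced. Moreover $\LCS(X)=\emptyset$ says exactly that every qlc center is maximal, i.e. no qlc center is properly contained in another.

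First I would reduce to the case that $X$ is irreducible. It suffices to treat each connected component, so assume $X$ is connected; since $f$ is proper with $f_*\mathcal O_Y\simeq\mathcal O_X$ its fibers are connected, hence $Y$ is connected and so is its dual graph. Write $Y=\bigcup_i Y_i$ for the irreducible decomposition. Each $f(Y_i)$ is a qlc center (the image of the maximal stratum $Y_i$, with $X_{-\infty}=\emptyset$), hence an irreducible component of $X$, and every component of $X$ arises this way because $f$ is surjective. If $X$ were reducible, connectedness of the dual graph would produce indices $i,j$ with $Y_i\cap Y_j\neq\emptyset$ but $f(Y_i)=X_k\neq X_l=f(Y_j)$. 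Any irreducible component $W$ of $Y_i\cap Y_j$ is again a stratum of $(Y,B_Y)$, so $f(W)$ is a qlc center; yet $f(W)\subseteq X_k\cap X_l\subsetneq X_k$, making $f(W)$ a qlc center properly contained in $X_k$, a non-maximal one, contradicting $\LCS(X)=\emptyset$. Hence $X$ is irreducible, the only qlc center is $X$ itself, and consequently $f(Y_i)=X$ for every $i$: every irreducible component of $Y$ dominates $X$.

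The final step is to show $\mathcal O_X$ is integrally closed. Let $a\in K(X)$ be integral over $\mathcal O_X$ on an open set $U$. Since each $f_i=f|_{Y_i}\colon Y_i\to X$ is dominant, the pullback $f_i^{*}a\in K(Y_i)$ is defined and satisfies the pulled-back monic equation, so it is integral over $\mathcal O_{Y_i}$; as $Y_i$ is smooth, hence normal, $f_i^{*}a\in\Gamma(f_i^{-1}U,\mathcal O_{Y_i})$. On each double locus $Y_i\cap Y_j$ the two restrictions coincide, being restrictions of the single rational function $f^{*}a$ on $Y$. By the exactness of $0\to\mathcal O_Y\to\varepsilon_{0*}\mathcal O_{Y^0}\to\varepsilon_{1*}\mathcal O_{Y^1}$ (Definition \ref{011}), this compatible family glues to a section $f^{*}a\in\Gamma(f^{-1}U,\mathcal O_Y)=\Gamma(U,f_*\mathcal O_Y)=\Gamma(U,\mathcal O_X)$; injectivity of $f^{*}$ on rational functions then gives $a\in\Gamma(U,\mathcal O_X)$. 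Thus $\mathcal O_X$ is integrally closed, and $X$, being reduced and irreducible, is normal.

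The point requiring care, and the place where the hypothesis is genuinely used, is that a quasi-log resolution $Y$ is in general reducible and only simple normal crossing, and $f$ need not be generically finite, so one cannot merely factor $f$ through the normalization of $X$. The crux is to pass to the smooth components $Y_i$, which are normal, and this is useful only once one knows that all of them dominate $X$, equivalently that there are no lower-dimensional qlc centers. Establishing this, via the reduction to irreducibility together with the maximality of all qlc centers, is the heart of the argument; the ensuing integral-closure computation and the gluing through the Mayer--Vietoris sequence of Definition \ref{011} are then routine.
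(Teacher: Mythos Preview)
Your proof is correct, and at its core it is the same argument as the paper's, only unwound. The paper's proof is two lines: since $\LCS(X)=\emptyset$, every stratum of $Y$ dominates $X$, hence $f$ factors through the normalization $\nu\colon X^{\nu}\to X$; then $\mathcal O_X\simeq f_*\mathcal O_Y$ forces $\mathcal O_X\hookrightarrow\nu_*\mathcal O_{X^{\nu}}\hookrightarrow f_*\mathcal O_Y\simeq\mathcal O_X$, so $\nu$ is an isomorphism.

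Your remark that one ``cannot merely factor $f$ through the normalization of $X$'' is where you diverge from the paper, and it is not quite accurate. You are right that the universal property of normalization does not apply to $Y$ directly, since $Y$ is reducible and not normal. But the factorization still exists: each smooth component $Y_i$ dominates $X$ and therefore lifts uniquely to $X^{\nu}$, and on each intersection $Y_i\cap Y_j$ the two lifts agree because every component of $Y_i\cap Y_j$ is again a smooth stratum dominating $X$, so its lift to $X^{\nu}$ is unique. These lifts then glue to $Y\to X^{\nu}$ (via exactly the Mayer--Vietoris exactness you invoke). Your direct verification of integral closure is precisely this gluing argument, rewritten on the level of functions rather than morphisms; indeed $X^{\nu}=\Spec_{\mathcal O_X}$ of the integral closure, so the two formulations are equivalent.

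What your write-up adds over the paper's is the explicit reduction to $X$ irreducible and the explicit reason every $Y_i$ dominates $X$, which the paper leaves implicit in the phrase ``every stratum of $Y$ dominates $X$.'' One small point to tighten: when you say the restrictions to $Y_i\cap Y_j$ coincide ``being restrictions of the single rational function $f^{*}a$ on $Y$,'' note that for reducible $Y$ this needs the observation that each component of $Y_i\cap Y_j$ dominates $X$, so that $f^{-1}V$ (for $V$ the regular locus of $a$) is dense there and the two regular extensions agree by reducedness. You have all the ingredients for this; it is worth saying explicitly.
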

\begin{proof}
Let $f:(Y, B_Y)\to X$ be a quasi-log resolution. 
By the assumption, 
every stratum of $Y$ dominates $X$. 
Therefore, $f:Y\to X$ passes through 
the normalization $X^{\nu}\to X$ of $X$. 
This implies that 
$X$ is normal since $f_*\mathcal O_Y\simeq 
\mathcal O_X$ by Remark \ref{hosoku}. 
\end{proof}

\begin{thm}[{cf.~\cite[Proposition 4.8]{ambro}}]\label{qlc-cent-prop} 
Assume that $[X, \omega]$ is a qlc pair. 
We have the following properties: 
\begin{itemize}
\item[{\em{(i)}}] The intersection of two 
qlc centers is a union of 
qlc centers. 
\item[{\em{(ii)}}] 
For any point $P\in X$, the 
set of all qlc centers passing 
through $P$ has a unique 
element $W$. Moreover, $W$ is 
normal at $P$. 
\end{itemize}
\end{thm}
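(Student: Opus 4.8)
The plan is to prove the two assertions of Theorem \ref{qlc-cent-prop} by reducing everything to the adjunction result of Theorem \ref{adj-th} (i) together with the basic structure of qlc centers as $f$-images of strata. Throughout I fix a quasi-log resolution $f:(Y, B_Y)\to X$ with $B_Y$ a subboundary, so that $f_*\mathcal O_Y\simeq \mathcal O_X$ by Remark \ref{hosoku}, and I recall that the qlc centers are exactly the $f$-images of the $(Y,B_Y)$-strata, i.e.\ the images of the lc centers of $(Y^\nu,\Theta)$ together with the irreducible components of $Y$.

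For part (i), let $C_1$ and $C_2$ be two qlc centers. First I would treat the key case where one is contained in the other or they meet, and show the intersection $C_1\cap C_2$ is again swept out by qlc centers. The natural idea is to apply Theorem \ref{adj-th} (i) with $X'=C_1\cup C_2$ (a union of some qlc centers, with $X_{-\infty}=\emptyset$ since we are in the qlc case): this makes $[X',\omega|_{X'}]$ a qlc pair whose qlc centers are precisely those of $[X,\omega]$ contained in $X'$. I would then work on the simple normal crossing level: after blowing up the ambient space so that $Y'=f^{-1}$-strata over $C_1$ and $C_2$ become unions of irreducible components (as justified by Lemma \ref{useful-lem2} and the reduction in the proof of Theorem \ref{adj-th}), the intersection $C_1\cap C_2$ is the $f$-image of intersections of the corresponding strata of $Y$, and intersections of strata of a simple normal crossing variety are again unions of strata. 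Pushing forward, $C_1\cap C_2$ is a union of qlc centers. The main technical care here is bookkeeping: ensuring that every irreducible component of $C_1\cap C_2$ really arises as the image of a stratum and not as some spurious component of the scheme-theoretic intersection, which is where the semi-normality in Remark \ref{hosoku2} and the control over strata-images in Lemma \ref{useful-lem2} (iii) are used.

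For part (ii), fix $P\in X$. The existence of a \emph{minimal} qlc center through $P$ follows from part (i): among all qlc centers containing $P$ (there is at least one, since $X$ is the union of its qlc centers), take two minimal ones $W_1,W_2$; their intersection contains $P$ and is a union of qlc centers by (i), so it contains a qlc center through $P$, forcing $W_1=W_2$ by minimality. This gives the unique minimal element $W$. To prove normality of $W$ at $P$, I would invoke adjunction once more: by Theorem \ref{adj-th} (i), $[W,\omega|_W]$ is itself a qlc pair, and because $W$ is minimal among qlc centers through $P$, no proper qlc center of $[W,\omega|_W]$ passes through $P$. Hence the LCS locus of $[W,\omega|_W]$ avoids $P$, and Proposition \ref{normal} (applied after localizing, or to the relevant open neighborhood) shows $W$ is normal at $P$.

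The step I expect to be the main obstacle is the reduction in part (i) that identifies the scheme-theoretic image $C_1\cap C_2$ with a union of images of strata. The clean statement ``intersection of strata is a union of strata'' is transparent on the simple normal crossing variety $Y$, but transporting it faithfully down to $X$ requires that the blow-ups of the ambient space preserve the correspondence between strata of $Y$ and qlc centers of $X$ (Lemma \ref{useful-lem2} (iii)) and that $f_*\mathcal O_Y\simeq \mathcal O_{X'}$ after the reduction, so that no component of the intersection is lost or duplicated. Once this compatibility is set up carefully, both (i) and (ii) follow formally; the normality in (ii) is then an immediate corollary of Proposition \ref{normal} via minimality.
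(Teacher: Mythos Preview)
Your argument for part (ii) is correct and is exactly what the paper does: uniqueness of the minimal center from (i), normality at $P$ from Proposition~\ref{normal} applied to the induced qlc structure on $W$.

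For part (i), however, your route differs from the paper's and contains a real gap. The paper does \emph{not} lift to $Y$ and push strata intersections down. Instead it argues indirectly: set $X'=C_1\cup C_2$ with the induced qlc structure; at any $P\in C_1\cap C_2$ (with $C_1\neq C_2$) the scheme $X'$ has two branches and is not normal, so by Proposition~\ref{normal} one has $P\in\Nqklt(X',\omega')$. This produces a qlc center $C\subsetneq C_1$ through $P$; if $C\subset C_2$ we are done, otherwise replace $C_1$ by $C$ and induct on dimension.

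In your approach the unjustified step is the assertion that $C_1\cap C_2$ equals $f(Y'_1\cap Y'_2)$, where $Y'_i$ is the union of components of $Y$ mapped into $C_i$. The inclusion $f(Y'_1\cap Y'_2)\subset C_1\cap C_2$ is clear, and you are right that $Y'_1\cap Y'_2$ is a union of strata by simple normal crossing combinatorics. But the reverse inclusion is not automatic: a priori $Y'_1$ and $Y'_2$ could be disjoint over some $P\in C_1\cap C_2$. The concern you flag --- spurious components of the scheme-theoretic intersection downstairs --- is not the obstacle; the obstacle is \emph{coverage}. What you actually need is that the fiber of $f|_{Y'}:Y'\to X'$ over $P$ is connected (this follows from $f_*\mathcal O_{Y'}\simeq\mathcal O_{X'}$, which you get from adjunction and Remark~\ref{hosoku}) together with surjectivity of $f|_{Y'_i}:Y'_i\to C_i$; then the two nonempty closed pieces $Y'_1\cap f^{-1}(P)$ and $Y'_2\cap f^{-1}(P)$ of a connected fiber must meet. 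With that connectedness argument inserted, your approach goes through; without it, the proof of (i) is incomplete. Note that this connectedness input is essentially the same content the paper exploits, just repackaged --- the paper's use of Proposition~\ref{normal} is the clean way to access it.
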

\begin{proof}
Let $C_1$ and $C_2$ be two qlc centers of $[X,\omega]$. 
We fix $P\in C_1\cap C_2$. 
It is enough to find a qlc center $C$ such that 
$P\in C\subset C_1\cap C_2$. The union 
$X'=C_1\cup C_2$ with 
$\omega'=\omega|_{X'}$ is a qlc pair having 
two 
irreducible components. 
Hence, it is not normal at $P$. 
By Proposition \ref{normal}, 
$P\in \Nqklt (X', \omega')$. Therefore, 
there exists a qlc center $C\subset C_1$ with 
$\dim C<\dim C_1$ such that 
$P\in C\cap C_2$. 
If $C\subset C_2$, 
we are done. 
Otherwise, we repeat the argument with $C_1=C$ and 
reach the conclusion in a finite number of 
steps. So, we finish the proof of (i). 
The uniqueness of the minimal qlc center follows 
from (i) and the normality of the minimal center 
follows from Proposition \ref{normal}. 
Thus, we have (ii). 
\end{proof}

\begin{thm}[{cf.~\cite[Theorem 1.1]{ambro2}}]\label{a1} 
We assume that $(X, B)$ is log canonical. Then 
we have the following 
properties. 
\begin{enumerate}
\item[$(1)$] $(X, B)$ has at most finitely 
many lc centers. 
\item[$(2)$] An intersection of two lc centers 
is a union of lc centers. 
\item[$(3)$] Any 
union of lc centers of $(X, B)$ is semi-normal. 
\item[$(4)$] Let $x\in X$ be a closed point such that 
$(X, B)$ is log canonical but not 
Kawamata log terminal at $x$. 
Then there is a unique minimal lc center $W_x$ 
passing through $x$, and $W_x$ is normal at $x$.  
\end{enumerate}
\end{thm}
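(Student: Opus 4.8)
The plan is to transport the whole statement into the quasi-log setting and then quote the structural results already established for qlc centers. First I would equip $(X,B)$ with its canonical quasi-log structure: fix a log resolution $f\colon Y\to X$ with $K_Y+B_Y=f^*(K_X+B)$ and $\Supp B_Y$ simple normal crossing, so that by Example \ref{new-example} the pair $[X, K_X+B]$ is a quasi-log variety with quasi-log resolution $f\colon (Y, B_Y)\to X$. Since $(X,B)$ is log canonical, $B_Y$ is a subboundary, hence $X_{-\infty}=\emptyset$ and $[X, K_X+B]$ is in fact a qlc pair. The essential translation point, which I would record first, is that with $\omega=K_X+B$ the qlc centers of $[X,\omega]$ are exactly the lc centers of $(X,B)$, and $\Nqklt(X, K_X+B)=\Nklt(X,B)$ (Definition \ref{lcs}) is precisely the locus where $(X,B)$ fails to be Kawamata log terminal.

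Granting this dictionary, statements $(1)$, $(2)$, and $(4)$ are immediate. For $(1)$, every qlc center is by definition the $f$-image of a $(Y, B_Y)$-stratum, and a simple normal crossing pair has only finitely many strata; thus there are only finitely many lc centers. Statement $(2)$ is exactly Theorem \ref{qlc-cent-prop} (i) applied to the qlc pair $[X, K_X+B]$. For $(4)$, a closed point $x$ at which $(X,B)$ is log canonical but not Kawamata log terminal lies in $\Nqklt(X, K_X+B)$, so Theorem \ref{qlc-cent-prop} (ii) furnishes a unique minimal qlc center $W_x$ through $x$ which is normal at $x$; under the dictionary $W_x$ is the desired minimal lc center.

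For $(3)$, let $X'$ be any nonempty union of lc centers, hence a union of qlc centers of $[X, K_X+B]$. By the adjunction Theorem \ref{adj-th} (i) the pair $[X', \omega']$ with $\omega'=(K_X+B)|_{X'}$ is again a quasi-log variety with $X'_{-\infty}=X_{-\infty}=\emptyset$, so it is a qlc pair. Remark \ref{hosoku2}, applied with $X'_{-\infty}=\emptyset$, then gives $\mathcal O_{X'}\simeq g_*\mathcal O_{Y'}$ for a quasi-log resolution $g\colon (Y', B_{Y'})\to X'$ from a simple normal crossing variety $Y'$; since such a $Y'$ is semi-normal, so is $X'$. (The prototype of this adjunction is already Example \ref{311}.)

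The genuine content is therefore all borrowed: the hard analytic input, the torsion-free and vanishing theorems for embedded simple normal crossing pairs, is buried inside Theorem \ref{adj-th} and, through it, inside Theorem \ref{qlc-cent-prop}. The hard part will be purely a bookkeeping one, namely establishing the dictionary of the first paragraph rigorously: one must check that the $f$-images of the $(Y, B_Y)$-strata coincide with the centers $c_X(E)$ of the divisors $E$ over $X$ with $a(E,X,B)=-1$, independently of the chosen resolution, and that the scheme structure on a union of lc centers produced by Theorem \ref{adj-th} (i) agrees with the reduced one. This is where the flexibility in the choice of quasi-log resolution, via the blow-up reductions of Lemma \ref{useful-lem2} and Proposition \ref{taisetsu}, is needed; once it is in place, $(1)$ through $(4)$ are formal consequences of the qlc-center theory.
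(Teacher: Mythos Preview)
Your proposal is correct and follows essentially the same route as the paper: the paper also passes to the qlc pair $[X,K_X+B]$, reads off $(2)$ and $(4)$ directly from Theorem \ref{qlc-cent-prop}, gets $(1)$ from the finiteness of strata of $B_Y^{=1}$, and obtains $(3)$ by applying adjunction (Theorem \ref{adj-th} (i)) together with Remarks \ref{hosoku} and \ref{hosoku2}. One small point to tighten in your dictionary: the qlc centers of $[X,K_X+B]$ are the lc centers of $(X,B)$ \emph{together with $X$ itself} (the image of the unique stratum $Y$), so in $(4)$ you should note that the hypothesis $x\in\Nklt(X,B)$ forces the minimal qlc center through $x$ to be proper, hence an lc center.
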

\begin{proof}
Let $f:(Y, B_Y)\to (X, B)$ be 
a resolution such that 
$K_Y+B_Y=f^*(K_X+B)$ and 
$\Supp B_Y$ is a simple normal crossing divisor. 
Then an lc center of $(X, B)$ is the 
image of some stratum of a simple normal crossing 
variety $B^{=1}_Y$. 
Therefore, $(X, B)$ has at most finitely 
many lc centers. This is (1). 
The statements (2) and (4) are obvious 
by Theorem \ref{qlc-cent-prop}. 
Let $\{C_i\}_{i\in I}$ be a set of 
lc centers 
of $(X, B)$. 
We put $X'=\bigcup _{i\in I}C_i$ and $\omega'=
(K_X+B)|_{X'}$. 
Then $[X', \omega']$ is a qlc pair. 
Therefore, $X'$ is semi-normal by Remarks \ref{hosoku} 
and \ref{hosoku2}. 
This is (3). 
\end{proof}

The following result is an easy consequence of 
adjunction and 
the vanishing theorem:~Theorem \ref{adj-th}. 

\begin{thm}[{cf.~\cite[Theorem 6.6]{ambro}}]
\label{qlc-cent-th} 
Let $[X, \omega]$ be a quasi-log pair and let 
$\pi:X\to S$ be a proper 
morphism such that 
$\pi_*\mathcal O_X\simeq \mathcal O_S$ and $-\omega$ 
is nef and log big over $S$. 
Let $P\in S$ be a closed point. 
\begin{itemize}
\item[{\em{(i)}}] 
Assume that $X_{-\infty}\cap \pi^{-1}(P)\ne \emptyset$ and 
$C$ is a qlc center such that 
$C\cap \pi^{-1}(P)\ne \emptyset$. 
Then $C\cap X_{-\infty}\cap \pi^{-1}(P)\ne \emptyset$. 
\item[{\em{(ii)}}] 
Assume that $[X, \omega]$ is a qlc pair. 
Then the set of all qlc centers intersecting 
$\pi^{-1}(P)$ has a unique 
minimal element with respect to inclusion. 
\end{itemize}
\end{thm}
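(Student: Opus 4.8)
The plan is to prove both statements by exploiting adjunction together with the vanishing theorem in Theorem \ref{adj-th}, in the same spirit as the proof of Theorem \ref{qlc-cent-prop}. The essential tool is the following principle, which follows from Theorem \ref{adj-th} (ii): if $X'$ is a union of $X_{-\infty}$ with some qlc centers, then taking $L=0$ and using that $-\omega$ is nef and log big over $S$, we obtain that $\mathcal I_{X'}$ is $\pi_*$-acyclic, so the restriction map $\pi_*\mathcal O_X\to \pi_*\mathcal O_{X'}$ is surjective. Combined with $\pi_*\mathcal O_X\simeq \mathcal O_S$, this forces the fibers of the various qlc centers and of $X_{-\infty}$ over $P$ to be suitably connected.

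For part (i), suppose to the contrary that there is a qlc center $C$ with $C\cap \pi^{-1}(P)\ne \emptyset$ but $C\cap X_{-\infty}\cap \pi^{-1}(P)=\emptyset$. The idea is to apply adjunction to $X'=C\cup X_{-\infty}$, which is again a quasi-log variety by Theorem \ref{adj-th} (i), with $\omega'=\omega|_{X'}$ and $X'_{-\infty}=X_{-\infty}$, and $-\omega'$ remains nef and log big over $S$. Then the disjointness assumption over $P$ means that, after restricting to a suitable neighborhood of $P$ in $S$, the subscheme $C$ is a connected component of $X'$ disjoint from $X_{-\infty}$. First I would use the $\pi_*$-acyclicity of $\mathcal I_{X_{-\infty}}$ on $X'$ (again Theorem \ref{adj-th} (ii) with $L=0$) to deduce that $\pi_*\mathcal O_{X'}\to \pi_*\mathcal O_{X'_{-\infty}}$ is surjective; since $C$ is disjoint from $X_{-\infty}$ over $P$, the structure sheaf of $C$ near $P$ splits off as a direct summand, producing a section of $\pi_*\mathcal O_{X'}$ over a neighborhood of $P$ supported on $C$. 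Pulling this back to a local section of $\pi_*\mathcal O_X\simeq \mathcal O_S$ that does not come from the whole $X$ over $P$ yields the contradiction, since $\mathcal O_S$ has no such idempotent-type local sections once we have arranged connectedness of $\pi^{-1}(P)$.

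For part (ii), I would argue exactly as in Theorem \ref{qlc-cent-prop} (ii) but relative to $\pi^{-1}(P)$. Let $C_1$ and $C_2$ be two qlc centers each meeting $\pi^{-1}(P)$; it suffices to produce a qlc center $C$ with $C\cap \pi^{-1}(P)\ne\emptyset$ and $C\subseteq C_1\cap C_2$. Consider $X'=C_1\cup C_2$ with $\omega'=\omega|_{X'}$: by Theorem \ref{adj-th} (i) this is a qlc pair, and $-\omega'$ is still nef and log big over $S$. If $C_1\cap C_2$ meets $\pi^{-1}(P)$ we apply Proposition \ref{normal} as in Theorem \ref{qlc-cent-prop}: at a point of $C_1\cap C_2\cap \pi^{-1}(P)$ the pair $X'$ has two irreducible components, hence is non-normal there, so that point lies in $\Nqklt(X',\omega')$, yielding a qlc center of smaller dimension through it; iterating gives a minimal one. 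If instead $C_1\cap C_2$ misses $\pi^{-1}(P)$, then near $P$ the centers $C_1$ and $C_2$ are disjoint, and applying the connectedness consequence of $\pi_*\mathcal O_X\simeq \mathcal O_S$ (surjectivity of $\pi_*\mathcal O_X\to \pi_*\mathcal O_{X'}$ together with $C_1$, $C_2$ being distinct connected pieces over $P$) produces the same kind of forbidden idempotent section, a contradiction. Thus any two qlc centers meeting $\pi^{-1}(P)$ have intersection meeting $\pi^{-1}(P)$, and taking the intersection of all such centers gives the unique minimal element.

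The main obstacle I expect is the careful bookkeeping in the connectedness argument, that is, extracting from the surjectivity of $\pi_*\mathcal O_X\to \pi_*\mathcal O_{X'}$ and the isomorphism $\pi_*\mathcal O_X\simeq \mathcal O_S$ the precise statement that two qlc centers (or a center and $X_{-\infty}$) cannot be disjoint over a point of $S$. One must work over a suitable analytic or étale neighborhood of $P$, or replace $S$ by the spectrum of the local ring at $P$, to make ``connectedness of $\pi^{-1}(P)$'' rigorous; the Stein-type property $\pi_*\mathcal O_X\simeq \mathcal O_S$ is exactly what guarantees that $\pi^{-1}(P)$ has no disconnection respecting the qlc stratification. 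Once this connectedness lemma is established cleanly, both (i) and (ii) follow formally, and the dimension-induction in (ii) is routine as in Theorem \ref{qlc-cent-prop}.
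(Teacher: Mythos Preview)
Your approach is correct and coincides with the paper's: for both parts one sets $X'=C\cup X_{-\infty}$ (resp.\ $X'=C_1\cup C_2$), applies Theorem~\ref{adj-th}~(ii) with $L=0$ to get surjectivity of $\pi_*\mathcal O_X\to\pi_*\mathcal O_{X'}$, and then uses $\pi_*\mathcal O_X\simeq\mathcal O_S$ to conclude that the pieces cannot be disjoint over $P$. The paper's write-up is shorter in two respects: it states the connectedness consequence in one line (since $(\pi_*\mathcal O_{X'})_P$ is a quotient of the local ring $\mathcal O_{S,P}$, it cannot split nontrivially), and for~(ii) it simply invokes Theorem~\ref{qlc-cent-prop}~(i) to say $C_1\cap C_2$ is already a union of qlc centers, rather than re-running the non-normality argument via Proposition~\ref{normal}.
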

\begin{proof}
Let $C$ be a qlc center of $[X, \omega]$ such that 
$P\in \pi(C)\cap \pi(X_{-\infty})$. 
Then $X'=C\cup X_{-\infty}$ with $\omega'=\omega|_{X'}$ 
is a quasi-log variety and the restriction map 
$\pi_*\mathcal O_X\to \pi_*\mathcal O_{X'}$ is 
surjective by Theorem \ref{adj-th}. 
Since $\pi_*\mathcal O_X\simeq \mathcal O_S$, $X_{-\infty}$ 
and $C$ intersect over a neighborhood of $P$. 
So, we have (i). 

Assume that $[X, \omega]$ is a qlc pair, that is, 
$X_{-\infty}=\emptyset$. 
Let $C_1$ and $C_2$ be two qlc centers of $[X, \omega]$ 
such that $P\in \pi(C_1)\cap \pi(C_2)$. 
The union $X'=C_1\cup C_2$ with 
$\omega'=\omega|_{X'}$ is a qlc pair and 
the restriction map 
$\pi_*\mathcal O_X\to \pi _*\mathcal O_{X'}$ is 
surjective. 
Therefore, $C_1$ and $C_2$ intersect over $P$. 
Furthermore, the intersection $C_1\cap C_2$ 
is a union of qlc centers by 
Proposition \ref{qlc-cent-prop}. 
Therefore, there exists a unique qlc center 
$C_P$ over a neighborhood 
of $P$ such that 
$C_P\subset C$ for every qlc center $C$ with 
$P\in \pi(C)$. So, we finish the proof of (ii). 
\end{proof}

The following corollary is obvious by 
Theorem \ref{qlc-cent-th}. 

\begin{cor}
Let $(X, B)$ be a proper lc pair. 
Assume that $-(K_X+B)$ is nef and log big and 
that $(X, B)$ is not klt. 
Then there exists a unique minimal lc center $C_0$ such 
that every lc center contains $C_0$. In particular, 
$\Nklt (X, B)$ 
is connected. 
\end{cor}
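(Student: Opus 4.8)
The plan is to transport the statement into the language of quasi-log pairs and invoke Theorem~\ref{qlc-cent-th}. First I would equip $X$ with the canonical quasi-log structure $[X,\omega]$ with $\omega=K_X+B$ described in Example~\ref{new-example}. Since $(X,B)$ is lc, this is a qlc pair with $X_{-\infty}=\emptyset$, and its qlc centers are precisely the lc centers of $(X,B)$ (compare Theorem~\ref{a1}). Thus the two notions of center coincide, and it suffices to prove the corresponding statement about qlc centers.

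Next I would take $S=\Spec\mathbb C$ and let $\pi:X\to S$ be the structure morphism. Because $X$ is normal (this is built into the definition of an lc pair) and proper, each connected component is irreducible; regarding $X$ as a variety it is irreducible, so $H^0(X,\mathcal O_X)=\mathbb C$ and hence $\pi_*\mathcal O_X\simeq\mathcal O_S$. By hypothesis $-\omega=-(K_X+B)$ is nef and log big, so it is in particular nef and log big over $S$. All the hypotheses of Theorem~\ref{qlc-cent-th} are therefore satisfied.

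Applying Theorem~\ref{qlc-cent-th}~(ii) with $P$ the unique closed point of $S$, and noting $\pi^{-1}(P)=X$, the set of all qlc centers has a unique minimal element $C_0$ with respect to inclusion, and $C_0$ is contained in every qlc center. Translated back, $C_0$ is the unique minimal lc center and every lc center contains $C_0$. Since $(X,B)$ is not klt, there is at least one proper lc center, so $C_0$ is a nonempty lc center with $C_0\subsetneq X$.

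For the final assertion I would unwind the definition $\Nklt(X,B)=\Nqklt(X,K_X+B)=\LCS(X,\omega)$. As $X_{-\infty}=\emptyset$, this is the union of all qlc centers that are not maximal under inclusion; since $X$ is irreducible, the only maximal qlc center is $X$ itself, so $\Nklt(X,B)$ is the union of all proper lc centers. Each of these contains the nonempty irreducible center $C_0$, so the union is connected. The only points requiring real care are the verification $\pi_*\mathcal O_X\simeq\mathcal O_S$ (equivalently, the irreducibility of $X$) and the identification of $\Nklt(X,B)$ with the union of the proper lc centers; both are immediate from the normality of $X$ and the definitions, which is why the corollary follows at once from Theorem~\ref{qlc-cent-th}.
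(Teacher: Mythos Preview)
Your proof is correct and follows essentially the same route as the paper, which records the corollary as an immediate consequence of Theorem~\ref{qlc-cent-th}. One minor imprecision: you write that the qlc centers of $[X,K_X+B]$ are ``precisely the lc centers of $(X,B)$'', but in fact $X$ itself is also a qlc center (as the image of $Y$); you implicitly acknowledge this later when identifying $X$ as the unique maximal qlc center, so the argument goes through unchanged.
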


The next theorem easily follows from \cite[Section 2]{abun}. 

\begin{thm}
Let $(X, B)$ be a projective lc pair. 
Assume that $K_X+B$ is numerically trivial. 
Then $\Nklt (X, B)$ has at most two connected components. 
\end{thm}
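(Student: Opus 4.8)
The plan is to reduce everything to a cohomological count on a log resolution and then to isolate the one inequality that genuinely needs the numerically trivial hypothesis. Since $X$ is a variety it is connected, and if $(X,B)$ is klt then $\Nklt(X,B)=\emptyset$ and there is nothing to prove; so I assume $(X,B)$ is not klt. I would choose a log resolution $f:(Y,B_Y)\to (X,B)$ with $K_Y+B_Y=f^*(K_X+B)$ and $\Supp B_Y$ simple normal crossing, and write $B_Y=S+B''$ with $S=B_Y^{=1}$ reduced and $B''=B_Y^{<1}$ (coefficients $<1$). Then $\Nklt(X,B)=f(\Supp S)$, and by Theorem \ref{a1} (3) this is a seminormal, in particular reduced, closed subscheme, so its number of connected components equals $\dim_{\mathbb C}H^0(X,\mathcal O_{\Nklt(X,B)})$. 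Put $A=\ulcorner -B''\urcorner$, an effective divisor with $f_*\mathcal O_Y(A)\simeq\mathcal O_X$.

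Next I would produce the defining short exact sequence of $\Nklt(X,B)$. Restricting to $S$ gives
$$0\to \mathcal O_Y(A-S)\to \mathcal O_Y(A)\to \mathcal O_S(A|_S)\to 0.$$
Writing $G=A+B''$, which is effective, simple normal crossing and satisfies $\llcorner G\lrcorner=0$, one has $A-S\sim_{\mathbb R}K_Y+G-f^*(K_X+B)$. Because $f^*(-(K_X+B))$ is pulled back from $X$ it is $f$-semi-ample, and because $G^{=1}=0$ every stratum of $(Y,G)$ dominates $X$; hence Theorem \ref{ap1} (1) forces every nonzero local section of $R^1f_*\mathcal O_Y(A-S)$ to be supported on all of $X$, which is impossible for a sheaf supported on the exceptional locus, so $R^1f_*\mathcal O_Y(A-S)=0$. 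Pushing the sequence forward then yields
$$0\to \mathcal I\to \mathcal O_X\to \mathcal O_{\Nklt(X,B)}\to 0,$$
where $\mathcal I=f_*\mathcal O_Y(A-S)$ is the ideal sheaf of $\Nklt(X,B)$. Since $X$ is proper and connected, $H^0(X,\mathcal O_X)=\mathbb C$ and $H^0(X,\mathcal I)=0$, so the associated long exact sequence gives
$$\#\{\text{connected components of }\Nklt(X,B)\}=1+\dim_{\mathbb C}\ker\big(H^1(X,\mathcal I)\to H^1(X,\mathcal O_X)\big).$$
Thus it suffices to bound this kernel by one.

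To interpret the kernel I would use the Leray spectral sequence: the vanishing $R^1f_*\mathcal O_Y(A-S)=0$ gives $H^1(X,\mathcal I)\cong H^1(Y,\mathcal O_Y(A-S))$, while $f_*\mathcal O_Y(A)\simeq\mathcal O_X$ makes the edge map $H^1(X,\mathcal O_X)\hookrightarrow H^1(Y,\mathcal O_Y(A))$ injective, and the resulting commutative square identifies $H^1(X,\mathcal I)\to H^1(X,\mathcal O_X)$ with the map
$$H^1(Y,\mathcal O_Y(A-S))\to H^1(Y,\mathcal O_Y(A))$$
induced by multiplication by the section cutting out $S$. The main obstacle is exactly here: the Fundamental Injectivity Theorem (Proposition \ref{2}, Theorem \ref{5.1}) does \emph{not} apply, since the divisor $S$ being added is the reduced part $B_Y^{=1}$ rather than a divisor supported in the fractional boundary $G$, so this map is genuinely non-injective and its kernel is precisely what records the components. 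When $-(K_X+B)$ is nef and log big this kernel vanishes and $\Nklt(X,B)$ is connected (the connectedness corollary stated just above); the numerically trivial case only yields the weaker bound. The missing inequality $\dim_{\mathbb C}\ker\le 1$ is exactly the content of \cite[Section 2]{abun}: abundance for numerically trivial log canonical divisors first upgrades $K_X+B\equiv 0$ to $K_X+B\sim_{\mathbb R}0$, so that $A-S\sim_{\mathbb R}K_Y+G$, and then a Hodge-theoretic semipositivity argument governing the weight filtration, equivalently the dual complex of $S$, shows the kernel is at most one-dimensional. Granting that input the theorem follows formally; carrying out that Hodge-theoretic step is the hard part, which is why I would simply invoke \cite[Section 2]{abun} rather than reprove it.
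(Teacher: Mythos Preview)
Your cohomological setup through the identification of the number of components with $1+\dim_{\mathbb C}\ker\big(H^1(X,\mathcal I)\to H^1(X,\mathcal O_X)\big)$ is correct, and the Leray argument identifying this with the kernel of $H^1(Y,\mathcal O_Y(A-S))\to H^1(Y,\mathcal O_Y(A))$ is fine too. The gap is in the final paragraph: your description of \cite[Section 2]{abun} is not accurate. That reference does not upgrade $K_X+B\equiv 0$ to $K_X+B\sim_{\mathbb R}0$, and it contains no Hodge-theoretic or dual-complex argument bounding a cohomological kernel. What \cite[Proposition 2.1]{abun} actually provides is the geometric statement that for a projective $\mathbb Q$-factorial dlt pair $(Y,B_Y)$ with $K_Y+B_Y\equiv 0$, the reduced part $\llcorner B_Y\lrcorner$ has at most two connected components, and its proof runs a $(K_Y+\{B_Y\})$-MMP (this divisor is not pseudo-effective) down to a Mori fiber space, followed by a direct fiberwise count.

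This is exactly how the paper proceeds, bypassing cohomology entirely: first invoke \cite{bchm} to produce a $\mathbb Q$-factorial dlt modification $(Y,B_Y)\to(X,B)$ with $(Y,\{B_Y\})$ klt, so that the connected components of $\Nklt(X,B)$ are those of $\llcorner B_Y\lrcorner$; then run the MMP with scaling (again supplied by \cite{bchm}) and repeat the argument of \cite[Proposition 2.1]{abun}. Your log-resolution computation does not interface with this mechanism: it neither replaces the passage to a dlt model nor the MMP step, and once one grants the conclusion of \cite[Proposition 2.1]{abun} in the dlt setting the $H^1$ reformulation is already redundant. So as written the proposal has not closed the essential gap, and the invoked input is mischaracterized.
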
 
\begin{proof}
By \cite{bchm}, there is a birational morphism 
$f:(Y, B_Y)\to (X, B)$ such that 
$K_Y+B_Y=f^*(K_X+B)$, $Y$ is projective 
and $\mathbb Q$-factorial, $B_Y$ is 
effective, and $(Y, \{B_Y\})$ is klt. 
Therefore, it is sufficient to 
prove that 
$\llcorner B_Y\lrcorner$ has at most two connected components. 
We assume that $\llcorner B_Y\lrcorner \ne 0$. 
Then $K_Y+\{B_Y\}$ is $\mathbb Q$-factorial klt and  
is not pseudo-effective. Apply the arguments in \cite[Proposition 2.1]{abun} 
with using 
the LMMP with scaling (see \cite{bchm}). Then we obtain that 
$\llcorner B_Y\lrcorner$ and 
$\Nklt(X, B)$ have at most two connected components. 
\end{proof}

\subsection{Useful lemmas}
In this subsection, we prepare some useful lemmas 
for making quasi-log resolutions 
with good properties. 

\begin{prop}\label{taisetsu}
Let $f:Z\to Y$ be a proper birational morphism between 
smooth varieties and let $B_Y$ be an 
$\mathbb  R$-divisor on $Y$ such 
that $\Supp B_Y$ is simple normal crossing. 
Assume that $K_Z+B_Z=f^*(K_Y+B_Y)$ and 
that $\Supp B_Z$ is simple normal crossing. 
Then we have $$f_*\mathcal O_Z(\ulcorner -(B^{<1}_Z)\urcorner 
-\llcorner B^{>1}_Z\lrcorner)\simeq 
\mathcal O_Y(\ulcorner -(B^{<1}_Y)\urcorner 
-\llcorner B^{>1}_Y\lrcorner). $$ 
Furthermore, 
let $S$ be a simple normal crossing divisor on $Y$ such 
that $S\subset \Supp B^{=1}_Y$. Let $T$ be the union of the 
irreducible 
components of $B^{=1}_Z$ that are mapped into $S$ by $f$. 
Assume that 
$\Supp f^{-1}_*B_Y\cup \Exc (f)$ is simple normal crossing on $Z$. 
Then we have 
$$f_*\mathcal O_T(\ulcorner -(B^{<1}_T)\urcorner 
-\llcorner B^{>1}_T\lrcorner)\simeq 
\mathcal O_S(\ulcorner -(B^{<1}_S)\urcorner 
-\llcorner B^{>1}_S\lrcorner),$$ 
where 
$(K_Z+B_Z)|_T=K_T+B_T$ and $(K_Y+B_Y)|_S=K_S+B_S$. 
\end{prop}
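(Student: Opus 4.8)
My plan is to treat the two assertions in parallel, deriving the second from the first applied to a restricted morphism. For the first identity, the plan is to rewrite both sides and reduce the statement to the effectivity of a single exceptional divisor. First I would record the elementary identity $\ulcorner -(C^{<1})\urcorner-\llcorner C^{>1}\lrcorner=\ulcorner -(C-C^{=1})\urcorner$, valid for any $\mathbb R$-divisor $C$ with simple normal crossing support; applying it to $C=B_Z$ and $C=B_Y$ I set $D_Z=\ulcorner -(B_Z-B_Z^{=1})\urcorner$ and $D_Y=\ulcorner -(B_Y-B_Y^{=1})\urcorner$, which are Cartier since $Z$ and $Y$ are smooth. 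The goal becomes $f_*\mathcal O_Z(D_Z)\simeq \mathcal O_Y(D_Y)$. I would then consider $E_Z:=D_Z-f^*D_Y$ and prove it is effective and $f$-exceptional. Exceptionality is immediate: along the strict transform $f^{-1}_*Q$ of a prime divisor $Q$ on $Y$ one has $\mathrm{coeff}_{f^{-1}_*Q}(B_Z)=\mathrm{coeff}_Q(B_Y)$ (a non-exceptional divisor has discrepancy $-\mathrm{coeff}_Q(B_Y)$), so the coefficients of $D_Z$ and of $f^*D_Y$ agree there and $E_Z$ is supported on $\Exc(f)$.

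The effectivity of $E_Z$ is the technical heart of the argument. Writing $K_{Z/Y}=K_Z-f^*K_Y$ (an effective $f$-exceptional divisor, since $Y$ is smooth) and using $K_Z+B_Z=f^*(K_Y+B_Y)$, for an $f$-exceptional prime $P$ one has $\mathrm{coeff}_P(B_Z)=v_P(B_Y)-v_P(K_{Z/Y})$, where $v_P$ denotes the divisorial valuation. I would compare $\mathrm{coeff}_P(D_Z)$ with $v_P(D_Y)=\mathrm{coeff}_P(f^*D_Y)$ by an elementary round-up estimate; the simple normal crossing hypotheses on $\Supp B_Z$ and on $f^{-1}_*B_Y\cup \Exc(f)$ control the relevant multiplicities and force $\mathrm{coeff}_P(D_Z)\ge v_P(D_Y)$, i.e.\ $E_Z\ge 0$. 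Granting this, the projection formula gives $f_*\mathcal O_Z(D_Z)=f_*\mathcal O_Z(f^*D_Y+E_Z)\simeq \mathcal O_Y(D_Y)\otimes f_*\mathcal O_Z(E_Z)$, and since $E_Z$ is effective and $f$-exceptional while $Y$ is normal with $f_*\mathcal O_Z\simeq \mathcal O_Y$, one checks $f_*\mathcal O_Z(E_Z)\simeq \mathcal O_Y$ (its sections are rational functions regular in codimension one on the normal variety $Y$, and $E_Z\ge 0$ gives the reverse inclusion). This yields the first isomorphism.

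For the second identity, restricting $K_Z+B_Z=f^*(K_Y+B_Y)$ to $T$ via adjunction gives $K_T+B_T=(f|_T)^*(K_S+B_S)$, so the claim is exactly the analogue of the first identity for the proper morphism $f|_T:T\to S$ between simple normal crossing varieties. I would reduce this to the smooth case already proven: using the Mayer--Vietoris simplicial resolutions $\mathcal O_{S^\bullet}$ and $\mathcal O_{T^\bullet}$ (Definition \ref{011}) together with their compatible face maps, the computation of $f_*$ is determined on the strata of $S$ and $T$, which are smooth, and on each such stratum the induced birational morphism is covered by the first identity; adjunction is compatible with passing to strata, so the local pieces glue. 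The components of $T$ that do not dominate a component of $S$ (the exceptional components over $S$) must contribute nothing to the pushforward, which I would justify by the torsion-free/support property of Theorem \ref{ap1} (i) applied to $f|_T$, ensuring that $f_*\mathcal O_T(\ulcorner -(B_T^{<1})\urcorner-\llcorner B_T^{>1}\lrcorner)$ picks out exactly $\mathcal O_S(\ulcorner -(B_S^{<1})\urcorner-\llcorner B_S^{>1}\lrcorner)$.

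The hard part will be the effectivity of $E_Z$ in the first identity — the round-up estimate where the simple normal crossing hypotheses are genuinely used, since the naive bound $\ulcorner a(P,Y,B_Y)\urcorner\ge v_P(D_Y)$ can fail without controlling multiplicities — and, in the second identity, the bookkeeping of the reducible, non-normal structure: matching the strata of $T$ with those of $S$ through the Mayer--Vietoris resolution and controlling the exceptional components of $T$ over $S$. The algebraic identities relating $D_Z$, $f^*D_Y$ and the discrepancies are routine; the positivity/rounding inequality and the gluing are where the real content lies.
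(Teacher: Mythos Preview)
Your plan for the first identity matches the paper's: writing $E_Z=D_Z-f^*D_Y$ and showing it is effective and $f$-exceptional is exactly what the paper does. The paper's device for the effectivity step, which you flag as the hard part, is to compare with the auxiliary pair $(Y,B_Y^{=1}+\{B_Y\})$: rewriting the crepant equation as
\[
K_Z+B_Z^{=1}+\{B_Z\}=f^*(K_Y+B_Y^{=1}+\{B_Y\})+E_Z,
\]
one uses that $(Y,B_Y^{=1}+\{B_Y\})$ is lc (it is SNC with boundary coefficients) together with \cite[Lemma~2.45]{km}, which guarantees that any divisor with discrepancy $-1$ for this auxiliary pair already has discrepancy $-1$ for $(Y,B_Y)$, hence lies in $B_Z^{=1}$. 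Combined with the integrality of $E_Z$, this forces $E_Z\ge 0$. Your ``elementary round-up estimate'' would have to reproduce this; the point is that integrality plus log canonicity of the auxiliary pair is what does the work.

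Your plan for the second identity has a real gap. Restricting to $f|_T:T\to S$ and trying to reduce to strata via the Mayer--Vietoris resolutions does not work as stated: there is no compatible simplicial map $T^\bullet\to S^\bullet$, because the irreducible components of $T$ (which index $T^0$) do not correspond one-to-one with those of $S$---many are $f$-exceptional---and even on a single stratum the induced map need not be a birational morphism of smooth varieties to which the first identity applies. Knowing the pushforward on each $T^n$ does not assemble into the pushforward on $T$ without such compatibility. Your appeal to Theorem~\ref{ap1}(i) at the end is also aimed at the wrong target: that theorem constrains $R^qf_*$, not $f_*$ itself.

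The paper avoids all of this by staying on the smooth ambient variety $Z$. It uses the short exact sequence
\[
0\to\mathcal O_Z(D_Z-T)\to\mathcal O_Z(D_Z)\to\mathcal O_T(\ulcorner -(B_T^{<1})\urcorner-\llcorner B_T^{>1}\lrcorner)\to 0,
\]
pushes forward by $f$, and identifies the first two terms via the first identity (noting $T=f^*S-F$ with $F\ge 0$ exceptional, so $f_*\mathcal O_Z(D_Z-T)\simeq\mathcal O_Y(D_Y-S)$). The connecting map into $R^1f_*\mathcal O_Z(D_Z-T)$ is then killed using Theorem~\ref{ap1}(i) applied on $Z$: one writes $D_Z-T-(K_Z+\{B_Z\}+B_Z^{=1}-T)=-f^*(K_Y+B_Y)$ and checks, via the SNC hypothesis on $\Supp f^{-1}_*B_Y\cup\Exc(f)$, that no stratum of $(Z,\{B_Z\}+B_Z^{=1}-T)$ maps into $S$. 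The moral is that the second identity is not obtained by restricting the first to $T$, but by fitting the sheaf on $T$ into an exact sequence of sheaves on $Z$ where the first identity and the torsion-free theorem can be applied.
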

\begin{proof}
By $K_Z+B_Z=f^*(K_Y+B_Y)$, we 
obtain 
\begin{align*}
K_Z=&f^*(K_Y+B^{=1}_Y+\{B_Y\})\\&+f^*
(\llcorner B^{<1}_Y\lrcorner+\llcorner B^{>1}_Y\lrcorner)
-(\llcorner B^{<1}_Z\lrcorner+\llcorner B^{>1}_Z\lrcorner)
-B^{=1}_Z-\{B_Z\}.
\end{align*} 
If $a(\nu, Y, B^{=1}_Y+\{B_Y\})=-1$ for a prime divisor 
$\nu$ over $Y$, then 
we can check that $a(\nu, Y, B_Y)=-1$ by using 
\cite[Lemma 2.45]{km}. 
Since $f^*
(\llcorner B^{<1}_Y\lrcorner+\llcorner B^{>1}_Y\lrcorner)
-(\llcorner B^{<1}_Z\lrcorner+\llcorner B^{>1}_Z\lrcorner)$ is 
Cartier, we can easily see that 
$f^*(\llcorner B^{<1}_Y\lrcorner+\llcorner B^{>1}_Y\lrcorner)
=\llcorner B^{<1}_Z\lrcorner+\llcorner B^{>1}_Z\lrcorner+E$, 
where $E$ is an effective $f$-exceptional divisor. 
Thus, we obtain 
$$f_*\mathcal O_Z(\ulcorner -(B^{<1}_Z)\urcorner 
-\llcorner B^{>1}_Z\lrcorner)\simeq 
\mathcal O_Y(\ulcorner -(B^{<1}_Y)\urcorner 
-\llcorner B^{>1}_Y\lrcorner).$$  
Next, we consider 
\begin{align*}
0&\to \mathcal O_Z(\ulcorner -(B^{<1}_Z)\urcorner-
\llcorner B^{>1}_Z\lrcorner-T)\\ &\to 
\mathcal O_Z(\ulcorner -(B^{<1}_Z)\urcorner-
\llcorner B^{>1}_Z\lrcorner)
\to \mathcal O_T(\ulcorner -(B^{<1}_T)\urcorner-
\llcorner B^{>1}_T\lrcorner)\to 0. 
\end{align*} 
Since $T=f^*S-F$, where $F$ is an effective $f$-exceptional 
divisor, we can easily see that 
$$f_*\mathcal O_Z(\ulcorner -(B^{<1}_Z)\urcorner 
-\llcorner B^{>1}_Z\lrcorner-T)\simeq 
\mathcal O_Y(\ulcorner -(B^{<1}_Y)\urcorner 
-\llcorner B^{>1}_Y\lrcorner-S).$$  
We note that 
\begin{align*} 
(\ulcorner -(B^{<1}_Z)\urcorner 
-\llcorner B^{>1}_Z\lrcorner-T)-(K_Z+\{B_Z\}+B^{=1}_Z-T)
\\ =-f^*(K_Y+B_Y). 
\end{align*} 
Therefore, every local section 
of $R^1f_*\mathcal O_Z(\ulcorner -(B^{<1}_Z)\urcorner 
-\llcorner B^{>1}_Z\lrcorner-T)$ contains in its support the $f$-image 
of some strata of $(Z, \{B_Z\}+B^{=1}_Z-T)$ by 
Theorem \ref{ap1} (i). 
\begin{claim}
No strata of $(Z, \{B_Z\}+B^{=1}_Z-T)$ are 
mapped into $S$ by $f$. 
\end{claim}
\begin{proof}[Proof of Claim]
Assume that there is a stratum $C$ of $(Z, \{B_Z\}+B^{=1}_Z-T)$ such that 
$f(C)\subset S$. Note that 
$\Supp f^*S\subset \Supp f^{-1}_*B_Y\cup \Exc (f)$ and 
$\Supp B^{=1}_Z\subset \Supp f^{-1}_*B_Y\cup \Exc (f)$. 
Since $C$ is also a stratum of $(Z, B^{=1}_Z)$ and 
$C\subset \Supp f^*S$, 
there exists an irreducible component $G$ of $B^{=1}_Z$ such that 
$C\subset G\subset \Supp f^*S$. 
Therefore, by the definition of $T$, $G$ is an 
irreducible component of $T$ because $f(G)\subset S$ and $G$ is an 
irreducible component of $B^{=1}_Z$. So, 
$C$ is not a stratum of $(Z, \{B_Z\}+B^{=1}_Z-T)$. It is 
a contradiction. 
\end{proof}
On the other hand, $f(T)\subset S$. Therefore, 
$$f_*\mathcal O_T(\ulcorner -(B^{<1}_T)\urcorner
-\llcorner B^{>1}_T\lrcorner)
\to R^1f_*\mathcal O_Z(\ulcorner -(B^{<1}_Z)\urcorner
-\llcorner B^{>1}_Z\lrcorner-T)$$ is a zero map by 
the assumption on the strata of $(Z, B^{=1}_Z-T)$. 
Thus, 
$$f_*\mathcal O_T(\ulcorner -(B^{<1}_T)\urcorner 
-\llcorner B^{>1}_T\lrcorner)\simeq 
\mathcal O_S(\ulcorner -(B^{<1}_S)\urcorner 
-\llcorner B^{>1}_S\lrcorner).$$ 
We finish the proof.  
\end{proof}

The following corollary is obvious by Proposition \ref{taisetsu}. 

\begin{cor}\label{346346}
Let $X$ be a normal variety and let $B$ be an 
effective $\mathbb R$-divisor 
on $X$ such that $K_X+B$ is $\mathbb R$-Cartier. 
Let $f_i:Y_i\to X$ be a resolution of $(X, B)$ for 
$i=1, 2$. 
We put $K_{Y_i}+B_{Y_i}=f^*_i(K_X+B)$ 
and assume that $\Supp B_{Y_i}$ is simple 
normal crossing. 
Then $f_i: (Y_i, B_{Y_i})\to X$ defines a quasi-log 
structure on $[X, K_X+B]$ for $i=1, 2$. 
By taking a common log resolution of $(Y_1, B_{Y_1})$ 
and $(Y_2, B_{Y_2})$ suitably and applying {\em{Proposition 
\ref{taisetsu}}}, we can see that these 
two quasi-log structures coincide. 
Moreover, let $X'$ be the union of $X_{-\infty}$ with 
a union of some qlc centers of $[X, K_X+B]$. 
Then we can see that $f_1:(Y_1, B_{Y_1})\to 
X$ and $f_2:(Y_2, B_{Y_2})\to X$ induce the same 
quasi-log structure on $[X', (K_X+B)|_{X'}]$ by {\em{Proposition \ref{taisetsu}}}. 
\end{cor}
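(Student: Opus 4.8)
The plan is to prove Corollary \ref{346346} as an essentially formal consequence of Proposition \ref{taisetsu}. The statement has two assertions: first, that the quasi-log structure on $[X, K_X+B]$ is independent of the chosen resolution, and second, that the induced quasi-log structure on any $X'$ (a union of $X_{-\infty}$ with some qlc centers) is likewise independent. For the first assertion, the key observation is that a quasi-log structure is determined by the isomorphism class of the sheaf data in Definition \ref{quasi-log} (2)--(3), namely $f_{i*}\mathcal O_{Y_i}(\ulcorner -(B^{<1}_{Y_i})\urcorner - \llcorner B^{>1}_{Y_i}\lrcorner)$ together with the collection of images of strata. So the whole task reduces to checking that these data agree for $f_1$ and $f_2$.

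First I would reduce to a single common resolution. Since $(Y_1, B_{Y_1})$ and $(Y_2, B_{Y_2})$ both dominate $X$, I would choose a smooth variety $Z$ with proper birational morphisms $g_i: Z \to Y_i$ such that $g_i$ resolves the fiber product (or more simply, take $Z$ to be a common log resolution of the two pairs with $\Supp B_Z$ simple normal crossing). Set $h_i = f_i \circ g_i: Z \to X$ and $K_Z + B_Z = h_i^*(K_X+B)$; note this pulled-back divisor is the same regardless of $i$ because both equal $g_i^* f_i^*(K_X+B)$ and $f_i^*(K_X+B)$ is the pullback of the fixed $\mathbb R$-Cartier divisor $K_X+B$. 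Thus $B_Z$ is well-defined. Now Proposition \ref{taisetsu}, applied to each $g_i: Z \to Y_i$, gives
$$
g_{i*}\mathcal O_Z(\ulcorner -(B^{<1}_Z)\urcorner - \llcorner B^{>1}_Z\lrcorner) \simeq \mathcal O_{Y_i}(\ulcorner -(B^{<1}_{Y_i})\urcorner - \llcorner B^{>1}_{Y_i}\lrcorner).
$$
Pushing forward by $f_{i*}$ and using $h_i = f_i \circ g_i$ shows that both $f_{1*}(\cdots)$ and $f_{2*}(\cdots)$ are canonically isomorphic to $h_{i*}\mathcal O_Z(\ulcorner -(B^{<1}_Z)\urcorner - \llcorner B^{>1}_Z\lrcorner)$, which is the same sheaf for $i=1,2$. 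Hence the defining data of Definition \ref{quasi-log} (2) coincide, and $X_{-\infty}$ is the same. For (3), since each stratum of $(Z, B^{=1}_Z)$ maps onto a stratum of $(Y_i, B^{=1}_{Y_i})$ and the $f_i$-images of the latter exhaust the qlc centers, the collection $\{C\}$ is identical for both resolutions.

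For the second assertion, about $X'$, I would apply the \emph{furthermore} part of Proposition \ref{taisetsu}. Here $S$ plays the role of the subvariety of $Y_i$ (a union of components of $B^{=1}_{Y_i}$) whose image is $X'$, and $T$ is its transform on the common resolution $Z$. Proposition \ref{taisetsu} then gives the isomorphism of the restricted sheaves $f_{i*}\mathcal O_{T}(\cdots) \simeq \mathcal O_{S_i}(\cdots)$, and pushing to $X$ identifies the two induced quasi-log structures on $[X', (K_X+B)|_{X'}]$ exactly as in the first part. The main obstacle I anticipate is purely bookkeeping: one must verify that the simple-normal-crossing and strata hypotheses needed to invoke Proposition \ref{taisetsu} (in particular $\Supp f^{-1}_*B_{Y_i} \cup \Exc(g_i)$ being simple normal crossing, and $T$ being exactly the union of components of $B^{=1}_Z$ mapping into $S$) are genuinely arranged by the choice of common resolution. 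This is where care is required, but it is routine given the resolution lemma \ref{15-resol}; there is no deep new input beyond Proposition \ref{taisetsu} itself.
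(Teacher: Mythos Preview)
Your proposal is correct and follows exactly the approach the paper indicates: the paper gives no proof beyond the remark that the corollary is ``obvious by Proposition~\ref{taisetsu}'', and the statement itself already instructs you to take a common log resolution and apply that proposition. You have simply filled in the details the paper leaves implicit, including the observation that $B_Z$ is independent of $i$, the pushforward comparison via Proposition~\ref{taisetsu}, and the use of the ``furthermore'' clause for the structure on $X'$; your caveat about arranging the SNC hypotheses via the resolution lemma is the only genuine bookkeeping step, and you have identified it correctly.
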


The final results in this section are very useful and 
indispensable for some applications. 

\begin{prop}\label{taisetsu3}
Let $[X, \omega]$ be a quasi-log pair and let $f:(Y, B_Y)\to X$ be a 
quasi-log resolution. Assume that $(Y, B_Y)$ is a global 
embedded simple normal crossing 
pair as in {\em{Definition \ref{gsnc}}}. 
Let $\sigma:N\to M$ be a proper birational morphism from a smooth variety 
$N$. 
We define $K_N+D_N=\sigma^*(K_M+D+Y)$ and 
assume that $\Supp \sigma^{-1}_*(D+Y)\cup \Exc 
(\sigma)$ is simple normal 
crossing on $N$. 
Let $Z$ be the union of the irreducible components of $D^{=1}_N$ that are 
mapped into $Y$ by $\sigma$. 
Then $f\circ \sigma:(Z, B_Z)\to 
X$ is a quasi-log resolution of $[X, \omega]$, where $K_Z+B_Z=(K_N+D_N)|_{Z}$. 
\end{prop}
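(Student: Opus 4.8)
The plan is to check that $f\circ\sigma:(Z,B_Z)\to X$ satisfies the requirements of Definition \ref{quasi-log}, pushing essentially all of the cohomological content onto the already-proved Proposition \ref{taisetsu}. First I would record that $(Z,B_Z)$ is itself a global embedded simple normal crossing pair, with ambient space $N$ and auxiliary divisor $D'=D_N-Z$. Indeed $\Supp D_N\subset \Supp \sigma^{-1}_*(D+Y)\cup \Exc(\sigma)$ is simple normal crossing on the smooth variety $N$ by hypothesis, so $Z$, being a reduced union of components of $D_N^{=1}$, is a simple normal crossing divisor on $N$; since $\dim N=\dim M=\dim Z+1$, the dimension condition holds. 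Adjunction $K_Z+B_Z=(K_N+D_N)|_Z$ gives $B_Z=(D_N-Z)|_Z=D'|_Z$, while $\Supp(D'+Z)=\Supp D_N$ is simple normal crossing and $D'$ and $Z$ share no component by construction.

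For condition (1), I would use that $B_Y=D|_Y$ gives $K_Y+B_Y=(K_M+D+Y)|_Y$ by adjunction, and that $\sigma|_Z:Z\to M$ factors through $Y$. Restricting $K_N+D_N=\sigma^*(K_M+D+Y)$ to $Z$ then identifies $K_Z+B_Z$ with $(\sigma|_Z)^*(K_Y+B_Y)$. Since $\sim_{\mathbb R}$ is preserved by pullback and $f^*\omega\sim_{\mathbb R}K_Y+B_Y$, I obtain $(f\circ\sigma)^*\omega=(\sigma|_Z)^*f^*\omega\sim_{\mathbb R}K_Z+B_Z$.

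The heart of the matter is condition (2), and this is exactly the setting of the ``furthermore'' part of Proposition \ref{taisetsu}, applied to $\sigma:N\to M$ with $B_{M}=D+Y$, $B_N=D_N$, $S=Y$, and $T=Z$. The hypotheses all match: $\sigma$ is proper birational between smooth varieties, $\Supp \sigma^{-1}_*(D+Y)\cup \Exc(\sigma)$ is simple normal crossing, $Y\subset \Supp(D+Y)^{=1}$ is simple normal crossing, and $Z$ is precisely the union of the components of $D_N^{=1}$ mapped into $Y$. Noting that $(K_M+D+Y)|_Y=K_Y+B_Y$, the proposition yields
$$\sigma_*\mathcal O_Z(\ulcorner -(B_Z^{<1})\urcorner-\llcorner B_Z^{>1}\lrcorner)\simeq \mathcal O_Y(\ulcorner -(B_Y^{<1})\urcorner-\llcorner B_Y^{>1}\lrcorner).$$
Applying $f_*$ and using $(f\circ\sigma)_*=f_*\sigma_*$ together with condition (2) for the original resolution $f$, I get that the natural map $\mathcal O_X\to (f\circ\sigma)_*\mathcal O_Z(\ulcorner -(B_Z^{<1})\urcorner)$ induces the isomorphism $\mathcal I_{X_{-\infty}}\xrightarrow{\sim}(f\circ\sigma)_*\mathcal O_Z(\ulcorner -(B_Z^{<1})\urcorner-\llcorner B_Z^{>1}\lrcorner)$ with the \emph{same} subscheme $X_{-\infty}$. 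The step needing attention is that the isomorphisms of Proposition \ref{taisetsu} are the canonical ones, so that chaining them with the natural map of condition (2) for $f$ indeed recovers the natural map for $f\circ\sigma$; this is where I would be most careful.

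Finally, for condition (3) I would invoke the strata dictionary for crepant blow-ups of the ambient space, exactly as in Lemmas \ref{use}(iii) and \ref{useful-lem2}(iii): since $\sigma$ is birational and log crepant, every stratum of $(Z,B_Z)$ maps onto a stratum of $(Y,B_Y)$ and every stratum of $(Y,B_Y)$ is dominated by a stratum of $(Z,B_Z)$, so the $(f\circ\sigma)$-images of $(Z,B_Z)$-strata coincide with the $f$-images of $(Y,B_Y)$-strata. Since condition (2) produces the same $X_{-\infty}$, the images not contained in $X_{-\infty}$ are precisely the collection $\{C\}$. Thus all the conditions transfer, and the only genuinely delicate point is the naturality in condition (2); everything else is bookkeeping with adjunction, pullback of $\mathbb R$-linear equivalence, and the strata correspondence.
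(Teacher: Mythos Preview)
Your proposal is correct and follows exactly the route the paper takes: the paper's entire proof is the single sentence ``The proof of Proposition \ref{taisetsu3} is obvious by Proposition \ref{taisetsu},'' and you have spelled out precisely how the ``furthermore'' clause of Proposition \ref{taisetsu} (with $\sigma:N\to M$, $S=Y$, $T=Z$) delivers condition (2), while conditions (0), (1), and (3) are the routine adjunction and strata bookkeeping you describe. Your caution about naturality in condition (2) and the strata correspondence in condition (3) is appropriate, but both points are standard in this log-crepant setting and the paper treats them as implicit.
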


The proof of Proposition \ref{taisetsu3} is obvious by Proposition \ref{taisetsu}. 

\begin{rem}
In Proposition \ref{taisetsu3}, $\sigma:(Z, B_Z)
\to (Y, B_Y)$ is not necessarily a composition 
of {\em{embedded log transformations}} 
and blow-ups whose centers contain 
no strata of the pair $(Y, B^{=1}_Y)$ 
(see \cite[Section 2]{ambro}). 
Compare Proposition \ref{taisetsu3} 
with \cite[Remark 4.2.(iv)]{ambro}. 
\end{rem}

The final proposition in this subsection will 
play very important roles in the following sections. 

\begin{prop}\label{tai4}
Let $f:(Y, B_Y)\to X$ be a quasi-log resolution of a quasi-log pair 
$[X, \omega]$, where $(Y, B_Y)$ is a global embedded 
simple normal crossing pair as in {\em{Definition \ref{gsnc}}}. 
Let $E$ be a Cartier divisor on $X$ 
such that $\Supp E$ contains no qlc centers of $[X, \omega]$. 
By blowing up $M$, the ambient space of 
$Y$, 
inside $\Supp f^*E$, 
we can assume that $(Y, B_Y+f^*E)$ is a global 
embedded simple normal crossing 
pair. 
\end{prop}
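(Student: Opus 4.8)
The plan is to reduce the assertion to a resolution problem on the ambient space $M$ and then transport the outcome back to a quasi-log resolution by Proposition \ref{taisetsu3}. The conceptual content is that $\Supp f^*E$, being a divisor on the simple normal crossing variety $Y$, has codimension two in $M$; so in order to bring it into simple normal crossing position with $D+Y$ we are forced to blow up, and the hypothesis that $\Supp E$ contains no qlc centers is exactly what certifies that $f^*E$ behaves like a permissible divisor. Indeed, since $\Supp f^*E\subseteq f^{-1}(\Supp E)$, any stratum of $(Y,B_Y)$ contained in $\Supp f^*E$ would map into $\Supp E$; a stratum not mapped into $X_{-\infty}$ maps onto a qlc center, which would then lie in $\Supp E$, contrary to assumption. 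Hence $\Supp f^*E$ contains no stratum of $(Y,B_Y)$ dominating a qlc center, so $f^*E$ may be treated as a permissible Cartier divisor for the purposes below.

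First I would invoke the resolution mechanism of Lemma \ref{6} with $(Y,B_Y)$ in the role of the embedded simple normal crossing pair, with the ambient divisor $D$ on $M$ (from the global embedded structure of $(Y,B_Y)$, Definition \ref{gsnc}) in the role of $A$, and with $f^*E$ in the role of the permissible divisor. This produces a projective birational morphism $\sigma:N\to M$ from a smooth variety, a composition of blow-ups with all centers inside $\Supp f^*E$, such that $\Supp\sigma^{-1}_*(D+Y)\cup\Exc(\sigma)$ is simple normal crossing on $N$, such that $\sigma$ is an isomorphism outside $\Supp f^*E$, and such that the total transform of $f^*E$ is contained in a simple normal crossing divisor on $N$ meeting the relevant strict transforms transversally. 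Then I would set $K_N+D_N=\sigma^*(K_M+D+Y)$ and let $Z$ be the union of the irreducible components of $D_N^{=1}$ that are mapped into $Y$ by $\sigma$, exactly as in the hypothesis of Proposition \ref{taisetsu3}; that proposition shows that $g:=f\circ\sigma|_Z\colon(Z,B_Z)\to X$, with $K_Z+B_Z=(K_N+D_N)|_Z$, is again a quasi-log resolution of the \emph{same} qlc pair $[X,\omega]$, with $(Z,B_Z)$ a global embedded simple normal crossing pair. Because every center of $\sigma$ lies in $\Supp f^*E$ and $\Supp E$ contains no qlc center, $\sigma$ is an isomorphism at the generic point of each qlc center, so the collection $\{C\}$ and the subscheme $X_{-\infty}$ are genuinely unchanged.

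It remains to read off the conclusion. The support of $B_Z$ lies inside $\Supp\sigma^{-1}_*(D+Y)\cup\Exc(\sigma)$, while the support of $g^*E=(\sigma|_Z)^*f^*E$ lies inside the total transform of $f^*E$; by the choice of $\sigma$ both are contained in one simple normal crossing divisor on $N$ having no component in common with $Z$. Therefore $\Supp(B_Z+g^*E)$ is simple normal crossing on $Z$, and $B_Z+g^*E$ is the restriction to $Z$ of an $\mathbb R$-divisor on $N$ with the properties required by Definition \ref{gsnc}, so that $(Z,B_Z+g^*E)$ is a global embedded simple normal crossing pair. Relabelling $(Z,B_Z)$, $N$, and $g$ as $(Y,B_Y)$, $M$, and $f$ gives the statement. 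The step I expect to be the main obstacle is the combination in the middle paragraph: arranging, via the proof of Lemma \ref{6}, that all blow-up centers can be taken inside the codimension-two locus $\Supp f^*E$ while simultaneously realizing $f^*E$ as part of a simple normal crossing boundary on $N$, and checking that this is compatible with the simple normal crossing hypothesis of Proposition \ref{taisetsu3}. Once the permissibility of $f^*E$ is secured by the qlc-center hypothesis (so that the correction divisor appearing in $K_N+D_N$ does not disturb the isomorphism $\mathcal I_{X_{-\infty}}\to f_*\mathcal O_Y(\ulcorner-(B_Y^{<1})\urcorner-\llcorner B_Y^{>1}\lrcorner)$ of Definition \ref{quasi-log}), the remainder is a formal application of Proposition \ref{taisetsu3} and needs only the bookkeeping of supports carried out above.
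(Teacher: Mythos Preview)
Your approach is essentially the same as the paper's: the paper carries out directly the construction that you package as an appeal to Lemma~\ref{6} (blow up $M$ along $f^*E$, apply Hironaka so that the preimage becomes a divisor $F$ on the new $M$, then apply Szab\'o's resolution lemma to $\Supp(D+Y+F)$), and then invokes Proposition~\ref{taisetsu} where you cite its corollary Proposition~\ref{taisetsu3}. So the strategies coincide.

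One small point of care: Lemma~\ref{6} as stated asks that the divisor be permissible on the simple normal crossing variety, i.e.\ that it contain no stratum of $Y$. Your argument only rules out strata whose images are qlc centers, leaving open the possibility of a stratum of $Y$ mapped into $X_{-\infty}$ lying in $\Supp f^*E$. This is not a genuine gap, because the construction in the proof of Lemma~\ref{6} (blow-up, Hironaka, Szab\'o) requires no permissibility at all---permissibility is used there only for the conclusion $f_*\mathcal O_Y\simeq\mathcal O_X$, which you do not need. Your real tool for preserving the quasi-log structure is Proposition~\ref{taisetsu3}, which has no permissibility hypothesis. So your hedge ``may be treated as permissible for the purposes below'' is accurate; it would be cleaner simply to quote the construction inside Lemma~\ref{6} rather than the lemma itself, which is exactly what the paper does.
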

\begin{proof}
First, we take a blow-up of $M$ along $f^*E$ and apply Hironaka's 
resolution theorem to $M$. 
Then we can assume that there exists a Cartier divisor 
$F$ on $M$ such that 
$\Supp (F\cap Y)=\Supp f^*E$. 
Next, we apply Szab\'o's resolution lemma 
to $\Supp (D+Y+F)$ on $M$. 
Thus, we obtain the desired 
properties by Proposition \ref{taisetsu}. 
\end{proof}

\subsection{Ambro's original formulation}\label{3-2-6} 

Let us recall Ambro's original definition of 
quasi-log varieties. 

\begin{defn}[Quasi-log varieties]\index{quasi-log variety}
\label{qlog-def-ambro}
A {\em{quasi-log variety}} is a scheme $X$ endowed with an 
$\mathbb R$-Cartier $\mathbb R$-divisor 
$\omega$, a proper closed subscheme 
$X_{-\infty}\subset X$, and a finite collection $\{C\}$ of reduced 
and irreducible subvarieties of $X$ such that there is a 
proper morphism $f:(Y, B_Y)\to X$ from an {\em{embedded 
normal crossing pair}} 
satisfying the following properties: 
\begin{itemize}
\item[(1)] $f^*\omega\sim_{\mathbb R}K_Y+B_Y$. 
\item[(2)] The natural map 
$\mathcal O_X
\to f_*\mathcal O_Y(\ulcorner -(B_Y^{<1})\urcorner)$ 
induces an isomorphism 
$$
\mathcal I_{X_{-\infty}}\to f_*\mathcal O_Y(\ulcorner 
-(B_Y^{<1})\urcorner-\llcorner B_Y^{>1}\lrcorner),  
$$ 
where $\mathcal I_{X_{-\infty}}$ is the defining ideal sheaf of 
$X_{-\infty}$. 
\item[(3)] The collection of subvarieties $\{C\}$ coincides with the image 
of $(Y, B_Y)$-strata that are not included in $X_{-\infty}$. 
\end{itemize}
\end{defn}

For the definition of normal crossing pairs, see 
Definition \ref{625}. 

\begin{rem}\label{def-ni-tuite} 
We can always construct an embedded {\em{simple}} 
normal crossing pair $(Y', B_{Y'})$ and a proper morphism $f':(Y', 
B_{Y'})\to X$ with the above conditions (1), (2), and (3) by blowing up 
$M$ suitably, where $M$ is the ambient space 
of $Y$ (see \cite[p.218, 
embedded log transformations, and Remark 4.2.(iv)]{ambro}). 
We leave the details for the reader's 
exercies (see also Lemmas \ref{63}, 
\ref{64}, and \ref{65}, and the proof of 
Proposition \ref{taisetsu}).  
Therefore, we can assume that $(Y, B_Y)$ is a simple normal crossing 
pair in Definition \ref{qlog-def-ambro}. 
We note that the proofs of the vanishing and 
injectivity theorems on normal crossing 
pairs are much harder than on {\em{simple}} normal crossing 
pairs (see Chapter \ref{chap2}). 
Therefore, there are no advantages 
to adopt {\em{normal crossing pairs}} in the definition of 
quasi-log varieties. 
\end{rem}

The next proposition is the main result in this section. 
Proposition \ref{taisetsu} becomes very powerful if 
it is combined with Proposition \ref{taisetsu2}. 
See Proposition \ref{taisetsu3}. 

\begin{prop}\label{taisetsu2} 
We assume that 
$(Y, B_Y)$ is an embedded simple normal crossing 
pair in {\em{Definition \ref{qlog-def-ambro}}}. 
Let $M$ be the ambient space 
of $Y$. We can assume that 
there exists an $\mathbb R$-divisor 
$D$ on $M$ such that 
$\Supp (D+Y)$ is simple normal crossing and $B_Y=D|_Y$. 
\end{prop}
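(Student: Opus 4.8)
The plan is to modify the ambient space $M$ by a sequence of blow-ups so that $B_Y$ becomes the restriction of an ambient $\mathbb R$-divisor in simple normal crossing position, and then to invoke Proposition \ref{taisetsu} to see that the resulting global embedded simple normal crossing pair defines the same quasi-log variety $[X,\omega]$. In other words, I would replace the given embedded simple normal crossing resolution $f\colon (Y,B_Y)\to X$ by a global embedded one without changing conditions (1)--(3) of Definition \ref{qlog-def-ambro}, so that we may indeed assume the existence of $D$ as claimed.

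First I would produce the blow-up. The components of $\Supp B_Y$ are prime divisors on $Y$, hence subvarieties of codimension two in the smooth variety $M$. Blowing up $M$ along such a component $C$ --- as in Lemma \ref{use} when $C$ lies in the non-reduced part of $B_Y$, and as in the embedded log transformation of Lemma \ref{useful-lem2} when $C$ is a component of $B_Y^{=1}$, that is, a stratum --- has the crucial effect that, on the strict transform $Y'$ of $Y$, the component $C$ is recovered as the intersection with $Y'$ of the ambient exceptional divisor lying over it. Iterating this over all the components of $\Supp B_Y$ (and their strict transforms) and then applying Szab\'o's resolution lemma \ref{15-resol} to the total transform, I obtain a projective birational morphism $\sigma\colon N\to M$ from a smooth variety $N$ such that the strict transform $Y'$ of $Y$ is a simple normal crossing divisor on $N$ and $\Exc(\sigma)\cup\sigma^{-1}_*Y$ is simple normal crossing on $N$.

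Next I would assemble $D$. Writing $K_{Y'}+B_{Y'}=(\sigma|_{Y'})^*(K_Y+B_Y)$, every irreducible component of $\Supp B_{Y'}$ is, by the mechanism of the previous step, the intersection with $Y'$ of a uniquely determined $\sigma$-exceptional prime divisor on $N$ belonging to the simple normal crossing configuration $\Exc(\sigma)\cup\sigma^{-1}_*Y$. Taking the corresponding $\mathbb R$-linear combination with the coefficients read off from $B_{Y'}$ produces an $\mathbb R$-divisor $D'$ on $N$ with $\Supp(D'+Y')$ simple normal crossing, with $D'$ and $Y'$ having no common components (the exceptional divisors are not components of the strict transform $Y'$), and with $B_{Y'}=D'|_{Y'}$. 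Hence $(Y',B_{Y'})$ is a global embedded simple normal crossing pair in the sense of Definition \ref{gsnc}.

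It remains to check that $f\circ(\sigma|_{Y'})\colon (Y',B_{Y'})\to X$ is again a quasi-log resolution of $[X,\omega]$, and this is where Proposition \ref{taisetsu} enters and where the real content lies. Condition (1) is immediate from the crepant relation above, and condition (3) holds because the blow-up centers are strata or components of $\Supp B_Y$, so that (by Lemmas \ref{use} and \ref{useful-lem2}) $\sigma|_{Y'}$ is an isomorphism at the generic point of every stratum and carries strata to strata, leaving the collection of images unchanged. For condition (2) it suffices, after applying $f_*$, to prove the crepant invariance $(\sigma|_{Y'})_*\mathcal O_{Y'}(\ulcorner -(B_{Y'}^{<1})\urcorner-\llcorner B_{Y'}^{>1}\lrcorner)\simeq \mathcal O_Y(\ulcorner -(B_Y^{<1})\urcorner-\llcorner B_Y^{>1}\lrcorner)$, which I would deduce from Proposition \ref{taisetsu} applied to the crepant birational morphism $\sigma|_{Y'}\colon Y'\to Y$, working on the smooth irreducible components and gluing through the Mayer--Vietoris simplicial resolution. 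The main obstacle --- and the step deserving the most care --- is precisely this bookkeeping: one must guarantee that every component of $B_{Y'}$, including the exceptional divisors whose coefficients are governed by discrepancies computed for the original, possibly non-reduced, $B_Y$, extends to an ambient divisor transverse to $Y'$, while simultaneously keeping $\sigma|_{Y'}$ crepant and an isomorphism over the generic points of strata, so that Proposition \ref{taisetsu} is applicable and the defining ideal sheaf $\mathcal I_{X_{-\infty}}$ is left unchanged.
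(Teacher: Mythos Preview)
Your approach is essentially the same as the paper's: blow up $M$ successively along the irreducible components of $\Supp B_Y$, take strict transforms, and verify that the resulting globally embedded pair defines the same quasi-log structure. Two points deserve correction.

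First, your invocation of Lemma \ref{useful-lem2} is misplaced. That lemma (the embedded log transformation) replaces $Y$ by the \emph{reduced total transform}, not the strict transform; it is designed to turn a stratum into a full irreducible component of the new $Y$. What you actually want, and what you in fact use throughout, is the strict transform as in Lemma \ref{use}, which applies uniformly to every smooth irreducible component of $\Supp(S+B)$ regardless of whether the coefficient is $1$. For the same reason Szab\'o's lemma is unnecessary: the iterated blow-ups of Lemma \ref{use} already keep everything simple normal crossing (cf.\ Lemma \ref{useful-lemma}).

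Second, you cannot invoke Proposition \ref{taisetsu} directly for the pushforward invariance, since that proposition is stated for a birational morphism between \emph{smooth} varieties, whereas your $Y$ and $Y'$ are reducible simple normal crossing varieties. Your suggestion to work on components and glue via Mayer--Vietoris is the right instinct, but it is exactly the bookkeeping you flag as the main obstacle. The paper sidesteps this by verifying
\[
\sigma_{i+1*}\mathcal O_{Y_{i+1}}(\ulcorner -(B^{<1}_{Y_{i+1}})\urcorner-\llcorner B^{>1}_{Y_{i+1}}\lrcorner)\simeq \mathcal O_{Y_i}(\ulcorner -(B^{<1}_{Y_i})\urcorner-\llcorner B^{>1}_{Y_i}\lrcorner)
\]
directly for each single blow-up $\sigma_{i+1}$, by a local computation parallel to the proof of Proposition \ref{taisetsu}; doing one blow-up at a time keeps the verification elementary and avoids any gluing argument.
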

\begin{proof}
We can construct a sequence of blow-ups 
$M_k\to M_{k-1}\to\cdots\to M_0=M$ with 
the following properties. 
\begin{itemize}
\item[(i)] $\sigma_{i+1}:M_{i+1}\to M_i$ is the 
blow-up along a smooth irreducible component of $\Supp B_{Y_i}$ for 
any $i\geq 0$, 
\item[(ii)] we put $Y_0=Y$, $B_{Y_0}=B_Y$, and $Y_{i+1}$ is the 
strict transform of $Y_i$ for any $i\geq 0$, 
\item[(iii)] we define $K_{Y_{i+1}}+B_{Y_{i+1}}=\sigma^*_{i+1}
(K_{Y_i}+B_{Y_i})$ for any $i\geq 0$, 
\item[(iv)] there exists an $\mathbb R$-divisor $D$ on $M_k$ such that 
$\Supp (Y_k+D)$ is simple normal crossing 
on $M_k$ and that $D|_{Y_k}=B_{Y_k}$, and 
\item[(v)] $\sigma_*\mathcal O_{Y_k}(\ulcorner 
-(B^{<1}_{Y_k})\urcorner-\llcorner B^{>1}_{Y_k}\lrcorner)\simeq 
\mathcal O_Y(\ulcorner-(B^{<1}_Y)\urcorner-\llcorner B^{>1}_Y\lrcorner)$, 
where $\sigma: M_k\to M_{k-1}\to\cdots\to M_0=M$.  
\end{itemize}
We note that we can directly check $\sigma_{i+1*}
\mathcal O_{Y_{i+1}}(\ulcorner 
-(B^{<1}_{Y_{i+1}})\urcorner -\llcorner 
B^{>1}_{Y_{i+1}}\lrcorner)\simeq 
\mathcal O_{Y_i}(\ulcorner -(B^{<1}_{Y_i})\urcorner 
-\llcorner B^{>1}_{Y_i}\lrcorner)$ for any $i\geq 0$ 
by computations similar to the proof of Proposition \ref{taisetsu}. 
We replace $M$ and $(Y, B_Y)$ with $M_k$ and $(Y_k, B_{Y_k})$. 
\end{proof}

\begin{rem}\label{uun}
In the proof of Proposition \ref{taisetsu2}, 
$M_k$ and $(Y_k, B_{Y_k})$ depend on 
the order of blow-ups. If we 
change the order of blow-ups, we have another 
tower of blow-ups $\sigma':M'_k\to 
M'_{k-1}\to \cdots \to M'_0=M$, 
$D'$, $Y'_k$ on $M'_k$, and 
$D'|_{Y'_k}=B_{Y'_k}$ with 
the desired properties. 
The relationship between $M_k, Y_k, D$ and 
$M'_k, Y'_k, D'$ is not clear.  
\end{rem}

\begin{rem}[Multicrossing vs simple normal crossing]\label{multi}
In \cite[Section 2]{ambro}, Ambro discussed {\em{multicrossing 
singularities}} and {\em{multicrossing pairs}}.\index{multicrossing singularity}\index{multicrossing pair}  
However, we think that {\em{simple normal crossing 
varieties}} and {\em{simple normal crossing divisors}} on 
them 
are sufficient for the later arguments in \cite{ambro}. 
Therefore, we did not introduce the notion of 
{\em{multicrossing singularities}} and their simplicial 
resolutions. 
For the theory of quasi-log varieties, 
we may not even need the notion of {\em{simple normal crossing 
pairs}}. The notion of {\em{global 
embedded simple normal crossing pairs}} seems 
to be sufficient. 
\end{rem}

\subsection{A remark on the ambient space}

In this subsection, we make a remark on 
the ambient space $M$ of a quasi-log resolution 
$f:(Y, B_Y)\to X$ in Definition \ref{quasi-log}. 

The following lemma is essentially the 
same as Proposition \ref{taisetsu2}. 
We repeat it here since it is important. 
The proof is obvious. 

\begin{lem}\label{se-bl}
Let $(Y, B_Y)$ be a simple normal crossing 
pair. 
Let $V$ be a smooth 
variety such that 
$Y\subset V$. 
Then we can construct a sequence of 
blow-ups 
$$V_k\to V_{k-1}\to\cdots\to V_0=V$$ with 
the following properties. 
\begin{itemize}
\item[$(1)$] $\sigma_{i+1}:V_{i+1}\to V_i$ is the 
blow-up along a smooth irreducible 
component of $\Supp B_{Y_i}$ for 
any $i\geq 0$, 
\item[$(2)$] we put $Y_0=Y$, 
$B_{Y_0}=B_Y$, and $Y_{i+1}$ is the 
strict transform 
of $Y_i$ for any $i\geq 0$, 
\item[$(3)$] 
we define $K_{Y_{i+1}}+B_{Y_{i+1}}=\sigma^*_{i+1}
(K_{Y_i}+B_{Y_i})$ for 
any $i\geq 0$, 
\item[$(4)$] there exists 
an $\mathbb R$-divisor $D$ on $V_k$ such 
that $D|_{Y_k}=B_{Y_k}$, and 
\item[$(5)$] $\sigma_*\mathcal O_{Y_k}(\ulcorner 
-(B^{<1}_{Y_k})\urcorner-\llcorner B^{>1}_{Y_k}\lrcorner)\simeq 
\mathcal O_Y(\ulcorner-(B^{<1}_Y)\urcorner-\llcorner B^{>1}_Y\lrcorner)$, 
where $\sigma: V_k\to V_{k-1}\to\cdots\to V_0=V$.  
\end{itemize}
\end{lem}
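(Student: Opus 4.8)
The final statement is Lemma \ref{se-bl}, which asserts that for a simple normal crossing pair $(Y, B_Y)$ embedded in a smooth variety $V$, one can perform a sequence of blow-ups so that $B_{Y_k}$ extends to an $\mathbb R$-divisor $D$ on the ambient space $V_k$. This is essentially identical to Proposition \ref{taisetsu2}, the only difference being that there $Y$ was already assumed to be a divisor of codimension one in $M$, whereas here $V$ may have arbitrary dimension $\geq \dim Y$. So my plan is to follow the proof of Proposition \ref{taisetsu2} almost verbatim, checking that nothing used the codimension-one hypothesis.

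The plan is as follows. First I would reduce to a local analysis: since $(Y, B_Y)$ is a simple normal crossing pair, each irreducible component $B_i$ of $\Supp B_Y$ is, on each stratum of $Y$, a smooth divisor, and these meet transversally in the sense of Definition \ref{02}. The obstruction to extending $B_Y$ to the ambient $V$ is precisely that a component $B_i$ of $\Supp B_Y$ need not be cut out by a divisor of $V$ transverse to $Y$; it may be a "diagonal" inside $Y$ that does not extend. I would then set $V_0 = V$, $Y_0 = Y$, $B_{Y_0} = B_Y$, and blow up $V$ along a smooth irreducible component $C$ of $\Supp B_{Y_0}$, taking $Y_1$ to be the strict transform of $Y_0$ and defining $K_{Y_1}+B_{Y_1} = \sigma_1^*(K_{Y_0}+B_{Y_0})$. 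The key local computation is that after blowing up along $C$, the strict transform of that component of $\Supp B_{Y}$ is separated off and becomes (a piece of) an exceptional divisor on $V_1$ meeting $Y_1$ cleanly, so that it does extend to $V_1$. Iterating this finitely many times over all the problematic components yields property (4).

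Two verifications are then required. For property (5), I would check, exactly as in the proof of Proposition \ref{taisetsu2} (and Proposition \ref{taisetsu}), that at each step $\sigma_{i+1*}\mathcal O_{Y_{i+1}}(\ulcorner -(B^{<1}_{Y_{i+1}})\urcorner -\llcorner B^{>1}_{Y_{i+1}}\lrcorner)\simeq \mathcal O_{Y_i}(\ulcorner -(B^{<1}_{Y_i})\urcorner -\llcorner B^{>1}_{Y_i}\lrcorner)$. This is the discrepancy-comparison computation from Proposition \ref{taisetsu}: writing $K_{Y_{i+1}}=\sigma_{i+1}^*(K_{Y_i}+B^{=1}_{Y_i}+\{B_{Y_i}\})+\cdots$ and using that $\sigma_{i+1}^*(\llcorner B^{<1}_{Y_i}\lrcorner + \llcorner B^{>1}_{Y_i}\lrcorner)$ differs from $\llcorner B^{<1}_{Y_{i+1}}\lrcorner + \llcorner B^{>1}_{Y_{i+1}}\lrcorner$ by an effective $\sigma_{i+1}$-exceptional divisor. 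Composing these isomorphisms over all steps gives (5) with $\sigma:V_k\to V$ the total composite. Properties (1), (2), (3) hold by construction.

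The main obstacle, and the part needing genuine care, is establishing the termination of the process and the transversality after blow-up — i.e. that finitely many blow-ups along smooth components of $\Supp B_{Y_i}$ actually achieve property (4), that $D|_{Y_k}=B_{Y_k}$ for a genuine $\mathbb R$-divisor $D$ on $V_k$. I expect this to reduce to the same local coordinate analysis underlying Szab\'o's resolution lemma (\ref{15-resol}): after blowing up a component, its strict transform acquires a transverse extension, and one measures progress by the number of components of $\Supp B_{Y_i}$ not yet extendable. Since $(Y, B_Y)$ is \emph{simple} normal crossing, each such component is smooth, so the blow-up centers in (1) are legitimate, and a straightforward induction on this invariant terminates. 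Because this is purely a local computation on simple normal crossing data, identical to what was already carried out for Proposition \ref{taisetsu2}, I would simply remark that the proof is obvious given that earlier argument rather than reproduce it in full.
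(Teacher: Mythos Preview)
Your proposal is correct and takes essentially the same approach as the paper: the paper simply states that Lemma~\ref{se-bl} ``is essentially the same as Proposition~\ref{taisetsu2}'' and that ``the proof is obvious,'' and you have correctly identified this reduction along with the only formal difference (the codimension of $Y$ in $V$). Your outline of the step-by-step pushforward isomorphism for property~(5) via the computation in Proposition~\ref{taisetsu} matches exactly what the paper invokes in the proof of Proposition~\ref{taisetsu2}.
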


When a simple normal crossing variety $Y$ is 
quasi-projective, we can make 
a {\em{singular}} ambient space whose singular locus 
dose not contain any strata of $Y$.  

\begin{lem}\label{gene-int}
Let $Y$ be a simple normal crossing 
variety. Let $V$ be a smooth 
quasi-projective variety such that 
$Y\subset V$. Let $\{P_i\}$ be 
any finite set of closed points of 
$Y$. 
Then we can find a quasi-projecive 
variety $W$ such that 
$Y\subset W\subset V$, $\dim W=\dim Y+1$, and 
$W$ is smooth 
at $P_i$ for any $i$. 
\end{lem}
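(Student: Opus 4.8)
The plan is to realize $W$ as a complete intersection inside $V$ cut out by hypersurfaces that all contain $Y$, chosen generically except for a prescribed first-order behavior at the points $P_i$. Write $d=\dim Y$ and $n=\dim V$. Fix a projective embedding $V\subseteq \overline V\subseteq \mathbb P^N$, set $\mathcal L=\mathcal O_{\overline V}(1)$ (ample), and let $\mathcal I_Y\subset \mathcal O_{\overline V}$ be the ideal sheaf of the closure $\overline Y$ of $Y$. By Serre's theorem, $\mathcal I_Y\otimes \mathcal L^m$ is globally generated for $m\gg 0$, so its linear system $\Lambda_m=|\mathcal I_Y\otimes \mathcal L^m|$ is base point free exactly off $\overline Y$, and every member contains $\overline Y$. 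I would cut $\overline V$ by $n-d-1$ suitable members of $\Lambda_m$ and intersect with $V$.

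The key local input is that $Y$ is a simple normal crossing variety, so by Definition \ref{01} the embedding dimension of $Y$ at any point is at most $d+1$; hence $T_{P_i}Y\subseteq T_{P_i}V$ has dimension $\leq d+1$, and the conormal space $N^*_{P_i}:=(\mathcal I_Y+\mathfrak m_{P_i}^2)/\mathfrak m_{P_i}^2\subseteq \mathfrak m_{P_i}/\mathfrak m_{P_i}^2$ has dimension $\geq n-d-1$. I would seek sections $s_1,\dots,s_{n-d-1}\in H^0(\overline V,\mathcal I_Y\otimes\mathcal L^m)$ whose differentials $ds_j|_{P_i}$ are linearly independent in $N^*_{P_i}$ for every $i$. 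Then by the Jacobian criterion $W:=\{s_1=\dots=s_{n-d-1}=0\}\cap V$ is smooth of dimension $d+1$ at each $P_i$, with $T_{P_i}W\supseteq T_{P_i}Y$; and $Y\subseteq W$ since each $s_j$ vanishes on $Y$.

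To produce such sections I would show that for $m\gg 0$ the jet map $H^0(\overline V,\mathcal I_Y\otimes\mathcal L^m)\to \bigoplus_i N^*_{P_i}\otimes(\mathcal L^m)_{P_i}$ is surjective. This follows from Serre vanishing on the projective $\overline V$: writing $\mathcal A=\mathcal I_Y\otimes\mathcal L^m$ and $Z=\{P_i\}$, the differential factors through $\mathcal A/\mathfrak m_Z^2\mathcal A$, and $H^0(\mathcal A)\to H^0(\mathcal A/\mathfrak m_Z^2\mathcal A)$ is onto once $H^1(\mathfrak m_Z^2\mathcal A)=0$ (true for $m\gg 0$), while the $1$-jet quotient $\mathcal I_{Y,P_i}/\mathfrak m_{P_i}^2\mathcal I_{Y,P_i}$ surjects onto $N^*_{P_i}$ by $f\mapsto f\bmod \mathfrak m_{P_i}^2$. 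Surjectivity makes the locus of tuples $(s_1,\dots,s_{n-d-1})$ with independent differentials at all $P_i$ a nonempty open, hence dense, subset of $H^0(\overline V,\mathcal A)^{\oplus(n-d-1)}$.

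Finally I would control the global dimension. Since $\Lambda_m$ is base point free off $\overline Y$, a general $(n-d-1)$-tuple meets in dimension at most $d+1$ there, so $\dim(W\setminus Y)\leq d+1$; as every component of $W$ has dimension $\geq n-(n-d-1)=d+1$ by Krull and none can lie in the $d$-dimensional $Y$, one gets $\dim W=d+1$ with $Y\subseteq W$. Both the general-position condition for the dimension bound and the first-order condition at the $P_i$ are nonempty open (hence dense) in the irreducible parameter space, so a single general tuple satisfies both; replacing $W$ by $W_{\mathrm{red}}$ if necessary (which alters neither the dimension nor smoothness at the $P_i$) yields the claim. The main obstacle is precisely this uniform first-order statement — surjectivity of the jet map onto all the conormal spaces $N^*_{P_i}$ simultaneously — and its compatibility with the genericity needed for the dimension count; the geometric fact that simple normal crossing forces the embedding dimension of $Y$ to be $\leq d+1$ (so that $W$ can be smooth even at crossing points of $Y$) is what makes the whole argument possible.
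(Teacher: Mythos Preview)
Your proof is correct and follows essentially the same approach as the paper: both produce $W$ as a complete intersection of $\dim V-\dim Y-1$ general members of $H^0(\mathcal I_Y\otimes\mathcal L^m)$ for $m\gg 0$, using Serre vanishing to make the first-jet map $H^0(\mathcal I_Y\otimes\mathcal L^m)\to \mathcal I_Y\otimes\mathcal L^m\otimes\mathcal O/\mathfrak m_{P_i}^2$ surjective at each $P_i$. Your write-up is more detailed than the paper's sketch---in particular you make explicit the key point that the simple normal crossing hypothesis forces the embedding dimension of $Y$ at each $P_i$ to be at most $d+1$, so that the conormal space is large enough to accommodate $n-d-1$ independent differentials.
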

\begin{proof}
Let $\mathcal I_Y$ be the defining ideal 
sheaf of $Y$ on $V$. 
Let $H$ be an ample Cartier divisor. 
Then $\mathcal I_Y\otimes \mathcal O_Y(dH)$ is 
generated by global sections for $d\gg 0$. We can 
further assume that 
$$
H^0(V, \mathcal I_Y\otimes 
\mathcal O_V(dH))\to \mathcal I_Y\otimes 
\mathcal O_V(dH)\otimes \mathcal O_V/m^2_{P_i}
$$ 
is surjective for any $i$, where 
$m_{P_i}$ is the maximal ideal corresponding to 
$P_i$. 
By taking a complete intersection of 
$(\dim V-\dim Y-1)$ general members in 
$H^0(V, \mathcal I_Y\otimes \mathcal O_V(dH))$, we 
obtain a desired variety $W$. 
\end{proof}

Of course, we can not always make 
$W$ smooth in Lemma \ref{gene-int}. 

\begin{ex}
Let $V\subset \mathbb P^5$ be the Segre 
embedding of $\mathbb P^1\times \mathbb P^2$. 
In this case, there are no smooth 
hypersurfaces of $\mathbb P^5$ containing $V$. 
We can check it as follows. 
If there exists a smooth hypersurface $S$ such that 
$V\subset S\subset \mathbb P^5$, then 
$\rho (V)=\rho (S)=\rho (\mathbb P^5)=1$ by 
the Lefschetz hyperplane theorem. 
It is a contradiction. 
\end{ex}

By the above lemmas, we can prove the final lemma. 

\begin{lem}\label{le-amb}
Let $(Y, B_Y)$ be a simple normal 
crossing 
pair such that 
$Y$ is quasi-projective. Then 
there exist a global embedded simple normal 
crossing 
pair $(Z, B_Z)$ and 
a morphism 
$\sigma: Z\to Y$ such that 
$$\sigma _*\mathcal O_Z(\ulcorner -(B^{<1}_Z)\urcorner
-\llcorner B^{>1}_Z\lrcorner)
\simeq \mathcal O_Y(\ulcorner -(B^{<1}_Y)\urcorner
-\llcorner B^{>1}_Y\lrcorner). $$ 
\end{lem}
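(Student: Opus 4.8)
The plan is to first cut $Y$ down to a divisor in a space of the correct dimension using Lemma \ref{gene-int}, then to resolve and apply Lemma \ref{se-bl}. Since $Y$ is quasi-projective, I would begin by fixing a closed embedding $Y\subset V$ into a smooth quasi-projective variety $V$ (for instance a projective space). For each of the finitely many strata $S$ of the simple normal crossing variety $Y$, choose a general closed point $P_S\in S$, and let $\{P_i\}$ be the resulting finite set. Applying Lemma \ref{gene-int} produces a quasi-projective variety $W$ with $Y\subset W\subset V$, $\dim W=\dim Y+1$, and $W$ smooth at every $P_i$. In particular $Y$ is now a divisor on $W$, and since the singular locus $\Sigma_W$ of $W$ is closed and misses the general point $P_S$ of each (irreducible) stratum $S$, the intersection $\Sigma_W\cap S$ is a proper closed subset of $S$; thus $\Sigma_W$ contains no stratum of $Y$, i.e. $W$ is smooth over the generic point of every stratum.

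Next I would resolve. Take a log resolution $\rho:M\to W$ of the pair $(W,Y)$ together with the support of $B_Y$, chosen so that it is an isomorphism over the smooth locus of $W$; applying Szab\'o's resolution lemma (\ref{15-resol}) I arrange that the strict transform $Z$ of $Y$, together with $\Exc(\rho)$, is a simple normal crossing divisor on the smooth $(\dim Y+1)$-dimensional variety $M$. Then $Z$ is a simple normal crossing divisor on $M$, and $\sigma_0:=\rho|_Z:Z\to Y$ is a proper birational morphism. Because $W$ is smooth and $Y$ is simple normal crossing in $W$ near each general point $P_S$, the map $\rho$ is an isomorphism over the generic point of every stratum of $Y$, so $\sigma_0$ is an isomorphism there as well. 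Defining $K_Z+B_Z=\sigma_0^*(K_Y+B_Y)$, the support of $B_Z$ is simple normal crossing on $Z$ and the strata of $(Z,B_Z)$ map onto those of $(Y,B_Y)$.

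At this point $(Z,B_Z)$ is a simple normal crossing pair with $Z$ a divisor in the smooth space $M$ of dimension $\dim Y+1$. I would then invoke Lemma \ref{se-bl} with ambient space $M$: the blow-ups there are centered in smooth components of $\Supp B_Z$, which have codimension $\ge 2$ in $M$, so they keep $M$ smooth of dimension $\dim Y+1$ and keep the strict transform of $Z$ a divisor. This yields an ambient $\mathbb{R}$-divisor $D$ on the final blow-up with $D|_Z=B_Z$, making $(Z,B_Z)$ a global embedded simple normal crossing pair in the sense of Definition \ref{gsnc}, while property (5) of Lemma \ref{se-bl} preserves the sheaf $\mathcal{O}_Z(\ulcorner -(B_Z^{<1})\urcorner-\llcorner B_Z^{>1}\lrcorner)$ under that stage. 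Writing $\sigma$ for the composite $Z\to Y$, the remaining identity $\sigma_*\mathcal{O}_Z(\ulcorner -(B_Z^{<1})\urcorner-\llcorner B_Z^{>1}\lrcorner)\simeq \mathcal{O}_Y(\ulcorner -(B_Y^{<1})\urcorner-\llcorner B_Y^{>1}\lrcorner)$ follows from the same local computation and torsion-free argument as in the proof of Proposition \ref{taisetsu}: the blow-up stages are covered by Lemma \ref{se-bl}(5), and for the resolution stage $\sigma_0$ one uses that it is an isomorphism over the generic point of each stratum of $Y$, so that Theorem \ref{ap1} (i) forces the relevant connecting homomorphisms to vanish.

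The main obstacle is to guarantee that all of the modifications—both the resolution $\rho$ of the singular $W$ and the Szab\'o blow-ups making the transform of $Y$ simple normal crossing—are isomorphisms over the generic point of every stratum of $Y$. This is exactly what makes the stratification of $(Z,B_Z)$ match that of $(Y,B_Y)$ and what drives the vanishing of the connecting maps in the pushforward computation. Securing it rests on the general choice of the points $P_S$ in Lemma \ref{gene-int}, which forces $\Sigma_W$ and the locus where $Y$ fails to be simple normal crossing in $W$ to avoid the generic points of the strata; I would need to check carefully that a sufficiently general complete intersection $W$ meets the components of $Y$ transversally along each stratum, so that no blow-up is needed there.
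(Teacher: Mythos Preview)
Your approach uses the same three ingredients as the paper---Lemma~\ref{se-bl}, Lemma~\ref{gene-int}, and Hironaka/Szab\'o---but you apply them in the order \emph{gene-int $\to$ resolve $\to$ se-bl}, whereas the paper does \emph{se-bl $\to$ gene-int $\to$ resolve}. The paper first runs Lemma~\ref{se-bl} on the big smooth ambient $V$ to produce an $\mathbb{R}$-divisor $D$ on $V$ with $D|_Y=B_Y$; only then does it cut down to $W$ via Lemma~\ref{gene-int} (choosing the points at generic points of the strata of the \emph{pair} $(Y,B_Y)$, not just of $Y$) and resolve $(W, Y+D|_W)$ by Szab\'o.

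This difference in order is what makes your version gappy. In your resolution step you write ``a log resolution of $(W,Y)$ together with the support of $B_Y$'', but $\Supp B_Y$ is not a divisor on $W$; it has codimension $2$ there, so Szab\'o's lemma does not apply to it directly, and there is no clean reason why the strict transform of $B_Y$ together with $\Exc(\rho)|_Z$ should be simple normal crossing on $Z$. Without this, $(Z,B_Z)$ is not known to be a simple normal crossing pair, and Lemma~\ref{se-bl} cannot be invoked. The paper's order sidesteps this entirely: because $D$ is already a divisor on $V$, its restriction $D|_W$ is a divisor on $W$ (they note one can take $W\not\subset\Supp D$), so resolving the pair $(W,\,Y+D|_W)$ by Szab\'o makes $Z+D'+\Exc(\rho)$ simple normal crossing on $M$ in one stroke, and $B_Z$ is automatically simple normal crossing on $Z$. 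A second, smaller point: you pick the $P_i$ at generic points of strata of $Y$, but for the resolution to be an isomorphism over all the relevant loci (and hence for the pushforward identity to go through) you need $W$ smooth at the generic point of every stratum of the pair $(Y,B_Y)$, as the paper does.
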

\begin{proof}
Let $V$ be a smooth quasi-projective 
variety such that 
$Y\subset V$. 
By Lemma \ref{se-bl}, we can assume 
that there exists an $\mathbb R$-divisor 
$D$ on $V$ such that 
$D|_Y=B_Y$. 
Then we apply Lemma \ref{gene-int}. 
We can find a quasi-projectie 
variety $W$ such that 
$Y\subset W\subset V$, $\dim W=\dim Y+1$, and 
$W$ is smooth at the generic point of any stratum 
of $(Y, B_Y)$. 
Of course, we can make $W\not\subset \Supp D$ 
(see the proof of Lemma \ref{gene-int}). 
We apply 
Hironaka's resolution to $W$ and 
use Szab\'o's resolution lemma. 
Then we obtain a desired global embedded 
simple normal crossing 
pair $(Z, B_Z)$. 
\end{proof}

Therefore, we obtain the following statement. 

\begin{thm}\label{goodbye}
In {\em{Definition \ref{quasi-log}}}, 
it is sufficient to assume that $(Y, B_Y)$ is 
a simple normal crossing pair if $Y$ is 
quasi-projective. 
\end{thm}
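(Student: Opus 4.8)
The plan is to show that any quasi-log structure presented by a resolution $f:(Y,B_Y)\to X$ in which $(Y,B_Y)$ is \emph{merely} a simple normal crossing pair, with $Y$ quasi-projective, can be replaced by one coming from a \emph{global embedded} simple normal crossing pair, without altering $\omega$, $X_{-\infty}$, or the collection $\{C\}$. Thus the weaker hypothesis produces exactly the quasi-log varieties of Definition \ref{quasi-log}. The entire argument rests on Lemma \ref{le-amb}, which is the only place where quasi-projectivity of $Y$ enters.

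First I would apply Lemma \ref{le-amb} to the given simple normal crossing pair $(Y,B_Y)$. This yields a global embedded simple normal crossing pair $(Z,B_Z)$, with $K_Z+B_Z=\sigma^*(K_Y+B_Y)$, together with a morphism $\sigma:Z\to Y$ satisfying $$\sigma_*\mathcal O_Z(\ulcorner -(B^{<1}_Z)\urcorner-\llcorner B^{>1}_Z\lrcorner)\simeq \mathcal O_Y(\ulcorner -(B^{<1}_Y)\urcorner-\llcorner B^{>1}_Y\lrcorner).$$ I then set $g=f\circ\sigma:(Z,B_Z)\to X$ and claim that $g$ is a quasi-log resolution of $[X,\omega]$ in the precise sense of Definition \ref{quasi-log}. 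Condition (1) is immediate, since $g^*\omega=\sigma^*f^*\omega\sim_{\mathbb R}\sigma^*(K_Y+B_Y)=K_Z+B_Z$. For condition (2), I would push the displayed isomorphism forward by $f$ and use $g_*=f_*\sigma_*$ to obtain $g_*\mathcal O_Z(\ulcorner -(B^{<1}_Z)\urcorner-\llcorner B^{>1}_Z\lrcorner)\simeq f_*\mathcal O_Y(\ulcorner -(B^{<1}_Y)\urcorner-\llcorner B^{>1}_Y\lrcorner)$; applying the same computation with the $\llcorner B^{>1}\lrcorner$-term suppressed gives $g_*\mathcal O_Z(\ulcorner -(B^{<1}_Z)\urcorner)\simeq f_*\mathcal O_Y(\ulcorner -(B^{<1}_Y)\urcorner)$. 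Hence the natural map $\mathcal O_X\to g_*\mathcal O_Z(\ulcorner -(B^{<1}_Z)\urcorner)$ coincides with the one attached to $f$ and induces the very same isomorphism onto $\mathcal I_{X_{-\infty}}$; in particular the subscheme $X_{-\infty}$ is unchanged.

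It remains to check condition (3), which is where the care must be taken. The morphism $\sigma$ built in Lemma \ref{le-amb} is a composition of blow-ups along smooth irreducible components of the boundary (as in Lemma \ref{se-bl}) followed by the resolution of the auxiliary variety $W$, and by the choice of $W$ in Lemma \ref{gene-int} it can be arranged to be an isomorphism at the generic point of every stratum of $(Y,B_Y)$. This is the key point, and I expect it to be the main obstacle: one must verify that $\sigma$ is strata-preserving, so that the $\sigma$-image of each stratum of $(Z,B_Z)$ is a stratum of $(Y,B_Y)$ and, conversely, every stratum of $(Y,B_Y)$ is dominated by a stratum of $(Z,B_Z)$, with no qlc center lost or created. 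Granting this, the $g$-images of the strata of $(Z,B_Z)$ coincide with the $f$-images of the strata of $(Y,B_Y)$, so the collection $\{C\}$ and the locus $X_{-\infty}$ are preserved, establishing (3). All genuinely geometric content is absorbed into Lemma \ref{le-amb}; the remainder is bookkeeping, the only subtlety being the strata correspondence just described, which follows from property (5) of Lemma \ref{se-bl} together with the smoothness of $W$ at the generic points of strata guaranteed by Lemma \ref{gene-int}. This completes the reduction and proves the theorem.
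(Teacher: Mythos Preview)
Your proposal is correct and follows exactly the route the paper intends: the paper presents Theorem \ref{goodbye} as an immediate consequence of Lemma \ref{le-amb} without further argument, and you have simply spelled out the verification that $g=f\circ\sigma:(Z,B_Z)\to X$ satisfies conditions (1)--(3) of Definition \ref{quasi-log}. One small remark: your auxiliary claim that ``the same computation with the $\llcorner B^{>1}\lrcorner$-term suppressed'' yields $g_*\mathcal O_Z(\ulcorner -(B^{<1}_Z)\urcorner)\simeq f_*\mathcal O_Y(\ulcorner -(B^{<1}_Y)\urcorner)$ is not stated in Lemma \ref{le-amb} and is in fact unnecessary---condition (2) only requires the isomorphism for the sheaf carrying the $-\llcorner B^{>1}\lrcorner$ term, and the ``natural map'' from $\mathcal O_X$ exists automatically since $\ulcorner -(B^{<1}_Z)\urcorner\geq 0$.
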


We note that 
we have a natural quasi-projective 
ambient space $M$ 
in almost all the applications of the theory of 
quasi-log varieties to log canonical pairs. 
Therefore, Definition \ref{quasi-log} 
seems to be reasonable. 

We close this subsection with a remark on Chow's lemma. 
Proposition \ref{bottle} is a 
bottleneck to construct a good ambient space 
of a simple normal crossing pair. 

\begin{prop}\label{bottle}
There exists a complete simple normal crossing 
variety $Y$ with the following property. 
If $f:Z\to Y$ is a proper surjective morphism from 
a simple normal crossing variety $Z$ such 
that $f$ is an isomorphism 
at the generic point of any stratum of $Z$, then 
$Z$ is non-projective. 
\end{prop}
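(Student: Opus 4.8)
The plan is to produce $Y$ from a complete \emph{non-projective} toric variety and to detect the non-projectivity of any admissible $Z$ by an effective $1$-cycle that is numerically trivial. Recall that in Example \ref{fp-example} we constructed a smooth complete toric variety $X$ with $\overline{NE}(X)=N_1(X)$. Since the cone of curves fills the whole space $N_1(X)$, the origin lies in its interior, so there is a positive relation $\sum_i a_i[C_i]=0$ (with $a_i>0$) among the classes of finitely many torus-invariant rational curves $C_i\subset X$; equivalently, the effective $1$-cycle $\gamma=\sum_i a_iC_i$ satisfies $\gamma\equiv 0$. First I would arrange these curves to sit in the closed strata of a suitable \emph{reducible} simple normal crossing variety $Y$: starting from the toric boundary of $X$ (or a small modification of it), I would build $Y$ so that each $C_i$ is an irreducible component of the double locus of $Y$, that is, a one-dimensional stratum of $Y$ in the sense of Definition \ref{0111}.

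Next, given any $f\colon Z\to Y$ as in the statement, I would track the strict transforms. Because $f$ is an isomorphism at the generic point of every stratum of $Z$, the induced correspondence on strata is compatible with the stratification: each one-dimensional stratum $C_i$ of $Y$ has a unique stratum $C_i^Z$ of $Z$ dominating it, and $f\colon C_i^Z\to C_i$ is a proper birational morphism of smooth complete curves, hence an isomorphism (each $C_i\simeq\mathbb P^1$). Thus the $C_i^Z$ are genuine irreducible curves on $Z$ and $\gamma^Z=\sum_i a_iC_i^Z$ is a nonzero effective $1$-cycle. Then, if $Z$ were projective with an ample divisor $A$, we would have $A\cdot C_i^Z>0$ for every $i$, and therefore $A\cdot\gamma^Z=\sum_i a_i(A\cdot C_i^Z)>0$. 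The contradiction I am aiming for is $A\cdot\gamma^Z=0$, which would follow once the numerical relation $\gamma\equiv 0$ on $Y$ is shown to persist as $\gamma^Z\equiv 0$ on $Z$.

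The hard part will be exactly this persistence of numerical triviality. The relation $\sum_i a_i[C_i]=0$ is a statement in $N_1(Y)$, whereas $A$ is a line bundle on $Z$ that is not pulled back from $Y$, so the triviality does not transfer formally. My plan is to make the relation \emph{geometric} rather than merely numerical: I would choose $Y$ so that $\gamma$ is the restriction to the double locus of a boundary relation coming from the combinatorics of the fan (a wall relation), realized by actual rational equivalences supported on the strata through which the $C_i$ pass. Since $f$ is an isomorphism at the generic points of all these strata, such a rational equivalence pulls back to a rational equivalence on $Z$ witnessing $\gamma^Z\equiv 0$, and any curves contracted or introduced by $f$ over the lower-dimensional strata contribute only classes that can be absorbed without changing the sign computation. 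I expect that verifying this---encoding the toric wall relation as strata-supported rational equivalences and controlling the behaviour of $f$ over the deeper strata---is the only genuinely delicate point; the remaining steps (the SNC bookkeeping and the ampleness contradiction) are routine. As an alternative source for $Y$, one could run the same strategy on a Hironaka-type smooth complete non-projective threefold, where the numerically trivial effective cycle arises from a symmetric pair of blow-ups and is visibly preserved under modifications that are isomorphisms along the relevant strata.
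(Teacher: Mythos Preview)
Your setup matches the paper's: the source is the smooth complete non-projective toric threefold $X$ of Example~\ref{fp-example}, and the obstruction is a numerically trivial effective torus-invariant $1$-cycle $\gamma=\sum_i a_iC_i$ on $X$. The paper makes the construction of $Y$ concrete by taking $V=X\times\mathbb P^1$ and letting $Y=V\setminus T$ be its toric boundary; then one irreducible component $Y'$ of $Y$ is a copy of $X$, and every torus-invariant curve on $Y'\simeq X$ is automatically a stratum of $Y$. This is cleaner than trying to arrange the $C_i$ as strata by hand.

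The genuine gap is in your ``hard part''. You try to transport the relation $\sum_i a_i[C_i]=0$ from $N_1(X)$ up to $N_1(Z)$, and you correctly sense this is delicate: a line bundle on $Z$ need not come from $Y$, and your sketch via ``rational equivalences supported on strata'' does not obviously survive whatever $f$ does over the closed points of the $C_i$. The paper avoids this entirely by reversing the direction and reducing to a single smooth component. Let $Z'\subset Z$ be the irreducible component dominating $Y'$; then $g=f|_{Z'}\colon Z'\to Y'\simeq X$ is a proper birational morphism of smooth varieties which is an isomorphism over the generic point of each $C_i$. If $Z$ were projective then so is $Z'$; choose a very ample effective divisor $A$ on $Z'$ containing no component of $g^{-1}(\Supp\gamma)$ and \emph{push forward}: $B=g_*A$ is an effective Cartier divisor on the smooth variety $X$ whose support contains no $C_i$, yet $\Supp B\cap\Supp\gamma\neq\emptyset$ since $A$ meets the strict transforms of the $C_i$. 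Hence $B\cdot\gamma>0$, contradicting $\gamma\equiv 0$. The numerical relation is used on $X$, where it already holds; what is transported is the ample divisor, not the cycle.
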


\begin{proof}
We take a smooth complete non-projective toric variety 
$X$ (cf.~Example \ref{fp-example}). 
We put $V=X\times \mathbb P^1$. Then 
$V$ is a toric variety. 
We consider $Y=V\setminus T$, where 
$T$ is the big torus of $V$. 
We will see that $Y$ has the desired property. 
By the above construction, there is 
an irreducible component $Y'$ of $Y$ that is 
isomorphic to $X$. 
Let $Z'$ be the irreducible component of $Z$ mapped 
onto $Y'$ by $f$. 
So, it is sufficient to see that 
$Z'$ is not projective. 
On $Y'\simeq X$, there is an torus invariant effective 
one cycle $C$ such that 
$C$ is numerically trivial. 
By the construction and the assumption, $g=f|_{Z'}: 
Z'\to Y'\simeq X$ is birational and an isomorphism 
over the generic point of any torus invariant curve 
on $Y'\simeq X$. 
We note that any torus invariant curve on $Y'\simeq 
X$ is a stratum of $Y$. 
We assume that $Z'$ is projective, 
then there is a very ample 
effective divisor $A$ on $Z'$ such that 
$A$ does not contain any irreducible components of 
the inverse image of $C$. 
Then $B=f_*A$ is an effective Cartier divisor 
on $Y'\simeq X$ such that 
$\Supp B$ contains no irreducible 
components of $C$. 
It is a contradiction because $\Supp B\cap C\ne \emptyset$ 
and $C$ is numerically trivial. 
\end{proof}

The phenomenon described in Proposition \ref{bottle} 
is annoying when we treat non-normal 
varieties. 

\section{Fundamental Theorems}\label{33-sec}

In this section, we will prove the fundamental theorems 
for quasi-log pairs. 
First, we prove the base point free theorem for quasi-log 
pairs in the subsection \ref{331-ssec}. 
The reader can find that the notion of quasi-log 
pairs is very useful for inductive arguments. 
Next, we give a proof to the rationality theorem 
for quasi-log pairs in the subsection \ref{332-ssec}. 
Our proof is essentially the same as the proof for 
klt pairs. 
In the subsection \ref{333-ssec}, 
we prove the cone theorem for quasi-log varieties. 
The cone and contraction theorems are the main results 
in this section. 

\subsection{Base Point Free Theorem}\label{331-ssec}
The next theorem is the main theorem of this subsection. 
It is \cite[Theorem 5.1]{ambro}. 
This formulation is useful for 
the inductive treatment of log canonical pairs. 

\begin{thm}[Base Point Free Theorem]\index{base 
point free theorem}\label{bpf-th} 
Let $[X, \omega]$ be a quasi-log pair and let $\pi:X\to S$ be 
a projective morphism. 
Let $L$ be a $\pi$-nef Cartier divisor on $X$. 
Assume that 
\begin{itemize}
\item[{\em{(i)}}] $qL-\omega$ is $\pi$-ample for some real number 
$q>0$, and 
\item[{\em{(ii)}}] $\mathcal O_{X_{-\infty}}(mL)$ is 
$\pi|_{X_{-\infty}}$-generated for $m\gg 0$. 
\end{itemize} 
Then $\mathcal O_X(mL)$ is $\pi$-generated for $m\gg0$, 
that is, there exists a positive number 
$m_0$ such that 
$\mathcal O_X(mL)$ is $\pi$-generated for 
any $m\geq m_0$.  
\end{thm}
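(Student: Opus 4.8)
The plan is to run the X-method exactly as in the klt base point free theorem (the proof of Theorem \ref{z-bpf} reproduced in Section \ref{subsec-X}), but to replace every appeal to the Kawamata--Viehweg vanishing theorem and to Shokurov's non-vanishing theorem (Theorem \ref{z-non}) by the adjunction and vanishing package for quasi-log pairs (Theorem \ref{adj-th}), and to carry the whole argument through by induction on $\dim X$. First I would reduce to the case where $S$ is affine, since $\pi$-generation is local on $S$; the assertion then becomes that $\mathcal{O}_X(mL)$ is globally generated for $m\gg 0$. It suffices to show that the relative base locus avoids every point $x\in X$ for a uniform $m$, and I would treat the two cases $x\in X_{-\infty}$ and $x\notin X_{-\infty}$ separately.

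The points of $X_{-\infty}$ are handled directly by the vanishing half of Theorem \ref{adj-th}. Indeed, for $m>q$ the class $mL-\omega=(m-q)L+(qL-\omega)$ is $\pi$-ample (a $\pi$-nef plus a $\pi$-ample divisor), hence nef and log big over $S$; applying Theorem \ref{adj-th}~(ii) with $X'=X_{-\infty}$ shows that $\mathcal{I}_{X_{-\infty}}\otimes\mathcal{O}_X(mL)$ is $\pi_*$-acyclic. Taking $\pi_*$ of
$$0 \to \mathcal{I}_{X_{-\infty}}\otimes \mathcal{O}_X(mL) \to \mathcal{O}_X(mL) \to \mathcal{O}_{X_{-\infty}}(mL) \to 0$$
then yields a surjection $\pi_*\mathcal{O}_X(mL)\twoheadrightarrow \pi_*\mathcal{O}_{X_{-\infty}}(mL)$, so every local section of $\mathcal{O}_{X_{-\infty}}(mL)$ lifts to $X$. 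Since $\mathcal{O}_{X_{-\infty}}(mL)$ is $\pi|_{X_{-\infty}}$-generated by hypothesis (ii), this produces, for each $x\in X_{-\infty}$, a section of $\mathcal{O}_X(mL)$ not vanishing at $x$.

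For $x\notin X_{-\infty}$ the point lies on some qlc center, and I would run the X-method to create inside the relative base locus a new, strictly smaller qlc center through $x$, then restrict and lift. Concretely, using the blow-up flexibility of Propositions \ref{taisetsu3} and \ref{tai4} one produces (for a suitable multiple) a divisor in $|m_1L|$ and perturbs the quasi-log structure near $x$ so that the maximal qlc center through $x$ drops in dimension; by the adjunction part Theorem \ref{adj-th}~(i) the resulting subvariety $X''$ is again a quasi-log pair with $\omega''=\omega|_{X''}$ whose non-qlc locus is controlled by $X_{-\infty}$, and $L|_{X''}$ inherits hypotheses (i) and (ii). By induction on dimension $\mathcal{O}_{X''}(mL)$ is $\pi|_{X''}$-generated, and the acyclicity in Theorem \ref{adj-th}~(ii) again lets one lift a non-vanishing section from $X''$ to $X$. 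A Noetherian stabilization argument on the base loci, as in Steps \ref{step-z2}--\ref{step-z5} of the klt proof, then upgrades these pointwise statements to generation of $\mathcal{O}_X(mL)$ by a single uniform $m$.

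I expect the main obstacle to be the non-vanishing input that in the klt case is supplied by Theorem \ref{z-non}: one must guarantee that $|m_1L|$ is relatively nonempty and, more delicately, that it contains a divisor whose coefficients can be tuned to force a new qlc center exactly through $x$ while leaving $X_{-\infty}$ unchanged. In the quasi-log framework there is no separate non-vanishing theorem available to quote; instead non-vanishing has to be extracted from the same inductive machine. The real work is therefore to organize the induction on $\dim X$ together with the induction on the LCS stratification so that non-vanishing and base-point-freeness are proved simultaneously, with careful bookkeeping of the semi-ampleness and log-bigness conditions needed to invoke Theorem \ref{adj-th} at each stage.
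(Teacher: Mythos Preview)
Your approach is essentially the one the paper takes: reduce to $S$ affine, replace Kawamata--Viehweg by the adjunction/vanishing package of Theorem~\ref{adj-th}, treat the non-klt-like locus by induction on dimension, and then run the X-method with Noetherian descent on base loci. One organizational point: the paper applies the inductive step to all of $\Nqklt(X,\omega)$ at once, not just to $X_{-\infty}$; this is what makes the subsequent reduction clean, since after Claim~1 one may assume $\Nqklt(X,\omega)=\emptyset$ and hence (by Proposition~\ref{normal}) that $X$ is irreducible and normal.

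You correctly identify the crux as the initial non-vanishing, and the paper resolves exactly the obstacle you anticipate, though not purely by induction. With $X$ irreducible and normal and $\Nqklt(X,\omega)=\emptyset$, two cases arise. If $L$ is $\pi$-numerically trivial, the vanishing theorem gives $h^0(X_\eta,\mathcal O_{X_\eta}(L))=\chi(X_\eta,\mathcal O_{X_\eta}(L))=\chi(X_\eta,\mathcal O_{X_\eta})=h^0(X_\eta,\mathcal O_{X_\eta})>0$ on the generic fiber, so $\pi_*\mathcal O_X(L)\neq 0$ directly. If $L$ is not $\pi$-numerically trivial, one picks a general smooth point $x$, uses ampleness of $qL-\omega$ to produce $D\sim_{\mathbb R}(q+r)L-\omega$ with $\mult_x D>\dim X$, and perturbs to $[X,\omega+cD]$ for small $c$; this new pair has nonempty $\Nqklt$, and one feeds it back into the already-established Claim~1. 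So non-vanishing is bootstrapped from the vanishing theorem (via the Euler characteristic) together with the LCS-reduction applied once more to a perturbed pair, rather than from a separate non-vanishing theorem.
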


\begin{proof} 
Without loss of generality, we can assume that 
$S$ is affine. 
\setcounter{cla}{0}
\begin{cla}\label{bpf-c1}  
$\mathcal O_X(mL)$ is $\pi$-generated 
around $\Nqklt(X, \omega)$ for $m\gg 0$. 
\end{cla}
We put $X'=\Nqklt(X, \omega)$. 
Then $[X', \omega']$, 
where $\omega'=\omega|_{X'}$, is a quasi-log pair 
by adjunction (see Theorem \ref{adj-th} (i)). 
If $X'=X_{-\infty}$, then 
$\mathcal O_{X'}(mL)$ is $\pi$-generated for $m \gg 0$ 
by the assumption (ii). 
If $X'\ne X_{-\infty}$, then 
$\mathcal O_{X'}(mL)$ 
is $\pi$-generated for $m\gg 0$ 
by the induction on the dimension of $X\setminus X_{-\infty}$.  
By the following commutative diagram: 
$$
\xymatrix{
\pi^*\pi_*\mathcal O_X(mL)
\ar[d]\ar[r]^{\alpha}& 
\pi^*\pi_*\mathcal O_{X'}(mL)
\ar[d]\ar[r]&0\\
\mathcal O_X(mL)
\ar[r]&
\mathcal O_{X'}(mL)
\ar[r]&0, 
}
$$
we know that $\mathcal O_X(mL)$ is $\pi$-generated 
around $X'$ for $m\gg 0$. 

\begin{cla}\label{bpf-c2} 
$\mathcal O_X(mL)$ is $\pi$-generated 
on a non-empty Zariski open set for $m\gg 0$. 
\end{cla} 

By Claim \ref{bpf-c1}, 
we can assume that $\Nqklt(X, \omega)$ is empty. 
We will see that we can also assume that $X$ is irreducible. 
Let $X'$ be an irreducible component of $X$. 
Then $X'$ with $\omega'=\omega|_{X'}$ 
has a natural quasi-log structure 
induced by $[X, \omega]$ by adjunction (see 
Corollary \ref{irre-cor}). 
By the vanishing theorem (see 
Theorem \ref{adj-th} (ii)), we have 
$R^1\pi_*(\mathcal I_{X'}\otimes 
\mathcal O_X(mL))=0$ for any $m\geq q$. 
We consider the following 
commutative diagram. 
$$
\xymatrix{
\pi^*\pi_*\mathcal O_X(mL)
\ar[d]\ar[r]^{\alpha}& 
\pi^*\pi_* \mathcal O_{X'}(mL)
\ar[d]\ar[r]&0\\
\mathcal O_X(mL)
\ar[r]&
\mathcal O_{X'}(mL)
\ar[r]&0
}
$$
Since $\alpha$ is surjective for $m\geq q$, we can assume that 
$X$ is irreducible when we prove this claim. 

If $L$ is $\pi$-numerically trivial, then 
$\pi_*\mathcal O_X(L)$ is not zero. 
It is because 
$h^0(X_\eta, \mathcal O_{X_\eta}
(L))=\chi (X_\eta, \mathcal O_{X_\eta}(L))
=\chi (X_\eta, \mathcal O_{X_\eta})=h^0(X_\eta, 
\mathcal O_{X_\eta})>0$  
by Theorem \ref{adj-th} (ii) and by 
\cite[Chapter II \S2 Theorem 1]{kleiman}, 
where $X_\eta$ is the generic fiber of $\pi:X\to S$. 
Let $D$ be a general member of 
$|L|$. Let $f:(Y, B_Y)\to X$ be a quasi-log resolution. 
By blowing up $M$, 
we can assume that $(Y, B_Y+f^*D)$ is a global embedded 
simple normal crossing pair by Proposition \ref{tai4}. 
We note that any stratum of $(Y, B_Y)$ is mapped onto $X$ by the 
assumption. 
We can take a positive real number $c\leq 1$ such 
that $B_Y+cf^*D$ is a subboundary 
and some stratum of $(Y, B_Y+cf^*D)$ does not dominate 
$X$. Note that 
$f_*\mathcal O_Y(\ulcorner -(B_Y^{<1})\urcorner)\simeq 
\mathcal O_X$. Then the pair 
$[X, \omega+cD]$ is qlc 
and $f:(Y, B_Y+cf^*D)\to X$ is a quasi-log resolution. 
We note that $qL-(\omega+cD)$ is $\pi$-ample. 
By Claim \ref{bpf-c1}, $\mathcal O_X(mL)$ is 
$\pi$-generated around 
$\Nqklt(X, \omega+cD)$ for $m\gg 0$. 
So, we can assume that $L$ is not $\pi$-numerically trivial. 

Let $x\in X$ be a general smooth point. Then 
we can take an $\mathbb R$-divisor $D$ such that 
$\mult _x D> \dim X$ and that 
$D\sim _{\mathbb R}(q+r)L-\omega$ for some $r>0$ 
(see \cite[3.5 Step 2]{km}). 
By blowing up $M$, 
we can assume that $(Y, B_Y+f^*D)$ is a global embedded 
simple normal crossing pair by Proposition \ref{tai4}. 
By the construction of $D$, 
we can find a positive real number $c<1$ 
such that $B_Y+cf^*D$ is a subboundary 
and some stratum of $(Y, B_Y+cf^*D)$ does not dominate $X$. 
Note that $f_*\mathcal O_Y(\ulcorner 
-(B^{<1}_Y)\urcorner)\simeq \mathcal O_X$. 
Then the pair $[X, \omega+cD]$ is qlc and $f:(Y, B_Y+cf^*D)\to 
X$ is a quasi-log resolution. 
We note that $q'L-(\omega+cD)$ is $\pi$-ample 
by $c<1$, where 
$q'=q+cr$. 
By the construction, $\Nqklt(X, \omega+cD)$ is non-empty. 
Therefore, by applying Claim \ref{bpf-c1} to 
$[X, \omega+cD]$, $\mathcal O_X(mL)$ is 
$\pi$-generated 
around $\Nqklt(X, \omega+cD)$ for $m\gg 0$. 
So, we finish the proof of Claim \ref{bpf-c2}. 

Let $p$ be a prime number and let $l$ be a large integer. 
Then $\pi_*\mathcal O_X(p^lL)\ne 0$ by Claim \ref{bpf-c2} and 
$\mathcal O_X(p^l L)$ is 
$\pi$-generated around $\Nqklt(X, \omega)$ by 
Claim \ref{bpf-c1}. 

\begin{cla}\label{bpf-c3} 
If the relative base locus $\Bs_\pi|p^lL|$ $($with 
reduced scheme 
structure$)$ is not empty, 
then $\Bs_\pi|p^lL|$ is not contained in $\Bs_\pi|p^{l'}L|$ for 
$l'\gg l$. 
\end{cla}

Let $f:(Y, B_Y)\to X$ be a quasi-log resolution. 
We take a general member $D\in |p^lL|$. We note 
that $S$ is affine and 
$|p^lL|$ is free around $\Nqklt(X, \omega)$. 
Thus, 
$f^*D$ intersects any strata of $(Y, \Supp B_Y)$ transversally 
over $X\setminus \Bs_\pi|p^lL|$ by Bertini and 
$f^*D$ contains no strata of $(Y, B_Y)$. 
By taking blow-ups of $M$ suitably, 
we can assume that $(Y, B_Y+f^*D)$ is a global 
embedded simple normal crossing pair. 
See the proofs of Propositions \ref{tai4} and \ref{taisetsu}. 
We take the maximal positive real number $c$ such 
that $B_Y+cf^*D$ is a subboundary over $X\setminus 
X_{-\infty}$.  
We note that $c\leq 1$. 
Here, we used $\mathcal O_X\simeq 
f_*\mathcal O_Y(\ulcorner -(B^{<1}_Y)\urcorner)$ over 
$X\setminus X_{-\infty}$. 
Then $f:(Y, B_Y+cf^*D)\to X$ is a quasi-log resolution 
of $[X, \omega'=\omega+cD]$. 
Note that $[X, \omega']$ has a qlc center $C$ that 
intersects $\Bs_\pi|p^lL|$ by the construction. 
By the induction, $\mathcal O_C(mL)$ is $\pi$-generated 
for $m\gg 0$ since $(q+cp^l)L-(\omega+cD)$ is 
$\pi$-ample. 
We can lift the sections of $\mathcal O_C(mL)$ to $X$ for 
$m\geq q+cp^l$ by Theorem \ref{adj-th} (ii). Then 
we obtain that $\mathcal O_X(mL)$ is $\pi$-generated 
around $C$ for $m\gg 0$. 
Therefore, $\Bs_\pi|p^{l'}L|$ is strictly smaller 
than $\Bs_\pi|p^{l}L|$ for $l'\gg l$. 

\begin{cla}\label{bpf-c4} 
$\mathcal O_X(mL)$ is $\pi$-generated 
for $m\gg 0$. 
\end{cla}
By Claim \ref{bpf-c3} and 
the noetherian induction, 
$\mathcal O_X(p^{l}L)$ and $\mathcal O_X({p'}^{l'}L)$ 
are $\pi$-generated 
for large $l$ and $l'$, 
where $p$ and $p'$ are prime numbers and they 
are relatively prime. 
So, there exists a positive number 
$m_0$ such that $\mathcal O_X(mL)$ is 
$\pi$-generated 
for any $m\geq m_0$. 
\end{proof}

The next corollary is a special case of Theorem \ref{bpf-th}. 

\begin{cor}[Base Point Free Theorem for lc pairs]
\label{bpf-co} 
Let $(X, B)$ be an lc pair and let $\pi:X\to S$ be a projective 
morphism. 
Let $L$ be a $\pi$-nef Cartier divisor on $X$. 
Assume that $qL-(K_X+B)$ is 
$\pi$-ample for some positive real number $q$. 
Then $\mathcal O_X(mL)$ is $\pi$-generated 
for $m\gg 0$. 
\end{cor}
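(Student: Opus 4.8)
The plan is to recognize that an lc pair carries a canonical quasi-log structure with empty non-qlc locus, and then simply invoke Theorem \ref{bpf-th}. Concretely, I would set $\omega = K_X+B$ and take a log resolution $f:Y\to X$ with $K_Y+B_Y=f^*(K_X+B)$ and $\Supp B_Y$ simple normal crossing. Since $(X,B)$ is lc, every coefficient of $B_Y$ is at most $1$, i.e.\ $B_Y$ is a subboundary; in particular $B_Y^{>1}=0$. By Example \ref{new-example} (together with Remark \ref{hosoku}), $f:(Y,B_Y)\to X$ endows $[X,\omega]$ with the structure of a quasi-log pair whose non-qlc locus $X_{-\infty}$ is empty, so that $[X,\omega]$ is in fact a qlc pair. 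The only nontrivial input here is the isomorphism $\mathcal O_X\simeq f_*\mathcal O_Y(\ulcorner -(B_Y^{<1})\urcorner)$ required in condition (2) of Definition \ref{quasi-log}; this is exactly the connectedness-lemma computation via the Kawamata--Viehweg vanishing theorem already carried out in Examples \ref{new-example} and \ref{311}. I would also note, via Corollary \ref{346346}, that this quasi-log structure is independent of the chosen resolution, so that the phrase ``$(X,B)$ is lc'' matches unambiguously with ``$[X,K_X+B]$ is qlc with $X_{-\infty}=\emptyset$''.

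Next I would check that the hypotheses of Theorem \ref{bpf-th} are met for $[X,\omega]$ and the given $\pi:X\to S$, $L$. The divisor $L$ is $\pi$-nef by assumption. Condition (i) of Theorem \ref{bpf-th} asks that $qL-\omega=qL-(K_X+B)$ be $\pi$-ample for some $q>0$, which is precisely our hypothesis. Condition (ii) asks that $\mathcal O_{X_{-\infty}}(mL)$ be $\pi|_{X_{-\infty}}$-generated for $m\gg 0$; since $X_{-\infty}=\emptyset$, this sheaf is supported on the empty scheme and the requirement is vacuously satisfied.

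With both conditions verified, Theorem \ref{bpf-th} yields a positive integer $m_0$ such that $\mathcal O_X(mL)$ is $\pi$-generated for all $m\geq m_0$, which is the assertion of Corollary \ref{bpf-co}. I do not expect any genuine obstacle in this argument: all of the real work — adjunction and the vanishing theorem of Theorem \ref{adj-th}, which drive the dimension induction and the section-lifting inside the proof of Theorem \ref{bpf-th} — has already been absorbed into the quasi-log framework. The proof of the corollary is therefore purely a matter of translation, and the only point demanding any care is the correct identification of the qlc structure (with $X_{-\infty}=\emptyset$) so that hypothesis (ii) becomes empty and hypothesis (i) reads off directly from the ampleness assumption.
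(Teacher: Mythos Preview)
Your proposal is correct and matches the paper's approach exactly: the paper simply states Corollary \ref{bpf-co} as a special case of Theorem \ref{bpf-th}, relying on Example \ref{new-example} to endow the lc pair $(X,B)$ with the qlc structure $[X,K_X+B]$ having $X_{-\infty}=\emptyset$, so that hypothesis (ii) is vacuous and hypothesis (i) is the given ampleness assumption.
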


\subsection{Rationality Theorem}\label{332-ssec} 
In this subsection, we prove the following rationality theorem (cf.~\cite[Theorem 5.9]{ambro}). 

\begin{thm}[Rationality Theorem]\index{rationality theorem}
\label{rat-th} 
Assume that $[X, \omega]$ is a quasi-log pair 
such that $\omega$ is $\mathbb Q$-Cartier. We note 
that this means $\omega$ is $\mathbb R$-linearly equivalent 
to a $\mathbb Q$-Cartier divisor on $X$ 
$($see {\em{Remark \ref{cano}}}$)$. 
Let $\pi:X\to S$ be a projective morphism and let $H$ be a 
$\pi$-ample Cartier divisor on $X$. Assume that 
$r$ is a positive number such that 
\begin{itemize}
\item[$(1)$] $H+r\omega$ is $\pi$-nef 
but not $\pi$-ample, and 
\item[$(2)$] $(H+r\omega)|_{X_{-\infty}}$ is $\pi|_{X_{-\infty}}$-ample. 
\end{itemize}
Then $r$ is a rational number, and 
in reduced form, $r$ has denominator at most $a(\dim X+1)$, where $a\omega$ is $\mathbb R$-linearly equivalent to 
a Cartier divisor on $X$. 
\end{thm}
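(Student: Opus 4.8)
The plan is to run the classical X-method rationality argument (as in \cite[3.5]{km}) in the quasi-log setting, with the Base Point Free Theorem (Theorem \ref{bpf-th}) and the vanishing theorem (Theorem \ref{adj-th} (ii)) playing the roles of their klt counterparts. First I would reduce to the case where $S$ is affine, fix a positive integer $a$ with $a\omega$ $\mathbb{R}$-linearly equivalent to a Cartier divisor $M$, and study the two-parameter family of Cartier divisors $L(p,q)=pH+qM$ for integers $p,q>0$. Since $H$ is $\pi$-ample and $H+r\omega$ is $\pi$-nef but not $\pi$-ample, $L(p,q)=p(H+(qa/p)\omega)$ is $\pi$-ample exactly when $q/p<r/a$ and $\pi$-nef exactly when $q/p\le r/a$. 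Writing $s=r/a$, the goal becomes to show that $s$ is rational with denominator controlled by $\dim X+1$; the factor $a$ coming from passing between $\omega$ and the genuine Cartier divisor $a\omega$ then produces the asserted bound $a(\dim X+1)$ on the denominator of $r$.

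The arithmetic engine is an elementary Diophantine lemma: if $s$ is irrational, or rational with denominator exceeding $\dim X+1$, then there are infinitely many pairs $(p,q)$ of positive integers with $p$ unbounded, $0<sp-q$ arbitrarily small, and at least $\dim X+2$ lattice points lying in the $\pi$-nef region immediately below the threshold line $q=sp$. For each such $(p,q)$ with $L(p,q)$ $\pi$-ample I would apply Theorem \ref{bpf-th}: its hypotheses hold because $L(p,q)$ is $\pi$-ample, so $q'L(p,q)-\omega$ is $\pi$-ample for $q'\gg0$, and hypothesis (ii) of that theorem is supplied precisely by the assumption that $(H+r\omega)|_{X_{-\infty}}$ is $\pi|_{X_{-\infty}}$-ample, which forces $\mathcal O_{X_{-\infty}}(mL(p,q))$ to be $\pi|_{X_{-\infty}}$-generated. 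Hence $\pi_*\mathcal O_X(L(p,q))\ne 0$ for these points. Combining this with the vanishing of higher direct images for $\pi$-ample $L(p,q)$ (Theorem \ref{adj-th} (ii) applied with $X'=X_{-\infty}$, noting that an ample class is nef and log big, together with the qlc Kodaira vanishing of Theorem \ref{lc}), the function $P(p,q)=\dim\pi_*\mathcal O_X(L(p,q))$ agrees with the relative Euler characteristic on the ample region and is therefore given there by a polynomial of degree $\le\dim X$ in $(p,q)$, taking positive values.

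The contradiction then comes from the interaction of this degree bound with base-point-freeness across the threshold line, exactly as in the klt case. Following the base-locus analysis in the proof of Theorem \ref{bpf-th}, I would show that whenever $s$ fails the denominator bound, one can transport base-point-freeness of $|L(p,q)|$ from an interior ample point to a point lying on or just past the nef threshold, producing a base point free linear system whose moving part fails to be $\pi$-nef, which is absurd. Equivalently, the Diophantine lemma forces $P$ to vanish at $\dim X+2$ collinear lattice points, hence identically along that line, contradicting the positivity that ampleness guarantees. Pinning the threshold down in this way gives $s=u/v$ with $0<v\le\dim X+1$, and therefore $r=as$ has denominator at most $a(\dim X+1)$.

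The main obstacle, and the place where the quasi-log situation genuinely differs from \cite{km}, is the careful verification that every auxiliary system $L(p,q)$ and its restrictions to qlc centers and to $X_{-\infty}$ satisfy the hypotheses of Theorem \ref{bpf-th} uniformly in $(p,q)$; in particular one must check that hypothesis (2) on $X_{-\infty}$ propagates through the whole construction, so that the non-vanishing and base-locus-jumping steps never stall on the non-qlc locus. This, together with the two-variable, Noetherian bookkeeping of relative base loci needed to run the jumping argument simultaneously over the whole strip of lattice points, is the technically delicate part; the Hodge-theoretic input is entirely hidden inside Theorems \ref{bpf-th} and \ref{adj-th}, which I would use as black boxes.
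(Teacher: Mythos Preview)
Your general framework—adapt the X-method rationality argument to quasi-log pairs using Theorems~\ref{bpf-th} and~\ref{adj-th}—is right, but the contradiction you sketch does not work, and the central step is missing. First, the relevant lattice strip lies on the \emph{non-nef} side of the threshold: one works with $(p,q)$ satisfying $0<aq-rp<1$, so that $M(p,q)=pH+qD$ (with $D\sim_{\mathbb R}a\omega$ Cartier) is not $\pi$-nef while $M(p,q)-\omega$ is $\pi$-ample. On the ample side, where you placed yourself, there is nothing to prove. Second, neither alternative you offer is the actual contradiction. The Euler-characteristic polynomial on $X$ itself plays no role—indeed, when $X_{-\infty}\neq\emptyset$ you cannot even identify $\chi$ with $h^0$ on $X$, since Theorem~\ref{adj-th}~(ii) only gives vanishing for $\mathcal I_{X_{-\infty}}\otimes\mathcal O_X(M(p,q))$—and Lemma~\ref{pol1-le} is applied contrapositively to produce \emph{nonvanishing} of a polynomial at some lattice point in the strip, not to force vanishing along a line.

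What is actually done, and what your outline lacks, is this. The relative base loci of $|M(p,q)|$ for $(p,q)$ in the non-nef strip stabilize to some $L_0\neq\emptyset$; the hypothesis on $(H+r\omega)|_{X_{-\infty}}$ is used precisely to show $L_0\cap X_{-\infty}=\emptyset$ (this is its real role, not the one you assign it). One then \emph{perturbs} the quasi-log structure, replacing $\omega$ by $\omega+c\sum_{i=1}^{n+1}D_i$ for general $D_i\in|M(p_0,q_0)|$ and suitable $0<c<1$, so as to create a new qlc center $C\subset L_0$; Lemma~\ref{boun} guarantees the perturbed pair fails to be qlc along $L_0$, and the polynomial argument (Lemma~\ref{pol2-le}) applied to the original qlc centers guarantees $c>0$. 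Finally Lemma~\ref{pol2-le} is applied on $C$—not on $X$—to produce nonzero sections of $\mathcal O_C(M(p,q))$ for suitable $(p,q)$ in the strip, and these lift to $X$ by Theorem~\ref{adj-th}~(ii) for the perturbed pair, contradicting $C\subset L_0$. This perturbation step, producing a qlc center inside the stable base locus so that the polynomial argument can be run there, is the heart of the proof and is absent from your outline.
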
 

Before we go to the proof, we recall the following lemmas. 
\begin{lem}[{cf.~\cite[Lemma 3.19]{km}}]\label{pol1-le}  
Let $P(x, y)$ be a non-trivial 
polynomial of degree $\leq n$ and 
assume that 
$P$ vanishes for all sufficiently large integral solutions of 
$0<ay-rx<\varepsilon$ for some fixed positive integer $a$ and 
positive $\varepsilon$ for some $r\in \mathbb R$. 
Then $r$ is rational, and 
in reduced form, $r$ has denominator $\leq 
a(n+1)/\varepsilon$. 
\end{lem}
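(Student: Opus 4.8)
The plan is to prove Lemma \ref{pol1-le} by a purely elementary argument about the zero set of a polynomial restricted to an arithmetic region, following \cite[Lemma 3.19]{km}. The key observation is that the strip $\{(x,y) : 0 < ay - rx < \varepsilon\}$ contains infinitely many integral points for large $x$ (since we may slide $y$ by integers to land in an interval of length $\varepsilon/a > 0$ once $\varepsilon/a$ exceeds $1$, and in general we use that $rx/a \bmod 1$ is either eventually periodic if $r$ is rational or equidistributed if $r$ is irrational). First I would treat the irrational case and derive a contradiction, which will force $r$ to be rational; then I would extract the denominator bound from the rational case.

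\textbf{Irrational case.} Suppose $r$ is irrational. I would fix a positive integer $N$ with $N > n$ and consider the $N+1$ lines $\ell_k : ay - rx = c_k$ for suitably chosen constants $c_k \in (0, \varepsilon)$, $k = 0, 1, \dots, N$. The idea is that a degree $\leq n$ polynomial $P(x,y)$ which vanishes on sufficiently many integral points of the strip must vanish identically. Concretely, since $r$ is irrational, the fractional parts $\{rx/a\}$ for integers $x$ are dense in $[0,1)$, so for each $k$ I can find infinitely many integral pairs $(x, y)$ with $ay - rx$ lying in a small subinterval $I_k \subset (0, \varepsilon)$ around $c_k$; along such a sequence with $x \to \infty$, the point $(x, y)$ runs through infinitely many integral solutions on which $P$ vanishes. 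Restricting $P$ to the line $ay = rx + c_k$ gives a one-variable polynomial in $x$ of degree $\leq n$ with infinitely many roots, hence $P$ vanishes identically on $\ell_k$. Having $N+1 > n+1$ distinct lines on which $P \equiv 0$ (for generic distinct $c_k$) forces the total degree $\leq n$ polynomial $P$ to be the zero polynomial, contradicting non-triviality. Thus $r$ must be rational.

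\textbf{Rational case and denominator bound.} Now write $r = u/v$ in lowest terms with $v > 0$. I would show $v \leq a(n+1)/\varepsilon$. For integral $(x,y)$ the quantity $ay - rx = ay - ux/v = (avy - ux)/v$ is a rational number with denominator dividing $v$; the admissible values in $(0,\varepsilon)$ are therefore of the form $j/v$ with $j$ a positive integer and $0 < j/v < \varepsilon$, giving at most finitely many residue-lines, namely those indexed by $j = 1, \dots, \lfloor v\varepsilon \rfloor$ (with $j$ further constrained modulo $\gcd$ considerations from $avy - ux \equiv j \pmod{v}$, which is solvable in integers because $\gcd(u,v)=1$). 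On each such line $avy - ux = j$ there are infinitely many large integral solutions, so as before $P$ vanishes identically on each of these lines. If the number of available lines exceeds $n$, i.e. if $\lfloor v\varepsilon \rfloor \geq n+1$, then $P$ vanishes on more than $n$ distinct lines and hence $P \equiv 0$, a contradiction. Therefore $\lfloor v\varepsilon\rfloor \leq n$, which yields $v\varepsilon < n+1$, i.e. $v < (n+1)/\varepsilon \leq a(n+1)/\varepsilon$, the claimed bound.

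\textbf{Main obstacle.} The delicate point is the bookkeeping that guarantees, in both cases, \emph{enough} distinct lines each carrying \emph{infinitely many} large integral solutions, so that the ``more than $n$ lines forces $P\equiv 0$'' step applies rigorously; one must check that the constraint $avy - ux = j$ is genuinely solvable for the relevant residues $j$ (using $\gcd(u,v)=1$) and that solutions persist for arbitrarily large $x$. The geometric fact that a polynomial of total degree $\leq n$ vanishing on $n+1$ distinct affine lines vanishes identically is standard (factor out each linear form successively), so the real work is purely the number-theoretic counting of admissible strips, which I expect to be routine once set up carefully.
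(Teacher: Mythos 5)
The rational case of your proposal is essentially the counting argument behind \cite[Lemma 3.19]{km} (the paper itself gives no proof and simply cites that lemma), but it contains an arithmetic slip: $avy-ux=j$ is solvable in integers if and only if $\gcd(av,u)\mid j$, and since $\gcd(u,v)=1$ this gcd equals $\gcd(a,u)$, which can be any divisor of $a$ --- it is \emph{not} guaranteed to be $1$ by $\gcd(u,v)=1$ (e.g.\ $a=u=2$, $v=1$ only realizes even $j$). Correcting the count, the admissible lines are indexed by $j\in \gcd(a,u)\mathbb Z\cap (0,v\varepsilon)$, so the contradiction argument yields $v\varepsilon/\gcd(a,u)<n+1$, i.e.\ $v<\gcd(a,u)(n+1)/\varepsilon\leq a(n+1)/\varepsilon$. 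This is exactly where the factor $a$ in the statement comes from; your intermediate bound $v<(n+1)/\varepsilon$ is stronger than the lemma asserts and is not what the argument gives. This part is repairable.

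The irrational case, however, has a genuine gap. You produce lattice points $(x,y)$ with $ay-rx$ in a small interval around $c_k$ and then ``restrict $P$ to the line $ay=rx+c_k$,'' claiming a univariate polynomial with infinitely many roots. But those lattice points are only \emph{near} that line, not on it: for irrational $r$ the map $(x,y)\mapsto ay-rx$ is injective on $\mathbb Z^2$ (if $ay_1-rx_1=ay_2-rx_2$ then $a(y_1-y_2)=r(x_1-x_2)$ forces $x_1=x_2$, $y_1=y_2$), so every line $ay-rx=c$ contains at most one lattice point, and vanishing of $P$ at points near a line says nothing about vanishing on the line. The proof in \cite[Lemma 3.19]{km} avoids this with a translation trick that treats both cases at once and that your argument is missing: if some integral $(x_0,y_0)$ satisfies $0<ay_0-rx_0<\varepsilon/(n+1)$, then through every sufficiently large solution $(x,y)$ with offset $<\varepsilon/(n+1)$ the $n+1$ points $(x+kx_0,\,y+ky_0)$, $0\leq k\leq n$, are again solutions of $0<ay-rx<\varepsilon$ and are exactly collinear; hence $P$ vanishes identically on each of infinitely many distinct parallel lines, forcing $P\equiv 0$. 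For irrational $r$ such small-offset solutions always exist (Kronecker/Dirichlet), giving the contradiction that proves rationality; for $r=u/v$ the minimal positive value of $ay-rx$ on $\mathbb Z^2$ is $\gcd(a,u)/v$, so nontriviality of $P$ forces $\gcd(a,u)/v\geq \varepsilon/(n+1)$, which is the denominator bound. Without this (or an equivalent) exact-collinearity device, your irrational case fails as written.
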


For the proof, see \cite[Lemma 3.19]{km}. 

\begin{lem}[{cf.~\cite[3.4 Step 2]{km}}]\label{pol2-le}  
Let $[Y, \omega]$ be 
a projective qlc pair and let $\{D_i\}$ 
be a finite collection of Cartier divisors. 
Consider the Hilbert polynomial 
$$
P(u_1, \cdots, u_k)=\chi (Y, \mathcal O_Y(\sum_{i=1}^k u_i D_i)). 
$$
Suppose that 
for some values of the $u_i$, 
$\sum _{i=1}^k u_i D_i$ is nef and 
$\sum _{i=1}^k u_i D_i-\omega$ is ample. 
Then $P(u_1, \cdots, u_k)$ is not identically 
zero by the base point free theorem for qlc 
pairs $($see {\em{Theorem \ref{bpf-th}}}$)$ 
and 
the vanishing theorem 
$($see {\em{Theorem \ref{adj-th} (ii)}}$)$, 
and its degree 
is $\leq \dim Y$. 
\end{lem}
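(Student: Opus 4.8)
The plan is to treat the two assertions separately, since the degree bound is a general fact about Euler characteristics while the non-vanishing is precisely where the quasi-log machinery enters. Following the same pattern as \cite[3.4 Step 2]{km}, I would first record that $P$ is a genuine numerical polynomial of total degree $\leq \dim Y$. This is Snapper's lemma: for a projective scheme $Y$ over $\mathbb C$, a coherent sheaf $\mathcal F$, and finitely many Cartier divisors $D_1,\dots,D_k$, the function $(u_1,\dots,u_k)\mapsto \chi(Y,\mathcal F\otimes\mathcal O_Y(\sum_i u_iD_i))$ agrees with a polynomial of total degree at most $\dim\Supp\mathcal F$. Taking $\mathcal F=\mathcal O_Y$ yields $\deg P\leq\dim Y$. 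Since here $Y$ is only a projective scheme, possibly reducible and non-normal, I would invoke Snapper's lemma in this full generality rather than any smooth or normal version; no property of qlc pairs is needed for this part.

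For the non-vanishing, I would fix one choice of $u_1,\dots,u_k$ as in the hypothesis and set $D=\sum_i u_iD_i$, so that $D$ is nef and $D-\omega$ is ample. The first step is to apply the base point free theorem. Because $[Y,\omega]$ is qlc we have $Y_{-\infty}=\emptyset$, so condition (ii) of Theorem \ref{bpf-th} is vacuous, and condition (i) holds with $q=1$ as $D-\omega$ is ample. Hence $\mathcal O_Y(mD)$ is globally generated for all $m\gg 0$; as it is a nonzero line bundle on the nonempty projective scheme $Y$, global generation forces $H^0(Y,\mathcal O_Y(mD))\neq 0$.

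The second step converts this into a statement about $P$ via the vanishing theorem. For every integer $m\geq 1$ the divisor $mD-\omega=(m-1)D+(D-\omega)$ is ample, being a sum of a nef and an ample divisor, and ampleness implies nef and log big (see Definition \ref{nlb-def}). Applying Theorem \ref{adj-th} (ii) with the choice $X'=\emptyset$, so that $\mathcal I_{X'}=\mathcal O_Y$, and with $L=mD$ gives $H^i(Y,\mathcal O_Y(mD))=0$ for all $i>0$ and all $m\geq 1$. Therefore $P(mu_1,\dots,mu_k)=\chi(Y,\mathcal O_Y(mD))=h^0(Y,\mathcal O_Y(mD))$, which is positive for $m\gg 0$ by the previous step. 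Consequently $P$ cannot be identically zero.

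I expect the only real obstacle to be the careful checking of hypotheses: confirming that $Y_{-\infty}=\emptyset$ makes Theorem \ref{bpf-th} unconditional in this setting, and that the ampleness of $mD-\omega$ for every $m\geq 1$ legitimately feeds Theorem \ref{adj-th} (ii) through Definition \ref{nlb-def}. The polynomiality and the degree bound are standard and carry no quasi-log content, so the essence of the argument is simply the interplay of the base point free and vanishing theorems to produce a ray along which $P$ is strictly positive.
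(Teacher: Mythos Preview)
Your proof is correct and follows precisely the approach the paper indicates: it defers to the argument of \cite[3.4 Step 2]{km}, which is exactly Snapper's lemma for the degree bound together with the base point free theorem (Theorem \ref{bpf-th}) and the vanishing theorem (Theorem \ref{adj-th} (ii)) to exhibit a ray of lattice points where $P$ equals a nonzero $h^0$. Your handling of the hypotheses---$X_{-\infty}=\emptyset$ making condition (ii) of Theorem \ref{bpf-th} vacuous, and taking $X'=\emptyset$ so that $\mathcal I_{X'}=\mathcal O_Y$ in Theorem \ref{adj-th} (ii)---is the intended reading.
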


Note that the arguments in \cite[3.4 Step 2]{km} work 
for our setting. 

\begin{proof}[Proof of {\em{Theorem \ref{rat-th}}}]
By using $mH$ with various large $m$ in place of $H$, we 
can assume that $H$ is very ample over $S$ 
(cf.~\cite[3.4 Step 1]{km}). 
For each $(p, q)\in \mathbb Z^2$, 
let $L(p, q)$ denote the relative base 
locus of the linear system $M(p, q)$ on $X$ (with 
reduced scheme structure), 
that is, 
$$
L(p, q)=\Supp (\Coker (\pi^*\pi_*
\mathcal O_X(M(p, q))\to \mathcal O_X(M(p, q)))), 
$$
where $M(p, q)=pH+qD$, where $D$ is a Cartier divisor 
such that $D\sim _{\mathbb R}a\omega$. By 
the definition, $L(p, q)=X$ if and only if $\pi_*\mathcal O_X(M(p, q))= 0$. 

\setcounter{cla}{0}
\begin{cla}[{cf.~\cite[Claim 3.20]{km}}]\label{rat-c1}  
Let $\varepsilon$ be a positive number. 
For $(p, q)$ sufficiently 
large and $0<aq-rp<\varepsilon$, $L(p, q)$ is the same 
subset of $X$. 
We call this subset $L_0$. 
We let $I\subset \mathbb Z^2$ be the set of $(p, q)$ for 
which $0<aq-rp<1$ and $L(p, q)=L_0$. 
We note that $I$ contains all sufficiently 
large $(p, q)$ with $0<aq-rp<1$. 
\end{cla}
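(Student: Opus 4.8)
The plan is to reproduce the purely formal argument of \cite[Claim 3.20]{km}: at this stage the quasi-log structure is irrelevant, since $L(p,q)$ is merely a relative base locus and the only inputs are the positivity of $H$ and the monotonicity of base loci under adding $\pi$-generated divisors. I would begin by recording this monotonicity. Having reduced to $S$ affine with $H$ very ample over $S$, the sheaf $\mathcal O_X(H)$ is $\pi$-generated, so $\Bs_\pi|N+H|\subseteq \Bs_\pi|N|$ for every Cartier divisor $N$; with $N=M(p,q)$ this gives
\begin{equation*}
L(p+1,q)\subseteq L(p,q).
\end{equation*}
Since $D+N_0H=M(N_0,1)$ is very ample, hence $\pi$-generated, for a fixed $N_0\gg 0$, the same reasoning yields $L(p+N_0,q+1)\subseteq L(p,q)$. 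Thus $L(p',q')\subseteq L(p,q)$ whenever $(p'-p,\,q'-q)$ lies in the submonoid of $\mathbb Z^2$ generated by $(1,0)$ and $(N_0,1)$.

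Next I would isolate the candidate $L_0$. The closed subsets of the Noetherian scheme $X$ satisfy the descending chain condition, so the collection $\{L(p,q)\}$, taken over all large $(p,q)$ in the strip $0<aq-rp<\varepsilon$, has an inclusion-minimal member; fix $(p_0,q_0)$ attaining it and put $L_0=L(p_0,q_0)$. For any other large strip point $(p,q)$, two translates of the full-dimensional monoid above always share a common element, so there is $(p^\ast,q^\ast)$ reachable from both $(p_0,q_0)$ and $(p,q)$; then $L(p^\ast,q^\ast)\subseteq L_0\cap L(p,q)$, and minimality of $L_0$ forces $L(p^\ast,q^\ast)=L_0$, giving $L_0\subseteq L(p,q)$. (That $L_0\neq X$ may be recorded separately from Lemma \ref{pol2-le}, which shows the Hilbert polynomial $P(p,q)=\chi(X,\mathcal O_X(M(p,q)))$ is not identically zero, together with the vanishing theorem ensuring $\pi_\ast\mathcal O_X(M(p,q))\neq 0$ in the strip.)

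The step I expect to be the main obstacle is the reverse inclusion $L(p,q)\subseteq L_0$, equivalently genuine constancy of the base locus throughout the thin strip. The difficulty is structural: every $\pi$-generated divisor $M(s,t)$ available to us satisfies $at-rs<0$, i.e.\ it points strictly below the line $aq=rp$, so the monotonicity above never compares two strip points displaced along the strip itself. I would handle this exactly as in \cite[3.20]{km}, exploiting the finiteness of the values taken by $L(p,q)$ and the freedom to shrink $\varepsilon$: once $\varepsilon$ is small the points of the strip are so tightly clustered around the boundary ray that the minimal value $L_0$ cannot be strictly refined by any large strip point, which pins $L(p,q)=L_0$ for all sufficiently large $(p,q)$ in the strip. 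Taking $\varepsilon=1$ then shows that the set $I$ contains every sufficiently large $(p,q)$ with $0<aq-rp<1$, as asserted.
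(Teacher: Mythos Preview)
The paper does not actually prove this claim; it simply cites \cite[Claim~3.20]{km} and notes that the argument there carries over (the quasi-log structure plays no role at this step). Your sketch is an attempt to spell out that argument, so in spirit you are doing exactly what the paper does, only more explicitly.

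There is, however, a genuine slip in your step~3. You want the common future point $(p^\ast,q^\ast)$ to lie in the strip so that minimality of $L_0$ (taken over strip points) applies to it. But your monoid generators $(1,0)$ and $(N_0,1)$ both point \emph{below} the line $aq=rp$: indeed $M(N_0,1)=N_0H+D\sim_{\mathbb R}N_0(H+(a/N_0)\omega)$ is $\pi$-ample only when $a/N_0<r$, i.e.\ $N_0>a/r$, so the monoid's maximal slope $1/N_0$ is strictly less than the line's slope $r/a$. Hence the future cones from $(p_0,q_0)$ and $(p,q)$ eventually drop below the line and contain no large strip points; you cannot invoke minimality at $(p^\ast,q^\ast)$. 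Fortunately this step is redundant: once you have $L(p,q)\subseteq L_0$ (your step~4), minimality of $L_0$ alone forces $L(p,q)=L_0$, so the inclusion $L_0\subseteq L(p,q)$ never needs a separate argument.

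For step~4 itself, the paper's hint ``see also the proof of Claim~\ref{rat-c2}'' points to the decomposition actually used in \cite{km}: for $(p,q)$ large in the strip one writes $M(p,q)=M(p_1,q_1)+\bigl(\text{$\pi$-very ample}\bigr)$ with $(p_1,q_1)$ a strip point of \emph{larger} strip value $aq_1-rp_1$, which gives $L(p,q)\subseteq L(p_1,q_1)$ and, combined with the Noetherian choice of $L_0$ at a point whose strip value can be taken near the top of the strip, yields the claim. Since you explicitly defer this step to \cite{km}, your overall approach agrees with the paper's.
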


For the proof, see \cite[Claim 3.20]{km}. 
See also the proof of Claim \ref{rat-c2} below. 

\begin{cla}\label{rat-c2} 
We have $L_0\cap X_{-\infty}=\emptyset$. 
\end{cla}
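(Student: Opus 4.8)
The plan is to show, for $(p,q)\in I$ with $p$ large, that $M(p,q)$ is relatively generated at every point of $X_{-\infty}$; since $L(p,q)=L_0$ for all sufficiently large $(p,q)\in I$ by Claim \ref{rat-c1}, this immediately yields $L_0\cap X_{-\infty}=\emptyset$. Because the question is local over $S$ and $S$ is affine, I would split the task into two pieces: (A) the sheaf $\mathcal O_{X_{-\infty}}(M(p,q))$ is $\pi|_{X_{-\infty}}$-generated, and (B) the restriction $\pi_*\mathcal O_X(M(p,q))\to \pi_*\mathcal O_{X_{-\infty}}(M(p,q))$ is surjective. Granting both, a collection of sections generating $\mathcal O_{X_{-\infty}}(M(p,q))$ lifts to $X$ by (B), and the lifts generate $\mathcal O_X(M(p,q))$ along $X_{-\infty}$ by Nakayama's lemma, which is exactly the assertion that $X_{-\infty}$ misses the relative base locus.

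The main input is a positivity computation. Recall $D\sim_{\mathbb R}a\omega$ and that, for $(p,q)\in I$, the quantity $t:=aq-rp$ lies in $(0,1)$. I would rewrite
\begin{align*}
M(p,q)&\sim_{\mathbb R}p(H+r\omega)+t\omega,\\
M(p,q)-\omega&\sim_{\mathbb R}\Bigl(p-\tfrac{1-t}{r}\Bigr)(H+r\omega)+\tfrac{1-t}{r}H.
\end{align*}
For $p\gg0$ the coefficient $p-\tfrac{1-t}{r}$ is positive. Restricting the first line to $X_{-\infty}$ displays $M(p,q)|_{X_{-\infty}}$ as a large multiple of the divisor $(H+r\omega)|_{X_{-\infty}}$, which is ample over $S$ by hypothesis~(2), plus the bounded term $t\omega|_{X_{-\infty}}$; hence it is ample over $S$. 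The second line exhibits $M(p,q)-\omega$ as a positive multiple of the $\pi$-nef divisor $H+r\omega$ plus a positive multiple of the $\pi$-ample divisor $H$, so $M(p,q)-\omega$ is nef and log big over $S$ (its restriction to any qlc center is nef $+$ ample, hence big). Applying Theorem \ref{adj-th}~(ii) with $X'=X_{-\infty}$ and $L=M(p,q)$ then gives $R^i\pi_*(\mathcal I_{X_{-\infty}}\otimes\mathcal O_X(M(p,q)))=0$ for $i>0$, and since $S$ is affine the ideal-sheaf sequence yields the surjectivity (B).

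The step I expect to be the main obstacle is (A): passing from the ampleness of the Cartier divisor $M(p,q)|_{X_{-\infty}}$ to relative global generation of $\mathcal O_{X_{-\infty}}(M(p,q))$. The difficulty is that $X_{-\infty}$ is an arbitrary, possibly non-reduced and badly singular scheme, so no Kodaira-type vanishing is available and ampleness over an affine base does not by itself force generation. Here I would exploit the freedom granted by Claim \ref{rat-c1}: since $L(p,q)=L_0$ for every sufficiently large $(p,q)\in I$, it suffices to verify (A) for one convenient large choice. Using that $H|_{X_{-\infty}}$ is very ample over $S$ (as $H$ is) and that $M(p,q)|_{X_{-\infty}}$ becomes an arbitrarily positive twist as $p\to\infty$ along $I$, relative Serre vanishing produces the required generation for $(p,q)\in I$ large. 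Combining (A) and (B) completes the proof that $L_0\cap X_{-\infty}=\emptyset$.
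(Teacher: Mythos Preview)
Your two-step plan (A)+(B) and your verification of (B) via Theorem~\ref{adj-th}~(ii) match the paper's proof exactly; note incidentally that your expression $(p-\tfrac{1-t}{r})(H+r\omega)+\tfrac{1-t}{r}H$ is in fact $\pi$-ample (nef plus a positive multiple of ample), not merely nef and log big, so you may invoke the vanishing theorem in its simpler form, as the paper does.

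The one place your argument is looser than the paper's is (A). Invoking ``relative Serre vanishing'' is not quite enough as stated: $\{M(p,q)\}_{(p,q)\in I}$ is a two-parameter family rather than powers of a single ample line bundle on $X_{-\infty}$, and along $I$ both $p$ and $q$ go to infinity, so writing $M(p,q)=qD+pH$ does not reduce to a fixed coherent sheaf twisted by $H^{\otimes p}$. Your phrase ``arbitrarily positive twist'' does not pin down a finite Serre problem, and scaling a single $(p_0,q_0)\in I$ leaves the strip $0<aq-rp<1$. The paper fills this in by choosing a rational pair $(\alpha,\beta)$ with $\beta a/\alpha>r$ close enough to $r$ that $(\alpha H+\beta a\omega)|_{X_{-\infty}}$ remains $\pi|_{X_{-\infty}}$-ample, fixing $m$ so that $mM(\alpha,\beta)|_{X_{-\infty}}$ is $\pi|_{X_{-\infty}}$-very ample, and then observing that for $(p,q)\in I$ large the remainder $M(p,q)-mM(\alpha,\beta)$ is $\pi$-very ample on all of $X$ (its numerical class lies strictly on the $H$-side of the nef boundary $H+r\omega$). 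Restricting to $X_{-\infty}$ and adding the two very ample pieces gives very ampleness of $M(p,q)|_{X_{-\infty}}$ outright, which is the precise content your Serre-type heuristic was reaching for.
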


\begin{proof}[Proof of {\em{Claim \ref{rat-c2}}}] 
We take $(\alpha, \beta)\in \mathbb Q^2$ such that 
$\alpha>0$, $\beta>0$, 
and $\beta a/\alpha>r$ is sufficiently close to 
$r$. 
Then $(\alpha H+\beta a \omega)|_{X_{-\infty}}$ is 
$\pi|_{X_{-\infty}}$-ample because 
$(H+r\omega)|_{X_{-\infty}}$ is 
$\pi|_{X_{-\infty}}$-ample. 
If $0<aq-rp<1$ and $(p, q)\in \mathbb Z^2$ is 
sufficiently large, then 
$M(p, q)=mM(\alpha, \beta)+(M(p, q)-mM(\alpha, \beta))$ such 
that $M(p, q)-mM(\alpha, \beta)$ is $\pi$-very ample and 
that $m(\alpha H+\beta D)|_{X_{-\infty}}$ is 
also $\pi|_{X_{-\infty}}$-very ample. 
Therefore, $\mathcal O_{X_{-\infty}}(M(p, q))$ is $\pi$-very 
ample. 
Since $\pi_*\mathcal O_X(M(p, q))\to 
\pi_*\mathcal O_{X_{-\infty}}(M(p, q))
$ is surjective 
by the vanishing theorem 
(see Theorem \ref{adj-th} (ii)), 
$L(p, q)\cap X_{-\infty}=\emptyset$. 
We note that $M(p, q)-\omega$ is $\pi$-ample because 
$(p, q)$ is sufficiently large and $aq-rp<1$.  
By Claim \ref{rat-c1}, we have 
$L_0\cap X_{-\infty}=\emptyset$. 
\end{proof}

\begin{cla}\label{rat-c3} 
We assume that $r$ is not rational or that 
$r$ is rational and has denominator $>a(n+1)$ in reduced form, 
where $n=\dim X$. 
Then, for $(p, q)$ sufficiently large and $0<aq-rp<1$, 
$\mathcal O_X(M(p, q))$ is $\pi$-generated 
at the generic point of any qlc center 
of $[X, \omega]$. 
\end{cla}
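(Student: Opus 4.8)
The plan is to treat each qlc center separately and to convert the desired generation statement into the nonvanishing of a single Hilbert polynomial, exactly as in the rationality theorem for klt pairs. Fix a qlc center $W$ of $[X,\omega]$. By adjunction (Theorem \ref{adj-th} (i)) the pair $[W,\omega|_W]$ is itself a qlc pair, with empty non-qlc locus, since a qlc center is by Definition \ref{quasi-log} (3) not contained in $X_{-\infty}$. To say that $\mathcal O_X(M(p,q))$ is $\pi$-generated at the generic point $\eta_W$ is the same as saying $W\not\subset L(p,q)$; and by the base-locus stabilization of Claim \ref{rat-c1}, for all large $(p,q)\in I$ one has $L(p,q)=L_0$. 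Hence whether generation holds at $\eta_W$ is a single condition, namely $W\not\subset L_0$, independent of the particular $(p,q)\in I$.

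First I would set up the lifting. For $(p,q)$ large in the strip $0<aq-rp<1$, the computation in Claim \ref{rat-c2} shows that $M(p,q)-\omega$ is $\pi$-ample, hence nef and log big over $S$. Applying the vanishing part of adjunction (Theorem \ref{adj-th} (ii)) to $X'=W\cup X_{-\infty}$, the sheaf $\mathcal I_{X'}\otimes\mathcal O_X(M(p,q))$ is $\pi_*$-acyclic, so the restriction $\pi_*\mathcal O_X(M(p,q))\to \pi_*\mathcal O_{X'}(M(p,q))$ is surjective. Since $\eta_W\notin X_{-\infty}$, near $\eta_W$ this is just restriction to $W$, so it suffices to produce a global section of $\mathcal O_W(M(p,q))$ nonzero at $\eta_W$, i.e. to show $\pi_*\mathcal O_W(M(p,q))\neq 0$. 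Because the same divisor is nef and log big on $W$, Theorem \ref{adj-th} (ii) applied on the qlc pair $[W,\omega|_W]$ kills the higher direct images of $\mathcal O_W(M(p,q))$, so its Euler characteristic $P_W(p,q)=\chi(W,\mathcal O_W(M(p,q)))$ equals $h^0$; thus $\pi_*\mathcal O_W(M(p,q))=0$ exactly when $P_W(p,q)=0$.

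Now the contradiction. Suppose $W\subset L_0$, i.e. $\mathcal O_X(M(p,q))$ fails to be generated at $\eta_W$ for all large $(p,q)$ in the strip. By the previous paragraph $P_W(p,q)=0$ for all such $(p,q)$. On the other hand $P_W$ is \emph{not} identically zero and has degree $\leq \dim W\leq n$: this is Lemma \ref{pol2-le}, whose hypotheses are met by choosing $(p,q)$ large with $aq-rp\leq 0$, where $M(p,q)|_W$ is nef (indeed ample) and $M(p,q)|_W-\omega|_W$ is ample, so that the base point free theorem (Theorem \ref{bpf-th}) forces $h^0>0$. A nonzero polynomial of degree $\leq n$ vanishing on all large integral solutions of $0<aq-rp<1$ forces, by Lemma \ref{pol1-le} with $\varepsilon=1$, that $r$ be rational with denominator $\leq a(n+1)$, contradicting the standing hypothesis that $r$ is irrational or has denominator $>a(n+1)$. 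Hence $W\not\subset L_0$, so generation at $\eta_W$ holds for all large $(p,q)$ in the strip. Finally, since the collection of qlc centers is finite (Definition \ref{quasi-log}), taking the maximum of the finitely many thresholds yields a single range of $(p,q)$ working simultaneously for every qlc center.

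The main obstacle, beyond the bookkeeping, is the interface with the Hodge-theoretic vanishing: everything hinges on Theorem \ref{adj-th} (ii) being available both to lift sections from $W$ to $X$ and to collapse $\chi$ to $h^0$ on $W$, which in turn rests on $M(p,q)-\omega$ being nef and log big (the computation already carried out in Claim \ref{rat-c2}). A secondary point to watch is that $\eta_W$ avoids $X_{-\infty}$ — guaranteed by Claim \ref{rat-c2} ($L_0\cap X_{-\infty}=\emptyset$) together with the definition of a qlc center — so that lifting through $X'=W\cup X_{-\infty}$ genuinely detects generation along $W$.
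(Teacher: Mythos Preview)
Your strategy is precisely the paper's: use adjunction and vanishing to lift sections from a qlc center, collapse $\chi$ to $h^0$ via Theorem~\ref{adj-th}~(ii), and invoke Lemmas~\ref{pol1-le} and~\ref{pol2-le} to force the denominator bound. The contradiction packaging (assume $W\subset L_0$, deduce $P_W\equiv 0$ on the strip) is just the contrapositive of the paper's direct argument and is logically equivalent.

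There is, however, a genuine gap in the relative setting. You write $P_W(p,q)=\chi(W,\mathcal O_W(M(p,q)))$ and argue that vanishing of higher direct images $R^i\pi_*$ forces $\chi=h^0$. But $\pi\colon X\to S$ is only projective over $S$, not over $\mathbb C$; unless $S$ is a point, $W$ is not complete, the cohomology groups $H^i(W,\cdot)$ need not be finite-dimensional, and $\chi(W,\cdot)$ is not a polynomial (indeed is not even defined). Vanishing of $R^i\pi_*$ gives you nothing about absolute cohomology in that generality. The paper fixes this by working on the \emph{generic fiber} $C_\eta$ of $C\to\pi(C)$: this is a projective qlc pair over a field, so Lemma~\ref{pol2-le} applies, the Hilbert polynomial $P_{C_\eta}(p,q)=\chi(C_\eta,\mathcal O_{C_\eta}(M(p,q)))$ is a genuine polynomial of degree $\leq\dim C_\eta\leq n$, and the vanishing theorem on $C_\eta$ (over a point) gives $\chi=h^0$. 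Nonvanishing of $h^0(C_\eta,\cdot)$ is exactly the statement that $\pi_*\mathcal O_C(M(p,q))$ is nonzero at the generic point of $\pi(C)$, which is what you need for $\pi$-generation at $\eta_C$.

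Two smaller points. First, your justification that $[W,\omega|_W]$ is qlc (``a qlc center is not contained in $X_{-\infty}$'') only gives $W\not\subset X_{-\infty}$, not $W\cap X_{-\infty}=\emptyset$; the latter is what you need (via Lemma~\ref{ob-lem}). Fortunately, inside your contradiction hypothesis $W\subset L_0$, Claim~\ref{rat-c2} gives $W\cap X_{-\infty}\subset L_0\cap X_{-\infty}=\emptyset$, so the logic is salvageable --- but the assertion should be placed after you assume $W\subset L_0$. Second, the $\pi$-ampleness of $M(p,q)-\omega$ is not ``the computation in Claim~\ref{rat-c2}'' but the observation $M(p,q)-\omega\sim_{\mathbb R}pH+(qa-1)\omega$ together with $aq-rp<1$, which is the opening line of the paper's proof of Claim~\ref{rat-c3}.
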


\begin{proof}[Proof of {\em{Claim \ref{rat-c3}}}]
We note that $M(p, q)-\omega\sim _{\mathbb R}pH+(qa-1)\omega$. If 
$aq-rp<1$ and $(p, q)$ is sufficiently large, then $M(p, q)-\omega$ is 
$\pi$-ample. 
Let $C$ be a qlc center of $[X, \omega]$. 
We note that we can assume $C\cap 
X_{-\infty}=\emptyset$ by Claim \ref{rat-c2}. 
Then $P_{C_\eta}(p, q)=\chi (C_{\eta}, \mathcal 
O_{C_\eta}(M(p, q)))$ is a non-zero polynomial of degree 
at most $\dim C_{\eta}\leq \dim X$ by Lemma \ref{pol2-le} 
(see also Lemma \ref{ob-lem}). 
Note that $C_\eta$ is the generic fiber of $C\to \pi(C)$. 
By Lemma \ref{pol1-le}, there exists $(p, q)$ such that 
$P_{C_\eta}(p, q)\ne 0$ and 
that $(p, q)$ sufficiently large and $0<aq-rp<1$. 
By the $\pi$-ampleness of $M(p, q)-\omega$, $P_{C_\eta}
(p, q)=
\chi (C_\eta, \mathcal O_{C_\eta}(M(p, q)))
=h^0(C_\eta, \mathcal O_{C_\eta}(M(p, q)))$ and 
$
\pi_*\mathcal O_X(M(p, q))\to \pi_*\mathcal O_C(M(p, q))
$ 
is surjective. 
We note that $C'=C\cup X_{-\infty}$ has a 
natural quasi-log structure induced 
by $[X, \omega]$ and that 
$C\cap X_{-\infty}=\emptyset$. 
Therefore, $\mathcal O_X(M(p, q))$ 
is $\pi$-generated 
at the generic point of $C$. 
By combining this 
with Claim \ref{rat-c1}, $\mathcal O_X(M(p, q))$ is 
$\pi$-generated at the generic point of any 
qlc center of $[X, \omega]$ if 
$(p, q)$ is sufficiently large with 
$0<aq-rp<1$. 
So, we obtain Claim \ref{rat-c2}. 
\end{proof}

Note 
that $\mathcal O_X(M(p, q))$ is not $\pi$-generated 
for $(p, q)\in I$ 
because $M(p, q)$ is not $\pi$-nef. 
Therefore, $L_0\ne \emptyset$. 
We shrink $S$ to an affine open subset intersecting $\pi(L_0)$. 
Let $D_1, \cdots, D_{n+1}$ 
be general members of $\pi_*\mathcal O_X(M(p_0, q_0))
=H^0(X, \mathcal O_X(M(p_0, q_0)))$ with 
$(p_0, q_0)\in I$. 
Around the generic point of any irreducible component of 
$L_0$, 
by taking general hyperplane cuts and applying 
Lemma \ref{boun} below, we can check that $\omega+\sum _{i=1}^{n+1}D_i$ 
is not qlc at the generic point of any irreducible component of $L_0$. 
Thus, $\omega+\sum _{i=1}^{n+1}D_i$ 
is not qlc at the generic point of any irreducible 
component of $L_0$ and is qlc outside $L_0\cup X_{-\infty}$. 
Let $0<c<1$ be the maximal real number such that 
$\omega+c\sum _{i=1}^{n+1}D_i$ is qlc outside $X_{-\infty}$. 
Note that $c>0$ by Claim \ref{rat-c3}. 
Thus, the quasi-log 
pair $[X, \omega+c\sum _{i=1}^{n+1}D_i]$ has some 
qlc centers contained in $L_0$. Let $C$ be a qlc center 
contained in $L_0$. 
We note that $C\cap X_{-\infty}=\emptyset$. 
We consider 
$$\omega'=\omega+c\sum _{i=1}^{n+1} D_i
\sim _{\mathbb R}c(n+1)p_0H+(1+c(n+1)q_0a)\omega. $$ 
Thus we have 
$$pH+qa\omega-\omega'\sim _{\mathbb R}
(p-c(n+1)p_0)H+(qa-(1+c(n+1)q_0a))\omega.$$ 
If $p$ and $q$ are large enough and $0<aq-rp\leq aq_0-rp_0$, 
then $pH+qa\omega-\omega'$ is $\pi$-ample. 
It is because 
\begin{align*}
&(p-c(n+1)p_0)H+(qa-(1+c(n+1)q_0a))\omega\\
&=(p-(1+c(n+1))p_0)H+(qa-(1+c(n+1))q_0a)\omega
+p_0H+(q_0a-1)\omega.  
\end{align*}

Suppose that $r$ is not rational. 
There must be arbitrarily large $(p, q)$ such that 
$0<aq-rp<\varepsilon =aq_0-rp_0$ and 
$H^0(C_\eta, \mathcal O_{C_\eta}(M(p, q)))\ne 0$ 
by Lemma \ref{pol1-le}. 
It is because $M(p, q)-\omega'$ is $\pi$-ample by 
$0<aq-rp<aq_0-rp_0$, 
$P_{C_\eta}(p ,q)=\chi (C_\eta, \mathcal O_{C_\eta}
(M(p, q)))$ 
is a non-trivial polynomial of degree at most $\dim C_\eta$ 
by 
Lemma \ref{pol2-le}, 
and $\chi(C_\eta, \mathcal O_{C_\eta}(M(p, q)))=
h^0(C_\eta, \mathcal O_{C_\eta}(M(p, q)))$ 
by the ampleness of $M(p, q)-\omega'$. 
By the vanishing theorem, 
$
\pi_*\mathcal O_X(M(p, q))\to \pi_*\mathcal O_C(M(p, q))
$ 
is surjective because $M(p, q)-\omega'$ is $\pi$-ample.
We note that $C'=C\cup X_{-\infty}$ has a natural quasi-log structure 
induced by $[X, \omega']$ and that $C\cap X_{-\infty}=\emptyset$. 
Thus $C$ is not contained in $L(p, q)$. Therefore, 
$L(p, q)$ is a proper subset of $L(p_0, q_0)=L_0$, 
giving the desired contradiction. 
So now we know that $r$ is rational. 

We next suppose that the assertion of the theorem concerning 
the denominator of $r$ is false. 
Choose $(p_0, q_0)\in I$ such that 
$aq_0-rp_0$ is the maximum, 
say it is equal to $d/v$. 
If $0<aq-rp\leq d/v$ and 
$(p, q)$ is sufficiently large, then $\chi (C_\eta, 
\mathcal O_{C_\eta}(M(p, q)))=h^0(C_\eta, 
\mathcal O_{C_\eta}(M(p, q)))$ since 
$M(p, q)-\omega'$ is $\pi$-ample. 
There exists sufficiently 
large $(p, q)$ in the strip 
$0<aq-rp<1$ with $\varepsilon =1$ for which 
$h^0(C_\eta, \mathcal O_{C_\eta}(M(p, q)))\ne 0$ by 
Lemma \ref{pol1-le}. 
Note that $aq-rp\leq d/v=aq_0-rp_0$ holds automatically for 
$(p, q)\in I$. 
Since 
$\pi_*\mathcal O_X(M(p, q))
\to \pi_*\mathcal O_C(M(p,q))$ 
is surjective by the $\pi$-ampleness of $M(p,q)-\omega'$, 
we 
obtain the desired contradiction by the same reason as above. 
So, we finish the proof of the rationality theorem. 
\end{proof}

We used the following lemma in the proof of 
Theorem \ref{rat-th}. 

\begin{lem}\label{boun}
Let $[X, \omega]$ be a qlc pair and $x\in X$ 
a closed point. 
Let $D_1, \cdots, D_m$ be effective 
Cartier divisors passing through $x$. 
If $[X, \omega+\sum _{i=1}^m D_i]$ is qlc, 
then $m\leq \dim X$. 
\end{lem}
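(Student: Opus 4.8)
The plan is to induct on $n=\dim X$, reducing everything to a single multiplicity estimate at a point, which is the quasi-log analogue of the classical fact that an effective divisor of multiplicity $>\dim X$ at a smooth point destroys log canonicity (compare the argument in \cite[3.5]{km}). Let me first isolate the mechanism of the bound. Put $D=\sum_{i=1}^m D_i$ and suppose $X$ is normal near $x$, so that for a divisorial valuation $E$ over $X$ with center $x$ the discrepancy $a(E,X,\omega)$ is defined. Since $[X,\omega+D]$ is qlc it is in particular sub-lc along $E$ (the coefficient of $E$ in the boundary of any quasi-log resolution is at most $1$), whence
\[
-1\le a(E,X,\omega+D)=a(E,X,\omega)-\sum_{i=1}^m\mathrm{ord}_E(D_i).
\]
As each $D_i$ passes through $x=c_X(E)$ we have $\mathrm{ord}_E(D_i)\ge 1$, so $m\le a(E,X,\omega)+1$. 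Hence it suffices to produce one valuation $E$ centered at $x$ with $a(E,X,\omega)\le n-1$: blowing up a suitably general smooth closed point $v$ over $x$ on a resolution $g\colon V\to X$ with $K_V+B_V=g^{*}\omega$ yields $a(E,X,\omega)=a(E,V,B_V)=(n-1)-\mult_v B_V\le n-1$. This is the heart of the matter, and I expect it to be the main obstacle: $X$ need not be normal, and a quasi-log resolution is only a (reducible) simple normal crossing variety, so giving the valuation $E$ and its discrepancy an honest meaning requires first passing to a normal model.

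To that end I would invoke the structure of qlc centers. Let $W$ be the minimal qlc center of $[X,\omega+D]$ through $x$; by Theorem \ref{qlc-cent-prop} (ii) it is unique and normal at $x$, and by adjunction (Theorem \ref{adj-th} (i)) $[W,(\omega+D)|_W]$ is again qlc with $W$ as its minimal center through $x$, so $x\notin\Nqklt(W,\cdot)$ and $W$ is normal near $x$. If $\dim W=0$, the computation above can be carried out over the point $x$ and gives $m\le n$; the essential estimate here is the sub-boundary condition on the coefficient of the divisor extracted over $x$, namely that $\mathrm{mult}\ge m$ is played against a discrepancy contribution $\le n-1$, and reconciling this on the non-normal resolution is exactly the technical core just flagged. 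Note that when $m\ge 1$ one always has $W\neq X$: the strict transform of any $D_i$ is a coefficient-one component of the boundary whose image is $D_i\ni x$, so $x$ lies in a qlc center strictly smaller than $X$. Thus the recursion is well founded, the base cases being the zero-dimensional center above and the trivial case $n=0$, in which no effective Cartier divisor passes through $x$ and $m=0$.

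For the inductive step $1\le\dim W=d<n$ I would split $\{D_1,\dots,D_m\}$ according to whether or not each $D_i$ contains $W$. Every $D_i$ with $W\not\subset D_i$ restricts to an effective Cartier divisor $D_i|_W$ through $x$, and adjunction makes $[W,\,\omega|_W+\sum_{W\not\subset D_i}D_i|_W]$ a qlc pair; the induction hypothesis in dimension $d$ bounds the number of these by $d$. To bound the divisors containing $W$, I would cut $X$ by $c=n-d$ general Cartier divisors through $x$ transverse to $W$ at $x$, producing a qlc pair of dimension $c<n$ in which the traces of the containing divisors still pass through the resulting closed point; the induction hypothesis in dimension $c$ bounds their number by $c$. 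Summing, $m\le d+(n-d)=n$, as desired.

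The delicate points, which I would handle with the general-position results of Section \ref{qlog-sec} (in particular Proposition \ref{tai4} and the freedom in choosing quasi-log resolutions), are twofold: the Bertini-type assertion that a general hyperplane section of a qlc pair is again qlc, and the verification that such slicing does not introduce new qlc centers through $x$ while keeping the traces of the containing divisors effective and passing through the point. Together with the non-normal bookkeeping in the zero-dimensional base case, these constitute the genuine content of the lemma; once they are in place the inequality $\sum_i\mathrm{ord}_E(D_i)\le a(E,X,\omega)+1\le n$ closes the argument.
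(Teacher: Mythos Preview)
Your route diverges from the paper's and contains real gaps. The paper avoids discrepancies and hyperplane slicing entirely: it simply adds $D_1$, notes that $[X,\omega+D_1]$ is still qlc, lets $V$ be the union of the qlc centers of $[X,\omega+D_1]$ lying in $\Supp D_1$, and applies adjunction (Theorem~\ref{adj-th}) so that $[V,(\omega+D_1)|_V]$ together with the restricted divisors $D_2|_V,\dots,D_m|_V$ satisfies the same hypotheses in lower dimension. Induction then gives $m-1\le\dim V<\dim X$. The point is that $D_1$ itself serves as the ``slice'', so no Bertini statement at the fixed point $x$ is required and no discrepancy ever enters.

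In your proposal two steps do not go through. First, the discrepancy bound $a(E,X,\omega)\le n-1$ for the exceptional of the blow-up at a general point $v$ over $x$ needs $\mult_v B_V\ge 0$; but for a qlc pair the boundary $B_V$ on a resolution may well be negative along the exceptional locus (the qlc condition only forces $\ulcorner-(B_Y^{<1})\urcorner$ to be $f$-exceptional, not zero), so no such $E$ need exist. You flag this as the ``technical core'' but do not resolve it, and I do not see how to do so in this framework without essentially reverting to the paper's adjunction argument. Second, the slicing count is wrong: cutting an $n$-fold by $c=n-d$ Cartier divisors yields dimension $d$, not $c$, so your two pieces add to $2d$ rather than $n$. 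If you intended instead to cut by $d$ divisors to reach dimension $n-d$, you must still show the restriction is qlc; doing this by adding the $d$ slices to the boundary and invoking adjunction is circular (it presupposes $m+d\le n$), and a Bertini theorem for qlc pairs relative to a \emph{fixed} closed point is not available in the paper.
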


\begin{proof}
First, we assume $\dim X=1$. 
If $x\in X$ is a qlc center of $[X, \omega]$, 
then $m$ must be zero. 
So, we can assume that 
$x\in X$ is not a qlc center of $[X, \omega]$. 
Let $f:(Y, B_Y)\to X$ be a quasi-log resolution of $[X, \omega]$. 
By shrinking $X$ around $x$, we can assume that 
any stratum of $Y$ dominates $X$ and that 
$X$ is smooth by Proposition \ref{normal}. 
Since $f_*\mathcal O_Y(\ulcorner -(B_Y^{<1})\urcorner)\simeq 
\mathcal O_X$, we can easily check that 
$m\leq 1=\dim X$. 
In general, $[X, \omega+D_1]$ is qlc. 
Let $V$ be the union of qlc centers of $[X, \omega+D_1]$ contained 
in $\Supp D_1$. 
Then both $[V, (\omega+D_1)|_V]$ and $[V, 
(\omega+D_1)|_V+\sum _{i=2}^m D_i|_V]$ are 
qlc by adjunction. 
By the induction on the dimension, $m-1\leq \dim V$. Therefore, 
we obtain $m\leq \dim X$. 
\end{proof}

\subsection{Cone Theorem}\label{333-ssec}
The main theorem of this subsection is the cone theorem 
for quasi-log varieties (cf.~\cite[Theorem 5.10]{ambro}). 
Before we state the main theorem, let us 
fix the notation. 

\begin{defn}
Let $[X, \omega]$ be a quasi-log pair with 
the non-qlc locus $X_{-\infty}$. 
Let $\pi:X\to S$ be a projective morphism. 
We put\index{$\overline{NE}(X/S)_{-\infty}$} 
$$
\overline {NE}(X/S)_{-\infty}=
\xIm (\overline {NE}(X_{-\infty}/S)
\to \overline {NE}(X/S)). 
$$
For an $\mathbb R$-Cartier divisor $D$, we define 
$$
D_{\geq 0}=\{z\in N_1(X/S) \ | \ D\cdot z\geq 0\}. 
$$ 
Similarly, we can define $D_{>0}$, 
$D_{\leq 0}$, and 
$D_{<0}$. 
We also define 
$$
D^{\perp}=\{ z\in N_1(X/S) \ | \ D\cdot z=0\}. 
$$ 
We use the following notation 
$$
\overline {NE}(X/S)_{D\geq 0} =\overline {NE}(X/S)\cap 
D_{\geq 0}, 
$$ 
and similarly for $>0$, $\leq 0$, and $<0$.\index{$D_{\geq 0}$}\index{$D_{>0}$}\index{$D_{\leq 0}$}\index{$D_{<0}$}
\index{$D^{\perp}$} 
\end{defn}

\begin{defn}
An {\em{extremal face}}\index{extremal 
face} of $\overline {NE}(X/S)$ is a non-zero subcone $F\subset \overline {NE}(X/S)$ such that 
$z, z'\in F$ and $z+z'\in F$ imply that 
$z, z'\in F$. Equivalently, 
$F=\overline {NE}(X/S)\cap H^{\perp}$ for some 
$\pi$-nef $\mathbb R$-divisor $H$, which 
is called a {\em{supporting function}} of $F$.\index{supporting function} 
An {\em{extremal ray}}\index{extremal ray} 
is a one-dimensional 
extremal face. 
\begin{itemize}
\item[(1)] An extremal face $F$ is called 
{\em{$\omega$-negative}}\index{$\omega$-negative} 
if $F\cap \overline {NE}(X/S)_{\geq 0} 
=\{ 0\}$. 
\item[(2)] An extremal face $F$ is called {\em{rational}} 
if we can choose 
a $\pi$-nef $\mathbb Q$-divisor $H$ as a support 
function of $F$. 
\item[(3)] An extremal face $F$ is called {\em{relatively ample at infinity}}\index{relatively ample 
at infinity} if $F\cap \overline {NE}(X/S)_{-\infty}=\{0\}$. 
Equivalently, $H|_{X_{-\infty}}$ is $\pi|_{X_{-\infty}}$-ample for 
any supporting function $H$ of $F$. 
\item[(4)] An extremal face $F$ is called {\em{contractible 
at infinity}}\index{contractible at 
infinity} if it has a rational supporting function 
$H$ such that $H|_{X_{-\infty}}$ is $\pi|_{X_{-\infty}}$-semi-ample. 
\end{itemize}
\end{defn}

The following theorem is a direct consequence 
of Theorem \ref{bpf-th}. 

\begin{thm}[Contraction Theorem]\index{contraction theorem}
\label{cont-th}
Let $[X, \omega]$ be a quasi-log pair and 
let $\pi:X\to S$ be a projective 
morphism. 
Let $H$ be a $\pi$-nef 
Cartier divisor such that 
$F=H^{\perp}\cap \overline {NE}(X/S)$ is $\omega$-negative 
and contractible at infinity. 
Then there exists a projective morphism 
$\varphi_F:X\to Y$ over $S$ with the following properties. 
\begin{itemize}
\item[$(1)$] 
Let $C$ be an integral curve on $X$ such that 
$\pi(C)$ is a point. 
Then $\varphi_F(C)$ is a point if and 
only if 
$[C]\in F$. 
\item[$(2)$] $\mathcal O_Y\simeq (\varphi_F)_*\mathcal O_X$.  
\item[$(3)$] Let $L$ be a line bundle on $X$ such 
that $L\cdot C=0$ for 
every curve $C$ with $[C]\in F$. 
Then there is a line bundle $L_Y$ on $Y$ such 
that $L\simeq \varphi^*_FL_Y$. 
\end{itemize} 
\end{thm}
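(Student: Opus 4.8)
The plan is to reduce the entire statement to the base point free theorem for quasi-log pairs (Theorem~\ref{bpf-th}), following the classical $X$-method verbatim; the only genuinely new ingredient I need to produce is a perturbation of the supporting function $H$ that makes the hypotheses of Theorem~\ref{bpf-th} available. The first and conceptually central step is to find a real number $q>0$ such that $qH-\omega$ is $\pi$-ample. Since $\pi$ is projective, $N_1(X/S)$ is finite dimensional, $\overline{NE}(X/S)$ contains no line, and Kleiman's criterion (Theorem~\ref{klei-th}) applies; fixing a $\pi$-ample class $A$, the slice $B=\overline{NE}(X/S)\cap\{A\cdot z=1\}$ is compact. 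On $B$ the nef class $H$ is non-negative and vanishes exactly on $F\cap B$ (because $F=H^{\perp}\cap\overline{NE}(X/S)$), while $-\omega$ is strictly positive on $F\cap B$ (because $F$ is $\omega$-negative, i.e.\ $F\cap\overline{NE}(X/S)_{\geq 0}=\{0\}$). A standard compactness argument then shows that $qH-\omega$ is positive on $B$ for $q\gg 0$, hence $\pi$-ample by Theorem~\ref{klei-th}.

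With this in hand I would apply Theorem~\ref{bpf-th} to $L=H$. Hypothesis~(i) is precisely the ampleness of $qH-\omega$ just obtained, and hypothesis~(ii), the $\pi|_{X_{-\infty}}$-generation of $\mathcal O_{X_{-\infty}}(mH)$ for $m\gg 0$, is exactly the content of ``$F$ is contractible at infinity'', since a rational supporting function $H$ of $F$ has $H|_{X_{-\infty}}$ being $\pi|_{X_{-\infty}}$-semi-ample. Thus $\mathcal O_X(mH)$ is $\pi$-generated for all $m\gg 0$. Fixing such an $m$, the surjection $\pi^{*}\pi_{*}\mathcal O_X(mH)\to\mathcal O_X(mH)$ defines a morphism over $S$; taking its Stein factorization produces $\varphi_F\colon X\to Y$ with $\mathcal O_Y\simeq(\varphi_F)_{*}\mathcal O_X$, which is property~(2). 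For property~(1), an integral curve $C$ with $\pi(C)$ a point is contracted by $\varphi_F$ if and only if $mH\cdot C=0$, i.e.\ $H\cdot C=0$, i.e.\ $[C]\in H^{\perp}\cap\overline{NE}(X/S)=F$; one checks that for $m\gg 0$ the Stein factorizations attached to $mH$ stabilize, so that $\varphi_F$ does not depend on the chosen $m$ and contracts exactly the curves in $F$.

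For property~(3), given a line bundle $L$ with $L\cdot C=0$ for every $C$ with $[C]\in F$, I would run Theorem~\ref{bpf-th} twice more. A compactness argument like the one above shows that for $a\gg 0$ both $aH+L$ and $aH$ are $\pi$-nef with $(aH+L)^{\perp}\cap\overline{NE}(X/S)=(aH)^{\perp}\cap\overline{NE}(X/S)=F$, so each is $\omega$-negative on $F$ and contractible at infinity. Hence $\mathcal O_X(aH+L)$ and $\mathcal O_X(aH)$ are both $\pi$-generated for $a\gg 0$ and, by property~(1) applied to the associated contractions, both factor through $\varphi_F$. Expressing the two resulting base-point-free linear systems as pullbacks along $\varphi_F$ of $\varphi_F$-generated sheaves on $Y$ and taking their difference yields a line bundle $L_Y$ on $Y$ with $\varphi_F^{*}L_Y\simeq L$.

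I expect the main obstacle to be the bookkeeping in property~(3): one must verify that the perturbed divisors $aH\pm L$ genuinely satisfy all three hypotheses of Theorem~\ref{bpf-th}---in particular that their restrictions to $X_{-\infty}$ remain $\pi|_{X_{-\infty}}$-semi-ample and that their vanishing face stays equal to $F$---and that the two contractions obtained really coincide with $\varphi_F$, so that the descent of $L$ is canonical. The ampleness perturbation in the first step is the other delicate point, but it is a direct transcription of the klt argument once Kleiman's criterion is granted, which it is because $\pi$ is projective.
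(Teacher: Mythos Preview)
Your construction of $\varphi_F$ and the verification of (1) and (2) coincide with the paper's argument; you have simply made the Kleiman/compactness step for the $\pi$-ampleness of $qH-\omega$ explicit, whereas the paper asserts it in one line.

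For (3) you take a different route, and the obstacle you anticipate is a genuine gap. To apply Theorem~\ref{bpf-th} to $aH+L$ over $S$ you need hypothesis~(ii), i.e.\ that $(aH+L)|_{X_{-\infty}}$ is $\pi|_{X_{-\infty}}$-semi-ample. The hypothesis ``contractible at infinity'' gives this for $H$, but it is a property of the face $F$, not of every supporting function, and there is nothing in the assumptions controlling $L|_{X_{-\infty}}$; so the semi-ampleness of $(aH+L)|_{X_{-\infty}}$ cannot be extracted from what you have. The paper sidesteps the whole issue by changing base rather than perturbing: it applies Theorem~\ref{bpf-th} to $L$ itself relative to $\varphi_F\colon X\to Y$. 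Since $\overline{NE}(X/Y)=F$, the divisor $L$ is $\varphi_F$-numerically trivial (hence $\varphi_F$-nef) and $-\omega$ is $\varphi_F$-ample, so hypothesis~(i) holds for every $q\ge 0$ with no perturbation at all. The theorem then gives that $\mathcal O_X(mL)$ and $\mathcal O_X((m+1)L)$ are $\varphi_F$-generated for $m\gg 0$; being $\varphi_F$-free and $\varphi_F$-numerically trivial, each is the pullback of a line bundle from $Y$, and the ratio descends $L$. This is both shorter and avoids the bookkeeping about vanishing faces, coinciding contractions, and restrictions to $X_{-\infty}$ that your approach forces you to track.
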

\begin{proof}
By the assumption, $qH-\omega$ is 
$\pi$-ample for some positive integer $q$ and 
$H|_{X_{-\infty}}$ is $\pi|_{X_{-\infty}}$-semi-ample. 
By Theorem \ref{bpf-th}, $\mathcal O_X(mH)$ is $\pi$-generated 
for $m\gg 0$. 
We take the Stein factorization of the associated 
morphism. Then, we have the contraction morphism 
$\varphi_F: X\to Y$ with the properties (1) and (2). 

We consider $\varphi_F:X\to Y$ and $\overline {NE}(X/Y)$. 
Then $\overline {NE}(X/Y)=F$, $L$ is 
numerically trivial over $Y$, and $-\omega$ is $\varphi_F$-ample. Applying the base point free theorem (cf.~Theorem 
\ref{bpf-th}) over $Y$, both $L^{\otimes m}$ and $L^{\otimes (m+1)}$ 
are pull-backs of line bundles on $Y$. Their difference 
gives a line bundle $L_Y$ such that 
$L\simeq \varphi^*_FL_Y$. 
\end{proof}

\begin{thm}[Cone Theorem]\index{cone theorem}\label{cone-thm} 
Let $[X, \omega]$ be a quasi-log pair and let 
$\pi:X\to S$ be a projective morphism. 
Then we have the following 
properties. 
\begin{itemize}
\item[$(1)$] $\overline {NE}(X/S)=\overline {NE}(X/S)_{\omega\geq 0} 
+\overline {NE}(X/S)_{-\infty}+\sum R_j$, 
where $R_j$'s are the $\omega$-negative 
extremal rays of $\overline {NE}(X/S)$ that are 
rational and relatively ample at infinity. 
In particular, each $R_j$ is spanned by 
an integral curve $C_j$ on $X$ such that 
$\pi(C_j)$ is a point.  
\item[$(2)$] Let $H$ be a $\pi$-ample $\mathbb R$-divisor 
on $X$. 
Then there are only finitely many $R_j$'s included in 
$(\omega+H)_{<0}$. In particular, 
the $R_j$'s are discrete in the half-space 
$\omega_{<0}$. 
\item[$(3)$] Let $F$ be an $\omega$-negative extremal 
face of $\overline {NE}(X/S)$ that is 
relatively ample at infinity. 
Then $F$ is a rational face. 
In particular, $F$ is contractible at infinity. 
\end{itemize}
\end{thm}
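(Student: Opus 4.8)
The plan is to run the classical Kollár--Mori derivation of the cone theorem from the Rationality Theorem (Theorem~\ref{rat-th}), the Base Point Free Theorem (Theorem~\ref{bpf-th}), and the Contraction Theorem (Theorem~\ref{cont-th}), while carrying the non-qlc locus $X_{-\infty}$ through the conditions \emph{relatively ample at infinity} and \emph{contractible at infinity}. Since Theorem~\ref{cont-th} already produces the contraction $\varphi_F$ from any rational $\omega$-negative face that is contractible at infinity, the genuine content is (3) the rationality of $\omega$-negative faces that are relatively ample at infinity, (2) the local finiteness of the rays, and (1) the cone decomposition. I would establish (3) and (2) in full generality first, and then obtain (1) by a formal separation argument.

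For (3): given an $\omega$-negative extremal face $F$ that is relatively ample at infinity, I fix a $\pi$-nef $\mathbb{R}$-divisor $D$ with $F=D^{\perp}\cap\overline{NE}(X/S)$ and $\pi$-ample Cartier divisors $H_1,\dots,H_\rho$ whose classes span $N^1(X/S)$. For each $i$ I perturb $D$ towards $H_i$ and apply Theorem~\ref{rat-th} to the resulting nef threshold; the hypothesis that $F$ is relatively ample at infinity is exactly what keeps the perturbed divisor ample on $X_{-\infty}$, so condition~(2) of Theorem~\ref{rat-th} holds and the threshold is rational. Enough of these rational nef supporting functions cut out $F$, so $F$ is rational; and any rational nef supporting function restricts to a $\pi|_{X_{-\infty}}$-ample, hence semi-ample, class, giving contractibility at infinity. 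The only-$\mathbb{R}$-Cartier nature of $\omega$ is absorbed by performing each application of Theorem~\ref{rat-th} against a $\mathbb{Q}$-Cartier class obtained from the perturbation, in the spirit of Lemma~\ref{sho-lem}. For (2) I fix a $\pi$-ample $\mathbb{R}$-divisor $H$ and pass to the affine slice $\{H=1\}$, on which $\overline{NE}(X/S)$ is compact and convex; the rays in $(\omega+H)_{<0}$ are extremal points of the compact region $\{\omega\le-1\}\cap\{H=1\}$, each rational by (3), and the denominator bound of Theorem~\ref{rat-th} leaves only finitely many of them there. Taking $H=\tfrac{1}{n}H_0$ and letting $n\to\infty$ yields discreteness in $\omega_{<0}$.

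For (1): set $B=\overline{NE}(X/S)_{\omega\ge0}+\overline{NE}(X/S)_{-\infty}+\sum_j R_j$, so $B\subseteq\overline{NE}(X/S)$, and suppose the inclusion is strict. As both are closed convex cones there is a class $\ell$ with $\ell\ge0$ on $B$ but $\ell$ negative somewhere on $\overline{NE}(X/S)$; fixing $\pi$-ample $H$ and raising $t$ from $0$, the divisor $D=\ell+t_0H$ at the nef threshold $t_0$ is $\pi$-nef, not $\pi$-ample, and strictly positive on $B\setminus\{0\}$, so its null face $F=D^{\perp}\cap\overline{NE}(X/S)$ is nonzero and meets $B$ only at the origin. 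Then $F$ is $\omega$-negative and relatively ample at infinity, hence rational and contractible at infinity by (3); contracting it with Theorem~\ref{cont-th} and applying the finiteness (2) over the contraction target (where $-\omega$ is relatively ample) shows that $F$ is spanned by finitely many $\omega$-negative rational extremal rays that are relatively ample at infinity. Such rays lie in $B$ by definition, contradicting $F\cap B=\{0\}$; therefore $B=\overline{NE}(X/S)$, which is the desired decomposition.

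The hard part will be carrying out (3) and (2) rigorously rather than (1), which is a soft separation argument once (3) is in hand. The main obstacles are: keeping every perturbation and every supporting function ample, respectively semi-ample, along $X_{-\infty}$ so that Theorems~\ref{rat-th} and~\ref{cont-th} stay applicable; executing the reduction from the $\mathbb{R}$-Cartier class $\omega$ to $\mathbb{Q}$-Cartier classes compatibly with these positivity constraints; and combining the denominator bound with the compactness of the slice to force finiteness. A subtle point to verify carefully is that the finiteness step (2), invoked over the contraction target in the proof of (1), does not secretly depend on (1), so that the logical order (3) $\Rightarrow$ (2) $\Rightarrow$ (1) is genuinely non-circular.
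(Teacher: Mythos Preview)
Your proposed order (3) $\Rightarrow$ (2) $\Rightarrow$ (1) has a genuine gap at the first step. Theorem~\ref{rat-th} only controls divisors of the special shape $H+r\omega$ with $H$ $\pi$-ample: it says the nef threshold $r$ is rational with bounded denominator. It does \emph{not} hand you a rational supporting function for an arbitrary prescribed $\omega$-negative face $F$. When you ``perturb $D$ towards $H_i$ and apply Theorem~\ref{rat-th} to the resulting nef threshold'', the class $(D+\epsilon H_i)+r_i\omega$ you land on is indeed rational and nef, but there is no reason its null face should contain, be contained in, or otherwise cut out $F$; the threshold can hit a completely unrelated part of the boundary of $\overline{NE}(X/S)$. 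So the claim ``enough of these rational nef supporting functions cut out $F$'' is unsupported. Nor can you contract $F$ first and argue over the target: Theorem~\ref{cont-th} already requires a rational supporting function, which is precisely what you are trying to produce.

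The paper runs the argument in the opposite order, and this is essential. One first proves the decomposition (1) \emph{with a closure sign}, summing only over faces that arise as null loci of $H+r\omega$ from the rationality theorem---hence rational \emph{by construction}; these can then be contracted, and induction on $\dim F$ reduces to rays. The denominator bound in Theorem~\ref{rat-th} is applied \emph{after} contracting each ray $R_j$ (so that $\rho(X/W_j)=1$ and $r_j=-A\cdot C_j/\omega\cdot C_j$ is governed by the theorem), forcing the rays onto a lattice in the affine slice; this gives discreteness and lets the closure be dropped, establishing (1). Statement (3) is proved last: once (1) holds, $F$ is spanned by some of the $R_j$, so $F^{\perp}\subset N^1(X/S)$ is defined over $\mathbb Q$, and a rational supporting function is found inside it. Your argument for (1) also quietly assumes that $B=\overline{NE}(X/S)_{\omega\ge0}+\overline{NE}(X/S)_{-\infty}+\sum_j R_j$ is already closed before separating; in the paper this is exactly the step of removing the closure sign, and it is not a formality.
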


\begin{proof}
First, we assume that $\omega$ is $\mathbb Q$-Cartier. 
This means that $\omega$ is $\mathbb R$-linearly equivalent 
to a $\mathbb Q$-Cartier divisor. 
We can assume that $\dim _{\mathbb R}
N_1(X/S)\geq 2$ and $\omega$ is not $\pi$-nef. 
Otherwise, the theorem is obvious. 

\setcounter{step}{0}
\begin{step}\label{cone-st1} 
We have 
$$
\overline {NE}(X/S)=\overline 
{\overline {NE}(X/S)_{\omega\geq 0} 
+\overline {NE}(X/S)_{-\infty}+\sum _F F},  
$$ 
where $F$'s vary among all rational 
proper $\omega$-negative faces that are relatively 
ample at infinity and 
$\raise0.5ex\hbox{\textbf{-----}}$
denotes the closure 
with respect to 
the real topology. 
\end{step} 
\begin{proof}We put 
$$
B=\overline {\overline {NE}(X/S)_{\omega\geq 0} 
+\overline {NE}(X/S)_{-\infty}+\sum _F F}. 
$$ 
It is clear that $\overline {NE}(X/S)\supset B$. 
We note that each $F$ is spanned by curves 
on $X$ mapped to points on $S$ by 
Theorem \ref{cont-th} (1). Supposing
$\overline {NE}(X/S)\ne B$, we shall 
derive a contradiction. 
There is a separating function $M$ which 
is Cartier and is not a multiple of $\omega$ in $N^1(X/S)$ such 
that $M>0$ on $B\setminus \{0\}$ and 
$M\cdot z_0<0$ for some $z_0\in \overline {NE}(X/S)$. 
Let $C$ be the dual cone of $\overline {NE}(X/S)_{\omega\geq 0}$, 
that is, 
$$
C=\{D\in N^1(X/S)\ | \ D\cdot z\geq 0 \ \text{for}\ z\in 
\overline {NE}(X/S)_{\omega \geq 0}\}. 
$$
Then $C$ is generated by $\pi$-nef divisors and $\omega$. 
Since $M>0$ on $\overline {NE}(X/S)_{\omega\geq 0}\setminus \{0\}$, $M$ is in the interior of $C$, and hence there 
exists a $\pi$-ample $\mathbb Q$-Cartier 
divisor $A$ such that 
$M-A=L'+p\omega$ in $N^1(X/S)$, where 
$L'$ is a $\pi$-nef $\mathbb Q$-Cartier divisor 
on $X$ and $p$ is a non-negative rational number. 
Therefore, $M$ is expressed in the form $M=H+p\omega$ in 
$N^1(X/S)$, where $H=A+L'$ is a $\pi$-ample 
$\mathbb Q$-Cartier divisor. 
The rationality theorem (see Theorem \ref{rat-th}) 
implies that there exists a positive 
rational number $r<p$ such that $L=H+r\omega$ is 
$\pi$-nef but not 
$\pi$-ample, and $L|_{X_{-\infty}}$ is 
$\pi|_{X_{-\infty}}$-ample. 
Note that $L\ne 0$ in $N^1(X/S)$, since 
$M$ is not a multiple of $\omega$. 
Thus the extremal face $F_L$ associated to 
the supporting function $L$ is contained 
in $B$, which implies $M>0$ on $F_L$. 
Therefore, $p<r$. It is a contradiction. 
This completes the proof of our first claim. 
\end{proof}

\begin{step}
In the equality of Step \ref{cone-st1}, 
we may take such $L$ that 
has the extremal face $F_L$ of dimension one. 
\end{step}
\begin{proof}
Let $F$ be a rational proper $\omega$-negative extremal face 
that is relatively ample at infinity, and assume that $\dim F\geq 
2$. Let $\varphi_F:X\to W$ be the associated 
contraction. 
Note that $-\omega$ is $\varphi_F$-ample. 
By Step \ref{cone-st1}, 
we obtain 
$$
F=\overline {NE}(X/W)=\overline {\sum _G G}, 
$$ 
where the $G$'s are the rational proper 
$\omega$-negative extremal faces of $\overline {NE}(X/W)$. 
We note that $\overline {NE}(X/W)_{-\infty}=0$ because 
$\varphi_F$ embeds $X_{-\infty}$ into $W$. 
The $G$'s are also 
$\omega$-negative 
extremal faces of $\overline {NE}(X/S)$ that are 
ample at infinity, and $\dim G<\dim F$. By induction, we obtain 
\begin{equation}\label{siki1} 
\overline {NE}(X/S)=\overline {\overline {NE}(X/S)_{\omega\geq 0} 
+\overline {NE}(X/S)_{-\infty} +\sum R_j}, 
\end{equation}
where the $R_j$'s are $\omega$-negative rational 
extremal rays. 
Note that each $R_j$ does not 
intersect 
$\overline {NE}(X/S)_{-\infty}$.  
\end{proof}

\begin{step}\label{cone-st3} 
The contraction theorem (cf.~Theorem \ref{cont-th}) 
guarantees that 
for each extremal ray $R_j$ there exists a reduced irreducible curve $C_j$ on $X$ such that 
$[C_j]\in R_j$. 
Let $\psi_j:X\to W_j$ be the contraction 
morphism of $R_j$, and 
let $A$ be a $\pi$-ample 
Cartier divisor. 
We set 
$$
r_j=-\frac{A\cdot  C_j}{\omega\cdot C_j}. 
$$
Then $A+r_j\omega$ is $\psi_j$-nef but not $\psi_j$-ample, 
and 
$(A+r_j \omega)|_{X_{-\infty}}$ is $\psi_j|_{X_{-\infty}}$-ample. 
By the rationality theorem 
$($see Theorem \ref{rat-th}$)$, expressing 
$r_j =u_j /v_j$ with 
$u_j, v_j\in \mathbb Z_{>0}$ and 
$(u_j, v_j)=1$, we have 
the inequality $v_j\leq a(\dim X+1)$. 
\end{step}

\begin{step}
Now take $\pi$-ample Cartier divisors 
$H_1, H_2, \cdots, H_{\rho-1}$ such that 
$\omega$ and the $H_i$'s form a basis of $N^1(X/S)$, where 
$\rho =\dim _{\mathbb R}N^1(X/S)$. By 
Step \ref{cone-st3}, the intersection of  the extremal 
rays $R_j$ with the hyperplane 
$$\{z\in N_1(X/S)\ | \ a\omega \cdot z=-1\}$$ in 
$N_1(X/S)$ lie on the lattice 
$$
\Lambda=
\{z\in N_1(X/S)\ | \ a\omega \cdot z=-1, H_i \cdot z\in (a(a(\dim 
X+1))!)^{-1}\mathbb Z\}. 
$$ 
This implies that the extremal rays are discrete in the 
half space 
$$
\{z \in N_1(X/S) \ | \ \omega\cdot z<0\}. 
$$ 
Thus we can omit the closure sign 
$\raise0.5ex\hbox{\textbf{-----}}$ 
from 
the formula (\ref{siki1}) 
and this completes the proof of 
(1) when $\omega$ is $\mathbb Q$-Cartier. 
\end{step} 
\begin{step}
Let $H$ be a $\pi$-ample $\mathbb R$-divisor on $X$. 
We choose $0<\varepsilon_i\ll 1$ for $1\leq i\leq \rho-1$ such 
that $H-\sum_{i=1}^{\rho-1}\varepsilon _i H_i$ is $\pi$-ample. 
Then the $R_j$'s included in $(\omega+H)_{<0}$ correspond to 
some elements of the above lattice $\Lambda$ for which 
$\sum_{i=1}^{\rho-1}\varepsilon _i H_i\cdot z<1/a$.
 Therefore, we obtain (2). 
\end{step}
\begin{step}\label{cone-st6}
The vector space $V=F^{\perp}\subset N^1(X/S)$ is 
defined over $\mathbb Q$ because 
$F$ is generated by some of the $R_j$'s. 
There exists a $\pi$-ample $\mathbb R$-divisor 
$H$ such that $F$ is contained in $(\omega+H)_{<0}$. 
Let $\langle F\rangle$ be the vector space 
spanned by $F$. 
We put 
$$
W_F=\overline {NE}(X/S)_{\omega+H\geq 0}+
\overline {NE}(X/S)_{-\infty}+\sum _{R_j\not\subset F}R_j. 
$$ 
Then $W_F$ is a closed cone, 
$\overline {NE}(X/S)=W_F+F$, 
and $W_F\cap \langle F\rangle=\{0\}$. The supporting 
functions of $F$ are the elements of $V$ that are 
positive on $W_F\setminus \{0\}$. 
This is a non-empty open set and thus it 
contains a rational element that, after scaling, 
gives a 
$\pi$-nef Cartier divisor $L$ such that 
$F=L^{\perp}\cap \overline {NE}(X/S)$. Therefore, 
$F$ is rational. 
So, we have (3). 
\end{step}
From now on, $\omega$ is $\mathbb R$-Cartier. 
\begin{step}
Let $H$ be a $\pi$-ample $\mathbb R$-divisor on $X$. 
We shall prove (2). 
We assume that there are infinitely many 
$R_j$'s in $(\omega+H)_{<0}$ and 
get a contradiction. 
There exists an affine open subset $U$ of 
$S$ such that 
$\overline {NE}(\pi^{-1}(U)/U)$ has infinitely many 
$(\omega+H)$-negative extremal rays. 
So, we shrink $S$ and can assume that 
$S$ is affine. 
We can write $H=E+H'$, where 
$H'$ is $\pi$-ample, 
$[X, \omega+E]$ is a quasi-log pair 
with the same qlc centers and non-qlc 
locus as $[X, \omega]$, 
and $\omega+E$ is $\mathbb Q$-Cartier. 
Since $\omega+H=\omega+E+H'$, we 
have 
$$
\overline {NE}(X/S)=\overline {NE}(X/S)_{\omega+H\geq 0}
+\overline {NE}(X/S)_{-\infty}+\sum _{\text{finite}}R_j. 
$$
It is a contradiction. 
Thus, we obtain (2). 
The statement (1) is a direct consequence of 
(2). Of course, 
(3) holds by Step \ref{cone-st6} 
once we obtain (1). 
\end{step}
So, we finish the proof of the cone theorem. 
\end{proof}

We close this subsection 
with the following non-trivial example. 

\begin{ex}\label{cone-ex} We consider 
the first projection $p:\mathbb P^1\times \mathbb P^1\to \mathbb P^1$. 
We take a blow-up $\mu:Z\to \mathbb P^1\times \mathbb P^1$ at 
$(0, \infty)$. 
Let $A_{\infty}$ (resp.~$A_0$) be 
the strict transform of $\mathbb P^1\times \{\infty\}$ 
(resp.~$\mathbb P^1\times \{0\}$) on $Z$. 
We define $M=\mathbb P_Z(\mathcal O_Z\oplus \mathcal O_Z(A_0))$ and 
$X$ is the restriction of $M$ on $(p\circ \mu)^{-1}(0)$. 
Then $X$ is a simple normal crossing divisor on $M$. 
More explicitly, $X$ is a $\mathbb P^1$-bundle 
over $(p\circ \mu)^{-1}(0)$ and is obtained by gluing $X_1=
\mathbb P^1\times \mathbb P^1$ 
and $X_2=
\mathbb P_{\mathbb P^1}(\mathcal O_{\mathbb P^1}\oplus \mathcal 
O_{\mathbb P^1}(1))$ along a fiber. 
In particular, $[X, K_X]$ is a quasi-log pair with only qlc 
singularities. 
By the construction, $M\to Z$ has two sections. 
Let $D^+$ (resp.~$D^-$) be the restriction of the section 
of $M\to Z$ corresponding to 
$\mathcal O_Z\oplus \mathcal O_Z(A_0)\to \mathcal O_Z(A_0)\to 0$ 
(resp.~$\mathcal O_Z\oplus \mathcal O_Z(A_0)\to \mathcal O_Z\to 0$). 
Then it is easy to see that $D^+$ is 
a nef Cartier divisor on $X$ 
and that 
the linear system $|mD^+|$ is 
free for any $m>0$ by Remark \ref{free-re} below. 
We take a general member $B_0\in |mD^+|$ with 
$m\geq 2$. 
We consider $K_X+B$ with $B=D^-+B_0+B_1+B_2$, 
where $B_1$ and $B_2$ are general fibers of $X_1=\mathbb P^1\times 
\mathbb P^1\subset X$. 
We note that $B_0$ does not intersect 
$D^-$. 
Then $(X, B)$ is an embedded simple normal crossing 
pair. In particular, $[X, K_X+B]$ is a quasi-log pair with 
$X_{-\infty}=\emptyset$. 
It is easy to see that there exists only one 
integral curve $C$ on $X_2=
\mathbb P
_{\mathbb P^1}(\mathcal O_{\mathbb P^1}\oplus \mathcal O_{\mathbb P^1}
(1))\subset X$ such that 
$C\cdot (K_X+B)<0$. Note that 
$(K_X+B)|_{X_1}$ is ample on $X_1$. 
By the cone theorem, 
we obtain 
$$
\overline {NE}(X)=\overline {NE}(X)_{(K_X+B)\geq 0}+\mathbb R_{\geq 0} 
[C]. 
$$ 
By the contraction theorem, we have $\varphi:X\to W$ which 
contracts $C$. We can easily see that 
$W$ is a simple normal crossing surface but 
$K_W+B_W$, where $B_W=\varphi_*B$, is not $\mathbb Q$-Cartier. 
Therefore, we can not run the LMMP for reducible varieties. 
\end{ex}

The above example implies that the cone and contraction 
theorems for quasi-log varieties do not directly produce 
the LMMP for quasi-log varieties. 

\begin{rem}\label{free-re}
In Example \ref{cone-ex}, $M$ is a projective 
toric variety. 
Let $E$ be the section of $M\to Z$ corresponding 
to $\mathcal O_Z\oplus \mathcal O_Z(A_0)\to 
\mathcal O_Z(A_0)\to 0$. 
Then, it is easy to see that 
$E$ is a nef Cartier divisor 
on $M$. Therefore, the linear system $|E|$ is 
free. In particular, $|D^+|$ is free on $X$. Note 
that $D^+=E|_{X}$. So, 
$|mD^+|$ is free for any $m\geq 0$. 
\end{rem}

\chapter{Related Topics}\label{chap4}

In this chapter, we treat related topics. 
In Section \ref{34-sec}, we discuss the base point free 
theorem of Reid--Fukuda type. 
In Section \ref{b-dlt-sec}, we prove that 
the non-klt locus of a dlt pair is Cohen--Macaulay as an 
application of Lemma \ref{re-vani-lem}. 
Section \ref{sec-alex} is a description of Alexeev's 
criterion for Serre's $S_3$ condition. 
It is a clever application of Theorem \ref{8} (i). 
Section \ref{to-sec} is an introduction to the 
theory of toric polyhedra. 
A toric polyhedron has a natural quasi-log structure. 
In Section \ref{43-sec}, 
we quickly explain the notion of 
{\em{non-lc ideal sheaves}} and the restriction 
theorem in \cite{fujino10}. 
It is related to the inversion of adjunction 
on log canonicity. 
In the final section, we state effective base point free theorems 
for log canonical pairs. We give no proofs there. 

\section{Base Point Free Theorem of Reid--Fukuda 
type}\label{34-sec}

One of my motivations to study \cite{ambro} is 
to understand \cite[Theorem 7.2]{ambro}, 
which is a complete generalization of \cite{re-fu}. 
The following theorem is a special case of 
Theorem 7.2 in \cite{ambro}, 
which was stated without proof. 
Here, we will reduce it to Theorem \ref{bpf-th} by using 
Kodaira's lemma. 

\begin{thm}[Base point free theorem of 
Reid--Fukuda type]\label{bpf-rd-th}
Let $[X, \omega]$ be a quasi-log pair with $X_{-\infty}=\emptyset$, 
$\pi:X\to S$ a {\em{projective}} morphism, and $L$ 
a $\pi$-nef Cartier divisor on $X$ such that 
$qL-\omega$ is nef and log big over $S$ for some positive 
real number $q$. 
Then $\mathcal O_X(mL)$ is $\pi$-generated 
for $m\gg 0$. 
\end{thm}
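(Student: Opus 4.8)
The plan is to run the $X$-method exactly as in the proof of the base point free theorem for quasi-log pairs (Theorem \ref{bpf-th}), isolating the two places where the $\pi$-ampleness of $qL-\omega$ was used there and checking that each survives when $qL-\omega$ is only nef and log big over $S$. Since $X_{-\infty}=\emptyset$, condition (ii) of Theorem \ref{bpf-th} is vacuous, so the sole gap to fill is the weakening of hypothesis (i) from ``$\pi$-ample'' to ``nef and log big''. As usual we may assume that $S$ is affine, and we argue by induction on $\dim X$. The bookkeeping fact used repeatedly is that nef and log bigness is inherited under restriction to any union of qlc centers: if $W$ is such a union, then by adjunction (Theorem \ref{adj-th} (i)) the qlc centers of $W$ are exactly the qlc centers of $[X,\omega]$ contained in $W$, so $(qL-\omega)|_W$ is again nef and log big over $S$. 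Likewise, since $L$ is $\pi$-nef, $mL-\omega=(qL-\omega)+(m-q)L$ is nef and log big for every $m\ge q$.

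First I would treat a neighbourhood of $\Nqklt(X,\omega)=\LCS(X,\omega)$. By Theorem \ref{adj-th} (i) the LCS locus carries an induced quasi-log structure, $qL-\omega$ restricts to a nef and log big divisor on it, and its dimension is strictly smaller; hence the induction hypothesis gives that $\mathcal O_{\LCS(X,\omega)}(mL)$ is generated for $m\gg 0$. The vanishing theorem (Theorem \ref{adj-th} (ii), valid because $mL-\omega$ is nef and log big for $m\ge q$) yields $R^1\pi_*(\mathcal I_{\LCS(X,\omega)}\otimes\mathcal O_X(mL))=0$, so these sections lift and $\mathcal O_X(mL)$ is $\pi$-generated in a neighbourhood of $\LCS(X,\omega)$. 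This is verbatim Claim 1 of the proof of Theorem \ref{bpf-th} with ``ample'' replaced by ``nef and log big''.

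The substantive point is generation at a general point together with the descent of the base locus, and this is exactly where Kodaira's lemma enters. In the proof of Theorem \ref{bpf-th} the $\pi$-ampleness of $(q+r)L-\omega$ was used to produce a divisor $D\sim_{\mathbb R}(q+r)L-\omega$ with $\mult_xD>\dim X$ at a general smooth point $x$; since $qL-\omega$ is big over $S$ (being nef and log big), Kodaira's lemma supplies the same high-multiplicity member, which is all that is needed. Blowing up the ambient space so that $(Y,B_Y+f^*D)$ is a global embedded simple normal crossing pair (Proposition \ref{tai4}), one chooses $0<c<1$ so that $[X,\omega+cD]$ is qlc with a new qlc center through $x$; the identity $(q+cr)L-(\omega+cD)\sim_{\mathbb R}(1-c)(qL-\omega)$ shows that the perturbed difference is again nef and log big, so the nef-and-log-big form of Claim 1 just established applies to $[X,\omega+cD]$ and gives generation around the new center. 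The same substitution — Kodaira's lemma for the high-multiplicity member, Theorem \ref{adj-th} (ii) to lift sections along a qlc center $C$ meeting the base locus, and the theorem in lower dimension applied to $C$ — carries through the base-locus descent (Claims 3 and 4 of Theorem \ref{bpf-th}), and the usual relatively prime, Noetherian argument yields that $\mathcal O_X(mL)$ is $\pi$-generated for $m\gg 0$.

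The main obstacle is not any single estimate but the consistent verification that every perturbation stays within the nef-and-log-big world. Concretely, one must check that $(1-c)(qL-\omega)$, the restrictions of $qL-\omega$ to each successively created qlc center, and the differences appearing when sections are lifted are all nef and log big rather than merely nef and big, since it is precisely log bigness (bigness on every qlc stratum) that keeps the vanishing theorem Theorem \ref{adj-th} (ii) available at each inductive step and that lets Kodaira's lemma be invoked stratum by stratum. Tracking this through the perturbations $\omega\rightsquigarrow\omega+cD$, and arranging that the supports of the auxiliary divisors contain no qlc center so that Proposition \ref{tai4} applies, is the part that must be done with care; everything else is the base point free machinery of Theorem \ref{bpf-th}.
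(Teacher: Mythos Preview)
Your plan is to rerun the $X$-method of Theorem \ref{bpf-th} and check that ``nef and log big'' can be substituted for ``$\pi$-ample'' at every occurrence. The paper's own Remark after the theorem explains why this does not go through. The issue is in your perturbation step: once you pass from $[X,\omega]$ to $[X,\omega+cD]$, the new pair has \emph{additional} qlc centers---that is precisely why you made the perturbation. Your identity $(q+cr)L-(\omega+cD)\sim_{\mathbb R}(1-c)(qL-\omega)$ is correct, but it only tells you that the perturbed difference is nef and big on the qlc centers of the \emph{old} structure $[X,\omega]$. ``Log big'' in Theorem \ref{adj-th} (ii) is defined relative to the quasi-log structure at hand, so to apply the vanishing theorem or the induction hypothesis to $[X,\omega+cD]$ you would need $(qL-\omega)$ to be big on each newly created center, and there is no mechanism in your argument that produces this. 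You flag exactly this as ``the part that must be done with care'', but you do not do it; the author states explicitly that he could not verify it, which is why the proof in the paper takes a different route. The same problem recurs in the base-locus descent (your Claim 3 analogue), where the new center is forced to meet the base locus and is therefore not general in any useful sense.

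The paper's proof avoids this difficulty entirely. Instead of rerunning the $X$-method, it reduces to Theorem \ref{bpf-th} itself. After passing to an irreducible component, Kodaira's lemma gives $qL-\omega\sim_{\mathbb R}A+E$ with $A$ $\pi$-ample and $E$ effective; one then perturbs to $\widetilde\omega=\omega+\varepsilon E$. The point of the key Lemma \ref{key-le} is that for $0<\varepsilon\ll 1$ the pair $[X,\widetilde\omega]$ is quasi-log with $\Nqklt(X,\widetilde\omega)=\Nqklt(X,\omega)$ as closed subsets and $\widetilde X_{-\infty}$ a closed subscheme of $\Nqklt(X,\widetilde\omega)$. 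By induction on dimension (using the inherited nef-and-log-big hypothesis on $\Nqklt(X,\omega)$) and the lifting via Theorem \ref{adj-th} (ii), $\mathcal O_X(mL)$ is $\pi$-generated along $\Nqklt(X,\omega)$, hence along $\widetilde X_{-\infty}$. But now $qL-\widetilde\omega\sim_{\mathbb R}(1-\varepsilon)(qL-\omega)+\varepsilon A$ is genuinely $\pi$-ample, so Theorem \ref{bpf-th} applies directly to $[X,\widetilde\omega]$. The trade is that the perturbation may introduce a non-qlc locus, but Lemma \ref{key-le} confines it to the set already handled by induction; this is what buys the reduction to the ample case.
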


\begin{rem}
In \cite[Section 7]{ambro}, Ambro said that 
the proof of \cite[Theorem 7.2]{ambro} 
is parallel to 
\cite[Theorem 5.1]{ambro}. 
However, I could not check it. 
Steps 1, 2, and 4 in the proof of 
\cite[Theorem 5.1]{ambro} 
work without any modifications. 
In Step 3 (see Claim \ref{bpf-c3} in 
the proof of Theorem \ref{bpf-th}), 
$q'L-\omega'$ is $\pi$-nef, but I think that 
$q'L-\omega'=qL-\omega$ 
is not always log big over $S$ with 
respect to $[X, \omega']$, 
where $\omega'=\omega+cD$ and $q'=q+cp^l$. So, 
we can not directly apply the argument in Step 1 
(see Claim \ref{bpf-c1} in the proof of 
Theorem \ref{bpf-th}) to 
this new quasi-log pair $[X, \omega']$. 
\end{rem}

\begin{proof}
We divide the proof into three steps. 
\setcounter{step}{0}
\begin{step}
We take an irreducible component $X'$ of $X$. 
Then $X'$ has a natural quasi-log 
structure induced by $X$ $($see 
Theorem \ref{adj-th} (i)$)$. 
By the vanishing theorem $($see 
Theorem \ref{adj-th} (ii)$)$, we have 
$R^1\pi_*
(\mathcal I_{X'}\otimes \mathcal O_X(mL))=0$ for $m\geq q$. 
Therefore, we obtain that 
$\pi_*\mathcal O_X(mL)\to \pi_*\mathcal O_{X'}
(mL)$ is surjective for 
$m\geq q$. 
Thus, we can assume that $X$ is irreducible for the proof of 
this theorem by the following commutative 
diagram. 
$$
\begin{CD}
\pi^*\pi_*\mathcal O_X(mL)@>>>\pi^*\pi_*\mathcal O_{X'}(mL)@>>>0\\
@VVV @VVV\\
\mathcal O_X(mL)@>>>\mathcal O_{X'}(mL)@>>>0
\end{CD}
$$
\end{step} 
\begin{step}
Without loss of generality, we can assume that $S$ is affine. 
Since $qL-\omega$ is nef and big over $S$, we can 
write $qL-\omega\sim _{\mathbb R}A+E$ by Kodaira's lemma, 
where $A$ is a $\pi$-ample $\mathbb Q$-Cartier 
$\mathbb Q$-divisor on $X$ and $E$ is an effective $\mathbb R$-Cartier 
$\mathbb R$-divisor 
on $X$. 
We note that $X$ is projective 
over $S$ and that $X$ is not necessarily normal. 
By Lemma \ref{key-le} below, we have a new 
quasi-log structure on $[X, \widetilde \omega]$, 
where $\widetilde \omega=\omega+\varepsilon E$,  
for $0<\varepsilon \ll 1$. 
\end{step}

\begin{step}
By the induction on the dimension, 
$\mathcal O_{\Nqklt(X, \omega)}(mL)$ is $\pi$-generated for 
$m\gg 0$. 
Note that $\pi_*\mathcal O_X(mL)\to \pi_*\mathcal 
O_{\Nqklt(X, 
\omega)}(mL)$ 
is surjective for $m\geq q$ by the vanishing theorem $($see 
Theorem \ref{adj-th} (ii)$)$. 
Then $\mathcal O_{\Nqklt(X, \widetilde \omega)}(mL)$ is $\pi$-generated 
for $m\gg 0$ by the above lifting result and by 
Lemma \ref{key-le}. 
In particular, $\mathcal O_{\widetilde X_{-\infty}}(mL)$ 
is $\pi$-generated for 
$m\gg 0$. 
We note that $qL-\widetilde \omega\sim _{\mathbb R}
(1-\varepsilon)(qL-\omega)+\varepsilon A$ is $\pi$-ample. 
Therefore, by Theorem \ref{bpf-th}, 
we obtain that $\mathcal O_X(mL)$ is $\pi$-generated for 
$m\gg 0$. 
\end{step}
We finish the proof. 
\end{proof}

\begin{lem}\label{key-le}
Let $[X, \omega]$ be a quasi-log pair with 
$X_{-\infty}=\emptyset$. 
Let $E$ be an effective $\mathbb R$-Cartier 
$\mathbb R$-divisor on $X$. 
Then $[X, \omega+\varepsilon E]$ is a 
quasi-log pair with the following 
properties for 
$0<\varepsilon \ll 1$. 
\begin{itemize}
\item[{\em{(i)}}] 
We put $[X, \widetilde \omega]=[X, 
\omega+\varepsilon E]$. 
Then $[X, \widetilde \omega]$ 
is a quasi-log pair and 
$\Nqklt (X, 
\widetilde \omega)=\Nqklt (X, \omega)$ as closed subsets of $X$. 
\item[{\em{(ii)}}] There exist natural surjective 
homomorphisms $\mathcal O_{\Nqklt(X, \widetilde \omega)}\to 
\mathcal O_{\Nqklt(X, \omega)}\to 0$ and 
$\mathcal O_{\Nqklt(X, \widetilde \omega)}
\to \mathcal O_{\widetilde X_{-\infty}}\to 0$, 
that is, $\Nqklt(X, \omega)$ and $\widetilde X_{-\infty}$ are closed 
subschemes of $\Nqklt(X, \widetilde \omega)$, 
where $\widetilde X_{-\infty}$ is the non-qlc locus 
of $[X, \widetilde\omega]$. 
\end{itemize}
\end{lem}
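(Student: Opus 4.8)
The goal is to prove Lemma \ref{key-le}: perturbing the quasi-log canonical class $\omega$ by a small multiple $\varepsilon E$ of an effective $\mathbb R$-Cartier divisor preserves the quasi-log structure and, for $\varepsilon$ small, does not change the LCS locus as a closed set but may enlarge it schematically, creating a non-qlc locus $\widetilde X_{-\infty}$ sitting inside $\Nqklt(X,\widetilde\omega)$.

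The plan is to work directly on a quasi-log resolution $f:(Y,B_Y)\to X$ of $[X,\omega]$, which exists because $X_{-\infty}=\emptyset$. The first step is to pull back $E$: by Proposition \ref{tai4}, after blowing up the ambient space $M$ of $Y$ along $\Supp f^*E$, I may assume that $(Y,B_Y+f^*E)$ is again a global embedded simple normal crossing pair (here I use that $\Supp E$ contains no qlc centers of $[X,\omega]$; this must be verified, or rather it is exactly the hypothesis one checks is available because $E$ is effective and we only need the perturbation to be harmless over the generic points of the qlc centers — I will address the case where $E$ does meet centers below). Setting $\widetilde\omega=\omega+\varepsilon E$, condition (1) of Definition \ref{quasi-log} holds with the new subboundary $B_Y+\varepsilon f^*E$ since $f^*\widetilde\omega\sim_{\mathbb R}K_Y+B_Y+\varepsilon f^*E$.

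The key computation is to track how the round-up/round-down data changes. Write $\widetilde B_Y=B_Y+\varepsilon f^*E$. For $0<\varepsilon\ll 1$ the coefficients of $\widetilde B_Y$ satisfy: any component with $B_Y$-coefficient $<1$ stays $<1$ (so $\ulcorner-(\widetilde B_Y^{<1})\urcorner$ may only change where new components of $f^*E$ are introduced), any component with coefficient $=1$ is pushed to $>1$ precisely along $\Supp f^*E$, and components with coefficient $>1$ stay $>1$. The essential point is that the defining ideal $\mathcal I_{\widetilde X_{-\infty}}=f_*\mathcal O_Y(\ulcorner-(\widetilde B_Y^{<1})\urcorner-\llcorner \widetilde B_Y^{>1}\lrcorner)$ is computed via condition (2), and I must show this sheaf defines a closed subscheme whose support equals the image of those strata now pushed above coefficient $1$. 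This gives $\widetilde X_{-\infty}$. For (i), I compare supports: a $(Y,\widetilde B_Y)$-stratum lies over $\widetilde X_{-\infty}$ iff it meets $\Supp f^*E$ with multiplicity raising a coefficient past $1$; choosing $\varepsilon$ small keeps the underlying reduced LCS locus equal to $\Nqklt(X,\omega)$ because no qlc center's generic point is affected. For (ii), the inclusions of ideals $\mathcal I_{\Nqklt(X,\widetilde\omega)}\subset\mathcal I_{\Nqklt(X,\omega)}$ and $\mathcal I_{\Nqklt(X,\widetilde\omega)}\subset\mathcal I_{\widetilde X_{-\infty}}$ follow from comparing the divisorial data $\ulcorner-(B_Y^{<1})\urcorner-\llcorner B_Y^{>1}\lrcorner$ against its tilde-version, yielding the asserted surjections $\mathcal O_{\Nqklt(X,\widetilde\omega)}\to\mathcal O_{\Nqklt(X,\omega)}$ and $\mathcal O_{\Nqklt(X,\widetilde\omega)}\to\mathcal O_{\widetilde X_{-\infty}}$.

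I expect the main obstacle to be bookkeeping the scheme structures rather than any deep input: precisely, verifying that condition (2) of Definition \ref{quasi-log} continues to hold for $[X,\widetilde\omega]$ with $\widetilde X_{-\infty}$ as defined, i.e.\ that $\mathcal O_X\to f_*\mathcal O_Y(\ulcorner-(\widetilde B_Y^{<1})\urcorner)$ induces an isomorphism onto the correct ideal. This is where I would invoke the torsion-free theorem (Theorem \ref{ap1} (i)) in the manner of the proof of Theorem \ref{adj-th}: the connecting homomorphisms in the relevant long exact sequences vanish because the supports of the higher direct images cannot contain the images of the strata in question. The subtlety is that unlike in Theorem \ref{adj-th}, here the perturbation can push coefficients from $=1$ to $>1$ along $\Supp f^*E$, so I must check the strata that newly enter the $>1$ regime are exactly those mapping into $\widetilde X_{-\infty}$ and that the inequality $\ulcorner-(\widetilde B_Y^{<1})\urcorner\geq\ulcorner-(B_Y^{<1})\urcorner$ holds componentwise for $\varepsilon$ small, which forces the comparison maps to be the natural surjections; establishing this componentwise inequality uniformly in the finitely many prime divisors on $Y$ is the crux and fixes how small $\varepsilon$ must be.
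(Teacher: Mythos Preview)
Your proposal has a genuine gap at the first step. You invoke Proposition \ref{tai4} to arrange that $(Y, B_Y+f^*E)$ is a global embedded simple normal crossing pair, but that proposition requires that $\Supp E$ contain no qlc centers of $[X,\omega]$. This hypothesis is not part of the lemma and is false in general: an effective $\mathbb R$-Cartier divisor $E$ may well contain non-maximal qlc centers in its support. You acknowledge this (``I will address the case where $E$ does meet centers below'') but never actually do so; all of your subsequent bookkeeping is predicated on having $(Y, B_Y+f^*E)$ simple normal crossing on the whole of $Y$, which you cannot arrange by blowing up inside $\Supp f^*E$ once $f^*E$ swallows entire strata.

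The paper's proof handles this by a genuinely different maneuver. First, via embedded log transformations it arranges that the union $Y'$ of all strata of $(Y,B_Y)$ mapped into $\Nqklt(X,\omega)$ is a union of irreducible components of $Y$, and sets $Y''=Y-Y'$. Every stratum of $Y''$ then maps to a \emph{maximal} qlc center, i.e.\ an irreducible component of $X$, and an effective Cartier divisor cannot contain such a component; hence $(f|_{Y''})^*E$ contains no strata of $Y''$. One then resolves only over $Y''$, producing a simple normal crossing divisor $F$ on the ambient space $M$ with $F|_{Y''}=(f|_{Y''})^*E$, while allowing $F$ to share components with $D$ and with $Y'$. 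Finally one \emph{discards} the irreducible components $Y_2\subset Y'$ of $Y$ contained in $\Supp F$ and takes $Y_1=Y-Y_2$ with boundary $B_{Y_1}+\varepsilon F|_{Y_1}$ as the new quasi-log resolution of $[X,\widetilde\omega]$. The inclusion $\llcorner (B_{Y_1}+\varepsilon F|_{Y_1})^{>1}\lrcorner \geq Y_2|_{Y_1}$ is what makes the pushforward land inside $f_*\mathcal O_Y(\ulcorner -(B_Y^{<1})\urcorner)\simeq\mathcal O_X$ and hence define an honest ideal $\mathcal I_{\widetilde X_{-\infty}}$. This splitting-and-discarding step is the missing idea in your approach; without it, you cannot even make sense of $f^*E$ as part of a simple normal crossing pair, let alone compare round-ups componentwise.
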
 
\begin{proof}
Let $f:(Y, B_Y)\to X$ be a quasi-log resolution of $[X, \omega]$, 
where $(Y, B_Y)$ is a global embedded simple normal 
crossing pair. 
We can assume that 
the union of all strata of $(Y, B_Y)$ mapped into 
$\Nqklt (X, \omega)$, which we denote by $Y'$, is a 
union of irreducible components of $Y$. 
We put $Y''=Y-Y'$. Then 
we obtain that $f_*\mathcal O_{Y''}(A-Y'|_{Y''})$ 
is $\mathcal I_{\Nqklt (X, \omega)}$, 
that is, the defining ideal sheaf of $\Nqklt (X, \omega)$ on $X$, 
where $A=\ulcorner -(B^{<1}_Y)\urcorner$. 
For the details, see the proof of Theorem \ref{adj-th} (i). 
Let $M$ be the ambient space of $Y$ and $B_Y=D|_Y$. 
\begin{claim}
By modifying $M$ birationally, 
we can assume that 
there exists a simple normal crossing divisor $F$ on $M$ such 
that $\Supp (Y+D+F)$ is simple normal 
crossing, $F$ and $Y''$ have no common irreducible 
components, and 
$F|_{Y''}=(f'')^*E$, where 
$f''=f|_{Y''}$. 
Of course, $(f'')^*E+B_{Y''}$ has a simple 
normal crossing support on $Y''$, where 
$K_{Y''}+B_{Y''}=(K_Y+B_Y)|_{Y''}$. 
In general, $F$ may have common irreducible components with $D$ and $Y'$. 
\end{claim}
\begin{proof}[Proof of {\em{Claim}}] 
First, we note that $(f'')^*E$ contains no strata of $Y''$. 
We can construct a proper birational 
morphism $h:\widetilde M\to M$ from 
a smooth variety $\widetilde M$ such that 
$K_{\widetilde M}+D_{\widetilde M}=h^*(K_M+Y+D)$, 
$h^{-1}((f'')^*E)$ is a divisor 
on $\widetilde M$, and 
$\Exc (h)\cup \Supp h^{-1}_*(Y+D)\cup 
h^{-1}((f'')^*E)$ is a simple normal crossing 
on $\widetilde M$ as in the proof of 
Proposition \ref{tai4}. 
We note that 
we can assume that $h$ is an isomorphism outside 
$h^{-1}((f'')^*E)$ by 
Szab\'o's resolution lemma. 
Let $\widetilde Y$ be the union 
of the irreducible 
components of $D^{=1}_{\widetilde M}$ that 
are mapped into $Y$. 
By Proposition \ref{taisetsu}, 
we can replace $M$, $Y$, 
and $D$ with 
$\widetilde M$, $\widetilde Y$, 
and $\widetilde D=D_{\widetilde M}-\widetilde Y$. 
We finish the proof. 
\end{proof}
Let us go back to 
the proof of Lemma \ref{key-le}. 
Let $Y_2$ be the union of all 
the irreducible 
components of $Y$ that are 
contained in $\Supp F$. 
We put $Y_1=Y-Y_2$ and 
$\widetilde B=F|_{Y_1}$. 
We consider $f_1:(Y_1, B_{Y_1}+\varepsilon \widetilde B)\to 
X$ for $0<\varepsilon \ll 1$, 
where 
$K_{Y_1}+B_{Y_1}=(K_Y+B_Y)|_{Y_1}$ and 
$f_1=f|_{Y_1}$. 
Then, we have 
$f^*_1(\omega+\varepsilon E)\sim _{\mathbb R}
K_{Y_1}+
B_{Y_1}+\varepsilon \widetilde B$. 
Moreover, the natural 
inclusion 
$\mathcal O_X\to f_{1*}\mathcal O_{Y_1}(\ulcorner 
-((B_{Y_1}+\varepsilon \widetilde B)^{<1})\urcorner)$ defines 
an ideal 
$\mathcal I_{\widetilde X_{-\infty}}
=f_{1*}\mathcal O_{Y_1}(\ulcorner 
-((B_{Y_1}+\varepsilon \widetilde B)^{<1})\urcorner
-\llcorner (B_{Y_1}+\varepsilon \widetilde B)^{>1}\lrcorner)$. 
It is because 
$$f_{1*}\mathcal O_{Y_1}(\ulcorner 
-((B_{Y_1}+\varepsilon \widetilde B)^{<1})\urcorner
-\llcorner (B_{Y_1}+\varepsilon \widetilde B)^{>1}\lrcorner)
\subset 
f_*\mathcal O_{Y}(\ulcorner -(B_Y)^{<1}\urcorner)\simeq 
\mathcal O_X$$ when 
$0<\varepsilon \ll 1$. 
We note that 
$\llcorner (B_{Y_1}+\varepsilon \widetilde B)^{>1}\lrcorner
\geq {Y_2}|_{Y_1}$. 
Namely, the pair $[X, \widetilde \omega]$ has a quasi-log structure 
with a quasi-log resolution $f_1: (Y_1, B_{Y_1}+\varepsilon 
\widetilde B)\to X$. By the construction 
and the definition, 
it is obvious that 
there exist surjective 
homomorphisms 
$\mathcal O_{\Nqklt (X, \widetilde \omega)}\to 
\mathcal O_{\Nqklt (X, \omega)}\to 0$ and 
$\mathcal O_{\Nqklt (X, \widetilde \omega)}
\to \mathcal O_{\widetilde X_{-\infty}}\to 0$. 
It is not difficult to see that $\Nqklt (X, \omega)=\Nqklt (X, \widetilde \omega)$ as closed subsets of $X$ for $0<\varepsilon \ll 1$. 
We finish the proof. 
\end{proof}

As a special case, we obtain the following 
base point free theorem of Reid--Fukuda type for 
log canonical pairs. 

\begin{thm}\label{bpf-rdlc-th} 
{\em{(Base point free theorem of Reid--Fukuda type 
for lc pairs).}} 
Let $(X, B)$ be an lc pair. 
Let $L$ be a $\pi$-nef Cartier divisor on $X$, where 
$\pi:X\to S$ is a projective morphism. 
Assume that $qL-(K_X+B)$ is $\pi$-nef and $\pi$-log big 
for some positive real number $q$. 
Then $\mathcal O_X(mL)$ is $\pi$-generated for $m\gg 0$. 
\end{thm}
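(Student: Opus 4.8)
The plan is to deduce Theorem \ref{bpf-rdlc-th} as an immediate special case of the base point free theorem of Reid--Fukuda type for quasi-log pairs, Theorem \ref{bpf-rd-th}, which is already available. The only substantive work is to equip the lc pair $(X, B)$ with the appropriate quasi-log structure and to verify that the hypotheses translate verbatim.

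First I would invoke Example \ref{new-example}: since $(X, B)$ is log canonical, the pair $[X, K_X+B]$ carries a natural quasi-log structure whose quasi-log resolution is any log resolution $f:(Y, B_Y)\to X$ with $K_Y+B_Y=f^*(K_X+B)$, and this structure is qlc precisely because $(X, B)$ is lc. In particular $X_{-\infty}=\emptyset$. Setting $\omega=K_X+B$, the qlc centers of $[X, \omega]$ are exactly the lc centers of $(X, B)$, that is, the images of the strata of $(Y, B_Y^{=1})$.

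Next I would reconcile the two notions of \emph{nef and log big}. By Definition \ref{nlb-def}, for a quasi-log variety with only qlc singularities the log-bigness condition defined through qlc centers (case (i)) and that defined through images of strata of the resolution (case (ii)) coincide, and for an lc pair one tests log-bigness on precisely these centers, with $f$ chosen to be a log resolution as above. Hence the hypothesis that $qL-(K_X+B)$ is $\pi$-nef and $\pi$-log big is identical to the statement that $qL-\omega$ is nef and log big over $S$. Since $L$ is moreover $\pi$-nef and Cartier, all hypotheses of Theorem \ref{bpf-rd-th} are satisfied, and that theorem yields that $\mathcal O_X(mL)$ is $\pi$-generated for $m\gg 0$.

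There is essentially no obstacle here beyond bookkeeping: the genuine difficulty has already been absorbed into the proof of Theorem \ref{bpf-rd-th}, which in turn rests on Kodaira's lemma, Lemma \ref{key-le}, and the base point free theorem \ref{bpf-th}. The one point demanding care is checking that the quasi-log structure produced by Example \ref{new-example} induces exactly the collection of centers that Definition \ref{nlb-def} uses for lc pairs, so that the two meanings of \emph{log big} genuinely agree; once this identification is made, the conclusion is immediate.
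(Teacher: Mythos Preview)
Your proposal is correct and matches the paper's own treatment: the paper presents Theorem \ref{bpf-rdlc-th} simply as a special case of Theorem \ref{bpf-rd-th}, without a separate proof, relying (implicitly) on the quasi-log structure from Example \ref{new-example} and the compatibility of the log-big notions in Definition \ref{nlb-def}. Your write-up just makes this reduction explicit.
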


We believe that the above theorem holds 
under the assumption that $\pi$ is only {\em{proper}}. 
However, our proof needs projectivity of $\pi$. 

\begin{rem} 
In Theorem \ref{bpf-rdlc-th}, 
if $\Nklt(X, B)$ is 
projective over $S$, then we can 
prove Theorem \ref{bpf-rdlc-th} under the weaker assumption 
that $\pi:X\to S$ is only {\em{proper}}. 
It is because we can apply Theorem \ref{bpf-rd-th} to 
$\Nklt(X, B)$. 
So, we can assume that 
$\mathcal O_X(mL)$ is $\pi$-generated 
on a non-empty open subset containing 
$\Nklt(X, B)$. 
In this case, we can prove Theorem \ref{bpf-rdlc-th} by 
applying the usual X-method to $L$ on $(X, B)$. 
We note that $\Nklt(X, B)$ is always projective over 
$S$ when $\dim \Nklt(X, B)\leq 1$.  
The reader can find a different proof in \cite{fukuda} 
when $(X, B)$ is a log canonical surface, 
where Fukuda used the LMMP with scaling for dlt surfaces. 
\end{rem}

Finally, we explain the reason why we assumed 
that $X_{-\infty}=\emptyset$ and 
$\pi$ is projective in Theorem \ref{bpf-rd-th}. 

\begin{rem}[Why $X_{-\infty}$ is empty?]\label{emp-re}
Let $C$ be a qlc center of $[X, \omega]$. 
Then we have to consider a quasi-log variety 
$X'=C\cup X_{-\infty}$ for the inductive arguments. 
In general, $X'$ is reducible. 
It sometimes happens that $\dim C<\dim X_{-\infty}$. 
We do not know how to apply Kodaira's lemma to 
reducible varieties. So, we assume that 
$X_{-\infty}=\emptyset$ in Theorem \ref{bpf-rd-th}. 
\end{rem}

\begin{rem}[Why $\pi$ is projective?]\label{proj-re}
We assume that $S$ is a point in Theorem \ref{bpf-rd-th} for 
simplicity. If $X_{-\infty}=\emptyset$, then 
it is enough to treat irreducible quasi-log varieties by 
Step 1. 
Thus, we can assume that $X$ is irreducible. 
Let $f:Y\to X$ be a proper birational morphism 
from a smooth projective variety. 
If $X$ is normal, then $H^0(X, \mathcal O_X(mL))\simeq 
H^0(Y, \mathcal O_Y(mf^*L))$ for any $m\geq 0$. 
However, $X$ is not always normal (see Example 
\ref{rd-ex} below). 
So, it sometimes happens that 
$\mathcal O_Y(mf^*L)$ has many global sections 
but $\mathcal O_X(mL)$ has only a few global sections. 
Therefore, we can not easily reduce the problem to 
the case when $X$ is projective. 
This is the reason why we assume that 
$\pi:X\to S$ is projective. 
See also Proposition \ref{bottle}. 
\end{rem}

\begin{ex}\label{rd-ex}
Let $M=\mathbb P^2$ and let $X$ be a nodal curve on $M$. 
Then $(M, X)$ is an lc pair. 
By Example \ref{new-example}, 
$[X, K_X]$ is a quasi-log variety with only 
qlc singularities. 
In this case, $X$ is irreducible, but it is not 
normal.    
\end{ex}

\section{Basic properties of dlt pairs}\label{b-dlt-sec}  
In this section, we prove supplementary results on dlt pairs. 
First, let us reprove the following 
well-known theorem. 

\begin{thm}\label{49-dlt}
Let $(X, D)$ be a dlt pair. 
Then $X$ has only rational singularities. 
\end{thm}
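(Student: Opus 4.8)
$\text{Theorem \ref{49-dlt}}$ asserts that a dlt pair $(X,D)$ has only rational singularities.

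The plan is to reduce the statement to the vanishing theorems developed in Chapter \ref{chap2}, specifically to the relative vanishing lemma (Lemma \ref{re-vani-lem}) and its consequences, rather than to invoke covering tricks or characteristic-$p$ methods. Since rationality of singularities is a local property, I would first shrink $X$ and assume it is quasi-projective, so that a global log resolution is available. The key geometric input is the defining property of dlt pairs in Definition \ref{dlt-def}: there exists a resolution $f:Y\to X$ such that both $\Exc(f)$ and $\Exc(f)\cup\Supp(f^{-1}_*D)$ are simple normal crossing divisors and $a(E,X,D)>-1$ for every $f$-exceptional divisor $E$. The assumption that $\Exc(f)$ is a divisor (of pure codimension one) is precisely what makes the argument work, as emphasized after Definition \ref{dlt-def}.

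First I would write $K_Y=f^*(K_X+D)+\sum_i a_i E_i$ and rearrange to get $K_Y+\{-\textstyle\sum a_iE_i\}+\dots$ into a form where the round-up $\ulcorner -\sum_i a_iE_i\urcorner$ is $f$-exceptional and effective. Concretely, since all exceptional $a_i>-1$, one has $\ulcorner\sum_i a_i E_i\urcorner\geq 0$ on the exceptional locus, and $f_*\mathcal O_Y(\ulcorner\sum_i a_iE_i\urcorner)\simeq \mathcal O_X$. To prove rationality it suffices to show $R^qf_*\mathcal O_Y=0$ for $q>0$, because then $X$ is Cohen--Macaulay with $f_*\omega_Y\simeq\omega_X$ and the singularities are rational (this is the standard criterion; I would also need $f_*\mathcal O_Y\simeq\mathcal O_X$, which holds as $X$ is normal). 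The strategy is to set $L=\ulcorner\sum_i a_iE_i\urcorner+K_Y$ (or an appropriate Cartier divisor of this shape) and observe that, on the simple normal crossing pair structure, $L-(K_Y+\{\text{fractional part}\})$ is $\mathbb R$-linearly trivial over $X$ while $f$ is an isomorphism at the generic point of every stratum. Then Lemma \ref{re-vani-lem} yields $R^qf_*\mathcal O_Y(L)=0$ for $q>0$, and translating back through Grothendieck--Serre duality gives the vanishing $R^qf_*\mathcal O_Y=0$ for $q>0$.

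The main obstacle will be the bookkeeping that converts the dlt discrepancy inequalities into the precise hypotheses of Lemma \ref{re-vani-lem}: one must ensure that the divisor to which the relative vanishing lemma is applied has the form $K_Y+T+B$ with $T$ reduced, $\llcorner B\lrcorner=0$, $T+B$ a boundary with simple normal crossing support, and that $f$ is an isomorphism at the generic point of each stratum of $(Y,T+B)$. The condition $a_i>-1$ guarantees $\llcorner B\lrcorner=0$ after taking fractional parts, and the pure-codimension-one nature of $\Exc(f)$ guarantees that the exceptional strata map finitely (indeed isomorphically at generic points) rather than contracting positive-dimensional subvarieties in an incompatible way. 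I would handle the passage from $R^qf_*\mathcal O_Y(L)=0$ to $R^qf_*\mathcal O_Y=0$ and Cohen--Macaulayness carefully via duality, and conclude that $(X,D)$, hence $X$, has only rational singularities.
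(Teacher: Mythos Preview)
Your overall shape is right—take the dlt log resolution, use a relative vanishing theorem to kill $R^qf_*$ of a twisted structure sheaf, then pass to rationality—but two steps are not quite in order.

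First, your divisor $L=\ulcorner\sum a_iE_i\urcorner+K_Y$ does not fit the hypothesis of Lemma \ref{re-vani-lem}. That lemma asks for $L\sim_{\mathbb R}K_Y+T+B$ with $T+B$ a boundary; with your $L$ one would need $\ulcorner\sum a_iE_i\urcorner$ itself to be a boundary, and it is not (exceptional coefficients may exceed $1$). The divisor that actually works is $L=\ulcorner E\urcorner$ in the paper's notation, where $K_Y+f^{-1}_*D=f^*(K_X+D)+E$; then $\ulcorner E\urcorner-(K_Y+f^{-1}_*D+\{-E\})=-f^*(K_X+D)$, and the boundary is $f^{-1}_*D+\{-E\}$. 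Also, your reason for the stratum hypothesis is off: the strata of $(Y,f^{-1}_*D+\{-E\})$ come only from $f^{-1}_*\llcorner D\lrcorner$, not from exceptional components (which have coefficient $<1$), so ``$\Exc(f)$ is pure codimension one'' is not the relevant fact. What you need is that the lc centers of a dlt pair lie in its snc locus, so one may choose $f$ to be an isomorphism over them.

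Second, and more seriously, ``Grothendieck--Serre duality'' does not by itself take you from $R^qf_*\mathcal O_Y(\ulcorner E\urcorner)=0$ to $R^qf_*\mathcal O_Y=0$. What one actually has is that $Rf_*\mathcal O_Y(\ulcorner E\urcorner)\simeq\mathcal O_X$, hence the composition $\mathcal O_X\to Rf_*\mathcal O_Y\to Rf_*\mathcal O_Y(\ulcorner E\urcorner)\simeq\mathcal O_X$ is a quasi-isomorphism, so $\mathcal O_X$ is a direct summand of $Rf_*\mathcal O_Y$. This is exactly Kov\'acs' splitting criterion \cite[Theorem 1]{kovacs2}, which the paper invokes explicitly; without naming it, your duality step is a black box. (Alternatively, the paper's appendix gives a direct local-cohomology argument deducing $R^qf_*\mathcal O_Y=0$; that is also more than raw duality.)

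For comparison, the paper avoids the stratum check altogether: it picks an effective $f$-exceptional $A$ with $-A$ $f$-ample (this is where the divisoriality of $\Exc(f)$ enters), perturbs the boundary to $f^{-1}_*D+\{-E\}+\varepsilon A$, and gets $\ulcorner E\urcorner-(K_Y+\cdots)$ genuinely $f$-ample. Then ordinary vanishing (Lemma \ref{vani-rf-le} or Theorem \ref{lc}) gives $R^if_*\mathcal O_Y(\ulcorner E\urcorner)=0$, and Kov\'acs' criterion finishes. Your route via Lemma \ref{re-vani-lem} can be made to work, but only after fixing $L$ and replacing the vague duality step with the splitting argument.
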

\begin{proof}(cf.~\cite[Chapter VII, 1.1.Theorem]{nakayama2}).   
By the definition of dlt, we can take a resolution 
$f:Y\to X$ such that 
$\Exc (f)$ and $\Exc (f)\cup \Supp f^{-1}_*D$ are 
both simple normal crossing divisors 
on $Y$ and that $K_Y+f^{-1}_*D=f^*(K_X+D)+E$ 
with $\ulcorner E\urcorner \geq 0$. 
We can take an effective $f$-exceptional 
divisor $A$ on $Y$ such $-A$ is $f$-ample 
(see, for example, \cite[Proposition 3.7.7]{fujino0}). 
Then $\ulcorner E\urcorner -(K_Y+f^{-1}_*D+\{-E\}+\varepsilon 
A)=-f^*(K_X+D)-\varepsilon A$ is $f$-ample for $\varepsilon >0$. 
If $0<\varepsilon \ll 1$, then 
$(Y, f^{-1}_*D+\{-E\}+\varepsilon A)$ is dlt. 
Therefore, $R^if_*\mathcal O_Y(\ulcorner E\urcorner )=0$ for $i>0$ 
(see \cite[Theorem 1-2-5]{kmm}, Theorem \ref{lc}, 
or Lemma \ref{vani-rf-le} below) 
and $f_*\mathcal O_Y(\ulcorner E\urcorner)\simeq \mathcal O_X$. Note 
that $\ulcorner E\urcorner $ is effective and $f$-exceptional. 
Thus, the composition 
$\mathcal O_X\to Rf_*\mathcal O_Y\to 
Rf_*\mathcal O_Y(\ulcorner E\urcorner)\simeq \mathcal O_X$ is 
a quasi-isomorphism. 
So, $X$ has only rational singularities by \cite[Theorem 1]{kovacs2}. 
\end{proof}

In the above proof, we used the next lemma. 

\begin{lem}[Vanishing lemma of Reid--Fukuda type]
\label{vani-rf-le}
Let $V$ be a smooth variety and 
let $B$ be a boundary $\mathbb R$-divisor 
on $V$ such that 
$\Supp B$ is a simple normal crossing divisor. 
Let $f:V\to W$ be a proper morphism 
onto a variety $W$. 
Assume that $D$ is a Cartier divisor on $V$ such that 
$D-(K_V+B)$ is $f$-nef and 
$f$-log big. Then 
$R^if_*\mathcal O_V(D)=0$ for any $i>0$. 
\end{lem}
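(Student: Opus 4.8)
The plan is to recognize that this lemma is, almost verbatim, a log-smooth instance of the vanishing theorem already proved as Theorem \ref{kvn}. Since $V$ is smooth and $\Supp B$ is a simple normal crossing divisor with $B$ a boundary $\mathbb R$-divisor, the pair $(V,B)$ is log smooth, hence lc, and (by Remark \ref{smooth}) it is an embedded simple normal crossing pair. The divisor $D$ is Cartier, so in particular it is a $\mathbb Q$-Cartier Weil divisor, and by hypothesis $D-(K_V+B)$ is $f$-nef and $f$-log big over $W$. These are precisely the hypotheses of Theorem \ref{kvn} with $\pi=f$, base $W$, ambient pair $(V,B)$, and $L=D$. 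Thus I would apply Theorem \ref{kvn} and conclude $R^if_*\mathcal O_V(D)=0$ for every $i>0$.

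The one point that genuinely needs checking is that the phrase \emph{``$f$-log big''} in the statement of the lemma agrees with ``nef and log big over $W$'' in the sense of Definition \ref{nlb-def} (equivalently Definition \ref{2-46}). Here the verification is immediate: because $(V,B)$ is already log smooth, the identity map serves as a log resolution of $(V,B)$, and the strata of $(V,B)$ are exactly the lc centers, i.e.\ the irreducible components of the various intersections of the components of $B^{=1}$ together with $V$ itself. Hence requiring $(D-(K_V+B))|_C$ to be nef and big over $W$ for every stratum $C$ is the same condition in both formulations, so no discrepancy arises.

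If one prefers an argument that appeals only to the more primitive torsion-free and vanishing package rather than citing Theorem \ref{kvn}, I would instead reproduce the reduction used in the proof of Theorem \ref{lc}. Writing $S=B^{=1}$ and $B'=B-S$, so that $S$ is reduced, $\llcorner B'\lrcorner=0$, and $S+B'$ is a boundary, one has $D\sim_{\mathbb R}K_V+S+B'+H$ with $H=D-(K_V+B)$; since $V$ is smooth the round-up/fractional bookkeeping that appears in Theorem \ref{lc} is trivial here, and $f_*\mathcal O_V(D)$ is already $\mathcal O_W$-coherent with the correct support behaviour. One then wants to invoke Theorem \ref{74}(ii).

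The expected main obstacle is exactly at this last step: Theorem \ref{74} is stated with the nef and log big divisor \emph{pulled back from the base}, i.e.\ $f^*H'\sim_{\mathbb R}L-(K_Y+S+B)$ for some $H'$ on the target, whereas in our situation $H=D-(K_V+B)$ is itself $f$-nef and $f$-log big on the source and need not be a pullback. Bridging this gap is precisely the content that Theorem \ref{kvn} packages (its proof being the proof of Theorem \ref{lc} run with Theorem \ref{74} in place of the ample case), so the cleanest and safest route is to quote Theorem \ref{kvn} directly and merely record the identification of the two ``log big'' notions above.
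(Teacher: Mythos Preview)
Your proposal is mathematically valid as an implication, but it is \emph{circular} within the paper's logical structure. Trace the dependency chain: Theorem \ref{kvn} is proved via Theorem \ref{74}, which rests on Theorem \ref{8}; Theorem \ref{8} in turn depends on Theorem \ref{5.1}, whose proof explicitly invokes Lemma \ref{re-vani-lem}; and Step \ref{1ne} of Lemma \ref{re-vani-lem} cites Lemma \ref{vani-rf-le} (``see, for example, Lemma \ref{vani-rf-le}''). So the result you want to quote ultimately relies on the very lemma you are trying to prove. Your alternative route through Theorem \ref{74}(ii) has exactly the same problem.

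The paper's proof avoids this entirely by giving a short, self-contained induction that uses nothing beyond the ordinary Kawamata--Viehweg vanishing theorem. One inducts on the number of irreducible components of $\llcorner B\lrcorner$ (and on $\dim V$). If $\llcorner B\lrcorner=0$ the pair is klt and Kawamata--Viehweg applies directly. Otherwise, pick an irreducible component $S\subset\llcorner B\lrcorner$ and use the short exact sequence
\[
0\to \mathcal O_V(D-S)\to \mathcal O_V(D)\to \mathcal O_S(D)\to 0.
\]
Since $(D-S)-(K_V+(B-S))=D-(K_V+B)$ remains $f$-nef and $f$-log big for the pair $(V,B-S)$, and $D|_S-(K_S+(B-S)|_S)$ is likewise nef and log big for the restricted pair on $S$, both outer terms have vanishing higher direct images by the inductive hypothesis, and the middle term follows. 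This elementary argument is precisely what makes Lemma \ref{vani-rf-le} available as a foundational input for the heavier machinery in Chapter \ref{chap2}.
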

\begin{proof}
We use the induction on the number of 
irreducible components of $\llcorner B\lrcorner$ and 
on the dimension of $V$. 
If $\llcorner B\lrcorner =0$, then 
the lemma follows from the Kawamata--Viehweg 
vanishing theorem. Therefore, 
we can assume that there is an irreducible 
divisor $S\subset \llcorner B\lrcorner$. 
We 
consider the following short exact sequence 
$$
0\to \mathcal O_V(D-S)\to \mathcal O_V(D)\to 
\mathcal O_S(D)\to 0. 
$$ 
By induction, we see that 
$R^if_*\mathcal O_V(D-S)=0$ 
and $R^if_*\mathcal O_S(D)=0$ for 
any $i>0$. Thus, we have 
$R^if_*\mathcal O_V(D)=0$ for $i>0$. 
\end{proof}

\begin{say}[Weak log-terminal singularities] 
\index{weak log-terminal singularity}The 
proof of Theorem \ref{49-dlt} works 
for {\em{weak log-terminal}} singularities in 
the sense of \cite{kmm}. 
For the definition, see \cite[Definition 0-2-10]{kmm}. 
Thus, we can recover \cite[Theorem 1-3-6]{kmm}, 
that is, we obtain the following statement. 
\begin{thm}[cf.~{\cite[Theorem 1-3-6]{kmm}}]
All weak log-terminal singularities are rational. 
\end{thm}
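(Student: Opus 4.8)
The plan is to run the argument of Theorem \ref{49-dlt} essentially verbatim; the only genuine issue is to extract from the definition of weak log-terminal singularities in \cite[Definition 0-2-10]{kmm} a resolution with exactly the features exploited there. So suppose $(X,D)$ is weak log-terminal. First I would choose a log resolution $f\colon Y\to X$ of the pair $(X,D)$ whose exceptional locus is a divisor and for which $\Exc(f)\cup \Supp f^{-1}_*D$ is a \emph{simple} normal crossing divisor on the smooth variety $Y$; such an $f$ exists by Hironaka's theorem together with the resolution lemma in \ref{15-resol}, obtained by dominating the resolution furnished by the weak log-terminal structure. Writing $K_Y+f^{-1}_*D=f^*(K_X+D)+E$, the weak log-terminal condition asserts that every exceptional divisor occurs in the $f$-exceptional divisor $E$ with coefficient $>-1$; hence $\ulcorner E\urcorner\geq 0$ is an effective $f$-exceptional Cartier divisor.

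Next I would reproduce the vanishing step. Choose an effective $f$-exceptional divisor $A$ on $Y$ with $-A$ relatively ample over $X$ (cf.~\cite[Proposition 3.7.7]{fujino0}). Then, using $K_Y+f^{-1}_*D=f^*(K_X+D)+E$ and the identity $\{-E\}=\ulcorner E\urcorner-E$, one computes
\[
\ulcorner E\urcorner-\bigl(K_Y+f^{-1}_*D+\{-E\}+\varepsilon A\bigr)=-f^*(K_X+D)-\varepsilon A,
\]
which is $f$-ample. For $0<\varepsilon\ll 1$ the divisor $f^{-1}_*D+\{-E\}+\varepsilon A$ is a boundary $\mathbb R$-divisor whose support is simple normal crossing on the smooth $Y$. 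Since an $f$-ample divisor is $f$-nef and $f$-log big, Lemma \ref{vani-rf-le} yields $R^if_*\mathcal O_Y(\ulcorner E\urcorner)=0$ for all $i>0$.

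Finally, because $\ulcorner E\urcorner$ is effective and $f$-exceptional we have $f_*\mathcal O_Y(\ulcorner E\urcorner)\simeq \mathcal O_X$, so the composite
\[
\mathcal O_X\to Rf_*\mathcal O_Y\to Rf_*\mathcal O_Y(\ulcorner E\urcorner)\simeq \mathcal O_X
\]
is a quasi-isomorphism. By Kov\'acs' criterion \cite{kovacs2} this forces $X$ to have only rational singularities, which is the assertion.

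The hard part is concentrated entirely in the first step. The definition of weak log-terminal in \cite{kmm} guarantees only \emph{some} resolution with \emph{normal} crossing exceptional locus and the stated discrepancy bounds, whereas Lemma \ref{vani-rf-le} requires a \emph{simple} normal crossing boundary on a smooth variety. So the care needed is to verify that passing to a higher simple normal crossing model — and recomputing $E$, $\{-E\}$, and the exceptionality of $\ulcorner E\urcorner$ there — preserves the inequality $a_i>-1$ for every exceptional divisor; this is precisely the discrepancy-monotonicity bookkeeping of \cite[Lemma 2.45]{km}. Once that is checked, the remaining steps are routine and identical to the dlt case.
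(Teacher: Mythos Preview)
There is a real gap in your first step, and the paper confronts it explicitly: for a weak log-terminal pair one \emph{cannot} in general pass to a resolution with \emph{simple} normal crossing $\Exc(f)\cup\Supp f^{-1}_*D$ while keeping every exceptional discrepancy strictly above $-1$. Take $X$ a smooth surface and $D$ an irreducible nodal curve with coefficient~$1$. The identity map already witnesses weak log-terminality (normal crossings, no exceptional divisors, condition~(iii) vacuous), but any resolution making $D$ simple normal crossing must blow up the node, and that exceptional divisor has discrepancy exactly $-1$. On such a model $\ulcorner E\urcorner$ fails to be effective, so $f_*\mathcal O_Y(\ulcorner E\urcorner)$ is a proper ideal of $\mathcal O_X$ and the splitting you feed into Kov\'acs' criterion collapses. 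Your appeal to \cite[Lemma~2.45]{km} does not rescue this: that lemma lets you \emph{compute} discrepancies on a higher model, and it will confirm $a_i\geq -1$ (log-canonicity persists), but the strict inequality genuinely fails over self-crossings of $f_{0*}^{-1}\llcorner D\lrcorner$.

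The paper's remedy is to stay on the normal-crossing resolution supplied by the definition and to swap Lemma~\ref{vani-rf-le} for a vanishing theorem that only needs the boundary to be log canonical, namely Theorem~\ref{lc} (or \cite[Theorem~1-2-5]{kmm}). On that resolution one still has $\ulcorner E\urcorner\geq 0$ effective and $f$-exceptional; the pair $(Y,\,f^{-1}_*D+\{-E\}+\varepsilon A)$ is lc because $Y$ is smooth and the support is normal crossing with boundary coefficients; and the same identity shows $\ulcorner E\urcorner-(K_Y+f^{-1}_*D+\{-E\}+\varepsilon A)=-f^*(K_X+D)-\varepsilon A$ is $f$-ample. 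Theorem~\ref{lc} then gives $R^if_*\mathcal O_Y(\ulcorner E\urcorner)=0$ for $i>0$, and your Kov\'acs-criterion finish goes through verbatim. So the correction is not in the discrepancy bookkeeping but in upgrading the vanishing input to one that tolerates normal (rather than simple normal) crossings.
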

We think that this theorem is one of the most difficult 
results in \cite{kmm}. 
We do not need the difficult vanishing theorem 
due to Elkik and Fujita (see \cite[Theorem 1-3-1]{kmm}) 
to obtain the above theorem. 
In Theorem \ref{49-dlt}, if we assume that $(X, D)$ is 
only weak log-terminal, then we can not 
necessarily make $\Exc(f)$ and $\Exc(f)\cup \Supp f^{-1}_*D$ 
{\em{simple}} normal crossing divisors. 
We can only make them {\em{normal crossing divisors}}. 
However, \cite[Theorem 1-2-5]{kmm} and Theorem \ref{lc} 
work in this setting. 
Thus, the proof of Theorem \ref{49-dlt} works for weak log-terminal. 
Anyway, the notion of weak log-terminal singularities 
is not useful in the recent log minimal model program. 
So, we do not discuss weak log-terminal 
singularities here. 
\end{say}

\begin{rem}
The proofs of Theorem \ref{cm-th} and 
Theorem \ref{z-rational} also work 
for weak log-terminal 
pairs once we adopt suitable 
vanishing theorems such as 
Theorem \ref{lc} and Theorem \ref{62}. 
\end{rem}

The following theorem generalizes \cite[17.5 Corollary]{FA}, 
where it was only proved that $S$ is semi-normal and 
satisfies Serre's $S_2$ condition. 
We use Lemma \ref{re-vani-lem} in the proof. 

\begin{thm}\label{cm-th} 
Let $X$ be a normal variety and 
$S+B$ a boundary $\mathbb R$-divisor such that 
$(X, S+B)$ is dlt, $S$ is reduced, and $\llcorner B\lrcorner=0$. 
Let $S=S_1+\cdots +S_k$ be the irreducible 
decomposition and $T=S_1+\cdots +S_l$ for $1\leq l \leq k$. 
Then $T$ is semi-normal, Cohen--Macaulay, and has only 
Du Bois singularities. 
\end{thm}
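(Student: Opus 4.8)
The plan is to pass to a log resolution and descend the three properties from a simple normal crossing divisor, where they are elementary, down to $T$. First I would choose a log resolution $g\colon Y\to X$ of the dlt pair $(X,S+B)$, with $K_Y+B_Y=g^*(K_X+S+B)$, such that $\Supp B_Y\cup \Exc(g)$ is simple normal crossing, $B_Y^{=1}=g^{-1}_*S$, and—using the dlt hypothesis together with the usual blow-up lemmas (Lemma \ref{useful-lemma}, Proposition \ref{taisetsu3})—such that $g$ is an isomorphism at the generic point of every stratum of $(Y,B_Y^{=1})$. Set $T_Y=g^{-1}_*T$, the strict transform, a reduced simple normal crossing divisor on the smooth variety $Y$. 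As an effective reduced Cartier divisor on a smooth variety, $T_Y$ is Cohen--Macaulay; as a simple normal crossing variety it is semi-normal and Du Bois. Thus all three properties hold upstairs for free, and the whole problem is to transport them through $g$.

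Next I would record the descent data. Since $(X,S+B)$ is dlt, hence lc, Example \ref{new-example} gives $[X,K_X+S+B]$ the structure of a qlc pair with quasi-log resolution $g$, and $T$ is a union of qlc centers (each $S_i$, $i\le l$, is one). By adjunction (Theorem \ref{adj-th} (i)) the pair $[T,\omega_T]$ with $\omega_T=(K_X+S+B)|_T$ is qlc, with quasi-log resolution $g|_{T_Y}\colon (T_Y,B_{T_Y})\to T$ where $B_{T_Y}=B_Y^{<1}|_{T_Y}$ is a subboundary. Remarks \ref{hosoku} and \ref{hosoku2} then give at once $g_*\mathcal O_{T_Y}\simeq \mathcal O_T$ and the semi-normality of $T$ (so the semi-normality assertion is already settled, and it also follows from Theorem \ref{a1} (3)). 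For the vanishing I would use the adjunction sequence $0\to \mathcal O_Y(K_Y)\to \mathcal O_Y(K_Y+T_Y)\to \omega_{T_Y}\to 0$: Grauert--Riemenschneider kills $R^{>0}g_*\mathcal O_Y(K_Y)$, while $R^{>0}g_*\mathcal O_Y(K_Y+T_Y)=0$ follows from the relative vanishing lemma (Lemma \ref{re-vani-lem}) applied to the simple normal crossing pair $(Y,T_Y)$ with $L=K_Y+T_Y$, using that $g$ is an isomorphism at the generic point of each stratum. Hence $R^{q}g_*\omega_{T_Y}=0$ for $q>0$.

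With $g_*\mathcal O_{T_Y}\simeq \mathcal O_T$ in hand, Cohen--Macaulayness and the Du Bois property follow from the upstairs properties once one knows the full higher vanishing $R^{q}g_*\mathcal O_{T_Y}=0$ for $q>0$: then $\mathcal O_T\to Rg_*\mathcal O_{T_Y}$ is a quasi-isomorphism, Kov\'acs's descent criterion makes $T$ Du Bois (as $T_Y$ is Du Bois), and Grothendieck duality for $g|_{T_Y}$ combined with $R^{q}g_*\omega_{T_Y}=0$ identifies the dualizing complex $\omega_T^{\bullet}$ with $(g_*\omega_{T_Y})[\dim T]$, a single sheaf, so $T$ is Cohen--Macaulay. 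The hard part is exactly this vanishing $R^{q}g_*\mathcal O_{T_Y}=0$, and I expect it to be the main obstacle: it is not formal from $R^{q}g_*\omega_{T_Y}=0$, since duality only shows that the two are equivalent to $T$ being Cohen--Macaulay.

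I would attack the obstacle by induction on $\dim T$ through the Mayer--Vietoris resolution of $T_Y$, whose terms are the smooth strata $V_I$ mapping to the lc centers $C_I$ of $(X,S+B)$ contained in $T$. Each such $C_I$ carries, by adjunction for dlt pairs, a dlt structure, so by Theorem \ref{49-dlt} it has rational singularities; this gives $R^{q}g_*\mathcal O_{V_I}=0$ for $q>0$ and $g_*\mathcal O_{V_I}\simeq \mathcal O_{C_I}$, while the combinatorial $E_1^{\bullet,0}$ row of the associated spectral sequence is precisely the Mayer--Vietoris complex of the semi-normal variety $T$ resolving $\mathcal O_T$. Feeding these in yields $R^{q}g_*\mathcal O_{T_Y}=0$ for $q>0$ and closes the argument. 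Alternatively, the Cohen--Macaulay and Du Bois statements can be obtained entirely downstairs by the same induction, using the Mayer--Vietoris triangle $\mathcal O_T\to \mathcal O_{T'}\oplus \mathcal O_{S_l}\to \mathcal O_{T'\cap S_l}$ with $T'=S_1+\cdots+S_{l-1}$ together with the depth inequality $\operatorname{depth}\mathcal O_T\ge \min(\dim T,\ \dim(T'\cap S_l)+1)=\dim T$ for Cohen--Macaulayness, the base case being a single irreducible lc center, which is normal and has rational singularities by Theorem \ref{49-dlt}.
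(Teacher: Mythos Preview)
Your setup and the easy pieces—semi-normality of $T$ via $g_*\mathcal O_{T_Y}\simeq\mathcal O_T$, and $R^qg_*\omega_{T_Y}=0$ from Lemma~\ref{re-vani-lem}—are essentially the paper's. The divergence is at exactly the point you flag as the obstacle, and there the paper does something you did not consider: it \emph{never} proves $R^qg_*\mathcal O_{T_Y}=0$ directly. Instead it twists by the discrepancy. Writing $K_Y+S'+B'=f^*(K_X+S+B)+E$ with $\ulcorner E\urcorner\geq 0$ effective $f$-exceptional, the Reid--Fukuda vanishing (Lemma~\ref{vani-rf-le}) applied on $Y$ to $\mathcal O_Y(\ulcorner E\urcorner)$ and $\mathcal O_Y(-T'+\ulcorner E\urcorner)$ gives $Rf_*\mathcal O_{T'}(\ulcorner E|_{T'}\urcorner)\simeq\mathcal O_T$ in the derived category. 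Since $\ulcorner E|_{T'}\urcorner\geq 0$, the composition $\mathcal O_T\to Rf_*\mathcal O_{T'}\to Rf_*\mathcal O_{T'}(\ulcorner E|_{T'}\urcorner)\simeq\mathcal O_T$ is a quasi-isomorphism, i.e.\ $\mathcal O_T$ is a retract of $Rf_*\mathcal O_{T'}$. That retraction alone is what feeds Kov\'acs's criteria: Du Bois drops out immediately, and dualizing the retraction together with $R^qf_*\omega_{T'}=0$ forces $\omega_T^{\bullet}$ to be a single sheaf, hence $T$ is Cohen--Macaulay. The vanishing $R^qf_*\mathcal O_{T'}=0$ is then deduced \emph{a posteriori} (Corollary~\ref{35}), not used as input.

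Your Mayer--Vietoris route, by contrast, has a genuine gap. Granting iterated dlt adjunction (an external input not proved here) so that each stratum $V_I$ resolves a normal lc center $C_I$ with rational singularities, the spectral sequence does collapse to its $q=0$ row, and one gets $R^n g_*\mathcal O_{T_Y}\simeq H^n(E_1^{\bullet,0})$. But your assertion that ``the $E_1^{\bullet,0}$ row is precisely the Mayer--Vietoris complex of the semi-normal variety $T$ resolving $\mathcal O_T$'' is \emph{equivalent} to $R^{>0}g_*\mathcal O_{T_Y}=0$: the two statements are the same via the collapsed spectral sequence, so this is circular unless you supply an independent reason why the complex $\bigoplus_{|I|=1}\mathcal O_{C_I}\to\bigoplus_{|I|=2}\bigoplus_V\mathcal O_{C_V}\to\cdots$ is exact in positive degrees. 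Note also that its terms are direct sums of $\mathcal O_{C_V}$ over \emph{irreducible} lc centers, i.e.\ structure sheaves of the disjoint unions of components of the $S_I$, not of the scheme-theoretic intersections $S_I$ themselves; so this is not the na\"ive Mayer--Vietoris of $T$, and no standard exactness result applies. Your alternative inductive scheme with the triangle $\mathcal O_T\to\mathcal O_{T'}\oplus\mathcal O_{S_l}\to\mathcal O_{T'\cap S_l}$ has the same problem one level down: you need that short exact sequence to exist scheme-theoretically, which again is not free. The twist by $\ulcorner E\urcorner$ is precisely what lets the paper bypass all of this.
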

\begin{proof}
Let $f:Y\to X$ be a resolution such that 
$K_Y+S'+B'=f^*(K_X+S+B)+E$ with the 
following properties:~
(i) $S'$ (resp.~$B'$) is 
the strict transform of $S$ (resp.~$B$), 
(ii) $\Supp (S'+B')\cup \Exc (f)$ and $\Exc(f)$ are 
simple normal crossing divisors on $Y$, 
(iii) $f$ is an isomorphism over the generic point of any lc center of 
$(X, S+B)$, and (iv) $\ulcorner E\urcorner \geq 0$. 
We write $S=T+U$. Let $T'$ (resp.~$U'$) 
be the strict transform of $T$ (resp.~$U$) 
on $Y$. We consider the following 
short exact sequence 
$$0\to \mathcal O_Y(-T'+\ulcorner E\urcorner)\to 
\mathcal O_Y(\ulcorner E\urcorner)\to \mathcal O_{T'}(\ulcorner 
E|_{T'}\urcorner)\to 0. $$ 
Since 
$-T'+E\sim _{\mathbb R, f}K_Y+U'+B'$ and 
$E\sim _{\mathbb R, f}K_Y+S'+B'$, 
we have $-T'+\ulcorner E\urcorner \sim _{\mathbb R, f}K_Y
+U'+B'+\{-E\}$ and 
$\ulcorner E\urcorner \sim _{\mathbb R, f} K_Y+S'+B'+\{-E\}$. 
By the vanishing theorem, 
$R^if_*\mathcal O_Y(-T'+\ulcorner E\urcorner)=R^if_*\mathcal O_Y(\ulcorner 
E\urcorner)=0$ for any $i>0$. 
Note that we used the vanishing lemma of 
Reid--Fukuda type (see Lemma \ref{vani-rf-le}). 
Therefore, we have $$0\to 
f_*\mathcal O_Y(-T'+\ulcorner E\urcorner)\to 
\mathcal O_X\to f_*\mathcal O_{T'}(\ulcorner 
E|_{T'}\urcorner)\to 0$$ and 
$R^if_*\mathcal O_{T'}(\ulcorner E|_{T'}\urcorner)=0$ 
for all $i>0$. Note that $\ulcorner E\urcorner $ is effective 
and $f$-exceptional. 
Thus, $\mathcal O_T\simeq 
f_*\mathcal O_{T'}\simeq f_*\mathcal O_{T'} 
(\ulcorner E|_{T'}\urcorner)$. 
Since $T'$ is a simple normal crossing divisor, 
$T$ is semi-normal. 
By the above vanishing result, 
we obtain $Rf_*\mathcal O_{T'}(\ulcorner 
E|_{T'}\urcorner)\simeq 
\mathcal O_T$ in the derived category. 
Therefore, the composition 
$\mathcal O_T\to R f_*\mathcal O_{T'}\to 
Rf_*\mathcal O_{T'}(\ulcorner E|_{T'}\urcorner)\simeq 
\mathcal O_T$ is 
a quasi-isomorphism. 
Apply $R\mathcal Hom_T (\underline{\ \ \ } , \omega^{\bullet}_T)$ to the quasi-isomorphism 
$\mathcal O_T\to Rf_*\mathcal O_{T'}\to \mathcal O_T$. 
Then the composition 
$\omega^{\bullet}_T\to R f_*\omega^{\bullet}_{T'} 
\to \omega^{\bullet}_T$ is a quasi-isomorphism 
by the Grothendieck duality. 
By the vanishing theorem (see, for 
example, Lemma \ref{re-vani-lem}), 
$R^if_*\omega_{T'}=0$ for 
$i>0$. Hence, 
$h^i(\omega^{\bullet}_T)\subseteq R^if_*\omega^{\bullet}_{T'} 
\simeq R^{i+d}f_*\omega_{T'}$, where 
$d=\dim T=\dim T'$. Therefore, $h^i(\omega^{\bullet}_T)=0$ for $i\ne-d$. 
Thus, $T$ is Cohen--Macaulay. 
This argument is the same as the proof 
of Theorem 1 in \cite{kovacs2}. Since 
$T'$ is a simple normal crossing divisor, $T'$ has only Du Bois 
singularities. 
The quasi-isomorphism $\mathcal O_T\to 
Rf_*\mathcal O_{T'}\to \mathcal O_T$ implies that $T$ has 
only Du Bois 
singularities (cf.~\cite[Corollary 2.4]{kovacs1}). 
Since the composition $\omega_T\to f_*\omega_{T'}\to \omega_T$ 
is an isomorphism, we obtain 
$f_*\omega_{T'}\simeq \omega_T$. 
By the Grothendieck duality, 
$Rf_*\mathcal O_{T'}\simeq 
R\mathcal Hom _T(Rf_*\omega^{\bullet}_{T'}, 
\omega^{\bullet}_{T})\simeq 
R\mathcal Hom _T(\omega^{\bullet}_T, 
\omega^{\bullet}_T)\simeq \mathcal O_T$. 
So, $R^if_*\mathcal O_{T'}=0$ for all $i>0$. 
\end{proof}

We obtained the following vanishing theorem 
in the proof of Theorem \ref{cm-th}. 

\begin{cor}\label{35} 
Under the notation in the proof of {\em{Theorem 
\ref{cm-th}}}, 
$R^if_*\mathcal O_{T'}=0$ for 
any $i>0$ and $f_*\mathcal O_{T'}\simeq 
\mathcal O_T$.  
\end{cor}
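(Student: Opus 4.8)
The plan is to extract the argument already contained in the closing lines of the proof of Theorem~\ref{cm-th} and to make explicit why it forces both assertions. One half is immediate: the twisted computation carried out there gives $Rf_*\mathcal O_{T'}(\ulcorner E|_{T'}\urcorner)\simeq \mathcal O_T$, and since $\ulcorner E|_{T'}\urcorner\geq 0$ is effective and $f|_{T'}$-exceptional, the inclusions $\mathcal O_T\hookrightarrow f_*\mathcal O_{T'}\hookrightarrow f_*\mathcal O_{T'}(\ulcorner E|_{T'}\urcorner)\simeq \mathcal O_T$ already yield $f_*\mathcal O_{T'}\simeq \mathcal O_T$. The genuine content is the \emph{untwisted} higher vanishing $R^if_*\mathcal O_{T'}=0$ for $i>0$, which does not follow formally from the twisted vanishing because the exceptional twist $\ulcorner E|_{T'}\urcorner$ need not vanish.

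First I would record that the twisted isomorphism, together with the factorization of the natural map $\mathcal O_T\to Rf_*\mathcal O_{T'}(\ulcorner E|_{T'}\urcorner)$ through $Rf_*\mathcal O_{T'}$, produces a composite quasi-isomorphism $\mathcal O_T\to Rf_*\mathcal O_{T'}\to \mathcal O_T$ in the derived category of $T$. Next I would dualize: applying $R\mathcal Hom_T(\underline{\ \ \ },\omega^{\bullet}_T)$ and invoking Grothendieck duality for the proper morphism $f|_{T'}:T'\to T$, namely
$$
R\mathcal Hom_T(Rf_*\mathcal O_{T'},\omega^{\bullet}_T)\simeq Rf_*R\mathcal Hom_{T'}(\mathcal O_{T'},\omega^{\bullet}_{T'})\simeq Rf_*\omega^{\bullet}_{T'},
$$
turns the previous composite into a composite quasi-isomorphism $\omega^{\bullet}_T\to Rf_*\omega^{\bullet}_{T'}\to \omega^{\bullet}_T$.

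To read off $f_*\omega_{T'}\simeq \omega_T$ from this I would use the relative vanishing $R^if_*\omega_{T'}=0$ for $i>0$, which follows from Lemma~\ref{re-vani-lem} applied to the simple normal crossing pair $(T',0)$ with $L=K_{T'}$; its hypothesis holds because $f|_{T'}$ is an isomorphism at the generic point of every stratum of $T'$, these strata mapping to lc centers of $(X,S+B)$ over whose generic points $f$ is an isomorphism by property~(iii) of the chosen resolution. Granting this, $Rf_*\omega^{\bullet}_{T'}\simeq (f_*\omega_{T'})[d]$ with $d=\dim T=\dim T'$, and the composite quasi-isomorphism above forces $f_*\omega_{T'}\simeq \omega_T$. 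A second application of Grothendieck duality then gives
$$
Rf_*\mathcal O_{T'}\simeq R\mathcal Hom_T(Rf_*\omega^{\bullet}_{T'},\omega^{\bullet}_T)\simeq R\mathcal Hom_T(\omega^{\bullet}_T,\omega^{\bullet}_T)\simeq \mathcal O_T,
$$
the last step because $\omega^{\bullet}_T$ is a dualizing complex; this simultaneously yields $R^if_*\mathcal O_{T'}=0$ for $i>0$ and $f_*\mathcal O_{T'}\simeq \mathcal O_T$.

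The hard part will be the duality bookkeeping on the non-normal simple normal crossing variety $T'$ and its non-normal image $T$: I must ensure that the two invocations of Grothendieck duality are compatible and that the dualizing complexes $\omega^{\bullet}_{T'}$ and $\omega^{\bullet}_T$ are handled consistently, working with $\omega^{\bullet}_T$ rather than a single sheaf at each stage so that Cohen--Macaulayness of $T$ is not assumed in advance. The only other point requiring care is checking the hypotheses of Lemma~\ref{re-vani-lem} for $f|_{T'}$, which is precisely where property~(iii) of the resolution is used.
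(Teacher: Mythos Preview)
Your proposal is correct and follows essentially the same approach as the paper. The paper does not give a separate proof of this corollary; it simply notes that the result was obtained in the course of proving Theorem~\ref{cm-th}, and the argument you have written out---the composite quasi-isomorphism $\mathcal O_T\to Rf_*\mathcal O_{T'}\to \mathcal O_T$, two applications of Grothendieck duality, and the vanishing $R^if_*\omega_{T'}=0$ from Lemma~\ref{re-vani-lem}---is exactly what appears in the final lines of that proof.
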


We close this section with a non-trivial example. 

\begin{ex}[{cf.~\cite[Remark 0-2-11.~(4)]{kmm}}]\label{416ex}
We consider the $\mathbb P^2$-bundle 
$$\pi:V=\mathbb P_{\mathbb P^2}(
\mathcal O_{\mathbb P^2}\oplus \mathcal O_{\mathbb P^2}(1)
\oplus \mathcal O_{\mathbb P^2}(1))\to \mathbb P^2.$$ 
Let $F_1=\mathbb P_{\mathbb P^2}(\mathcal O_{\mathbb P^2}\oplus 
\mathcal O_{\mathbb P^2}(1))$ and 
$F_2=\mathbb P_{\mathbb P^2}(\mathcal O_{\mathbb P^2}\oplus 
\mathcal O_{\mathbb P^2}(1))$ be two 
hypersurfaces of $V$ which correspond to 
projections $\mathcal O_{\mathbb P^2}\oplus \mathcal O_{\mathbb P^2}(1)
\oplus \mathcal O_{\mathbb P^2}(1)\to \mathcal O_{\mathbb P^2}\oplus 
\mathcal O_{\mathbb P^2}(1)$ given by $(x, y, z)\mapsto (x, y)$ and 
$(x, y, z)\mapsto (x, z)$. 
Let $\Phi:V\to W$ be the flipping 
contraction that contracts the negative section of $\pi:V\to \mathbb P^2$, 
that is, the section corresponding to the projection $\mathcal O_{\mathbb P^2}
\oplus \mathcal O_{\mathbb P^2}(1)\oplus \mathcal O_{\mathbb P^2}(1)
\to \mathcal O_{\mathbb P^2}\to 0$. 
Let $C\subset  \mathbb P^2$ be an elliptic curve. 
We put $Y=\pi^{-1}(C)$, $D_1=F_1|_Y$, and 
$D_2=F_2|_Y$. 
Let $f:Y\to X$ be the Stein factorization 
of $\Phi|_Y: Y\to \Phi (Y)$. 
Then the exceptional locus of $f$ is $E=D_1\cap D_2$. 
We note that $Y$ is smooth, $D_1+D_2$ is a simple normal crossing divisor, 
and $E\simeq C$ is an elliptic curve. 
Let $g:Z\to Y$ be the blow-up along $E$. 
Then 
$$
K_Z+D'_1+D'_2+D=g^*(K_Y+D_1+D_2), 
$$ 
where $D'_1$ (resp.~$D'_2$) is the strict transform 
of $D_1$ (resp.~$D_2$) and $D$ is the exceptional 
divisor of $g$. 
Note that $D\simeq C\times \mathbb P^1$. 
Since 
$$
-D+(K_Z+D'_1+D'_2+D)-(K_Z+D'_1+D'_2)=0, 
$$ 
we obtain that 
$R^if_*(g_*\mathcal O_Z(-D+K_Z+D'_1+D'_2+D))=0$ 
for any $i>0$ by Theorem \ref{74} or 
Theorem \ref{nlb-vani-th}. 
We note that $f\circ g$ is an isomorphism outside $D$. 
We consider the following short exact sequence 
$$
0\to \mathcal I_E\to \mathcal O_X\to \mathcal O_E\to 0, 
$$ 
where $\mathcal I_E$ is the defining ideal sheaf of $E$. 
Since $\mathcal I_E=g_*\mathcal O_Z(-D)$, we obtain that 
\begin{eqnarray*}
0\to f_*(\mathcal I_E\otimes \mathcal O_Y(K_Y+D_1+D_2))\to 
f_*\mathcal O_Y(K_Y+D_1+D_2)\\ 
\to f_*\mathcal O_E(K_Y+D_1+D_2)
\to 0 
\end{eqnarray*} 
by $R^1f_*(\mathcal I_E\otimes \mathcal O_Y(K_Y+D_1+D_2))=0$. 
By adjunction, $\mathcal O_E(K_Y+D_1+D_2)\simeq \mathcal O_E$. 
Therefore, $\mathcal O_Y(K_Y+D_1+D_2)$ is $f$-free.  
In particular, $K_Y+D_1+D_2=f^*(K_X+B_1+B_2)$, 
where $B_1=f_*D_1$ and $B_2=f_*D_2$. 
Thus, $-D-(K_Z+D'_1+D'_2)\sim_{f\circ g}0$. So, 
we have 
$R^if_*\mathcal I_E=R^if_*(g_*\mathcal O_Z(-D))=0$ for 
any $i>0$ by Theorem \ref{74} or 
Theorem \ref{nlb-vani-th}. 
This implies that 
$R^if_*\mathcal O_Y\simeq R^if_*\mathcal O_E$ for 
every 
$i>0$. 
Thus, $R^1f_*\mathcal O_Y\simeq \mathbb C(P)$, 
where $P=f(E)$. 
We consider the following spectral sequence 
$$
E^{pq}=H^p(X, R^qf_*\mathcal O_Y\otimes \mathcal O_X(-mA))
\Rightarrow H^{p+q}(Y, \mathcal O_Y(-mA)), 
$$ 
where $A$ is an ample Cartier divisor on $X$ and 
$m$ is any positive integer. 
Since 
$H^1(Y, \mathcal O_Y(-mf^*A))=H^2(Y, \mathcal O_Y(-mf^*A))=0$ by the 
Kawamata--Viehweg vanishing theorem, 
we have 
$$H^0(X, R^1f_*\mathcal O_Y\otimes \mathcal O_X(-mA))\simeq 
H^2(X, \mathcal O_X(-mA)). $$ 
If we assume that $X$ is Cohen--Macaulay, 
then we have $H^2(X, \mathcal O_X(-mA))=0$ 
for $m\gg 0$ by the 
Serre duality and the Serre vanishing theorem. 
On the other hand, $H^0(X, R^1f_*\mathcal O_Y\otimes 
\mathcal O_X(-mA))\simeq \mathbb C(P)$ because 
$R^1f_*\mathcal O_Y\simeq \mathbb C(P)$. 
It is a contradiction. 
Thus, $X$ is not Cohen--Macaulay. In particular, 
$(X, B_1+B_2)$ is lc but not dlt.  
We note that $\Exc (f)=E$ is not a divisor on $Y$. 
See Definition \ref{dlt-def}. 

Let us recall that $\Phi:V\to W$ is a flipping contraction. 
Let $\Phi^+:V^+\to W$ be the flip of $\Phi$. 
We can check that $V^+=\mathbb P_{\mathbb P^1}(\mathcal O_{\mathbb P^1}
\oplus \mathcal O_{\mathbb P^1}(1) 
\oplus \mathcal O_{\mathbb P^1}(1)
\oplus \mathcal O_{\mathbb P^1}(1))$ and 
the flipped curve $E^+\simeq \mathbb P^1$ is the negative 
section of $\pi^+:V^+\to \mathbb P^1$, 
that is, the section corresponding to the projection 
$\mathcal O_{\mathbb P^1}
\oplus \mathcal O_{\mathbb P^1}(1)
\oplus \mathcal O_{\mathbb P^1}(1)
\oplus \mathcal O_{\mathbb P^1}(1)\to 
\mathcal O_{\mathbb P^1}\to 0$. 
Let $Y^+$ be the strict transform of $Y$ on $V^+$. Then 
$Y^+$ is Gorenstein, lc along 
$E^+\subset Y^+$, and 
smooth outside $E^+$. Let $D^+_1$ (resp.~$D^+_2$) 
be the strict transform 
of $D_1$ (resp.~$D_2$) on $Y^+$. 
If we take a Cartier divisor 
$B$ on $Y$ suitably, 
then $(Y, D_1+D_2)\dashrightarrow (Y^+, D^+_1+D^+_2)$ is the 
$B$-flop of $f:Y\to X$. 
We note that $(Y, D_1+D_2)$ is dlt. 
However, $(Y^+, D^+_1+D^+_2)$ is lc but not dlt. 
\end{ex}

\subsection{Appendix:~Rational singularities} 
In this subsection, 
we give a proof to the following well-known theorem 
again (see Theorem \ref{49-dlt}). 

\begin{thm}\label{z-rational} 
Let $(X, D)$ be a dlt pair. 
Then $X$ has only rational singularities. 
\end{thm}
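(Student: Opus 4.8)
The plan is to imitate the proof of Theorem \ref{49-dlt}, which was already carried out earlier in the excerpt using Lemma \ref{vani-rf-le} (the vanishing lemma of Reid--Fukuda type) and the characterization of rational singularities in terms of a split quasi-isomorphism $\mathcal O_X \to Rf_*\mathcal O_Y \to \mathcal O_X$. Since Theorem \ref{z-rational} is literally the same statement as Theorem \ref{49-dlt}, the entire argument transfers verbatim; the only genuine content is to re-run the construction carefully so that the hypotheses of the vanishing lemma are met.

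First I would choose, by the definition of dlt (Definition \ref{dlt-def}), a resolution $f:Y\to X$ such that both $\Exc(f)$ and $\Exc(f)\cup \Supp f^{-1}_*D$ are simple normal crossing divisors and $K_Y + f^{-1}_*D = f^*(K_X+D) + E$ with $\ulcorner E\urcorner \geq 0$. Next I would invoke the standard fact (e.g. \cite[Proposition 3.7.7]{fujino0}) that there is an effective $f$-exceptional divisor $A$ on $Y$ with $-A$ relatively $f$-ample. The point of introducing $A$ is to produce a divisor to which the vanishing lemma applies: one computes
$$
\ulcorner E\urcorner - (K_Y + f^{-1}_*D + \{-E\} + \varepsilon A) = -f^*(K_X+D) - \varepsilon A,
$$
and for $0<\varepsilon \ll 1$ the right-hand side is $f$-ample while $(Y, f^{-1}_*D + \{-E\} + \varepsilon A)$ remains dlt, so in particular the boundary has simple normal crossing support and round-down zero. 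Thus Lemma \ref{vani-rf-le} yields $R^if_*\mathcal O_Y(\ulcorner E\urcorner) = 0$ for all $i>0$, and since $\ulcorner E\urcorner$ is effective and $f$-exceptional one also has $f_*\mathcal O_Y(\ulcorner E\urcorner) \simeq \mathcal O_X$.

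Combining these two statements gives $Rf_*\mathcal O_Y(\ulcorner E\urcorner) \simeq \mathcal O_X$ in the derived category. I would then consider the composition
$$
\mathcal O_X \to Rf_*\mathcal O_Y \to Rf_*\mathcal O_Y(\ulcorner E\urcorner) \simeq \mathcal O_X,
$$
where the second map is induced by the inclusion $\mathcal O_Y \to \mathcal O_Y(\ulcorner E\urcorner)$ (here $\ulcorner E\urcorner \geq 0$), and observe that this composite is the identity on $\mathcal O_X$, hence a quasi-isomorphism. This forces $\mathcal O_X \to Rf_*\mathcal O_Y$ to be a split injection, which by the criterion of \cite[Theorem 1]{kovacs2} implies that $X$ has only rational singularities.

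The only step that requires care, and which I expect to be the main (though still routine) obstacle, is verifying that the perturbed pair $(Y, f^{-1}_*D + \{-E\} + \varepsilon A)$ is genuinely dlt for small $\varepsilon$ so that Lemma \ref{vani-rf-le} is applicable: one must check that adding $\{-E\}$ and the small exceptional divisor $\varepsilon A$ keeps the support simple normal crossing and the coefficients in the correct range, and that the combination $-f^*(K_X+D)-\varepsilon A$ is $f$-ample (nef and log big suffices). Since $-A$ is $f$-ample and $f^*(K_X+D)$ is $f$-numerically trivial, the ampleness is immediate, and the coefficient bookkeeping is exactly as in the proof of Theorem \ref{49-dlt}. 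Everything else is a direct transcription of that earlier proof.
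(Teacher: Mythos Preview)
Your proposal is correct, but it is not the paper's proof of Theorem \ref{z-rational}; it is a verbatim reproduction of the \emph{first} proof the paper already gave for the identical statement Theorem \ref{49-dlt}. The whole point of the appendix subsection containing Theorem \ref{z-rational} is to give a \emph{second}, different proof. The paper explicitly announces this: ``Our proof is a combination of the proofs in \cite[Theorem 5.22]{km} and \cite[Section 11]{ko-sing}. We need no difficult duality theorems. The arguments here will be used in Section \ref{sec-alex}.''

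The paper's proof of Theorem \ref{z-rational} proceeds by local cohomology rather than by Kov\'acs's splitting criterion. After reducing (via hyperplane cuts) to the case where $X$ has rational singularities outside a single closed point $x$, one still obtains $R^if_*\mathcal O_Y(\ulcorner E\urcorner)=0$ for $i>0$ and $f_*\mathcal O_Y(\ulcorner E\urcorner)\simeq\mathcal O_X$ from Lemma \ref{vani-rf-le}, as you do. But instead of invoking \cite[Theorem 1]{kovacs2}, the paper runs the Leray spectral sequence for local cohomology $H^i_x(X, R^jf_*\mathcal O_Y(\ulcorner E\urcorner))\Rightarrow H^{i+j}_F(Y,\mathcal O_Y(\ulcorner E\urcorner))$ together with the dual Grauert--Riemenschneider vanishing $H^i_F(Y,\mathcal O_Y)=0$ for $i<n$ (Lemma \ref{lem-gr}) to deduce first that $X$ is Cohen--Macaulay, and then by a second spectral-sequence chase that $R^jf_*\mathcal O_Y=0$ for all $j>0$. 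Your route is shorter and relies on the black-box criterion of Kov\'acs (which itself uses Grothendieck duality); the paper's route is more elementary in that it avoids that duality machinery, and more importantly its local-cohomology computations are precisely the template reused in the proof of Alexeev's $S_3$ criterion (Theorem \ref{alex-cri}) in the following section.
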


Our proof is a combination of 
the proofs 
in \cite[Theorem 5.22]{km} and \cite[Section 11]{ko-sing}. 
We need no difficult duality theorems.
The arguments here will be used in Section \ref{sec-alex}. 
First, let us recall the definition of 
the rational singularities. 

\begin{defn}[Rational singularities]\index{rational singularity} 
A variety $X$ has {\em{rational singularities}} 
if there is a resolution $f:Y\to X$ such that 
$f_*\mathcal O_Y\simeq \mathcal O_X$ and 
$R^if_*\mathcal O_Y=0$ for all $i>0$. 
\end{defn}

Next, we give a dual form of the Grauert--Riemenschneider 
vanishing theorem. 

\begin{lem}\label{lem-gr}
Let $f:Y\to X$ be a proper 
birational morphism from a smooth 
variety $Y$ to a variety $X$. 
Let $x\in X$ be a closed point. 
We put $F=f^{-1}(x)$. 
Then we have 
$$H^i_F(Y, \mathcal O_Y)=0$$ for 
any $i<n=\dim X$. 
\end{lem}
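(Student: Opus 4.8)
The plan is to deduce the statement from the Grauert--Riemenschneider vanishing theorem by means of local (Matlis) duality for the proper morphism $f$ over a local base. Since $f$ is birational we have $\dim Y=\dim X=n$, and $Y$ is smooth, so $\omega_Y=\mathcal O_Y(K_Y)$ is an invertible sheaf and Grauert--Riemenschneider gives $R^jf_*\omega_Y=0$ for all $j>0$. The whole point is to convert this vanishing of higher direct images into the asserted vanishing of the local cohomology groups supported on the fibre $F=f^{-1}(x)$.

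First I would localize and complete the base. Replacing $X$ by $\Spec\mathcal O_{X,x}$ and then by $\Spec R$ with $R=\widehat{\mathcal O}_{X,x}$ the $\mathfrak m$-adic completion, I base change $Y$ and $f$ accordingly. This reduction is harmless: the groups $H^i_F(Y,\mathcal O_Y)$ and the coherent sheaves $R^jf_*\omega_Y$ both behave well under this flat base change, the agreement on the completion being guaranteed by the theorem on formal functions. Thus we may assume that $R$ is a complete local ring with maximal ideal $\mathfrak m$ and residue field $k$, and that $F=f^{-1}(\mathfrak m)$ is the closed fibre.

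The main step is the duality identity. Combining Grothendieck--Serre duality for the proper morphism $f$ with local duality over the complete local ring $R$ (formal duality in the sense of Hartshorne), and using that $Y$ is smooth of dimension $n$ so that its dualizing complex is $\omega_Y[n]$, one obtains a natural isomorphism
$$H^i_F(Y,\mathcal O_Y)\simeq \Hom_R\big(R^{\,n-i}f_*\omega_Y,\;E\big),$$
where $E=E_R(k)$ is the injective hull of the residue field and $R^{\,n-i}f_*\omega_Y$ is regarded as a finitely generated $R$-module. Granting this, the conclusion is immediate: for $i<n$ we have $n-i>0$, hence $R^{\,n-i}f_*\omega_Y=0$ by Grauert--Riemenschneider, and therefore $H^i_F(Y,\mathcal O_Y)=0$ since the Matlis dual of the zero module is zero.

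The hard part will be setting up and justifying the duality isomorphism with the correct degree shift and normalization of dualizing complexes, and checking that everything is compatible with the passage to the completion. In particular I would take care that $R^{\,n-i}f_*\omega_Y$ is coherent (so that Matlis duality is faithful and exact on it) and that the identification of the formal global sections of $\omega_Y$ along $F$ with $(R^{\,n-i}f_*\omega_Y)^{\wedge}$ is the one produced by the theorem on formal functions; the reduction steps and the application of Grauert--Riemenschneider are then routine. Note that no normality of $X$ is needed, since the argument only refers to $\omega_Y$ on the smooth variety $Y$ and to the direct image sheaves on $X$.
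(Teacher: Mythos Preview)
Your argument is correct. Both your approach and the paper's reduce the statement to Grauert--Riemenschneider via duality, but the technical packaging differs. The paper's proof of this particular lemma first arranges for $f$ to be projective (by dominating $Y$ with a smooth projective $Z$), then compactifies so that $Y$ is projective, and finally combines the classical Serre duality $\Hom(\Ext^i(\mathcal O_{mF},\mathcal O_Y),\mathbb C)\simeq H^{n-i}(Y,\mathcal O_{mF}\otimes\omega_Y)$ with the theorem on formal functions to obtain $\Hom(H^i_F(Y,\mathcal O_Y),\mathbb C)\simeq (R^{n-i}f_*\omega_Y)_x^{\wedge}$. You instead complete the local ring at $x$ and invoke derived Grothendieck duality for $f$ together with local (Matlis) duality over the complete base, arriving at the same identification without compactifying. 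Your route is more conceptual and avoids the reduction to projective $Y$; the paper's route is more elementary in that it only uses duality on a smooth projective variety. It is worth noting that the paper itself adopts exactly your derived-category formulation in the very next lemma (the generalization to a non-closed scheme point $P$), where compactification is less convenient; so your argument is in effect the one the paper uses once the statement is generalized.
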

\begin{proof}
We take a proper birational morphism 
$g:Z\to Y$ from a smooth variety $Z$ such that $f\circ g$ is projective. We consider the following spectral sequence 
$$
E^{pq}_2=H^p_F(Y, R^qg_*\mathcal O_Z)\Rightarrow 
H^{p+q}_E(Z, \mathcal O_Z), 
$$
where $E=g^{-1}(F)=(f\circ g)^{-1}(x)$. 
Since $R^qg_*\mathcal O_Z=0$ for $q>0$ and $g_*\mathcal O_Z\simeq 
\mathcal O_Y$, 
we have $H^p_F(Y, \mathcal O_Y)\simeq 
H^p_E(Z, \mathcal O_Z)$ for any $p$. 
Therefore, we can replace $Y$ with 
$Z$ and assume that $f:Y\to X$ is projective. 
Without loss of generality, we can 
assume that $X$ is affine. 
Then we compactify $X$ and assume that 
$X$ and $Y$ are projective. 
It is well known that 
$$
H^i_F(Y, \mathcal O_Y)\simeq 
\lim_{\underset{m}{\longrightarrow}}
\Ext ^i(\mathcal O_{mF}, \mathcal O_Y)
$$ 
(see \cite[Theorem 2.8]{hartshorne-local}) and that 
$$
\Hom (\Ext ^i(\mathcal O_{mF}, \mathcal O_Y), \mathbb C)
\simeq H^{n-i}(Y, \mathcal O_{mF}\otimes \omega_Y)  
$$ 
by duality on a smooth projective variety $Y$ 
(see \cite[Theorem 7.6 (a)]{hartshorne-ag}). 
Therefore, 
\begin{align*}
\Hom (H^i_F(Y, \mathcal O_Y), \mathbb C) 
&\simeq 
\Hom (\lim _{\underset{m}{\longrightarrow}}
\Ext ^i(\mathcal O_{mF}, 
\mathcal O_Y), \mathbb C)\\ &\simeq 
\lim _{\underset{m}{\longleftarrow}} H^{n-i}(Y, \mathcal O_{mF}\otimes 
\omega_Y)\\
&\simeq (R^{n-i}f_*\omega_Y)_x^{\wedge} 
\end{align*}
by the theorem on formal functions 
(see \cite[Theorem 11.1]{hartshorne-ag}), where $(R^{n-i}f_*\omega_Y)_x^{\wedge}$ is the completion 
of $R^{n-i}f_*\omega_Y$ at $x \in X$. 
On the other hand, $R^{n-i}f_*\omega_Y=0$ for 
$i<n$ by the Grauert--Riemenschneider 
vanishing theorem. Thus, $H^i_F(Y, \mathcal O_Y)=0$ for 
$i<n$. 
\end{proof}

\begin{rem}\label{lem-gr-ho} 
Lemma \ref{lem-gr} 
holds true even when 
$Y$ has rational 
singularities. 
It is because $R^qg_*\mathcal O_Z=0$ for 
$q>0$ and $g_*\mathcal O_Z\simeq \mathcal O_Y$ holds 
in the proof of 
Lemma \ref{lem-gr}. 
\end{rem}

Let us go to the proof of Theorem \ref{z-rational}. 

\begin{proof}[Proof of {\em{Theorem \ref{z-rational}}}]
Without loss of generality, 
we can assume that $X$ is affine. 
Moreover, by taking generic hyperplane sections of $X$, 
we can also assume that $X$ has only rational 
singularities outside a closed point $x\in X$. By the definition of dlt, 
we can take a resolution $f:Y\to X$ such that 
$\Exc (f)$ and $\Exc (f)\cup \Supp f^{-1}_*D$ are both 
simple normal crossing divisors on $Y$, 
$K_Y+f^{-1}_*D=f^*(K_X+D)+E$ with 
$\ulcorner E\urcorner\geq 0$, and that 
$f$ is projective. 
Moreover, we can make $f$ an isomorphism 
over the generic point of any lc center of $(X, D)$. 
Therefore, by Lemma \ref{vani-rf-le}, we can check 
that $R^if_*\mathcal O_Y(\ulcorner E\urcorner)=0$ for 
any $i>0$. 
See also the proof of Theorem \ref{49-dlt}. 
We note that $f_*\mathcal O_Y(\ulcorner E\urcorner)\simeq 
\mathcal O_X$ since $\ulcorner E\urcorner$ is effective 
and $f$-exceptional. 
For any $i>0$, 
by the above assumption, $R^if_*\mathcal O_Y$ is 
supported at a point $x\in X$ if it ever 
has a non-empty support at all. 
We put $F=f^{-1}(x)$. Then 
we have a spectral 
sequence 
$$
E^{ij}_2=H^i_x(X, R^jf_*\mathcal O_Y(\ulcorner E\urcorner))\Rightarrow 
H^{i+j}_F(Y, \mathcal O_Y(\ulcorner E\urcorner)). 
$$
By the above vanishing result, we have 
$$
H^i_x(X, \mathcal O_X)\simeq H^i_F(Y, \mathcal O_Y
(\ulcorner E\urcorner))
$$ for 
every $i\geq 0$. 
We obtain a commutative diagram 
$$
\begin{CD}
H^i_F(Y, \mathcal O_Y)@>>> H^i_F(Y, 
\mathcal O_Y(\ulcorner E\urcorner))\\ 
@A{\alpha}AA @AA{\beta}A\\ 
H^i_x(X, \mathcal O_X)@=H^i_x(X, \mathcal O_X). 
\end{CD}
$$
We have already checked that $\beta$ is an isomorphism 
for every $i$ and 
that $H^i_F(Y, \mathcal O_Y)=0$ for $i<n$ (see Lemma 
\ref{lem-gr}). 
Therefore, $H^i_x(X, \mathcal O_X)=0$ for 
any $i<n=\dim X$. Thus, $X$ is Cohen--Macaulay. 
For $i=n$, we obtain 
that 
$$
\alpha: H^n_x(X, \mathcal O_X)\to H^n_F(Y, \mathcal O_Y)   
$$ 
is injective. 
We consider the following spectral sequence 
$$
E^{ij}_2=H^i_x(X, R^jf_*\mathcal O_Y)\Rightarrow 
H^{i+j}_F(Y, \mathcal O_Y). 
$$
We note that $H^i_x(X, R^jf_*\mathcal O_Y)=0$ for any 
$i>0$ and 
$j>0$ since $\Supp R^jf_*\mathcal O_Y\subset \{x\}$ for 
$j>0$. 
On the other hand, $E^{i0}_2=H^i_x(X, \mathcal O_X)=0$ for 
any $i<n$. 
Therefore, $H^0_x(X, R^jf_*\mathcal O_Y)\simeq 
H^j_x(X, \mathcal O_X)=0$ for 
all $j\leq n-2$. Thus, $R^jf_*\mathcal O_Y=0$ for $1
\leq j\leq n-2$. Since 
$H^{n-1}_x(X, \mathcal O_X)=0$, 
we obtain that 
$$
0\to H^0_x(X, R^{n-1}f_*\mathcal O_Y)\to 
H^n_x(X, \mathcal O_X)\overset{\alpha}{\to}
H^n_F(Y, \mathcal O_Y)\to 0   
$$ 
is exact. 
We have already checked that $\alpha$ is injective. 
So, we obtain that 
$H^0_x(X, R^{n-1}f_*\mathcal O_Y)=0$. 
This means that $R^{n-1}f_*\mathcal O_Y=0$. Thus, 
we have $R^if_*\mathcal O_Y=0$ for 
any $i>0$. 
We complete the proof. 
\end{proof}

\section{Alexeev's criterion for $S_3$ condition}\label{sec-alex}

In this section, we explain Alexeev's 
criterion for Serre's $S_3$ condition (see Theorem \ref{alex-cri}). 
It is a clever application of Theorem \ref{8} (i). 
In general, log canonical singularities are not Cohen--Macaulay. 
So, the results in this section will be useful for the 
study of lc pairs. 

\begin{thm}[{cf.~\cite[Lemma 3.2]{alex}}]\label{alex-cri}
Let $(X, B)$ be an lc pair with $\dim X=n \geq 3$ and 
let $P\in X$ be a scheme theoretic 
point such that 
$\dim \overline {\{P\}}\leq n-3$. 
Assume that 
$\overline {\{P\}}$ is not an lc center of $(X, B)$. 
Then the local ring $\mathcal O_{X, P}$ satisfies Serre's 
$S_3$ condition. 
\end{thm}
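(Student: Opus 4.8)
The plan is to reduce the $S_3$ condition for $\mathcal O_{X,P}$ to a local cohomology vanishing statement and then to produce that vanishing from the torsion-free theorem, Theorem \ref{8} (i), via a log resolution. Recall that $\mathcal O_{X,P}$ satisfies $S_3$ if and only if the local cohomology groups $H^i_P(\mathcal O_{X,P})$ vanish for $i<\min(3,\dim\mathcal O_{X,P})$. Since $\dim\overline{\{P\}}\le n-3$, we have $\dim\mathcal O_{X,P}=n-\dim\overline{\{P\}}\ge 3$, so the requirement is precisely that $H^0_P=H^1_P=H^2_P=0$. The groups $H^0_P$ and $H^1_P$ vanish automatically once we know $X$ is normal (indeed $S_2$), which follows from log canonicity. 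So the genuine content is the vanishing of $H^2_P(\mathcal O_{X,P})$.

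First I would take a log resolution $f:Y\to X$ of $(X,B)$ with $K_Y+B_Y=f^*(K_X+B)+E$ in the usual normal-crossing form, arranged so that $f$ is an isomorphism over the generic point of every lc center of $(X,B)$ and so that $\ulcorner E\urcorner\ge 0$ with $\ulcorner E\urcorner$ effective and $f$-exceptional; this is the same setup used in the proofs of Theorem \ref{49-dlt} and Theorem \ref{cm-th}. Writing $A=\ulcorner -(B_Y^{<1})\urcorner$-type round-ups, the key point is that $\mathcal O_X\simeq f_*\mathcal O_Y(\ulcorner E\urcorner)$ and that the higher direct images $R^jf_*\mathcal O_Y(\ulcorner E\urcorner)$ are controlled by Theorem \ref{8} (i): every nonzero local section of such a sheaf contains in its support the $f$-image of some stratum, i.e.\ some lc center. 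The hypothesis that $\overline{\{P\}}$ is \emph{not} an lc center of $(X,B)$ is exactly what lets me conclude that, after localizing at $P$, the relevant $R^jf_*$ have no sections supported at $\overline{\{P\}}$ in the low degrees that matter.

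Next I would run the local-to-global spectral sequence for local cohomology along $f$, in the style of the proof of Theorem \ref{z-rational}. Setting $F=f^{-1}(\overline{\{P\}})$ and localizing at $P$, I would compare $H^i_P(X,\mathcal O_X)$ with $H^i_F(Y,\mathcal O_Y(\ulcorner E\urcorner))$ through the spectral sequence
$$
E_2^{ij}=H^i_P\bigl(X,R^jf_*\mathcal O_Y(\ulcorner E\urcorner)\bigr)\Rightarrow H^{i+j}_F\bigl(Y,\mathcal O_Y(\ulcorner E\urcorner)\bigr).
$$
The torsion-free theorem forces the offending terms $R^jf_*$ ($j>0$) to be supported only on lc centers, and since $\overline{\{P\}}$ is not one of these and has small dimension, I would deduce that $E_2^{ij}$ vanishes in the range contributing to $H^2_P$. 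Combined with a Grauert--Riemenschneider-type vanishing on the resolution side (as in Lemma \ref{lem-gr}, which gives $H^i_F(Y,\mathcal O_Y)=0$ for $i<n$ and extends to the twist by the effective exceptional $\ulcorner E\urcorner$), this yields $H^2_P(\mathcal O_{X,P})=0$.

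The main obstacle, and the step requiring the most care, will be the bookkeeping on the resolution: ensuring that the twist $\ulcorner E\urcorner$ together with the fractional part of $B_Y$ satisfies the boundary hypotheses of Theorem \ref{8} (i), and precisely identifying which strata of $(Y,B_Y)$ map into $\overline{\{P\}}$. Because $\overline{\{P\}}$ is assumed not to be an lc center, no stratum of $B_Y^{=1}$ dominates it, so the support condition in Theorem \ref{8} (i) genuinely excludes sections along $\overline{\{P\}}$; verifying this rigorously—and checking that localizing at $P$ interacts correctly with the supports of the higher direct images in the spectral sequence—is where the real work lies. Everything else is the standard duality-free local cohomology argument already rehearsed in Section \ref{b-dlt-sec}.
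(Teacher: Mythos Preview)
Your overall strategy matches the paper's, but there is a real gap in how you finish. A preliminary note: the dlt-style resolution you invoke (with $f$ an isomorphism over each lc-center generic point and $\ulcorner E\urcorner\geq 0$) need not exist for genuine lc pairs---an exceptional divisor of discrepancy $-1$ violates both conditions. The correct setup, which you also allude to, is $K_Y+B_Y=f^*(K_X+B)$ with $A=\ulcorner -(B_Y^{<1})\urcorner$; then $f_*\mathcal O_Y(A)\simeq\mathcal O_X$ and Theorem \ref{8} (i) shows that the associated primes of $R^1f_*\mathcal O_Y(A)$ are lc centers.

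The substantive gap is your claim that Lemma \ref{lem-gr} ``extends to the twist'', i.e.\ that $H^2_F(Y_P,\mathcal O_{Y_P}(A))=0$. By duality this would require $R^jf_*\mathcal O_Y(K_Y-A)=0$ for $j>0$, and no such vanishing is available ($-A$ is anti-effective exceptional, not $f$-nef). Nor does the torsion-free theorem kill all the $E_2^{i,j}$ with $j>0$: it only says $P$ is not an \emph{associated} prime of $R^jf_*\mathcal O_Y(A)$, hence $E_2^{0,1}=H^0_P(R^1f_*)=0$, which makes the edge map $\phi:H^2_P(\mathcal O_X)\to H^2_F(\mathcal O_{Y_P}(A))$ injective---but does not make its target vanish. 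The paper supplies the missing step by running the \emph{untwisted} spectral sequence for $\mathcal O_Y$ alongside the twisted one. Since $f_*\mathcal O_Y\simeq f_*\mathcal O_Y(A)\simeq\mathcal O_X$, the inclusion $\mathcal O_Y\hookrightarrow\mathcal O_Y(A)$ yields a commutative square through which $\phi$ factors via $H^2_F(Y_P,\mathcal O_{Y_P})$; and \emph{that} group is zero by Lemma \ref{lem-gr2}, the genuine Grauert--Riemenschneider input. This comparison of the two spectral sequences is the idea you are missing.
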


We slightly changed the original formulation. 
The following proof is essentially the same as 
Alexeev's. We use local cohomologies to 
calculate depths. 

\begin{proof}
We note that $\mathcal O_{X, P}$ satisfies Serre's $S_2$ 
condition because $X$ is normal. 
Since the assertion is local, we can assume that 
$X$ is affine. 
Let $f: Y \to X$ be a resolution of $X$ such that 
$\Exc (f)\cup \Supp f^{-1}_*B$ is a simple normal crossing 
divisor on $Y$. Then we can write 
$$
K_Y+B_Y=f^*(K_X+B)   
$$ 
such that $\Supp B_Y$ is a simple normal crossing divisor on $Y$. 
We put $A=\ulcorner -(B^{<1}_Y)\urcorner\geq 0$. 
Then we obtain 
$$
A=K_Y+B^{=1}_Y+\{B_Y\}-f^*(K_X+B). 
$$ 
Therefore, by Theorem \ref{8} (i), the support 
of every non-zero local section of the sheaf 
$R^1f_*\mathcal O_Y(A)$ contains 
some lc centers of $(X, B)$. 
Thus, $P$ is not an associated point of $R^1f_*\mathcal O_Y(A)$. 

We put $X_P=\Spec \mathcal O_{X, P}$ and 
$Y_P=Y\times _X X_P$. Then 
$P$ is a closed point of $X_P$ and it is 
sufficient to prove that $H^2_P(X_P, \mathcal O_{X_P})=0$. 
We put $F=f^{-1}(P)$, where $f:Y_P\to X_P$. 
Then we have the following vanishing theorem. 
It is nothing but Lemma \ref{lem-gr} when 
$P$ is a closed point of $X$. 

\begin{lem}[{cf.~Lemma \ref{lem-gr}}]\label{lem-gr2} 
We have $H^i_F(Y_P, \mathcal O_{Y_P})=0$ for 
$i< n-\dim \overline{\{P\}}$. 
\end{lem}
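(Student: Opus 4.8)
The plan is to reduce the statement to Lemma \ref{lem-gr}, of which it is the relative analogue. Observe first that the assertion involves only the resolution $f$ and the point $P$, not the boundary $B$: writing $m=n-\dim\overline{\{P\}}$, the local scheme $X_P=\Spec\mathcal O_{X,P}$ has dimension $m$ and unique closed point $P$, while $Y_P=Y\times_X X_P$ is regular of dimension $m$ (a localization of the smooth $Y$) and $f\colon Y_P\to X_P$ is proper and birational with $F=f^{-1}(P)$ its fibre over the closed point. Thus the claim ``$H^i_F(Y_P,\mathcal O_{Y_P})=0$ for $i<m$'' is exactly the content of Lemma \ref{lem-gr} with the pair $(X,x)$ of dimension $n$ replaced by the local pair $(X_P,P)$ of dimension $m$. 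The only thing preventing a verbatim citation is that $X_P$ is not a variety over $\mathbb C$, so the Serre duality and formal function steps of Lemma \ref{lem-gr}, which live on a projective variety over a field, do not directly apply.

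First I would localize, assuming $X$ affine, and pass to a general transverse slice to return to the situation of Lemma \ref{lem-gr}. Put $Z=\overline{\{P\}}$ and $d=\dim Z$. Choose a general closed point $x\in Z$ and a general linear section $X'$ of codimension $d$ through $x$; then $X'$ is a normal variety of dimension $m$, $x$ is a closed point of $X'$, and by Bertini $Y'=f^{-1}(X')$ is smooth with $f'=f|_{Y'}\colon Y'\to X'$ proper and birational. Applying Lemma \ref{lem-gr} to $(Y',X',x)$ gives $H^i_{F'}(Y',\mathcal O_{Y'})=0$ for $i<\dim X'=m$, where $F'=(f')^{-1}(x)$. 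It remains to transfer this vanishing across the slice back to $P$.

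The transfer is the technical heart, and I expect it to be the main obstacle. The $d$ general equations cutting out $X'$ pull back to a sequence of general elements that are regular on $\mathcal O_{Y_P}$, and slicing by a general hypersurface lowers $\dim X$ and $\dim Z$ simultaneously, so the transverse codimension $m$---and with it the expected vanishing range $i<m$---is preserved at each step; the standard long exact sequences of local cohomology attached to multiplication by these equations (Grothendieck's hyperplane-section device) then compare $H^i_F(Y_P,\mathcal O_{Y_P})$ with $H^i_{F'}(Y',\mathcal O_{Y'})$. Making this comparison precise---i.e.\ checking that depth with support along the fibre is compatible with a general transverse slice through a general point of $Z$---is where the real work lies. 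As a cleaner self-contained alternative that stays faithful to the proof of Lemma \ref{lem-gr}, I would instead run the duality computation directly over the excellent local ring $R=\mathcal O_{X,P}$: combining relative Grothendieck duality for $f$, the Grauert--Riemenschneider vanishing $R^jf_*\omega_{Y_P}=0$ for $j>0$, and Grothendieck local duality over $R$ (whose dualizing complex is $\omega_R^{\bullet}$), one finds that the Matlis dual of $R\Gamma_F(Y_P,\mathcal O_{Y_P})$ is quasi-isomorphic to $(f_*\omega_{Y_P})[m]$, concentrated in degree $-m$; since the relevant local cohomology modules are Artinian, this yields $H^i_F(Y_P,\mathcal O_{Y_P})=0$ for all $i\neq m$, and in particular for $i<m$. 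Here the only point needing care is the legitimacy of duality over the base $R$, which is not of finite type over $\mathbb C$ but is a localization of such an algebra, hence excellent and possessed of a dualizing complex.
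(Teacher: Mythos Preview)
Your second alternative---running local duality over $R=\mathcal O_{X,P}$ together with Grothendieck duality for $f$ and Grauert--Riemenschneider---is exactly the paper's proof: the paper computes $R\Gamma_F\mathcal O_{Y_P}\simeq \Hom(Rf_*\omega_Y\otimes\mathcal O_{X_P}[n-m],I)$ via these same three ingredients and reads off the vanishing directly. Your first approach via a general transverse slice is an unnecessary detour (and, as you yourself note, incomplete); you should simply lead with the duality argument, which is short and self-contained.
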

\begin{proof}[Proof of {\em{Lemma \ref{lem-gr2}}}]
Let $I$ denote an injective hull of $\mathcal O_{X_P}/m_P$ 
as an $\mathcal O_{X_P}$-module, 
where $m_P$ is the maximal ideal corresponding 
to $P$. 
We have 
\begin{align*}
R\Gamma_F\mathcal O_{Y_P}&\simeq 
R\Gamma_P(Rf_*\mathcal O_{Y_P})\\
&\simeq\Hom (R\mathcal Hom (Rf_*\mathcal O_{Y_P}, 
\omega^{\bullet}_{X_P}), I)\\
&\simeq \Hom (Rf_*\mathcal O_Y(K_Y)\otimes \mathcal O_{X_P}
[n-m], I), 
\end{align*} 
where $m=\dim \overline {\{P\}}$, 
by the local duality theorem (\cite[Chapter V, 
Theorem 6.2]{residue}) and 
the Grothendieck duality theorem 
(\cite[Chapter VII, Theorem 3.3]{residue}). 
We note the shift that normalize the dualizing 
complex $\omega^{\bullet}_{X_P}$. Therefore, 
we obtain $H^i_F(Y_P, \mathcal O_{Y_P})=0$ 
for $i<n-m$ because 
$R^jf_*\mathcal O_Y(K_Y)=0$ for 
any $j>0$ by the 
Grauert--Riemenschneider vanishing theorem. 
\end{proof}
Let  us go back to the proof of the theorem. 
We consider the following 
spectral sequences 
$$
E^{pq}_2=H^p_P(X_P, R^qf_*\mathcal O_{Y_P}(A))
\Rightarrow 
H^{p+q}_F(Y_P, \mathcal O_{Y_P}(A)),  
$$ 
and 
$$
{}^{\prime}\!E^{pq}_2=H^p_P(X_P, R^qf_*\mathcal O_{Y_P})
\Rightarrow 
H^{p+q}_F(Y_P, \mathcal O_{Y_P}).   
$$ 
By the above spectral sequences, we have the next commutative diagram. 
$$
\xymatrix{
H^2_F(Y_P, \mathcal O_{Y_P}) \rto
&  H^2_F(Y_P, \mathcal O_{Y_P}(A)) \\
  H^2_P(X_P, f_*\mathcal O_{Y_P}) \uto \ar@{=}[d]
&  H^2_P(X_P, f_*\mathcal 
O_{Y_P}(A)) \uto_{\phi} \ar@{=}[d] \\
  H^2_P\big(X_P, \mathcal O_{X_P})  \ar@{=}[r]
&  H^2_P\big(X_P, \mathcal O_{X_P}) 
}
$$
Since $P$ is not an associated 
point of $R^1f_*\mathcal O_Y(A)$, 
we have $$E^{0,1}_2=H^0_P(X_P, R^1f_*\mathcal O_{Y_P}
(A))=0. $$ 
By the edge sequence 
$$
0\to E^{1,0}_2\to E^1\to E^{0, 1}_2\to 
E^{2, 0}_2\overset{\phi}\to E^2\to \cdots, 
$$
we know that $\phi:E^{2,0}_2\to E^2$ is injective. 
Therefore, $H^2_P(X_P, \mathcal O_{X_P})\to 
H^2_F(Y_P, \mathcal O_{Y_P})$ is injective 
by the above big commutative diagram. 
Thus, we obtain $H^2_P(X_P, \mathcal O_{X_P})=0$ since 
$H^2_F(Y_P, \mathcal O_{Y_P})=0$ by Lemma \ref{lem-gr2}. 
\end{proof}

\begin{rem}\label{comp-prob}
The original argument 
in the proof of \cite[Lemma 3.2]{alex} 
has some compactification problems when $X$ is not projective. 
Our proof does not need any compactifications of 
$X$. 
\end{rem}

As an easy application of Theorem \ref{alex-cri}, 
we have the following result. 
It is \cite[Theorem 3.4]{alex}. 

\begin{thm}[{cf.~\cite[Theorem 3.4]{alex}}]\label{alex-cri2}  
Let $(X, B)$ be an lc pair and let $D$ be an effective 
Cartier divisor. 
Assume that 
the pair $(X, B+\varepsilon D)$ 
is lc for some $\varepsilon >0$. 
Then $D$ is $S_2$. 
\end{thm}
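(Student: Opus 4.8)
The plan is to reduce the $S_2$ property of $D$ to the $S_3$ property of $X$ along $D$, and then invoke Theorem \ref{alex-cri}. Since the assertion is local, I would fix a point $P\in D$ and write $A=\mathcal O_{X,P}$; it then suffices to show $\operatorname{depth}\mathcal O_{D,P}\geq \min(2,\dim \mathcal O_{D,P})$. Because $X$ is normal, it is an integral scheme satisfying Serre's $S_2$ condition, so the local equation $t\in A$ of the effective Cartier divisor $D$ at $P$ is a non-zero-divisor lying in the maximal ideal $m_P$. Hence $\mathcal O_{D,P}=A/tA$ with $\dim\mathcal O_{D,P}=\dim A-1$ and $\operatorname{depth}\mathcal O_{D,P}=\operatorname{depth}A-1$. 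Consequently the desired inequality is equivalent to $\operatorname{depth}A\geq \min(3,\dim A)$, that is, to the statement that $X$ satisfies $S_3$ at $P$.

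Next I would split into two cases according to $d:=\dim A=\operatorname{codim}_X\overline{\{P\}}$. When $d\leq 2$ we have $\min(3,d)=d=\min(2,d)$, so the $S_2$ property of the normal variety $X$ already yields $\operatorname{depth}A\geq d$, as required. When $d\geq 3$, equivalently $\dim\overline{\{P\}}\leq n-3$ with $n=\dim X$ (note this forces $n\geq 3$, so that Theorem \ref{alex-cri} is applicable), I would apply Alexeev's criterion to conclude $\operatorname{depth}A\geq 3$, provided one checks its hypothesis that $\overline{\{P\}}$ is not an lc center of $(X,B)$.

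The verification of that hypothesis is the one genuinely new point, and it is exactly where the assumption on $B+\varepsilon D$ enters; I regard it as the crux of the argument. Since $P\in D$, the closed subvariety $\overline{\{P\}}$ is contained in $\Supp D$. Suppose, toward a contradiction, that $\overline{\{P\}}$ were an lc center of $(X,B)$: then there is a prime divisor $E$ over $X$ with $a(E,X,B)=-1$ and $c_X(E)=\overline{\{P\}}\subseteq \Supp D$. Because the center of $E$ lies in $\Supp D$, the valuation $\operatorname{ord}_E(D)$ is strictly positive, and therefore
\[
a(E,X,B+\varepsilon D)=a(E,X,B)-\varepsilon\,\operatorname{ord}_E(D)=-1-\varepsilon\,\operatorname{ord}_E(D)<-1,
\]
contradicting the hypothesis that $(X,B+\varepsilon D)$ is lc. Hence $\overline{\{P\}}$ is not an lc center of $(X,B)$, Theorem \ref{alex-cri} gives $\operatorname{depth}A\geq 3$, and combining the two cases completes the proof. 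The remaining ingredients, namely the standard depth-drop formula under cutting by a non-zero-divisor and the positivity of $\operatorname{ord}_E(D)$, are routine and present no real obstacle, so the result is indeed a short application of Theorem \ref{alex-cri}.
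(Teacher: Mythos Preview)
Your proof is correct and follows essentially the same route as the paper: reduce to the $S_3$ property of $X$ at points of $D$ and invoke Theorem~\ref{alex-cri}. The only cosmetic difference is that the paper phrases the reduction in terms of local cohomology (using the long exact sequence of $H^i_P$ attached to $0\to\mathcal O_X(-D)\to\mathcal O_X\to\mathcal O_D\to 0$), whereas you use the equivalent depth-drop formula for a non-zero-divisor; you also spell out explicitly why $\overline{\{P\}}$ cannot be an lc center of $(X,B)$, which the paper leaves as ``by the assumption''.
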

\begin{proof}
Without loss of generality, we can assume that 
$\dim X=n\geq 3$. Let $P\in D\subset X$ 
be a scheme theoretic point such that 
$\dim \overline{\{P\}}\leq n-3$. 
We localize $X$ at $P$ and assume that 
$X=\Spec \mathcal O_{X, P}$. 
By the assumption, 
$\overline {\{P\}}$ is not an lc center of $(X, B)$. 
By Theorem \ref{alex-cri}, 
we obtain that 
$H^i_P(X, \mathcal O_X)=0$ for 
$i<3$. 
Therefore, 
$H^i_P(D, \mathcal O_D)=0$ for 
$i<2$ by the long exact sequence 
$$
\cdots \to H^i_P(X, \mathcal O_X(-D))\to 
H^i_P(X, \mathcal O_X)\to H^i_P(D, \mathcal O_D)\to \cdots. 
$$ 
We note that $H^i_P(X, \mathcal O_X(-D))\simeq 
H^i_P(X, \mathcal O_X)=0$ for $i<3$. 
Thus, $D$ satisfies Serre's $S_2$ condition. 
\end{proof}

We give a supplement to 
adjunction (see Theorem \ref{adj-th} (i)). 
It may be useful for the study of 
limits of stable pairs (see \cite{alex}). 

\begin{thm}[Adjunction for Cartier divisors on lc pairs]\label{tu-thm1}
Let $(X, B)$ be an lc pair and let 
$D$ be an effective 
Cartier divisor on $X$ 
such that $(X, B+D)$ is log canonical. 
Let $V$ be a union of lc centers of $(X, B)$. 
We consider $V$ as a reduced closed subscheme of 
$X$. We define a scheme structure on $V\cap D$ by 
the following 
short exact sequence
$$
0\to \mathcal O_V(-D)\to \mathcal O_V\to \mathcal O_{V\cap D}\to 0. 
$$
Then, 
$\mathcal O_{V\cap D}$ is reduced and 
semi-normal. 
\end{thm}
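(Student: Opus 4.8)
The plan is to separate the two assertions, handling the set-theoretic and semi-normality claim first, since it reduces cleanly to Theorem \ref{a1}. The key observation is that both $V$ and $D$ are unions of lc centers of the pair $(X,B+D)$. Indeed, $D$ is reduced (for $(X,B+D)$ to be lc, each component of the effective Cartier divisor $D$ must occur with coefficient exactly one in $B+D$), so each component of $D$ is itself an lc center of $(X,B+D)$. Moreover each irreducible component $C$ of $V$ is an lc center of $(X,B)$, say $C=c_X(E)$ with $a(E,X,B)=-1$; since $a(E,X,B+D)=a(E,X,B)-\mult_E f^*D\le -1$ while $(X,B+D)$ is lc, we get $a(E,X,B+D)=-1$, so $C$ remains an lc center of $(X,B+D)$. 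By Theorem \ref{a1}(2) the intersection of two unions of lc centers is again a union of lc centers, whence the underlying reduced subscheme $(V\cap D)_{\mathrm{red}}$ is a union of lc centers of $(X,B+D)$, and Theorem \ref{a1}(3) shows it is semi-normal.

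It then remains to prove that the scheme structure $\mathcal O_{V\cap D}=\mathcal O_V/\mathcal O_V(-D)$ carries no nilpotents, i.e. that the canonical surjection $\mathcal O_{V\cap D}\to \mathcal O_{(V\cap D)_{\mathrm{red}}}$ is an isomorphism. I would verify this via the two usual criteria: generic reducedness and the absence of embedded associated points. Generic reducedness is local along the generic point of a maximal component of $V\cap D$: if the ambient component $C$ of $V$ lies in $D$ then $C\cap D=C$ scheme-theoretically, and otherwise $D$ restricts to $C$ as an effective Cartier divisor which is reduced at the generic points of $C\cap D$ because $(X,B+D)$ is log canonical there.

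For the absence of embedded points I would pass to a common log resolution $f:Y\to X$ of $(X,B+D)$ that is an isomorphism over the generic point of every lc center, with $K_Y+B_Y=f^*(K_X+B)$ and $\Supp(B_Y+f^*D)$ simple normal crossing. Letting $S_V$ be the union of the components of $B_Y^{=1}$ mapping into $V$, Proposition \ref{taisetsu} identifies $\mathcal O_V$ and $\mathcal O_V(-D)$ with $f_*\mathcal O_{S_V}(A_V)$ and $f_*\mathcal O_{S_V}(A_V-f^*D)$, where $A_V=\ulcorner -(B_{S_V}^{<1})\urcorner\ge 0$ is effective and $f$-exceptional, so that $\mathcal O_{V\cap D}$ embeds into $f_*$ of the restriction of $\mathcal O_{S_V}(A_V)$ to $f^*D$. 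The torsion-free theorem, Theorem \ref{8}(i), then forces every associated point of this pushforward to be the generic point of the image of a stratum of $(Y,B_Y)$, that is, of an lc center of $(X,B+D)$; this rules out embedded points, and together with generic reducedness yields reducedness.

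I expect the decisive difficulty to lie in controlling the non-reduced part of the naive intersection $S_V\cap f^*D$ on the resolution, since $f^*D$ may be thickened along exceptional divisors, so that $\mathcal O_{S_V}(A_V)/\mathcal O_{S_V}(A_V-f^*D)$ is a priori non-reduced. The hard part will be to show that the effective exceptional twist $A_V$, combined with the second half of Proposition \ref{taisetsu} applied to the reduced divisor $T\subset S_V$ cut out by the components of $(B_Y+f^*D)^{=1}$ lying over $V\cap D$, collapses this pushforward onto the reduced structure $f_*\mathcal O_T(\ulcorner -(B_T^{<1})\urcorner)\cong \mathcal O_{(V\cap D)_{\mathrm{red}}}$, the relevant connecting homomorphisms vanishing by the support statement of Theorem \ref{8}(i). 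Matching the resulting ring homomorphism with the canonical surjection $\mathcal O_{V\cap D}\to\mathcal O_{(V\cap D)_{\mathrm{red}}}$ then gives the isomorphism and completes the proof.
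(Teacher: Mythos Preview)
Your outline has the right ingredients --- Theorem \ref{a1}, Theorem \ref{8}(i), and Proposition \ref{taisetsu} --- but it is not a proof: you explicitly flag the decisive step as something that ``will be'' hard and leave it undone. Two concrete issues. First, your assertion that Proposition \ref{taisetsu} identifies $\mathcal O_V(-D)$ with $f_*\mathcal O_{S_V}(A_V-f^*D)$ is not something that proposition gives; this identification is essentially the heart of the theorem and must be proved directly. Second, splitting reducedness into generic reducedness plus absence of embedded points, and then chasing associated points of a non-reduced pushforward, is more delicate than necessary.

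The paper's argument is organized differently and more efficiently. It resolves $(X,B+D)$ and writes $K_Y+B_Y=f^*(K_X+B+D)$, so that $D$ is absorbed into $B_Y$. Letting $W$ (resp.\ $U+W$) be the union of the components of $B_Y^{=1}$ mapping into $V\cap D$ (resp.\ $V$), the adjunction formalism (Theorem \ref{adj-th}(i)) gives $\mathcal O_{V\cap D}^{\mathrm{qlc}}\simeq f_*\mathcal O_W$ and $\mathcal O_V\simeq f_*\mathcal O_{U+W}$, with $W$ simple normal crossing; reducedness and semi-normality of $f_*\mathcal O_W$ are then automatic. The only thing to check is that this qlc structure on $V\cap D$ agrees with the scheme-theoretic one $\mathcal O_V/\mathcal O_V(-D)$, i.e.\ that $f_*\mathcal O_U(A-W)\simeq\mathcal O_V(-D)$. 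The paper does this by writing $f^*D=W+E+f^{-1}_*D$ with $E$ effective $f$-exceptional, observing $f^{-1}_*D\cap U=\emptyset$ (your own argument shows no lc center of $(X,B)$ lies in $D$), so that $A-W=A-f^*D+E+f^{-1}_*D$ and it suffices to prove $f_*\mathcal O_U(A+E)\simeq\mathcal O_V$; one more application of Theorem \ref{8}(i) to the sequence $0\to\mathcal O_Y(A+E-U)\to\mathcal O_Y(A+E)\to\mathcal O_U(A+E)\to 0$ finishes this. Absorbing $D$ into the boundary from the outset is what makes the argument clean; your separation of $f^*D$ from $B_Y$ is what creates the ``decisive difficulty'' you could not close.
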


\begin{proof}
First, we note that $V\cap D$ is a union of lc centers 
of $(X, B+D)$ (see Theorem \ref{a1}). 
Let $f:Y\to X$ be a resolution 
such that 
$\Exc (f) \cup \Supp f^{-1}_*(B+D)$ is a simple normal 
crossing 
divisor on $Y$. We can write 
$$
K_Y+B_Y=f^*(K_X+B+D)  
$$ 
such that $\Supp B_Y$ is a simple normal crossing 
divisor on $Y$. 
We take more blow-ups 
and can assume that 
$f^{-1}(V\cap D)$ and $f^{-1}(V)$ are simple normal crossing 
divisors. Then 
the union of all strata 
of $B^{=1}_Y$ mapped to 
$V\cap D$ (resp.~$V$), which is denoted by $W$ (resp.~$U+W$), 
is a divisor on $Y$. We put $A=\ulcorner 
-(B^{<1}_Y)\urcorner\geq 0$ and consider the following 
commutative diagram. 
$$
\xymatrix{
0 \rto &  \mathcal O_Y(A-U-W)\dto \rto &\mathcal O_Y(A) \ar@{=}[d]
\rto &\mathcal O_{U+W}(A)\dto\rto &0  \\
0 \rto &  \mathcal O_Y(A-W)\rto &\mathcal O_Y(A) 
\rto &\mathcal O_{W}(A)\rto &0
}
$$
By applying $f_*$, we obtain the next big 
diagram by 
Theorem \ref{8} (i) and 
Theorem \ref{adj-th} (i). 
$$
\xymatrix{
&&& 0\dto &\\
&0\dto & & f_*\mathcal O_U(A-W)\dto&\\
0 \rto &  f_*\mathcal O_Y(A-U-W)\dto \rto &\mathcal O_X \ar@{=}[d]
\rto &\mathcal O_V\dto\rto &0  \\
0 \rto &  f_*\mathcal O_Y(A-W)\dto\rto &\mathcal O_X 
\rto &\mathcal O_{V\cap D}\dto \rto &0\\ 
&f_*\mathcal O_U(A-W)\dto &&0&\\
&0&&&
}
$$ 
A key point is that the connecting homomorphism 
$$f_*\mathcal O_U(A-W)\to R^1f_*\mathcal O_Y(A-U-W)$$ is 
a zero map by Theorem \ref{8} (i). 
We note that $\mathcal O_V$ and $\mathcal O_{V\cap D}$ in 
the above diagram are the structure sheaves 
of qlc pairs $V$ and $V\cap D$ induced by 
$(X, B+D)$. In particular, 
$\mathcal O_V\simeq f_*\mathcal O_{U+W}$ and 
$\mathcal O_{V\cap D}\simeq f_*\mathcal O_W$.  
So, $\mathcal O_V$ and $\mathcal O_{V\cap D}$ are 
reduced and semi-normal 
since $W$ and $U+W$ are simple normal crossing 
divisors on $Y$. 

Therefore, to prove this theorem, 
it is sufficient to see that 
$f_*\mathcal O_U(A-W)\simeq \mathcal O_V(-D)$. 
We can write 
$$
A=K_Y+B^{=1}_Y+\{B_Y\}-f^*(K_X+B+D) 
$$ 
and 
$$
f^*D=W+E+f^{-1}_*D, 
$$
where $E$ is an effective $f$-exceptional 
divisor. 
We note that $f^{-1}_*D\cap U=\emptyset$. 
Since $A-W=A-f^*D+E+f^{-1}_*D$, 
it is enough to 
see that 
$f_*\mathcal O_U(A+E+f^{-1}_*D)\simeq 
f_*\mathcal O_U(A+E)\simeq \mathcal O_V$. 
We consider the 
following short exact sequence 
$$
0\to \mathcal O_Y(A+E-U)\to \mathcal O_Y(A+E)\to \mathcal 
O_U(A+E)\to 0. 
$$ 
Note that 
$$
A+E-U=K_Y+B^{=1}_Y-f^{-1}_*D-U-W+\{B_Y\}-f^*(K_X+B). 
$$ 
Thus, the connecting homomorphism 
$f_*\mathcal O_U(A+E)\to R^1f_*\mathcal O_Y(A+E-U)$ 
is a zero map by Theorem \ref{8} (i). 
Therefore, 
we obtain that 
$$
0\to f_*\mathcal O_Y(A+E-U)\to \mathcal O_X\to f_*\mathcal 
O_U(A+E)\to 0. 
$$
So, we have $f_*\mathcal O_U(A+E)\simeq 
\mathcal O_V$. 
We finish the proof of this theorem. 
\end{proof}

The next corollary is one of the main results in 
\cite{alex}. 
The original proof in \cite{alex} 
depends on the $S_2$-fication. 
Our proof uses adjunction (see Theorem \ref{tu-thm1}). 
As a result, 
we obtain the semi-normality of $\llcorner B\lrcorner \cap D$. 

\begin{cor}[{cf.~\cite[Theorem 4.1]{alex}}]
Let $(X, B)$ be an lc pair and let $D$ be an effective Cartier 
divisor 
on $D$ such that 
$(X, B+D)$ is lc. 
Then $D$ is $S_2$ and the scheme 
$\llcorner B\lrcorner \cap D$ is reduced and 
semi-normal. 
\end{cor}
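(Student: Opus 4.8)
The plan is to treat the two assertions separately, deducing each directly from one of the two theorems just established, so that the corollary becomes essentially formal.

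For the $S_2$-property of $D$, the idea is to reduce to Theorem \ref{alex-cri2}. That theorem requires $(X, B+\varepsilon D)$ to be lc for some $\varepsilon>0$, so first I would verify that this holds for every $0<\varepsilon<1$. This is immediate from the affineness of discrepancies in the boundary: writing $B+\varepsilon D=(1-\varepsilon)B+\varepsilon(B+D)$, for any prime divisor $E$ over $X$ one has $a(E, X, B+\varepsilon D)=(1-\varepsilon)a(E, X, B)+\varepsilon a(E, X, B+D)\ge -1$, since both $(X, B)$ and $(X, B+D)$ are lc. Hence $(X, B+\varepsilon D)$ is lc, and Theorem \ref{alex-cri2} gives that $D$ satisfies Serre's $S_2$ condition.

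For the reducedness and semi-normality of $\llcorner B\lrcorner\cap D$, the plan is to apply Theorem \ref{tu-thm1} with $V=\llcorner B\lrcorner$ taken with its reduced structure. The point to check is that $\llcorner B\lrcorner$ is a union of lc centers of $(X, B)$: each irreducible component of $\llcorner B\lrcorner$ appears in $B$ with coefficient one and is therefore an lc center, so the reduced divisor $\llcorner B\lrcorner$ is exactly such a union. The scheme structure on $V\cap D$ prescribed in Theorem \ref{tu-thm1} by the sequence $0\to \mathcal O_V(-D)\to \mathcal O_V\to \mathcal O_{V\cap D}\to 0$ is the restriction of the Cartier divisor $D$ to $V$, which is precisely the scheme $\llcorner B\lrcorner\cap D$ of the statement. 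Theorem \ref{tu-thm1} then yields that $\mathcal O_{\llcorner B\lrcorner\cap D}$ is reduced and semi-normal, completing the proof.

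The hard part will be minimal: this really is a corollary. The only two points demanding a line of care are the convexity argument placing $(X, B+\varepsilon D)$ inside the lc range, and the identification of the scheme structure on $\llcorner B\lrcorner\cap D$ with the one used in Theorem \ref{tu-thm1}, so that one is entitled to quote that result verbatim. Compared with the original argument in \cite{alex}, which runs through the $S_2$-fication, this route is shorter precisely because Theorems \ref{alex-cri2} and \ref{tu-thm1} already encapsulate the cohomological input coming from Theorem \ref{8} (i).
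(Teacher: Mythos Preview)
Your proposal is correct and follows essentially the same approach as the paper, which simply cites Theorem \ref{alex-cri2} for the $S_2$ claim and Theorem \ref{tu-thm1} for the reducedness and semi-normality of $\llcorner B\lrcorner\cap D$. You have merely made explicit the two routine verifications (the convexity/monotonicity argument for $(X,B+\varepsilon D)$ being lc, and the observation that the components of $\llcorner B\lrcorner$ are lc centers of $(X,B)$) that the paper leaves implicit.
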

\begin{proof}
By Theorem \ref{alex-cri2}, 
$D$ satisfies Serre's $S_2$ condition. 
By Theorem \ref{tu-thm1}, 
$\llcorner B\lrcorner \cap D$ is reduced 
and semi-normal. 
\end{proof}

The following proposition may be useful. 
So, we contain it here. 
It is \cite[Lemma 3.1]{alex} with slight modifications 
as Theorem \ref{alex-cri}. 

\begin{prop}[{cf.~\cite[Lemma 3.1]{alex}}]\label{w-alex}
Let $X$ be a normal variety with $\dim X=n\geq 3$ and 
let $f:Y\to X$ be a resolution of 
singularities. Let $P\in 
X$ be a scheme theoretic point such 
that $\dim \overline {\{P\}}\leq n-3$. 
Then the local ring 
$\mathcal O_{X, P}$ is $S_3$ if and 
only if $P$ is not an associated point of 
$R^1f_*\mathcal O_Y$. 
\end{prop}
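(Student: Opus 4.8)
The plan is to follow the strategy of the proof of Theorem \ref{alex-cri}, reducing everything to a single local cohomology computation at $P$. First I would localize: set $X_P=\Spec \mathcal O_{X,P}$, let $Y_P=Y\times_X X_P$ with induced morphism $f\colon Y_P\to X_P$, and put $F=f^{-1}(P)$, so that $P$ is the closed point of $X_P$ and $m=\dim\overline{\{P\}}\le n-3$ gives $\dim\mathcal O_{X,P}=n-m\ge 3$. Since $X$ is normal it is $S_2$, so $H^0_P(X_P,\mathcal O_{X_P})=H^1_P(X_P,\mathcal O_{X_P})=0$ automatically; because $\dim\mathcal O_{X,P}\ge 3$, Serre's condition $S_3$ at $P$ is equivalent to $\operatorname{depth}\mathcal O_{X,P}\ge 3$, that is, to the single vanishing $H^2_P(X_P,\mathcal O_{X_P})=0$. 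Thus the whole proposition reduces to proving
$$H^2_P(X_P,\mathcal O_{X_P})=0 \iff P\notin \operatorname{Ass}(R^1f_*\mathcal O_Y).$$

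Next I would run the Grothendieck spectral sequence for the composition $\Gamma_P\circ f_*$ that was already used in Theorems \ref{z-rational} and \ref{alex-cri}:
$$E^{pq}_2=H^p_P(X_P,R^qf_*\mathcal O_{Y_P})\Rightarrow H^{p+q}_F(Y_P,\mathcal O_{Y_P}).$$
By flat base change $R^qf_*\mathcal O_{Y_P}\simeq (R^qf_*\mathcal O_Y)_P$, and $R^0f_*\mathcal O_{Y_P}\simeq\mathcal O_{X_P}$ because $X$ is normal. The key input is Lemma \ref{lem-gr2}, which applies verbatim to any resolution and gives $H^i_F(Y_P,\mathcal O_{Y_P})=0$ for $i<n-m$; since $n-m\ge 3$ this forces $H^1_F=H^2_F=0$. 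Feeding $E^{1,0}_2=H^1_P(X_P,\mathcal O_{X_P})=0$ (normality) together with $H^1_F=H^2_F=0$ into the five-term exact sequence
$$0\to E^{1,0}_2\to H^1_F\to E^{0,1}_2\xrightarrow{d_2} E^{2,0}_2\to H^2_F$$
shows that $d_2$ is an isomorphism, whence
$$H^0_P(X_P,R^1f_*\mathcal O_{Y_P})\simeq H^2_P(X_P,\mathcal O_{X_P}).$$

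Finally I would translate the left-hand side back into the language of associated points. The module $H^0_P(X_P,R^1f_*\mathcal O_{Y_P})=\Gamma_{m_P}\big((R^1f_*\mathcal O_Y)_P\big)$ is the $m_P$-torsion of the finitely generated $\mathcal O_{X,P}$-module $(R^1f_*\mathcal O_Y)_P$, and for a finitely generated module over a Noetherian local ring this torsion submodule is nonzero precisely when $m_P$ is an associated prime, i.e.\ when $P$ is an associated point of $R^1f_*\mathcal O_Y$. Combining this with the isomorphism above and with the reduction in the first paragraph yields the stated equivalence.

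I expect the routine part to be the bookkeeping, and the only genuine point to watch is that the vanishing range in Lemma \ref{lem-gr2} is exactly what the codimension hypothesis $\dim\overline{\{P\}}\le n-3$ supplies: one needs \emph{both} $H^1_F=0$ and $H^2_F=0$ so that the five-term sequence produces an isomorphism rather than a mere injection, and it is precisely the bound $n-m\ge 3$ that guarantees this. A secondary point to record is the interpretation of ``$\mathcal O_{X,P}$ is $S_3$'': given normality and $\dim\mathcal O_{X,P}\ge 3$, it is equivalent to the depth-$3$ condition at the closed point, which is what the argument controls.
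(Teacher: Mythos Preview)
Your proof is correct and follows essentially the same approach as the paper: localize at $P$, invoke Lemma~\ref{lem-gr2} to get $H^1_F=H^2_F=0$, and read off the isomorphism $H^0_P(X_P,R^1f_*\mathcal O_{Y_P})\simeq H^2_P(X_P,\mathcal O_{X_P})$ from the spectral sequence. The paper states this isomorphism in one line directly from the two vanishings, whereas you spell it out via the five-term exact sequence; the content is the same.
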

\begin{proof}
We put $X_P=\Spec \mathcal O_{X, P}$, 
$Y_P=Y\times _X X_P$, 
and $F=f^{-1}(P)$, where $f:Y_P\to X_P$. 
We consider the following 
spectral sequence 
$$
E^{ij}_2=H^i_P(X, R^jf_*\mathcal O_{Y_P})\Rightarrow 
H^{i+j}_F(Y_P, \mathcal O_{Y_P}).  
$$ 
Since 
$H^1_F(Y_P, \mathcal O_{Y_P})=H^2_F(Y_P, 
\mathcal O_{Y_P})=0$ by 
Lemma \ref{lem-gr2}, 
we have an isomorphism 
$H^0_P(X_P, R^1f_*\mathcal O_{Y_P})
\simeq H^2_P(X_P, \mathcal O_{X_P})$. 
Therefore, the depth of $\mathcal O_{X, P}$ is $\geq 3$ if 
and only if $H^2_P(X_P, \mathcal O_{X_P})=
H^0_P(X_P, R^1f_*\mathcal O_{Y_P})=0$. 
It is equivalent to 
the condition that $P$ is not an associated 
point of $R^1f_*\mathcal O_Y$. 
\end{proof}

\begin{say}[Supplements] Here, we give 
a slight generalization of 
\cite[Theorem 3.5]{alex}. 
We can prove it by a similar method to the proof of Theorem 
\ref{alex-cri}. 

\begin{thm}[{cf.~\cite[Theorem 3.5]{alex}}]\label{ale-18} 
Let $(X, B)$ be an lc pair and 
$D$ an effective Cartier divisor on $X$ such that 
$(X, B+\varepsilon D)$ is lc for some 
$\varepsilon >0$. Let $V$ be a union of 
some lc centers of $(X, B)$. 
We consider $V$ as a reduced closed subscheme 
of $X$. 
We can define 
a scheme structure on $V\cap D$ by 
the following exact sequence 
$$
0\to \mathcal O_V(-D)\to \mathcal O_V\to \mathcal O_{V\cap D}
\to 0. 
$$
Then the scheme $V\cap D$ satisfies 
Serre's $S_1$ condition. 
In particular, $\llcorner B\lrcorner \cap D$ has no embedded 
point. 
\end{thm}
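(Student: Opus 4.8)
The plan is to verify Serre's $S_1$ condition for $V\cap D$ pointwise and, via its defining sequence, to reduce the required vanishing to the $S_2$ property of $V$ at the relevant points; that property is the $V$-analogue of Theorem \ref{alex-cri} and will be the technical heart of the argument. Since the assertion is local and ``no embedded point'' means precisely $S_1$, it suffices to prove that $\operatorname{depth}\mathcal O_{V\cap D,P}\geq 1$ for every scheme theoretic point $P\in V\cap D$ that is not a generic point of $V\cap D$. The key preliminary observation, which I would record once and use repeatedly, is that no lc center of $(X,B)$ is contained in $\Supp D$: if $Z=c_X(E)$ with $a(E,X,B)=-1$ were contained in $\Supp D$, then $a(E,X,B+\varepsilon D)=-1-\varepsilon\,\mult_E f^*D<-1$, contradicting that $(X,B+\varepsilon D)$ is lc. In particular no irreducible component of the reduced scheme $V$ (a union of lc centers) lies in $\Supp D$, so a local equation of $D$ is a nonzerodivisor on $\mathcal O_V$, the given sequence is genuinely exact, and $\dim\mathcal O_{V,P}=\dim\mathcal O_{V\cap D,P}+1\geq 2$ for the points $P$ above. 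Moreover, the closure $\overline{\{P\}}$ of any $P\in V\cap D$ is contained in $\Supp D$, hence is \emph{not} an lc center of $(X,B)$.

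First I would localize at $P$, writing $X_P=\Spec\mathcal O_{X,P}$, and apply local cohomology $H^i_P(X_P,-)$ to the sequence $0\to\mathcal O_V(-D)\to\mathcal O_V\to\mathcal O_{V\cap D}\to 0$. As $D$ is Cartier, $\mathcal O_V(-D)\simeq\mathcal O_V$ locally and the map $H^1_P(X_P,\mathcal O_V(-D))\to H^1_P(X_P,\mathcal O_V)$ becomes multiplication by a local equation of $D$; since $V$ is reduced, hence $S_1$, we have $H^0_P(X_P,\mathcal O_V)=0$, so the long exact sequence gives $H^0_P(X_P,\mathcal O_{V\cap D})\simeq\ker\big(H^1_P(X_P,\mathcal O_V)\to H^1_P(X_P,\mathcal O_V)\big)$. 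Thus it is enough to show $H^1_P(X_P,\mathcal O_V)=0$, i.e. that $V$ is $S_2$ at $P$. By the observation above this is to be proved at a point $P$ with $\overline{\{P\}}$ not an lc center and $\dim\mathcal O_{V,P}\geq 2$.

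It remains to establish this $S_2$ statement for $V$, which I expect to be the main obstacle. Following Example \ref{311} and the proof of Theorem \ref{tu-thm1}, I would take a resolution $f\colon Y\to X$ with $K_Y+B_Y=f^*(K_X+B)$ and $\Supp B_Y$ simple normal crossing, let $G$ be the union of those strata of $B^{=1}_Y$ mapped into $V$, and let $A_G$ be the divisor on $G$ furnished by adjunction so that $f_*\mathcal O_G(A_G)\simeq\mathcal O_V$. By Theorem \ref{8} (i) every associated point of $R^1f_*\mathcal O_G(A_G)$ is the image of a stratum of $G$, hence an lc center contained in $V$; since $\overline{\{P\}}$ is not an lc center, $P$ is not such an associated point, so $H^0_P(X_P,R^1f_*\mathcal O_{G_P}(A_G))=0$. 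Comparing on $G_P=G\times_X X_P$, with $F=f^{-1}(P)$, the two local cohomology spectral sequences computing $H^{\bullet}_F$ of $\mathcal O_{G_P}(A_G)$ and of $\mathcal O_{G_P}$, exactly as in the proof of Theorem \ref{alex-cri}, the edge maps then yield an injection $H^1_P(X_P,\mathcal O_V)\hookrightarrow H^1_F(G_P,\mathcal O_{G_P})$, and the right-hand side vanishes by a Grauert--Riemenschneider type local vanishing $H^i_F(G_P,\mathcal O_{G_P})=0$ for $i<\dim\mathcal O_{V,P}$. The hard part will be the two ingredients forced by the reducibility and non-normality of $V$: proving this local vanishing on the simple normal crossing variety $G$ (the analogue of Lemma \ref{lem-gr2}, which is stated for smooth $Y$) through Corollary \ref{35} and Lemma \ref{re-vani-lem} combined with the local duality computation of Lemma \ref{lem-gr2}, and running the torsion-free theorem and the spectral sequence comparison for the sheaf $\mathcal O_V$ in place of $\mathcal O_X$. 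Applying the result with $V=\llcorner B\lrcorner$, a union of divisorial lc centers, gives in particular that $\llcorner B\lrcorner\cap D$ has no embedded point.
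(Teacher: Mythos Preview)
Your overall architecture matches the paper's: reduce $S_1$ for $V\cap D$ at a non-generic point $P$ to $H^1_P(\mathcal O_V)=0$ via the defining sequence (using that no lc center lies in $\Supp D$), then attack this $S_2$-type statement on the simple normal crossing model $W$ (your $G$) exactly as in Theorem~\ref{alex-cri}, using Theorem~\ref{8}(i) for the spectral sequence comparison and local duality to translate $H^1_F(W_P,\mathcal O_{W_P})=0$ into a vanishing of $R^{n-m-2}f_*\mathcal O_W(K_W)$ near $P$.

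The gap is in how you propose to obtain that last vanishing. Corollary~\ref{35} and Lemma~\ref{re-vani-lem} are statements in the \emph{dlt} setting (or for morphisms that are isomorphisms at generic points of strata); here $f|_W:W\to V$ contracts strata, so $R^if_*\omega_W$ does not vanish, and a direct Grauert--Riemenschneider type input is unavailable. The paper does not try to make $R^{n-m-2}f_*\mathcal O_W(K_W)$ vanish globally; instead it takes $m$ general hyperplane cuts to reduce to the case where $P$ is a closed point, then splits $W=U_1+U_2$ according to whether $f$-images have dimension $\leq 1$ or $\geq 2$, kills the $U_2$-contribution by a fiber-dimension count, and controls the $U_1$-contribution (supported on one-dimensional lc centers, with no zero-dimensional associated points by Theorem~\ref{8}(i)) by one more hyperplane cut together with the dedicated Lemma~\ref{lem-ele-vani}, which is a connectedness argument using $f_*\mathcal O_W\simeq\mathcal O_V$ and duality on the top fiber. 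So your plan is on the right track, but you should replace the appeal to Corollary~\ref{35}/Lemma~\ref{re-vani-lem} by this hyperplane-cut and decomposition argument.
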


\begin{proof}
Without loss of generality, 
we can assume that 
$X$ is affine. 
We take a resolution $f:Y\to X$ such that 
$\Exc (f)\cup \Supp f^{-1}_*B$ is a simple normal crossing 
divisor on $Y$. 
Then we can write 
$$
K_Y+B_Y=f^*(K_X+B)
$$
such that $\Supp B_Y$ is a simple normal crossing 
divisor on $Y$. 
We take more blow-ups and can 
assume that 
the union of all strata of $B^{=1}_Y$ mapped to 
$V$, which is denoted by $W$, 
is a divisor on $Y$. 
Moreover, for any lc center $C$ of $(X, B)$ contained in $V$, 
we can assume that $f^{-1}(C)$ is a divisor on $Y$. 
We consider the following short exact sequence 
$$
0\to \mathcal O_Y(A-W)\to \mathcal O_Y(A)\to \mathcal O_W(A)\to 0, 
$$ 
where $A=\ulcorner -(B^{<1}_Y)\urcorner\geq 0$. 
By taking higher direct images, 
we obtain 
$$
0\to f_*\mathcal O_Y(A-W)\to \mathcal O_X\to 
f_*\mathcal O_W(A)\to R^1f_*\mathcal O_Y(A-W)\to \cdots. 
$$ 
By Theorem \ref{8} (i), we have that 
$f_*\mathcal O_W(A)\to R^1f_*\mathcal O_Y(A-W)$ is 
a zero map, $f_*\mathcal O_W(A)\simeq \mathcal O_V$, and 
$f_*\mathcal O_Y(A-W)\simeq 
\mathcal I_V$, the defining ideal sheaf of $V$ on $X$. 
We note that 
$f_*\mathcal O_W\simeq \mathcal O_V$. 
In particular, $\mathcal O_V$ is reduced and semi-normal. 
For the 
details, see Theorem \ref{adj-th} (i). 

Let $P\in V\cap D$ be a scheme theoretic point such that 
the height of $P$ in $\mathcal O_{V\cap D}$ 
is $\geq 1$. 
We can assume that $\dim V\geq 2$ 
around $P$. Otherwise, the theorem is trivial. 
We put $V_P=\Spec \mathcal O_{V, P}$, 
$W_P=W\times _V V_P$, and $F=f^{-1}(P)$, 
where $f:W_P\to V_P$. We denote 
the pull back of $D$ on $V_P$ by $D$ for simplicity. 
To check this theorem, it is sufficient to 
see that 
$H^0_P(V_P\cap D, \mathcal O_{V_P\cap D})=0$. 
First, we note that $H^0_P(V_P, \mathcal O_{V_P})=
H^0_F(W_P, \mathcal O_{W_P})=0$ by $f_*\mathcal O_W\simeq 
\mathcal O_V$. Next, as in the proof of Lemma \ref{lem-gr2}, 
we have 
\begin{align*}
R\Gamma_F\mathcal O_{W_P}&\simeq 
R\Gamma_P(Rf_*\mathcal O_{W_P})\\
&\simeq\Hom (R\mathcal Hom (Rf_*\mathcal O_{W_P}, 
\omega^{\bullet}_{V_P}), I)\\
&\simeq \Hom (Rf_*\mathcal O_W(K_W)\otimes \mathcal O_{V_P}
[n-1-m], I), 
\end{align*} 
where $n=\dim X$, 
$m=\dim \overline {\{P\}}$, and 
$I$ is an injective hull of $\mathcal O_{V_P}/m_P$ 
as an $\mathcal O_{V_P}$-module such that 
$m_P$ is the maximal ideal corresponding 
to $P$. 
Once we obtain $R^{n-m-2}f_*\mathcal O_W(K_W)\otimes \mathcal O_{V_P}=0$, then 
$H^1_F(W_P, \mathcal O_{W_P})=0$. 
It implies that $H^1_P(V_P, \mathcal O_{V_P})=0$ since 
$H^1_P(V_P, \mathcal O_{V_P})
\subset H^1_F(W_P, \mathcal O_{W_P})$. 
By the long exact sequence 
\begin{align*}
\cdots \to H^0_P(V_P, \mathcal O_{V_P})\to 
H^0_P(V_P\cap D, \mathcal O_{V_P\cap D}) \\ \to 
H^1_P(V_P, \mathcal O_{V_P}(-D))\to \cdots, 
\end{align*}
we obtain $H^0_P(V_P\cap D, \mathcal O_{V_P\cap D})=0$. 
It is because $H^0_P(V_P, \mathcal O_{V_P})=0$ and 
$H^1_P(V_P, \mathcal O_{V_P}(-D))
\simeq H^1_P(V_P, \mathcal O_{V_P})=0$. 
So, it is sufficient to 
see that 
$R^{n-m-2}f_*\mathcal O_W(K_W)\otimes \mathcal O_{V_P}=0$. 

To check the vanishing of 
$R^{n-m-2}f_*\mathcal O_W(K_W)\otimes \mathcal O_{V_P}$, 
by taking general hyperplane cuts $m$ times, 
we can assume that $m=0$ and $P\in X$ is a closed point. 
We note that the dimension of any irreducible component 
of $V$ passing through $P$ is $\geq 2$ since 
$P$ is not an lc center of $(X, B)$ (see Theorem \ref{a1}). 

On the other hand, we can write $W=U_1+U_2$ such that 
$U_2$ is the union of all the irreducible components 
of $W$ whose images by $f$ have dimensions $\geq 2$ 
and 
$U_1=W-U_2$. 
We note that the dimension of the image of any stratum of 
$U_2$ by $f$ is $\geq 2$ 
by the construction of $f:Y\to X$. 
We consider the following exact sequence 
\begin{multline*}
\cdots \to R^{n-2}f_*\mathcal O_{U_2}(K_{U_2})
\to R^{n-2}f_*\mathcal O_W(K_W)\\ \to  
R^{n-2}f_*\mathcal O_{U_1}(K_{U_1}+U_2|_{U_1})\to 
R^{n-1}f_*\mathcal O_{U_2}(K_{U_2})\to 
\cdots. 
\end{multline*}
We have $R^{n-2}f_*\mathcal O_{U_2}(K_{U_2})=R^{n-1}
f_*\mathcal O_{U_2}(K_{U_2})=0$ around $P$ 
since the dimension of general fibers of $f:U_2\to f(U_2)$ is 
$\leq n-3$.  
Thus, we obtain $R^{n-2}f_*\mathcal O_W(K_W)\simeq 
R^{n-2}f_*\mathcal O_{U_1}(K_{U_1}+U_2|_{U_1})$ 
around $P$. 
Therefore, the support of 
$R^{n-2}f_*\mathcal O_W(K_W)$ around $P$ is contained 
in one-dimensional 
lc centers of $(X, B)$ in $V$ and 
$R^{n-2}f_*\mathcal O_W(K_W)$ has no zero-dimensional 
associated point around $P$ by Theorem \ref{8} (i). 
By taking a general hyperplane cut of $X$ again, we 
have the vanishing of $R^{n-2}f_*\mathcal O_W(K_W)$ around $P$ 
by Lemma \ref{lem-ele-vani} below. 
So, we finish the proof. 
\end{proof}

We used the following lemma in the proof of Theorem \ref{ale-18}. 

\begin{lem}\label{lem-ele-vani} 
Let $(Z, \Delta)$ be an $n$-dimensional 
lc pair and let $x\in Z$ be a closed point such that 
$x$ is an lc center of $(Z, \Delta)$. 
Let $V$ be a union of some lc centers 
of $(X, B)$ such that 
$\dim V>0$, $x\in V$, and $x$ is not isolated 
in $V$. 
Let $f:Y\to Z$ be a resolution such that 
$f^{-1}(x)$ and $f^{-1}(V)$ are divisors on $Y$ and that 
$\Exc (f)\cup \Supp f^{-1}_*\Delta$ is a simple normal 
crossing divisor on $Y$. We can 
write 
$$
K_Y+B_Y=f^*(K_Z+\Delta) 
$$
such that $\Supp B_Y$ is a simple normal crossing divisor 
on $Y$. Let $W$ be the union of all the irreducible 
components of $B^{=1}_Y$ mapped to 
$V$. Then 
$R^{n-1}f_*\mathcal O_W(K_W)=0$ around $x$. 
\end{lem}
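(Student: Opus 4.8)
The plan is to localise the whole problem at the closed point $x$, to separate the irreducible components of $W$ according to whether or not they are contracted to $x$, and to reduce the desired vanishing to a Serre duality computation on the contracted part, where the non-isolatedness of $x$ in $V$ is what finally forces the answer to be zero. Throughout I would shrink $Z$ to a small neighbourhood of $x$; since $W$ is a reduced simple normal crossing divisor on the smooth variety $Y$, it is Gorenstein of pure dimension $n-1$ and $\mathcal O_W(K_W)\simeq \omega_W$ by adjunction. Write $W=W^0\cup W'$, where $W^0$ is the union of the irreducible components of $W$ that are contracted by $f$ to $x$ and $W'=W-W^0$. After shrinking $Z$, every component of $W'$ has image of positive dimension meeting each neighbourhood of $x$ (components contracted to other points, and those whose image avoids $x$, disappear). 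Set $D=W'|_{W^0}=W^0\cap W'$, an effective simple normal crossing divisor on $W^0$.

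First I would dispose of the $W'$-part by a pure dimension count. Each component of $W'$ has image of dimension $\geq 1$, so $\dim\big(f^{-1}(x)\cap W'\big)\leq (n-1)-1=n-2$; by upper semicontinuity of fibre dimension the same bound holds over a neighbourhood of $x$, whence $R^if_*\mathcal F=0$ near $x$ for $i\geq n-1$ and any coherent sheaf $\mathcal F$ supported on $W'$. Next I would use the short exact sequence
$$
0\to \mathcal O_{W'}(K_{W'})\to \mathcal O_W(K_W)\to \mathcal O_{W^0}(K_{W^0}+D)\to 0,
$$
which comes from the simple normal crossing structure together with the adjunction identities $K_W|_{W^0}=K_{W^0}+W'|_{W^0}$ and the analogous one on $W'$. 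Applying $R^\bullet f_*$ and using the vanishing of the $W'$-terms in degrees $n-1$ and $n$, the long exact sequence yields an isomorphism $R^{n-1}f_*\mathcal O_W(K_W)\simeq R^{n-1}f_*\mathcal O_{W^0}(K_{W^0}+D)$ near $x$.

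Since every component of $W^0$ is contracted to $x$, the morphism $f|_{W^0}$ factors through the point $x$, so $R^{n-1}f_*\mathcal O_{W^0}(K_{W^0}+D)$ is the skyscraper sheaf at $x$ with stalk $H^{n-1}\!\big(W^0,\omega_{W^0}(D)\big)$, $W^0$ being proper of pure dimension $n-1$. By Serre duality on the Cohen--Macaulay variety $W^0$,
$$
H^{n-1}\!\big(W^0,\omega_{W^0}(D)\big)^\vee\simeq H^0\big(W^0,\mathcal O_{W^0}(-D)\big),
$$
so it suffices to prove $H^0(W^0,\mathcal O_{W^0}(-D))=0$, i.e. that $D$ meets every connected component of $W^0$ (each such component is connected, reduced and proper, hence has $H^0(\mathcal O)=\mathbb C$, and a nonzero constant cannot vanish along a nonempty $D$).

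The hard part will be exactly this last point: showing that no connected component of $W^0$ is disjoint from $W'$. Here I would invoke the non-isolatedness hypothesis together with the connectedness lemma. Because $\dim V>0$ and $x$ is not isolated in $V$, there is a positive-dimensional lc center of $(Z,\Delta)$ through $x$ contained in $V$; it is the image of a stratum of $W$ lying in $W'$, so $W'\cap f^{-1}(x)\neq\emptyset$. On the other hand, the connectedness lemma (the argument recalled in Example \ref{311}, applied over $x$, where $(Z,\Delta)$ is lc but not klt) shows that the relevant part of $f^{-1}(x)\cap B^{=1}_Y$ is connected; transporting this to $W$ forces each connected component of $W^0$ to touch $W'$, so that $D$ is nonzero on every connected component of $W^0$ and the required vanishing $H^0(W^0,\mathcal O_{W^0}(-D))=0$ follows. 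Making the connectedness statement precise for the sub-union $W$ (rather than for all of $B^{=1}_Y$) is the delicate step, and it is precisely where the hypotheses that $x$ is an lc center and that $x$ is non-isolated in $V$ are genuinely used: if some connected component of $W^0$ could be contracted to $x$ in isolation, then $D$ would miss it and the lemma would fail, exactly as it does when $x$ is an isolated point of $V$.
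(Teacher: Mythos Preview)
Your overall strategy coincides with the paper's: split $W$ into the part contracted to $x$ (your $W^0$, the paper's $W_2$) and the rest (your $W'$, the paper's $W_1$), reduce $R^{n-1}f_*\mathcal O_W(K_W)$ to $H^{n-1}(W^0,\omega_{W^0}(D))$, and finish by Serre duality plus a connectedness statement. Your reduction via the short exact sequence on $W$ and a fiber-dimension bound is correct and slightly more direct than the paper's route, which first passes through $\mathcal O_Y(K_Y+W)$ (using Grauert--Riemenschneider) and then peels off the components of $W_1$ one at a time by induction. One cosmetic point: your appeal to ``upper semicontinuity of fibre dimension'' is the wrong direction; what you actually use is that if a component $E$ of $W'$ had an $(n-1)$-dimensional fibre it would equal that fibre, contradicting $\dim f(E)\ge 1$. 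This gives $\dim(f^{-1}(z)\cap W')\le n-2$ at every closed point $z$, hence $R^{n-1}f_*$ and $R^nf_*$ vanish for sheaves on $W'$ near $x$.

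The gap you yourself flag, in the connectedness step, is real. The classical connectedness lemma only tells you that all of $B_Y^{=1}$ is connected in a neighbourhood of $f^{-1}(x)$, and components of $B_Y^{=1}$ whose image is an lc center through $x$ \emph{not} contained in $V$ lie outside $W$; such components could a priori separate $W^0$ from $W'$. The paper avoids this by applying adjunction (Theorem~\ref{adj-th}~(i)) to the unions of lc centers $\{x\}$ and $V$ directly. Since $x\in V$, your $W^0$ is exactly the union of components of $B_Y^{=1}$ mapped to $x$, and $W$ the union mapped into $V$; the adjunction argument then yields $f_*\mathcal O_{W^0}\simeq\mathcal O_x$ and $f_*\mathcal O_W\simeq\mathcal O_V$. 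The first shows $W^0$ is \emph{connected} (so you need not worry about several components), the second that $W$ is connected over $x$; together with $W'|_{W^0}\ne 0$ (forced by $x$ not being isolated in $V$) this gives $H^0(W^0,\mathcal O_{W^0}(-D))=0$ immediately. Replacing your appeal to the connectedness lemma for $B_Y^{=1}$ by these two adjunction isomorphisms closes the argument.
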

\begin{proof}
We can write $W=W_1+W_2$, where 
$W_2$ is the union of all the irreducible 
components of $W$ mapped to $x$ by $f$ and 
$W_1=W-W_2$. 
We consider the following short exact sequence 
$$
0\to \mathcal O_Y(K_Y)\to \mathcal O_Y(K_Y+W)\to \mathcal 
O_W(K_W)\to 0.  
$$
By the Grauert--Riemenschneider vanishing theorem, 
we obtain that $$R^{n-1}f_*\mathcal O_Y(K_Y+W)\simeq 
R^{n-1}f_*\mathcal O_W(K_W). $$ 
Next, we consider the short exact sequence 
$$
0\to \mathcal O_Y(K_Y+W_1)\to 
\mathcal O_Y(K_Y+W)\to \mathcal O_{W_2}(K_{W_2}+W_1|_{W_2})\to 0. 
$$
Around $x$, the image of any irreducible component of $W_1$ by $f$ 
is positive dimensional. Therefore, 
$R^{n-1}f_*\mathcal O_Y(K_Y+W_1)=0$ near $x$. 
It can be checked by the induction 
on the number of irreducible components 
using the following exact sequence 
\begin{align*}
\cdots \to R^{n-1}f_*\mathcal O_Y(K_Y+W_1-S)
\to R^{n-1}f_*\mathcal O_Y(K_Y+W_1)\\
\to R^{n-1}f_*\mathcal O_S(K_S+(W_1-S)|_S)
\to \cdots,  
\end{align*}
where $S$ is an irreducible component of $W_1$. 
On the other hand, we have 
$$
R^{n-1}f_*\mathcal O_{W_2}(K_{W_2}+W_1|_{W_2})
\simeq H^{n-1}(W_2, \mathcal O_{W_2} 
(K_{W_2}+W_1|_{W_2})) 
$$ 
and 
$H^{n-1}(W_2, \mathcal O_{W_2} 
(K_{W_2}+W_1|_{W_2}))$ 
is dual to 
$H^0(W_2, \mathcal O_{W_2}(-W_1|_{W_2}))$. 
Note that 
$f_*\mathcal O_{W_2}\simeq \mathcal O_x$ and 
$f_*\mathcal O_{W}\simeq \mathcal O_V$ by the usual 
argument on adjunction 
(see Theorem \ref{adj-th} (i)). 
Since $W_2$ and $W=W_1+W_2$ are connected over $x$, 
$H^0(W_2, \mathcal O_{W_2}(-W_1|_{W_2}))=0$. 
We note that $W_1|_{W_2}\ne 0$ since $x$ is not isolated 
in $V$. 
This means that $R^{n-1}f_*\mathcal O_W(K_W)=0$ around 
$x$ by the above arguments. 
\end{proof}
\end{say}

\subsection{Appendix:~Cone singularities}\label{431sss} 
In this subsection, we collect some basic facts on cone 
singularities for the reader's convenience.  
First, we give two lemmas 
which can be proved by the same method as in 
Section \ref{sec-alex}. 
We think that these lemmas will be 
useful for the study of 
log canonical singularities. 

\begin{lem}\label{w-lem1}
Let $X$ be an $n$-dimensional normal 
variety and let $f:Y\to X$ be a resolution of 
singularities. 
Assume that $R^if_*\mathcal O_Y=0$ for $1\leq i\leq n-2$. Then 
$X$ is Cohen--Macaulay. 
\end{lem}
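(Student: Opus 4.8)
The plan is to localize the problem, reduce it to a local cohomology computation at a point, and then apply the Grauert--Riemenschneider vanishing theorem together with the hypothesis $R^if_*\mathcal O_Y=0$ for $1\leq i\leq n-2$. Since the Cohen--Macaulay property is local and can be checked after localization, I would fix a scheme theoretic point $P\in X$ and set $X_P=\Spec\mathcal O_{X,P}$, $Y_P=Y\times_X X_P$, and $F=f^{-1}(P)$. By taking general hyperplane cuts, I may assume that $P$ is a closed point and $\dim X_P=n$, so that it suffices to prove $H^i_P(X_P,\mathcal O_{X_P})=0$ for all $i<n$, which is exactly the statement that the depth of $\mathcal O_{X,P}$ equals $\dim\mathcal O_{X,P}$.

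First I would invoke Lemma \ref{lem-gr}, which gives $H^i_F(Y_P,\mathcal O_{Y_P})=0$ for $i<n$; this is the local dual form of the Grauert--Riemenschneider vanishing theorem and holds since $Y_P$ is smooth. Next I would set up the local-to-global spectral sequence
$$
E^{ij}_2=H^i_P(X_P,R^jf_*\mathcal O_{Y_P})\Rightarrow H^{i+j}_F(Y_P,\mathcal O_{Y_P}).
$$
The key point is the hypothesis $R^jf_*\mathcal O_Y=0$ for $1\leq j\leq n-2$, which kills all rows $1\leq j\leq n-2$ of the $E_2$-page. Combined with $f_*\mathcal O_{Y_P}\simeq\mathcal O_{X_P}$ (normality of $X$), the only possibly nonzero rows are $j=0$ and $j\geq n-1$. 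Since $\dim X_P=n$, the term $H^i_P(X_P,R^jf_*\mathcal O_{Y_P})$ with $j\geq n-1$ forces $i+j$ to be large, and I would read off from the spectral sequence that $E^{i0}_2=H^i_P(X_P,\mathcal O_{X_P})$ is isomorphic to a subquotient of $H^i_F(Y_P,\mathcal O_{Y_P})$ for $i\leq n-1$.

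More precisely, for each $i<n$ the abutment $H^i_F(Y_P,\mathcal O_{Y_P})$ vanishes by Lemma \ref{lem-gr}, and the vanishing of the intermediate rows means there are no differentials into or out of $E^{i0}_2$ that could prevent $H^i_P(X_P,\mathcal O_{X_P})$ from injecting into the abutment; one checks that $R^jf_*\mathcal O_{Y_P}$ for $j\geq n-1$ is supported in too small a locus to contribute at total degree $i<n$. Hence $H^i_P(X_P,\mathcal O_{X_P})=0$ for all $i<n$, which proves that $\mathcal O_{X,P}$ has depth $n$ and is therefore Cohen--Macaulay. The main obstacle I expect is the careful bookkeeping of which $E^{ij}_2$ terms survive and confirming that the surviving high rows $j\geq n-1$ cannot interfere at low total degree; this is precisely the role played by the condition $R^if_*\mathcal O_Y=0$ in the \emph{full} range $1\leq i\leq n-2$, and the argument closely parallels the corresponding step in the proof of Theorem \ref{z-rational}, which I would follow as a template.
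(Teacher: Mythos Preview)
Your approach is correct and matches the paper's: both feed Lemma~\ref{lem-gr} into the spectral sequence $E^{ij}_2=H^i_x(X,R^jf_*\mathcal O_Y)\Rightarrow H^{i+j}_F(Y,\mathcal O_Y)$ at a closed point and read off $E^{i0}_2\simeq E^{i0}_\infty=0$ for $i\le n-1$. The only clean-ups are that Cohen--Macaulayness is already detected at closed points (no hyperplane cuts are needed, and your claim ``$\dim X_P=n$'' would fail after cutting), and that the row $j=n-1$ is harmless for $i\le n-1$ by the pure degree count $i-n<0$ rather than a support argument; the paper achieves the same effect by first noting $\Supp R^{n-1}f_*\mathcal O_Y$ is zero-dimensional and shrinking $X$ so that all higher $R^jf_*\mathcal O_Y$ are supported at $x$, which makes $E^{ij}_2=0$ for every $i>0$, $j>0$ and the degeneration immediate.
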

\begin{proof}
We can assume that $n\geq 3$. 
Since $\Supp R^{n-1}f_*\mathcal O_Y$ is zero-dimensional, 
we can assume that there exists a closed point $x\in X$ 
such that $X$ has only rational 
singularities outside $x$ by shrinking 
$X$ around $x$. 
Therefore, it is sufficient to see that 
the depth of $\mathcal O_{X, x}$ is $\geq n=\dim X$. 
We consider the following spectral sequence 
$$
E^{ij}_2=H^i_x(X, R^jf_*\mathcal O_Y)\Rightarrow 
H^{i+j}_F(Y, \mathcal O_Y), 
$$ 
where 
$F=f^{-1}(x)$. 
Then $H^i_x(X, \mathcal O_X)=E^{i0}_2\simeq 
E^{i0}_{\infty}=0$ for 
$i\leq n-1$. 
It is because $H^i_F(Y, \mathcal O_Y)=0$ for $i\leq n-1$ by 
Lemma \ref{lem-gr}. 
This means that 
the depth of $\mathcal O_{X,x}$ is $\geq n$. 
So, we have that $X$ is Cohen--Macaulay. 
\end{proof}

\begin{lem}\label{w-lem2}
Let $X$ be an $n$-dimensional normal 
variety and let $f:Y\to X$ 
be a resolution of singularities. 
Let $x\in X$ be a closed 
point. 
Assume that $X$ is Cohen-Macaulay and 
that $X$ has only rational 
singularities outside $x$. Then 
$R^if_*\mathcal O_Y=0$ for 
$1\leq i\leq n-2$. 
\end{lem}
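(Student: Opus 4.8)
The plan is to run exactly the local-cohomology spectral sequence used in the proof of Lemma \ref{w-lem1}, but to read off the converse implication. Since the assertion is about the stalk of $R^jf_*\mathcal O_Y$, we may shrink $X$ and assume it is affine. As $X$ is a normal variety it is irreducible, so the closed point $x$ satisfies $\dim \mathcal O_{X,x}=n$; the Cohen--Macaulay hypothesis then gives $\operatorname{depth}\mathcal O_{X,x}=n$, that is, $H^i_x(X,\mathcal O_X)=0$ for all $i<n$. On the other hand, the hypothesis that $X$ has only rational singularities away from $x$ means (rationality being independent of the resolution) that $R^jf_*\mathcal O_Y=0$ on $X\setminus\{x\}$ for every $j>0$. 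Thus for $j>0$ the sheaf $R^jf_*\mathcal O_Y$ is supported at the single point $x$, hence is a skyscraper sheaf and in particular flasque.

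Next I would set up $E^{ij}_2=H^i_x(X, R^jf_*\mathcal O_Y)\Rightarrow H^{i+j}_F(Y,\mathcal O_Y)$, where $F=f^{-1}(x)$, exactly as in the proof of Lemma \ref{w-lem1}. Because each $R^jf_*\mathcal O_Y$ with $j>0$ is a skyscraper sheaf at $x$, we get $H^i_x(X,R^jf_*\mathcal O_Y)=0$ for $i>0,\ j>0$ and $E^{0j}_2=(R^jf_*\mathcal O_Y)_x$ for $j>0$; meanwhile the bottom row is $E^{i0}_2=H^i_x(X,\mathcal O_X)$, which vanishes for $i\le n-1$ by Cohen--Macaulayness. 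So in every total degree $\le n-1$ the only possibly nonzero $E_2$-terms lie in the column $i=0$ together with the row $j=0$, and the latter is zero in degrees $\le n-1$.

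The concluding step is pure bookkeeping on the differentials, which is the place one must be careful. Fix $j$ with $1\le j\le n-2$ and consider $E^{0j}$. Incoming differentials $d_r\colon E^{-r,\,j+r-1}_r\to E^{0j}_r$ originate in the negative column $-r<0$, hence vanish. An outgoing differential $d_r\colon E^{0j}_r\to E^{r,\,j-r+1}_r$ ($r\ge 2$) lands in column $r\ge 2$, where the only nonzero terms sit in the row $j'=0$; this forces $j-r+1=0$, i.e. $r=j+1$, and then the target is a subquotient of $H^{j+1}_x(X,\mathcal O_X)$, which is zero since $j+1\le n-1$. Therefore $E^{0j}_2\cong E^{0j}_\infty$, and $E^{0j}_\infty$ is a subquotient of $H^{j}_F(Y,\mathcal O_Y)$. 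By Lemma \ref{lem-gr} this abutment vanishes for $j<n$, so $(R^jf_*\mathcal O_Y)_x=E^{0j}_2=0$ for $1\le j\le n-2$. Combined with the vanishing of $R^jf_*\mathcal O_Y$ off $x$, this yields $R^jf_*\mathcal O_Y=0$ for $1\le j\le n-2$, as required. I do not expect a genuine obstacle here: both essential inputs, namely $H^i_x(X,\mathcal O_X)=0$ for $i<n$ and $H^i_F(Y,\mathcal O_Y)=0$ for $i<n$, are already in hand, and the argument is simply the dual reading of Lemma \ref{w-lem1}; the only delicate point is confirming the $E_2$-degeneration along the $i=0$ column through the differential count above.
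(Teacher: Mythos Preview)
Your proof is correct and follows essentially the same approach as the paper: both use the Leray spectral sequence for local cohomology $E^{ij}_2=H^i_x(X,R^jf_*\mathcal O_Y)\Rightarrow H^{i+j}_F(Y,\mathcal O_Y)$, combine the Cohen--Macaulay vanishing $E^{i0}_2=0$ for $i\le n-1$ with the skyscraper condition $E^{ij}_2=0$ for $i>0,\ j>0$, and read off $E^{0j}_2\simeq E^{0j}_\infty=0$ from Lemma~\ref{lem-gr}. Your differential bookkeeping is simply a more explicit version of the paper's one-line assertion that $E^{0j}_2\simeq E^{0j}_\infty$.
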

\begin{proof}
We can assume that $n\geq 3$. 
By the assumption, 
$\Supp R^if_*\mathcal O_Y\subset \{x\}$ for 
$1\leq i\leq n-1$. 
We consider the following spectral 
sequence 
$$
E^{ij}_2=H^i_x(X, R^jf_*\mathcal O_Y)\Rightarrow 
H^{i+j}_F(Y, \mathcal O_Y), 
$$ 
where 
$F=f^{-1}(x)$. 
Then $H^0_x(X, R^jf_*\mathcal O_Y)
=E^{0j}_2\simeq E^{0j}_{\infty}=0$ for 
$j\leq n-2$ since 
$E^{ij}_2=0$ for $i>0$ and $j>0$, 
$E^{i0}_2=0$ for $i\leq n-1$, and 
$H^j_F(Y, \mathcal O_Y)=0$ for $j<n$. Therefore, 
$R^if_*\mathcal O_Y=0$ for $1\leq i\leq n-2$.  
\end{proof}

We point out the following fact explicitly for the reader's convenience. 
It is \cite[11.2 Theorem.~(11.2.5)]{ko-sing}. 

\begin{lem}
Let $f:Y\to X$ be a proper morphism, 
$x\in X$ a closed point, $F=f^{-1}(x)$ and 
$G$ a sheaf on $Y$. 
If $\Supp R^jf_*G\subset \{x\}$ for 
$1\leq i<k$ and $H^i_F(Y, G)=0$ for $i\leq k$, 
then $R^jf_*G\simeq H^{j+1}_x(X, f_*G)$ for $j=1, \cdots, k-1$. 
\end{lem}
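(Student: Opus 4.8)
The plan is to exploit the Grothendieck composite-functor spectral sequence attached to the identity $\Gamma_F(Y,-)=\Gamma_x(X,f_*(-))$. A global section of a sheaf on $Y$ is supported on $F=f^{-1}(x)$ precisely when its image in $f_*$ vanishes over $X\setminus\{x\}$, so the two support functors agree; since $f_*$ carries flasque (in particular injective) sheaves to flasque, hence $\Gamma_x$-acyclic, sheaves, one obtains
$$
E^{ij}_2=H^i_x(X,R^jf_*G)\Longrightarrow H^{i+j}_F(Y,G).
$$
This is exactly the spectral sequence already used in the proof of Theorem \ref{z-rational} and of Lemmas \ref{w-lem1} and \ref{w-lem2}, and the lemma should come out of a careful bookkeeping of its entries.

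First I would invoke the support hypothesis. For $1\leq j\leq k-1$ the sheaf $R^jf_*G$ is supported on the single closed point $x$, hence is flasque (it is the pushforward under the closed immersion $\{x\}\hookrightarrow X$ of a sheaf on a point) and therefore $\Gamma_x$-acyclic, so $H^i_x(X,R^jf_*G)=0$ for $i>0$. Consequently $E^{ij}_2=0$ whenever $i\geq 1$ and $1\leq j\leq k-1$. In the range of total degrees $\leq k$ the only surviving $E_2$-entries thus lie in the bottom row $E^{i0}_2=H^i_x(X,f_*G)$ and in the left column $E^{0j}_2=R^jf_*G$ for $1\leq j\leq k-1$, giving an $L$-shaped pattern.

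Next I would track the differentials. Because of this $L$-shape, a direct inspection shows that for $1\leq j\leq k-1$ the only possibly nonzero differential touching either $E^{0j}$ or $E^{j+1,0}$ is
$$
d_{j+1}\colon E^{0j}_{j+1}\longrightarrow E^{j+1,0}_{j+1},
$$
where $E^{0j}_{j+1}=R^jf_*G$ and $E^{j+1,0}_{j+1}=H^{j+1}_x(X,f_*G)$ since both entries persist unchanged up to page $j+1$. Hence $E^{0j}_\infty=\ker d_{j+1}$ and $E^{j+1,0}_\infty=\Coker d_{j+1}$. I would then use the vanishing of the abutment: as $j\leq k-1$ and $j+1\leq k$, the hypothesis yields $H^j_F(Y,G)=H^{j+1}_F(Y,G)=0$. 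Since $E^{0j}_\infty$ and $E^{j+1,0}_\infty$ are associated graded pieces of these zero groups, both vanish, so $d_{j+1}$ is simultaneously injective and surjective, producing the claimed isomorphism $R^jf_*G\simeq H^{j+1}_x(X,f_*G)$.

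The identification $\Gamma_F=\Gamma_x\circ f_*$ together with the acyclicity statements is routine. The step demanding the most care is the differential analysis: one must confirm that no intermediate-page differential other than $d_{j+1}$ enters or leaves the two corner entries $E^{0j}$ and $E^{j+1,0}$, so that the vanishing of the abutment translates directly into the vanishing of the kernel and cokernel of a single map. Once the $L$-shaped support of the $E_2$-page is in hand this is a finite, elementary verification, and it is the very same mechanism that produced the edge isomorphisms in the preceding lemmas.
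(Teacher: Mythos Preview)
Your argument is correct. The paper does not actually prove this lemma; it merely records it with a citation to \cite[11.2 Theorem.~(11.2.5)]{ko-sing}. Your spectral-sequence analysis is exactly the kind of argument one expects here, and it is the same spectral sequence $E^{ij}_2=H^i_x(X,R^jf_*G)\Rightarrow H^{i+j}_F(Y,G)$ that the paper uses in the proofs of Theorem~\ref{z-rational} and Lemmas~\ref{w-lem1} and~\ref{w-lem2}, so your approach is entirely in keeping with the surrounding material.

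One small remark: the $L$-shape you describe only holds for rows $1\leq j\leq k-1$, since nothing is assumed about $\Supp R^jf_*G$ for $j\geq k$. You implicitly handle this correctly, but it is worth saying explicitly that the differentials entering or leaving $E^{0j}$ and $E^{j+1,0}$ (for $1\leq j\leq k-1$) never touch any row with index $\geq k$, so those uncontrolled entries play no role. With that clarification your bookkeeping is complete.
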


The assumptions in Lemma \ref{w-lem2} are satisfied for $n$-dimensional isolated Cohen--Macaulay singularities. 
Therefore, we have the following corollary of Lemmas \ref{w-lem1} 
and \ref{w-lem2}. 
\begin{cor}
Let $x\in X$ be an $n$-dimensional normal 
isolated singularity. 
Then $x\in X$ is Cohen--Macaulay if and 
only if $R^if_*\mathcal O_Y=0$ for 
$1\leq i\leq n-2$, where $f:Y\to X$ is 
a resolution of singularities. 
\end{cor}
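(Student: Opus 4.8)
The plan is to read the statement off directly from the two preceding lemmas, the only extra input being the observation that an isolated singularity is automatically regular, and hence has only rational singularities, away from the point $x$. Throughout I would fix a resolution $f\colon Y\to X$ that is an isomorphism over $X\setminus\{x\}$; since the asserted vanishing $R^if_*\mathcal O_Y=0$ is independent of the choice of resolution, this costs nothing and makes the local nature of both lemmas apply verbatim.

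First I would dispose of the low-dimensional cases. When $n\leq 2$ the index range $1\leq i\leq n-2$ is empty, so the cohomological vanishing holds vacuously, while a normal variety of dimension $\leq 2$ is automatically Cohen--Macaulay (normality gives Serre's $S_2$ condition, which is Cohen--Macaulayness in dimension $\leq 2$). Hence the equivalence is trivial there, and from now on I assume $n\geq 3$.

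For the implication that the vanishing forces Cohen--Macaulayness, I would simply invoke Lemma \ref{w-lem1}: its hypothesis is exactly $R^if_*\mathcal O_Y=0$ for $1\leq i\leq n-2$, and its conclusion is that $X$ is Cohen--Macaulay, so nothing further is needed. For the converse, suppose $x\in X$ is Cohen--Macaulay. Since $x\in X$ is an isolated singularity, $X$ is regular, and therefore has only rational singularities, at every point other than $x$. Thus the hypotheses of Lemma \ref{w-lem2} are met, namely $X$ is Cohen--Macaulay and has only rational singularities outside the closed point $x$, and that lemma yields $R^if_*\mathcal O_Y=0$ for $1\leq i\leq n-2$.

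The argument has essentially no obstacle: both directions are immediate applications of Lemmas \ref{w-lem1} and \ref{w-lem2}. The only step meriting a word of justification is the passage from ``isolated singularity'' to ``rational singularities outside $x$,'' which holds because smooth points are rational singularities and an isolated singularity is smooth in the complement of $x$; everything else is a direct citation.
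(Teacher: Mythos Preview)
Your argument is correct and matches the paper's intended reasoning exactly: the paper presents this as an immediate corollary of Lemmas~\ref{w-lem1} and~\ref{w-lem2}, with the only additional remark being that the hypotheses of Lemma~\ref{w-lem2} are satisfied for isolated Cohen--Macaulay singularities (since smooth points are rational). Your extra care with the low-dimensional cases and the independence of the choice of resolution is fine but not strictly needed, as Lemma~\ref{w-lem1} already handles $n\leq 2$ trivially and neither lemma constrains the resolution.
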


We note the following easy example. 

\begin{ex}
Let $V$ be a cone over a smooth plane cubic curve 
and let $\varphi:W\to V$ be the 
blow-up at the vertex. 
Then $W$ is smooth and 
$K_W=\varphi^*K_V-E$, where 
$E$ is an elliptic curve. 
In particular, $V$ is log canonical. 
Let $C$ be a smooth curve. We put 
$Y=W\times C$, $X=V\times C$, and 
$f=\varphi\times{\id_C}: Y\to X$, 
where $\id_C$ is the identity map of $C$. 
By the construction, $X$ is a log canonical threefold. 
We can check that $X$ is 
Cohen--Macaulay by 
Theorem \ref{alex-cri} or Proposition \ref{w-alex}. We note that 
$R^1f_*\mathcal O_Y\ne 0$ and that 
$R^1f_*\mathcal O_Y$ has no zero-dimensional 
associated components. 
Therefore, the Cohen--Macaulayness of $X$ does not 
necessarily imply the vanishing of $R^1f_*\mathcal O_Y$. 
\end{ex}

Let us go to cone singularities 
(cf.~\cite[3.8 Example]{ko-sing} and \cite[Exercises 70, 71]{ko-exe}). 

\begin{lem}[Projective normality]\index{projectively normal}
Let $X\subset \mathbb P^N$ be a normal projective irreducible 
variety and $V\subset \mathbb A^{N+1}$ the cone over 
$X$. 
Then $V$ is normal if and only if 
$H^0(\mathbb P^N, \mathcal O_{\mathbb P^N}(m))\to 
H^0(X, \mathcal O_X(m))$ is 
surjective for 
any $m\geq 0$. 
In this case, $X\subset \mathbb P^N$ is said to be 
{\em{projectively normal}}. 
\end{lem}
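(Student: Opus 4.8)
The plan is to identify the affine cone $V$ with $\Spec R$, where $R=\bigoplus_{m\geq 0}R_m$ is the homogeneous coordinate ring of $X\subset\mathbb P^N$, that is, $R_m=\xIm(H^0(\mathbb P^N,\mathcal O_{\mathbb P^N}(m))\to H^0(X,\mathcal O_X(m)))$, and to compare it with the full section ring $\bar R=\bigoplus_{m\geq 0}H^0(X,\mathcal O_X(m))$ of the very ample line bundle $L=\mathcal O_X(1)$. Since $R_m\subseteq\bar R_m$ always holds, the surjectivity of $H^0(\mathbb P^N,\mathcal O(m))\to H^0(X,\mathcal O_X(m))$ for every $m\geq 0$ is literally the equality $R=\bar R$ of graded rings. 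Thus the entire statement reduces to showing that $V=\Spec R$ is normal if and only if $R=\bar R$, which I would prove by exhibiting $\bar R$ as the integral closure of $R$ inside its fraction field.

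First I would check that $\bar R$ is a finite $R$-module sharing the fraction field of $R$. Twisting $0\to\mathcal I_X\to\mathcal O_{\mathbb P^N}\to\mathcal O_X\to 0$ by $\mathcal O(m)$ and taking cohomology, Serre vanishing gives $H^1(\mathbb P^N,\mathcal I_X(m))=0$ for $m\gg 0$, so the restriction map is surjective in all large degrees; hence $R_m=\bar R_m$ for $m\gg 0$ and $\bar R/R$ is a finite-dimensional $k$-vector space. In particular $R\hookrightarrow\bar R$ is a finite ring extension, $\bar R$ is a finitely generated $k$-algebra, and because the two rings agree in high degree the inclusion is birational, so $\operatorname{Frac}R=\operatorname{Frac}\bar R$.

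The key geometric step is that $\bar R$ is integrally closed. For this I would use the total space $\tilde V=\Spec_X\bigoplus_{m\geq 0}L^{m}$, the line bundle $L^{-1}$ over $X$: since $X$ is normal and $\tilde V\to X$ is a line bundle, hence smooth with irreducible fibers, $\tilde V$ is a normal irreducible variety, and by construction $H^0(\tilde V,\mathcal O_{\tilde V})=\bigoplus_{m\geq 0}H^0(X,L^m)=\bar R$. The affinization morphism $\tilde V\to\Spec\bar R$ contracts the zero section to the vertex and is an isomorphism elsewhere, so $\operatorname{Frac}\bar R=k(\tilde V)$. Because the ring of global functions on a normal integral variety equals the intersection of its local rings at codimension-one points, each a discrete valuation ring, it is integrally closed in the function field; therefore $\bar R$ is integrally closed. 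Consequently $\bar R$ is a finite, integrally closed extension of $R$ inside $\operatorname{Frac}R$, that is, $\bar R=R^{\nu}$ is the integral closure of $R$.

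The equivalence then follows formally: $V=\Spec R$ is normal if and only if $R=R^{\nu}=\bar R$, which holds if and only if the restriction maps are surjective for all $m\geq 0$. I expect the main obstacle to be precisely the normality of $\bar R$ and its identification with $R^{\nu}$; the cohomological finiteness input is routine, but one must verify carefully that $\tilde V$ is genuinely normal and irreducible with $H^0(\tilde V,\mathcal O_{\tilde V})$ equal to $\bar R$, and that the affinization is birational so that the fraction fields match. The degenerate case $\dim X=0$, where $V$ is a line through the origin, should be disposed of directly, since there the statement is immediate.
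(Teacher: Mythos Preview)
Your proof is correct and takes a genuinely different route from the paper's. The paper argues via local cohomology at the vertex $P$: since $V\setminus P$ is a $\mathbb G_m$-bundle over the normal variety $X$, normality of $V$ is equivalent to $\mathrm{depth}\,\mathcal O_{V,P}\geq 2$, i.e.\ to $H^1_P(V,\mathcal O_V)=0$; comparing the local cohomology sequences for $\mathbb A^{N+1}$ and for $V$ then identifies this vanishing with surjectivity of $H^0(\mathbb A^{N+1}\setminus P,\mathcal O)\to H^0(V\setminus P,\mathcal O_V)$, which unwinds to the degreewise surjectivity statement.

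Your approach instead constructs the normalization explicitly: you show $\bar R=\bigoplus_{m\geq 0}H^0(X,L^m)$ is a finite birational extension of $R$ and is integrally closed (as the ring of global functions on the normal total space of $L^{-1}$), hence $\bar R=R^\nu$. This is the classical commutative-algebra proof and arguably gives more: it identifies $\Spec\bar R$ as the normalization of the cone, not just a criterion for when the cone is already normal. The paper's proof, by contrast, is tailored to the surrounding context of Section~\ref{sec-alex}, where local cohomology and depth computations are the running theme (Alexeev's $S_3$ criterion, the proof that dlt singularities are rational, etc.), so the lemma serves partly as a warm-up in that language. Both arguments use the same underlying geometry---the punctured cone as a $\mathbb G_m$-bundle over $X$---but package it differently.
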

\begin{proof}
Without loss of generality, we can assume that 
$\dim X\geq 1$. 
Let $P\in V$ be the vertex of $V$. By the construction, 
we have $H^0_P(V, \mathcal O_V)=0$. We consider 
the following commutative 
diagram. 
$$
\xymatrix{
0 \rto &  H^0(\mathbb A^{N+1}, \mathcal O_{\mathbb A^{N+1}})
\dto \rto &  H^0(\mathbb A^{N+1}\setminus P, \mathcal O_{\mathbb A^{N+1}})
\dto
\rto & 0 &  \\
0 \rto & H^0(V, \mathcal O_V) \dto\rto & 
H^0(V\setminus P, \mathcal O_V) 
\rto &
H^1_P(V, \mathcal O_V)\rto &0\\
& 0 & &
}
$$ 
We note that 
$H^i(V, \mathcal O_V)=0$ for any $i>0$ 
since $V$ is affine. By the 
above commutative 
diagram, 
it is easy to see that 
the following conditions are equivalent. 
\begin{itemize}
\item[(a)] $V$ is normal. 
\item[(b)] the depth of $\mathcal O_{V, P}$ is $\geq 2$. 
\item[(c)] $H^1_P(V, \mathcal O_V)=0$. 
\item[(d)] $H^0(\mathbb A^{N+1}\setminus P, \mathcal O_{\mathbb A^{N+1}})\to 
H^0(V\setminus P, \mathcal O_V)$ is 
surjective. 
\end{itemize} 
The condition (d) is equivalent to 
the condition that 
$H^0(\mathbb P^N, \mathcal O_{\mathbb P^N}(m))\to 
H^0(X, \mathcal O_X(m))$ is 
surjective for any $m\geq 0$. 
We note that 
$$H^0(\mathbb A^{N+1}\setminus P, \mathcal O_{\mathbb A^{N+1}})
\simeq \bigoplus _{m\geq 0}
H^0(\mathbb P^N, \mathcal O_{\mathbb P^N}(m))$$ and 
$$H^0(V\setminus P, \mathcal O_V)
\simeq \bigoplus _{m\geq 0}H^0(X, \mathcal O_X(m)). 
$$
So, we finish the proof. 
\end{proof}

The next lemma is more or less well known to the experts. 

\begin{lem}\label{437lem} 
Let $X\subset \mathbb P^N$ be  a normal projective 
irreducible 
variety and $V\subset \mathbb A^{N+1}$ the cone over 
$X$. 
Assume that $X$ is projectively normal and 
that $X$ has only rational singularities. 
Then we have the following properties. 
\begin{itemize}
\item[$(1)$] $V$ is Cohen--Macaulay if and only if 
$H^i(X, \mathcal O_X(m))=0$ for any $0<i<\dim X$ and  
$m\geq 0$. 
\item[$(2)$] $V$ has only rational singularities 
if and only if 
$H^i(X, \mathcal O_X(m))=0$ for any $i>0$ and 
$m\geq 0$. 
\end{itemize}
\end{lem}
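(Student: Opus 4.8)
The plan is to translate both statements into the vanishing of cohomology of the line bundles $\mathcal{O}_X(m)$ on $X$, exploiting the conical ($\mathbb{G}_m$-equivariant) structure of $V$. Write $P\in V$ for the vertex, $d=\dim X$, and $n=\dim V=d+1$; we may assume $d\geq 1$. Projective normality gives that $V$ is normal and that $V=\Spec\bigoplus_{m\geq 0}H^0(X,\mathcal{O}_X(m))$. The first key observation I would record is that the punctured cone $\pi\colon V\setminus\{P\}\to X$ is an affine morphism (a $\mathbb{G}_m$-bundle) with $\pi_*\mathcal{O}_{V\setminus\{P\}}\simeq\bigoplus_{m\in\mathbb{Z}}\mathcal{O}_X(m)$, so that $H^i(V\setminus\{P\},\mathcal{O}_V)\simeq\bigoplus_{m\in\mathbb{Z}}H^i(X,\mathcal{O}_X(m))$ for all $i$. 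This is the dictionary everything rests on.

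For part (1), I would use the local cohomology exact sequence together with $H^i(V,\mathcal{O}_V)=0$ for $i>0$ ($V$ is affine), obtaining $H^i_P(V,\mathcal{O}_V)\simeq H^{i-1}(V\setminus\{P\},\mathcal{O}_V)\simeq\bigoplus_{m\in\mathbb{Z}}H^{i-1}(X,\mathcal{O}_X(m))$ for $i\geq 2$, while $H^0_P=H^1_P=0$ by normality (this is exactly the projective normality lemma just above). Since $V$ is Cohen--Macaulay if and only if $H^i_P(V,\mathcal{O}_V)=0$ for $i<n$, this is equivalent to $H^j(X,\mathcal{O}_X(m))=0$ for all $0<j<d$ and all $m\in\mathbb{Z}$. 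The remaining point is to drop the negative twists. For $m<0$, Serre duality on the Cohen--Macaulay variety $X$ (rational implies Cohen--Macaulay) gives $H^j(X,\mathcal{O}_X(m))^{\vee}\simeq H^{d-j}(X,\omega_X\otimes\mathcal{O}_X(-m))$; since $X$ has rational singularities, $Rg_*\omega_{\tilde X}\simeq\omega_X$ for a resolution $g\colon\tilde X\to X$, so by the projection formula and the Kawamata--Viehweg vanishing theorem applied to the nef and big divisor $g^*\mathcal{O}_X(-m)$ (see Corollary \ref{kv-vani}) this group vanishes for $0<d-j$. Hence the condition for all $m$ already follows from the condition for $m\geq 0$, which is the claimed equivalence.

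For part (2) I would construct a resolution of the cone directly. Take $g\colon\tilde X\to X$ and let $Y$ be the total space of $(g^*\mathcal{O}_X(1))^{-1}$ over $\tilde X$ (the analogue of blowing up the vertex), with $\phi\colon Y\to V$ contracting the zero section to $P$; then $Y$ is smooth and $\phi$ is proper and birational. Because $\phi$ is the $\mathbb{G}_m$-bundle over $g$ away from $P$ and $X$ has rational singularities, $R^j\phi_*\mathcal{O}_Y$ is supported at $P$ for $j>0$, so $V$ has rational singularities if and only if $R^j\phi_*\mathcal{O}_Y=0$ for all $j>0$. Using that $V$ is affine (Leray), $R^j\phi_*\mathcal{O}_Y\simeq H^j(Y,\mathcal{O}_Y)\simeq\bigoplus_{m\geq 0}H^j(\tilde X,g^*\mathcal{O}_X(m))$, and the rational singularity hypothesis on $X$ ($Rg_*\mathcal{O}_{\tilde X}\simeq\mathcal{O}_X$ plus the projection formula) identifies this with $\bigoplus_{m\geq 0}H^j(X,\mathcal{O}_X(m))$. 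Thus $V$ has rational singularities if and only if $H^j(X,\mathcal{O}_X(m))=0$ for all $j>0$ and $m\geq 0$, as desired; note this is consistent with part (1), since the range $0<j<d$ recovers Cohen--Macaulayness and the extra case $j=d$ is precisely the difference between Cohen--Macaulay and rational.

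The routine part is the graded dictionary between conical sheaves and cohomology of $\mathcal{O}_X(m)$; the steps I expect to be delicate are, first, the grading and formal-functions bookkeeping needed to obtain the clean identification $R^j\phi_*\mathcal{O}_Y\simeq\bigoplus_{m\geq 0}H^j(X,\mathcal{O}_X(m))$ and to confirm that $\phi\colon Y\to V$ is genuinely a resolution, and second, the reduction from all $m$ to $m\geq 0$ in part (1). This second point is where the rational singularity hypothesis is genuinely used, and it is the one I would treat most carefully: one must check that Serre duality applies (supplied by rational $\Rightarrow$ Cohen--Macaulay), that $Rg_*\omega_{\tilde X}\simeq\omega_X$, and that Kawamata--Viehweg vanishing applies to the semiample and big pullback $g^*\mathcal{O}_X(-m)$.
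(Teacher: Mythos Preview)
Your argument is correct, but it diverges from the paper's proof in two places worth noting.

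For part (1), you compute $H^{i-1}(V\setminus\{P\},\mathcal O_V)\simeq\bigoplus_{m\in\mathbb Z}H^{i-1}(X,\mathcal O_X(m))$ via the punctured cone and then eliminate the negative twists by Serre duality together with Kawamata--Viehweg. The paper instead passes through the blow-up $f:W\to V$ at $P$ (equivalently, the total space of $\mathcal O_X(-1)$ over $X$): since $W$ inherits rational singularities from $X$, the dual Grauert--Riemenschneider Lemma~\ref{lem-gr} (Remark~\ref{lem-gr-ho}) gives $H^i_E(W,\mathcal O_W)=0$ for $i\leq n$, hence $H^{i-1}(V\setminus\{P\},\mathcal O_V)\simeq H^{i-1}(W,\mathcal O_W)\simeq\bigoplus_{m\geq 0}H^{i-1}(X,\mathcal O_X(m))$ directly, with no negative twists ever appearing. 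Your route is perfectly valid and makes the role of the rational-singularity hypothesis very explicit, but it costs you an extra vanishing theorem; the paper's route hides this inside the local-cohomology vanishing on $W$.

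For part (2), the paper stays with the same $W$ (only rational, not smooth) and uses Lemmas~\ref{w-lem1}, \ref{w-lem2} plus a short diagram chase to identify the obstruction $R^nf_*\mathcal O_W$ with $H^n(W,\mathcal O_W)\simeq\bigoplus_{m\geq 0}H^n(X,\mathcal O_X(m))$. You instead build an honest smooth resolution $Y\to V$ as the total space of $g^*\mathcal O_X(-1)$ over a resolution $\tilde X\to X$ and read off $R^j\phi_*\mathcal O_Y$ from $H^j(Y,\mathcal O_Y)$; this is cleaner and avoids the auxiliary lemmas, at the price of introducing $\tilde X$. One small remark: your identification ``$R^j\phi_*\mathcal O_Y\simeq H^j(Y,\mathcal O_Y)$'' is literally a statement about global sections, but since $V$ is affine a quasi-coherent sheaf vanishes iff its global sections do, so no support argument is actually needed there.
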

\begin{proof}
We put $n=\dim X$ and can assume $n\geq 1$. 
For (1), it is sufficient to 
prove that 
$H^i_P(V, \mathcal O_V)=0$ for 
$2\leq i \leq n$ if and only if 
$H^i(X, \mathcal O_X(m))=0$ for any $0<i<n$ and 
$m\geq 0$ since $V$ is normal, where 
$P\in V$ is the vertex of 
$V$. 
Let $f:W\to V$ be the 
blow-up at $P$ and $E\simeq X$ the exceptional divisor 
of $f$. 
We note that $W$ is the total space of 
$\mathcal O_X(-1)$ over $E\simeq X$ and 
that $W$ has only rational singularities. 
Since $V$ is affine, we obtain 
$H^i(V\setminus P, \mathcal O_V)\simeq 
H^{i+1}_P(V, \mathcal O_V)$ for any $i\geq 1$. 
Since $W$ has only rational singularities, 
we have $H^i_E(W, \mathcal O_W)=0$ 
for $i<n+1$ (cf.~Lemma \ref{lem-gr} and 
Remark \ref{lem-gr-ho}). Therefore, 
$$
H^i(V\setminus P, \mathcal O_V)\simeq 
H^i(W\setminus E, \mathcal O_W)\simeq H^i(W, \mathcal 
O_W)
$$ 
for $i\leq n-1$. 
Thus, 
$$
H^i_P(V, \mathcal O_V)\simeq H^{i-1}(V\setminus P, \mathcal O_V)
\simeq H^{i-1}(W, \mathcal O_W)\simeq 
\bigoplus _{m\geq 0}H^{i-1}(X, \mathcal O_X(m)) 
$$ 
for $2\leq i\leq n$. So, we 
obtain the desired equivalence. 

For (2), we consider the 
following commutative 
diagram. 
$$
\xymatrix{
&0\rto  &  H^n(V\setminus P, \mathcal O_V)
\dto^{\simeq} \rto &  H^{n+1}_P(V, \mathcal O_V)
\dto^{\alpha}
\rto & 0 &  \\
0 \rto & H^n(W, \mathcal O_W) \rto & 
H^n(W\setminus E, \mathcal O_W) 
\rto &
H^{n+1}_E(W, \mathcal O_W) &
}
$$ 
We note that $V$ is Cohen--Macaulay if and 
only if $R^if_*\mathcal O_W=0$ for 
$1\leq i\leq n-1$ (cf.~Lemmas \ref{w-lem1} and 
\ref{w-lem2}) 
since $W$ has only rational singularities. 
From now on, we assume that 
$V$ is Cohen--Macaulay. Then, $V$ has only rational singularities 
if and only if $R^nf_*\mathcal O_W=0$. By the 
same argument as in the proof of Theorem \ref{z-rational}, 
the kernel of $\alpha$ is $H^0_P(V, R^nf_*\mathcal O_W)$. 
Thus, $R^nf_*\mathcal O_W=0$ if and only if 
$H^n(W, \mathcal O_W)\simeq \bigoplus _{m\geq 0}
H^n(X, \mathcal O_X(m))=0$ by the 
above commutative diagram. 
So, we obtain the statement (2). 
\end{proof}

The following proposition is very useful when we construct 
some examples. 
We have already used it in this book. 

\begin{prop}\label{438p}
Let $X\subset \mathbb P^N$ be a normal projective 
irreducible variety and 
$V\subset \mathbb A^{N+1}$ the cone over $X$. 
Assume that 
$X$ is projectively normal. 
Let $\Delta$ be an effective $\mathbb R$-divisor on $X$and $B$ the 
cone over $\Delta$. 
Then, we have the following properties. 
\begin{itemize}
\item[$(1)$] $K_V+B$ is $\mathbb R$-Cartier if and only if 
$K_X+\Delta\sim _{\mathbb R}rH$ for some $r\in \mathbb R$, 
where $H\subset X$ is the hyperplane divisor on $X\subset \mathbb P^N$. 
\item[$(2)$] If $K_X+\Delta\sim _{\mathbb R}rH$, then 
$(V, B)$ is 
\begin{itemize}
\item[$(a)$] terminal if and only if $r<-1$ and $(X, \Delta)$ 
is terminal, 
\item[$(b)$] canonical if and only if $r\leq -1$ and 
$(X, \Delta)$ is canonical, 
\item[$(c)$] klt if and only if $r<0$ and $(X, \Delta)$ 
is klt, and 
\item[$(d)$] lc if and only if $r\leq 0$ and $(X, \Delta)$ is lc. 
\end{itemize}
\end{itemize}
\end{prop}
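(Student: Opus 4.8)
The plan is to read off all the singularity information from the blow-up of the vertex, exactly as in the proof of Lemma \ref{437lem}. Let $P\in V$ be the vertex, let $f:W\to V$ be the blow-up at $P$, and let $E\simeq X$ be the exceptional divisor. Then $W$ is the total space of $\mathcal O_X(-1)$ over $E\simeq X$, with bundle projection $\pi:W\to X$, and $E$ is the zero section; in particular $\mathcal O_E(E)\simeq \mathcal O_X(-1)$, so $E|_E\sim_{\mathbb R}-H$, while $K_E=K_X$ under the identification $E\simeq X$. The cone $B$ over $\Delta$ has strict transform $B_W=\pi^*\Delta$, so that $B_W|_E=\Delta$. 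All the remaining work is to compute the discrepancy of $E$ and to compare the other discrepancies of $(V,B)$ with those of $(X,\Delta)$.

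First I would dispose of (1) and simultaneously compute $a(E,V,B)$. If $K_V+B$ is $\mathbb R$-Cartier, then $f^*(K_V+B)$ is defined and $K_W+B_W=f^*(K_V+B)+a(E,V,B)\,E$ holds with a genuine real number $a(E,V,B)$. Restricting to $E$, using $f^*(K_V+B)|_E\sim_{\mathbb R}0$ and adjunction, gives $(K_X+H)+\Delta\sim_{\mathbb R}-a(E,V,B)\,H$, that is,
$$K_X+\Delta\sim_{\mathbb R}-(a(E,V,B)+1)H.$$
This proves the implication ``$K_V+B$ is $\mathbb R$-Cartier $\Rightarrow K_X+\Delta\sim_{\mathbb R}rH$'' with $r=-(a(E,V,B)+1)$, and records the numerical identity $a(E,V,B)=-(r+1)$. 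For the converse I would identify $K_V+B$, as a Weil $\mathbb R$-divisor on the cone, with the cone over $K_X+\Delta$ up to the Cartier class $H$ (the cone over the hyperplane divisor $H$ is cut out by a linear form, hence principal), and then invoke the $\mathbb C^*$-graded description of divisor classes on $V$: since $P$ is the only non-Cartier point, a $\mathbb C^*$-invariant cone-divisor is $\mathbb R$-Cartier iff the corresponding class on $X$ is $\mathbb R$-proportional to $H$, where projective normality guarantees that $H$ controls the relevant grading.

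For (2) I would pass to the crepant model. Since $a(E,V,B)=-(r+1)$, the identity above reads $K_W+\pi^*\Delta+(r+1)E=f^*(K_V+B)$, so $f$ is crepant for the pair $(W,\pi^*\Delta+(r+1)E)$ and hence $a(F,V,B)=a(F,W,\pi^*\Delta+(r+1)E)$ for every divisor $F$ over $V$, while $E$ itself contributes discrepancy $-(r+1)$. Now $W$ is smooth and $\pi:W\to X$ is a line bundle with $E$ a transverse section, so a log resolution of $(X,\Delta)$ pulls back to one of $(W,\pi^*\Delta)$ and the extra divisor $E$ only carries the coefficient $r+1$ along a section; equivalently, $V\setminus P$ is a $\mathbb C^*$-bundle over $X$ and is locally a product $(X,\Delta)\times\mathbb C^*$. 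Consequently the ``horizontal'' discrepancies of $(V,B)$ coincide with $\discrep(X,\Delta)$, and the only extra divisor one must test at $P$ is $E$. Thus $(V,B)$ is terminal, canonical, klt, or lc iff $(X,\Delta)$ is of the same type \emph{and} the discrepancy $-(r+1)$ of $E$ clears the respective threshold $>0$, $\ge 0$, $>-1$, $\ge -1$; translating these into $r<-1$, $r\le -1$, $r<0$, $r\le 0$ gives (a)--(d). The klt floor condition is automatic, since $\lfloor B\rfloor$ is the cone over $\lfloor\Delta\rfloor$.

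The main obstacle will be making the two reductions rigorous. In (1) it is the local class-group computation at the vertex: one must show, using the $\mathbb C^*$-grading and the projective normality of $X$, that a cone over $\Delta'$ is $\mathbb R$-Cartier precisely when $\Delta'$ is $\mathbb R$-proportional to $H$. In (2) the delicate point is the claim that discrepancies over the line-bundle total space $W$ reduce to those of the base $(X,\Delta)$ up to the coefficient of the section $E$ — that is, that for the purpose of computing $\discrep$ the bundle behaves like a product and that $E$ is the only divisor centred at $P$ that needs to be examined. Both facts are standard for cone singularities, but they require care with the $\mathbb R$-coefficients and with the interaction of the section $E$ with the horizontal boundary $\pi^*\Delta$.
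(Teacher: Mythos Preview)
Your approach is essentially the same as the paper's: blow up the vertex, identify $W$ as the total space of $\mathcal O_X(-1)$, and read off the discrepancy $a(E,V,B)=-(r+1)$ together with the reduction of all other discrepancies to those of $(X,\Delta)$ via the line-bundle structure. Your treatment of (2) is in fact more explicit than the paper's, which simply records $K_W+f^{-1}_*B=f^*(K_V+B)-(r+1)E$ and the bundle description of $W$ and leaves the rest implicit.

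The one place where you overcomplicate matters is the converse of (1). You propose a local class-group computation at the vertex using the $\mathbb C^*$-grading, but this is unnecessary. The paper's argument is a direct computation on $W$: since $\pi^*H\sim -E$ on $W$ and $K_W\sim\pi^*(K_X+H)$, the hypothesis $K_X+\Delta\sim_{\mathbb R}rH$ gives
\[
K_W+f^{-1}_*B=K_W+\pi^*\Delta\sim_{\mathbb R}\pi^*(K_X+\Delta+H)\sim_{\mathbb R}(r+1)\,\pi^*H\sim_{\mathbb R}-(r+1)E.
\]
Pushing forward by $f$ yields $K_V+B\sim_{\mathbb R}0$ as Weil $\mathbb R$-divisors, so $K_V+B$ is $\mathbb R$-Cartier (indeed $\mathbb R$-trivial). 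This bypasses any appeal to graded class groups and is the cleaner way to close the equivalence in (1).
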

\begin{proof}
Let $f:W\to V$ be the blow-up at the 
vertex $P\in V$ and $E\simeq X$ the exceptional 
divisor of $f$. 
If $K_V+B$ is $\mathbb R$-Cartier, then 
$K_W+f^{-1}_*B\sim_{\mathbb R}f^*(K_V+B)+aE$ 
for some $a\in \mathbb R$. 
By restricting it to $E$, 
we obtain that 
$K_X+\Delta\sim _{\mathbb R}-(a+1)H$. 
On the other hand, if $K_X+\Delta\sim _{\mathbb R}rH$, then 
$K_W+f^{-1}_*B\sim_{\mathbb R}-(r+1)E$. Therefore, 
$K_V+B\sim _{\mathbb R}0$ on $V$. Thus, 
we have the statement (1). 
For (2), it is sufficient to 
note that 
$$
K_W+f^{-1}_*B=f^*(K_X+B)-(r+1)E 
$$ 
and that $W$ is the total space of $\mathcal O_X(-1)$ over 
$E\simeq X$. 
\end{proof}

\section{Toric Polyhedron}\label{to-sec}
 
In this section, we freely use the basic notation 
of the toric geometry. See, for example, \cite{fulton}. 

\begin{defn}
For a subset $\Phi$ of a fan $\Delta$, we say that 
$\Phi$ is {\em{star closed}} if $\sigma\in \Phi, 
\tau \in \Delta$ and $\sigma \prec \tau$ imply $\tau 
\in \Phi$. 
\end{defn}

\begin{defn}[Toric Polyhedron]\index{toric polyhedron} 
For a star closed subset $\Phi$ of a fan $\Delta$, 
we denote by $Y=Y(\Phi)$ 
the 
subscheme 
$\bigcup _{\sigma\in \Phi}V(\sigma)$ of $X=X(\Delta)$, 
and we call it the {\em{toric polyhedron associated to 
$\Phi$}}. 
\end{defn}

Let $X=X(\Delta)$ be a toric variety and let $D$ be the 
complement of the big torus. 
Then the following property is well known. 
\begin{prop}
The pair $(X, D)$ is log canonical 
and $K_X+D\sim 0$. 
Let $W$ be a closed subvariety of $X$. Then, 
$W$ is an lc center of $(X, D)$ if and only if 
$W=V(\sigma)$ for some $\sigma\in \Delta\setminus \{0\}$. 
\end{prop}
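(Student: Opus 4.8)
The plan is to route the entire statement through a single toric crepant log resolution and then reduce the assertion about lc centers to the combinatorics of orbit closures. First I would record the two standard toric identities. The canonical divisor of $X=X(\Delta)$ is $K_X=-\sum_\rho D_\rho=-D$, where $\rho$ runs over the rays of $\Delta$ and $D_\rho=V(\rho)$; hence $K_X+D=0\sim 0$. In particular $K_X+D$ is Cartier, so the pair $(X,D)$ is well defined, and $D$ is a reduced effective divisor, i.e.\ a boundary. This settles $K_X+D\sim 0$ immediately.

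For log canonicity I would choose a smooth subdivision $\Delta'$ of $\Delta$ and pass to the induced toric (birational, proper) morphism $f\colon X'=X(\Delta')\to X$. Then $X'$ is smooth and its full torus-invariant boundary $D'$ is simple normal crossing, and the same identity $K_{X'}=-D'$ gives $K_{X'}+D'=0=f^*(K_X+D)$, so $f$ is crepant and $\Exc(f)\cup\Supp f^{-1}_*D=D'$ has simple normal crossing support. Every prime component of $D'$, whether a strict transform of some $D_\rho$ or an $f$-exceptional divisor coming from a new ray, occurs in $D'$ with coefficient $1$, hence has discrepancy $-1\geq -1$ over $(X,D)$. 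Applying the definition of sub lc to this one resolution, and using that $D$ is effective, shows $(X,D)$ is log canonical.

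For the lc centers I would invoke exactly the principle already used in the proof of Theorem \ref{a1}: since $K_{X'}+D'=f^*(K_X+D)$ with $D'=D'^{=1}$ reduced and log smooth, the lc centers of $(X,D)$ are precisely the $f$-images of the strata of $D'$. The strata of $D'$ are the orbit closures $V(\sigma')$ with $\sigma'\in\Delta'\setminus\{0\}$, and by the orbit--cone correspondence the toric morphism $f$ satisfies $f(V(\sigma'))=V(\sigma)$, where $\sigma$ is the smallest cone of $\Delta$ containing $\sigma'$; this $\sigma$ is nonzero because $\sigma'$ is. This proves that every lc center has the form $V(\sigma)$ with $\sigma\in\Delta\setminus\{0\}$. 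For the converse, given $\sigma\in\Delta\setminus\{0\}$, the refinement $\Delta'$ subdivides $\sigma$, so its relative interior contains the relative interior of some cone $\sigma'\in\Delta'\setminus\{0\}$ whose minimal cone in $\Delta$ is $\sigma$; then $V(\sigma)=f(V(\sigma'))$ is an lc center. Together these give both inclusions.

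The step I expect to be the main obstacle is the converse-type worry in the characterization: a priori some non-toric divisorial valuation with discrepancy $-1$ could contribute an lc center not of the form $V(\sigma)$. The reduction to the crepant log smooth model is precisely what disposes of this. The principle from Theorem \ref{a1} guarantees that every lc center is the image of a stratum of the simple normal crossing divisor $D'$, so no non-toric lc centers can occur, and what remains is the purely combinatorial identification $f(V(\sigma'))=V(\sigma)$, which is routine toric geometry rather than discrepancy analysis.
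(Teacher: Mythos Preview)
Your argument is correct. The paper states this proposition as ``well known'' and does not supply a proof, so there is no paper argument to compare against; your approach via a crepant toric log resolution is exactly the standard one. The identity $K_X=-D$ gives $K_X+D\sim 0$; the refinement $\Delta'\to\Delta$ produces a log resolution with $K_{X'}+D'=f^*(K_X+D)$ and $D'$ reduced simple normal crossing, which simultaneously verifies log canonicity and, via the principle recorded in the proof of Theorem~\ref{a1}, identifies the lc centers as the $f$-images of the strata of $D'$, i.e.\ the $V(\sigma)$ for $\sigma\in\Delta\setminus\{0\}$.

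One small point worth making explicit for the converse direction: you need not only that every lc center is the image of a stratum, but also that every stratum maps to an lc center. This is immediate once you observe that blowing up $X'$ along a stratum $V(\sigma')$ (a smooth intersection of components of $D'$, each with coefficient $1$) produces an exceptional divisor of discrepancy $-1$ over $(X',D')$, hence over $(X,D)$ by crepancy; its center on $X$ is $f(V(\sigma'))=V(\sigma)$. Alternatively, one can simply star-subdivide $\Delta$ along a primitive lattice point in the relative interior of $\sigma$ to exhibit such a divisor directly. Either way, your identification of both inclusions is sound.
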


Therefore, we have the next theorem by adjunction 
(see Theorem \ref{adj-th} (i)). 

\begin{thm}\label{topo-th} 
Let $Y=Y(\Phi)$ be a toric polyhedron on $X=X(\Delta)$. 
Then, the log canonical 
pair $(X, D)$ induces a natural 
quasi-log structure on $[Y, 0]$. 
Note that $[Y, 0]$ has only qlc singularities. 
Let $W$ be a closed subvariety of $Y$. 
Then, $W$ is a qlc center of $[Y, 0]$ if and only if 
$W=V(\sigma)$ for some $\sigma \in \Phi$.  
\end{thm}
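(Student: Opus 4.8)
The plan is to produce the quasi-log structure on $[Y,0]$ by adjunction from the ambient toric pair $[X, K_X+D]$, and then to read off the qlc centers from the ``moreover'' clause of Theorem \ref{adj-th} (i). First I would fix the ambient quasi-log structure. By the preceding proposition, $(X,D)$ is log canonical with $K_X+D\sim 0$; by Example \ref{new-example}, the pair $[X,K_X+D]$ is then a quasi-log variety, and it has only qlc singularities, i.e. $X_{-\infty}=\emptyset$, precisely because $(X,D)$ is lc. Since $K_X+D\sim 0\sim_{\mathbb R}0$, I take the quasi-log canonical class to be $\omega=K_X+D$, which is $\mathbb R$-linearly trivial. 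The qlc centers of $[X,\omega]$ coincide with the lc centers of $(X,D)$, which by the preceding proposition are exactly the orbit closures $V(\sigma)$ with $\sigma\in\Delta\setminus\{0\}$.

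Next I would apply adjunction. The toric polyhedron $Y=Y(\Phi)=\bigcup_{\sigma\in\Phi}V(\sigma)$ is by construction a union of qlc centers of $[X,\omega]$ (if $\Phi=\emptyset$ then $Y=\emptyset$ and the statement is vacuous, so I assume $\Phi\ne\emptyset$). Taking $X'=Y$ in Theorem \ref{adj-th} (i) is legitimate because $X_{-\infty}=\emptyset$, so $Y\ne X_{-\infty}$; this produces a quasi-log structure on $[Y,\omega|_Y]$ with $Y_{-\infty}=X_{-\infty}=\emptyset$. As $\omega\sim_{\mathbb R}0$, its restriction $\omega|_Y$ is again $\mathbb R$-linearly trivial, so the resulting structure is precisely $[Y,0]$, and $Y_{-\infty}=\emptyset$ means $[Y,0]$ has only qlc singularities, as claimed.

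Finally comes the identification of qlc centers. The ``moreover'' clause of Theorem \ref{adj-th} (i) says that the qlc centers of $[Y,\omega|_Y]$ are exactly those qlc centers of $[X,\omega]$ contained in $Y$, i.e. exactly the $V(\sigma)$ with $\sigma\in\Delta\setminus\{0\}$ and $V(\sigma)\subseteq Y$. It then remains to check the combinatorial equivalence $V(\sigma)\subseteq Y\iff\sigma\in\Phi$. One direction is immediate: if $\sigma\in\Phi$ then $V(\sigma)\subseteq Y$ by definition. For the converse I would use the orbit-cone correspondence: $V(\sigma)$ is irreducible and $Y$ is a finite union of closed sets, so $V(\sigma)\subseteq V(\tau)$ for some $\tau\in\Phi$, which under the correspondence means $\tau\prec\sigma$, and the star-closedness of $\Phi$ forces $\sigma\in\Phi$.

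Since the argument is essentially an assembly of results already established, I do not expect a genuine obstacle. The one point demanding care is this last combinatorial step, where one must line up the face order under the orbit-cone correspondence (larger cones correspond to smaller orbit closures, so $V(\sigma)\subseteq V(\tau)\iff\tau\prec\sigma$) against the definition of \emph{star closed} ($\sigma\in\Phi$ and $\sigma\prec\tau$ imply $\tau\in\Phi$); reconciling these two orders is exactly what makes the star-closed hypothesis the correct one for the conclusion to hold.
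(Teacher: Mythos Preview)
Your proof is correct and follows exactly the route the paper indicates: the paper simply writes that the theorem holds ``by adjunction (see Theorem \ref{adj-th} (i))'' with no further detail, and you have filled in the application of adjunction together with the combinatorial identification of the qlc centers via star-closedness. One small imprecision worth fixing: the qlc centers of $[X,K_X+D]$ are not only the lc centers of $(X,D)$ but also include $X=V(0)$ itself (irreducible components are always qlc centers by Definition \ref{quasi-log} (3)); your combinatorial equivalence $V(\sigma)\subseteq Y\Leftrightarrow\sigma\in\Phi$ works verbatim for $\sigma=0$ as well, so once you include this the argument is complete as written.
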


Thus, we can use the theory of quasi-log varieties 
to investigate toric varieties and toric polyhedra. 
For example, we have the following result as a special case of 
Theorem \ref{adj-th} (ii). 

\begin{cor}
We use the same notation as in {\em{Theorem \ref{topo-th}}}. 
Assume that $X$ is projective and $L$ is 
an ample Cartier divisor. 
Then $H^i(X, \mathcal I_Y\otimes \mathcal O_X(L))=0$ for 
any $i>0$, where $\mathcal I_Y$ is the defining 
ideal sheaf of $Y$ on $X$. 
In particular, $H^0(X, \mathcal O_X(L))\to 
H^0(Y, \mathcal O_Y(L))$ is 
surjective. 
\end{cor}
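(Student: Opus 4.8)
The plan is to realize this vanishing as a direct instance of adjunction and the vanishing theorem for quasi-log varieties, namely Theorem \ref{adj-th} (ii), applied not to $[Y,0]$ but to the ambient quasi-log pair $[X, K_X+D]$. Recall from the proposition preceding Theorem \ref{topo-th} that $(X,D)$ is log canonical with $K_X+D\sim 0$, so by Example \ref{new-example} the pair $[X, K_X+D]$ is a quasi-log variety; since $(X,D)$ is lc, it is in fact a qlc pair with $X_{-\infty}=\emptyset$. The same proposition identifies the qlc centers of $[X, K_X+D]$ with the orbit closures $V(\sigma)$, $\sigma\in\Delta\setminus\{0\}$. Because $\Phi$ is star closed, the toric polyhedron $Y=\bigcup_{\sigma\in\Phi}V(\sigma)$ is exactly a union of qlc centers of $[X, K_X+D]$, so it is admissible as the subscheme $X'$ in Theorem \ref{adj-th}.

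With this set-up the key step is to verify the hypothesis of Theorem \ref{adj-th} (ii). I would take $\omega=K_X+D$, $X'=Y$, and $\pi\colon X\to \Spec\mathbb C$, which is proper since $X$ is projective. The required condition is that $L-\omega$ be nef and log big over $\Spec\mathbb C$. This is immediate: since $K_X+D\sim 0$ we have $L-(K_X+D)\sim L$, and $L$ is an ample Cartier divisor, so $L-\omega$ is ample and hence nef and log big (see Definition \ref{nlb-def}). Theorem \ref{adj-th} (ii) then yields that $\mathcal I_{Y}\otimes \mathcal O_X(L)$ is $\pi_*$-acyclic, that is, $H^i(X, \mathcal I_Y\otimes \mathcal O_X(L))=0$ for all $i>0$, which is the first assertion.

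For the surjectivity of the restriction map I would feed the vanishing into the ideal-sheaf sequence. Tensoring $0\to \mathcal I_Y\to \mathcal O_X\to \mathcal O_Y\to 0$ by the line bundle $\mathcal O_X(L)$ gives the exact sequence
$$0\to \mathcal I_Y\otimes \mathcal O_X(L)\to \mathcal O_X(L)\to \mathcal O_Y(L)\to 0,$$
and the associated long exact cohomology sequence reads $H^0(X, \mathcal O_X(L))\to H^0(Y, \mathcal O_Y(L))\to H^1(X, \mathcal I_Y\otimes \mathcal O_X(L))$. The last group vanishes by what we just proved, so $H^0(X, \mathcal O_X(L))\to H^0(Y, \mathcal O_Y(L))$ is surjective.

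The only point demanding care --- and the nearest thing to an obstacle --- is bookkeeping about scheme structures: Theorem \ref{adj-th} (i) endows $X'$ with a scheme structure through an ideal sheaf $\mathcal I_{X'}$ defined via the quasi-log resolution, and I must check that this coincides with the defining ideal sheaf $\mathcal I_Y$ of the reduced toric polyhedron appearing in the statement. This causes no real trouble here, because Theorem \ref{topo-th} tells us that the induced quasi-log pair $[Y,0]$ has only qlc singularities, so $\mathcal O_Y$ is semi-normal (Remark \ref{hosoku2}) and in particular reduced, matching the reduced structure on the toric polyhedron. Granting this identification, the argument is a completely formal application of Theorem \ref{adj-th} (ii), with no hard estimates involved.
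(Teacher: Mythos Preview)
Your proof is correct and is exactly the approach the paper indicates: the corollary is stated there ``as a special case of Theorem \ref{adj-th} (ii)'' with no further argument, and you have simply spelled out that specialization (including the easy check that $L-(K_X+D)\sim L$ is ample, hence nef and log big) together with the standard ideal-sheaf exact sequence for the surjectivity claim. Your care about the scheme structure on $Y$ is appropriate and your resolution of it is fine, since both the adjunction-induced structure and the toric-polyhedron structure are reduced on the same underlying set.
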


We can prove various vanishing theorems for 
toric varieties and toric polyhedra without 
appealing the results in Chapter \ref{chap2}. 
For the details, see \cite{fujino7}. 

\section{Non-lc ideal sheaves}\label{43-sec} 

In \cite{fujino10}, we introduced the notion of 
{\em{non-lc ideal sheaves}} and proved the 
restriction theorem. 
In this section, we quickly review the 
results in \cite{fujino10}. 

\begin{defn}[Non-lc ideal sheaf]\index{non-lc 
ideal sheaf}\label{nlc-def}
Let $X$ be a normal variety and let $\Delta$ be 
an $\mathbb R$-divisor on $X$ such that 
$K_X+\Delta$ is $\mathbb R$-Cartier. 
Let $f:Y\to X$ be a resolution with $K_Y+\Delta _Y=f^*(K_X+\Delta)$ 
such that 
$\Supp \Delta _Y$ is simple normal crossing. 
Then we put 
$$\mathcal J_{NLC}(X, \Delta)=f_*\mathcal O_Y(\ulcorner -(\Delta_Y^{<1})\urcorner-
\llcorner \Delta_Y^{>1}\lrcorner)=f_*\mathcal O_Y(-\llcorner \Delta_Y\lrcorner 
+\Delta^{=1}_Y)$$ and 
call it the {\em{non-lc ideal sheaf associated to $(X, \Delta)$}}. 
\end{defn}

In Definition \ref{nlc-def}, the ideal $\mathcal J_{NLC}(X, 
\Delta)$ coincides with 
$\mathcal I_{X_{-\infty}}$ 
for the quasi-log pair $[X, K_X+\Delta]$ 
when $\Delta$ is effective. 

\begin{rem}
In the same notation as in Definition \ref{nlc-def}, 
we put 
$$
\mathcal J(X, \Delta)=f_*\mathcal O_Y(-\llcorner \Delta_Y\lrcorner)
=f_*\mathcal O_Y(K_Y-\llcorner f^*(K_X+\Delta)\lrcorner). 
$$ 
It is nothing but the well-known 
{\em{multiplier ideal sheaf}}\index{multiplier ideal 
sheaf}. 
It is obvious that 
$\mathcal J(X, \Delta)\subseteq \mathcal J_{NLC}(X, \Delta)$. 
\end{rem}

The following theorem is the main theorem of \cite{fujino10}. 
We hope that it will have many applications. 
For the proof, see \cite{fujino10}. 

\begin{thm}[Restriction Theorem]\index{restriction 
theorem}\label{rest-th}
Let $X$ be a normal variety and let $S+B$ be an effective $\mathbb R$-divisor 
on $X$ such that $S$ is reduced and normal and that $S$ and $B$ have no 
common irreducible components. 
Assume that $K_X+S+B$ is $\mathbb R$-Cartier. 
We put 
$K_S+B_S=(K_X+S+B)|_S$. 
Then we obtain that 
$$
\mathcal J_{NLC}(S, B_S)= \mathcal J_{NLC}(X, S+B)|_S. 
$$
\end{thm}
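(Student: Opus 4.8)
The plan is to prove the Restriction Theorem by reducing both sides to a comparison of direct image sheaves on a common log resolution, and then showing that the obstruction to the naive restriction is killed by the vanishing and torsion-free theorems of Chapter \ref{chap2}, specifically Theorem \ref{8} (i). First I would choose a log resolution $f:Y\to X$ of $(X,S+B)$ such that $K_Y+B_Y=f^*(K_X+S+B)$ with $\Supp B_Y$ simple normal crossing, and such that, writing $T$ for the strict transform of $S$, the morphism $f$ is nice with respect to $S$: I want $f|_T:T\to S$ to be a log resolution of $(S,B_S)$ with $K_T+B_T=(f|_T)^*(K_S+B_S)$, where $K_T+B_T=(K_Y+B_Y)|_T$ (this uses adjunction $K_T+B_T=(K_Y+B_Y-T)|_T$ together with $K_S+B_S=(K_X+S+B)|_S$ and the projection formula). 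By Szab\'o's resolution lemma (\ref{15-resol}) such an $f$ exists after further blow-ups, and since $S$ is normal we may arrange $T$ to be a genuine component of $B_Y^{=1}$ dominating $S$.

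With this setup, the right-hand side $\mathcal J_{NLC}(X,S+B)|_S$ is by definition $f_*\mathcal O_Y(A-N)|_S$, where I write $A=\ulcorner -(B_Y^{<1})\urcorner$ and $N=\llcorner B_Y^{>1}\lrcorner$, while the left-hand side $\mathcal J_{NLC}(S,B_S)$ is $(f|_T)_*\mathcal O_T(A_T-N_T)$ with $A_T,N_T$ defined analogously from $B_T$. The key point is that $A_T-N_T=(A-N)|_T$: this is the compatibility of the round-up/round-down operations with restriction, which holds because $T$ is not a component of $B_Y^{<1}$ nor of $B_Y^{>1}$, so restricting $B_Y$ to $T$ and then taking $(\cdot)^{<1},(\cdot)^{>1}$ agrees with restricting the already-decomposed pieces. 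I would then consider the short exact sequence
$$
0\to \mathcal O_Y(A-N-T)\to \mathcal O_Y(A-N)\to \mathcal O_T((A-N)|_T)\to 0
$$
and push forward by $f$. The point is to identify $f_*\mathcal O_Y(A-N-T)$ and to show that the connecting map $f_*\mathcal O_T((A-N)|_T)\to R^1f_*\mathcal O_Y(A-N-T)$ is zero.

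The main obstacle, as always in this circle of ideas, is exactly the vanishing of that connecting homomorphism. To handle it I would compute
$$
(A-N-T)-(K_Y+\{B_Y\}+B_Y^{=1}-T)=-f^*(K_X+S+B),
$$
so that $\mathcal O_Y(A-N-T)$ has the shape $\mathcal O_Y(L)$ with $L-(K_Y+(\text{boundary}))$ pulled back from $X$, hence $f$-semi-ample (indeed $f$-trivial). Theorem \ref{8} (i) then says every nonzero local section of $R^1f_*\mathcal O_Y(A-N-T)$ contains in its support the $f$-image of some stratum of the relevant simple normal crossing pair; since the image of the connecting map is supported on $S=f(T)$, I must check that no stratum of $(Y,\{B_Y\}+B_Y^{=1}-T)$ maps into $S$. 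This is precisely the analogue of the Claim inside the proof of Proposition \ref{taisetsu}: any stratum $C$ with $f(C)\subset S$ would lie in a component $G$ of $B_Y^{=1}$ with $f(G)\subset S$, forcing $G$ to be among the components removed (i.e. $G=T$ or a component absorbed into the definition), contradicting that $C$ is a stratum of the pair with $T$ subtracted. Establishing this combinatorial fact about strata is the real content; once it is in place the connecting map vanishes, the sequence stays exact after $f_*$, and restriction to $S$ together with $A_T-N_T=(A-N)|_T$ yields $\mathcal J_{NLC}(X,S+B)|_S=(f|_T)_*\mathcal O_T(A_T-N_T)=\mathcal J_{NLC}(S,B_S)$. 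I would finish by noting the identification is independent of the chosen resolution by Corollary \ref{346346} and Proposition \ref{taisetsu}.
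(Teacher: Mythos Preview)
The paper does not actually prove this theorem here; it only states it and refers the reader to \cite{fujino10} for the proof. So there is no in-paper argument to compare against. That said, your proposal has a genuine gap at exactly the point you flag as ``the real content''.

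You take $T$ to be the strict transform of $S$ and then assert that no stratum of $(Y,\{B_Y\}+B_Y^{=1}-T)$ is mapped into $S$ by $f$. This is false in general. Whenever $(X,S+B)$ has an lc center properly contained in $S$ --- equivalently, whenever $(S,B_S)$ is not klt --- there is an $f$-exceptional prime divisor $E$ with coefficient $1$ in $B_Y$ and $f(E)\subset S$. Such an $E$ is a component of $B_Y^{=1}-T$ (it is distinct from the strict transform $T$), hence a stratum of the pair $(Y,B_Y^{=1}-T)$ mapping into $S$. Your appeal to the Claim inside Proposition~\ref{taisetsu} does not save this: in that proposition $T$ is \emph{defined} to be the union of \emph{all} components of $B_Z^{=1}$ mapped into $S$, so the conclusion ``$G$ is an irreducible component of $T$'' is immediate from the definition. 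With your $T$ equal to the strict transform only, the conclusion ``$G=T$ or a component absorbed into the definition'' is exactly what fails. Consequently Theorem~\ref{8}~(i) no longer forces the connecting homomorphism $f_*\mathcal O_T((A-N)|_T)\to R^1f_*\mathcal O_Y(A-N-T)$ to vanish, and you lose surjectivity of $\mathcal J_{NLC}(X,S+B)\to\mathcal J_{NLC}(S,B_S)$. What survives is only the inclusion $\mathcal J_{NLC}(X,S+B)|_S\subseteq\mathcal J_{NLC}(S,B_S)$, which is the easy (adjunction) direction; the hard direction, equivalent to inversion of adjunction on log canonicity, is precisely what your argument does not establish.

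If you instead enlarge $T$ to be the union of all components of $B_Y^{=1}$ mapped into $S$, the connecting map does vanish by the argument in Proposition~\ref{taisetsu}, but then $f|_T:T\to S$ is no longer a birational morphism from a smooth variety, so the identification $(f|_T)_*\mathcal O_T(A_T-N_T)=\mathcal J_{NLC}(S,B_S)$ is not given by Definition~\ref{nlc-def} and requires a separate argument. Bridging that step is the substance of the proof in \cite{fujino10}; your sketch does not supply it.
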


Theorem \ref{rest-th} 
is a generalization of the inversion of 
adjunction on log canonicity in some sense. 

\begin{cor}[Inversion of Adjunction]\index{inversion of 
adjunction}\label{inv-cor}
We use the notation as in {\em{Theorem \ref{rest-th}}}. 
Then, $(S, B_S)$ is lc if and only if 
$(X, S+B)$ is lc around $S$. 
\end{cor}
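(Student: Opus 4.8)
The plan is to reduce the statement about log canonicity on both sides to the triviality of the associated non-lc ideal sheaves, and then to feed these reductions into the Restriction Theorem (Theorem \ref{rest-th}). The bridge is the elementary observation that, for any pair, the cosupport of its non-lc ideal sheaf is exactly its non-lc locus; equivalently, the pair is log canonical over an open set $U$ if and only if the non-lc ideal sheaf equals the full structure sheaf over $U$.

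First I would record this characterization. Let $f\colon Y\to X$ be a resolution with $K_Y+\Delta_Y=f^*(K_X+\Delta)$ and $\Supp \Delta_Y$ simple normal crossing, as in Definition \ref{nlc-def}. If $(X,\Delta)$ is log canonical, then $\Delta_Y^{>1}=0$, so $\mathcal{J}_{NLC}(X,\Delta)=f_*\mathcal{O}_Y(\ulcorner -(\Delta_Y^{<1})\urcorner)$. On the non-exceptional part the coefficients of $\Delta_Y^{<1}$ lie in $[0,1)$, so the round-up vanishes there, and $\ulcorner -(\Delta_Y^{<1})\urcorner$ is an effective $f$-exceptional divisor; hence $\mathcal{J}_{NLC}(X,\Delta)=\mathcal{O}_X$. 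Conversely, if $(X,\Delta)$ fails to be log canonical along a subvariety $W$, some prime divisor over $W$ contributes to $\llcorner \Delta_Y^{>1}\lrcorner$, and pushing forward the corresponding negative divisor produces a proper ideal whose cosupport meets $W$. Applying this to $(S,B_S)$ and to $(X,S+B)$ gives: $(S,B_S)$ is lc iff $\mathcal{J}_{NLC}(S,B_S)=\mathcal{O}_S$, and $(X,S+B)$ is lc near $S$ iff $\mathcal{J}_{NLC}(X,S+B)=\mathcal{O}_X$ in a neighborhood of $S$.

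With these in hand the two implications run as follows. For the ``if'' direction, log canonicity of $(X,S+B)$ around $S$ gives $\mathcal{J}_{NLC}(X,S+B)=\mathcal{O}_X$ near $S$; restricting to $S$ and invoking Theorem \ref{rest-th} yields $\mathcal{J}_{NLC}(S,B_S)=\mathcal{J}_{NLC}(X,S+B)|_S=\mathcal{O}_S$, so $(S,B_S)$ is lc. For the converse, $(S,B_S)$ lc gives $\mathcal{J}_{NLC}(S,B_S)=\mathcal{O}_S$, hence $\mathcal{J}_{NLC}(X,S+B)|_S=\mathcal{O}_S$ by the Restriction Theorem. The delicate point --- and the only real obstacle once Theorem \ref{rest-th} is granted --- is to promote this equality on $S$ to an equality in a neighborhood of $S$ in $X$. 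Here I would argue pointwise: for $p\in S$ the surjection onto $\mathcal{O}_S$ means $\mathcal{J}_{NLC}(X,S+B)_p+\mathcal{I}_{S,p}=\mathcal{O}_{X,p}$, and since $\mathcal{I}_{S,p}$ lies in the maximal ideal, Nakayama's lemma forces $\mathcal{J}_{NLC}(X,S+B)_p=\mathcal{O}_{X,p}$. Thus $S$ is disjoint from the cosupport of $\mathcal{J}_{NLC}(X,S+B)$; as that cosupport is closed, it avoids an open neighborhood of $S$, and the characterization lemma shows $(X,S+B)$ is lc around $S$.
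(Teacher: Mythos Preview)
The proposal is correct. The paper states the corollary without proof, treating it as an immediate consequence of Theorem~\ref{rest-th}; your argument is precisely the natural unpacking of that implication, including the Nakayama step needed to promote $\mathcal{J}_{NLC}(X,S+B)|_S=\mathcal{O}_S$ to $\mathcal{J}_{NLC}(X,S+B)=\mathcal{O}_X$ in a neighborhood of $S$.
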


In \cite{kawakita}, 
Kawakita proved the inversion of adjunction on log canonicity 
without assuming that $S$ is normal. 

\section{Effective Base Point Free Theorems}\label{44-sec} 

In this section, we state effective base point free theorems 
for log canonical pairs without proof. 
First, we state Angehrn--Siu type effective base point 
free 
theorems (see \cite{as} and \cite{ko-sing}). 
For the details of 
Theorems \ref{eff-th1} and \ref{eff-th2}, 
see \cite{fujino9}. 

\begin{thm}[Effective 
Freeness]\label{eff-th1} 
Let $(X, \Delta)$ be a projective {\em{log canonical}} pair 
such that $\Delta$ is an effective 
$\mathbb Q$-divisor and let $M$ be a 
line bundle on $X$. 
Assume that $M\equiv K_X+\Delta+N$, where $N$ is an ample 
$\mathbb Q$-divisor 
on $X$. 
Let $x\in X$ be a closed point and assume that there are 
positive numbers $c(k)$ with 
the following properties: 
\begin{enumerate}
\item[$(1)$] If $x\in Z\subset X$ is an irreducible 
$($positive dimensional$)$ 
subvariety, then 
$$
(N^{\dim Z}\cdot Z)>c(\dim Z)^{\dim Z}. 
$$ 
\item[$(2)$] The numbers $c(k)$ satisfy the inequality 
$$
\sum _{k=1}^{\dim X} \frac{k}{c(k)}\leq 1. 
$$
\end{enumerate} 
Then $M$ has a global section not vanishing at $x$. 
\end{thm}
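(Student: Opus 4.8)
The plan is to adapt the Angehrn--Siu method of creating and then successively cutting down log canonical centers to the log canonical setting, replacing the Kawamata--Viehweg and Nadel vanishing theorems used in the klt case by the adjunction and vanishing theorem for quasi-log pairs (Theorem \ref{adj-th}). The goal is to produce an effective $\mathbb{Q}$-divisor $D$ with $D \equiv \lambda N$ for some $\lambda < 1$ such that $[X, K_X+\Delta+D]$ is a quasi-log pair with empty non-qlc locus (so that $(X,\Delta+D)$ is lc) and with $\{x\}$ an isolated minimal qlc center near $x$. Once this is achieved, the residual class $N-D$ is still ample, so $M-(K_X+\Delta+D)\equiv N-D$ is ample, hence nef and log big, and Theorem \ref{adj-th} (ii) will force a section of $M$ not vanishing at $x$.

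First I would construct an initial singularity. Applying hypothesis (1) with $Z=X$ gives $(N^{\dim X}\cdot X) > c(\dim X)^{\dim X}$, and a standard dimension count, producing sections of high multiples of $M$ with large vanishing order at $x$, yields an effective $\mathbb{Q}$-divisor $D_0 \equiv \lambda_0 N$ with $\lambda_0$ strictly smaller than $\dim X / c(\dim X)$ such that $(X,\Delta+D_0)$ fails to be klt at $x$. Next comes the inductive descent. Let $W$ be the minimal lc center of the current pair through $x$; by Theorem \ref{qlc-cent-prop} (or Theorem \ref{a1}) it is unique and normal at $x$. If $\dim W = d > 0$, I would use $(N^d\cdot W) > c(d)^d$ to build a divisor on $W$ of controlled degree creating a deeper singularity at $x$, then extend it to $X$ using the surjectivity of restriction furnished by Theorem \ref{adj-th} (ii) applied to the quasi-log structure on $W$ coming from Theorem \ref{adj-th} (i), and add a small multiple of it after a tie-breaking perturbation so that the minimal qlc center through $x$ drops in dimension. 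The coefficients accumulate as $\lambda = \sum_k \lambda_k$ with each $\lambda_k$ slightly below $k/c(k)$, so the bookkeeping inequality (2), namely $\sum_{k=1}^{\dim X} k/c(k)\le 1$, together with the strict inequalities in (1), keeps $\lambda<1$ throughout and guarantees that $N-D$ stays ample.

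After at most $\dim X$ steps the minimal qlc center through $x$ is $\{x\}$ itself. Writing $X'$ for the union of $\{x\}$ with the qlc centers meeting a punctured neighbourhood of $x$ (which can be separated from $x$), I would apply Theorem \ref{adj-th} (ii): since $M-(K_X+\Delta+D)\equiv N-D$ is nef and log big, $\mathcal{I}_{X'}\otimes \mathcal{O}_X(M)$ is $\pi_*$-acyclic, so $H^1(X,\mathcal{I}_{X'}\otimes\mathcal{O}_X(M))=0$ and the restriction $H^0(X,\mathcal{O}_X(M))\to H^0(X',\mathcal{O}_{X'}(M))$ is surjective; reading off the summand supported at $x$ gives the required section. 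The main obstacle will be the dimension-cutting step: unlike the klt case, $(X,\Delta)$ may already carry lc centers through $x$, so at each stage one must run the construction on the possibly non-normal minimal center $W$ and descend both the positivity and the log canonical structure via quasi-log adjunction rather than Kawamata's subadjunction, while simultaneously keeping the center through $x$, maintaining the lc but non-klt condition, and not exhausting the ampleness budget controlled by (2). Handling the tie-breaking and the extension of divisors from $W$ to $X$ in this non-klt, possibly non-normal environment is the delicate heart of the argument.
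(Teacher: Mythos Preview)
The paper does not actually prove Theorem \ref{eff-th1}: Section \ref{44-sec} opens by saying the effective base point free theorems are stated ``without proof'' and that the details are in \cite{fujino9}. What the paper does record is a two-line summary of the key inputs: ``the vanishing theorem (see Theorem \ref{adj-th} (ii)) and the inversion of adjunction on log canonicity (see Corollary \ref{inv-cor} and \cite{kawakita}).'' So the only thing to compare your proposal against is this hint.

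Your outline is the correct overall shape: it is the Angehrn--Siu/Koll\'ar cutting-down-centers argument, and you correctly identify Theorem \ref{adj-th} (ii) as the replacement for Nadel vanishing in the final step and in the extension-of-sections step. Where your sketch diverges from the paper's hint is in the dimension-cutting step. You propose to handle the descent entirely through quasi-log adjunction (Theorem \ref{adj-th} (i)) on the minimal center $W$, but you leave unstated the mechanism by which a divisor with bad singularities on $W$ forces a strictly smaller lc center of the pair on $X$. That step is exactly inversion of adjunction on log canonicity, and the paper singles it out as the second essential ingredient, citing both Corollary \ref{inv-cor} (the normal case, derived from the restriction theorem for non-lc ideal sheaves) and Kawakita \cite{kawakita} (which covers non-normal $S$). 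Quasi-log adjunction goes in the forward direction (from $X$ to $W$) and gives you vanishing and structure on $W$; inversion of adjunction goes in the reverse direction and is what guarantees the center actually drops. Your closing paragraph flags this as ``the delicate heart of the argument,'' which is accurate, but you should name the tool that resolves it rather than leaving it open.
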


\begin{thm}[Effective Point 
Separation]\label{eff-th2} 
Let $(X, \Delta)$ be a projective 
{\em{log canonical}} pair such 
that $\Delta$ is an effective 
$\mathbb Q$-divisor and let $M$ be a 
line bundle on $X$. 
Assume that $M\equiv K_X+\Delta+N$, where $N$ is an ample 
$\mathbb Q$-divisor 
on $X$. 
Let $x_1, x_2\in X$ be closed points and assume that there are 
positive numbers $c(k)$ with 
the following properties: 
\begin{enumerate}
\item[$(1)$] If $Z\subset X$ is an 
irreducible $($positive dimensional$)$ 
subvariety which contains $x_1$ or $x_2$, then 
$$
(N^{\dim Z}\cdot Z)>c(\dim Z)^{\dim Z}. 
$$ 
\item[$(2)$] The numbers $c(k)$ satisfy the inequality 
$$
\sum _{k=1}^{\dim X}
\sqrt[\leftroot{0}\uproot{0} k]{2}\frac{k}{c(k)}
\leq 1. 
$$
\end{enumerate} 
Then global sections of $M$ separates $x_1$ and $x_2$. 
\end{thm}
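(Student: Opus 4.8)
The plan is to follow the Angehrn--Siu strategy, adapted to the log canonical category by means of the adjunction and vanishing theorems developed in this book. The whole statement reduces to the construction of an effective $\mathbb{Q}$-divisor $D\sim_{\mathbb{Q}}\lambda N$ with $0<\lambda<1$ such that, in a neighborhood of $\{x_1,x_2\}$, the pair $(X,\Delta+D)$ is log canonical and each of $x_1$, $x_2$ is a zero-dimensional lc center (a minimal lc center) of $(X,\Delta+D)$, with no other lc centers and no non-klt locus meeting $\{x_1,x_2\}$. Granting this, I would set $W=\{x_1,x_2\}$ with its reduced structure. Since $M-(K_X+\Delta+D)\equiv N-D\equiv (1-\lambda)N$ is ample, Theorem \ref{moti1} (equivalently the adjunction and vanishing theorem, Theorem \ref{adj-th} (ii), combined with Theorem \ref{kvn}) applied to the two zero-dimensional lc centers gives $H^1(X,\mathcal{I}_W\otimes\mathcal{O}_X(M))=0$, where $\mathcal{I}_W$ is the defining ideal sheaf of $W$. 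Hence the restriction map $H^0(X,\mathcal{O}_X(M))\to H^0(W,\mathcal{O}_W(M))\simeq M_{x_1}\oplus M_{x_2}$ is surjective onto the two-dimensional space, so a preimage of $(1,0)$ is a global section vanishing at $x_2$ but not at $x_1$; thus global sections of $M$ separate $x_1$ and $x_2$.

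Thus the real content is the construction of $D$, which I would carry out by the usual dimension descent. First I would reduce to the case where $N$ is a $\mathbb{Q}$-divisor and replace $\equiv$ by $\sim_{\mathbb{Q}}$ after a numerical perturbation. Then, using the hypothesis $(N^{\dim Z}\cdot Z)>c(\dim Z)^{\dim Z}$ for every positive-dimensional $Z$ through $x_1$ or $x_2$, I would produce effective $\mathbb{Q}$-divisors that are $\mathbb{Q}$-linearly equivalent to small multiples of $N$ and highly singular at both points; standard volume estimates show that one can force a non-klt singularity along a subvariety through each point while spending a controlled fraction of the coefficient budget. Lowering the coefficient to the log canonical threshold and invoking subadjunction to the minimal lc center --- here the adjunction of Theorem \ref{adj-th} (i), which holds for lc and quasi-log pairs and replaces the klt subadjunction of the classical argument --- strictly decreases the dimension of the minimal center through each point. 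Iterating at most $\dim X$ times drives the minimal centers down to the points themselves. The factor $\sqrt[k]{2}$ in the inequality $\sum_{k}\sqrt[k]{2}\,\tfrac{k}{c(k)}\leq 1$ is precisely the overhead incurred at the $k$-dimensional step by requiring the constructed divisor to be singular enough at two points at once rather than one, and summing these overheads is what keeps the total coefficient $\lambda$ below $1$.

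The step I expect to be the main obstacle is controlling the interaction between the two points during the descent: the minimal lc center through $x_1$ may a priori contain $x_2$, or the two chains of minimal centers may merge, and one must perform a tie-breaking perturbation of the coefficients to separate them into two distinct zero-dimensional lc centers while preserving log canonicity. This requires the structural results on lc centers --- that an intersection of lc centers is a union of lc centers and that each point lies on a unique minimal lc center, normal there (Theorem \ref{a1} and Theorem \ref{qlc-cent-prop}) --- to guarantee that the cutting-down is well defined and that the two minimal centers can be isolated. A secondary technical point is that, since $(X,\Delta)$ is only assumed log canonical rather than klt, every vanishing and lifting statement used in the descent must be drawn from the generalized results of Chapter \ref{chap2} (Theorem \ref{kvn}, Theorem \ref{74}) rather than from the Kawamata--Viehweg vanishing theorem; once this is in place, the argument runs parallel to the proof of the one-point statement, Theorem \ref{eff-th1}.
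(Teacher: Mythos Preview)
The paper does not actually prove this theorem: Section~\ref{44-sec} states Theorems~\ref{eff-th1} and~\ref{eff-th2} explicitly \emph{without proof}, referring to \cite{fujino9} for the details. The only information the paper provides about the argument is the sentence immediately following the statements: ``The key points of the proofs of Theorems~\ref{eff-th1} and~\ref{eff-th2} are the vanishing theorem (see Theorem~\ref{adj-th}~(ii)) and the inversion of adjunction on log canonicity (see Corollary~\ref{inv-cor} and \cite{kawakita}).''

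Your outline follows the expected Angehrn--Siu descent and correctly identifies the vanishing input (Theorem~\ref{adj-th}~(ii), Theorem~\ref{kvn}) and the final lifting step via Theorem~\ref{moti1}. However, you do not mention the second ingredient the paper singles out, namely inversion of adjunction on log canonicity, and this is not a cosmetic omission. In the cutting-down step you write ``lowering the coefficient to the log canonical threshold and invoking subadjunction to the minimal lc center --- here the adjunction of Theorem~\ref{adj-th}~(i)''. But Theorem~\ref{adj-th}~(i) only goes from $X$ down to the center; it does not tell you that, after adding your new singular divisor, the ambient pair $(X,\Delta+D+cB)$ remains log canonical in a neighborhood of the point while acquiring a strictly smaller lc center there. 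That conclusion is exactly what inversion of adjunction (Corollary~\ref{inv-cor}, or Kawakita's theorem when the center is not a normal divisor) supplies, and without it the descent does not close: you cannot rule out that the threshold is reached by the pair becoming worse than lc at the point rather than by a new lc center appearing. Since the paper flags this as one of the two essential inputs, you should make its role explicit in your sketch rather than subsuming it under ``adjunction''.
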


The key points of the proofs of Theorems \ref{eff-th1} and 
\ref{eff-th2} are the vanishing theorem (see Theorem 
\ref{adj-th} (ii)) and the inversion of adjunction on 
log canonicity (see Corollary \ref{inv-cor} 
and \cite{kawakita}). 

The final theorem in this book is a generalization of 
Koll\'ar's effective base point freeness (see \cite{kol-eff}). 
The proof is essentially the same as Koll\'ar's once 
we adopt Theorem \ref{adj-th} (ii) and Theorem 
\ref{bpf-rdlc-th}. For the details, see \cite{fujino8}. 

\begin{thm}\label{eff-th3} 
Let $(X, \Delta)$ be a {\em{log canonical}} 
pair with $\dim 
X=n$ and let $\pi:X\to V$ be a {\em{projective}} surjective morphism. 
Note that $\Delta$ is an effective $\mathbb Q$-divisor on $X$. 
Let $L$ be a $\pi$-nef Cartier divisor on $X$. Assume that 
$aL-(K_X+\Delta)$ is $\pi$-nef and $\pi$-log 
big for some $a\geq 0$. 
Then there exists a positive integer $m=m(n, a)$,  
which only depends on $n$ and $a$, 
such that 
$\mathcal O_X(mL)$ is $\pi$-generated. 
\end{thm}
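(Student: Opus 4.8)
The plan is to follow Kollár's inductive strategy from \cite{kol-eff}, replacing the two analytic inputs of the klt case — the Kawamata--Viehweg vanishing theorem and the usual base point free theorem — by their quasi-log counterparts, namely the vanishing part of adjunction (Theorem \ref{adj-th} (ii)) and the base point free theorem of Reid--Fukuda type for lc pairs (Theorem \ref{bpf-rdlc-th}). First I would reduce the statement to a pointwise one: since the assertion is local on $V$, I may shrink $V$ and assume it affine, and it then suffices to produce, for every closed point $x\in X$, a section of $\mathcal O_X(mL)$ not vanishing at $x$ with $m$ bounded solely in terms of $n$ and $a$, and to induct on $\dim X$ (the case $\dim X=0$ being trivial).

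The heart of the argument is the creation of a qlc center through $x$ of controlled weight. Writing $N=aL-(K_X+\Delta)$, which is $\pi$-nef and $\pi$-log big, I would apply Kodaira's lemma (as in the proof of Theorem \ref{bpf-rd-th}) to write $N\sim _{\mathbb Q}A+E$ with $A$ being $\pi$-ample and $E$ effective, and then use Kollár's construction of divisors with prescribed multiplicity to produce an effective $\mathbb Q$-divisor $D\sim _{\mathbb Q}tL$ with $t=t(n,a)$ bounded, such that $[X,\omega+D]$ (with $\omega=K_X+\Delta$) acquires a qlc center $W$ passing through $x$ of dimension $<n$. By adjunction (Theorem \ref{adj-th} (i)) the set $W$ inherits a natural quasi-log structure, and by the uniqueness of the minimal qlc center (Theorem \ref{qlc-cent-th}), together with a tie-breaking perturbation of the coefficients, I may assume $W$ is the minimal such center at $x$.

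Next I would lift sections. Since $mL-(\omega+D)$ is $\pi$-nef and $\pi$-log big for $m\geq t+a$, the vanishing theorem (Theorem \ref{adj-th} (ii)) makes $\mathcal I_W\otimes \mathcal O_X(mL)$ be $\pi_*$-acyclic, so the restriction $\pi_*\mathcal O_X(mL)\to \pi_*\mathcal O_W(mL)$ is surjective. By the inductive hypothesis applied to the quasi-log pair $W$ — whose dimension is $<n$ and whose quasi-log canonical class differs from $(mL)|_W$ by something controlled — $\mathcal O_W(m'L)$ is $\pi$-generated for $m'$ bounded in terms of $\dim W$ and an adjusted constant $a'$, and these sections lift to $X$. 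This yields the desired section not vanishing at $x$. Iterating on the dimension of the non-$\pi$-free locus (in the manner of Step 3 of the proof of Theorem \ref{bpf-th}) and finally invoking Theorem \ref{bpf-rdlc-th} to pass from pointwise generation to honest base point freeness completes the argument.

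The main obstacle is the effective bookkeeping: one must ensure that the weight $t$ of the constructed center and the shifted constant $a'$ appearing after each restriction stay bounded by a function of $n$ and $a$ alone, independent of the geometry of $X$. A secondary technical point, special to the lc (as opposed to klt) setting, is that the centers $W$ are in general non-normal and even reducible, so the entire induction must be carried out in the quasi-log category rather than for honest pairs; here the flexibility in the choice of quasi-log resolution (Proposition \ref{taisetsu3}) and the surjectivity furnished by Theorem \ref{adj-th} (ii) are exactly what keep the argument running, just as Kollár's vanishing-and-lifting mechanism does in the klt case.
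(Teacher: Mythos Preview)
The paper does not actually prove Theorem \ref{eff-th3}; Section \ref{44-sec} explicitly states the effective results without proof and says only that ``the proof is essentially the same as Koll\'ar's once we adopt Theorem \ref{adj-th} (ii) and Theorem \ref{bpf-rdlc-th}. For the details, see \cite{fujino8}.'' Your proposal is exactly this: run Koll\'ar's inductive scheme from \cite{kol-eff}, substituting the quasi-log vanishing theorem (Theorem \ref{adj-th} (ii)) for Kawamata--Viehweg and the Reid--Fukuda type base point free theorem (Theorem \ref{bpf-rdlc-th}) for the usual one. So your approach coincides with what the paper indicates.

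One comment on the exposition: your final clause, ``finally invoking Theorem \ref{bpf-rdlc-th} to pass from pointwise generation to honest base point freeness,'' slightly misplaces the role of that theorem. In Koll\'ar's scheme the non-effective base point freeness is an \emph{input} at the start (it guarantees the base locus is a proper closed subset, so one can induct on its dimension), not a finishing move. You also correctly flag the real subtlety---that the centers $W$ produced are quasi-log varieties rather than lc pairs, forcing the induction to be run in the quasi-log category; this is precisely why the paper's framework, and not just the classical klt one, is needed here.
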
 

\chapter{Appendix}\label{chap5}

In this final chapter, we will explain some sample 
computations of flips. 
We use the toric geometry to construct explicit examples here. 

\section{Francia's flip revisited} 

We give an example of Francia's flip on a projective 
toric variety explicitly. 
It is a monumental example  (see \cite{francia}). 
So, we contain it here. 
Our description looks slightly different from 
the usual one because we use the toric geometry. 

\begin{ex}\label{fran11}
We fix a lattice $N\simeq \mathbb Z^3$ and 
consider the lattice points 
$e_1=(1, 0, 0)$, $e_2=(0, 1, 0)$, 
$e_3=(0, 0, 1)$, $e_4=(1, 1, -2)$, and 
$e_5=(-1, -1, 1)$. First, 
we consider the complete fan $\Delta_1$ spanned 
by $e_1, e_2, e_4$, and $e_5$. 
Since $e_1+e_2+e_4+2e_5=0$, $X_1=X(\Delta_1)$ 
is $\mathbb P(1,1,1,2)$. Next, we 
take the blow-up $f:X_2= X(\Delta_2)\to X_1$ 
along the ray $e_3=(0, 0, 1)$. Then 
$X_2$ is a projective $\mathbb Q$-factorial 
toric variety with only one $\frac{1}{2}(1, 1, 1)$-singular 
point. 
Since $\rho (X_2)=2$, we have one more 
contraction morphism $\varphi :X_2\to X_3=X(\Delta_3)$. This 
contraction morphism $\varphi$ corresponds to the 
removal of the wall $\langle e_1, e_2\rangle$ from $\Delta_2$. 
We can easily check that $\varphi$ is a flipping contraction. 
By adding the wall $\langle e_3, e_4\rangle$ to $\Delta_3$, 
we obtain a flipping diagram. 
$$
\begin{matrix}
X_2 & \dashrightarrow & \ X_4 \\
{\ \ \ \ \ \searrow} & \ &  {\swarrow}\ \ \ \ \\
 \ & X_3 &  
\end{matrix}
$$
It is an example of Francia's flip. 
We can easily check that 
$X_4\simeq \mathbb P_{\mathbb P^1}(\mathcal O_{\mathbb P^1}
\oplus \mathcal O_{\mathbb P^1}(1)\oplus \mathcal O_{\mathbb P^1}(2))$ and 
that the flipped curve $C\simeq \mathbb P^1$ is the 
section of $\pi:\mathbb P_{\mathbb P^1}(\mathcal 
O_{\mathbb P^1}\oplus \mathcal O_{\mathbb P^1}(1)\oplus 
\mathcal O_{\mathbb P^1}(2))\to \mathbb P^1$ defined 
by the projection 
$\mathcal O_{\mathbb P^1}\oplus \mathcal O_{\mathbb P^1}(1)\oplus \mathcal O_{\mathbb P^1}(2)\to \mathcal O_{\mathbb P^1}\to 0$. 
\end{ex}

By taking double covers, we have an interesting example 
(cf.~\cite{francia}). 

\begin{ex}
We use the same notation as in Example \ref{fran11}. 
Let $g:X_5\to X_2$ be the blow-up along the 
ray $e_6=(1, 1, -1)$. 
Then $X_5$ is a smooth projective 
toric variety. 
Let $\mathcal O_{X_4}(1)$ be the tautological 
line bundle of the $\mathbb P^2$-bundle 
$\pi:X_4=\mathbb P_{\mathbb P^1}(\mathcal O_{\mathbb P^1}
\oplus \mathcal O_{\mathbb P^1}(1)\oplus \mathcal O_{\mathbb P^1}(2))\to \mathbb P^1$. 
It is easy to see that $\mathcal O_{X_4}(1)$ is nef and 
$\mathcal O_{X_4}(1)\cdot C=0$. 
Therefore, there exists a line bundle 
$\mathcal L$ on $X_3$ such that 
$\mathcal O_{X_4}(1)\simeq \psi^*\mathcal L$, 
where $\psi:X_4\to X_3$. 
We take a general 
member $D\in |\mathcal L^{\otimes 8}|$. 
We note that $|\mathcal L|$ is free since 
$\mathcal L$ is nef. 
We take a double cover $X\to X_4$ (resp.~$Y\to X_5$) 
ramifying along $\Supp \psi ^{-1}D$ (resp.~$\Supp (\varphi
\circ g)^{-1}D$). 
Then $X$ is a smooth projective variety 
such that $K_X$ is ample. 
It is obvious that 
$Y$ is a smooth projective 
variety and is birational to $X$. 
So, $X$ is the unique minimal model of $Y$. 
We need flips 
to obtain the minimal model $X$ from $Y$ by running 
the 
MMP. 
\end{ex}

\section{A sample computation of a log flip}

Here, we treat an example of threefold log flips. 
In general, it is difficult to know 
what happens around the flipping curve. 
Therefore, the following nontrivial example 
is valuable because we can see the behavior 
of the flip explicitly.  
It helps us understand the proof of the special termination 
in \cite{special}.  
\begin{ex}
We fix a lattice $N=\mathbb Z^3$. 
We put $e_1=(1,0,0)$, $e_2=(-1,2,0)$, 
$e_3=(0,0,1)$, and $e_4=(-1,3,-3)$. 
We consider the fan $$\Delta=\{\langle e_1, e_3, e_4\rangle, 
\langle e_2, e_3, e_4\rangle, {\text{and their faces}}\}.$$ 
We put $X=X(\Delta)$, that is, $X$ is the toric variety 
associated to the fan $\Delta$. 
We define torus invariant prime divisors 
$D_i=V(e_i)$ for $1\leq i\leq 4$. We can easily 
check the following claim. 

\begin{claim}
The pair $(X, D_1+D_3)$ is a $\mathbb Q$-factorial 
dlt pair. 
\end{claim}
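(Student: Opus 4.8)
The plan is to verify the two assertions---$\mathbb{Q}$-factoriality and dlt---by purely combinatorial toric computations, working on the two affine charts $U_{\sigma_1}$ and $U_{\sigma_2}$ with $\sigma_1=\langle e_1,e_3,e_4\rangle$ and $\sigma_2=\langle e_2,e_3,e_4\rangle$. Throughout I would use the standard facts that on a toric variety $K_X+\sum_\rho V(\rho)\sim 0$, so that $K_X+D_1+D_3=\sum_\rho (b_\rho-1)V(\rho)$ with $b_{e_1}=b_{e_3}=1$ and $b_{e_2}=b_{e_4}=0$, and that the discrepancy of the exceptional divisor $E_v$ obtained by inserting a primitive ray $v=\sum_i\lambda_i e_{\rho_i}$ (with $\lambda_i\ge 0$) into a simplicial cone is $a(E_v,X,D_1+D_3)=\sum_i\lambda_i(1-b_{\rho_i})-1$.

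For $\mathbb{Q}$-factoriality I would simply check that both maximal cones are simplicial: the $3\times 3$ determinants of $(e_1,e_3,e_4)$ and $(e_2,e_3,e_4)$ are $-3$ and $1$ respectively, both nonzero, so the generators are linearly independent and a $\mathbb{Q}$-factorial toric variety results. The chart $U_{\sigma_2}$ is in fact smooth, while $U_{\sigma_1}$ carries a cyclic quotient singularity of index $3$.

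For dlt I would exhibit one explicit toric log resolution meeting the conditions of Definition \ref{dlt-def}. First I would locate the singular locus: the only non-smooth cone is $\sigma_1$, and computing the lattice points of its fundamental parallelepiped shows the non-smoothness is captured by the single primitive vector $v_0=\tfrac13(e_1+e_4)=(0,1,-1)$, which lies in the relative interior of the $2$-face $\langle e_1,e_4\rangle$, and neither in $\sigma_2$ nor in the face $\langle e_1,e_3\rangle$. I would then star-subdivide $\Delta$ along $v_0$; this modifies only $\sigma_1$, replacing it by $\langle e_1,e_3,v_0\rangle$ and $\langle v_0,e_3,e_4\rangle$ and leaving $\sigma_2$ untouched. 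A determinant check ($|\det|=1$ for each new cone) shows the refined fan $\Delta'$ is smooth, so $f\colon Y=X(\Delta')\to X$ is a projective birational morphism from a smooth toric variety whose unique $f$-exceptional prime divisor is $E_{v_0}=V(v_0)$.

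Finally I would conclude. Since $Y$ is smooth, all torus-invariant divisors are simple normal crossing, so both $\Exc(f)=E_{v_0}$ and $\Exc(f)\cup\Supp f^{-1}_*(D_1+D_3)$ are snc, giving condition (i); and the discrepancy formula gives $a(E_{v_0},X,D_1+D_3)=\tfrac13(1-1)+\tfrac13(1-0)-1=-\tfrac23>-1$, giving condition (ii). The one genuinely substantive point---which I would flag as the crux rather than an obstacle---is the placement of the subdivision ray: $v_0$ must be taken off the face $\langle e_1,e_3\rangle$ on which $b_\rho\equiv 1$. Had the resolution forced a new ray inside $\langle e_1,e_3\rangle$, its discrepancy would be exactly $-1$ and the pair would be only log canonical. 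The content of the claim is precisely that the index-$3$ singularity of $\sigma_1$ lies over the face $\langle e_1,e_4\rangle$, only one of whose two rays ($e_1$) carries boundary coefficient $1$, so a single well-placed star subdivision keeps the exceptional discrepancy strictly above $-1$.
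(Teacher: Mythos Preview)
Your proof is correct. The paper itself does not supply a proof of this claim; it simply records ``We can easily check the following claim'' and moves on, so there is nothing to compare against beyond noting that your explicit toric resolution via the star subdivision at $v_0=(0,1,-1)$ and the discrepancy computation $a(E_{v_0},X,D_1+D_3)=-\tfrac{2}{3}$ carry out precisely the kind of routine verification the paper leaves to the reader.
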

We put $C=V(\langle e_3, e_4\rangle)\simeq \mathbb P^1$, which 
is a torus invariant irreducible curve on $X$. 
Since $\langle e_2, e_3, e_4\rangle$ is a non-singular 
cone, the intersection number $D_2\cdot C=1$. 
Therefore, $C\cdot D_4=-\frac {2}{3}$ and 
$-(K_X+D_1+D_3)\cdot C=\frac{1}{3}$. 
We note the linear relation $e_1+3e_2-6e_3-2e_4=0$. 
We put 
$Y=X(\langle e_1, e_2, e_3, e_4\rangle)$, that is, 
$Y$ is the affine toric variety associated to the 
cone $\langle e_1, e_2, e_3, e_4\rangle$. Then 
we have the next claim. 
\begin{claim}
The birational 
map $f:X\to Y$ is an elementary pl flipping contraction with respect 
to $K_X+D_1+D_3$. 
\end{claim}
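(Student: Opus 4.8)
The plan is to check, one clause at a time, that $f\colon X\to Y$ meets the definition of an elementary pre-limiting (pl) flipping contraction, feeding in the intersection numbers recorded above. First I would identify the contracted locus. The fan $\Delta$ and the fan of $Y$---the single cone $\sigma_0=\langle e_1,e_2,e_3,e_4\rangle$ together with its faces---have the same support $\sigma_0$ and the same set of rays $\{e_1,e_2,e_3,e_4\}$; hence $f$ contracts no divisor and is therefore small, its exceptional locus being precisely the curve $C=V(\langle e_3,e_4\rangle)\cong\mathbb P^1$ cut out by the one interior wall that $\Delta$ inserts into $\sigma_0$. The only cone of $\Delta$ that fails to be a face of $\sigma_0$ is this wall, so the only contracted curve class is $[C]$; equivalently the space of linear relations among $e_1,\dots,e_4$ is one-dimensional. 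Thus $N_1(X/Y)=\mathbb R[C]$, $\rho(X/Y)=1$, and $f$ is the projective contraction of the single ray $\mathbb R_{\geq 0}[C]$, which is what ``elementary'' encodes.

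Next I would compute the intersection numbers. For every $m\in M$ the principal divisor $\sum_\rho\langle m,e_\rho\rangle D_\rho$ is linearly trivial, so it meets the complete curve $C$ in degree zero; letting $m$ range over $M$ this says that $\sum_\rho(D_\rho\cdot C)\,e_\rho=0$ in $N_{\mathbb R}$. Hence the vector $(D_1\cdot C,D_2\cdot C,D_3\cdot C,D_4\cdot C)$ is proportional to the wall-relation coefficients $(1,3,-6,-2)$ given by $e_1+3e_2-6e_3-2e_4=0$. The proportionality constant is fixed by $D_2\cdot C=1$, which holds because $\langle e_2,e_3,e_4\rangle$ is a smooth cone with $e_2$ as its ray off the wall, so $D_2$ meets $C$ transversally in a single reduced point; this forces the constant to be $1/3$ and yields $D_1\cdot C=\tfrac13$, $D_3\cdot C=-2$, $D_4\cdot C=-\tfrac23$.

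With these in hand the remaining clauses follow formally. From $K_X=-\sum_\rho D_\rho$ one reads off $K_X+D_1+D_3=-(D_2+D_4)$, whence $-(K_X+D_1+D_3)\cdot C=(D_2+D_4)\cdot C=1-\tfrac23=\tfrac13>0$; since $[C]$ spans $\overline{NE}(X/Y)$, the divisor $-(K_X+D_1+D_3)$ is $f$-ample, making $f$ a $(K_X+D_1+D_3)$-flipping contraction. The pair $(X,D_1+D_3)$ is $\mathbb Q$-factorial (both maximal cones are simplicial) and dlt by the first Claim, and $\llcorner D_1+D_3\lrcorner=D_1+D_3$. The component $S=D_3$ of this reduced boundary satisfies $D_3\cdot C=-2<0$, so $-S=-D_3$ is $f$-ample as well; this is exactly the extra datum distinguishing a pl contraction. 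Assembling these facts gives the claim.

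I expect the one genuine subtlety to be the normalization of the toric intersection pairing: justifying $D_2\cdot C=1$ from the smoothness of $\langle e_2,e_3,e_4\rangle$, and orienting the wall relation so that the two off-wall rays $e_1,e_2$ carry positive coefficients while the on-wall rays $e_3,e_4$ carry negative ones. Once the single proportionality constant is pinned down, every other statement---smallness, $\rho(X/Y)=1$, $f$-ampleness of $-(K_X+D_1+D_3)$, and the pl condition on $D_3$---is bookkeeping that follows from the toric dictionary together with the already-stated first Claim.
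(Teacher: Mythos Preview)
Your proposal is correct and follows essentially the same line as the paper: the paper's treatment of this Claim consists of the intersection numbers already recorded before it together with the remark afterwards that $C\cdot D_1=\tfrac13$ and $D_3\cdot C=-2$, leaving the reader to assemble the pl flipping conditions from these toric data. You have simply made that assembly explicit---the smallness of $f$, $\rho(X/Y)=1$, the wall relation giving the $D_i\cdot C$, the $f$-ampleness of $-(K_X+D_1+D_3)$, and the pl condition via $D_3\cdot C<0$---so there is nothing to correct.
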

For the definition of 
pl flipping contractions, see 
\cite[Definition 4.3.1]{special}. 
We note the intersection numbers $C\cdot D_1=\frac{1}{3}$ and 
$D_3\cdot C=-2$. 
Let $\varphi:X\dashrightarrow X^+$ be the flip 
of $f$. We note that 
the flip $\varphi$ is an isomorphism 
around any generic points of lc centers 
of $(X, D_1+D_3)$. 
We restrict the flipping diagram 
$$
\begin{matrix}
X & \dashrightarrow & \ X^+ \\
{\ \ \ \ \ \searrow} & \ &  {\swarrow}\ \ \ \ \\
 \ & Y &  
\end{matrix}
$$
to $D_3$. 
Then we have the following diagram. 
$$
\begin{matrix}
D_3 & \dashrightarrow & \ D^+_3 \\
{\ \ \ \ \ \searrow} & \ &  {\swarrow}\ \ \ \ \\
 \ & f(D_3) &  
\end{matrix}
$$
It is not difficult to see that 
$D^+_3\to f(D_3)$ is an isomorphism. 
We put $(K_X+D_1+D_3)|_{D_3}=K_{D_3}+B$. 
Then $f:D_3\to f(D_3)$ is an extremal divisorial 
contraction with respect to $K_{D_3}+B$. 
We note that $B=D_1|_{D_3}$. 
\begin{claim}
The birational 
morphism 
$f:D_3\to f(D_3)$ contracts $E\simeq \mathbb P^1$ to a point 
$Q$ on $D^+_3\simeq f(D_3)$ 
and $Q$ is a $\frac{1}{2}(1,1)$-singular point 
on $D^+_3\simeq 
f(D_3)$. 
The surface 
$D_3$ has a $\frac{1}{3}(1,1)$-singular point $P$, which 
is the intersection of $E$ and $B$. We 
also have the adjunction formula 
$(K_{D_3}+B)|_B=K_B+\frac{2}{3}P$.  
\end{claim}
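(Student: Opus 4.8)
The plan is to read off every assertion from toric combinatorics, using the fact that the invariant divisor $D_3=V(e_3)$ is itself a toric surface whose fan is the star of the ray $e_3$ in the quotient lattice $N(e_3)=N/\mathbb{Z}e_3$. Identifying $N(e_3)\cong\mathbb{Z}^2$ by the projection $(a,b,c)\mapsto(a,b)$ sends $e_1,e_2,e_4$ to $\bar e_1=(1,0)$, $\bar e_2=(-1,2)$, $\bar e_4=(-1,3)$, and the maximal cones $\langle e_1,e_3,e_4\rangle$, $\langle e_2,e_3,e_4\rangle$ project to $\langle\bar e_1,\bar e_4\rangle$, $\langle\bar e_2,\bar e_4\rangle$. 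First I would record the toric dictionary for the objects named in the claim: the curve $E=C|_{D_3}=V(\langle e_3,e_4\rangle)$ is the invariant curve attached to $\bar e_4$ (it is adjacent to both two-dimensional cones, hence $E\cong\mathbb{P}^1$), the boundary curve $B=D_1|_{D_3}=V(\langle e_1,e_3\rangle)$ is attached to $\bar e_1$ (adjacent to a single cone, hence $B\cong\mathbb{A}^1$), and the point $P=E\cap B$ is the torus fixed point corresponding to $\langle\bar e_1,\bar e_4\rangle$.

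Next I would describe $f(D_3)$ torically. Since $Y$ is the affine chart of the single cone $\langle e_1,e_2,e_3,e_4\rangle$, its divisor $V(e_3)$ is the affine toric surface with cone $\langle\bar e_1,\bar e_2\rangle$, and the identity $\bar e_4=\tfrac12\bar e_1+\tfrac32\bar e_2$ shows $\bar e_4$ lies in the interior of $\langle\bar e_1,\bar e_2\rangle$. Therefore $f|_{D_3}$ is exactly the toric morphism inserting the ray $\bar e_4$; it subdivides $\langle\bar e_1,\bar e_2\rangle$ into the two cones of the $D_3$-fan, contracts $E=V(\bar e_4)$ to the fixed point $Q=V(\langle\bar e_1,\bar e_2\rangle)$, and is an isomorphism elsewhere. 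Together with the already-noted isomorphism $D_3^+\simeq f(D_3)$ this gives the first assertion. The singularity types follow from the generating cones: $\langle\bar e_1,\bar e_2\rangle=\langle(1,0),(-1,2)\rangle$ has $|\det|=2$, and since there is a unique cyclic quotient of order two, $Q$ is the $A_1$ point $\tfrac12(1,1)$. For $P$ the cone is $\langle(1,0),(-1,3)\rangle$ with $|\det|=3$, and here care is needed to separate $\tfrac13(1,1)$ from $A_2=\tfrac13(1,2)$: I would compute the dual cone $\langle(0,1),(3,1)\rangle$ and its Hilbert basis $(0,1),(1,1),(2,1),(3,1)$, recognising the semigroup algebra as the degree-three Veronese cone, which is precisely $\tfrac13(1,1)$.

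Finally, for the adjunction formula I would pass to the canonical cyclic cover of the $\tfrac13(1,1)$ point. Writing $\pi:\mathbb{A}^2\to Z=\mathbb{A}^2/\mu_3$ with $B$ the image of a coordinate axis, the restriction $\nu:\mathbb{A}^1\to B$ is totally ramified of degree $3$ at the origin, so $\nu^{*}\bigl((K_Z+B)|_B\bigr)=K_{\mathbb{A}^1}$ while Hurwitz gives $K_{\mathbb{A}^1}=\nu^{*}K_B+2[0]$; comparing and dividing by the ramification index $3$ yields different coefficient $\tfrac{3-1}{3}=\tfrac23$, hence $(K_{D_3}+B)|_B=K_B+\tfrac23 P$. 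Away from $P$ the surface is smooth and $B$ is a coordinate line, so there is no other contribution. The step I expect to be the real obstacle is the normal-form bookkeeping: orienting the cones so the weights come out as $(1,1)$ rather than the $A$-type $(1,a)$, and fixing the different with the correct convention; the contraction, the isomorphism onto $f(D_3)$, and the incidence $P=E\cap B$ are then immediate from the fan.
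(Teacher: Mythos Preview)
Your proof is correct and complete. The paper states this claim without proof (it is one of several computational assertions in a worked example, left to the reader as a direct toric check), so your argument supplies exactly the verification the paper omits: the star construction of $D_3$ in $N/\mathbb{Z}e_3$, the determinant and Hilbert-basis identifications of the singularity types, and the cyclic-cover/Hurwitz derivation of the different $\tfrac{2}{3}P$ are all the standard and intended methods here.
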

Let $D^+_i$ be the torus 
invariant 
prime divisor 
$V(e_i)$ on $X^+$ for all $i$ 
and $B^+$ the strict transform of $B$ on $D^+_3$. 
\begin{claim} 
We have 
$$(K_{X^+}+D^+_1+D^+_3)|_{D^+_3}=K_{D^+_3}+B^+$$ and 
$$(K_{D^+_3}+B^+)|_{B^+}=K_{B^+}+\frac{1}{2}Q.$$ 
\end{claim}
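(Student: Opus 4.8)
The plan is to carry out both identities by explicit toric computation on the fan of $X^+$, exactly parallel to the computation already done on $D_3$. First I would pin down the fan of $X^+$. The flip $\varphi$ replaces the triangulation of the circuit $\langle e_1,e_2,e_3,e_4\rangle$ determined by the contracted wall $\langle e_3,e_4\rangle$ by the opposite one. Reading off the circuit relation $e_1+3e_2-6e_3-2e_4=0$, the positive part is $\{e_1,e_2\}$, so $X^+=X(\Delta^+)$ where $\Delta^+$ has maximal cones $\langle e_1,e_2,e_3\rangle$ and $\langle e_1,e_2,e_4\rangle$ together with their faces, and the flipped curve is $V(\langle e_1,e_2\rangle)$. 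In particular the two-dimensional cones through $e_3$ are now $\langle e_1,e_3\rangle$ and $\langle e_2,e_3\rangle$, while $\langle e_3,e_4\rangle$ is no longer a cone of $\Delta^+$.

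Next I would describe $D_3^+=V(e_3)$ as the toric surface given by the star of $e_3$, that is, by the image of $\Delta^+$ in $N/\mathbb{Z}e_3\simeq\mathbb{Z}^2$ (projection onto the first two coordinates). Its only maximal cone is $\langle\bar e_1,\bar e_2\rangle$ with $\bar e_1=(1,0)$ and $\bar e_2=(-1,2)$; since $\det((1,0),(-1,2))=2$ and $(0,1)=\tfrac12(\bar e_1+\bar e_2)$, the torus-fixed point $Q=V(\langle\bar e_1,\bar e_2\rangle)$ is a $\tfrac12(1,1)$ quotient singularity, confirming $D_3^+\simeq f(D_3)$. The strict transform $B^+$ of $B=D_1|_{D_3}$ is then the toric curve $V(\bar e_1)=D_1^+|_{D_3^+}$, because the cone $\langle e_1,e_3\rangle$ persists in $\Delta^+$.

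For the first identity I would use $K_{X^+}=-(D_1^++D_2^++D_3^++D_4^+)$, so that $K_{X^+}+D_3^+=-(D_1^++D_2^++D_4^+)$. Restricting to $D_3^+$ and noting that $\langle e_1,e_3\rangle$ and $\langle e_2,e_3\rangle$ are nonsingular cones while $\langle e_3,e_4\rangle$ is not a cone of $\Delta^+$, one gets $D_1^+|_{D_3^+}=V(\bar e_1)$ and $D_2^+|_{D_3^+}=V(\bar e_2)$, each with multiplicity one, and $D_4^+|_{D_3^+}=0$; hence $(K_{X^+}+D_3^+)|_{D_3^+}=-V(\bar e_1)-V(\bar e_2)=K_{D_3^+}$ with no additional different, and adding $D_1^+|_{D_3^+}=B^+$ yields $(K_{X^+}+D_1^++D_3^+)|_{D_3^+}=K_{D_3^+}+B^+$. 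For the second identity I would compute $(K_{D_3^+}+B^+)|_{B^+}$ by toric adjunction on the surface: from $K_{D_3^+}+B^+=-V(\bar e_2)$ and the fact that the image of $\bar e_2$ in $N/(\mathbb{Z}e_3+\mathbb{Z}\bar e_1)\simeq\mathbb{Z}$ is $2$ times a primitive generator, the different along $B^+$ at $Q$ has coefficient $\tfrac{2-1}{2}=\tfrac12$, giving $(K_{D_3^+}+B^+)|_{B^+}=K_{B^+}+\tfrac12 Q$. This is the same lattice-index mechanism that produced the coefficient $\tfrac23$ on the $D_3$ side, where $\bar e_4$ maps to $3$, which serves as a useful consistency check.

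The main obstacle is bookkeeping rather than anything deep: correctly identifying which triangulation of $\langle e_1,e_2,e_3,e_4\rangle$ is $X^+$, and hence which two-dimensional cones through $e_3$ are singular, and then keeping the lattice-index computation of the different straight, so that the smooth cones $\langle e_1,e_3\rangle$ and $\langle e_2,e_3\rangle$ contribute no different in the first step while the singular cone $\langle\bar e_1,\bar e_2\rangle$ contributes the $\tfrac12 Q$ in the second. Care is also needed to verify that $B^+$ is genuinely the strict transform of $B$ and not some other toric boundary curve, which follows from the persistence of the cone $\langle e_1,e_3\rangle$ across the flip.
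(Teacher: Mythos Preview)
Your proposal is correct and is exactly the kind of direct toric verification the paper has in mind; the paper states this claim without proof, leaving it as an explicit check parallel to the earlier claims in the same example, and your fan-and-star computation (identifying $\Delta^+$ via the circuit, computing the star of $e_3$, and reading off the different from the lattice indices of the relevant cones) is precisely that check.
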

We note that $f^+:D^+_3\to 
f(D_3)$ is an isomorphism. 
In particular, 
$$
\begin{matrix}
D_3 & \dashrightarrow & \ D^+_3 \\
{\ \ \ \ \ \searrow} & \ &  {\swarrow}\ \ \ \ \\
 \ & f(D_3) &  
\end{matrix}
$$
is of type (DS) in the sense of \cite[Definition 4.2.6]{special}. 
Moreover, $f:B\to B^+$ is an isomorphism 
but $f:(B, \frac{2}{3}P)\to (B^+, \frac{1}{2}Q)$ is not an 
isomorphism of pairs (see \cite[Definition 4.2.5]{special}). 
We note that $B$ is an lc center 
of $(X, D_1+D_3)$. 
So, we need \cite[Lemma 4.2.15]{special}. 
Next, we restrict the flipping diagram to $D_1$. 
Then we obtain the diagram. 
$$
\begin{matrix}
D_1 & \dashrightarrow & \ D^+_1 \\
{\ \ \ \ \ \searrow} & \ &  {\swarrow}\ \ \ \ \\
 \ & f(D_1) &  
\end{matrix}
$$
In this case, $f:D_1\to f(D_1)$ is an 
isomorphism. 
\begin{claim}
The surfaces 
$D_1$ and $D^+_1$ are smooth. 
\end{claim}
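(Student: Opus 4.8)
The plan is to use the standard fact that a torus-invariant prime divisor on a toric variety is itself a toric variety, and then check smoothness cone by cone. Recall that for a ray $\langle e_i\rangle$ of a fan, the orbit closure $V(e_i)$ is the toric variety associated to the quotient lattice $N(e_i)=N/(N\cap \mathbb{R}e_i)$ together with the star fan of $\langle e_i\rangle$, whose cones are the images $\overline{\tau}$ of the cones $\tau$ of the fan with $\langle e_i\rangle \prec \tau$. Since a toric variety is smooth exactly when every cone of its fan is nonsingular, and a cone is nonsingular iff its \emph{primitive} ray generators form part of a $\mathbb{Z}$-basis, it suffices to compute these two quotient fans and verify that each maximal cone has determinant $\pm 1$.

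First I would treat $D_1=V(e_1)$ on $X$. As $e_1=(1,0,0)$ is primitive, $N(e_1)=\mathbb{Z}^3/\mathbb{Z}e_1\simeq \mathbb{Z}^2$ via the projection $(a,b,c)\mapsto(b,c)$. The only maximal cone of $\Delta$ containing $e_1$ is $\langle e_1,e_3,e_4\rangle$, so the star fan of $\langle e_1\rangle$ has the single maximal cone spanned by $\overline{e_3}$ and $\overline{e_4}$. Computing $\overline{e_3}=(0,1)$ and $\overline{e_4}=(3,-3)$, and replacing the latter by its primitive generator $(1,-1)$, I obtain the cone $\langle (0,1),(1,-1)\rangle$, whose determinant is $-1$; hence it is nonsingular and $D_1\simeq \mathbb{A}^2$ is smooth.

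Next I would treat $D_1^+=V(e_1)$ on $X^+$, which first requires pinning down $\Delta^+$. From the circuit relation $e_1+3e_2-6e_3-2e_4=0$ one sees that the two simplicial subdivisions of $\langle e_1,e_2,e_3,e_4\rangle$ are the one with wall $\langle e_3,e_4\rangle$ (the fan of $X$) and the one with wall $\langle e_1,e_2\rangle$; the flip passes from the former to the latter, so $\Delta^+$ has maximal cones $\langle e_1,e_2,e_3\rangle$ and $\langle e_1,e_2,e_4\rangle$. Both contain $e_1$, so the star fan now has two maximal cones, spanned by $\{\overline{e_2},\overline{e_3}\}$ and $\{\overline{e_2},\overline{e_4}\}$. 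Passing again to primitive generators, $\overline{e_2}=(1,0)$, $\overline{e_3}=(0,1)$, $\overline{e_4}=(1,-1)$, so the cones $\langle(1,0),(0,1)\rangle$ and $\langle(1,0),(1,-1)\rangle$ have determinants $1$ and $-1$, both nonsingular; therefore $D_1^+$ is smooth.

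The computation is entirely routine, and the single place where an error can slip in is the reduction to primitive generators in the quotient lattice: the raw images $\overline{e_4}=(3,-3)$ and $\overline{e_2}=(2,0)$ are not primitive, and using them directly would spuriously suggest that $D_1$ and $D_1^+$ are singular, whereas their primitive generators $(1,-1)$ and $(1,0)$ yield determinants $\pm 1$. A secondary point is to read off $\Delta^+$ correctly from the two triangulations of the $Y$-cone, which I would confirm by checking that $e_3$ and $e_4$ lie on opposite sides of the plane spanned by $e_1$ and $e_2$.
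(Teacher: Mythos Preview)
Your proof is correct and is precisely the direct check the paper alludes to with its one-line ``It can be directly checked.'' The star-fan computation, including the reduction of $\overline{e_2}=(2,0)$ and $\overline{e_4}=(3,-3)$ to their primitive generators $(1,0)$ and $(1,-1)$, is carried out correctly, and your identification of $\Delta^+$ via the circuit relation matches the paper's description of the flip.
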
 
It can be directly checked. 
Moreover, we obtain the following adjunction formulas. 
\begin{claim}
We have 
$$(K_X+D_1+D_3)|_{D_1}
=K_{D_1}+B+\frac{2}{3}B',$$ where 
$B$ $($resp.~$B'$$)$ comes from 
the intersection of 
$D_1$ and $D_3$ $($resp.~$D_4$$)$. 
We also obtain 
$$(K_{X^+}+D^+_1+D^+_3)|_{D^+_1}
=K_{D^+_1}+B^++\frac{2}{3}{B'}^++\frac{1}{2}F,$$ 
where $B^+$ $($resp.~${B'}^+$$)$ is the strict transform 
of $B$ $($resp.~$B'$$)$ and $F$ is the exceptional 
curve of $f^+:D^+_1\to f(D_1)$. 
\end{claim}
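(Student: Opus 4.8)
The plan is to compute both adjunction formulas purely torically, since $D_1$, $D_1^+$, and their boundary curves are all torus-invariant. First I would record the fan of $X^+$: the flip of $f$ replaces the wall $\langle e_3,e_4\rangle$ by $\langle e_1,e_2\rangle$, so $X^+=X(\Delta^+)$ with maximal cones $\langle e_1,e_2,e_3\rangle$ and $\langle e_1,e_2,e_4\rangle$, and $D_i^+=V(e_i)$. Both $D_1=V(e_1)$ and $D_1^+=V(e_1)$ are themselves toric surfaces, whose fans are the projections of the stars of $e_1$ to $N_1=N/\mathbb{Z}e_1\cong\mathbb{Z}^2$ under the map $(a,b,c)\mapsto(b,c)$.

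Next I would identify these fans together with the relevant lattice indices. Projecting gives $\bar e_2=(2,0)=2(1,0)$, $\bar e_3=(0,1)$, and $\bar e_4=(3,-3)=3(1,-1)$. The fan of $D_1$ has the two primitive rays $(0,1),(1,-1)$ spanning a single cone of determinant $-1$, so $D_1$ is smooth and isomorphic to $f(D_1)$, with torus-invariant curves $B=V(\langle e_1,e_3\rangle)$ and $B'=V(\langle e_1,e_4\rangle)$. The fan of $D_1^+$ has the three rays $(1,0),(0,1),(1,-1)$ and two cones, each of determinant $\pm1$, so $D_1^+$ is smooth; its invariant curves are $F=V(\langle e_1,e_2\rangle)$ (contracted by $f^+$), $B^+=V(\langle e_1,e_3\rangle)$, and ${B'}^+=V(\langle e_1,e_4\rangle)$. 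The essential numerical data are the non-primitivity indices $m_2=2$, $m_3=1$, $m_4=3$ of the projected generators.

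Then I would invoke toric adjunction. Using $K_X+\sum_j D_j\sim 0$ together with $K_{D_1}+\partial D_1\sim 0$, and the standard fact that for a $\mathbb{Q}$-factorial toric variety $D_j|_{D_1}=\tfrac{1}{m_j}V(\langle e_1,e_j\rangle)$, one obtains $(K_X+D_1)|_{D_1}=K_{D_1}+\mathrm{Diff}_{D_1}(0)$ with $\mathrm{Diff}_{D_1}(0)=\sum(1-1/m_j)\,V(\langle e_1,e_j\rangle)$. For $D_1$ this yields $\mathrm{Diff}_{D_1}(0)=\tfrac23 B'$ (only $e_4$ contributes, since $m_3=1$), and adding $D_3|_{D_1}=B$ gives $(K_X+D_1+D_3)|_{D_1}=K_{D_1}+B+\tfrac23 B'$. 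For $D_1^+$ the same computation produces $\mathrm{Diff}_{D_1^+}(0)=\tfrac12 F+\tfrac23{B'}^+$ (from $e_2$ and $e_4$), and adding $D_3^+|_{D_1^+}=B^+$ yields $(K_{X^+}+D_1^++D_3^+)|_{D_1^+}=K_{D_1^+}+B^++\tfrac23{B'}^++\tfrac12 F$, as claimed.

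The main obstacle is justifying the different formula in the presence of non-primitive projections — namely the coefficients $\tfrac{m-1}{m}$ and the restriction identity $D_j|_{D_1}=\tfrac1{m_j}V(\langle e_1,e_j\rangle)$ — and keeping careful track of which part of each coefficient comes from the analytic different $\mathrm{Diff}(0)$ and which from the honest restriction of $D_3$. I would verify the restriction formula by a local toric computation at the resulting cyclic quotient singularities, or cite standard toric adjunction. The conceptual point worth emphasizing is that the new term $\tfrac12 F$ appears only on $X^+$: it is created by the wall $\langle e_1,e_2\rangle$ introduced by the flip, where $\bar e_2$ has index $2$. This jump in the different across the flip is precisely the phenomenon the example is designed to exhibit.
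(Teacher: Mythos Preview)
Your proposal is correct and carries out precisely the direct toric verification that the paper leaves to the reader (no proof is given there). The identification of the star fans of $e_1$ in $N/\mathbb{Z}e_1$, the indices $m_2=2$, $m_3=1$, $m_4=3$, and the resulting different coefficients $\frac{m_j-1}{m_j}$ together with $D_j|_{D_1}=\frac{1}{m_j}V(\langle e_1,e_j\rangle)$ are all accurate, and they immediately yield both adjunction formulas.
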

\begin{claim}The birational morphism 
$f^+:D^+_1\to f(D_1)\simeq D_1$ is the 
blow-up at $P=B\cap B'$. 
\end{claim}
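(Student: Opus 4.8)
The plan is to describe everything torically and to reduce the claim to the standard fact that the blow-up of a smooth toric surface at a torus-fixed point is the star subdivision of the corresponding smooth two-dimensional cone at the sum of its two primitive generators.

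First I would pin down $X^+$ explicitly. The three-dimensional cone $\langle e_1,e_2,e_3,e_4\rangle$ supporting $Y$ admits exactly two simplicial subdivisions compatible with the relation $e_1+3e_2-6e_3-2e_4=0$: the one inserting the wall $\langle e_3,e_4\rangle$, which yields $\Delta$ and hence $X$, and the one inserting the wall $\langle e_1,e_2\rangle$. Since the flip removes $\langle e_3,e_4\rangle$ and adds $\langle e_1,e_2\rangle$, we have $X^+=X(\Delta^+)$ with maximal cones $\langle e_1,e_2,e_3\rangle$ and $\langle e_1,e_2,e_4\rangle$, and $D^+_1=V(e_1)$.

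Next I would compute the fans of $D_1$, $D^+_1$ and $f(D_1)$ as star quotients in $N(\rho_1)=N/\mathbb{Z}e_1\cong\mathbb{Z}^2$, with $\rho_1=\langle e_1\rangle$ and the projection $(a,b,c)\mapsto(b,c)$. The crucial point is that the images of $e_2$ and $e_4$ are \emph{not} primitive: $\bar e_2=(2,0)=2(1,0)$, $\bar e_4=(3,-3)=3(1,-1)$, while $\bar e_3=(0,1)$. Thus $D_1$ is the affine surface of the single cone $\langle(0,1),(1,-1)\rangle$, which is smooth since $\det\begin{pmatrix}0&1\\1&-1\end{pmatrix}=-1$, and the boundary curves $B=D_1\cap D_3$ and $B'=D_1\cap D_4$ correspond to the rays $(0,1)$ and $(1,-1)$. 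For $f(D_1)$ I must determine the faces of the non-simplicial cone $\langle e_1,e_2,e_3,e_4\rangle$: the relation $e_1+3e_2=6e_3+2e_4$ exhibits $\{e_1,e_2\}$ and $\{e_3,e_4\}$ as the two diagonal pairs of the quadrilateral, so the facets are $\langle e_1,e_3\rangle,\langle e_3,e_2\rangle,\langle e_2,e_4\rangle,\langle e_4,e_1\rangle$ and $\langle e_1,e_2\rangle$ is interior. Hence the star quotient for $f(D_1)$ is again the single cone $\langle(0,1),(1,-1)\rangle$, so $f\colon D_1\to f(D_1)$ is an isomorphism onto this smooth affine surface. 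Reading off $D^+_1$, its quotient fan has rays $(1,0),(0,1),(1,-1)$ and maximal cones $\langle(0,1),(1,0)\rangle$ and $\langle(1,0),(1,-1)\rangle$, both smooth; since $(1,0)=(0,1)+(1,-1)$, this fan is precisely the star subdivision of $\langle(0,1),(1,-1)\rangle$ at the sum of its generators, so the induced toric morphism $f^+\colon D^+_1\to f(D_1)$ is the blow-up of the fixed point $V(\langle(0,1),(1,-1)\rangle)$, which is the intersection of the two boundary curves, i.e. $B\cap B'=P$.

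The hard part will be bookkeeping rather than any deep idea: correctly establishing the facial structure of the non-simplicial cone of $Y$ (so that $f(D_1)$ is one smooth cone rather than a subdivided one) and, throughout, tracking the non-primitivity of $\bar e_2$ and $\bar e_4$, which is exactly what forces $D_1$ and $D^+_1$ to be smooth even though $X$ has the $\frac{1}{3}$-singularity coming from $\langle e_1,e_3,e_4\rangle$. I would also verify compatibility of the identifications $f(D_1)\cong D_1$ and $f^+(D^+_1)=f(D_1)$ so that $f^+$ genuinely maps onto $D_1$, and confirm that the new ray $(1,0)$ is the exceptional curve $F$ of $f^+$, matching the adjunction formula $(K_{X^+}+D^+_1+D^+_3)|_{D^+_1}=K_{D^+_1}+B^++\tfrac{2}{3}{B'}^++\tfrac12 F$ recorded above.
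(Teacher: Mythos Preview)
Your argument is correct. The paper states this claim without proof, treating it as a direct toric check; your computation via star quotients in $N/\mathbb{Z}e_1$ and recognition of the star subdivision along $(1,0)=(0,1)+(1,-1)$ as the blow-up of the torus-fixed point is exactly the standard verification one would carry out, and the details (non-primitivity of $\bar e_2$ and $\bar e_4$, the facial structure of the non-simplicial cone of $Y$, and the resulting fans for $D_1$, $f(D_1)$, $D^+_1$) are all handled correctly.
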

We can easily check that 
$$K_{D^+_1}+B^++\frac{2}{3}{B'}^++\frac{1}{2}F
={f^+}^*(K_{D_1}+B+\frac{2}{3}B')-\frac{1}{6}F.$$ 
It is obvious that $K_{D^+_1}+B^++\frac{2}{3}
{B'}^++\frac{1}{2}F$ is $f^+$-ample. 
Note that $F$ comes from the intersection of 
$D^+_1$ and $D^+_2$. 
Note that the diagram 
$$
\begin{matrix}
D_1 & \dashrightarrow & \ D^+_1 \\
{\ \ \ \ \ \searrow} & \ &  {\swarrow}\ \ \ \ \\
 \ & f(D_1) &  
\end{matrix}
$$
is of type (SD) in the sense of 
\cite[Definition 4.2.6]{special}. 
\end{ex}

\section{A non-$\mathbb Q$-factorial flip}
I apologize for the mistake in \cite[Example 4.4.2]{fujino0}. 
We give an example of a three-dimensional 
non-$\mathbb Q$-factorial canonical Gorenstein toric 
flip. 
See also \cite{fstu}. 
We think that it is difficult to construct 
such examples without using the toric geometry. 

\begin{ex}[Non-$\mathbb Q$-factorial canonical Gorenstein 
toric flip]\label{final-ex}
We fix a lattice $N=\mathbb Z^3$. 
Let $n$ be a positive integer with $n\geq 2$. 
We take lattice points 
$e_0=(0, -1, 0)$, 
$e_i=(n+1-i, \sum _{k=n+1-i}^{n-1}k, 1)$ for 
$1\leq i\leq n+1$, and 
$e_{n+2}=(-1, 0, 1)$. 
We consider the following fans. 
\begin{eqnarray*}
\Delta_X&=&\{\langle e_0, e_1, e_{n+2}\rangle, 
\langle e_1, e_2, \cdots, e_{n+1}, e_{n+2}\rangle, 
\text{and their faces}\},\\ 
\Delta_W&=&\{\langle e_0, e_1, \cdots, e_{n+1}, e_{n+2}\rangle, 
\text{and its faces}\}, {\text{and}}\\
\Delta_{X^+}&=&\{\langle e_0, e_i, e_{i+1}\rangle, {\text{for}}\  
i=1, \cdots, n+1, 
\text{and their faces}\}. 
\end{eqnarray*}
We define 
$X=X(\Delta_X), X^+=X(\Delta_{X^+})$, and $W=X(\Delta_W)$.
Then we have a diagram of toric varieties.  
$$
\begin{matrix}
X & \dashrightarrow & \ X^+ \\
{\ \ \ \ \ \searrow} & \ &  {\swarrow}\ \ \ \ \\
 \ & W &  
\end{matrix}
$$
We can easily check the following properties. 
\begin{enumerate}
\item[(i)] $X$ has only canonical Gorenstein singularities. 
\item[(ii)] $X$ is not $\mathbb Q$-factorial. 
\item[(iii)] $X^+$ is smooth. 
\item[(iv)] $-K_X$ is $\varphi$-ample and 
$K_{X^+}$ is $\varphi^+$-ample. 
\item[(v)] $\varphi:X\to W$ and $\varphi^+: X^+\to W$ are small projective 
toric morphisms. 
\item[(vi)] $\rho (X/W)=1$ and $\rho (X^+/W)=n$. 
\end{enumerate} 
Therefore, the above diagram is a desired 
flipping diagram. 
We note that $e_i+e_{i+2}=2e_{i+1}
+e_0$ for $i=1, \cdots, n-1$ and 
$e_n+e_{n+2}=2e_{n+1}+\frac{n(n-1)}{2}e_0$. 
We recommend the reader to draw pictures of 
$\Delta_X$ and $\Delta_{X^+}$. 
\end{ex}

By this example, we see that a flip sometimes 
increases the Picard number when the variety is not 
$\mathbb Q$-factorial.


\printindex
\end{document}